\def\rel#1_#2^#3{\mathrel{\mathop{\kern 0pt#1}\limits_{#2}^{#3}}}
  \def\arsinh{\operatorname{arsinh}}
 \def\artanh{\operatorname{artanh}}
   \newtheorem{theorem}{Theorem}
 \newtheorem{lemma}[theorem]{Lemma}
   \newtheorem{criterium}[theorem]{Zero-set criterium}
 \newtheorem{proposition}[theorem]{Proposition}
     \newtheorem{exemple}[theorem]{Example}
  \theoremstyle{definition}
\def\obun#1_#2^#3{\mathrel{\mathop{\kern 0pt#1}\limits_{#2}^{#3}}}
\def\oh{{\scriptstyle{\mathcal O}}}
     \def\Oh{\mathcal O}
\def\co#1{\textcolor{red}{#1}}
\def\gr#1{\textcolor{green}{#1}}
\def\bl#1{\textcolor{blue}{#1}}
\def\bs {\mathbf}
\newcommand{\zit}[1]{(\ref{#1})}
\def\T{ \mathbb T}
\def\R{ \mathbb R}
\def\Q{ \mathbb Q}
\def\D{{ \mathbb D}}
\def\C{{ \mathbb C}}
\def\Z{{ \mathbb Z}}
\def\N{{ \mathbb N}}
\def\e{\varepsilon}
\def\sp {\quad}
\def \n{\raisebox{-2pt}{\rule{1.3pt}{10pt}}}
\def\dis{\displaystyle}
\def\union{\cup}
\def\Union{\bigcup}
\def\inter{\cap}
\def\ov{\overline}
\def\ss{\subseteq}
\def\emp{\emptyset}
\def\buildrel#1_#2^#3{\mathrel{\mathop{\kern 0pt#1}\limits_{#2}^{#3}}}
\def\vs{\vskip}
\def\ssi{\Longleftrightarrow}
\def\imp{\Longrightarrow}
\begin{document}

    \begin{center}

  {\Large {\it Real and Complex Analysis}} \bigskip

{\Large{Solutions to Problems in}}

\vspace{1cm}

{\bf Amer. Math. Monthly\\

 Math. Magazine\\
 
 College Math. J.\\
 
  Elemente der Math\\
  
  Crux Math.\\
  
  EMS Newsletter\\
  
  Math. Gazette
  }
  
\end{center}
  \vspace{3cm}

\centerline{Edited by} \vspace{3cm}

\centerline{ \Large Raymond Mortini}

\vspace{1cm}
 
 \centerline {\small\the\day.\the \month.\the\year} \vspace{0.5cm}

\newpage

In this arxiv-post  I present my solutions (published or not) to Problems that appeared in Amer. Math. Monthly,
 Math. Magazine, Elemente der Mathematik and CRUX, that were mostly done in collaboration with Rudolf Rupp. 
 Some of them (including a few own  proposals which were published) were also done in  cooperation with Rainer Br\"uck, Bikash Chakraborty, Pamela Gorkin, Gerd Herzog, J\'er\^ome No\"el,  Peter Pflug,  Amol Sasane and Robrto Tauraso.
 
 A few of these contributions to ``Recreational Mathematics"  actually  were the base for  interesting  generalizations  that led to some of my  publications (partially co-authored) in research journals  
 (\cite{mpr,  mps, mr2024,  mr24,  mo23,   mr21,   mr20, mo2002}). 
 
 The content will surely be attractive to all undergraduate/graduate students in Mathematics who want to solve   challenging problems in Analysis by calculating explicit values of funny looking integrals, sums and products, by deriving astonishing inequalities  and by solving  functional equations.    It is also a valuable source for teachers in mathematics in preparing exercise sheets for their students. Moreover, I think that it is worth to see in most cases quite different solutions than  those already published in the above listed journals.
 
  My main reason to post this collection of (mainly unpublished solutions),  is to keep also for future generations  an archive for  historians in Mathematics, interested in the work of one of the very few mathematicians with Luxembourgish Nationality.  Without this digital archiving, these contributions to education and science  would for ever disappear in  a few years.

\vspace{1cm}

Some technical remarks: The first items for each journals are still hidden, as the submission deadline has not yet occurred. Regular updates  are planned. Own proposals are presented with  a yellow background.

We obtained permission from the MAA, the EMS, Math. Gazette and CRUX to post a scan of the original statements and to reproduce our published (and non-published) solutions.

\vspace{4cm}
\small\the\day.\the \month.\the\year  \hfill Raymond Mortini, Pontpierre (Luxembourg).

\phantom{\small\the\day.\the \month.\the\year}  \hfill  Prof. emeritus, Univ. Lorraine, Metz,  France

\phantom{\small\the\day.\the \month.\the\year}  \hfill Visiting researcher Univ. Luxembourg, Esch-sur-Alzette


\newpage

\begin{figure}[h!]
 \scalebox{0.49} 
  {\includegraphics{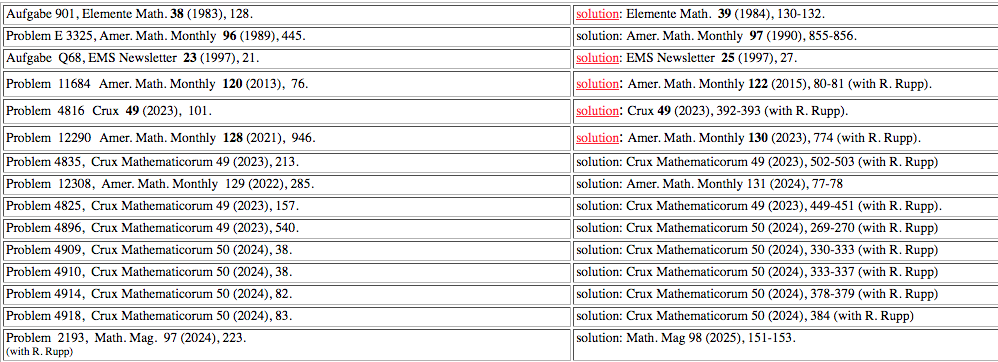}} 
   \end{figure}
   
   {\bf published proposals}
   
   \begin{figure}[h!]
 \scalebox{0.45} 
  {\includegraphics{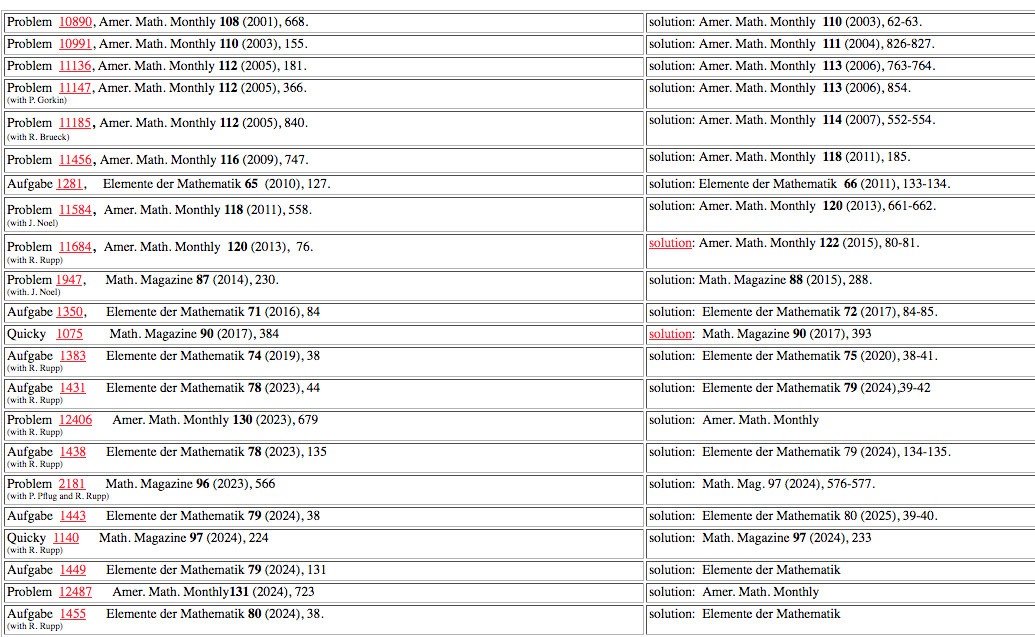}} 
   \end{figure}

   \vspace{1cm}  See  \cite{subsol} for the whole bibliographic  list.

\newpage

.\vspace{-3cm}
\gr{\huge\section{American Math. Monthly}}
\bigskip

\centerline{\co{\copyright Mathematical Association of America, 2025.  }}


\begin{figure}[h!]
 \scalebox{0.45} 
  {\includegraphics{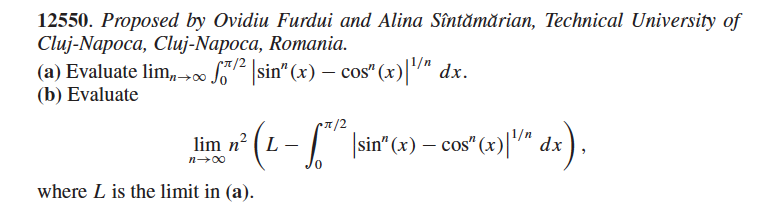}} 
  
 \end{figure}
 

\medskip

\centerline{{\bf Solution to problem 12550  in Amer. Math. Monthly 132 (2025), p. ?}}\medskip 

\centerline{Raymond Mortini  and  Rudolf Rupp }

\bigskip

\centerline{- - - - - - - - - - - - - - - - - - - - - - - - - - - - - - - - - - - - - - - - - - - - - - - - - - - - - -}
  
  \medskip



\newpage

\bigskip

\begin{figure}[h!]
 \scalebox{0.45} 
  {\includegraphics{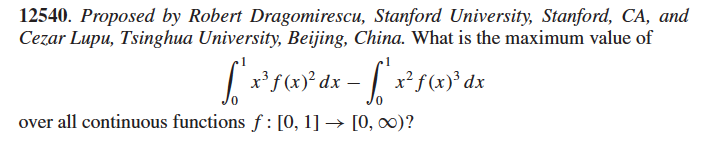}} 
  
 \end{figure}

\medskip

\centerline{{\bf Solution to problem 12540  in Amer. Math. Monthly 132 (2025), p. 592}}\medskip 

\centerline{Raymond Mortini  and  Rudolf Rupp }

\bigskip

\centerline{- - - - - - - - - - - - - - - - - - - - - - - - - - - - - - - - - - - - - - - - - - - - - - - - - - - - - -}
  
  \medskip


We show that for every continuous function $f:[0,1]\to[0,\infty[$, 
$$\ovalbox{$I(f):=\dis \int_0^1 x^3f(x)^2 dx-\int_0^1 x^2 f(x)^3 dx\leq \frac{2}{81}.$}$$
and that for  $f(x)=(2/3)x$, we have $I(f)= 2/81$.\\

First we note that $I(f)=\dis\int_0^1(x^2 f(x)^2(x-f(x))dx.$ Consider for $0\leq x\leq 1$  the auxiliary functions
$$h(y)=x^2y^2(x-y)= -x^2y^3+x^3 y^2.$$
This is  a cubic with double zero $0$ and zero $y=x$. Now 
$h'(y)=x^2y(2x-3y)=0$ if $y=0$ or $y=(2/3)x$. Hence
$$\max_{y\geq 0} h(y) =h\left(\frac{2}{3}x\right)=\frac{4}{27}x^5.$$
Consequently, if we take $y:=f(x)\geq 0$, then
$$I(f)=\int_0^1 h(f(x))dx\leq \int_0^1 \frac{4}{27}x^5 dx= \frac{2}{81}.$$
Now, if $f(x)=(2/3)x$, then, by direct calculation, 
$I(f) =2/81$.

\vspace{1cm}

{\bf Remark.}  We Note that  $\inf \{I(f): f\in C[0,1], f\geq 0\}=-\infty$. In fact, let $f(x)\equiv M$. Then
$$I(f)=\frac{M^2}{4}-\frac{M^3}{3}\to -\infty\quad{\rm as}\quad M\to\infty.$$

\newpage

\begin{figure}[h!]
 \scalebox{0.45} 
  {\includegraphics{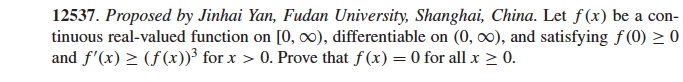}} 
  
 \end{figure}
 

\medskip

\centerline{{\bf Solution to problem 12537  in Amer. Math. Monthly 132 (2025), p. ?}}\medskip 

\centerline{Gerd Herzog, Raymond Mortini  and  Rudolf Rupp }

\bigskip

\centerline{- - - - - - - - - - - - - - - - - - - - - - - - - - - - - - - - - - - - - - - - - - - - - - - - - - - - - -}
  
  \medskip


We first show the hypotheses imply that that $f\geq 0$ on $[0,\infty[$.
If not, let $\xi>0$ be such that $f(\xi)<0$ and let $\eta\geq 0$ be the first zero \footnote{ At this point we used the hypothesis that $f(0)\geq 0$.} of $f$ when moving to the left (note that 
$\eta=0$ may be possible). In particular we have $f<0$ on $I:=]\eta, \xi]$. Now, on $I$, 
$$\frac{f'}{f^3}-1=\frac{f'-f^3}{f^3}\leq 0.$$
For $x\in I$, let
 $$h(x):=-\frac{1}{2} \cdot\frac{1}{f^{2}(x)}-x.$$
 Then $\dis h'=\frac{f'}{f^3}-1\leq 0$, and so $h$ decreases on $I$. But $\lim_{x\to\eta^+}h(x)=-\infty$. This is not possible, as $h(x)>h(\xi)$ for $x\in I$. We conclude that $f\geq 0$ on $[0,\infty[$.\\ 
 
For the rest, we present two solutions.\\  

{\it First solution}

Suppose that for some $x_0$ we have  $f>0$  on  $I=[x_0,b[$, where $b$ is maximal. Then either $f(b)=0$ or 
$b=\infty$ \footnote{ Actually, only the second case occurs: in fact, for $x\in I$,
$f'(x)\geq f(x)^3>0$; hence $f$ is increasing on $I$ and so $f(b)=\lim_{x\to b^-} f(x) \geq f(x_0)>0$. We do not need this observation, though.}.
Consider on $I$ the following function: 
$$h(x):=-\frac{1}{2}f^{-2}(x) +\frac{1}{2} f^{-2}(x_0) -x+x_0.$$
Then $h$ is continuous, differentiable, $h(x_0)=0$, and 
$$h'= \frac{f'}{f^3}-1\geq 0.$$
Hence $h$ is increasing on $I$ and so $h\geq 0$ on $I$. In other words, 
$$f^{-2}(x)-f^{-2}(x_0)\leq -2(x-x_0).$$
Noticing that on $I$,  $f(x_0)^{-2}-2(x-x_0)\geq  f^{-2}(x)>0$, we deduce that \footnote{ Note that 
$u$ is the unique solution to $u'=u^3, u(x_0)=f(x_0)$.}.

$$f(x)\geq \frac{1}{\sqrt{f(x_0)^{-2}+2x_0-2x}}=:u(x), \quad x\in I.$$
Henceforth the maximality of $b$ implies that $b=x_0+\frac{f(x_0)^{-2}}{2}$. 
Thus $f$ can't be a non-trivial  solution to the inequality $f'\geq f^3$ on the whole interval $[0,\infty[$. Consequently, 
as we know that $f\geq 0$,  $f(x)= 0$ for every $x\in [0,\infty[$.\newpage 


 {\it Second solution}\\

Here we use the following standard result (see \cite{wwa}):
 \begin{theorem}
 Suppose that $D\ss\R$ is an interval, that $H:[a,b[\times D \to \R$ is locally Lipschitz, and that $u,v: [a,b[\to D$ are differentiable functions satisfying
 $$u'(x)-H(x, u(x))\leq v'(x)-H(x,v(x)) \quad (x\in [a,b[)$$ 
 $$u(a)\leq v(a).$$
 Then $u(x)\leq v(x)$  for $x\in [a,b[$.
 \end{theorem}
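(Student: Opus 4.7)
I would argue by contradiction. Suppose $u(x_0) > v(x_0)$ for some $x_0 \in [a, b[$, and set $w := u - v$. Since $w$ is continuous with $w(a) \leq 0$ and $w(x_0) > 0$, the supremum $c$ of the closed set $\{x \in [a, x_0] : w(x) \leq 0\}$ satisfies $c < x_0$, and by continuity of $w$ we get $w(c) = 0$; by maximality of $c$, we have $w > 0$ on $]c, x_0]$. This reduces the problem to deriving a contradiction from the presence of this ``first excursion'' of $u$ above $v$.

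The key step is to convert the hypothesis into a closed differential inequality for $w$ alone on $[c, x_0]$. Since $u$ and $v$ are continuous on the compact interval $[c, x_0]$, their graphs lie in a compact subset of $[a,b[\,\times D$ on which the locally Lipschitz map $H$ admits a uniform Lipschitz constant $L\geq 0$ in its second variable. Combined with the hypothesis $u'(x) - H(x,u(x)) \leq v'(x) - H(x,v(x))$, this yields on $]c, x_0]$
$$w'(x) \;\leq\; H(x, u(x)) - H(x, v(x)) \;\leq\; L\,|u(x) - v(x)| \;=\; L\,w(x),$$
where the last equality uses $w > 0$ on $]c, x_0]$.

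To conclude, I would introduce $\phi(x) := e^{-L(x-c)}\,w(x)$, which satisfies $\phi(c) = 0$ and $\phi'(x) = e^{-L(x-c)}(w'(x) - L\,w(x)) \leq 0$. Hence $\phi$ is non-increasing with $\phi(c)=0$, so $\phi \leq 0$ on $[c, x_0]$, which forces $w \leq 0$ there, contradicting $w(x_0) > 0$. The only delicate point is that $H$ is assumed merely \emph{locally} Lipschitz, not globally; restricting to the compact subinterval $[c, x_0]$ is exactly what produces the uniform constant $L$ required by this Gronwall-type final step, and everything else is routine once $w$ has been pinned to zero at $c$.
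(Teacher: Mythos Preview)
Your proof is correct and is the standard argument for this comparison principle: isolate the first excursion of $u$ above $v$, extract a uniform Lipschitz constant on the resulting compact piece of the two graphs, and finish with the integrating-factor (Gronwall) step. One small point worth making explicit is that at the left endpoint $c$ you also have $w'(c)\leq H(c,u(c))-H(c,v(c))=0$ since $u(c)=v(c)$, so the inequality $w'\leq Lw$ in fact holds on all of $[c,x_0]$, not just on $]c,x_0]$; this is what makes the monotonicity of $\phi$ down to $c$ completely clean.

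As for comparison with the paper: the paper does \emph{not} prove this theorem at all. It is quoted as a known result with a reference to Walter's textbook on ordinary differential equations, and is then applied as a black box in the second solution to Problem~12537. So there is no ``paper's own proof'' to compare against; your argument simply supplies what the paper takes for granted.
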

We already know that $f\geq 0$.  
In order to obtain a contradiction, suppose  that $f(x_0)>0$ for some $x_0> 0$. Now we apply the Theorem 
 to $D= \R$, $a=x_0$, $b=\infty$ and $H(x, y)=y^3$, and  
 consider on $[x_0, b[$ the differential equation $u'=u^3$ with $u(x_0)=f(x_0)$. Then
 $0\leq f'(x)-f^3(x)$  implies that on $[x_0,b[$ we have $u(x)\leq f(x)$. But the unique solution $u$ is given by
 $$u(x)=\frac{1}{\sqrt{f(x_0)^{-2}+2x_0-2x}} $$
 with a singularity at $b= x_0+\frac{f(x_0)^{-2}}{2}<\infty$. Thus $f$ can't be a solution to the inequality $f'\geq f^3$ on the whole interval $[0,\infty[$. Consequently
  $f(x)\leq 0$ for every $x\in [0,\infty[$. Putting all together, we have shown that $f\equiv 0$ on $[0,\infty[$.\\


Let us mention  that if $f$ would satisfy $f(0)<0$, then any negative constant or $-e^x$ e.g. would be solutions.


\newpage
 
 \begin{figure}[h!]
 \scalebox{0.45} 
  {\includegraphics{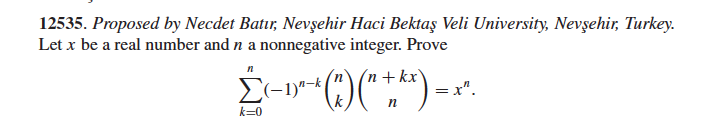}} 
  
 \end{figure}

\medskip

\centerline{{\bf Solution to problem 12535  in Amer. Math. Monthly 132 (2025), p. ?}}\medskip 

\centerline{Raymond Mortini  and  Rudolf Rupp }

\bigskip

\centerline{- - - - - - - - - - - - - - - - - - - - - - - - - - - - - - - - - - - - - - - - - - - - - - - - - - - - - -}
  
  \medskip

 This exercise is a special case of example (5.43) in \cite[p. 190]{GKP} or Example (6.30) in \cite[p. 72]{QG}.
   It is based on Euler's finite difference theorem: if
  $p(u)=\sum_{j=0}^d a_ju^j$ is a polynomial of degree $d$, then
  $$\sum_{k=0}^n (-1)^k \binom{n}{k} p(k)=\begin{cases} 0&\text{if $0\leq d<n$}\\ (-1)^n n! a_n&\text{if $d=n$}.
  \end{cases}
  $$
 (see also \cite[p. 68--69] {QG}). We have just to note that 
 $$p(u):=\binom{n+ux}{n}=\frac{1}{n!}(n+ux)(n-1+ux)\dots(1+ux) $$ 
 is a polynomial of degree $n$ and that $a_n=x^n/n!$.


\newpage
 
 \begin{figure}[h!]
 \scalebox{0.45} 
  {\includegraphics{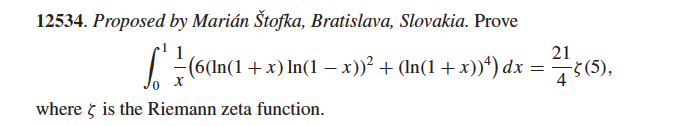}} 
  
 \end{figure}

\medskip

\centerline{{\bf Solution to problem 12534  in Amer. Math. Monthly 132 (2025), p. ?}}\medskip 

\centerline{Raymond Mortini  and  Rudolf Rupp }

\bigskip

\centerline{- - - - - - - - - - - - - - - - - - - - - - - - - - - - - - - - - - - - - - - - - - - - - - - - - - - - - -}
  
  \medskip

We will base the solution on the formula
 $$a^2b^2=\frac{1}{12} \left((a+b)^4+(a-b)^4-2a^4-2b^4\right).$$
 Hence, with $a=\log(1-x)$ and $b=\log(1+x)$, 
{\footnotesize \begin{eqnarray*}
 I&:=&\int_0^1 \frac{1}{x}\left(6 \log^2(1+x)\log^2(1-x)+\log^4(1+x)\right)dx\\
 &=&\frac{1}{2}\int_0^1 \frac{\log^4(1-x^2)}{x}dx
 +\frac{1}{2} \int_0^1 \frac{1}{x}\log^4\left(\frac{1-x}{1+x}\right)dx-\int_0^1\frac{\log^4(1-x)}{x}dx \co{-\int_0^1\frac{\log^4(1+x)}{x}dx+
 \int_0^1\frac{\log^4(1+x)}{x}dx}\\
 &\buildrel=_{x=\sqrt t}^{}& \frac{1}{2}\int_0^1 \frac{\log^4(1-t)}{\sqrt t}\;\frac{1}{2\sqrt t}dt
 +\frac{1}{2} \int_0^1 \frac{1}{x}\log^4\left(\frac{1-x}{1+x}\right)dx-\int_0^1\frac{\log^4(1-x)}{x}dx\\
 &=&-\frac{3}{4}\int_0^1\frac{\log^4(1-x)}{x}dx +\frac{1}{2} \int_0^1 \frac{1}{x}\log^4\left(\frac{1-x}{1+x}\right)dx=:I_1+I_2.
\end{eqnarray*}
}
 Now, due to $\sum\int=\int\sum$ (all terms are positive), and 4 times partial integration,
 \begin{eqnarray*}
 I_1&=&-\frac{3}{4}\int_0^1\frac{\log^4(1-x)}{x}dx\buildrel=_{}^{1-x=t}-\frac{3}{4}\int_0^1\frac{\log^4 t}{1-t}dt=
 -\frac{3}{4}\sum_{k=0}^\infty\int_0^1t^k\log^4 t\; dt\\
 &=&-\frac{3}{4} \sum_{k=0}^\infty \frac{4!}{(k+1)^5}=-18\, \zeta(5).
\end{eqnarray*}
 
 Moreover, by using the transformation $t=(1-x)/(1+x)$, resp. $x= (1-t)/(1+t)$ and $dx=-2/(1+t)^2 dt$,
 
 \begin{eqnarray*}
 I_2&=&\frac{1}{2} \int_0^1 \frac{1}{x}\log^4\left(\frac{1-x}{1+x}\right)dx=\frac{1}{2}\int_0^1\frac{1+t}{1-t}\log^4 t \;\frac{2}{(1+t)^2}dt\\
 &=&\int_0^1\frac{\log^4 t}{1-t^2} dt=\sum_{k=0}^\infty \int_0^1 t^{2k} \log^4 t \,dt=24 \sum_{k=0}^\infty \frac{1}{(2k+1)^5}.
\end{eqnarray*}

 Now 
 $$\zeta(5)=\sum_{n=1}^\infty \frac{1}{n^5}=\sum_{k=0}^\infty \frac{1}{(2k+1)^5}+\sum_{k=1}^\infty\frac{1}{(2k)^5}=
 \sum_{k=0}^\infty \frac{1}{(2k+1)^5}+\frac{\zeta(5)}{32}.
 $$
 Hence
 $$\sum_{k=0}^\infty \frac{1}{(2k+1)^5}=\frac{31}{32}\,\zeta(5).
 $$
 
 Consequently 
 $$I=I_1+I_2= -18 \zeta(5)+ 24 \frac{31}{32}\zeta(5)= \frac{21}{4}\zeta(5).$$
 \bigskip
 
 {\bf Remark}. The value  of the integral  $\dis \int_0^1 \frac{\log^2(1+x)\log^2(1-x)}{x} dx$ 
 is given in \cite[p. 5, p. 78-79]{val}. See also \cite{4785630}, unconvincing.

\newpage

\begin{figure}[h!]
 \scalebox{0.45} 
  {\includegraphics{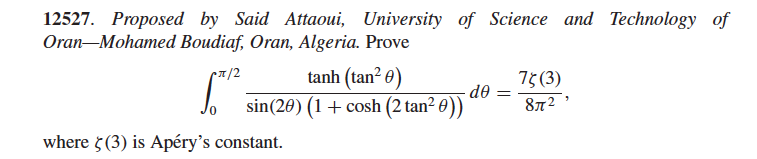}} 
  
 \end{figure}
 

\medskip

\centerline{{\bf Solution to problem 12527  in Amer. Math. Monthly 132 (2025), p. ?}}\medskip 

\centerline{Raymond Mortini  and  Rudof Rupp }

\bigskip

\centerline{- - - - - - - - - - - - - - - - - - - - - - - - - - - - - - - - - - - - - - - - - - - - - - - - - - - - - -}
  
  \medskip


  \begin{eqnarray*}
I:=\int_0^{\pi/2} \frac{\tanh(\tan^2\theta)}{\sin(2\theta)(1+\cosh(2\tan^2\theta))}\;d\theta&\buildrel=_{=2\sin x\cos x}^{\sin(2x)}&
\frac{1}{2}\;\int_0^{\pi/2} \frac{\tanh(\tan^2\theta)}{\tan\theta (1+\cosh(2\tan^2\theta))}\frac{d\theta}{\cos^2\theta}\\
\buildrel=_{}^{x=\tan \theta} \frac{1}{2}\;\int_0^\infty\frac{\tanh(x^2)}{x(1+\cosh(2x^2)} dx&=&
\frac{1}{4}\int_0^\infty \frac{\tanh u}{u(1+\cosh(2u))}du\\
=\frac{1}{8}\int_0^\infty \frac{\tanh u}{u\cosh^2 u}du&=& \frac{1}{8}\int_0^\infty \frac{\sinh u}{u\cosh^3 u}du.
 \end{eqnarray*} 
  The integrand being even, we obtain
  $$I=\frac{1}{16} \int_{-\infty}^\infty \frac{\sinh u}{u\cosh^3 u}du.$$
  Now we apply the residue theorem to the meromorphic function 
  $$f(z)=\dis \frac{1}{16}\; \frac{\frac{\sinh z}{z}}{\cosh^3 z},$$
   whose poles in the upper half plane are $z_n:=i(2n+1)\pi/2$ for $n\in \N$, the order being $3$. Note that $0$ is an artificial/removable singularity. 
  The usual softwares yield 
  $$w_n:={\rm Res}\; (f, z_n)= -\frac{i}{2}\;\frac{1}{(2n+1)^3 \pi^3}.$$
  (For a calculation, see the Addendum  below).
  Hence (using the Remark  1 below)
  $$I=2\pi i\sum_{n=0}^\infty w_n=\frac{1}{\pi^2}\sum_{n=0}^\infty \frac{1}{(2n+1)^3} =\frac{7\zeta(3)}{8\pi^2}
  \sim 0.1065695997\cdots.$$

  {\bf Remark 1} It is easy to see that the integrals $\int_{\Gamma_n} f(z) dz$ over the boundary $\Gamma_n$ of the rectangles 
  $[-n,n]\times [0, 2\pi n]$ tend to $0$ with $n\to\infty$, by using the definition of the hyperbolic functions with the exponential functions.
\\
  
 {\bf Remark 2} Using the substitution $u=\tanh x$, we obtain
 $$ \frac{7}{\pi^2}\zeta(3)=L:=\int_0^\infty \frac{\sinh u}{u\cosh^3 u}du=\int_0^\infty \frac{\tanh u}{u\cosh^2 u}du= \int_0^1 \frac{x}{\artanh x} dx \sim 0.852556797\cdots$$
 The integral $L$ also appears in the solution to Problem 12332 in {\it Amer. Math. Monthly} {\bf 131}
(2024) 270--271 and equals $ \int_0^\infty \frac{\tanh^2 x}{2x^2} dx$, as has been communicated to us by Russ Gordon.\\

{\bf Addendum} 
We calculate the Laurent series at $z_n$: since $\cosh z_n=0$ and $\sinh z_n=i(-1)^n$, we obtain
{\footnotesize \begin{eqnarray*}
16 f(z)&=& \frac{\sinh (z-z_n+z_n)}{(z-z_n+z_n)\cosh^3(z-z_n+z_n)}=\frac{\sinh (z-z_n) \cosh z_n+ \cosh (z-z_n)\sinh z_n}
{ (z-z_n+z_n)  \Big(\cosh(z-z_n) \cosh z_n + \sinh (z-z_n)\sinh z_n\Big)^3}\\
&=&-\frac{\cosh(z-z_n)}{(z-z_n+z_n)\; \sinh^3 (z-z_n)}=-\frac{1}{z-z_n+z_n}\;\frac{\coth(z-z_n)}{\sinh^2(z-z_n)}
=\frac{1}{2 (z-z_n+z_n)} \frac{d}{dz} \coth^2(z-z_n)\\
&=&\frac{1}{2 (z-z_n+z_n)} \frac{d}{dz}\Big( \frac{1}{(z-z_n)^2}+\frac{2}{3}+\frac{1}{15}(z-z_n)^2 -\cdots\Big)\\
&=& \frac{1}{2z_n}\frac{1}{1+ \frac{z-z_n}{z_n}}\Big( -\frac{2}{(z-z_n)^3} +\frac{2}{15}(z-z_n)-\cdots\Big)\\
&\buildrel=_{0<|z-z_n|<|z_n|}^{}&\frac{1}{z_n}\left( 1-\frac{z-z_n}{z_n}+ \frac{(z-z_n)^2}{z_n^2} -\frac{(z-z_n)^3}{z_n^3}+\cdots\right)
\; \left(-\frac{1}{(z-z_n)^3} +\frac{1}{15}(z-z_n)\cdots\right)\\
&=&c_{-3}\frac{1}{(z-z_n)^3}+c_{-2}\frac{1}{(z-z_n)^2}-\frac{1}{z_n^3}\frac{1}{z-z_n}+c_0+\cdots
\end{eqnarray*}
}
Hence 
$${\rm Res}\; (f, z_n)= - \frac{1}{16} \frac{1}{z_n^3}=\frac{1}{2i}\;\frac{1}{(2n+1)^3 \pi^3}.$$

\newpage

\bigskip

\begin{figure}[h!]
 \scalebox{0.45} 
  {\includegraphics{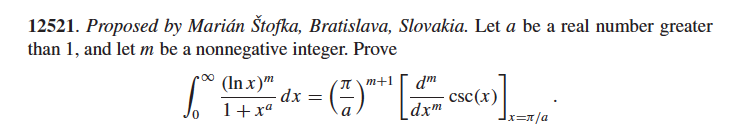}} 
  
 \end{figure}
 

\medskip

\centerline{{\bf Solution to problem 12521  in Amer. Math. Monthly 132 (2025), p. ?}}\medskip 

\centerline{Raymond Mortini  and  Rudof Rupp }

\bigskip

\centerline{- - - - - - - - - - - - - - - - - - - - - - - - - - - - - - - - - - - - - - - - - - - - - - - - - - - - - -}
  
  \medskip


 Fix $a>1$. Consider for $-1< s<a-1$ the function 
  $$F(s):=\int_0^\infty \frac{x^s}{1+x^a}\;dx.$$
    Since the  integral
  $I(s):=\int_0^\infty \frac{ x^s \log s}{1+x^a} dx$ is uniformly convergent with respect to the parameter $s$ \footnote{ This means that
  $\forall\e>0\;\exists  c>0\; \forall    s\in S\; \forall u,v\in [c,\infty[: \left|\int_u^v f_s(x) dx\right|<\e$.},
  we have $d/ds \int =\int d/ds$,  and so $I(s)=F'(s)$. The same reasoning applies to the higher order derivatives. Thus
  $$F^{(m)}(s)=\int_0^\infty \frac{x^s(\log x)^m}{1+x^a}\;dx.$$
  The value we are looking for therefore coincides with $F^{(m)}(0)$. We are going to show that
  $$F(s)=\frac{\pi}{a} \csc \left( \frac{\pi}{a} (s+1)\right),$$
  a function well defined on $]-1,a-1[$, that is a  neighborhood of $s=0$. To this end we use the Euler-Mittag-Leffler representation
  \begin{eqnarray}\label{sini}
\frac{\pi}{\sin (\pi z)}&=& \frac{1}{z}+\sum_{n=1}^\infty (-1)^n\frac{2z}{z^2-n^2}= 
 \frac{1}{z}+ \sum_{n=1}^\infty  (-1)^n\left(\frac{1}{z+n}+\frac{1}{z-n}\right)\\
 &=& \lim_{N\to\infty} \sum_{|n|\leq N} \frac{(-1)^n}{z-n} =\sum_{n=0}^\infty (-1)^n \left(\frac{1}{n+z}+\frac{1}{n+1-z}\right),
  \end{eqnarray}
where the latter identity follows through an index change in the previous finite sum representation
 \footnote{ $\dis\sum_{|n|\leq N} \frac{(-1)^n}{z-n}\buildrel=_{m=-n}^{k=n-1}\sum_{m=0}^N\frac{(-1)^m}{z+m} +
 \sum_{k=1}^{N-1}  \frac{(-1)^{k-1}}{z-(k+1)}+\frac{1}{1-z}$.}.
 Here is now the calculation:
\begin{eqnarray*}
F(s)&=&\int_0^1  \frac{x^s}{1+x^a}\;dx +\int_1^\infty  \frac{x^s}{1+x^a}\;dx\\
&\buildrel=_{x=1/u}^{}&\int_0^1  \frac{x^s}{1+x^a}\;dx + \int_0^1\frac{1}{u^{s+2}} \frac{du}{1+u^{-a}}= \int_0^1  \frac{x^s}{1+x^a}\;dx+
\int_0^1 \frac{u^{a-s-2}}{1+u^a}\;du\\
&\buildrel=_{x^a=t}^{}&\int_0^1 \frac{t^{s/a}+t^{(a-s-2)/a}}{1+t}\;\frac{1}{a}\;t^{-1+1/a}\; dt=
\frac{1}{a}\;\int_0^1\frac{t^{-1+(s+1)/a}+t^{-(s+1)/a}}{1+t}\;dt.
\end{eqnarray*}
Since (via geometric series) 
$$\int_0^1 \frac{t^b}{1+t}dt= \sum_{n=0}^\infty \frac{(-1)^n}{b+n+1},$$
we obtain (with $z=(s+1)/a$ in (\ref{sini}))
\begin{eqnarray*}
F(s)&=&\frac{1}{a} \sum_{n=0}^\infty(-1)^n\left(\frac{1}{n+\frac{s+1}{a}}+\frac{1}{n-\frac{s+1}{a}+1}\right)\\
&=& \frac{\pi}{a} \frac{1}{\sin\left(\frac{\pi}{a}(s+1)\right)}.
\end{eqnarray*}
  Hence
 $$ F^{(m)}(0)= \left(\frac{\pi}{a}\right)^{m+1} \frac{d^m}{(d s)^m}\csc \left(\frac{\pi}{a}(s+1)\right)\Big|_{s=0}=
  \left(\frac{\pi}{a}\right)^{m+1} \frac{d^m}{(d s)^m}\csc (x)\Big|_{x=\pi/a}.$$
  
  \vspace{1cm}
  {\bf Addendum}
  Using the Fa\`a di Bruno formula, or its variant in \cite{moru} or \cite{mo2013}, one may obtain an explicit representation of the value for  $\csc^{(n)}(\pi/a)$: in fact, take $f(x)=1/x$ and $g(x)=\sin x$ and pull in
  
  $$(f\circ g)^{(n)}(z)=
\sum_{j=1}^n f^{(j)}(g(z))\biggl(\sum_{{\mathbf k\in{\N}^j\atop |\mathbf k|=n}}C_\mathbf k^n\, g^{(\mathbf k)}(z)\biggr),\eqno ({\rm Mo}_n)$$

where $\mathbf k=(k_1,k_2,\dots,k_j)\in\N^j$ is an ordered multi-index with $k_1\geq k_2\leq\cdots\geq k_j\geq 1$,
$|\mathbf k|=\sum_{i=1}^j k_i,  g^{(\mathbf k)}=g^{(k_1)}g^{(k_2)}\dots g^{(k_j)}$ and
$\dis C_\mathbf k^n=\frac{1}{~\prod_i[A_\mathbf k(i)!]~}{n\choose k}$. Here $A_\mathbf k(i)$ denotes the cardinal of how often
$i$ appears within the ordered index $\mathbf k$ and ${n\choose \mathbf k}=\frac{n!}{ k_1!k_2!\dots k_j!}$.

This yields an expression of the form $\dis \sum_{j=1}^{n+1} C_j\frac{(\cos x)^{N_j}}{(\sin x)^j}$.

\newpage

\begin{figure}[h!]
 \scalebox{0.45} 
  {\includegraphics{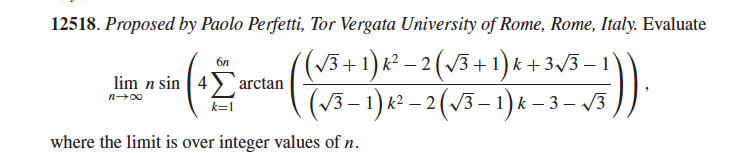}} 
  
 \end{figure}
 

\medskip

\centerline{{\bf Solution to problem 12518  in Amer. Math. Monthly 132 (2025), p. ?}}\medskip 

\centerline{Raymond Mortini  and  Rudof Rupp }

\bigskip

\centerline{- - - - - - - - - - - - - - - - - - - - - - - - - - - - - - - - - - - - - - - - - - - - - - - - - - - - - -}
  
  \medskip


We show that the limit is $4/3$. 
Write

$$u_k:=\frac{k^2-2k +\frac{3\sqrt 3-1}{\sqrt 3+1}}{k^2-2k- \frac{3+\sqrt 3}{\sqrt 3-1}}\; \frac{\sqrt 3+1}{\sqrt 3-1}
=:\frac{r_k}{s_k}v$$

with $$v=\frac{\sqrt 3+1}{\sqrt 3-1} =\tan \left(\frac{5\pi}{12}\right)\sim 3.7320508\cdots$$

Then 
$$u_k=\frac{v(k-1)^2+2}{(k-1)^2-2v}= \left(\frac{\frac{(k-1)^2}{2}-v}{1+v \frac{(k-1)^2}{2}}\right)^{-1}.$$
Note that $u_k>0$ if and only if $k\geq 4$. 
Due to the formula 
$$\mbox{$\dis\arctan x+\arctan y= \arctan \left(\frac{x+y}{1-xy}\right)$  whenever $xy<1$},$$
we have
\begin{eqnarray*}
\arctan (1/u_k)&=& \arctan \frac{(k-1)^2}{2}-\arctan v=\frac{\pi}{2}-\arctan\left(\frac{2}{(k-1)^2}\right)-\arctan v\\
&=&\frac{\pi}{2}-\arctan k+\arctan (k-2) -\arctan v.
\end{eqnarray*}
Thus, for $k\geq 4$,
$$\arctan u_k=\arctan k-\arctan(k-2)+\arctan v,$$
and for $1\leq k\leq 3$,
$$\arctan u_k=-\frac{\pi}{2}-\arctan (1/u_k)=-\pi +\arctan k-\arctan(k-2)+\arctan v.
$$
Hence
\begin{eqnarray*}
4\sum_{k=1}^{6n}  \arctan u_k&=&4\sum_{k=1}^{6n}( \arctan u_k -\arctan v) +24 n \arctan v\\
&=&4 (-3\pi)+4\sum_{k=1}^{6n} (\arctan k-\arctan (k-2)) +10\,\pi n\\
&=&4\sum_{k=1}^{6n}\big(\arctan k-\arctan (k-1)\big) +\big(\arctan (k-1)-\arctan(k-2)\big)+2\pi(5n-6)\\
&=& 4 \big(\arctan (6n) +\arctan(6n-1)\big)+\pi + 2\pi(5n-6).
\end{eqnarray*}
Using that $\arctan x+\arctan (1/x)=\pi/2$ for  $x\geq 0$, we obtain
\begin{eqnarray*}
4\sum_{k=1}^{6n}  \arctan u_k&=&4 \big(\pi -\arctan\frac{1}{6n} -\arctan\frac{1}{6n-1}\big)+\pi + 2\pi(5n-6).
\end{eqnarray*}
Therefore, as $\sin(x+j\pi)=-\sin x$ for each odd $j\in \Z$, and $\arctan x \sim x$ and $\sin x\sim x$ for $x\to 0$,
\begin{eqnarray*}
n\sin \left(4\sum_{k=1}^{6n}  \arctan u_k\right)&=& n \sin\left(4\arctan\frac{1}{6n} +4\arctan\frac{1}{6n-1}\right)
\sim 4/6+4/6=4/3.
\end{eqnarray*}

\newpage


\begin{figure}[h!]
 \scalebox{0.45} 
  {\includegraphics{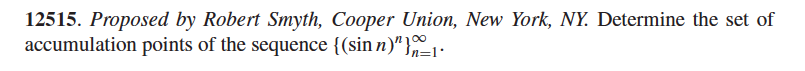}} 
  
 \end{figure}

\medskip

\centerline{{\bf Solution to problem 12515  in Amer. Math. Monthly 132 (2025), p. 181}}\medskip 

\centerline{Raymond Mortini  and  Myriam Ounaies }

\bigskip

 \centerline{- - - - - - - - - - - - - - - - - - - - - - - - - - - - - - - - - - - - - - - - - - - - - - - - - - - - - -}
  
  \medskip
  

  We show the following:\\
  
  \centerline{\ovalbox{The set of cluster points of $(\sin n)^n$ equals $[-1,1]$.} }\bigskip
  
For the proof, we shall  use the following result,  a weaker version apparently  going back due to Kronecker, 
and which was later strengthened e.g. by Khintchine \cite{kh}, Cassels \cite{ca}  and Descombes \cite{des}. See also \cite{raym}.

\begin{theorem}\label{kron2}
For every real number  $\eta$ and every  irrational number $\xi$ there are infinitely many pairs $(p,n)$ of integers with $n>0$ such that
\begin{equation}\label{kro}
  n \xi-p-\eta=\Oh(1/n).
\end{equation}
\end{theorem}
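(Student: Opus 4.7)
The approach is to deploy the Three Distance Theorem of S\'os--Sur\'anyi--\'Swierczkowski directly on the orbit $\{j\xi\}_{j=0}^{N}$ in order to produce the required approximants.

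Fix $N\ge 1$. The Three Distance Theorem asserts that the $N+1$ fractional parts $\{j\xi\}$ for $j=0,1,\ldots,N$ partition the circle $\R/\Z$ into $N+1$ arcs whose lengths take at most three distinct values, all bounded by an absolute constant times $1/N$ (indeed, bounded by $|\alpha_{k-1}|+|\alpha_k|$ when $q_k\le N<q_{k+1}$, where $\alpha_k:=q_k\xi-p_k$ comes from the continued fraction convergents of $\xi$). In particular, for every $\eta\in\R$, the point $\{\eta\}$ lies in some such arc, yielding integers $n_N\in\{0,1,\ldots,N\}$ and $p_N$ with
\[
|n_N\xi-p_N-\eta|\le C/N
\]
for a universal constant $C$. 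Since $n_N\le N$, this also reads $|n_N\xi-p_N-\eta|\le C/n_N$ (assuming $n_N\ge 1$), which is the desired $\Oh(1/n)$ estimate.

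To extract infinitely many distinct pairs, one lets $N\to\infty$. If $(n_N)$ is unbounded we pass to a subsequence with infinitely many distinct $n_N$ and we are done. Otherwise some positive integer $n^*$ appears infinitely often, and the fact that the error tends to $0$ forces $n^*\xi-p^*-\eta=0$ for some integer $p^*$, i.e., $\eta=n^*\xi-p^*$. In this degenerate case one falls back on Dirichlet's theorem for $\xi$ alone: for any pair $(p_j,q_j)$ with $|q_j\xi-p_j|<1/q_j$ (e.g.\ the convergents), the shifted pair $(p_j+p^*,\,q_j+n^*)$ satisfies
\[
(q_j+n^*)\xi-(p_j+p^*)-\eta=q_j\xi-p_j,
\]
whose absolute value is $<1/q_j=\Oh(1/(q_j+n^*))$ for $j$ large, furnishing infinitely many valid pairs.

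The principal obstacle is the clean invocation of the Three Distance Theorem with a uniform constant $C$ controlling the arc length. A fully self-contained alternative is Ostrowski's mixed-radix representation $\eta=\sum_k c_k\alpha_k$, which directly yields integers $n_K\le q_K$ and $p_K$ with $|n_K\xi-p_K-\eta|<|\alpha_K|<1/q_{K+1}$, making the $\Oh(1/n)$-bound transparent without any appeal to the three-distance machinery. Handling of the exceptional case $\eta\in\Z\xi+\Z$ by the shifted-Dirichlet trick above is standard and short.
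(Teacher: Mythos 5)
The paper does not actually prove Theorem~\ref{kron2}; it is quoted from the literature (Khintchine, Cassels, Descombes), so your argument has to stand on its own. The strategy you choose --- the three-distance theorem, or equivalently the Ostrowski expansion of $\eta$ in the base $\alpha_k=q_k\xi-p_k$ --- is indeed the standard route to this inhomogeneous approximation result, and your treatment of the degenerate case $\eta\in\Z\xi+\Z$ via shifted convergents is correct.

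However, the load-bearing estimate is false as stated. For $q_k\le N<q_{k+1}$ the three arc lengths are indeed bounded by $|\alpha_{k-1}|+|\alpha_k|$, but that quantity is of order $1/q_k$, \emph{not} $\Oh(1/N)$: when the partial quotient $a_{k+1}$ is large, $N$ ranges up to $q_{k+1}-1\approx a_{k+1}q_k$, and for $N$ in the middle of $[q_k,q_{k+1})$ the longest arc has length comparable to $1/q_k\approx a_{k+1}/q_{k+1}$, which exceeds $1/N$ by a factor of order $a_{k+1}$. The assertion that all arcs have length at most $C/N$ uniformly in $N$ is equivalent to $\xi$ being badly approximable, so as written your main argument only covers that case. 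The repair is cheap but necessary: restrict to the special values $N=N_k:=q_k+q_{k+1}-1$, for which the $q_k+q_{k+1}$ points $\{j\xi\}$, $0\le j\le N_k$, cut the circle into exactly $q_{k+1}$ arcs of length $|\alpha_k|$ and $q_k$ arcs of length $|\alpha_{k+1}|$ (consistent with $q_{k+1}|\alpha_k|+q_k|\alpha_{k+1}|=1$); then every arc has length $<1/q_{k+1}<2/(N_k+1)$, and the endpoint of the arc containing $\{\eta\}$ with positive index $n\le N_k$ gives $|n\xi-p-\eta|<2/n$. Your Ostrowski fallback contains the same kind of slip: the tail $\sum_{k\ge K}c_{k+1}\alpha_k$ is only $\Oh(|\alpha_{K-1}|)=\Oh(1/q_K)$, not $<|\alpha_K|$ (take $c_{K+1}=a_{K+1}$); fortunately $n_K<q_K$, so the corrected bound still yields $\Oh(1/n_K)$ and that route does go through once the estimate is fixed.
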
 
We emphasize that the non-trivial sign condition on $n$ is very important for us.\\

In order to get acquainted with our method, we  first reprove Kronecker's result that every point in $[-1,1]$ is a 
 cluster point of $(\sin n)$ 
(for the standard proof, see \cite[p. 1878]{moru}.).\\

So let $\lambda\in [-1,1]$. Put $\xi=\frac{1}{2\pi}$, and $\eta=\frac{\arcsin \lambda}{2\pi}$.  
Then take these sequences $(n_k,p_k)$ such that
$$\frac{n_k}{2\pi}-p_k-\frac{\arcsin \lambda}{2\pi}= \Oh(1/n_k),$$
and let  $a_k:=n_k-2\pi p_k -\arcsin \lambda$. Then $a_k\to 0$ and so
\begin{eqnarray*}
\sin n_k&=&\sin(a_k+\arcsin \lambda)= \sin a_k \cos (\arcsin \lambda) +\lambda \cos a_k\\
&\to & 0+\lambda.
\end{eqnarray*}

   {\sl The Solution:}\\

Let $C$ be the set of cluster points of $(\sin n)^n$. Since $C$ is closed,  it is sufficient to prove that every number 
$\lambda\in\; ]-1,1[$, $\lambda\not=0$,  belongs to $C$. \footnote{ The case $\lambda=0$, though is very easy: by Kronecker's  result there exists a subsequence $(n_k)$ with $\sin n_k\to 0$. 
Hence $(\sin n_k)^{n_k}\to 0.$}. Towards  a proof of this, 
let $\sigma:=1$ if $0<\lambda<1$ and $\sigma:=-1$ if $-1<\lambda<0$.
 To prove that $\lambda\in C$, 
we are going  to show in  a moment that  there are sequences of {\it odd} integers $n_k$ and integers $p_k$ 
 tending to $\infty$ such that
\begin{equation}\label{mainsinu}
\sqrt{ n_k}\; \Big|n_k-2\pi p_k-\sigma\pi/2\Big| \to \sqrt{-2\;\log |\lambda|}.
 \end{equation}

First let us see why this gives the result. Put $a_k:=n_k-2\pi p_k - \sigma\pi/2 $.   Then $a_k\to 0$. Moreover,

$$\sin n_k=\sin(a_k+\sigma \pi/2)=\sigma \cos a_k\to \sigma.$$

 Hence
$$n_k(1-\sigma\sin n_k)= n_k(1-\cos a_k)= n_ka_k^2\cdot \frac{1-\cos a_k}{a_k^2}\to 
-2 \log|\lambda|  \cdot\frac{1}{2}$$

Since $\dis \lim_{x\to \sigma}\frac{\log (\sigma x)}{1-\sigma x}=-1$,

\begin{eqnarray*}
n_k\log(\sigma \sin n_k)&=&n_k(1-\sigma \sin n_k)\cdot \frac{\log(\sigma \sin n_k)}{1-\sigma\sin n_k}\\
&\to&- \log(|\lambda|) \cdot (-1) =\log(|\lambda|).
\end{eqnarray*}

Consequently, as $n_k$ is odd and $\sigma^2=1$,  
$$(\sin n_k)^{n_k} =\sigma(\sigma\sin n_k)^{n_k}  \to \lambda.$$

To prove (\ref{mainsinu}), we apply Theorem  \ref{kron2} with
 $\xi=1/\pi$ and $\eta=\sigma/4-1/(2\pi)$. There are
 infinitely many pairs $(P_k,N_k)$ of integers with $N_k>0$ such that
$$ \frac{N_k}{\pi} -P_k-\frac{\sigma}{4}+\frac{1}{2\pi}=\Oh(1/N_k).$$
That is, by multiplying with $2\pi$,
$$S_k:=(2N_k+1)-2\pi P_k-\frac{\sigma \pi}{2}=\Oh(1/N_k).$$

Define  $X_k=X(k,\lambda)$ by
\begin{equation}\label{equ22}
(4X_k+1)^{3/2}=\frac{\sqrt{-2\log |\lambda|}}{|S_k| \sqrt{2N_k+1}}.
\end{equation}
In particular $X_k\to \infty$. Now let
\begin{equation}\label{equ33}
d_k:=4\lfloor X_k \rfloor+1 \sim 4X_k+1,
\end{equation}
where the asymptotics follows from the fact that  $4X_k-3\leq d_k \leq 4X_k+1$. 
Using (\ref{equ22}) and (\ref{equ33}), we have

 $$
\big((2N_k+1)d_k\big)^{1/2}\left| d_k(2N_k+1) -2\pi d_kP_k-d_k\sigma\frac{\pi}{2}\right| =
d_k^{3/2} \sqrt{2N_k+1} \;|S_k|
\sim  \sqrt{-2\log|\lambda|}
$$
 \medskip
 
Next use that
$$d_k \frac{\pi}{2}= \pi\frac{4\lfloor X_k \rfloor +1}{2}= 2\pi \lfloor X_k \rfloor  + \frac{\pi}{2}.$$
\medskip

Hence, if we put
$n_k=d_k(2N_k+1)$ and $p_k=d_kP_k+\sigma \lfloor X_k \rfloor $,  we obtain (\ref{mainsinu}),
that is

 $$\sqrt {n_k} \big|(n_k-2\pi p_k - \sigma \pi/2\big|\to \sqrt{-2\;\log |\lambda|}.$$
 \medskip

  {\bf Remark}  
  This Problem had already appeared in the Monthly \cite{kr}. The displayed solution was rather technical.

  \newpage

\begin{figure}[h!]
 \scalebox{0.45} 
  {\includegraphics{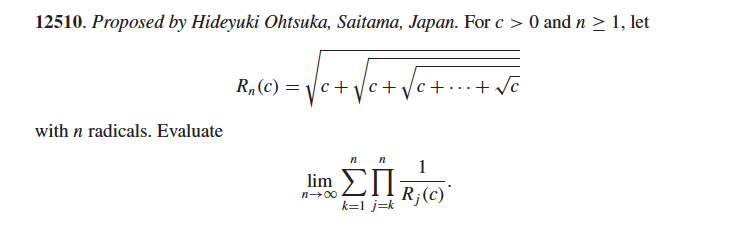}} 
  
 \end{figure}
 

\medskip

\centerline{{\bf Solution to problem 12510  in Amer. Math. Monthly 132 (2025), p. 180}}\medskip 

\centerline{Raymond Mortini  and  Rudolf Rupp }

\bigskip

\centerline{- - - - - - - - - - - - - - - - - - - - - - - - - - - - - - - - - - - - - - - - - - - - - - - - - - - - - -}
  
  \medskip


  Let $R_1(c)=\sqrt c$, $R_2(c)=\sqrt{c+\sqrt c}$, $R_3(c)=\sqrt{c+\sqrt{c+\sqrt c}}$, $\cdots$.
  In general
  $$R_{n+1}(c)=\sqrt{c+R_n(c)}.$$
  
  We prove  that 
  $$\ovalbox{$\dis f(c):=\lim_n \sum_{k=1}^n \prod_{j=k}^n R_j(c)^{-1}=\frac{ \frac{1}{2}+ \sqrt{\frac{1}{4}+c}}{c}.$}$$
  
  {\it Claim 1.} $R_n$ is increasing and $\sqrt c\leq R_n(c)\leq \frac{1}{2}+ \sqrt{\frac{1}{4}+c}$.
  
  The monotony is clear. Now 
  $$0\leq R_{n+1}^2(c)-R_n^2(c)=c+R_n(c)-R_n(c)^2.$$
Put $x:=R_n(c)$. Note that the parabola $c+x-x^2\geq 0$ if and only if
$$\frac{-1+\sqrt{1+4c}}{-2}\leq x\leq \frac{-1-\sqrt{1+4c}}{-2}.$$
Hence 
$$0<\sqrt c \leq R_n(c)\leq \frac{1}{2}+\sqrt{\frac{1}{4}+c}.$$
  
  {\it Claim 2.} $R:=\lim R_n$ exists and $R=\frac{1}{2}+\sqrt{\frac{1}{4}+c}>1$.
  
  In fact, the existence being clear, we must have $R^2=c+R$. Hence $R=\frac{1}{2}+\sqrt{\frac{1}{4}+c}>1$.
  
  {\it Claim 3.}  Let $f_n(c):=\sum_{k=1}^n \prod_{j=k}^n R_j(c)^{-1}$. Then $f(c)=\lim f_n(c)$ exists. This is clear since the product and sum of positive decreasing sequences are decreasing. Let us now calculate $f(c)$.
  
  Since
  $$\frac{ f_n(c)}{R_{n+1}(c)}=\sum_{k=1}^n \prod_{j=k}^{n+1} \frac{1}{R_j(c) }=
  \sum_{k=1}^{\co{n+1}} \prod_{j=k}^{n+1} \frac{1}{R_j(c) }-\frac{1}{R_{n+1}(c)}=f_{n+1}(c)-\frac{1}{R_{n+1}(c)},$$
  we may pass to the limit and obtain
  $$\frac{f(c)}{R}=f(c)-\frac{1}{R}.$$
  In other words, 
  $$f(c)=\frac{1}{R-1}=\frac{ \frac{1}{2}+ \sqrt{\frac{1}{4}+c}}{c}.
  $$

\newpage
\begin{figure}[h!]
 \scalebox{0.45} 
  {\includegraphics{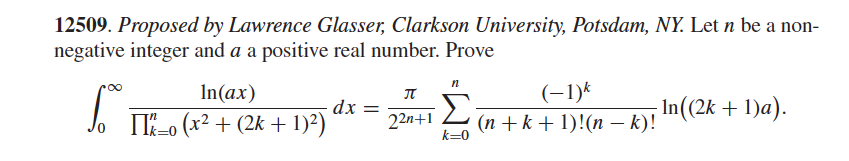}} 
  
 \end{figure}
 

\medskip

\centerline{{\bf Solution to problem 12509  in Amer. Math. Monthly 132 (2025), p. 90}}\medskip 

\centerline{Raymond Mortini  and  Rudolf Rupp }

\bigskip

\centerline{- - - - - - - - - - - - - - - - - - - - - - - - - - - - - - - - - - - - - - - - - - - - - - - - - - - - - -}
  
  \medskip


We first derive the partial fraction: 
  
 \begin{eqnarray}
\frac{1}{\prod_{k=0}^n \left(x^2+(2k+1)^2\right)}&=& \sum_{k=0}^n \frac{r_k^+}{x-(2k+1)i}+ \sum_{k=0}^n \frac{r_k^-}{x+(2k+1)i} \label{11}\\
&=&\sum_{k=0}^n \frac{(r_k^++r_k^-)\; x+ (2k+1)i(r_k^+-r_k^-)}{x^2+(2k+1)^2}.\label{12}
\end{eqnarray}

To calculate the residues $r_k^\pm$, write
$${\prod\limits_{k=0}^{n}{\left({{x}^{2}}+{{(2k+1)}^{2}}\right)}}=\left({x}^{2}+(2j+1)^{2}\right)\cdot
{\prod\limits_{k=0,k\ne j}^{n}{({{x}^{2}}+{{(2k+1)}^{2}})}}
 $$
 multiply equation (\ref{11}) by $x\pm i(2j+1)$, and let $x\to \pm i(2j+1)$ to obtain
 
 \begin{eqnarray*}
 r_{j}^{\pm }&=&\frac{1}{2(2j+1)(\pm i)}\cdot\frac{1}{\prod\limits_{k=0,k\ne j}^{n}{({{(2k+1)}^{2}}-{{(2j+1)}^{2}})}}\\
 &=&\frac{1}{2(2j+1)(\pm i)}\cdot\frac{1}{\prod\limits_{k=0,k\ne j}^{n}{(2k+2j+2)(2k-2j)}} \\
 & =&\mp i\frac{1}{{{2}^{2n+1}}}\frac{1}{\prod\limits_{k=0}^{n}{(k+j+1)\prod\limits_{k=0,k\ne j}^{n}{(k-j)}}}\\
 &=&\mp i\frac{1}{{{2}^{2n+1}}}\frac{1}{(n+j+1)\cdots (j+1)\cdot (-j)\cdots (-1)\cdot 1\cdots (n-j)} \\
&=& \mp i\frac{1}{{{2}^{2n+1}}}\frac{{{(-1)}^{j}}}{(n+j+1)!(n-j)!}.
\end{eqnarray*}

Hence $r_k^++r_k^-=0$ and 
$$r_k^+-r_k^-=-i \frac{1}{2^{2n}}\;\frac{(-1)^k}{(n+k+1)!(n-k)!}.$$

Next we use that
$$I:=\int_0^\infty \frac{\log u}{u^2+1}du=0,$$
since
\begin{eqnarray*}
I&=&\int_0^1 \frac{\log (u)}{u^2+1}du+\int_1^\infty\frac{\log u}{u^2+1}du\\
&\buildrel=_{u=1/x}^{}&\int_0^1 \frac{\log u}{u^2+1}du - \int_0^1\frac{ \log  x}{1+x^2}dx. 
\end{eqnarray*}

Thus

\begin{eqnarray*}
\int\limits_{0}^{\infty }{\frac{\log (ax)}{{{x}^{2}}+{{(2k+1)}^{2}}}}dx&\overset{x=(2k+1)u}=&\,\frac{1}{2k+1}\int\limits_{0}^{\infty }{\frac{\log (a(2k+1))+\log (u)}{{{u}^{2}}+1}}du\\
 &=&\frac{\log (a(2k+1))}{2k+1}\frac{\pi }{2}+0=\frac{\log (a(2k+1))}{2k+1}\frac{\pi }{2}.
\end{eqnarray*}

Hence, by (\ref{12}),
\begin{eqnarray*}
 \int\limits_{0}^{\infty }{\frac{\log (ax)}{\prod\limits_{k=0}^{n}{({{x}^{2}}+{{(2k+1)}^{2}})}}dx}&=&
 \frac{1}{{{2}^{2n}}}\sum\limits_{k=0}^{n}{\frac{{{(-1)}^{k}}(2k+1)}{(n+k+1)!(n-k)!}\int\limits_{0}^{\infty }{\frac{\log (ax)}{{{x}^{2}}+{{(2k+1)}^{2}}}dx}}\\
 &=&\frac{\pi }{{{2}^{2n+1}}}\sum\limits_{k=0}^{n}{{{(-1)}^{k}}\frac{\log (a(2k+1))}{(n+k+1)!(n-k)!}}.
\end{eqnarray*}

\newpage

\begin{figure}[h!]
 \scalebox{0.45} 
  {\includegraphics{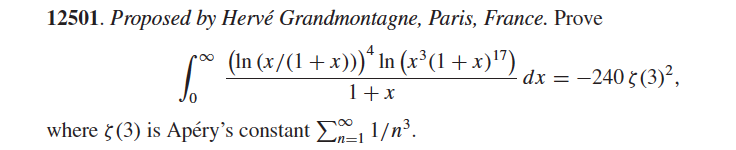}} 
  
 \end{figure}
 

\medskip

\centerline{{\bf Solution to problem 12501  in Amer. Math. Monthly 131 (2024), p. 906}}\medskip 

\centerline{Raymond Mortini  and  Rudolf Rupp }

\bigskip

\centerline{- - - - - - - - - - - - - - - - - - - - - - - - - - - - - - - - - - - - - - - - - - - - - - - - - - - - - -}
  
  \medskip


 Substituting $x/(1+x)=:t$, equivalently $x=t/(1-t)$ with $t\in ]0,1[$ yields
  
 \begin{eqnarray*}
I:= \int_0^\infty \left(\log\frac{x}{1+x}\right)^4 \log\left(x^3(1+x)^{17}\right)\;dx&=&\int_0^1 \frac{\log^4 t
 \left( 3\log\left(\frac{t}{1-t}\right)+17 \log\left(\frac{1}{1-t}\right)\right)}{\frac{1}{1-t}} \; \frac{dt}{(1-t)^2}\\
 &=&\int_0^1 \frac{\log^4 t\Big(3\log t-20\log(1-t)\Big)}{1-t}\;dt\\
 &=& 3\;\int_0^1 \frac{\log^5 t}{1-t}\;dt-20\;\int_0^1 \frac{\log^4 t \;\log(1-t)}{1-t}\;dt\\
 &=:&3I_1-20 I_2.
\end{eqnarray*} 
  
  Now, as the integrands have constant sign, $\int\sum=\sum\int$, and so, using 5-times integration by parts, 
  \begin{eqnarray*}
  I_1&=& \int_0^1\sum_{n=0}^\infty t^n \log^5 t\; dt=\sum_{n=0}^\infty \int_0^1 t^n\,\log^5 t\; dt\\
  &=&-\sum_{n=0}^\infty \frac{5!}{(n+1)^6}= -120 \zeta(6).
\end{eqnarray*}
Let $H_k=1+\frac{1}{2}+\frac{1}{3}+\cdots+\frac{1}{k}$ denote the $k$-th harmonic number.  Using that for $0\leq x<1$, 
$$ \frac{\log(1-x)}{1-x}=-\sum_{k=1}^\infty H_k\,x^k$$ (Cauchy-product), 
we obtain

\begin{eqnarray*}
I_2&=&-\int_0^1 \log^4 t \sum_{k=1}^\infty H_k t^k \;dt=-\sum_{k=1}^\infty \int_0^1 t^k \log^4 t  \;dt\\
&=&-4!\sum_{k=1}^\infty \frac{H_k}{(k+1)^5}.
\end{eqnarray*}
  
 Due to Euler's  formula (see \cite[p. 416]{ba}, \cite{har}), 
 $$
 2\sum_{k=1}^\infty \frac{H_k}{(k+1)^n}= n \zeta(n+1)-\sum_{k=1}^{n-2} \zeta(n-k)\zeta(k+1),\quad (n\geq 2),
 $$
  we conclude that
  \begin{eqnarray*}
  \sum_{k=1}^\infty \frac{H_k}{(k+1)^5}&=&\frac{1}{2}\left( 5\,\zeta(6)- \zeta(4)\zeta(2)-\zeta(3)^2-\zeta(2)\zeta(4)\right)\\
  &=& \frac{5}{2}\zeta(6) -\zeta(4)\zeta(2)-\frac{1}{2}\zeta(3)^2.
\end{eqnarray*}
  Since $\zeta(6)= \frac{\pi^6}{945}$ and $\zeta(4)\zeta(2)=\frac{\pi^4}{90}\; \frac{\pi^2}{6}=\frac{\pi^6}{540}=\frac{945}{540}\zeta(6)
  =\frac{7}{4}\zeta(6)$, we deduce that
  
   \begin{eqnarray*}
   I=3I_1-20I_2&=& -360 \zeta(6)+20  \cdot 24 \Big( \frac{5}{2}\zeta(6) -\zeta(4)\zeta(2)-\frac{1}{2}\zeta(3)^2\Big)\\
   &=&\zeta(6)\Big(-360+1200-840\Big)-240 \zeta(3)^2\\
   &=&-240 \zeta(3)^2.
\end{eqnarray*}

\newpage

\begin{figure}[h!]
 \scalebox{0.45} 
  {\includegraphics{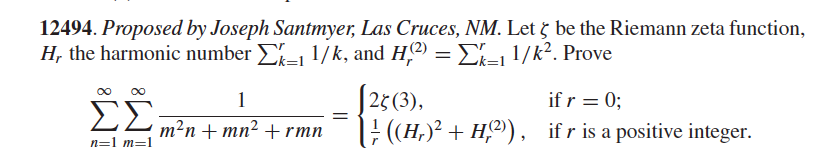}} 
  
 \end{figure}
 

\centerline{{\bf Solution to problem 12494  in Amer. Math. Monthly 131 (2024), p. 815}}\medskip 

\centerline{Raymond Mortini  and  Rudolf Rupp }

\bigskip

\centerline{- - - - - - - - - - - - - - - - - - - - - - - - - - - - - - - - - - - - - - - - - - - - - - - - - - - - - -}
  
  \medskip
  
  Let  $$S:=S(r):=\sum_{n=1}^\infty\sum_{m=1}^\infty \frac{1}{m^2n+mn^2+rmn}.$$\\
  
  We additionally show that for $r\in \N=\{0, 1,2,3,\dots\}$
  $$S(r)=\int_0^{1} x^{r-1} \log^2 (1-x)dx=\begin{cases} 2 \zeta(3)&\text{if $r=0$}\\
 \dis \sum_{j=0}^{r-1} \binom{r-1}{j} (-1)^j \frac{2}{(j+1)^3} &\text{if $r\not=0$}.
  \end{cases}
  $$
 If, more generally,  $r\geq 0$ is a real number, then  this  formula also holds in the form
   $$S(r)=\sum_{j=0}^{\infty}  \binom{r-1}{j} (-1)^j  \frac{2}{(j+1)^3}.$$
   
 \subsection{The calculations}
  
 Let $m,n\in\N^*:=\N\setminus \{0\}$  and $r\in \R, r\geq0$. Then
  
  \begin{eqnarray*}
  \frac{1}{m^2n+mn^2+rmn}&=& \frac{1}{mn}\;\frac{1}{m+n+r}=\frac{1}{m(m+r)} \left(\frac{1}{n}-\frac{1}{n+(m+r)}\right)\\
  &=&\frac{1}{m(m+r)}\;\int_0^1 (x^{n-1}-x^{n+m+r-1})dx\\
  &=&\frac{1}{m(m+r)}\;\int_0^1 x^{n-1}(1-x^{m+r}) dx.
\end{eqnarray*}
  
  Since all terms considered are positive, $\int\sum=\sum\int$. Hence, by partial integration,
  
  \begin{eqnarray}\label{harm-num}
\sum_{n=1}^\infty \frac{1}{m^2n+mn^2+rmn} &=& \frac{1}{m(m+r)}\;\int_0^1 (1-x^{m+r}) \sum_{n=1}^\infty x^{n-1}dx\\\nonumber
&=&  \frac{1}{m(m+r)}\;\int_0^1 \frac{1-x^{m+r}}{1-x} dx\\ \nonumber
&=&-\frac{1}{m} \int_0^1 x^{m+r-1}\log(1-x) dx\nonumber
\end{eqnarray}

Consequently, if $r\geq 0$, $r\in \R$,

\begin{eqnarray}\label{zwisch}
S(r)&=&-\sum_{m=1}^\infty \frac{1}{m}\int_0^1 x^{m+r-1}\log(1-x)dx\nonumber\\
&=&-\int_0^1 x^{r-1} \left(\sum_{m=1}^\infty\frac{x^m}{m}\right)\log(1-x)dx=\int_0^1  x^{r-1}\log^2(1-x) dx\\
&=&\int_0^1 (1-x)^{r-1}\log^2 x\;dx.
\end{eqnarray}

Hence, if $r\in \N^*$,
\begin{eqnarray*}
S(r)&=&\sum_{j=0}^{r-1}  \binom{r-1}{j} (-1)^j  \int_0^1 x^j \log^2 x\;dx\\
&=& \sum_{j=0}^{r-1}  \binom{r-1}{j} (-1)^j  \frac{2}{(j+1)^3}.
\end{eqnarray*}

 where the latter identity is shown using twice partial integration.
 \\
 
 If $r=0$, we have the convergent integral (two singularities), 
 
 \begin{eqnarray*}
 S&=&\int_0^1 (1-x)^{-1}\log^2 x\;dx=\int_0^1 \sum_{j=0}^\infty x^j \log^2 x\;dx= \sum_{j=0}^\infty \int_0^1 x^j \log^2 x\;dx\\
 &=&  \sum_{j=0}^\infty \frac{2}{(j+1)^3}=2 \zeta(3).
\end{eqnarray*}
 
 If $r\in\R$ and $r>0$, 
 \begin{eqnarray*}
S(r)&=&\sum_{j=0}^{\infty}  \binom{r-1}{j} (-1)^j  \int_0^1 x^j \log^2 x\;dx\\
&=& \sum_{j=0}^{\infty}  \binom{r-1}{j} (-1)^j  \frac{2}{(j+1)^3}.
\end{eqnarray*}

 Next, if $r\in \N^*$,  we derive the desired equality  from \ref{zwisch}. We do this using the following trick:  consider the function
 $$f(a):=(1-x)^{a-1}, a>0.$$
 Then
 $$\frac{d^2}{da^2}(1-x)^{a-1}=\frac{d}{da}(1-x)^{a-1}\log(1-x)=(1-x)^{a-1}\log^2(1-x).$$
 In particular, 
 $$\log^2(1-x)=\frac{d^2}{da^2}(1-x)^{a-1}\Big|_{a=1}.$$
  
  For $a>0$, let
  $$G(a):=\int_0^1  x^{r-1} \frac{d^2}{da^2}(1-x)^{a-1}dx.$$

The continuity of  $G$ on $]0,\infty[$ and $(x,a)\mapsto  x^{r-1} \frac{d^2}{da^2}(1-x)^{a-1}$ on $]0,1[\times ]0,\infty[$  yields that 
 $$\lim_a\int=\int\lim_a.$$
  Hence

 \begin{eqnarray*}
 G(1)&=& \lim_{a\to 1}  \int_0^1  x^{r-1} \frac{d^2}{da^2}(1-x)^{a-1}dx=   \int_0^1  x^{r-1} \lim_{a\to 1}\frac{d^2}{da^2}(1-x)^{a-1}dx\\
 &=& \int_0^1  x^{r-1} \log^2(1-x) dx.
 \end{eqnarray*}

  Now, in view of the definition of the Eulerian beta-function, $B(r,a)$,  and the fact that $ \frac{d^2}{da^2}\int= \int\frac{d^2}{da^2}$
 
 \begin{eqnarray*}
G(a)&= & \frac{d^2}{da^2} \int_0^1  x^{r-1}(1-x)^{a-1}dx\\
 &=&\frac{d^2}{da^2} B(r,a)= \frac{d^2}{da^2} \frac{\Gamma(r)\Gamma(a)}{\Gamma(a+r)}\\
 &=&\frac{d^2}{da^2} \left(\frac{(r-1)!}{a(a+1)\cdots(a+r-1)}\right).
\end{eqnarray*}
For $a>0$, put
  $$L(a):= \log \frac{1}{ \prod_{j=0}^{r-1}(a+j)} =-\sum_{j=0}^{r-1} \log (a+j).$$
 
  Then 
  $$F(a):=\frac{B(r,a)}{(r-1)!}= e^{L(a)}.$$
  Hence
\begin{eqnarray*}
G(1)=(r-1)!F''(1)=(r-1)!F(1)\big((L'(1))^2+L''(1)\big)=\frac{(r-1)!}{r!} \big((H_r)^2+H_r^{(2)}\big).
\end{eqnarray*}
  
  \vspace{1cm}
  
{\bf Remark}.~~
Using that for $r\in \N^*$
\begin{eqnarray*}
 \frac{1}{m(m+r)}\;\int_0^1 \frac{1-x^{m+r}}{1-x} dx
&=& \frac{1}{m(m+r)}\; \int_0^1\sum_{k=0}^{m+r-1} x^k dx\\
&=&  \frac{H_{m+r}}{m(m+r)},
\end{eqnarray*}
  
  we also have that,  in view of (\ref{harm-num}),  $\dis S(r)=\sum_{m=1}^\infty \frac{H_{m+r}}{m(m+r)}$.

\newpage

\begin{figure}[h!]
 \scalebox{0.45} 
  {\includegraphics{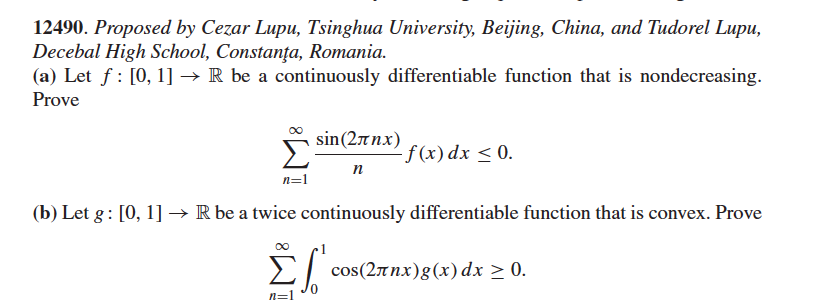}} 
  
 \end{figure}

\centerline{{\bf Solution to problem 12490  in Amer. Math. Monthly 131 (2024), p. 814}}\medskip 

\centerline{Raymond Mortini  and  Rudolf Rupp }

\bigskip

\centerline{- - - - - - - - - - - - - - - - - - - - - - - - - - - - - - - - - - - - - - - - - - - - - - - - - - - - - -}
  
  \medskip


 a1) We prove that $\sum\int$ is well defined and that
  $$\sum_{n=1}^\infty \int_0^1 \frac{\sin(2\pi nx)}{n}\,f(x)\;dx\leq 0.$$
  
 As $f'\geq 0$,  partial integration yields
  
  \begin{eqnarray*}
 I_n:= \int_0^1 \frac{\sin(2\pi nx)}{n}\,f(x)\;dx&=&-\frac{\cos(2\pi nx)}{2\pi n^2} f(x)\Bigg|^1_0 + \int_0^1 \frac{\cos(2\pi nx)}{2\pi n^2} f'(x)dx\\
  &\leq &\frac{f(0)-f(1)}{2\pi n^2} +\frac{1}{2\pi n^2}\int_0^1 f'(x)dx=0.
  \end{eqnarray*}

Since $|I_n|\leq  \frac{c}{n^2}$ the series converges $S:=\sum I_n$ and so $S\leq 0$.\\

a2)  We prove that $\int\sum$ is well defined and that
  $$\int_0^1\left(\sum_{n=1}^\infty \frac{\sin(2\pi nx)}{n}\,f(x)\;\right)dx\leq 0.$$
  
  A standard exercise in Fourier analysis tells us 
  $$\mbox{$\dis S(x):=\sum_{n=1}^\infty\frac{\sin(2\pi nx)}{n}=\frac{\pi-2\pi x}{2}$ for $0<x<1$.}
  $$
  
Since $f'\geq 0$, 
\begin{eqnarray*}
\int_0^1 S(x) f(x)dx&= &\frac{\pi}{2}\left(\int_0^1  f(x)dx-2\int_0^1  x f(x) dx\right)\\
&=& \frac{\pi}{2}\left( x f(x)\Big|_0^1 -\int_0^1 x f'(x) dx- x^2 f(x)\Big|_0^1 +\int_0^1 x^2f'(x) dx\right)\\
 &=&\frac{\pi}{2}\left( (x-x^2) f(x)\Big|_0^1 +\int_0^1 (x^2-x) f'(x) dx\right)\\
 &=&\frac{\pi}{2}\int_0^1 (x^2-x) f'(x) dx\leq 0.
\end{eqnarray*}

b) Since $g''\geq 0$,
\begin{eqnarray*}
J_n&=& \int_0^1\cos(2\pi nx) g(x)dx=-\frac{1}{2\pi n} \int_0^1 \sin(2\pi nx) g'(x)dx\\
&=&\frac{1}{4\pi^2 n^2} \cos(2\pi n x) g'(x)\Big|_0^1 -\frac{1}{4\pi^2 n^2}\int_0^1 \underbrace{\cos(2\pi nx)}_{\leq 1} g''(x) dx\\
&\geq& \frac{g'(1)-g'(0)}{4\pi^2 n^2}  -\frac{g'(1)-g'(0)}{4\pi^2 n^2} =0.
\end{eqnarray*}

Since $|J_n|\leq  \frac{c}{n^2}$, the series  $S^*:=\sum J_n$ converges and so $S^*\geq 0$.

\bigskip\bigskip

  - - - - - - - - - - - - - - - - - - - - - - - - - - - - - - - - - - - - \bigskip

\underline{Second proof of b)}\\

Extend $g$ 1-periodically to $\R$. 
The Fourier series for $g$ is given by
$$g(x)=\frac{a_0}{2} +\sum_{n=1}^\infty \left(a_n \cos(2\pi nx) +b_n\sin(2\pi nx)\right)$$ 
with
$a_n= 2 \int_0^1 g(x)\cos(2\pi nx)dx$ and $b_n= 2 \int_0^1 g(x)\sin(2\pi nx)dx$.
Since $g$ is smooth, Dirichlet's rule yields
\begin{equation}\label{diri}
\frac{g(0)+g(1)}{2}= \frac{a_0}{2} +\sum_{n=1}^\infty a_n= \int_0^1 g(x)dx+ 2\sum_{n=1}^\infty  \int_0^1 \cos(2\pi nx) g(x) dx.
\end{equation}

Now $\frac{g(0)+g(1)}{2}\geq \int_0^1 g(x)dx$ since the convexity of $g$ implies
$g(x)\leq g(0)+x(g(1)-g(0))$ and so
$$\int_0^1 g(x)dx\leq g(0)\cdot 1 +\frac{1}{2}(g(1)-g(0))=\frac{g(0)+g(1)}{2}.$$

  Hence, in view of (\ref{diri}), $\dis \sum_{n=1}^\infty  \int_0^1 \cos(2\pi nx) g(x) dx\geq 0$.\\

\underline{Second proof of a)}\\

This follows from b): Given $f$ in a), put $g(x):=\int_0^x f(t)dt$.  Then $g''\geq 0$; hence $g$ is convex. Now, by partial integration, 
\begin{eqnarray*}
\int_0^1 g(x)\cos(2\pi nx)dx&=& g(x)\frac{\sin(2\pi nx)}{2\pi n}\Big|^1_0-\int_0^1 \frac{\sin(2\pi nx)}{2\pi n}g'(x)dx\\
&=&-\int_0^1  \frac{\sin(2\pi nx)}{2\pi n} f(x)dx.
\end{eqnarray*}
Thus b) implies a).

\newpage

\pagecolor{yellow}
\begin{figure}[h!]
 
  \scalebox{0.45} 
  {\includegraphics{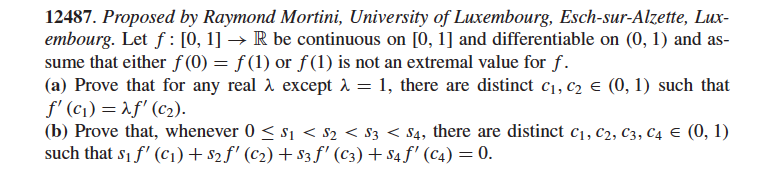}} 
  
 \end{figure}
 

\centerline{{\bf Solution to problem 12487  in Amer. Math. Monthly 131 (2024), p. 723}}\medskip 

\centerline{Raymond Mortini }

\bigskip

\centerline{- - - - - - - - - - - - - - - - - - - - - - - - - - - - - - - - - - - - - - - - - - - - - - - - - - - - - -}
  
  \medskip

 (a) Suppose that $f$ is not constant. Due to the hypotheses,  $f$ takes at least one of its extrema  inside $]0,1[$. By Rolle's theorem, there is $x_0\in ]0,1[$ for which $f'(x_0)=0$. Now $f'$ must also take negative as well as positive values, since otherwise $f$ is decreasing, respectively increasing, on $[a,b]$,   contradicting the assumption that  $f(0)=f(1)$ or that $f(1)$ is not an extremal value.  Say $f'(x_1)<0$ and $f'(x_2)>0$.  By the intermediate value property for the derivative (or Darboux property, see \cite[Theorem 5.12, p.108]{rud})), $f'$ takes every value $\eta$ with $f'(x_1)<\eta<f'(x_2)$.  In particular, every
      value $\eta$ with small modulus is taken, say $|\eta|\leq \e$. Now let $r_1\not=r_2$ be taken so that $|r_j|\leq\e$ and $r_1/r_2=\lambda$
   (this is possible: for instance,  if $|\lambda| >1 $, choose $r_1=\e$ and $r_2= \e/\lambda$, and if $|\lambda|<1$, choose $r_1=\lambda\e$ and $r_2=\e$). Now let $c_j\in ]0,1[$ be such that $f'(c_j)=r_j$. Then $c_1\not=c_2$ and 
   $$f'(c_1)/f'(c_2)=r_1/r_2=\lambda.$$

   (b) Let $0<s_1<s_2<s_3<s_4$.    
   By i)  there are $c_1,c_2\in ]0,1[$ with 
   $$f'(c_1)/f'(c_2)=\lambda:=-s_2/s_1=r_1/r_2$$  and
   $c_3,c_4\in ]a,b[$ with 
   $$f'(c_3)/f'(c_4)=\lambda:=-s_4/s_3=r_3/r_4.$$ 
   Since the modulus of $r_j$ can be taken to be arbitrarily small, the proof above  guarantees that all the $c_j$ are distinct.
   We conclude that $\sum_{j=1}^4 s_j f'(c_j)=0$.\\
 
 If  $s_1=0$, we choose $c_3,c_4$ as above, then $c_2:=x_0$ and $c_1$ arbitrary, but different from $c_2,c_3,c_4$. 
   
 \bigskip
 
{\bf Remark}  This was motivated by   problem 4779 Crux Math. 48 (8) 2022, 484.

\newpage

\nopagecolor
\begin{figure}[h!]
 
  \scalebox{0.45} 
  {\includegraphics{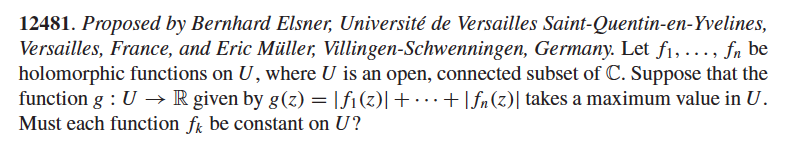}} 
  
 \end{figure}
 

\centerline{{\bf Solution to problem 12481  in Amer. Math. Monthly 131 (2024), p. 630}}\medskip 

\centerline{Raymond Mortini, Peter Pflug  and Rudolf Rupp }

\bigskip

\centerline{- - - - - - - - - - - - - - - - - - - - - - - - - - - - - - - - - - - - - - - - - - - - - - - - - - - - - -}
  
  \medskip

The answer is yes, and this is an easy  classical exercise in a complex analysis course; see \cite[p. 168]{bur},
 \cite[p. 353]{burck}, \cite[p. 161]{BC},  \cite[p. 164, Ex. 300/302]{PS}, \cite[p. 41]{T}, \cite{289114}. 
Since the modulus of a holomorphic function is subharmonic, it satisfies the maximum principle. Thus, if the maximum value 
of $f\in H(U)$ is taken inside the domain $U$, then $f$ is constant. This establishes the problem for $n=1$. Now if $n\geq 2$, we use that the finite sum $u$ of subharmonic functions is subharmonic again, a fact best seen by using the definition via the mean-value inequality
$$u(x_0)\leq \frac{1}{2\pi}\int_0^{2\pi} u(x_0+re^{i\theta})d\theta$$
for all closed disks $D(x_0,r)\ss U$. 
Thus $g=\sum_{j=1}^n |f_j|$ is subharmonic, and so is constant $c$ under the assumption of the problem.
 Next, fix $j_0\in \{1,\dots,n\}$. Then
 $$|f_{j_0}|= c-\underbrace{\sum_{j=1\atop j\not=j_0}^n|f_j|}_{:=v}.$$
 The subharmonicity of $v$ implies that $-v$ is superharmonic. Hence, by adding the constant $c$, $c-v$ is superharmonic, too. 
 Consequently, $|f_{j_0}|$ is superharmonic, as well as subharmonic. In other words, $|f_{j_0}|$ is harmonic. Any holomorphic function $f$ in  a domain $U$ whose modulus is harmonic, is constant though. In fact, let $D$ be a closed disk in $U$ such that $f$ has no zeros in $D$. Then $f$ admits a holomorphic square-root $q$ in $D$; that is $q^2=f$ (\cite[p. 816]{moru}). Now $|q|^2$ is harmonic by assumption. 
 Since $\Delta |q|^2= 4|q'|^2$ (where $\Delta$ is the Laplacian \cite[p. 222]{moru}), we get that $q'\equiv 0$ in $D$, and so $q$ is constant in $D$. Consequently, $f=q^2$ is constant in $D$. By the uniqueness theorem for holomorphic functions
  \cite[p. 347]{burck}, $f$ is constant in the
connected open set $U$.\\
 
 {\bf Remark 1.} A more direct, but not so elegant  way to prove that the harmonicity of $|f|$ implies constancy, is purely computational and uses the $\ov{\partial}$-calculus, that is the Wirtinger derivatives  $\partial u=u_z$ and $\ov{\partial} u=u_{\ov z}$
 (see \cite[sect. 4]{moru}): 
 
 Let $s(z):=|f(z)|$, $f\not\equiv 0$. Then  $s^2=f\ov f$ and $\partial s^2= 2 s \partial s= f' \ov f$. Hence, outside the discrete zero  set of $f$, 
 $\partial s= \frac{f' \ov f}{2 s}$, and so
 $$\Delta s=4\ov{\partial}\partial s= \frac{4}{2}\;\frac{f' \ov{f' }\;s-f' \ov f \frac{\ov{f'} f}{2s}}{s^2}\buildrel=_{}^{!}0 \iff 
 |f'|^2 \left(2s^2-|f|^2\right)=0\iff |f'|^2 |f|^2=0\iff f'=0.$$
 This could of course also be established via the "real"-method  by calculating $s_{xx}+s_{yy}$ with $s(z)\sim s(x,y)$, $z=x+iy$ and 
  $s=\sqrt{u^2+v^2}$, where $f=u+iv$, and applying the Cauchy-Riemann equations in their `real' form, instead of the shorter form $f_{\ov z}=0$ above.\\

  {\bf Remark 2.}  An analysis of the proof shows that one obtains the same conclusion replacing $\sum_{j=1}^n|f_j|$ by
  $\sum_{j=1}^n |f_j|^{\alpha_j}$, where $\alpha_j>0$. Just note the following two points: 
  
  1) if $f\not\equiv 0$ is holomorphic, 
  then $u:=\alpha\log|f|$ is subharmonic, and so its left composition $e^u$ with the exponential function yields the subharmonicity of $|f|^{\alpha}$ (see \cite[Chap.1, \S 6]{ga} or \cite[p.44]{ran}).
  
  2) The square root $q$ of $f$ above is replaced by an $\alpha/2$-root of $f$, so that  $q^2= f^\alpha$.

\newpage

\begin{figure}[h!]
 
  \scalebox{0.45} 
  {\includegraphics{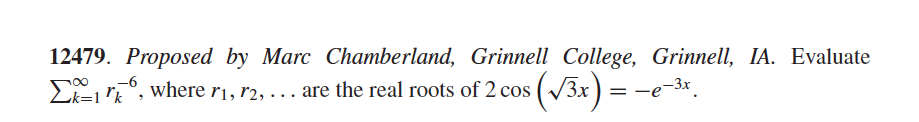}} 
  
 \end{figure}
 

\centerline{{\bf Solution to problem 12479  in Amer. Math. Monthly 131 (2024), p. 630}}\medskip 

\centerline{Raymond Mortini, Peter Pflug  and Rudolf Rupp }

\bigskip

\centerline{- - - - - - - - - - - - - - - - - - - - - - - - - - - - - - - - - - - - - - - - - - - - - - - - - - - - - -}
  
  \medskip

 For technical reasons, we use the new enumeration $r_0, r_1, r_2,\dots$ to  denote the real roots of the equation
 $2\cos(\sqrt 3 x)=-e^{-3x}$.  We show that 
 $$\ovalbox{$S:=\dis\sum_{n=0}^\infty r_n^{-6}=\frac{8}{5}$}. $$ \medskip
 
Recall the  Bourbaki notation $\N:=\{0,1,2,\dots\}$ and $\N^*:=\N\setminus \{0\}$.
 
 Let $$g(z):=2\cos(\sqrt 3z)+e^{-3z},$$
  and, by multiplication with $e^{3z}$,
$$f(z):=2e^{3z} \cos(\sqrt 3 z)+1.$$
 
 Note that $f$ also writes as  
 $$
 f(z)=e^{(3+i\sqrt 3)z}+e^{(3-i\sqrt 3)z}+1.
 $$
 Depending on which is best adapted to the computations, we shall use in the sequel all these variants.  
 Note that the functions $f$, $g$ have the same zero sets.
\\

{\bf Property 1}  Let 
$$\alpha:=e^{i2\pi/3}=-\frac{1}{2}+i\frac{\sqrt 3}{2}.$$
Then
$$
f(\alpha z)=e^{(-3+i\sqrt 3)z} f(z).
$$
In fact, by using that
$$\left(-\frac{1}{2}+i\frac{\sqrt 3}{2}\right)\;\left(3+i\sqrt 3\right)= -3+\sqrt 3 i$$
and
$$\left(-\frac{1}{2}+i\frac{\sqrt 3}{2}\right)\;\left(3-i\sqrt 3\right)=2i\sqrt 3,$$
we obtain
\begin{eqnarray*}
f(\alpha z)&=& e^{(-3+\sqrt 3i)z}+e^{2i\sqrt 3 z}+1\\
&=&e^{\sqrt 3iz}\left(e^{-3z}+e^{i\sqrt 3z}\right)+1\\
&=&e^{\sqrt 3iz}\left(e^{-3z}+e^{i\sqrt 3z}+e^{-i\sqrt 3z}\right)-1+1\\
&=&e^{\sqrt 3iz}\left(e^{-3z}+2\cos(\sqrt 3z)\right)\\
&=&e^{\sqrt 3iz}e^{-3z} f(z).
\end{eqnarray*}

As an immediate consequence we have the following useful property:\\

{\bf Property 2}  {\sl $f(z)=0$ if and only $f(\alpha z)=0$.}\\

{\bf Property 3} {\sl The function $f$ has infinitely  many positive zeros and no negative one. More precisely, for every $n\in \N$  each interval
$$I_n:=\left[\frac{2n\pi}{\sqrt 3},  \frac{2(n+1)\pi}{\sqrt 3}\right]$$ 
contains exactly two positive zeros of $f$, one contained in the left part of the interval 
$\left[\frac{\frac{\pi}{2}+2n\pi}{\sqrt 3},\frac{\frac{3\pi}{2}+2n\pi}{\sqrt 3}\right]$, and one in the right part. }
\begin{proof}

$\bullet$ Let us first show that $g$ (hence $f$) has no negative zeros.  In fact if $-\frac{\pi}{2\sqrt 3}\leq x\leq 0$, then $\cos(\sqrt 3x)\geq 0$, and so $g(x)=e^{-3x}+2\cos(\sqrt 3x)>0 $. If $x<-\frac{\pi}{2\sqrt 3}$, then $e^{-3x}> e^{\sqrt 3 \frac{\pi}{2}}\sim 15.1909\cdots>2$, and again $g(x)>0$.

$\bullet$ Now we deal with the positive solutions to $g(x)=0$; or equivalently $f(x)= 2e^{3x}\cos(\sqrt 3 x)+1=0$.
Consider  the points
$$a_n=\frac{2n\pi}{\sqrt 3}, \quad b_n =\frac{2n\pi +\frac{\pi}{2}}{\sqrt 3},\quad c_n=\frac{2n\pi +\pi}{\sqrt 3},
\quad d_n=\frac{2n\pi +\frac{3\pi}{2}}{\sqrt 3},
\quad e_n=\frac{2n\pi +2\pi}{\sqrt 3}.
$$

To achieve our goal,  we study for $x\geq 0$ the variation of $f(x)=2e^{3x}\cos(\sqrt 3 x)+1$. Note that
$$f'(x)=2e^{3x}\big(3\cos (\sqrt 3 x)-\sqrt 3 \sin(\sqrt 3 x)\big).$$
In particular,  $f'(x)=0\iff \sqrt 3=\tan \sqrt 3 x\iff x=x_n:= \frac{\arctan \sqrt 3 +n\pi }{\sqrt 3}=\frac{\frac{\pi}{3}+n\pi}{\sqrt 3}$, $n\in \N$.
Thus $f$ is increasing on $[\frac{2n\pi}{\sqrt 3}, \frac{\frac{\pi}{3}+2n\pi}{\sqrt 3}]$, decreasing on 
$[ \frac{\frac{\pi}{3}+2n\pi}{\sqrt 3},\frac{\frac{4\pi}{3}+2n\pi}{\sqrt 3}]$ and increasing on
$[\frac{\frac{4\pi}{3}+2n\pi}{\sqrt 3}, \frac{2\pi +2n\pi}{\sqrt 3}]$.

Now 
$$f(a_n)=2e^{3a_n}+1, \quad f(b_n)=1,\quad f(c_n)=-2e^{3c_n}+1<0, \quad f(d_n)=1,\quad f(e_n)=2e^{3e_n}+1.$$ 
Since $ a_n<\frac{\frac{\pi}{3}+2n\pi}{\sqrt 3}<b_n< \frac{\pi+2n\pi}{\sqrt 3}=c_n< \frac{\frac{4\pi}{3}+2n\pi}{\sqrt 3}<
\frac{\frac{3\pi}{2}+2n\pi}{\sqrt 3}=d_n$, we see that there are exactly two zeros on $[a_n,e_n]$, namely one between $b_n$ and $c_n$ and one between $c_n$ and $d_n$  \footnote{Using only the information $f(a_n)>0, f(c_n)<0$ and $f(e_n)>0$  (so without
 studying the variation of $f$), allows us  to conclude that $f$ has at least two zeros in $I_n=[a_n,e_n]$. Using Rouch\'es theorem  and the additional information that in the right half-plane the complex zeros of $\cos(\sqrt 3 z)$, all simple,   are exactly the real zeros $\frac{\frac{\pi}{2}+2n\pi}{\sqrt 3}$ 
 and $\frac{\frac{3\pi}{2}+2n\pi}{\sqrt 3}$, gives us the possibility to conclude that $f$ has no other zeros in the strips 
 $\{z\in \C: {\rm Re}\; z\in I_n\}$ (see Property 6).}.
\\

Let us denote the zeros by  $r_n$, in increasing order ($n\in \N$).  
 Note that $r_n$ is eventually close to the zero 
 $$\ovalbox{$\dis s_n:=\frac{\frac{\pi}{2}+n\pi}{\sqrt 3}$}$$
  of
  $2\cos(\sqrt 3x)$, since $e^{-3x}$ tends rapidly to $0$ as $x\to\infty$, and that for $n\in \N$,
  \begin{equation}\label{hauptungl}
s_{2n}<r_{2n}<r_{2n+1}<s_{2n+1}<s_{2n+2}<r_{2n+2}
\end{equation}
(because the cosinus values at $r_n$ must be negative).
\end{proof}

Here are some numerical examples:\\

$r_0\sim 0.924906\dots$,\quad\quad\;\; $s_0=\frac{\pi}{2 \sqrt 3}\sim 0.9068996\dots$, \quad\; $|s_0-r_0|\sim 0.0180064 <0.125=1/8 $,
   
    $r_1\sim 2.720616677\dots$, \quad  $s_1=\frac{3\pi}{2 \sqrt 3}\sim 2.7206990\dots$,\quad\;\; $|r_1-s_1|\sim 0.00008223\cdots$,
   
    $r_2\sim 4.53449876\dots$,   \quad\; $s_2=\frac{5\pi}{2 \sqrt 3}\sim 4.53449841\dots$, \quad $|r_2-s_2|\sim0.00000035\cdots$.\\
  
$|r_1-r_0|\sim 1.7957107\cdots$, \quad $|r_2-r_1|\sim 1.81388208\cdots$\\

The items \bl{(1)--(4)} below are nice additional  properties of the real zeros of $f$. We do not need these, though.
\\

{\bf Property 4} 

\begin{enumerate}
\item[(1)] \bl{$|r_n-s_n|<\frac{1}{16\sqrt 3}<\frac{1}{8}$ for all $n\in \N$.}
\item[(2)] \bl{$|r_n-s_n|\to 0$.}
\item[(3)] \bl{$|r_n-r_{n-1}|>1$ for all $n\in \N^*$.}
\item[(4)]\bl{$|r_n-r_{n-1}|\to \frac{\pi}{\sqrt 3}\sim 1.813799\cdots $.}
\item[(5)] $r_n^3\sim c n^3$, for some constant $c$.
\end{enumerate}
Noticing that $\{r_{2n},r_{2n+1}\}\ss I_n$, one can easily show  that $|r_{2n+1}-r_{2n}|$ increases to
  $\frac{\pi}{\sqrt 3}$ and $|r_{2n}-r_{2n-1}|$ decreases to $\frac{\pi}{\sqrt 3}$.

\begin{proof}

Recall that $f(z)=2e^{3z} \cos(\sqrt 3 z)+1$. 

(1) For $n=2m$ even, it is sufficient to prove that  $s_n<r_n<s_n+\frac{1}{16\sqrt 3}=:p_n$,
where $s_n,r_n\in I_m$.  Since $f(s_n)=1$ and $f(r_n)=0$, and $f$ is decreasing on $[ \frac{\frac{\pi}{3}+2m\pi}{\sqrt 3},\frac{\frac{4\pi}{3}+2m\pi}{\sqrt 3}]\;\supseteq\; [s_n,p_n]$, this proof is done by showing that $f(p_n)<0$.  Now, noticing that the cosinus term is negative, 

\begin{eqnarray*}
f(p_n)&=& 2e^{\sqrt 3(\frac{\pi}{2}+n\pi+\frac{1}{16})}\cos\left(\frac{\pi}{2}+\frac{1}{16}\right)+1\\
 &\leq& 2e^{\sqrt 3(\frac{\pi}{2}+\frac{1}{16})}\cos\left(\frac{\pi}{2}+\frac{1}{16}\right)+1\sim-1.114587\dots.
 \end{eqnarray*}
 For $n=2m+1$ odd, it is proved in the same way  that $q_n:=s_n-\frac{1}{16\sqrt 3}<r_n< s_n$, where $s_n,r_n\in I_m$.
   In fact, $f$ is increasing on
$[\frac{\frac{4\pi}{3}+2m\pi}{\sqrt 3}, \frac{2\pi +2m\pi}{\sqrt 3}]\;\supseteq\; [q_n, s_n]$.  So,  by noticing that the cosinus term is negative,
\begin{eqnarray*}
f(q_n)&=&2e^{\sqrt 3(\frac{3\pi}{2}+(n-1)\pi-\frac{1}{16})}\cos\left(\frac{3\pi}{2}-\frac{1}{16}\right)+1\\
& \leq&  2e^{\sqrt 3(\frac{3\pi}{2}-\frac{1}{16})}\cos\left(\frac{3\pi}{2}-\frac{1}{16}\right)+1\sim -391.97708\cdots
\end{eqnarray*}

 (2) This works in the same way as above; just replace $1/16$ be an number $\e>0$ arbitrary close to $0$.  Then the cosinus term, still negative, may be very small. But the power $n$ in the exponential factor can be made sufficiently big (depending on $\e$), so that $f(p_n)$, resp. $f(q_n)$ is strictly negative.\\

 (3) Let $n$ be even. Then
 $$|r_n-r_{n-1}|= r_n-r_{n-1}\geq s_n-s_{n-1}= \frac{\pi}{\sqrt 3} \sim 1.813799\dots >1.$$
 If $n$ is odd, then by (1)
 $$|r_n-r_{n-1}|= r_n-r_{n-1}\geq( s_n-\frac{1}{16\sqrt 3})-(s_{n-1}+\frac{1}{16\sqrt 3})=\frac{\pi}{\sqrt 3}-\frac{1}{8\sqrt 3}\sim
 1.7416305\cdots >1.
 $$
 
(4)  Due to (2)
  $$r_n-r_{n-1}= (r_n-s_n)+(s_n-s_{n-1})+(s_{n-1}-r_{n-1}) \to 0+\frac{\pi}{\sqrt 3}+0.$$


(5) Using  (\ref{hauptungl}),  we obtain that
\begin{equation}\label{mainin}
\frac{\frac{\pi}{2}+2n\pi}{\sqrt 3}<r_{2n}<r_{2n+1}<\frac{\frac{3\pi}{2}+2n\pi }{\sqrt 3}.
\end{equation}
Hence $\dis \frac{r_n}{n}\to \frac{\pi}{\sqrt 3}$ and so $r_n^3\sim  c n^3$.

\end{proof}

Let $Z:=\{r_n,  \alpha r_n, \alpha^2 r_n\}$.  Combining the properties (1) and (2), we see that $Z$ is a symmetric
 zero-set for $f$, resp.  $g$. \\

{\bf Property 5}  {\it The elements of $Z$ are simple zeros for $f$ (hence for  $g$)} \footnote{Also note here, that Rouch\'e's theorem (applied as in the proof of Property 6) automatically yields this fact, too. We preferred though a straightforward elementary proof.}.
\\

\begin{proof}

Suppose, contrariwise,  that  for some real $z$,
\begin{equation}\label{(i)}
\quad f(z)=2e^{3z} \cos(\sqrt 3 z)+1=0,
\end{equation}
and   
\begin{equation}\label{(ii)}
 f'(z)=3\cdot 2e^{3z} \cos \sqrt 3 z-2\cdot e^{3z} \sqrt 3 \sin\sqrt 3 z=0.
 \end{equation}
Plugging equality  (\ref{(i)})  into (\ref{(ii)}) yields
\begin{equation}\label{(iii)}
 \quad 0=f'(z)=-3-2\sqrt 3 e^{3z}\sin \sqrt 3 z.
 \end{equation}
Hence
$$1=\sin^2(\sqrt 3 z)+\cos^2(\sqrt 3z)\buildrel=_{(\ref{(i)})}^{(\ref{(iii)})} \frac{3}{4}e^{-6z}+\frac{1}{4}e^{-6z}=e^{-6z}.$$
Therefore $z=0$. But $f(0)=3$, a contradiction. We conclude that, due to Property 2, all elements in $Z$ are simple zeros for $f$.
\end{proof}


{\bf Property 6} {\sl The exact zero-set of $f$ coincides with $Z$.} \\

\begin{proof}

 Numerically it can  be shown that  for $k\in \N^*$
$$\frac{1}{2\pi i}\int_{|z|=2k\pi/\sqrt 3 } \frac{f'}{f} dz=6 k,$$
which yields the assertion, as we already know that  within the annuli 
$$\left\{z\in \C:\frac{2n\pi}{\sqrt 3} \leq |z|\leq   \frac{2(n+1)\pi}{\sqrt 3} \right\}$$
there are 2 real zeros and their rotations by $\alpha$ and $\alpha^2$. \\

For our genuine proof, 
we  first restrict the calculations to the case where ${\rm Re}\, z\geq 0$. The other case will be deduced at the very end of the proof.\\

$\bullet$ We know that $|\cos z|^2= (\cos x)^2 +(\sinh y)^2$, where $z=x+iy$. In fact,
\begin{eqnarray*}
4|\cos z|^2&=& |e^{i(x+iy)}+e^{-i(x+iy)}|^2=|e^{-y}e^{ix}+e^ye^{-ix}|^2\\
&=&e^{-2y}+e^{2y} +2 {\rm Re}( e^{2ix})=2 \cosh (2y)+2\cos (2x)\\
&=&2(\cosh^2 y+\sinh^2 y)+ 2(\cos^2 x-\sin^2 x)\\
&=& 2( 1+2\sinh^2 y)+2(\cos^2 x-(1-\cos^2 x))\\
&=& 4(\sinh^2y+\cos^2 x).
\end{eqnarray*}

$\bullet$  Recall that $g(z)=e^{-3z}+2\cos(\sqrt 3 z)$. We now show that $g\not=0$ on $[0, \infty[\times [1,\infty[$.  In fact,

$$|g(z)|\geq2 |\cos(\sqrt 3z)|-|e^{-3z}| = 2\sqrt{\cos^2 (\sqrt 3 x)+\sinh^2 (\sqrt 3 y)}-e^{-3x}\geq  2\sinh \sqrt 3 -1>0.$$
The same proof also shows that $g\not=0$ on $[0, \infty[\times \,]-\infty ,-1]$.

Next we use Rouch\'e's theorem for the rectangles $R_k:=\left[\frac{2k\pi}{\sqrt 3}, \frac{2(k+1)\pi}{\sqrt 3}\right]\times [-1,1]$, $k\in \N$.
If $x=\frac{2k\pi}{\sqrt 3}$ or  $x=\frac{2(k+1)\pi}{\sqrt 3}$, then 
\begin{eqnarray*}
|g(z)- 2\cos(\sqrt 3 z)|&=& e^{-3x}\leq 1<2  |\cos \sqrt 3 iy|=2(\sqrt {1+ \sinh^2(\sqrt 3 y)}=2 |\cos(\sqrt 3 z)| \\
&\leq& |g(z)|+2 |\cos(\sqrt 3 z)|.
\end{eqnarray*}

If $|y|=1$, then 
\begin{eqnarray*}
|g(z)- 2\cos(\sqrt 3 z)|&=& e^{-3x}\leq 1< 2\sinh \sqrt 3\leq 2\sqrt{\cos^2 ({\sqrt 3}x)+\sinh^2 (\sqrt 3})\leq 2|\cos(\sqrt 3 z)|\\
&\leq&|g(z)|+2|\cos(\sqrt 3 z)|.
\end{eqnarray*}
By Rouch\'e's Theorem \cite[p. 852]{moru}, $g$ and  $\cos(\sqrt 3z)$ have the same number of zeros on $R_k$, namely 2. 
As we already know that $g$ has two real zeros in these intervals $\left[\frac{2k\pi}{\sqrt 3}, \frac{2(k+1)\pi}{\sqrt 3}\right]$, we are done.
\\

$\bullet$  Having only the real zeros $r_k$ of $f$ (equivalently $g$)  in the right-half plane, the symmetry $f(z)=0\iff f(e^{i2\pi/3}z)=0$ now implies that no other zeros are in the left half-plane excepted the rotations $e^{i2\pi/3} r_k$ and $e^{i4\pi/3} r_k$ as any
  $\zeta:=re^{i\theta}$ with
  $\pi/2\leq \theta\leq3\pi/2$ would yield that  at least one of the points $e^{i4\pi/3}\zeta $ or $e^{i2\pi/3}\zeta$ belongs 
  to the right-half plane. 
  This finishes the proof of Property 6.
  \end{proof}

We are finally ready to derive the value 
for  the desired sum $S$.
The main idea is to apply the residue theorem for the function
$$F(z):= \frac{1}{z^6}\;\frac{f'(z)}{f(z)},$$
 and the formula ${\rm Res} \left(h \frac{f'}{f}, p\right)= h(p)$, where $p$ is a simple pole of $f$ and $h$ is holomorphic in a neighborhood of $p$.

Observe that \footnote{ Obtained e.g. with wolframalpha.}
$$\frac{f'(z)}{f(z)}= 2-4z^2-\frac{24}{5}z^5-\frac{212}{35}z^8-\frac{14736}{1925}z^{11}-\frac{1694832}{175175}z^{14}-\cdots$$
Thus, 

$${\rm Res} \left(\frac{1}{z^6}\frac{f'}{f},z=0\right) =-\frac{24}{5}.$$
Note that  the zeros $z_n=\alpha r_n$, resp.  $z_n=\alpha^2 r_n$ have the property that  $z_n^6=r_n$. 

Recall from (\ref{mainin}) that  for $n\in \N$ the numbers $r_{2n}, r_{2n+1}$ belong to 
$$\left[\frac{2n\pi+\frac{\pi}{2}}{\sqrt 3},  \frac{2n\pi+\frac{3\pi}{2}}{\sqrt 3}\right].$$ 
For $N\in \N$, let  $C_N$  be the circle centered at the origin and with radius $R_N:=\frac{2\pi (N+1)}{\sqrt 3 }$.   Then the associated disk contains 
$r_0,r_1,\dots, r_{2N+1}$ and

\begin{enumerate}
\item[(1)] $R_N>N$,
\item[(2)] $R_N^3-r_{2N+1}^3\geq 1$.
\end{enumerate} 
Note that (2) is a consequence to $R_N-r_{2N+1}\geq \pi/2$.

Due to the residue theorem 
\begin{eqnarray*}
L:=\lim_{N\to\infty}\frac{1}{2\pi i }\int_{C_N}  \frac{1}{z^6}\frac{f'(z)}{f(z)} dz&=&{\rm Res} \left( \frac{1}{z^6}\frac{f'}{f},0\right) +
\sum_{\xi\in Z} {\rm Res} \left( \frac{1}{z^6}\frac{f'}{f}, \xi\right)\\
&=&-\frac{24}{5}+ 3 \sum_n \frac{1}{r_n^6}.
\end{eqnarray*}

We claim that $L=0$, from which we deduce that 
$$S=\sum_n \frac{1}{r_n^6}=\frac{8}{5}.$$

 To prove this claim,  we use that by Property (4), $\sum \frac{1}{r_n^3}$ converges. Hence the infinite product
$$p(z):=\prod_{n=0}^\infty\left(1- \left(\frac{z}{r_n}\right)^3\right)$$
converges locally uniformly on $\C$. According to Weierstrass's factorization theorem,
$f=pq$, where $q$ is  a zero-free entire  function.
Note that
$$\frac{f'}{f}=\frac{p'(z)}{p(z)}+ \frac{q'(z)}{q(z)}$$ and that
$$\int_{C_N}  \frac{1}{z^6}\frac{f'(z)}{f(z)} dz= \int_{C_N}  \frac{1}{z^6}\frac{p'(z)}{p(z)} dz +0.$$
 
Now

$$\frac{p'(z)}{p(z)}=-3\sum_{n=0}^\infty \frac{z^2}{r_n^3-z^3},$$
which is a locally uniformly convergent series on $\C\setminus Z$ (note that $r_n^3\sim c\cdot n^3$).
 Hence $\int\sum=\sum\int$ and so,
by Cauchy's integral theorem,
$$2\pi i J_N:=\int_{C_N}\frac{1}{z^6}\frac{p'(z)}{p(z)}dz =-3\sum_{n=0}^{2N+1} \int_{C_N}\frac{z^{-4}}{r_n^3-z^3} dz.$$
Now 
$$\left| \frac{z^{-4}}{r_n^3-z^3}\right| \leq \frac{R_N^{-4}}{R_N^3-r_n^3}\leq R_N^{-4}.$$
Thus
$$0\leq 2\pi J_N\leq 3 (2N+2) R_N^{-4} 2\pi R_N\leq C\cdot N^{-2}.$$
We conclude that $L=\lim_{N\to\infty} J_N=0$. \hfill $\square$ \\

  \bigskip

Related problems are given in \cite[p. 279] {ray} and \cite{LJ} ($\cos x \cosh x+1=0$) and  \cite{li} ($\tan x=kx$) \footnote{ Communicated to us by A. Sasane.} .

{\sl \underline{Second, but more complicated proof of Property 6}}\\

Note that $g(z)=e^{-3z}+2\cos(\sqrt 3z)=0$ implies that 
$$4|\cos^2 (\sqrt 3 z)|= e^{-6x},$$ 
and so
$$
4 \sinh^2 (\sqrt 3 y)= e^{-6x} -4\cos^2 (\sqrt 3 x).
$$
In particular 
\begin{equation}\label{sinhcos}
4\cos^2 (\sqrt 3 x)\leq e^{-6x}.
\end{equation}

  If $k\in \N$, equality in (\ref{sinhcos}) holds if $x=r_k$ and if
$x=\frac{\frac{\pi}{2} +k \pi}{\sqrt 3}=s_k$,  then trivially $0=4\cos^2 (\sqrt 3 x)\leq e^{-6x}$.

In the figure below, we display one "branch" of the curves $y_j$, given by 
\begin{equation} \label{winzig}
\mbox{$\dis y_2= \frac{1}{\sqrt 3}{\rm arsinh}\sqrt{\frac{1}{4}e^{-6x} -\cos^2 (\sqrt 3 x)}$ and $y_1=2\cos(\sqrt 3 x)+e^{-3x}$}.
\end{equation}
 They are very tiny, as the coordinates show. To recapitulate, if $z$ is a zero of $g$ with non-negative real part, then it belongs to these tiny arcs.  
 
 \begin{figure}[h!]
 
  \scalebox{0.45} 
  {\includegraphics{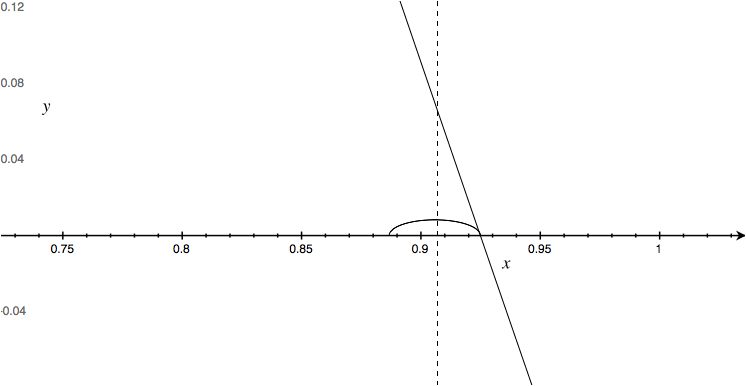}} 
  \caption {The curves $y_2= \frac{1}{\sqrt 3}{\rm arsinh}\sqrt{\frac{1}{4}e^{-6x} -\cos^2 (\sqrt 3 x)}$ and $y_1=2\cos(\sqrt 3 x)+e^{-3x}$ and $s_0=\frac{\pi}{2\sqrt 3}$}
  \end{figure}
  
 $\bullet$  To show that in the right half-plane $g$ has only the zeros $r_k$, it remains to study the behaviour of $g(z)=2\cos(\sqrt 3z)+e^{-3z}$  on  the disks $D(r_k,1/8)$. A major difficulty will be to show  that the union of these disks contains the graph of the curve $y_2$. A tool will be the following result from complex analysis:
  
\begin{criterium}\cite[p. 365]{moru}\label{holi}
Let $\Phi$ be bounded by $1$ and holomorphic in $\D$ and suppose that $\Phi(0)=0$ as well as $|\Phi'(0)|\geq \delta> 0$. 
Then $\Phi$ has no zeros on $\{0<|z|<\delta\}$.
\end{criterium}

  We first note that for $z=x+iy$ with $x\geq 0$ and $|y|\leq 1$, $g$ is bounded by $8$ :
 \begin{eqnarray*}
|g(z)|&\leq& 2|\cos(\sqrt 3 z)|+ e^{-3x}= 2 (\sqrt{\cos^2 (\sqrt 3 x)+\sinh^2 (\sqrt 3 y)})+1\leq 2 (\sqrt{1+ \sinh^2\sqrt 3})+1\\
&=& 2 \cosh(\sqrt 3) +1\leq  e^{\sqrt 3}+2\leq 8.
 \end{eqnarray*} 
 Moreover  $|g'(r_k)|\geq 1 $, since
 $$g'(z)=-2\sqrt 3 \sin(\sqrt 3 z)-3e^{-3z},$$
 and so
\begin{eqnarray*}
|g'(r_k)|&=&|-2\sqrt 3 \sin(\sqrt 3 r_k)+ 6 \cos(\sqrt 3 r_k)|\to  2 \sqrt 3\sim 3.4641\cdots.
\end{eqnarray*}
respectively 
$$|g'(r_k)|=|(-1)^{k}\sqrt 3\sqrt{4-e^{-6r_k}}-3 e^{-3r_k}|$$

   \begin{figure}[h!]
 
  \scalebox{0.45} 
  {\includegraphics{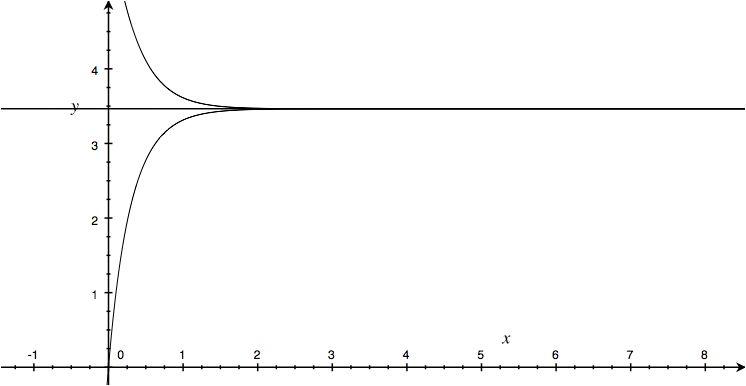}} 
  \caption {The curves $y=|\pm \sqrt 3\sqrt{4-e^{-6x}}-3 e^{-3x}|$}
  \end{figure}
 
 The value of the lower branch at $x=1/6$ is $\sim 1.481371\cdots$ and the branch is increasing.  So $|g'(r_k)|\geq 1$ for $r$ even.
 The upper branch is always bigger than $3$. So, $|g'(r_k)|\geq 1$, too  if  $r$ is odd. 
 \medskip

 We conclude from the zero set criterium \ref{holi} that $g\not=0$ on $D(r_k, 1/8)$, excepted at $r_k$.\\
 
 $\bullet$ The final part which remains is now the proof that the graph of the (different branches) of the curve $y_2$ is contained in the union of the disks $D(r_k, 1/8)$.
 
 We first determine the positive zeros of $y_2$. These are given by $e^{-6x} -4\cos^2 (\sqrt 3 x)=0$. Of course the $r_k$, 
the positive zeros of $2\cos (\sqrt 3 x)- e^{-3x}$ are zeros of $y_2$, too. The other ones are given by the zeros of the function
$$h^*(x)= 2e^{3x}\cos(\sqrt 3 x)-1=h(x)-2.$$
This works as in the proof oh Property 3; note that $(h^*)'=h'$.

$$h^*(a_n)=2e^{3a_n}-1 >0, \quad h(b_n)=-1,\quad h(c_n)=-2e^{3c_n}-1<0, \quad h(d_n)=-1,\quad h(e_n)=2e^{3e_n}-1>0.$$ 
Since $ a_n<\frac{\frac{\pi}{3}+2n\pi}{\sqrt 3}<b_n< \frac{\pi+2n\pi}{\sqrt 3}=c_n< \frac{\frac{4\pi}{3}+2n\pi}{\sqrt 3}<
\frac{\frac{3\pi}{2}+2n\pi}{\sqrt 3}=d_n$, and $h^*$ is increasing on $[\frac{2n\pi}{\sqrt 3}, \frac{\frac{\pi}{3}+2n\pi}{\sqrt 3}]$, decreasing on 
$[ \frac{\frac{\pi}{3}+2n\pi}{\sqrt 3},\frac{\frac{4\pi}{3}+2n\pi}{\sqrt 3}]$ and increasing on
$[\frac{\frac{4\pi}{3}+2n\pi}{\sqrt 3}, \frac{2\pi +2n\pi}{\sqrt 3}]$,
we see that there are exactly two zeros on $[a_n,e_n]$, namely one between 
$\frac{\frac{\pi}{3}+2n\pi}{\sqrt 3}$ and $b_n=\frac{\frac{\pi}{2}+2n\pi}{\sqrt 3}$ and one between 
$d_n=\frac{\frac{3\pi}{2}+2n\pi}{\sqrt 3}$
and $e_n=\frac{2\pi+2n\pi}{\sqrt 3}$.  We enumerate these in an increasing order, say $r_n^*$, and
$$ \mbox{$r^*_{2n}< s_{2n} < r_{2n}$ as well as $r_{2n+1}<s_{2n+1}<r^*_{2n+1}.$}$$
As in the proof of Property 5 (1) we see that
$$|r_n^*- s_n|< \frac{1}{16\sqrt 3 }.$$
In fact,

 For $n=2m$ even, it is sufficient to prove that  $p_n^*:=s_n-\frac{1}{16\sqrt 3}<r^*_n<s_n$,
where $s_n,r^*_n\in I_m=\left[\frac{2\pi m}{\sqrt 3},\frac{2\pi m+2\pi}{\sqrt 3}\right]$.  Since $h(s_n)=-1$ and $h(r^*_n)=0$, and $h$ is decreasing on $[ \frac{\frac{\pi}{3}+2m\pi}{\sqrt 3},\frac{\frac{4\pi}{3}+2m\pi}{\sqrt 3}]\;\supseteq\; [p_n^*,s_n]$, this is done by showing that $h^*(p_n)>0$.  Now, noticing that the cosinus term is positive, 
\begin{eqnarray*}
h^*(p_n^*)&=& 2e^{\sqrt 3(\frac{\pi}{2}+n\pi-\frac{1}{16})}\cos\left(\frac{\pi}{2}-\frac{1}{16}\right)-1\\
 &\geq& 2e^{\sqrt 3(\frac{\pi}{2}-\frac{1}{16})}\cos\left(\frac{\pi}{2}-\frac{1}{16}\right)-1\sim 0.7029349\dots >0.
 \end{eqnarray*}
 For $n=2m+1$ odd, it is proved in the same way  that $s_n< r_n^*<q_n^*:=s_n+\frac{1}{16\sqrt 3}$, where $s_n,r^*_n\in I_m$.
   In fact, $h^*$ is increasing on
$[\frac{\frac{4\pi}{3}+2m\pi}{\sqrt 3}, \frac{2\pi +2m\pi}{\sqrt 3}]\;\supseteq\; [s_n,q_n^* ]$. 
 So,  by noticing that the cosinus term is positive,
\begin{eqnarray*}
h^*(q^*_n)&=&2e^{\sqrt 3(\frac{3\pi}{2}+(n-1)\pi+\frac{1}{16})}\cos\left(\frac{3\pi}{2}+\frac{1}{16}\right)-1\\
& \geq&  2e^{\sqrt 3(\frac{3\pi}{2}+\frac{1}{16})}\cos\left(\frac{3\pi}{2}+\frac{1}{16}\right)-1\sim 486.971814\cdots
\end{eqnarray*}

Next we estimate  the local maxima of $y_2$. Since $r^*_n\geq r_0^*> \pi/3$, 
$$y_2\leq \frac{1}{\sqrt 3} {\rm arsinh} \sqrt{ \frac{1}{4} e^{-6 \pi/3}}=\frac{1}{\sqrt 3} {\rm arsinh}  \frac{e^{-\pi}}{2}
\sim 0.01247381\cdots<0.02:=\rho
 $$
 
 Now the rectangle 
 $$R:=\left[r_0-\frac{1}{8\sqrt 3}, r_0+\frac{1}{8\sqrt 3 }\right]\times [0, \rho]\ss D(r_0, \frac{1}{8}),$$
 because the vertex $(\frac{1}{8\sqrt 3} , \rho)$ satisfies 
 $$\left(\frac{1}{8\sqrt 3}\right)^2 + \rho^2\sim 0.00568\cdots< 0.01562=\frac{1}{64}.$$
It is easily shown that $\{(x,y_2(x)): r_{2n}^*\leq x\leq r_{2n}\}$ and  $\{(x,y_2(x)): r_{2n+1}\leq x\leq r_{2n+1}^*\}$ are contained in the rectangle $R$.  \hfill $\square$

$\bullet$  Having only the real zeros $r_k$ of $f$ in the right-half plane, the symmetry $f(z)=0\iff f(e^{i2\pi/3}z)=0$ now implies that no other zeros are in the left half-plane excepted the rotations $e^{i2\pi/3} r_k$ and $e^{i4\pi/3} r_k$ as any
  $\zeta:=re^{i\theta}$ with
  $\pi/2\leq \theta\leq3\pi/2$ would yield that  at least one of the points $e^{i4\pi/3}\zeta $ or $e^{i2\pi/3}\zeta$ belongs 
  to the right-half plane. 
  This finishes the proof of Property 6.\hfill $\square$
  
A detailed analysis  of the zeros of the functions $e^{az}+e^{-az}+1$ appears in \cite{mpr}. That paper is based on the preceding methods and text.

\newpage

\begin{figure}[h!]
 
  \scalebox{0.45} 
  {\includegraphics{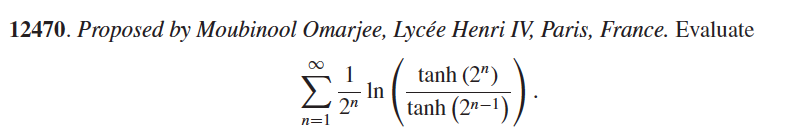}} 
  
 \end{figure}
 

\centerline{{\bf Solution to problem 12470  in Amer. Math. Monthly 131 (2024), p. 536}}\medskip 

\centerline{Raymond Mortini and Rudolf Rupp }

\bigskip

\centerline{- - - - - - - - - - - - - - - - - - - - - - - - - - - - - - - - - - - - - - - - - - - - - - - - - - - - - -}
  
  \medskip

  We show that
  $$\ovalbox{$\dis S:=\sum_{n=1}^\infty\frac{1}{2^n} \log\left(\frac{\tanh 2^n}{\tanh 2^{n-1}}\right)=\log (e^2+1)-2\sim 0.1269280110\cdots$}.$$
  \bigskip
  
  One has to transform this into a telescoping series. 
  
  \begin{eqnarray*}
  \frac{\tanh 2^n}{\tanh 2^{n-1}}&=& \frac{\sinh 2^n}{\sinh 2^{n-1}}\; \frac{\cosh 2^{n-1}}{\cosh 2^n}=
  \frac{2\; \sinh 2^{n-1}\cosh 2^{n-1}}{\sinh 2^{n-1}}\; \frac{\cosh 2^{n-1}}{\cosh 2^n}\\
  &=&2 \frac{(\cosh 2^{n-1})^2}{\cosh 2^n}.
\end{eqnarray*}

  Hence
  \begin{eqnarray*}
\frac{1}{2^n} \log\left(\frac{\tanh 2^n}{\tanh 2^{n-1}}\right)&=&\frac{1}{2^n}\Big( \log 2 +2\log(\cosh 2^{n-1})-\log\cosh 2^n\Big)\\
&=&\frac{ \log 2}{2^n} + \frac{1}{2^{n-1}}\log(\cosh 2^{n-1})-\frac{1}{2^n}\log(\cosh 2^{n}).
 \end{eqnarray*} 
    Note that $\e_n:=\frac{1}{2^n}\log(\cosh 2^{n})\co{\to 1}$ since (by using l'Hospital's rule).
    $$ \lim_{x\to\infty}\frac{\log( e^x+e^{-x})}{x}=1.
    $$
    
 Consequently the series below converges and 
 \begin{eqnarray*}
S&=&\sum_{n=1}^\infty \frac{\log 2}{2^n} +\sum_{n=1}^\infty  \left(\frac{1}{2^{n-1}}\log(\cosh 2^{n-1})-\frac{1}{2^n}\log(\cosh 2^{n})\right)\\
&=&1\cdot \log 2  +\log\cosh 1-\lim_n \e_n=\log 2+\log\left(\frac{e^1+e^{-1}}{2}\right)-1=\log (e^2+1)-2.
\end{eqnarray*}

\newpage

\begin{figure}[h!]
 
  \scalebox{0.45} 
  {\includegraphics{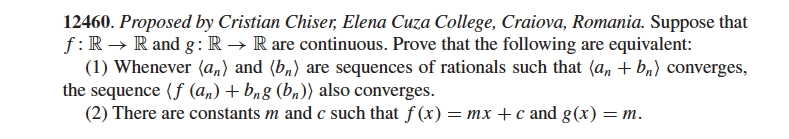}} 
  
 \end{figure}
 

\centerline{{\bf Solution to problem 12460  in Amer. Math. Monthly 131 (2024), 354}} \medskip 

\centerline{Raymond Mortini and Rudolf Rupp }

\bigskip

\centerline{- - - - - - - - - - - - - - - - - - - - - - - - - - - - - - - - - - - - - - - - - - - - - - - - - - - - - -}
  
  \medskip

  (2) $\imp$ (1): If $f(x)=mx+c$ and $g(x)=m$, then trivially 
  $$S(n):=f(a_n)+b_ng(b_n)=ma_n+c+b_n m= m(a_n+b_n)+c$$
  and so $S(n)$ converges whenever $a_n+b_n$ converges.
  
  (1)$\imp$ (2):  Let $\N:=\{0,1,2,\dots\}$. Fix $a, q\in \Q$. 
  Consider the sequences 
    $$\mbox{$(a_n)_{n\in \N}=(a+q , a, \co{a+q, a},a+q, a,\dots)$ \quad
    and \quad $(b_n)_{n\in \N}=(-q,0,\co{-q,0},\dots)$}.$$
   
Then $a_n+b_n=a$ for all $n$. Moreover
    $$s_n:=f(a_n)+b_n g(b_n)=\begin{cases} f(a+q)-q g(-q)& \text{if $n$ is even}\\
    f(a)&\text{if $n$ is odd}.
    \end{cases}.
    $$
    As by assumption $(s_n)$ converges, we deduce that 
    \begin{equation}\label{assu1}
    f(a+q)-q g(-q)=f(a)
\end{equation}
  
  Next consider the sequences $(a_n)_{n\in \N}=(q,0,q,0,\dots)$ and $(b_n)_{n\in \N}=(-q,0,-q,0, \dots)$.
  Then $a_n+b_n=0$ and
  $$r_n:=f(a_n)+b_n g(b_n)=\begin{cases} f(q)-q g(-q)& \text{if $n$ is even}\\
    f(0)&\text{if $n$ is odd}.\end{cases}$$
  Since by assumption also $(r_n)$ converges, we have
  \begin{equation}\label{assu2}
  f(q)-q g(-q)=f(0).
\end{equation}
  Now (\ref{assu1})--(\ref{assu2}) yields that for every $q\in \Q$ and $a\in \Q$
  $$
  f(a+q)-f(q)=f(a)-f(0).
  $$
  Since $f$ is assumed to be continuous,  $f(a+x)-f(a)= f(x)-f(0)$ for every $x\in \R$. This implies that $f$ is an affine function. In fact,
  let $h(x)=f(x)-f(0)$. Then $h(a+x)=h(x)+h(a)$, that is, $h$ is a continuous additive function. By a classical result due to Cauchy, $h$ is linear; that is $h(x)=mx$ for some $m\in \R$. Consequently $f(x)=mx+f(0)$.  Let $c:=f(0)$.
  Then the condition on $f$ and $g$ has the form
  $$S_n=f(a_n)+b_ng(b_n)=m a_n+c +b_n g(b_n)= c+ m(a_n+b_n)+ b_n(g(b_n)-m).$$

 Let $p,q\in \Q$ and consider the sequences 
 $(b_n)_{n\in \N}=(p,q,p,q,\dots)$ and $(a_n)_{n\in \N}=(-p,-q,-p,-q,\dots)$.
 Then  $a_n+b_n=0$ for all $n$ and, by assumption,
 $$S_n=\begin{cases}
 c+p(g(p)-m)&\text{if $n$ is even}\\ c+q(g(q)-m)&\text{if $n$ is odd}
 \end{cases}
 $$  
 converges. Hence the function $x (g(x)-m)$ must be constant on $\Q$, hence on $\R$ (due to continuity).  Consequently $g(x)=m$ for every $x\in \R$.

\newpage

  \begin{figure}[h!]
 
  \scalebox{0.45} 
  {\includegraphics{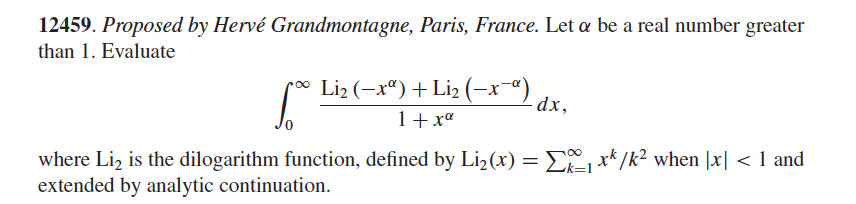}} 
  
 \end{figure}
 

\centerline{{\bf Solution to problem 12459  in Amer. Math. Monthly 131 (2024), 354}}\medskip 

\centerline{Raymond Mortini and Rudolf Rupp }

\bigskip

\centerline{- - - - - - - - - - - - - - - - - - - - - - - - - - - - - - - - - - - - - - - - - - - - - - - - - - - - - -}
  
  \medskip

We prove that for $a>1$
 $$\ovalbox{$\dis
 I(a):=\int_0^\infty\frac{{\rm Li}_2(-x^a)+{\rm Li}_2(-x^{-a})}{1+x^a}\;dx =
 \frac{\pi^3}{3a}  \left( \frac{\sin^2(\pi/a)-3}{\sin^3 (\pi/a)}\right).
$ }
 $$
 
 \medskip

 To start with, we use the known formula \cite{max}
 $${\rm Li}_2(z)+{\rm Li}_2(\frac{1}{z})=-\frac{\pi^2}{6}-\frac{1}{2}\log^2(-z),\quad z\in \C\setminus [0,\infty[, $$
 for $z=-x^a$. So the integral to be computed is
 $$I(a)=-\frac{\pi^2}{6}\int_0^\infty \frac{1}{1+x^a}dx - \frac{1}{2}\int_0^\infty \frac{\log^2 (x^a)}{1+x^a}\;dx.$$

 The change of variable $x^a\mapsto e^{-t}$ now yields
 $$I(a)=-\frac{\pi^2}{6a} \int_{-\infty} ^\infty \frac{e^{-{\frac{t}{a}}}}{1+e^{-t}}dt -
  \frac{1}{2a}\int_{-\infty} ^\infty  \frac{t^2e^{-{\frac{t}{a}}}}{1+e^{-t}}dt.
  $$
 We solve this with the help of the residue theorem.  So, for $m=0$ or $m=2$,  let 
 $$f_m(z):= z^m \frac{e^{-{\frac{z}{a}}}}{1+e^{-z}}.$$
 In order the obtained   series converge and the path-integrals tend to $0$ when "blowing up" the contours,  we consider for $0<r<1$ the auxiliary
 functions 
 $$u_r(z):=r^{-iz/2\pi}:=e^{-i \frac{\log r}{2\pi} z}$$
(which converge locally uniformly to $1$ as $r\to 1$) and
 $$F_{m,r}(z):= f_m(z) u_r(z),$$
 and calculate the integral
\begin{equation}\label{paraint}
J_r(a):= -\frac{\pi^2}{6a} \int_{-\infty} ^\infty \frac{e^{-\frac{t}{a}} r^{-i t/2\pi}}{1+e^{-t}}dt -
  \frac{1}{2a}\int_{-\infty} ^\infty  \frac{t^2e^{-\frac{t}{a}}r^{-i t/2\pi}}{1+e^{-t}}dt.
\end{equation}
 As $|u_r(t)|\leq 1$,  we deduce from Lebesgue's dominated convergence theorem that 
 $$\lim_{r\to 1} J_r(a)=I(a).$$
 
 Note that  $F_{m,r}$ is meromorphic in $\C$ with simple poles at $z_n=i\pi(1 + 2n)$ for $n\in\Z$.  We integrate $F_{m,r}$ over  the  
 positively oriented boundary $\Gamma_N=\gamma_1+\gamma_2+\gamma_3$ of the rectangles 
 $[-2N\pi, 2N\pi]\times[0, 2N\pi]$, where $N\in \N^*$.  Let $s_N:=2\pi N$. Then
 
 $$\begin{matrix}
\gamma_1(t)&=&s_N (1+it),&  0\leq t\leq 1,\\ 
 \gamma^{[-1]}_2(t)&=& s_N(t+i ),& -1\leq t\leq 1,\\
  \gamma_3^{[-1]}(t)&=&s_N(-1+ it),&  0\leq t\leq 1.
 \end{matrix}
 $$
 
By the residue theorem
 $$\int_{\Gamma_N} F_{m,r}(z)dz=2\pi i \sum_{n=0}^{N-1}{\rm Res}\;(F_{m,r}, z_n).$$
 Now $F_{m,r}=g/h$ and so 
 $${\rm Res}\;(F_{m,r}, z_n)= \frac{g(z_n)}{h'(z_n)}.
 $$
 Moreover, $\dis\int_{\Gamma_N} F_{m,r}(z) dz\to 0$ as $N\to\infty$. To see this we have to consider three cases:
 
 Since on $\gamma_j$  for $j=1,3$,  we have $|u_r(\gamma_j(t))|\leq e^{t N\log r}\leq 1$, we see that
\begin{eqnarray*}
\left|\int_{\gamma_1} F_{m,r}(z)dz\right| &\leq &\int_0^1 \frac{|s_N(1+it)|^m \;|e^{-s_N(1+it)/a}|\,|u_r(s_N(1+it))|}
{|1+e^{-s_N(1+it)}|} s_N dt\\
&\leq& \frac{C N^{m+1} e^{-s_N/a}}{1-e^{-s_N}}\to 0\;{\rm as}\; N\to\infty.
\end{eqnarray*}

\begin{eqnarray*}
\left|\int_{\gamma_3} F_{m,r}(z)dz\right| &\leq &\int_0^1 \frac{|s_N(-1+it)|^m \;|e^{-s_N(-1+it)/a}| \,|u_r(s_N(-1+it))|
}{|1+e^{-s_N(-1+it)}|} s_N dt\\
&\leq& \frac{C N^{m+1} e^{s_N/a}}{e^{s_N}-1} \cdot \co{\frac{e^{-s_N}}{e^{-s_N}}}   \\
&=&\frac{CN^{m+1} e^{-s_N\left(1-\frac{1}{a}\right)}}{1-e^{-s_N}}\to 0\;{\rm as}\; N\to\infty.
\end{eqnarray*}

Next we observe that on $\gamma_2$ we have
$$|u(\gamma_2(t))|=\left|e^{-i N \log r( t+i)}\right|=e^{N\log r},
$$
and that 
$$N^{m+1} e^{N\log r}= e^{(m+1)\log N  +N\log r}\to 0 \;{\rm as}\; N\to\infty.$$
Hence
\begin{eqnarray*}
\left|\int_{\gamma_2} F_{m,r}(z)dz\right| &\leq &\int_{-1}^1 \frac{|s_N(i+t)|^m |e^{-s_N (t+i)/a}| \,|u_r(s_N(i+t))|
} {|1+ e^{-s_N(t+i)}|} s_Ndt\\
&\leq&C N^{m+1} e^{N\log r} \int_{-1}^1  \frac{e^{-s_N t/a}}{1+e^{-t s_N}}dt\\
&\leq&CN^{m+1} e^{N\log r}\left(\int_0^1  \underbrace{e^{-s_Nt/a}}_{\leq 1} dt +\int_{-1}^0 \underbrace{e^{s_Nt\left(1-\frac{1}{a}\right)}}_{\leq 1} dt\right)\\
&&\to 0\cdot  0\;{\rm as}\; N\to\infty,
\end{eqnarray*}
where the property $\lim\int=\int\lim$ is used (Lebesgue's dominated convergence theorem: the integrands are bounded
 (in moduli) by $1$ and converge  to $0$ on the associated open intervals).\\

By letting $N\to\infty$, we conclude that

\begin{eqnarray*}
J_r(a)=2\pi i  \left( -\frac{\pi^2}{6a} \, \sum_{n=0}^\infty{\rm Res}\;(F_{0,r}, z_n) 
  -\frac{1}{2a}  \,\sum_{n=0}^\infty{\rm Res}\;(F_{2,r}, z_n)       \right).
\end{eqnarray*}

That the series converge will be clear  in a moment. To this end, we need to calculate the residua. Note that
$$u_r(z_n)=e^{-i\frac{\log r}{2\pi} i\pi(1+2n)}=  (\sqrt r)^{1+2n}.
$$
Hence
\begin{eqnarray*}
{\rm Res}\;(F_{m,r}, z_n) &=&z_n^m \;\frac{e^{-z_n/a}}{-e^{-z_n}} u_r(z_n)= -(i\pi(1+2n))^m\frac{e^{-i\pi(1+2n)/a}}{e^{-i\pi(1+2n)}}  u_r(z_n)\\
&=& (\sqrt r)^{1+2n}\;(i\pi(1+2n))^m e^{-i\pi(1+2n)/a}. 
\end{eqnarray*}

Let $\zeta:=e^{-i\pi/a}$. Then
\begin{eqnarray*}
J_r(a)
&=&2\pi i  \left( -\frac{\pi^2}{6a}   \sum_{n=0}^\infty ( \sqrt  r\zeta)^{2n+1}  
-\frac{1}{2a}  \,\sum_{n=0}^\infty (i\pi(1+2n))^2 (\sqrt r\zeta)^{2n+1} \right)\\
&=&2\pi i  \left( -\frac{\pi^2}{6a}   \sum_{n=0}^\infty  (\sqrt r \zeta)^{2n+1}  
+\frac{\pi^2}{2a}  \,\sum_{n=0}^\infty (2n+1)^2 (\sqrt r\zeta)^{2n+1}\right).
\end{eqnarray*}

 It is straightforward to check  the following result:
 
 \begin{equation}\label{powersu}
 \sum_{n=0}^\infty (2n+1)^2 z^{2n+1} =z\frac{z^4+6z^2+1}{(1-z^2)^3}=:S(z).
\end{equation}

Hence 
$$J_r(a)=2\pi i\left( -\frac{\pi^2}{6a} \frac{\sqrt r \zeta}{1-r \zeta^2} +\frac{\pi^2}{2a} S(\sqrt r \zeta)\right),$$
and so 
$$I=\lim_{r\to 1} J_r(a)=2\pi i\left( -\frac{\pi^2}{6a} \frac{ \zeta}{1- \zeta^2} +\frac{\pi^2}{2a} S( \zeta)\right).
$$
  Note that $\zeta\; \ov{\zeta}=1$, but $\zeta\not=1$. A short calculation yields
 $$S(\zeta)= \frac{\zeta^2+6+\ov{\zeta}^2}{(\ov{\zeta}-\zeta)^3}.
 $$
 Consequently, by using that $\zeta^2+6+\ov{\zeta}^2= (\zeta-\ov{\zeta})^2 +8$,
 \begin{eqnarray*}
I&=&2\pi i  \left( -\frac{\pi^2}{6a}  \frac{\zeta}{1-\zeta^2}+\frac{\pi^2}{2a}  \,\frac{\zeta^2+6+\ov{\zeta}^2}{(\ov{\zeta}-\zeta)^3}\right)\\
&=&2\pi i  \left( -\frac{\pi^2}{6a}  \frac{1}{\ov{\zeta}-\zeta}+\frac{\pi^2}{2a}  \,\frac{(\zeta-\ov{\zeta})^2+8}{(\ov{\zeta}-\zeta)^3}\right)\\
&=&2\pi i  \left( -\frac{\pi^2}{6a}  \frac{(\ov{\zeta}-\zeta)^2}{(\ov{\zeta}-\zeta)^3}+\frac{\pi^2}{2a}  
\,\frac{(\zeta-\ov{\zeta})^2+8}{(\ov{\zeta}-\zeta)^3}\right)\\
&=&-\frac{\pi^3}{3a} \left(i \; \frac{(-4)\sin^2(\pi/a)}{(2i)^3\sin^3 (\pi/a)} -i\;\frac{(-12) \sin^2(\pi/a) +24}{(2i)^3\sin^3 (\pi/a)}\right)\\
&=&-\frac{\pi^3}{3a}  \left( \frac{3-\sin^2(\pi/a)}{\sin^3 (\pi/a)}\right).
\end{eqnarray*}

{\bf Remark} A more classical way is to compute
$$I(a)=-\frac{\pi^2}{6}\int_0^\infty \frac{1}{1+x^a}dx - \frac{1}{2}\int_0^\infty \frac{\log^2 (x^a)}{1+x^a}\;dx$$
"directly" without the change of variable by applying the residue theorem to the functions
$\frac{1}{1+z^a}$, $\frac{\log z}{1+z^a}$ and $\frac{\log^2 z}{1+z^a}$ for the standard branches of the power and logarithm and the boundary of the sectors
$$\{z\in \C: |z|<R,  0\leq \arg z\leq \frac{2\pi}{a}\},$$
which contains one simple pole.

\newpage

   \begin{figure}[h!]
 
  \scalebox{0.5} 
  {\includegraphics{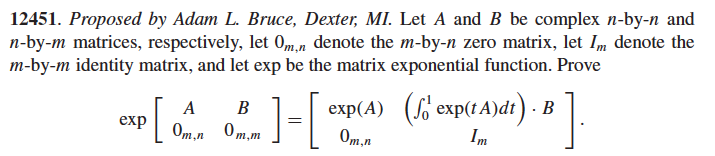}} 
  
 \end{figure}
 

\centerline{\bf Solution to problem 12451  in Amer. Math. Monthly 131 (2024)} \medskip 

\centerline{Raymond Mortini and Rudolf Rupp }

\bigskip

\centerline{- - - - - - - - - - - - - - - - - - - - - - - - - - - - - - - - - - - - - - - - - - - - - - - - - - - - - -}
  
  \medskip

As usual,  $M^0=I_s$ where $M$ is  a square $s\times s$ matrix.
  Via induction
  $$\left(\begin{matrix}
  A& B\\
  0&0
  \end{matrix}
  \right)^k
  = \left(\begin{matrix}
  A^k& A^{k-1}B\\
  0&0
  \end{matrix}
  \right).
 $$
 Hence
 \begin{eqnarray*}
\exp\left(\begin{matrix}
  A& B\\
  0&0
  \end{matrix}
  \right)
  &=&\sum_{k=0}^\infty \frac{1}{k!} \left(\begin{matrix}
  A& B\\
  0&0
  \end{matrix}
  \right)^k
 =I_{n+m} +\sum_{k=1}^\infty \frac{1}{k!} \left(\begin{matrix}
  A^k& A^{k-1}B\\
  0&0
  \end{matrix}
  \right) \\
  &=& \left(\begin{matrix}
 \exp A& \sum_{k=1}^\infty  \frac{1}{k!}A^{k-1}B\\
  0&I_{m}
  \end{matrix}
  \right).
\end{eqnarray*}
 But 
 $$\int_0^1 \exp(At)dt= \sum_{j=0}^\infty A^j\;\int_0^1 \frac{t^j}{j!}=\sum_{j=0}^\infty \frac{1}{(j+1)!} A^j.$$
 Hence
 $$ 
\exp\left(\begin{matrix}
  A& B\\
  0&0
  \end{matrix} \right)
 =\left(\begin{matrix}
  \exp A& \left(\int_0^1 \exp(At)dt\right)B\\
  0&I_m
  \end{matrix}\right).
  $$

\newpage

   \begin{figure}[h!]
 
  \scalebox{0.45} 
  {\includegraphics{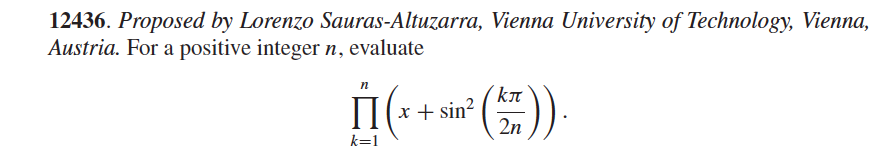}} 
  
 \end{figure}
 
\nopagecolor

\centerline{\bf Solution to problem 12436  in Amer. Math. Monthly 130 (2023)} \medskip 

\centerline{Raymond Mortini and Rudolf Rupp }

\bigskip

\centerline{- - - - - - - - - - - - - - - - - - - - - - - - - - - - - - - - - - - - - - - - - - - - - - - - - - - - - -}
  
  \medskip

 We show that
$$\ovalbox{$P(x):=\dis \prod_{k=1}^n \left( x+\sin^2\left(\frac{k\pi}{2n}\right)\right)=2^{-2n+2}(x+1) U_{n-1}(2x+1)$}\;,$$

 where  $U_0=1$ and 
 $$U_n(x)=\sum_{k=0}^{\lfloor\frac{n}{2}\rfloor} (-1)^k {n-k\choose k} (2x)^{n-2k}\buildrel=_{}^{n\geq 1}
 2^n\prod_{k=1}^n \left(x-\cos\left(\frac{k\pi}{n+1}\right)\right)$$
 is the Chebyshev polynomial of the second kind.
 
 This is very easy, though.
 \begin{eqnarray*}
 P(x)&=&\prod_{k=1}^n \left( x+\co{\sin^2\left(\frac{k\pi}{2n}\right)}\right)= \prod_{k=1}^n \left( x+\co{\frac{1}{2}
 \left(1-\cos\left(\frac{k\pi}{n}\right)\right)}\right)\\
 &=&2^{-n}\prod_{k=1}^n \left(2x+1-\cos\left(\frac{k\pi}{n}\right)\right)=2^{-n} \prod_{k=1}^{n-1} \left(2x+1-\cos\left(\frac{k\pi}{n}\right)\right)\; (2x+1-\cos(\pi))\\
 &=&2^{-2n+2}(x+1) U_{n-1}(2x+1).
\end{eqnarray*}

 \newpage
 
   \begin{figure}[h!]
 
  \scalebox{0.45} 
  {\includegraphics{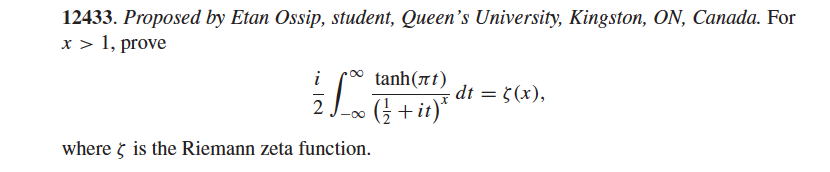}} 
  
 \end{figure}

 \centerline{\bf Solution to problem 12433  in Amer. Math. Monthly 130 (2023), ?}\medskip 
 
\centerline{Raymond Mortini and Rudolf Rupp }

\bigskip

\centerline{- - - - - - - - - - - - - - - - - - - - - - - - - - - - - - - - - - - - - - - - - - - - - - - - - - - - - -}
  
  \medskip

 Let $G:=\{z\in \C: {\rm Im} z<0.5\}$ be the  shifted lower half-plane, and  
 let $\log z=\log|z|+i\arg z$ with $-\pi<\arg z<\pi$  be the standard holomorphic branch of the logarithm. Since  for $z\in G$  
we have ${\rm Re}\;(0.5+iz)>0$,  the function 
$$(0.5 +iz)^x=e^{x\log (0.5+iz)}$$
 is well defined and holomorphic in $G$.  Consequently,  the  function
 $$f(z):= \frac{\tanh (\pi z)}{(0.5 +iz)^x},$$
  is meromorphic in $G$ with simple poles at $z_k:=-i(0.5 +k)\in G$, where $k\in \N=\{0,1,2,\dots\}$.
  We apply now  the residue theorem to $f$. To this end,  we integrate for $N\geq 1$  the function $f$ along  the positively oriented
  boundary $\Gamma_N$ of the rectangles $R_N:=[-N,N]\times [0,-N]$ and conclude that
  $$
  \int_{\Gamma_N} f(z) dz= 2\pi i \sum_{k=0}^\infty n(\Gamma_N,z_k){\rm Res}(f, z_k),
  $$
  where $n(\Gamma,z)$ denotes the number of times the point $z$ is  surrounded by $\Gamma$. Observe that at most a finite number of terms in this sum are not equal to $0$ as 
  $$n(\Gamma_N, z_k)=\begin{cases} 1 &\text{if $k=0,1,\dots, N$}\\ 0& \text{if $k>N$}.
  \end{cases}
  $$
  Let us calculate the residue now. We use the formula ${\rm Res}\;( \frac{g}{h}, a)=  \frac{g(a)}{ h'(a)}$, whenever $a$ is a simple zero of $h$. That is, when we choose $\dis g(z)=\frac{\sinh (\pi z)}{(0.5+iz)^x}$ and 
  $\dis h(z)= \cosh (\pi z)$, 
  $${\rm Res}(f, z_k)= \frac{ \sinh (\pi z)}{(1+k)^x} \frac{1}{ \pi \sinh(\pi z)}\Big|_{z=-i(0.5+k)}= \frac{1}{\pi}\;\frac{1}{(1+k)^x}.
  $$

 It remains to show that the integral along the three parts $\Gamma_N^j$ of $\Gamma_N$ that are contained in the lower half plane 
 ${\rm Im}\; z<0$ tends to zero. First note that
 $$\tanh (\pi z)= \frac{e^{2\pi z}-1}{e^{2\pi z}+1}.$$
 
 i) Let $z(t)=-N-it$, where $0\leq t\leq N$. Then for $n\geq N_0$,
 
 \begin{eqnarray*}
 |\tanh z(t)|&=& \left|\frac{e^{-2\pi N}e^{-2i \pi t}-1}{e^{-2\pi N}e^{-2i\pi t}-1}\right|\leq \frac{1+e^{-2\pi N}}{1-e^{-2\pi N}}\leq 2.
 \end{eqnarray*}
Moreover,

\begin{eqnarray*}
|0.5+iz(t)|^x&=& |0.5-iN+t|^x\geq N^x.
\end{eqnarray*}
 
 ii) Let $z(t)=N-it$, where $0\leq t\leq N$. Then for $n\geq N_0$,
 \begin{eqnarray*}
  |\tanh z(t)|&=& \left|\frac{e^{2\pi N}e^{-2i\pi t}-1}{e^{2\pi N}e^{-2i\pi t}+1}\right|\leq \frac{e^{2\pi N}+1}{e^{2\pi N}-1}\leq 2.
  \end{eqnarray*}
  Moreover
  
  \begin{eqnarray*}
  |0.5+iz(t)|^x&=&|0.5 +iN+t|^x\geq  N^x.
\end{eqnarray*}
  
  iii) Let $z(t)=t -iN$ where $-N\leq t\leq N$. Then
  \begin{eqnarray*}
   |\tanh z(t)|&=&\left|\frac{e^{2\pi t} e^{-2i\pi N}-1}{e^{2\pi t} e^{-2i\pi N}+1}\right|= \frac{e^{2\pi t}-1}{e^{2\pi t}+1}\leq 1.
\end{eqnarray*}

Moreover
\begin{eqnarray*}
|0.5+iz(t)|^x&=&|0.5+it+N|^x\geq (N+0.5)^x\geq N^x.
\end{eqnarray*}

Since $x>1$, we conclude that for $N\geq N_0$
\begin{eqnarray*}
\left|\sum_{j=1}^3\int_{\Gamma_N^j} f(z)dz\right|&\leq&\sum_{j=1}^2\int_0^N |f(z_j(t))| dt +\int_{-N}^N|f(z_3(t))| dt\\
&\leq&2 \cdot  \frac{2N}{N^x} +  2N\frac{1}{N^x}=\frac{6}{N^{x-1}}\to 0 \; \text{as $N\to \infty$.}
\end{eqnarray*}

We conclude that 
\begin{eqnarray*}
\frac{i}{2}\int_{-\infty}^\infty \frac{\tanh (\pi t)}{(0.5+it)^x}\;dt=-\frac{i}{2} 2\pi i \sum_{k=0}^\infty \frac{1}{\pi }\; \frac{1}{(1+k)^x}=\zeta(x).
\end{eqnarray*}
 Note that the minus sign comes from the fact that the upper boundary of the rectangle $R_N$ is run through from the right to the left.
   \begin{figure}[h!]
 
  \scalebox{0.45} 
  {\includegraphics{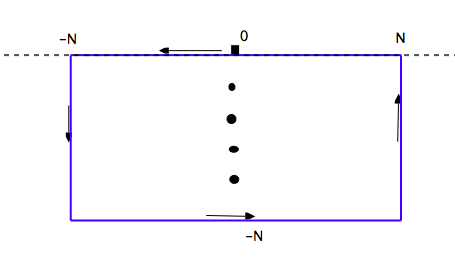}} 
  
 \end{figure}

\newpage

\begin{figure}[h!]
 
  \scalebox{0.5} 
  {\includegraphics{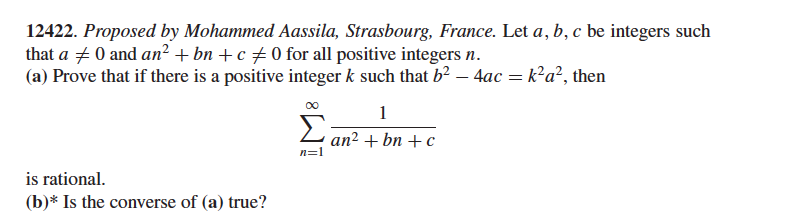}} 
  
 \end{figure}
 \nopagecolor

\centerline{\bf Solution to problem 12422  in Amer. Math. Monthly 130 (2023), 862} \medskip 

\centerline{Raymond Mortini and Rudolf Rupp }

\bigskip

\centerline{- - - - - - - - - - - - - - - - - - - - - - - - - - - - - - - - - - - - - - - - - - - - - - - - - - - - - -}
  
  \medskip

We solve (a).  Put
  $$R:=\sum_{n=1}^\infty \frac{1}{an^2+bn+c}.$$
   Let $r_1$ and $r_2$ be the  zeros of the polynomial $p(x)= ax^2+bx+c$. Suppose that $a\not=0$ and 
  $b^2-4ac=k^2a^2$ for some $k\in\{1,2,3,\dots\}$.
  Then
  $$\mbox{$\dis r_1= \frac{-b-ka}{2a}$ and $r_2=\dis \frac{-b+ka}{2a} $}$$
  and $r_1-r_2= -k$. As an example we mention $a=1$, $b=5$ and $c=4$, $k=3$, $r_1=-4$,  $r_2=-1$.
  
 Now the partial fraction decomposition of $1/p(n)$ reads as
 \begin{eqnarray*}
\frac{1}{an^2+bn+c}&= & \frac{1}{a(n-r_1)(n-r_2)}= \frac{1}{a(r_1-r_2)}\left(\frac{1}{n-r_1} -\frac{1}{n-r_2}\right).
 \end{eqnarray*} 
 Hence, for $n\geq n_0>1$ and $n_0$ chosen so that $n-r_j-1> 0$, 
 \begin{eqnarray*}
 S:=\sum_{n=n_0}^\infty \frac{1}{an^2+bn+c}&=&\frac{1}{a(r_2-r_1)}\sum_{n=n_0}^\infty \int_0^1 (x^{n-r_2-1}- x^{n-r_1-1})dx\\
 &=& \frac{1}{ak}\sum_{n=n_0}^\infty\int_0^1 x^{n-1} ( x^{-r_2}-x^{-r_1}) dx
\buildrel=_{}^{(1)} \frac{1}{ak} \int_0^1 ( x^{-r_2}-x^{-r_1}) \sum_{n=n_0}^\infty x^{n-1} dx\\
&=& \frac{1}{ak} \int_0^1  x^{n_0-1} \frac{ x^{-r_2}-x^{-r_1}}{1-x}\; dx
= \frac{1}{ak} \int_0^1 x^{n_0-1} x^{-r_2} \frac{1-x^{r_2-r_1}}{1-x}\; dx\\
&=&\frac{1}{ak} \int_0^1 x^{n_0-r_2-1} \frac{1-x^k}{1-x}\;dx
= \frac{1}{ak} \int_0^1\sum_{j=0}^{k-1} x^{n_0-1-r_2+j}\;dx\\
&= &\frac{1}{ak} \sum_{j=0}^{k-1} \frac{1}{n_0-r_2+j}.
\end{eqnarray*}
Hence $S$ is rational, and therefore $R$ is rational, too. 
 
 Note that in (1) the interchanging $\int\sum=\sum\int$ is possible, since $x^{n-1} ( x^{-r_2}-x^{-r_1}) $ has constant sign.

\newpage

 \begin{figure}[h!]
 
  \scalebox{0.45} 
  {\includegraphics{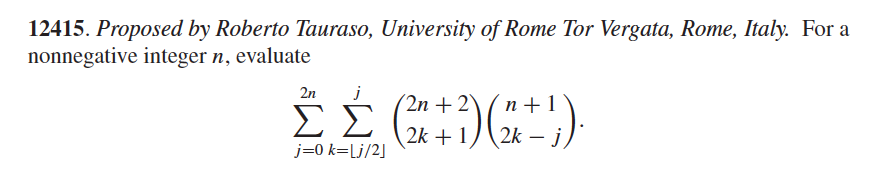}} 
  
 \end{figure}

\centerline{\bf Solution to problem 12415  in Amer. Math. Monthly 130 (2023), 765}\medskip 

\centerline{Raymond Mortini and Rudolf Rupp }

\bigskip

\centerline{- - - - - - - - - - - - - - - - - - - - - - - - - - - - - - - - - - - - - - - - - - - - - - - - - - - - - -}
  
  \medskip

Let  $n\in \N=\{0,1,2,\dots\}$ and  let 
$$ S_n:=\sum_{j=0}^{2n}\sum_{k=\lfloor j/2\rfloor}^j {2n+2\choose 2k+1}\;{n+1\choose 2k-j}.$$
We show that
$$\ovalbox{$S_n=2^{3n+1}.$}$$

First we interchange the two summations.

  \begin{figure}[h!]
 
  \scalebox{0.4} 
  {\includegraphics{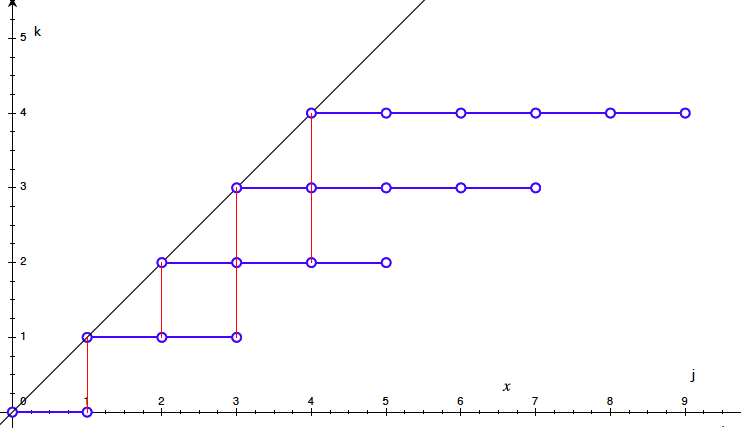}} 
  \caption{\label{gitt} \bl{$k\leq j\leq 2k+1$, $k=0,1,2,3,4$}, or \co{$ \lfloor j/2\rfloor\leq k\leq  j$ for $j=0,1,2,3,4$}}
  
 \end{figure}
 
\begin{eqnarray*}
S_n&=&\sum_{k=0}^{2n}\sum_{j=k}^{2k+1} {2n+2\choose 2k+1}\;{n+1\choose 2k-j}=
\sum_{k=0}^{2n}\left[{2n+2\choose 2k+1}\sum_{j=k}^{2k}{n+1\choose 2k-j}\right]\\
&=&\sum_{k=0}^{n}\left[{2n+2\choose 2k+1}\sum_{m=0}^{k}{n+1\choose m}\right]
\end{eqnarray*}

  \begin{figure}[h!]
 
  \scalebox{0.35} 
  {\includegraphics{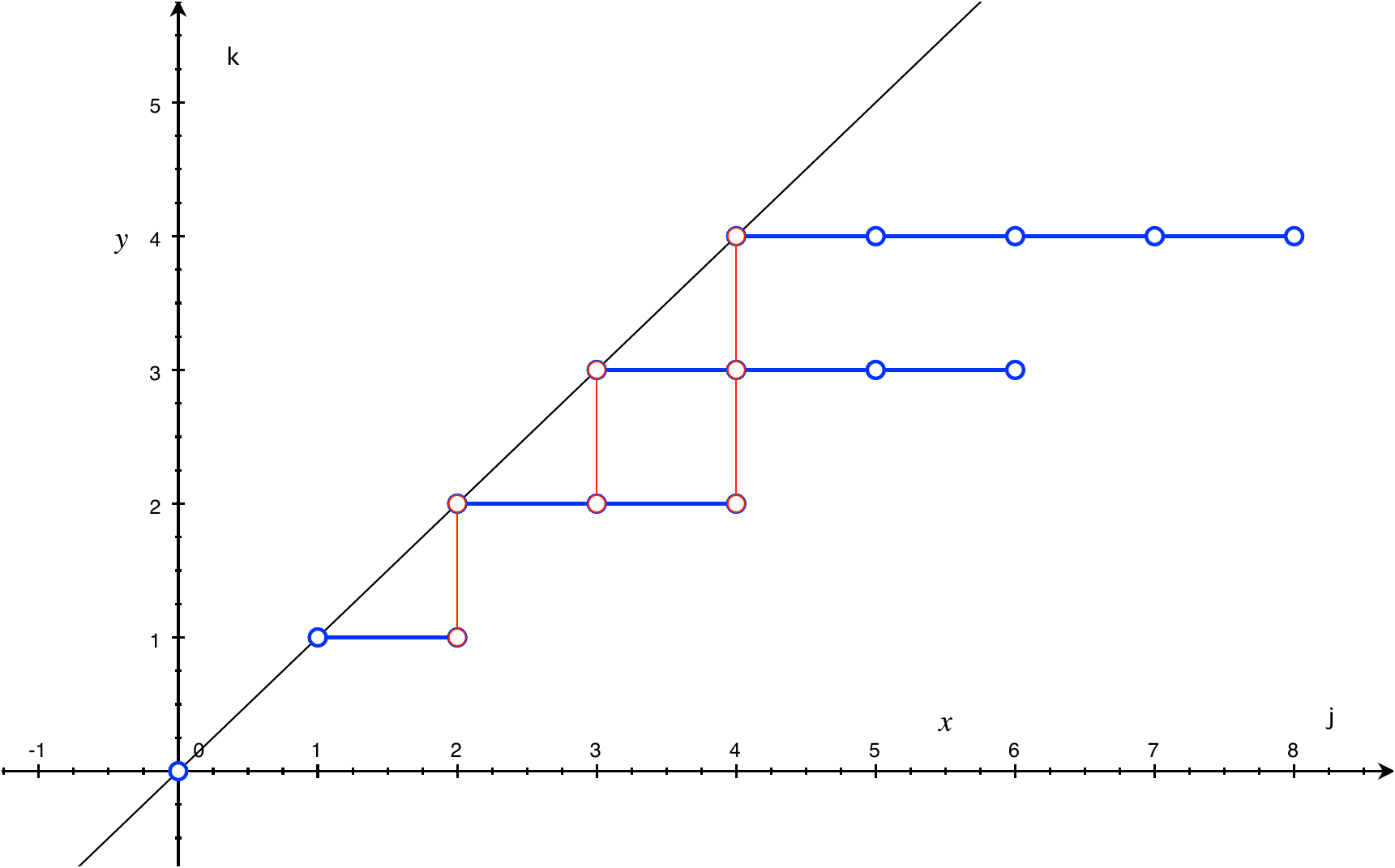}} 
  \caption{\label{gitt2} \bl{$k\leq j\leq 2k$, $k=0,1,2,3,4$}, or \co{$ \lceil j/2\rceil\leq k\leq  j$ for $j=0,1,2,3,4$}}
  
\end{figure}

\begin{figure}[ht!]
  
\vspace{4cm}
\begin{minipage}[b]{3cm}
 \begin{picture}(10,0)
 \hspace{-6cm}
     {\scalebox{0.25} {\includegraphics{gitter}} }
\end{picture}
\end{minipage}

\begin{minipage}[b]{3cm}
 \begin{picture}(10,0)
 \hspace{4cm}
     {\scalebox{0.2} {\includegraphics{gitti}} }
\end{picture}
\end{minipage}
  
 \end{figure}

It is well known that $\dis\sum_{k=0}^n {2n+2\choose 2k+1}=2^{2n+1}$. In fact
$$2^{2n+2}=\sum_{k=0}^{2n+2}{2n+2\choose k}\quad\text{and}\quad 0=(1+ (-1))^{2n+2}= \sum_{k=0}^{2n+2}(-1)^k{2n+2\choose k}.$$
Substraction yields that
$$
2^{2n+2}= 2\sum_{k=0\atop k\; {\rm odd} }^{2n+2}{2n+2\choose k}=2\sum_{m=0}^n  {2n+2\choose 2m+1}.
$$

Also, 
\begin{eqnarray*}
\dis 2^{n+1}&=&(1+1)^{n+1}= \sum_{j=0}^{n+1} {n+1\choose j} =\sum_{j=0}^k {n+1\choose j} + \sum_{j=k+1}^{n+1} {n+1\choose j} \\
&=&\sum_{j=0}^k {n+1\choose j} + \sum_{j=k+1}^{n+1} {n+1\choose n+1-j}\\
&\buildrel=_{i=n+1-j}^{}&\sum_{j=0}^k {n+1\choose j} + \sum_{i=0}^{n-k}{n+1\choose i}.
\end{eqnarray*}
Hence
\begin{eqnarray*}
S_n&=&\sum_{k=0}^{n}{2n+2\choose 2k+1}\sum_{m=0}^k{n+1\choose m}\buildrel=_{k=n-j}^{}
\sum_{j=0}^n{2n+2\choose 2n-2j+1} \sum_{i=0}^{n-j}{n+1\choose i}\\
&=&\sum_{j=0}^n{2n+2\choose (2n+2)-(2n-2j+1)} \sum_{i=0}^{n-j}{n+1\choose i}\\
&=&\sum_{j=0}^n{2n+2\choose 1+2j} \sum_{i=0}^{n-j}{n+1\choose i}\buildrel=_{j\to k}^{}
\sum_{k=0}^n{2n+2\choose 2k+1} \sum_{i=0}^{n-k}{n+1\choose i}.
\end{eqnarray*}

Addition yields
\begin{eqnarray*}
2S_n&=&\sum_{k=0}^{n}{2n+2\choose 2k+1}\sum_{j=0}^k{n+1\choose j}+
\sum_{k=0}^n{2n+2\choose 2k+1} \sum_{i=0}^{n-k}{n+1\choose i}\\
&=&\sum_{k=0}^{n}{2n+2\choose 2k+1}\left(\sum_{j=0}^k{n+1\choose j}+ \sum_{i=0}^{n-k}{n+1\choose i}\right)\\
&=&2^{2n+1}\cdot 2^{n+1}=2^{3n+2}.
\end{eqnarray*}
Hence $S_n=2^{3n+1}$.

{\bf Remarks}

(1)Note that 
$$S_n\buildrel=_{j=k-m}^{}\sum_{k=0}^{n}\left[{2n+2\choose 2k+1}\sum_{j=0}^k{n+1\choose k-j}\right]$$
This has the form  $\dis \sum_{k=0}^\infty a_k\sum_{j=0}^k b_{k-j}$, which is  a little bit different from the Cauchy product
$$\left(\sum_{k=0}^\infty a_k\right)\; \left(\sum_{k=0}^\infty b_k\right)= \sum_{k=0}^\infty\sum_{j=0}^k a_jb_{k-j}.$$

(2) Replacing  $\lfloor j/2\rfloor$ by $\lceil j/2\rceil$ yields the same result
 $$R_n:=\sum_{j=0}^{2n}\sum_{k=\lceil j/2\rceil}^j {2n+2\choose 2k+1}\;{n+1\choose 2k-j}=2^{3n+1}$$
 (see below),
alhough the associated index-grid is different (see figure \ref{gitt} and \ref{gitt2}).

Just note that
\begin{eqnarray*}
R_n&=&\sum_{k=0}^{2n}\sum_{j=k}^{2k} {2n+2\choose 2k+1}\;{n+1\choose 2k-j}=
\sum_{k=0}^{2n}\sum_{j=k}^{2k+1} {2n+2\choose 2k+1}\;{n+1\choose 2k-j}=S_n
\end{eqnarray*}

 
\newpage

 \begin{figure}[h!]
 
  \scalebox{0.5} 
  {\includegraphics{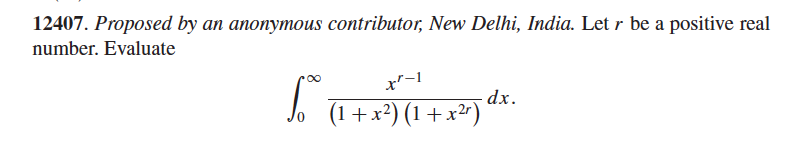}}
\end{figure}


\centerline{\bf Solution to problem 12407  in Amer. Math. Monthly 130 (2023), **}\medskip 

\centerline{Raymond Mortini and Rudolf Rupp }

\bigskip

\centerline{- - - - - - - - - - - - - - - - - - - - - - - - - - - - - - - - - - - - - - - - - - - - - - - - - - - - - -}
  
  \medskip

\bigskip

Given $r>0$, 
let $$I(r):=\int_0^\infty \frac{x^{r-1}}{(1+x^2)(1+ x^{2r})}\;dx.$$
We show that 
$$\ovalbox{$\dis I(r)=\frac{\pi}{4r}$}.$$
First it is clear that the integral converges since at $\infty$ we have that the integrand  $f_r(x)$ is similar to $1/x^{r+3}$ and at $0$
$f_r(x)$ is similar to $x^{r-1}$, where $r-1>-1$.
We make the change of the variable $x\to 1/y$. Then
\begin{eqnarray*}
I(r)&=& \int_0^\infty \frac{y^{1-r}}{(1+y^{-2})(1+y^{-2r})}\frac{dy}{y^2}=\int_0^\infty \frac{y^{1+r}}{(1+y^2)(1+y^{2r})}\;dy\\
&=&\int_0^\infty\frac{(y^2+1-1)y^{r-1}}{(1+y^2)(1+y^{2r})}\;dy=\int_0^\infty \frac{y^{r-1}}{1+y^{2r}}\;dy-I(r).
\end{eqnarray*}
Hence
$$2I(r)=\frac{1}{r}\arctan(y^r)\Big|^\infty_0=\frac{\pi}{2r},$$
from which we deduce that $I(r)=\pi/(4r)$.

\newpage

\begin{figure}[h!]
 
  \scalebox{0.6} 
  {\includegraphics{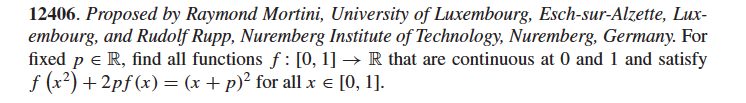}}
\end{figure}

\bigskip

\pagecolor{yellow}

\centerline{\bf Solution to problem 12406  in Amer. Math. Monthly 130 (2023), 679} \medskip 

\centerline{Raymond Mortini and Rudolf Rupp }

\bigskip

\centerline{- - - - - - - - - - - - - - - - - - - - - - - - - - - - - - - - - - - - - - - - - - - - - - - - - - - - - -}
  
  \medskip

\bigskip
Let $p\in\R$.  Consider the functional equation
\begin{equation}\label{fgl}
f(x^2)+2p f(x)=(x+p)^2.
\end{equation}

We claim that all solutions of (\ref{fgl}) on $[0,1]$ and continuous at $\{0,1\}$ are actually continuous on $[0,1]$ and are given by 
 $$f(x)= x+ \frac{p^2}{1+2p}$$
 whenever $p\not=-1/2$.\\
 
  $\bullet$ If $p=-1/2$, then $f(x^2)-f(x)=(x-\frac{1}{2})^2$ has  no solution  on $[0,1]$ (independently of being continuous or not) since for $x=1$, we would get  $0=f(1)-f(1)=1/4$.

$\bullet$ If $p=0$,  then $f(x^2)=x^2$ implies that on $[0,1]$ one has $f(x)=x$.

$\bullet$ Let $p\not=-1/2$.  We first determine the polynomial solutions. So let $q$ be  a polynomial solving (\ref{fgl}).
 Then the degree of $q$ is at most $1$. Say $q(x)=ax+b$. 
 Pulling into the functional equation yields
 $$ax^2+b+2p(ax+b)=x^2+2px +p^2$$
 or equivalently
 $$(a-1)x^2+2p(a-1)x +b(1+2p)-p^2=0.$$
 Hence $a=1$ and $b=\frac{p^2}{1+2p}$. 
 
 It is straighforward to check that $q(x)=x+ \frac{p^2}{1+2p}$ is indeed a solution to (\ref{fgl}). 
  We conclude that all polynomial solutions are given by the linear function $q$ above.\\
 
Next we determine the general solution   (\ref{fgl}). So let $f$ be  a solution on $[0,1]$  continuous at $0,1$.
 Now put $h(x):=f(x)-q(x)$. Then $h$ satisfies on $[0,1]$ the functional equation (of Schroeder type)
\begin{equation}\label{fglred}
h(x^2)=-2p h(x).
\end{equation}
Of course this implies that $h(0)=0$. 

i)  Let $p<-1/2$ or $p\geq 1/2$. Via induction
$$h(x^{2^n})=(-2p)^n h(x).$$
Since $h$ is continuous at $0$,  and $x^{2^n}\to 0$ for $0<x<1$,  $h(0)=0$, and  $|2p|^n\to \infty$ respectively 
$(-2p)^n=(-1)^n$ if $p=1/2$,  we
 deduce that $h(x)=0$ for $0<x<1$, too.

 ii) If $0<|p|<1/2$,  we rewrite (\ref{fglred}) as
 \begin{equation}\label{fglred2}
 h(\sqrt x)=-\frac{1}{2p} h(x).
\end{equation}
 Via induction
 $$
 h(x^{1/2^n})=\left(-\frac{1}{2p}\right)^{n} h(x).
 $$
 Since $h$ is assumed to be continuous at $1$ and $h(1)=0$ by (\ref{fglred}), $\left(\frac{1}{2p}\right)^{n}\to\infty$ implies that  $$0= h(1)=\infty \cdot h(x),$$
  and so $h(x)=0$ for $x>0$.
 \medskip
 
 We conclude that  for $p\not=-1/2$, the general solution  to (\ref{fgl}) on $[0,1]$, and continuous at $0,1$, is given by our poynomial
 $$q(x)=x+ \frac{p^2}{1+2p}.$$

 Thus the solution is completely established.\medskip
 
 The Schr\"oder type functional equations $f(x^m)=rf(x)$ are analyzed in detail in \cite{mr2024}.

\newpage
\nopagecolor
\begin{figure}[h!]
 
  \scalebox{0.5} 
  {\includegraphics{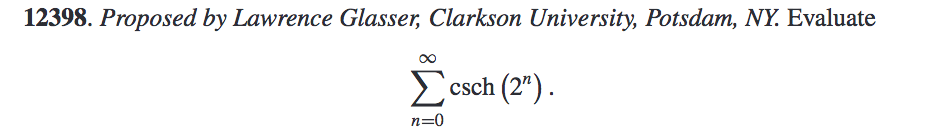}}
\end{figure}

\centerline{\bf Solution to problem 12398 in Amer. Math. Monthly 130 (2023), 587} \medskip 

\centerline{Raymond Mortini and Rudolf Rupp }

\bigskip

\centerline{- - - - - - - - - - - - - - - - - - - - - - - - - - - - - - - - - - - - - - - - - - - - - - - - - - - - - -}
  
  \medskip


We suppose that this agglomeration $csch$ of letters is nothing but $1/\sinh$. So let
$$S:=\sum_{n=0}^\infty \frac{1}{\sinh 2^n}.$$

We prove that $$\ovalbox{$S=\frac{2}{e-1}$}.$$
This is very simple though. Since $2\sinh x=e^x-e^{-x}$  and 
$$(e^{2^n}+1)(e^{2^n}-1)=e^{2^{n+1}}-1,$$
we obtain
\begin{eqnarray*}
S&=&2\sum_{n=0}^\infty  \frac{1}{e^{2^n}-e^{-2^n}}=2\sum_{n=0}^\infty \frac{e^{2^n}\co{+1-1}}{e^{2^{n+1}}-1}\\
&=&2\sum_{n=0}^\infty \left(\frac{1}{e^{2^n}-1}-\frac{1}{e^{2^{n+1}}-1}\right)=\frac{2}{e-1}.
\end{eqnarray*}

Another possibility would be to use the formula
$$\frac{1}{\sinh x}=\coth(x/2)-\coth x.$$
Then
\begin{eqnarray*}
S&=&\sum_{n=0}^\infty\left(\coth (2^{n-1})-\coth 2^n\right)=\coth(1/2)-1=\frac{2e^{-1/2}}{e^{1/2}-e^{1/2}}=\frac{2}{e-1}.
\end{eqnarray*}


\newpage

 \begin{figure}[h!]
 
  \scalebox{0.5} 
  {\includegraphics{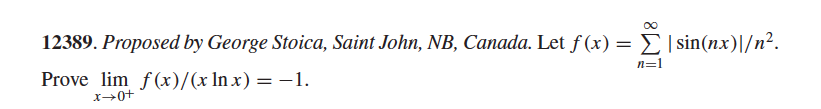}}
\end{figure}


\centerline{\bf Solution to problem 12389  in Amer. Math. Monthly 130 (2023), 386} \medskip 

\centerline{Raymond Mortini and Rudolf Rupp }

\bigskip

\centerline{- - - - - - - - - - - - - - - - - - - - - - - - - - - - - - - - - - - - - - - - - - - - - - - - - - - - - -}
  
  \medskip

Our tool will be the fact that for $\dis H_n:=\sum_{j=1}^n \frac{1}{j}$ we have $H_n-\log n \searrow \gamma$, where $\gamma$ is the Euler-Mascheroni constant.
First note that
\begin{eqnarray*}
\sum_{j=1}^\infty \frac{N}{(N+j)^2}&\leq &\sum_{j=1}^\infty \frac{N}{(N+j)(N+j-1)}\\
&=&N\sum_{j=1}^\infty\Big( \frac{1}{N+j-1}- \frac{1}{N+j}\Big)\\
&=&1.
\end{eqnarray*}

Fix $0<x<1$ and let $\dis N:=N(x):=\left\lfloor \frac{1}{x}\right\rfloor$. Let $\e\in \;]0,1/2]$. 
Since $|\sin y|\leq y$ for $y\geq 0$, 
 we obtain for 
\begin{eqnarray*}
\sum_{n=1}^\infty \frac{|\sin (nx)|}{n^2}&\leq& \sum_{n=1}^{N} \frac{nx}{n^2}+\sum_{n= N+1}^\infty \frac{1}{n^2}
\leq x\;\sum_{n=1}^{N} \frac{1}{n}+\frac{1}{N}\\
&\leq& x \Big( H_{N}-\log N-\gamma\Big) +x\gamma + x\,\log N +\frac{1}{N}.
\end{eqnarray*}

Hence, for $x$ small enough, $N$ is big, and so
\begin{eqnarray*}
H(x):=\frac{\sum_{n=1}^\infty \frac{|\sin (nx)|}{n^2}}{x\log (1/x)}&\leq \dis \frac{1}{\log(1/x)}+\frac{\gamma}{\log(1/x)} +
\frac{\log \left\lfloor \frac{1}{x}\right\rfloor}{\log(1/x)}+\frac{1}{x\log(1/x) \left\lfloor \frac{1}{x}\right\rfloor}.
\end{eqnarray*}

We conclude that 
$$0\leq \limsup_{x\to 0} H(x)\leq 0+0+1+0= 1.$$

Now we estimate $\liminf_{x\to 0} H(x)$.  Let $\e\in ]0,1/2]$.  Since $x\mapsto (\sin x)/x$ is decreasing on $[0,\pi/2]$, we see that
for $0<u\leq \e$
$$\frac{\sin u}{u}\geq \frac{\sin\e}{\e}.$$
For $0<x< \e$ put $\dis N:=N(x):=\left\lfloor\frac{\e}{x}\right\rfloor$.  Then, $N>0$ and for $n\leq N$ we have 
$$nx\leq Nx=\left\lfloor\frac{\e}{x}\right\rfloor  x\leq \frac{\e}{x} x=\e,$$
and so
$$\frac{\sin (nx)}{nx}\geq \frac{\sin\e}{\e}.$$

Hence
\begin{eqnarray*}
\sum_{n=1}^\infty \frac{|\sin (nx)|}{n^2}&\geq& \frac{\sin\e}{\e} \sum_{n=1}^N \frac{nx}{n^2}= 
x\;  \frac{\sin\e}{\e}\sum_{n=1}^N \frac{1}{n}\\
&\geq& x\;  \frac{\sin\e}{\e}\log N.
\end{eqnarray*}

We deduce that for $0<x<\e$
\begin{eqnarray*}
H(x)&\geq &\dis \frac{\sin\e}{\e}\; \frac{\log \left\lfloor\frac{\e}{x}\right\rfloor }{\log(1/x)}\geq \frac{\sin\e}{\e}\; 
\frac{\log(\frac{\e}{x}-1)}{\log(1/x)}\\
&=&\frac{\sin\e}{\e}\;  \frac{\log(\e-x)-\log x}{-\log x}.
\end{eqnarray*}

Since 
$$\lim_{x\to 0}  \frac{\log(\e-x)}{-\log x}=\log\e \cdot 0$$

we conclude that 
$$\liminf_{x\to 0} H(x)\geq \frac{\sin\e}{\e}.$$

Now $\e\to 0$ yields that $\dis \liminf_{x\to 0} H(x)\geq 1$. Consequently $\lim_{x\to 0}H(x)=1$.

\newpage

 \begin{figure}[h!]
 
  \scalebox{0.5} 
  {\includegraphics{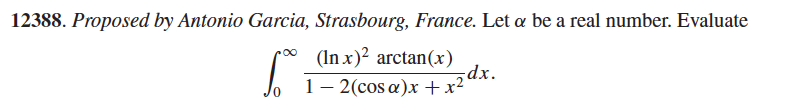}}
\end{figure}


\centerline{\bf Solution to problem 12388  in Amer. Math. Monthly 130 (2023), 385} \medskip 

\centerline{Raymond Mortini and Rudolf Rupp }

\bigskip

\centerline{- - - - - - - - - - - - - - - - - - - - - - - - - - - - - - - - - - - - - - - - - - - - - - - - - - - - - -}
  
  \medskip

For $a\in [0,2\pi]$, let
$$I(a):=\int_0^\infty \frac{(\log x)^2\,\arctan x}{1-2x\cos a +x^2}\;dx.$$
We prove that 
$$I(a)=\begin{cases}
\dis \pi\,\frac{a }{\sin a} \frac{(2\pi-a)(\pi-a)}{12}&\text{if $0<a<2\pi$, $a\not=\pi$}\\
\\
\dis\frac{\pi^3}{6}&\text{if $a=0$ or $a=2\pi$}\\
\\
\dis\frac{\pi^3}{12}&\text{if $a=\pi$}.
\end{cases}
$$
If $a$ is arbitrary, we replace $a$ by $a-2k\pi$, where $k\in \Z$ is chosen so that $2k\pi\leq a<2(k+1)\pi$.
\\

First we let  "disappear" the arctangent: the substitution $u=1/x$, $dx=-1/u^2$ and the formula $\arctan(1/ x)+\arctan x=\pi/2$ for $x>0$ yield
$$
I(a)=\int_0^\infty \frac{(\log u)^2\Big(\frac{\pi}{2}-\arctan u\Big)}{1-2 \frac{1}{u} \cos a +\frac{1}{u^2}}\; \frac{du}{u^2}
=-I+\frac{\pi}{2}\;\int_0^\infty \frac{(\log x)^2}{1-2x\cos a +x^2}\;dx,
$$
and so
$$I(a)=\frac{\pi}{4}\;\int_0^\infty  \frac{(\log x)^2}{1-2x\cos a +x^2}\;dx.$$

Using again the transformation $u=1/x$, we  obtain that
$$\int_0^1  \frac{(\log x)^2}{1-2x\cos a +x^2}\;dx= \int_1^\infty  \frac{(\log x)^2}{1-2x\cos a +x^2}\;dx,$$
and so
$$\ovalbox{$\dis I(a)=\frac{\pi}{2}\; \int_0^1  \frac{(\log x)^2}{1-2x\cos a +x^2}\;dx.$}$$

Next we use that for $a\notin \{k\pi: k\in \Z\}$
$$\frac{1}{1-2x\cos a +x^2}=\frac{A}{x- e^{ia}}- \frac{\ov A}{x- e^{-ia}},$$
where $\dis A= -\frac{i}{2\sin a}$. Hence, in that case,
\begin{eqnarray*}
I(a)&=&\dis 2 {\rm Re}\; \Big(\frac{\pi}{2}\; A \; \int_0^1 \frac{(\log x)^2}{x-e^{ia}}\;dx\Big)\\
&=& \frac{\pi}{2\sin a}\;{\rm Im}\; \int_0^1 \frac{(\log x)^2}{x-e^{ia}}\;dx\\
&=&\frac{\pi}{2\sin a}\;{\rm Im}\; (-e^{ia})\,\int_0^1\frac{(\log x)^2}{1-xe^{-ia}}\;dx
\end{eqnarray*}

Since $\sum_{n=0}^\infty x^n (\log x)^2$ is an $L^1(0,1)$-majorant, we have $\int\sum=\sum\int$. Thus
$$I(a)=-\frac{\pi}{2\sin a}\;{\rm Im}\;\Big(\sum_{n=0}^\infty e^{-ia(n+1)} \int_0^1 x^n(\log x)^2 \;dx\Big).$$
By twice partial integration, 
$$\int_0^1 x^n(\log x)^2 \;dx= \frac{2}{(n+1)^3}.$$
We conclude that
$$I(a)= -\frac{\pi}{2\sin a}\;{\rm Im}\;\Big(\sum_{n=0}^\infty e^{-ia(n+1)} \frac{2}{(n+1)^3}\Big)= 
\frac{\pi}{\sin a}\sum_{n=0}^\infty \frac{\sin(n+1)a}{(n+1)^3}.
$$

Let 
$$h(a):=\sum_{n=0}^\infty \frac{\sin(n+1)a}{(n+1)^3}.$$
Then 
$$h'(a)=\sum_{n=0}^\infty  \frac{\cos(n+1)a}{(n+1)^2}.
$$
Since $\frac{1}{3}\pi^2 + 4 \sum_{n=1}^\infty \frac{\cos nx}{n^2}$ is the  Fourier series of the  function $(x-\pi)^2$, $0\leq x<  2\pi$, 
extended $2\pi$-periodically,
we see that for $0<a<2\pi$,
$$h'(a)= \frac{(a-\pi)^2}{4}-\frac{\pi^2}{12}.$$

As $h(0)=0$, we deduce that for $0<a<2\pi$,
$$h(a)= \frac{(a-\pi)^3}{12}-\frac{\pi^2}{12} \,a+ \frac{\pi^3}{12}=\frac{a^3 -3\pi a^2+2 \pi^2 a}{12}.
$$

Consequently, for $0<a<2\pi$, $a\not=\pi$, 
$$I(a)=\pi\,\frac{a }{\sin a} \frac{(2\pi-a)(\pi-a)}{12}.
$$

Now let $a_n\searrow 0$ and $f_n(a):=\dis  \frac{(\log x)^2}{1-2x\cos a_n +x^2}$.
 As $f_n$ is positive and increases to $\frac{(\log x)^2}{(1-x)^2}$, we deduce from Beppo-Levi's monotone convergence theorem that 
 $I(a_n)\to I(0)$. Hence  $I(0)=\pi^3/6$.

Moreover, $I(b_n)\to \frac{\pi^3}{12}$ as $b_n\nearrow \pi$.  This is also the value of 
$$I(\pi)= \frac{\pi}{2}\; \int_0^1  \frac{(\log x)^2}{(1+x)^2}\;dx.$$
Just write
$$ \frac{(\log x)^2}{(1+x)^2}=\sum_{n=1}^\infty (-1)^{n-1} n x^{n-1}(\log x)^2,$$
and use again that $\int \sum=\sum\int$. Finally, $I(2\pi)=I(0)=\pi^3/6$.

\newpage

 \begin{figure}[h!]
 
  \scalebox{0.6} 
  {\includegraphics{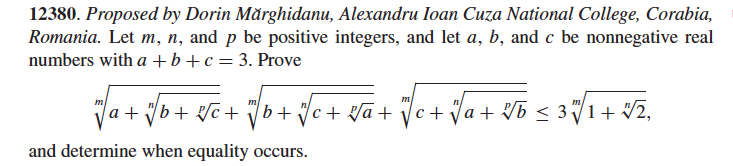}}
\end{figure}


\centerline{\bf Solution to problem 12380  in Amer. Math. Monthly 130 (2023), 285} \medskip 

\centerline{Raymond Mortini and Rudolf Rupp }

\bigskip

\centerline{- - - - - - - - - - - - - - - - - - - - - - - - - - - - - - - - - - - - - - - - - - - - - - - - - - - - - -}
  
  \medskip

{\bf Lemma} Let $f$, $g$, $h$ be  positive increasing  functions on $[0,\infty[$ satisfying for every $x_j\geq 0$ and $0\leq t_j\leq 1$
 with $\sum_{j=1}^n t_j=1$ the concavity inequality
\begin{equation}\label{ju}
f\Big(\sum_{j=1}^n t_j x_j\Big)\geq \sum_{j=1}^n t_j f(x_j)
\end{equation}
and similarily for $g,h$.
 Let $M_2,P_j\in [0,\infty[\times [0,\infty[$. Then the function $G$ given by 
$$G(M_2):=G(x,y):=f(x +g(y))$$
also satisfies
$$G\Big(\sum_{j=1}^nt_j P_j\Big)\geq \sum_{j=1}^nt_j G(P_j),$$
Similarily, if $M_3, Q_j\in [0,\infty[\times [0,\infty[\times [0,\infty[$, then 
 the  function  $H$ given by
 $$H(M_3):=H(x,y,z):=f(x+g(y+h(z)))$$ satisfies
 $$H\Big(\sum_{j=1}^nt_j Q_j\Big)\geq \sum_{j=1}^nt_j H(Q_j).$$

\medskip

{\bf Proof} 
\begin{eqnarray*}
G\Big(\sum_{j=1}^n t_j x_j, \sum_{j=1}^n t_jy_j\Big)&=&f\Big(\sum_{j=1}^n t_j x_j+ g\Big(\sum_{j=1}^n t_j y_j\Big)\Big)\\
&\geq &f\Big(\sum_{j=1}^n t_j x_j+\sum_{j=1}^n t_j g(y_j)\Big)\\
&=&f\Big(\sum_{j=1}^nt_j(x_j+g(y_j))\Big)\\
&\geq&\sum_{j=1}^n t_jf(x_j+g(y_j))\\
&=&\sum_{j=1}^n t_j G(P_j).
\end{eqnarray*}
Now applying this, we get
\begin{eqnarray*}
H(\sum_{j=1}^n t_j Q_j)&=&f\Big( \sum_{j=1}^n t_j x_j + g\Big(\sum_{j=1}^n t_jy_j+h(\sum_{j=1}^n t_jz_j)\Big)\Big)\\
&\geq&f\Big( \sum_{j=1}^n t_j x_j + \sum_{j=1}^n t_j g(y_j+h(z_j))\Big)\\
&=& f\Big(\sum_{j=1}^n t_j (x_j+ g(y_j+h(z_j))\Big)\\
&\geq&\sum_{j=1}^n t_j f(x_j+ g(y_j+h(z_j)))\\
&=&\sum_{j=1}^n t_jH(Q_j).
\end{eqnarray*}

Now we are ready to give the solution to the problem. Let
$$S(a,b,c):=\sqrt[m]{a+\sqrt[n]{b+\sqrt[p]c}}+\sqrt[m]{b+\sqrt[n]{c+\sqrt[p]a}}+\sqrt[m]{c+\sqrt[n]{a+\sqrt[p]b}}.$$

\medskip

For $M:=(x,y,z)\in \R^3, x,y,z\geq 0$, let 
$$f(M):=\dis f(x,y,z):=\sqrt[m]{x+\sqrt[n]{y+\sqrt[p]z}}.$$
By Lemma,  for $P:=(a,b,c)$, $Q=(b,c,a)$ and $R=(c,a,b)$ we have
\begin{equation}\label{str}
 \frac{1}{3}\big(f(P)+f(Q)+f(R)\big)\leq f\left(\frac{P+Q+R}{3}\right).
 \end{equation}

Since $a+b+c=3$, we deduce that

$$f(P)+f(Q)+f(R)\leq 3 \cdot f(1,1,1)=3 \sqrt[m]{1+\sqrt[n]2}.$$

In case $mnp>1$, at least one function $\sqrt[r]x$ for $r\in\{m,n,p\}$ is strictly concave and we have strict inequality in (\ref{ju}) whenever not all the $x_j$ are the same and $0<t_j<1$.  The proof of the Lemma in particular then yields that equality holds in (\ref{str}) only if $P=Q=R$, and so
$a=b=c$. Thus, due to $a+b+c=3$, we deduce that $a=b=c=1$. Hence
$$S(a,b,c)=3 \sqrt[m]{1+\sqrt[n]2}$$ if and only if $(a,b,c)=(1,1,1)$.

If $m=n=p=1$, then $f(M)$ is linear in $\R^3$, and so for all $a,b,c$ with $a+b+c=3$ we have equality:
$$S(a,b,c)=3(a+b+c)=9 =3 \sqrt[m]{1+\sqrt[n]2}.$$

\newpage

 \begin{figure}[h!]
 
  \scalebox{0.6} 
  {\includegraphics{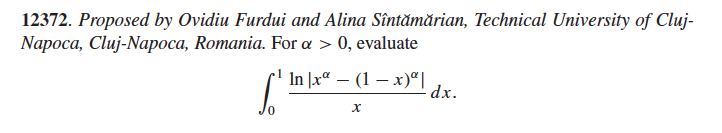}}
\end{figure}


\centerline{\bf Solution to problem 12372  in Amer. Math. Monthly 130 (2023), 187} \medskip 

\centerline{Raymond Mortini and Rudolf Rupp }

\bigskip

\centerline{- - - - - - - - - - - - - - - - - - - - - - - - - - - - - - - - - - - - - - - - - - - - - - - - - - - - - -}
  
  \medskip

For $a>0$, let $\dis I:=\int_0^1 \frac{\log|x^a-(1-x)^a|}{x}\;dx$, which is a double improper integral (with singularities at  $0,1/2$).
 We show that
$$\ovalbox{$\dis I=- \frac{a^2+2}{12 a} \;\pi^2$}.$$

To this  end, we first note that $x^a\leq (1-x)^a$ if and only if $0\leq x\leq 1/2$. Hence, by substituting $x\to 1-x$ in the second integral
\begin{eqnarray*}
I&=&\int_0^{1/2} \frac{\log\big((1-x)^a-x^a\big)}{x}\;dx + \int_0^{1/2} \frac{\log\big( x^a-(1-x)^a\big)}{x}\;dx\\
&=&\int_0^{1/2} \frac{\log\big((1-x)^a-x^a\big)}{x}\;dx +\int_0^{1/2} \frac{\log\big((1-x)^a-x^a\big)}{1-x}\;dx\\
&=& \int_0^{1/2} \frac{\log\big((1-x)^a-x^a\big)}{x(1-x)}\;dx\\
&=&\int_0^{1/2} \frac{\log\Big(1- \left(\frac{x}{1-x}\right)^a \Big) + a \log(1-x)}{x(1-x)}\;dx.
\end{eqnarray*}

Next we substitute $x/(1-x)=y$. Equivalently, $x=y/(1+y)$. Note that $0\to 0$ and $1/2\to 1$, $dx= \frac{1}{(1+y)^2}dy$ and
$1-x=\frac{1}{1+y}$. Hence
\begin{eqnarray*}
I&=& \int_0^1 \frac{\log(1-y^a)-a\;\log (1+y) }{\frac{y}{(1+y)^2}}\; \frac{1}{(1+y)^2}\; dy.
\end{eqnarray*}

Consequently,
$$I= \int_0^1 \frac{1}{x} \log\left(\frac{1-x^a}{(1+x)^a}\right)\;dx.
$$

Using partial integration for $\int_\e^{1-\eta}$ with $u':=1/x$ and $v= \log\big(\frac{1-x^a}{(1+x)^a}\big)$, and passing to the limits
 $\e,\eta\to 0$,
we obtain

\begin{eqnarray*}
I&=&0+ a\int_0^1  \left( \frac{x^{a-1}}{1-x^a} + \frac{1}{1+x}\right)\;\log x \; dx\\
&=& a\;\int_0^1\left( \sum_{n=0}^\infty x^{a-1} x^{na} + \sum_{n=0}^\infty (-1)^n x^n\right)\; \log x\; dx.
\end{eqnarray*}

\newpage

Note that $I$ has the form $I=\int\sum$. Now let us calculate $J:=\sum\int$.

To do this,  we apply for $\beta>-1$ the formula 
$$\int_0^1 x^\beta \log x\;dx= -\frac{1}{(\beta+1)^2},$$
(which can easily be obtained by partial integration $u=\log x, v'=x^\beta$).
 Hence
 
\begin{eqnarray}\label{werte1}
J/a&=&-\frac{1}{a^2} \sum_{n=0}^\infty \frac{1}{(n+1)^2} +\sum_{n=0}^\infty (-1)^{n +1} \frac{1}{(n+1)^2}\\
&=& -\frac{1}{a^2}\; \frac{\pi^2}{6} - \frac{\pi^2}{12}\\
&=& - \frac{a^2+2}{12 a^2} \;\pi^2.
\end{eqnarray}

To finish the proof,  we need to show that  $\int\sum=\sum\int$.   
As  the  summands in the first sum $\sum_{n=0}^\infty x^{a-1} x^{na} \log x$ do not change sign, we may use Beppo-Levi's theorem.
In the second sum, $ \sum_{n=0}^\infty (-1)^n x^n\; \log x$,  we have absolute convergence, in particular any rearrangement converges (to the same function),  and so we apply Beppo-Levi to the  sum over the odd integers and the sum over the even integers.
Thus $\int \sum_{even}=\sum_{even}\int$ and $\int \sum_{odd}= \sum_{odd} \int$.  Similarily to  (\ref{werte1}), it can be shown that the values of $ \sum_{odd} \int$ and $\sum_{even}\int$ are finite.  Hence
$$\int \sum=\int \sum_{even} -\int \sum_{odd}=\sum_{even}\int - \sum_{odd} \int=\sum\int.$$

\newpage

 \begin{figure}[h!]
 
  \scalebox{0.5} 
  {\includegraphics{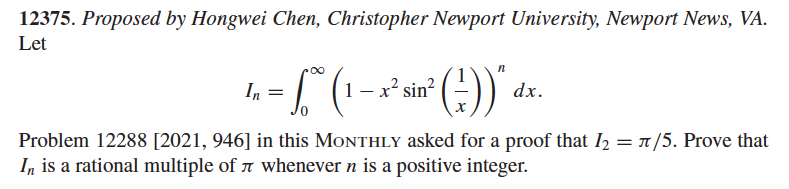}}
\end{figure}


\centerline{\bf Solution to problem 12375  in Amer. Math. Monthly 130 (2023), ??} \medskip 

\centerline{Raymond Mortini }

\bigskip

\centerline{- - - - - - - - - - - - - - - - - - - - - - - - - - - - - - - - - - - - - - - - - - - - - - - - - - - - - -}
  
  \medskip

A change of the variable $x\to 1/x$ yields that
$$J:=\int_0^\infty \left(1-x^2 \sin^2 \left(\frac{1}{x}\right)\right)^n\;dx=
\int_0^\infty \frac{(x^2-\sin^2 x)^n}{x^{2n+2}}\;dx.
$$

Now  we "linearize" the trigonometric powers: using
$\sin^2x=(1/2)(1-\cos 2x)$, we obtain
\begin{eqnarray*}
J&=&\frac{1}{2} \int_{-\infty}^\infty \frac{\big(x^2-\frac{1}{2}+ \frac{1}{2} \cos(2x)\big)^n}{x^{2n+2}}\;dx\\
&=& \frac{1}{2} \int_{-\infty}^\infty \frac{1}{x^{2n+2}}\; \sum_{j=0}^n  {n \choose j}\frac{1}{2^j}   \Big(x^2-\frac{1}{2}\Big)^{n-j}\;\cos^j (2x)\; dx.
\end{eqnarray*}
Noticing that 
$$\cos^ j(2x)=\frac{1}{2^j}\sum_{k=0}^j {j\choose k} \cos(2(j-2k) x),$$ we finally obtain that with $I:=2J$

$$I=\int_{-\infty}^\infty \sum_{\ell=0}^n \frac{1}{x^{2n+2}}\; \;p_\ell(x) \cos (2\ell x)\;dx$$
where $p_\ell$ is a polynomial of degree at most $2n$ and with rational coefficients.

Next we consider the functions 
$$f(z):=\sum_{\ell=0}^n \frac{1}{z^{2n+2}}\; \;p_\ell(z) e^{2i\ell z}$$ and

$$F(z):=f(z)- \frac{p(z)}{z^{2n+2}},$$
where $ \frac{p(z)}{z^{2n+2}}= \frac{q(z)}{z^{2n+2}}+ \frac{r}{z}$ 
is the principal part of the meromorphic function $f$. Note that $\deg p\leq 2n+1$, $\deg q\leq 2n$, and $r\in \Q+i\,\Q$.

In particular,  $F$ has  a holomorphic extension to the origin,  hence is an entire function.  Therefore
 $\int_\Gamma F(z)dz=0$, 
  where $\Gamma$ is the boundary of the half-disk $|z|\leq R$, ${\rm Im}\,z \geq 0$, consisting of the half circle 
  $\Gamma_R$  and  the interval $[-R,R]$.
  Hence, by letting $R\to \infty$ and taking real parts,
  $$0={\rm Re}\lim_{R\to\infty} \int_{\Gamma_R} F(z)dz+ I.$$
  
  By Jordan's Lemma, $\limsup_{R\to\infty} \left|\int_{\Gamma_R} e^{inz} dz\right|<\infty$. Hence, by noticing that the differences of the degrees of the polynomials in the denominator and numerator  if $f$ is bigger than 2,   
  $$\lim_{R\to\infty} \int_{\Gamma_R} F(z)dz=0+0+ \lim_{R\to \infty} \int_{\Gamma_R}\frac{r}{z} dz=  i\,r\,\pi.$$
  We conclude that the value of the original integral $J$ is rational.

\newpage

.\vspace{-1cm}
   \begin{figure}[h!]
 
  \scalebox{0.6} 
  {\includegraphics{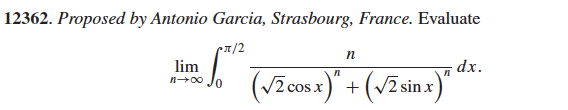}} 
\end{figure}


\centerline{\bf Solution to problem 12362  in Amer. Math. Monthly 129 (2022), 986} \medskip 

\centerline{Raymond Mortini }

\medskip

\centerline{- - - - - - - - - - - - - - - - - - - - - - - - - - - - - - - - - - - - - - - - - - - - - - - - - - - - - -}
  
  \medskip

We reduce the present problem to Problem 12340,  telling us that for each
 $f:[0,1]\to \R$  continuous, 
 \begin{equation}\label{old}
\lim_{n\to\infty}\frac{n}{2^n}\int_0^1 \frac{f(x)}{x^n+(1-x)^n}\;dx  =\frac{\pi}{4}\; f\left(\frac{1}{2}\right).
\end{equation}

First we note that one may replace of course $n$ by $t$, $t\to\infty$. Later we shall take $t=n/2$. 
As a result we obtain

\begin{equation}\label{mainsinus}
\ovalbox{$\dis \lim_{n\to\infty}\;\int_0^{\pi/2} \frac{n}{\Big(\sqrt 2 \cos x\Big)^n + \Big(\sqrt 2\sin x\Big)^n}\;dx=\frac{\pi}{2}$}
\end{equation}

\medskip

To see this, let $u:=\sin x$. Then, $dx=(1-u^2)^{-1/2}\;du$ and so
with
$$I_n:=\int_0^{\pi/2} \frac{n}{\Big(\sqrt 2 \cos x\Big)^n + \Big(\sqrt 2\sin x\Big)^n}\;dx,$$
we obtain
$$I_n=n\;\int_0^1 \frac{(1-u^2)^{-1/2}}{2^{n/2}\; (1-u^2)^{n/2}+ 2^{n/2}\; (u^2)^{n/2}}\;dx
$$

Now let $y:=u^2$.  Then $du=\frac{1}{2\sqrt y}\;dy$, and so
$$I_n= \frac{n}{2\cdot 2^{n/2}}\;\int_0^1 \frac{ y^{-1/2}(1-y)^{-1/2} }{(1-y)^{n/2} +y ^{n/2}}\; dy
$$

Let $g_{\e}(y)=(y+\e)^{-1/2}(1-y+\e)^{-1/2}$ and $g:=g_0$. Then
\begin{equation}\label{unten}
 \frac{n}{2\cdot 2^{n/2}}\;\int_0^1 \frac{g_\e(y)}{(1-y)^{n/2} +y ^{n/2}}\; dy\leq  I_n.
 \end{equation} 
Next we estimate from above.
Let $x\in [0,1]$ satisfy  $|x-1/2|\geq \delta$, where $\delta>0$ is small. Then, for $t\geq 1$, 
$$x^t +(1-x)^t \geq (1/2+\delta)^t +(1/2 -\delta)^t.$$
Hence 
$$\frac{t}{2^t}\;\frac{1}{x^t +(1-x)^t}\leq \frac{t}{(1+2\delta)^t+(1-2\delta)^t}=:m_t\to 0\; \text{as $t\to\infty$}$$

Now for $\e>0$, choose $\delta$ so small that $|g(x)-g(1/2)|<\e$ for $|x-1/2|\leq \delta $. Then
\begin{eqnarray*}
\frac{t}{2^t}\int_0^1  \frac{g(x)}{x^t+(1-x)^t}\;dx&\leq & \frac{t}{2^t}\int_{|x-1/2|\leq \delta} \frac{g(1/2)+\e}{x^t+(1-x)^t}\; dx+
m_t\int_{|x-1/2|>\delta}g(x)\; dx\\
&\leq& \frac{t}{2^t}\int_0^1 \frac{g(1/2)+\e}{x^t+(1-x)^t}\; dx +m_t ||g||_1\buildrel\longrightarrow_{t\to\infty}^{} \frac{\pi}{4} (g(1/2)+\e).
\end{eqnarray*}
Together with (\ref{unten}), we obtain that
$$\frac{\pi}{4}\;g_\e(1/2)\leq \liminf_n I_n\leq \limsup_n I_n\leq \frac{\pi}{4} (g(1/2)+\e).$$
Hence, by letting $\e\to 0$, 
$$\lim_n I_n=\frac{\pi}{4}\;g(1/2)=\pi/2.$$

\newpage

   \begin{figure}[h!]
 
  \scalebox{0.5} 
  {\includegraphics{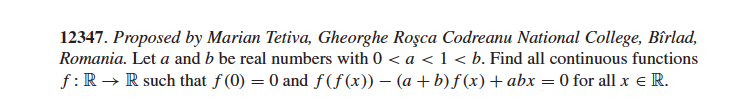}} 
\end{figure}

\centerline{\bf Solution to problem 12347  in Amer. Math. Monthly 129 (2022), 786} \medskip 

\centerline{Raymond Mortini }
\bigskip

\centerline{- - - - - - - - - - - - - - - - - - - - - - - - - - - - - - - - - - - - - - - - - - - - - - - - - - - - - -}
  
  \medskip

We show that on $\R$ there are exactly 4  continuous solutions to the functional equation
\begin{equation}\label{equ}
f(f(x))-(a+b)f(x) +abx=0, 
\end{equation}
whenever $f(0)=0$ and $0<a<1<b$. Namely

$$\mbox{$F_1(x)=ax$, $F_2(x)=bx$, $F_3(x)=\begin{cases} ax &\text{if $x\leq 0$}\\ bx &\text{if $x>0$}
\end{cases}$ and 
$F_3(x)=\begin{cases} bx &\text{if $x\leq 0$}\\ ax &\text{if $x>0$}.
\end{cases}$
}$$

It is easy to check that $F_j$ are solutions. Now suppose that $f$ is a solution.

\co {\Large i) } $f$ is injective:  let $f(x)=f(y)$. Then 
$$abx=-f(f(x))+(a+b)f(x)=-f(f(y))+(a+b)f(y)=aby$$
and so $x=y$.

\co {\Large ii)} $f$ is strictly increasing:  monotonicity implies that $M^\pm:=\lim_{x\to\pm\infty} f(x)$ exists in $[-\infty,\infty]$. Now $M^\pm$ cannot be finite, since (1) and continuity would imply that $f(M^\pm)-(a+b) M^\pm+ \pm\infty=0$, which is impossible.  
But $M^+\not=-\infty$, either,  since otherwise
$$\mbox{$f(f(x))+abx =(a+b)f(x)\to -\infty$ as $x\to\infty$},$$
and so $\dis \lim_{x\to\infty} f(f(x))=-\infty$. Hence, with $y:=f(x)\to-\infty$, we deduce that
$$\lim_{y\to-\infty} f(y)=-\infty=\lim_{x\to\infty} f(x),$$
 contradicting the monotonicity of $f$.  We conclude that $f$ is strictly increasing,  
 $f(x)\geq 0$ for $x\geq 0$, $f(x)\leq 0$ for $x\leq 0$, and 
$\lim_{x\to-\infty} f(x) =-\infty$, $\lim_{x\to\infty} f(x) =\infty$.\\

\co {\Large iii)} The inverse $h:=f^{-1}:\R\to \R$ satisfies the functional equation

\begin{equation}
h(h(y))-\left(\frac{1}{a}+\frac{1}{b}\right) h(y) +\frac{1}{ab}\; y =0.
\end{equation}

Just take $x:=h(h(y))$ in (\ref{equ}) and note that $h\circ f=f\circ h={\rm id}$. Then

$$y-(a+b)h(y)+ab\; h(h(y))=0.$$
Now divide by $ab$.  We also deduce the following  identity:
\begin{equation}
\big[f(y)-ay\big] +ab \big[ f^{-1}(y) - (1/a)\;y\big]=0.
\end{equation}
In particular $f^{-1}$ is increasing, too.\\

\co {\Large iv)}  The only fixed point of $f$ is $0$:  let $f(s)=s$. If $s\not=0$, then, by (\ref{equ})
$s-(a+b)s +ab s=0$. Thus $1+ab=a+b$, or equivalently, $b(a-1)=a-1$. That is, $b=1$ (since $a<1$).  A contradiction.
We conclude that for $x>0$ either $f(x)<x$ or $f(x)>x$ for every $x>0$.\\

\co {\Large v)} Let $f_n:=\underbrace{f\circ \cdots\circ f}_{n\text{-times}}$ be the $n$-th iterate of $f$ \footnote{ We never use the exponent $n$ to designate the $n$-th iterate when working with functions, as the risk to mix it up with the $n$-th power is too   big.}.

$\bullet$ Suppose that there is $x_0>0$ such that $f(x_0)<x_0$. Then $ f_n(x)\to 0$ for every $x\geq 0$.  Indeed,  by iv), $0<f(x)<x$ for $x>0$. Hence
$$f_{n+1}(x)=f(f_{n}(x))< f_{n}(x)$$
and so $M(x):=\lim_{n\to\infty} f_{n}(x) $ exists for every $x>0$. Plugging 
$f_{n}(x)$ into the functional equation (\ref{equ}), yields
$$M(x)-(a+b)M(x)+ab M(x)=0.$$
Consequently, $M(x)(1+ab-(a+b))=0$.  But $1+ab-(a+b)=(1-a)+b(a-1)=(1-a)(1-b)\not=0$. Hence $M(x)=0$.\\
$\bullet$ Suppose that there is $x_0>0$ such that $f(x_0)>x_0$.  Then, by iv) $f(x)>x$ for every $x>0$ and  the 
sequence $(f_n(x))$ of iterates is increasing for  each $x>0$.  As its limit $M(x)$ can't be finite, in particular not $0$, we see that 
$\lim_{n\to\infty} f_n(x)=\infty$ for every $x>0$.\\

\co {\Large vi)} For each $x\in \R$ we obtain the following three terms difference equations:
$$f_{n+2}(x)-(a+b)f_{n+1}(x) +ab f_{n}(x)=0,$$
with initial condition $f_0(x):=x$ and $f_1(x):=f(x)$.

The associated characteristic polynomial  is $p(z)=z^2-(a+b)z+ab$, which has as roots $a$ and $b$. Hence, there exist
real coefficients  $A_x$ and $B_x$ depending on the initial value $x$ such that
\begin{equation}\label{sol}
f_{n}(x)= A_x a^n +B_x b^n.
\end{equation}

If $f(x)<x$ for every $x>0$, then    $\lim_n f_{n}(x)=0$ implies that $B_x=0$, because  $b>1$ and $0<a<1$. Hence
$f(x)=A_x a$.  As the initial value $f_{0}(x)$ equals $x$, we deduce from (\ref{sol}) that $A_x=x$. Thus, for $x>0$, 
$f(x)=ax$ whenever there exists $x_0>0$ with $f(x_0)<x_0$.

If $f(x)>x$ for every $x>0$, then  $x>f^{-1}(x)$ (note that by iii) $h:=f^{-1}$ is increasing). Hence, the difference equations,
\begin{equation}\label{equa2}
h_{n+2} (x)-\left(\frac{1}{a}+\frac{1}{b}\right) h_{n+1}(x) +\frac{1}{ab}\; h_n(x) =0\end{equation}
with initial values $h_0(x)=x$
have  for $x>0$ the solutions
\begin{equation}\label{equ3}
h_n(x)=C_x \frac{1}{a^n}+D_x \frac{1}{b^n}
\end{equation}
for real coefficients $C_x$ and $D_x$.
Using (\ref{equa2}), we see as above that  $\lim_{n\to\infty} h_n(x)=0$. Hence $C_x=0$. Thus $h(x)=h_1(x)= D_x \frac{1}{b} $. As $h_0(x)=x$, we deduce  from (\ref{equ3}) that $D_x=x$. 

To some up, $f^{-1}(x)=h(x)= x/b$ and so $f(x)=bx$ for every $x>0$ whenever  there exists $x_0>0$ with $f(x_0)>x_0$.\\

\co {\Large vii)}
The case for negative arguments follows from the observation that if $f$ is a solution to (\ref{equ}), then the function
$g$ given by $g(x)=-f(-x)$ is a solution, too:
\begin{eqnarray*}
g(g(x))-(a+b)g(x)+ab x&=&-f(-f(-x))+(a+b) f(-x) +abx\\
& =&-\bigg(f(f(-x))-(a+b)f(-x)+ab (-x)\bigg)=0.
\end{eqnarray*}


\newpage

   \begin{figure}[h!]
 
  \scalebox{0.5} 
  {\includegraphics{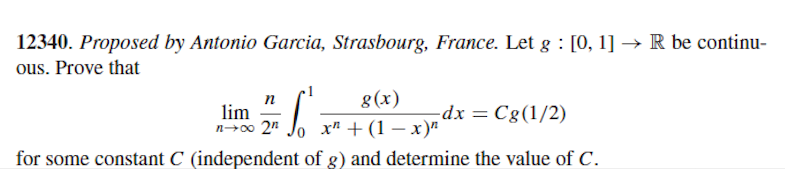}} 
\end{figure}

\medskip


\centerline{\bf Solution to problem 12340  in Amer. Math. Monthly  129 (2022),  686} \medskip 

\centerline{Raymond Mortini and  Rudolf Rupp}

\bigskip

\centerline{- - - - - - - - - - - - - - - - - - - - - - - - - - - - - - - - - - - - - - - - - - - - - - - - - - - - - -}
  
  \medskip

As $g:[0,1]\to \R$ is continuous, $||g||_\infty=\max\{|g(x)|: 0\leq x\leq 1\}<\infty$.
Let 
$$I_n:=\frac{n}{2^n}\int_0^1 \frac{g(x)}{x^n+(1-x)^n}\;dx.$$
We claim that $\lim_{n\to\infty} I_n= (\pi/4) g(1/2)$.

To see this, 
we split the integral into two parts and use two different change of variables:
\begin{eqnarray*}
I_n&=&\frac{n}{2^n}\int_0^{1/2} \underbrace{\frac{g(x)}{x^n+(1-x)^n}\;dx}_{x=:\frac{1}{2}-\frac{s}{2n}}
+ \frac{n}{2^n}\int_{1/2}^{1} \underbrace{\frac{g(x)}{x^n+(1-x)^n}\;dx}_{x=:\frac{1}{2}+\frac{s}{2n}}\\
&=&
\frac{n}{2^n}\int_{0}^n\frac{g(\frac{1}{2}-\frac{s}{2n})}{{(\frac{1}{2}-\frac{s}{2n})}^n+{(\frac{1}{2}+\frac{s}{2n})}^n}\;\frac{1}{2n}ds
+ \frac{n}{2^n}\int_{0}^n\frac{g(\frac{1}{2}+\frac{s}{2n})}{(\frac{1}{2}+\frac{s}{2n})^n+(\frac{1}{2}-\frac{s}{2n})^n}\;\frac{1}{2n}ds
\\
&=&\frac{1}{2} \int_0^n \frac{g(\frac{1}{2}-\frac{s}{2n})+g(\frac{1}{2}+\frac{s}{2n})}{(1-\frac{s}{n})^n +(1+\frac{s}{n})^n}\;ds.
\end{eqnarray*}

Note that $n\mapsto (1+\frac{s}{n})^n$ is increasing; so the integrand is dominated for $s\geq 1$ by
$$
\frac{||g||_\infty}{(1+\frac{s}{2})^2}\leq ||g||_\infty 4 s^{-2}.
$$

Hence, as $n\to\infty$,  
\begin{eqnarray*}
\lim_{n\to\infty} I_n&=&\frac{1}{2} 2 g(1/2)\int_0^\infty\frac{ds}{e^{-s}+e^s}\;ds\\
&=& g(1/2)\int_0^\infty \frac{e^s}{1+(e^s)^2}\;ds\\
&=& g(1/2) \big[\arctan e^s\big]^\infty_0=g(1/2)\left(\frac{\pi}{2}-\frac{\pi}{4}\right)\\
&=& \frac{\pi}{4}g(1/2).
\end{eqnarray*}

Generalizations appear in \cite{mo23}.

\newpage

   \begin{figure}[h!]
 
  \scalebox{0.6} 
  {\includegraphics{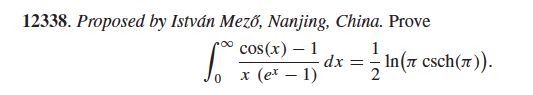}} 
\end{figure}

\centerline{\bf Solution to problem 12338  in Amer. Math. Monthly  129 (2022),  686} \medskip 

\centerline{Raymond Mortini and  Rudolf Rupp}

\bigskip

\centerline{- - - - - - - - - - - - - - - - - - - - - - - - - - - - - - - - - - - - - - - - - - - - - - - - - - - - - -}
  
  \medskip

Let $\dis f(x):=\frac{\cos x -1}{x(e^x-1)}$ and $I:=\int_{0}^\infty f(x)dx$. Note that $\lim_{x\to 0} f(x)=1/2$, that $f$ is bounded,  and that the integral converges (absolutely).
Using the Laplace transform of $f$, we are going to show that 
$$I=\frac{1}{2} \log\left(\frac{\pi}{\sinh \pi}\right).$$
So let 
$$F(s):=\int_0^\infty e^{-sx} f(x) dx.$$
Also this integral converges absolutely and uniformly in $s\geq 0$, as the integrand is dominated on $[1,\infty[$ by
$4 e^{-x}$. Moreover, $F$ is continuous on $[0,\infty[$ with $F(0)=I$.
Now, by a similar reason, 
$$G(s):=-\int_0^\infty xe^{-sx} f(x)dx$$
is absolutely convergent, as the integrand is dominated on $[1,\infty[$ by
$4 xe^{-x}$.
Hence $F'(s)=G(s)$. Moreover, by considering for $x>0$ the geometric series for $(1-e^{-x})^{-1}$,
\begin{eqnarray*}
G(s)&=& -\int_0^\infty \frac{e^{-(s+1)x}}{1-e^{-x}}(\cos x-1)\;dx=\int_0^\infty \sum_{k=0}^\infty e^{-(s+1+k)x} (1-\cos x)\;dx.
\end{eqnarray*}
As all the summands are positive,  Beppo Levi's monotone convergence theorem for Lebesgue integrals implies that
 $\int\sum=\sum\int$. Hence,  by using that  for $a>0$
$$\int_0^\infty e^{-ax}\cos xdx=\frac{a}{a^2+1},$$
we obtain
\begin{eqnarray*}
G(s)&=&\sum_{k=0}^\infty \int_0^\infty e^{-(s+1+k)x} (1-\cos x)\;dx=\sum_{k=0}^\infty\left( \frac{1}{s+1+k}-\frac{s+1+k}{(s+k+1)^2 +1}\right)\\
&\buildrel=_{}^{n=k+1}&\sum_{n=1}^\infty \frac{1}{(s+n)^3 +(s+n)}.
\end{eqnarray*}
The convergence being absolut and uniform on $[0,\infty[$ (a majorant is given by $\sum_{n=1}^\infty n^{-3}$), we can integrate termwise to re-obtain $F$. Note that a primitive $P$ of $G$ on $[0,\infty[$ is given by
\begin{eqnarray*}
P(s)&=&\sum_{n=1}^\infty \int \frac{1}{(s+n)^3 +(s+n)}ds =\sum_{n=1}^\infty \log \frac{s+n}{\sqrt{(s+n)^2+1}}\\
&=&-\frac{1}{2}\sum_{n=1}^\infty\log\frac{(s+n)^2+1}{(s+n)^2}
=-\frac{1}{2}\sum_{n=1}^\infty\log\left(1+\frac{1}{(s+n)^2}\right).
\end{eqnarray*}
Now $F=P+c$ for some constant $c$.  Since $P$ is uniformly convergent, it easily follows that $\lim_{s\to\infty} P(s)=0$
(just take a tail uniformly small, and use that the limit of the remaining finitely many summands is 0). But also
$\lim_{s\to\infty} F(s)=0$, because $|F(s)|\leq ||f||_\infty \int_0^\infty e^{-st}=||f||_\infty/s$.

Hence $c=0$ and so $\dis F(0)= -\frac{1}{2}\sum_{n=1}^\infty \log\left(1+\frac{1}{n^2}\right)$. Next we use that 
$$\sinh(\pi z)=\pi z \prod_{n=1}^\infty \left(1+\frac{z^2}{n^2}\right).$$
So, 
$$\log\sinh(\pi)=\log \pi +\sum_{n=1}^\infty\log \left(1+\frac{1}{n^2}\right).$$
Hence
$$F(0)=I=-\frac{1}{2}\log\sinh(\pi)+\frac{1}{2}\log \pi=\frac{1}{2} \log \frac{\pi}{\sinh\pi}.
$$

\newpage

  \begin{figure}[h!]
 
  \scalebox{0.6} 
  {\includegraphics{12338}} 
\end{figure}

{\it  A different solution to problem 12338  in Amer. Math. Monthly  129 (2022),  686} \medskip 

\centerline{Raymond Mortini and  Rudolf Rupp}

Let $I:=\dis\int_0^\infty \frac{\cos x -1}{x(e^x-1)}\;dx$. We show that 
$$I=\frac{1}{2} \log\left(\frac{\pi}{\sinh \pi}\right).$$

Recall that $\N=\{0,1,2,\dots\}$.
First we develop for $x>0$ the integrand into a double, absolutely convergent series (so this is independent of the  arrangement):

\begin{eqnarray*}
g(x):= \frac{\cos x -1}{x(e^x-1)}=\frac{\cos x-1}{x} \frac{e^{-x}}{1-e^{-x}}&=&\sum_{n=1}^\infty\frac{(-1)^n}{(2n)!}x^{2n-1}\; 
 \sum_{k=1}^\infty e^{-kx}=\sum_{k=1}^\infty \sum_{n=1}^\infty\underbrace{\frac{(-1)^n}{(2n)!}x^{2n-1}\; e^{-kx}}_{:=a_{kn}}\\
 &=& \sum_{k=1}^\infty \sum_{n\;{\rm even}}^\infty\frac{1}{(2n)!}x^{2n-1}\; e^{-kx} -
  \sum_{k=1}^\infty \sum_{n\;{\rm odd}}^\infty\frac{1}{(2n)!}x^{2n-1}\; e^{-kx}.
\end{eqnarray*}
Note that $\lim_{x\to 0} g(x)=1/2$, but that both absolutely convergent double series at the right vanish at $0$. 
  Beppo Levi's monotone convergence theorem for Lebesgue integrals applied twice, 
 gives
\begin{equation}\label{beppo}
\mbox{$\dis\int\sum_{k\geq 2}\sum_{n\;\rm  even}=\sum_{k\geq 2}\sum_{n\;\rm  even}\int$\;\; and \;\;
$\dis\int\sum_{k\geq 2}\sum_{n\;\rm  odd} =\sum_{k\geq 2}\sum_{n\;\rm odd}\int .$}
\end{equation}
 
 As the calculations below show, the sums $\sum_{n\;\rm odd}\int |a_{kn}|$ and 
 $\sum_{n\;\rm  even}\int |a_{kn}|$ converge  for  $k\geq 2$, but diverge for $k=1$, though
 $\sum_n \int a_{1n}$ converges.  Moreover, $\sum_{k\geq 2}\sum_{n\;\rm odd}\int$ and  $\sum_{k\geq 2}\sum_{n\;\rm  even}\int$ are finite; hence (by (\ref{beppo})) ,
 \begin{equation}\label{ds}
\sum_{k\geq 2}\sum_n\int =\int \sum_{k\geq 2}\sum_n.
\end{equation}
 
 So, at the end, by adding in (\ref{ds}) the term $\sum_n \int a_{1,n}$, respectively $\int\sum_n  a_{1,n}$ (which coincide, too; see addendum) we see that
 $$\int \sum_k\sum_n =\sum_n\sum_k\int.$$
 
 To complete the calculations, we use  that for $m\in \N$ and $k\in \N\setminus\{0\}$,
$$\int_0^\infty x^me^{-kx}dx=m! / k^{m+1}.$$
Hence
\begin{eqnarray*}
&&\sum_{k=1}^\infty\sum_{n=1}^\infty  \frac{(-1)^n}{(2n)! }\int_0^\infty 
x^{2n-1}\; e^{-kx}\;dx=\sum_{k=1}^\infty\sum_{n=1}^\infty  \frac{(-1)^n}{(2n)! }\frac{(2n-1)!}{k^{2n}}=\\
&&\sum_{k=1}^\infty\left(\sum_{n=1}^\infty  \frac{(-1)^n}{2n}\frac{1}{k^{2n}}\right)=-\frac{1}{2}\sum_{k=1}^\infty\log\left(1+\frac{1}{k^2}\right),
\end{eqnarray*}
where the last identity comes from the fact that for $0\leq y\leq 1$
$$h(y):=\sum_{n=1}^\infty \frac{(-1)^n}{2n} y^{2n}=-\frac{1}{2} \log(1+y^2)$$
(note that for $y=1$ there is no absolute convergence).
Next we use that 
$$\sinh(\pi z)=\pi z \prod_{n=1}^\infty \left(1+\frac{z^2}{n^2}\right).$$
So, 
$$\log\sinh(\pi)=\log \pi +\sum_{n=1}^\infty\log \left(1+\frac{1}{n^2}\right).$$
Hence
$$I=-\frac{1}{2}\log\sinh(\pi)+\frac{1}{2}\log \pi=\frac{1}{2} \log \frac{\pi}{\sinh\pi}
$$

{\bf Addendum}\bigskip

\begin{equation}\label{Lpt}
J:=\int_0^\infty e^{-x} \frac{\cos x-1}{x}dx= \sum_{n=1}^\infty \frac{(-1)^n}{2n}=- \frac{1}{2}\log 2.
\end{equation}

First we note that, as above, $ \sum_n\int a_{1,n}= \sum_{n=1}^\infty \frac{(-1)^n}{2n}=- \frac{1}{2}\log 2$.
To show that $J=- \frac{1}{2}\log 2,$
 we interprete this as the Laplace transform
$L(q)(s)$ of the function $q(x)=(\cos x-1)/x$ evaluated at $s=1$. By a well-known formula, if  $L(F(t))(s)=f(s)$, then
$$L(q)(s)=L( \frac{F(t)}{t})(s)=\int_s^\infty f(u)du,$$
where 
$$f(s)=\int_0^\infty e^{-st} (\cos t -1)\;dt=\frac{1}{s^3+s}.$$
Hence $L(q)(s)=-\frac{1}{2} \log(1+s^{-2})$ and so
$J=L(q)(1))=- \frac{1}{2}\log 2$.

Another way to calculate the Laplace transform $J(s):=L(q)(s)$ of $q$ is to take derivatives: 
$$J'(s)=-\int_0^\infty e^{-st}(\cos t-1) dt=f(s).$$
Note that $\frac{d}{ds}\int=\int \frac{d}{ds}$, since both integrands are locally (in $s$) dominated by $L^1[0,\infty[$ functions.\\

{\bf Remark} This integral $J$ appears also on the web, see  \cite{yout}.

\newpage

   \begin{figure}[h!]
 
  \scalebox{0.5} 
  {\includegraphics{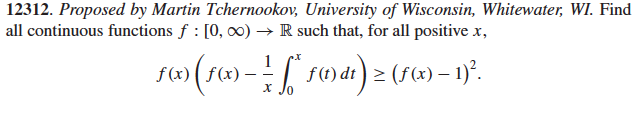}} 

\end{figure}

\centerline{\bf Solution to problem 12312, AMM 129 (3) (2022), p. 286} \medskip 

\centerline{Gerd Herzog, Raymond Mortini}

\bigskip

\centerline{- - - - - - - - - - - - - - - - - - - - - - - - - - - - - - - - - - - - - - - - - - - - - - - - - - - - - -}
  
  \medskip

\centerline{
\fbox{\parbox{9.6cm}{We show that  the constant function $1$ is the only solution}}
}

\medskip

Let $y=y(x):=\int_0^x f(t)dt$ and suppose that the continuous function $f:[0,\infty[\to\R$  satisfies on $]0,\infty[$ 
$$
f(x)\left( f(x)-\frac{1}{x}\int_0^x f(t)dt\right) \geq (f(x)-1)^2.
$$
Then 

\begin{equation}\label{dugl}
y'\left(2-\frac{y}{x}\right)\geq 1 \text{\;for $x>0$ and $y(0)=0$}.
\end{equation}
Note that this implies that $y'(0)=1$, because, by letting $x\to 0$, 
$$y'(0)(2-y'(0))\geq 1\iff (y'(0)-1)^2\leq 0$$

Let  the function $w:[0,\infty[\to \R$ be given by 
 $$w(x):=\begin{cases} \frac{y(x)}{x} &\text{if $x>0$}\\ y'(0)  &\text{if $x=0$}.
 \end{cases}
 $$
 Then $w\in C([0,\infty[)\inter C^1(\,]0,\infty[)$. We claim that 
 \begin{equation}\label{clai}
\mbox{$w(x)=1$ for every $x\geq 0$,}
\end{equation}
 from which we conclude that $y(x)=x$ and so $f(x)=y'(x)=1$  for $x\geq 0$.
 
 To see this, note that by (\ref{dugl}), $w(x)\not=2$. Since $w$ is continuous on $[0,\infty[$, $w(0)=1$,  and $w$ does 
 not take the value 2, we have that 
 $w(x)< 2$ for each $x>0$.  Hence,
for $x>0$,
\begin{eqnarray}\label{ungl}
w'(x)&=&\frac{x y'(x)-y(x)}{x^2}\geq \frac{1}{x}\left(\frac{1}{2-w(x)}-w(x)\right)\nonumber\\
&=&\frac{1}{x}\cdot\frac{(1-w(x))^2}{2-w(x)}
\end{eqnarray}

 Thus we may  deduce from  (\ref{ungl}) that  $w'\geq 0$; that is $w$ is increasing \footnote{ \;in the weak sense; or funnily called nondecreasing, a very ambiguous word.}.

 Now  suppose that (\ref{clai}) is not true.
 
 {\bf Case 1} There is $x_0>0$ with $w(x_0)<1$.   This is not possible, though, as $w$ is increasing, but $w(0)=1$.

{\bf Case 2}  There is $x_0>0$ with $w(x_0)>1$.  As $w$ is increasing, $w>1$ for $x\geq x_0$. Note that we already know that $w<2$.
Since the map $t\mapsto \frac{(1-t)^2}{2-t}$
is increasing on $[1,2[$, we deduce from  (\ref{ungl})  that for $x\geq x_0$
$$w'(x)\geq \frac{1}{x} \cdot\frac{(1-w(x_0))^2}{2-w(x_0)}=: c \frac{1}{x}.$$
Hence, by integration, for $x\geq x_0$,
$$w(x)\geq w(x_0)+ c\log(x/x_0)\to \infty ~~ (x\to \infty).$$
An obvious contradiction.\hfill $\square$

\newpage

 \begin{figure}[h!]
 
  \scalebox{0.45} 
  {\includegraphics{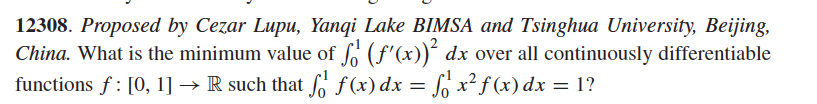}} 

\end{figure}


\centerline{\bf Solution to problem 12308, AMM 129 (3) (2022), p. 285 , by}\medskip

\centerline{Raymond Mortini}

\medskip

\centerline{- - - - - - - - - - - - - - - - - - - - - - - - - - - - - - - - - - - - - - - - - - - - - - - - - - - - - -}
  
  \bigskip\bigskip

\centerline{
\fbox{\parbox{14cm}{We show that  the minimal value is given by $105/2$ and is obtained by
 the polynomial $f(x)=-105/16 x^4+105/8 x^2 -33/16$}}
}

\medskip

Let $p$ be any polynomial. Then, by Cauchy-Schwarz,
$$\left(\int_0^1 f'p\; dx\right)^2 \leq  \left(\int_0^1 f'^2 dx\right)\, \left(\int_0^1 p^2 dx\right).$$

A  primitives of $f'p$ is given by $fp-\int fp' dx $.  Now choose $p$ so that $p(0)=p(1)=0$ and
$p'(x)=\alpha x^2+\beta $. To this end, put
$$p(x)=ax(x^2-1).$$ 
Then
\begin{eqnarray*}
I:=\int_0^1f'p\; dx&=& fp\big|^1_0-\int_0^1 f (3ax^2-a)dx=-3a+a=-2a
\end{eqnarray*}
Moreover,
$$\int_0^1 p^2 dx= a^2\int_0^1 (x^6+x^2-2x^4 )dx= a^2\left(\frac{1}{7}+\frac{1}{3}-\frac{2}{5}\right).$$
Hence
$$\int f'^2 dx\geq \frac{4a^2}{a^2\left(\frac{1}{7}+\frac{1}{3}-\frac{2}{5}\right)}=\frac{105}{2}.$$

Equality in the Cauchy-Schwarz inequality is given whenever $f'=p$. Thus
$$f(x)=\frac{a}{4} x^4-\frac{a}{2}x^2 +c.$$

Now $a$ and $c$ have to be chosen so that $\int f=\int x^2 f =1$. 
This yields the linear system
$$\begin{matrix} -7a+60c =60\\ -27a +140c=420
\end{matrix}
$$
whose solution is $a=-105/4$ and $c=-33/16$. Consequently
$$f(x)=-105/16 x^4+105/8 x^2 -33/16.$$
Note that
$$f'(x)^2=\big(-\frac{105}{4} x (x^2-1)\big)^2.$$

\newpage

   \begin{figure}[h!]
 
  \scalebox{0.5} 
  {\includegraphics{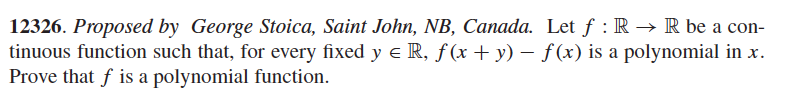}} 

\end{figure}


\centerline{\bf Solution to problem 12326, AMM 129 (5) (2022), p. 487} \medskip

\centerline{Raymond Mortini, Peter Pflug, Amol Sasane}

\bigskip

\centerline{- - - - - - - - - - - - - - - - - - - - - - - - - - - - - - - - - - - - - - - - - - - - - - - - - - - - - -}
  
  \medskip

By considering the symmetric function $p(x,y):=f(x+y)-f(x)-f(y)$ we get from the assumption that as well $p(\cdot,y)$ and $p(x,\cdot)$ are polynomials in their variables separately.  Hence, by \cite{ca}, $p(x,y)$ is  a polynomial.

{\bf Case 1} $f\in C^1(\R)$.
Write $p(x,y)=\sum a_{i,j}x^iy^j$ with  symmetrical coefficients and $a_{0,0}=-f(0)$ (the sum being finite of course)
If we take $y=0$, then for all $x$
$$-f(0)=f(x+0)-f(x)-f(0)=a_{0,0}+\sum  a_{i,0}x^i.$$
Hence $a_{i,0}=0$ for all $i\geq 1$. Due to symmetry, we also have $a_{0,j}=0$ for all $j\geq 1$. Thus we have only coefficients $a_{i,j}$ for $i,j\geq 1$.
Consequenlty
$$\frac{f(x+y)-f(x)-(f(y)-f(0)))}{y}=\sum_{i, j\geq 1}a_{i,j}x^iy^{j-1}.$$
As $f$ is assumed to be differentiable, we may take $y\to 0$ and get

$$f'(x)-f'(0)=\sum_{i\geq 1}a_{i,1}x^i.$$
Integration yields

$$f(x)-f(0)-xf'(0)=\sum_{i\geq 1}a_{i,1}\frac{x^{i+1}}{i+1}.$$
Thus $f$ is a polynomial.

{\bf Case 2}  $f\in C(\R)$. Let $F(x):=\int_0^x f(t)dt$ be  a primitive of $f$. Then with 
$$G(x,y):=F(x+y)-F(x)-F(y)$$
\begin{eqnarray*}
G(x,y)&=& \int_0^{x+y} f(t) dt -\int_0^x f(t) dt-\int_0^y f(t)dt\\
&\buildrel=_{t=y+s}^{}&\int_{-y}^x f(y+s) ds -\int_0^x f(t) dt-\int_0^y f(t)dt\\
&=& \int_{-y}^0  f(y+s) ds +\int_{0}^x\big( f(y+s)-f(s)\big)ds-\int_0^y f(t)dt\\
&\buildrel=_{t=y+s}^{}& \int_0^y f(t)dt+ \int_{0}^x\big( f(y+s)-f(s)\big)ds-\int_0^y f(t)dt\\
&=& \int_0^x p(y,s) ds + f(y) x
\end{eqnarray*}
which is a polynomial in $x$. Again, by symmetry, and the Carroll argument,   $G$ is a polynomial. Hence, by  Case 1, $F$ is a polynomial and so does $f=F'$.

\medskip

A detailed analysis of the functional equation $f(x+y)-f(x)-f(y)=p(x,y)$
(and based on these methods) appears in \cite{mps}.

\newpage

   \begin{figure}[h!]
 
  \scalebox{0.6} 
  {\includegraphics{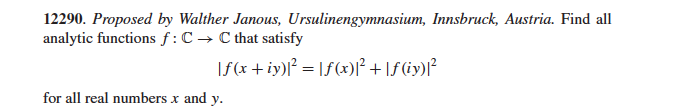}} 
\end{figure}
\centerline{\bf Solution to problem 12290  in Amer. Math. Monthly  128 (2021), 946}\medskip 

\centerline{Raymond Mortini and  Rudolf Rupp}

\medskip

\centerline{- - - - - - - - - - - - - - - - - - - - - - - - - - - - - - - - - - - - - - - - - - - - - - - - - - - - - -}
  
  \medskip

We show that all solutions are given by $az$, $b\sin(kz)$ and $c\sinh(kz)$ where $a,b,c\in \C$ and $k\in \R$. 

First we note that any solution $f$ necessarily satisfies $f(0)=0$.
Now let $h(z):=|f(z)|^2=(f \ov f)(z)$. Since  $f_x=f'$ and $ f_y=if_x=if'$, we see that $f_{xy}=(f')_y=i(f')_x=if''$. Moreover
$(\ov f)_x=\ov {f_x}$. Hence
\begin{eqnarray*}
h_{xy}&=&(f_x\ov f +f \ov f_x)_y= f_{xy} \ov f+f_x \ov f_{y} +f_y\ov f_x+f \ov f_{xy}\\
&=& 2{\rm Re}( f_{xy} \ov f)+0=2{\rm Re}(i f'' \ov f)= -2{\rm Im} (f'' \ov f).
\end{eqnarray*}
Now $|f(z)|^2=|f(x)|^2+|f(iy)|^2$ implies that the mixed derivative of the right hand side is $0$. We conclude that
${\rm Im} (f'' \ov f)=0$ in $\C$.  Let $U=\C\setminus Z(f)$, where $Z(f)=\{z\in\C: f(z)=0\}$. Then on $U$, this is equivalent to
$$0={\rm Im} \left(\frac{f''}{f} |f|^2\right)={\rm Im} \left(\frac{f''}{f}\right).$$

Thus, a necessary condition for $f\not\equiv 0$ being a solution is that $f''/f$ is a real constant $\lambda $. The differential 
equation $f''=\lambda  f$ in $\C$ has the solutions $az+d$ if $\lambda =0$, or $\alpha e^{\sqrt \lambda \, z}+\beta e^{-\sqrt\lambda \, z}$ if $\lambda >0$, and $\alpha e^{i\sqrt {|\lambda |}\, z}+\beta e^{-i\sqrt{|\lambda |}\,z}$  if $\lambda <0$. Since $f(0)=0$, we have $d=0$ and $\beta=-\alpha$. So, with $k:=\sqrt{|\lambda |}$, 
$$\mbox{$f(z)=az$,  $c\sinh kz$ if $\lambda >0$ and $c\sin kz$ if $\lambda <0$}.$$

It is now easy to check that these are  solutions indeed (wlog for $k=1$):

\begin{eqnarray*}
\sin(x+iy)&=&\cos(iy)\sin x +\cos x\sin(iy)\\
&=&\frac{e^{-y}+e^y}{2}\sin x -i  \cos x\frac{e^{-y}-e^y}{2}\\
&=&\cosh y\sin x+i \cos x \sinh y
\end{eqnarray*}

\begin{eqnarray*}
|\sin z|^2&=& \sin^2 x \cosh^2y +\cos^2 x\sinh^2y\\
&=& \sin^2 x \cosh^2y +(1-\sin^2 x)\sinh^2y\\
&=& \sin^2 x(\cosh^2 y-\sinh^2 y)+\sinh^2 y\\
&=& \sin ^2 x+\sinh^2 y\\
&=& \sin^2 x +|\sin^2(iy)|.
\end{eqnarray*}

as $\sin(iy)=i\sinh y$

\newpage
 
    \begin{figure}[h!]
 
  \scalebox{0.6} 
  {\includegraphics{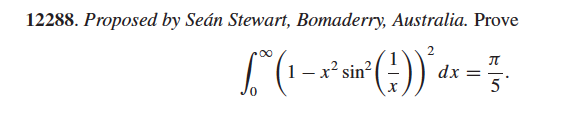}} 
\end{figure}


\centerline{\bf Solution to problem 12288  in Amer. Math. Monthly  128 (2021), 946}\medskip 

\centerline{Raymond Mortini and  Rudolf Rupp}

\bigskip

\centerline{- - - - - - - - - - - - - - - - - - - - - - - - - - - - - - - - - - - - - - - - - - - - - - - - - - - - - -}
  
  \medskip

A change of the variable $x\to 1/x$ yields that
$$J:=\int_0^\infty \left(1-x^2 \sin^2 \left(\frac{1}{x}\right)\right)^2\;dx=
\int_0^\infty \frac{(x^2-\sin^2 x)^2}{x^6}\;dx.
$$
Note that $$(x^2-\sin^2 x)^2=x^4-2x^2\sin^2 x+\sin^4 x.$$
Now  we "linearize" the trigonometric powers:
$\sin^2x=(1/2)(1-\cos 2x)$  and \\ $\sin^4 x=(3/8) -(1/2)\cos 2x +(1/8)\cos 4x$. 
Thus $J=I/2$, where

 $$I:= \int_{\mathbb R} \frac{\frac{3}{8}+x^4-x^2+(x^2-\frac{1}{2})\cos (2x) +\frac{1}{8}\cos(4x) }{x^6} \;dx.
  $$
  
  Next we  consider the meromorphic function
  $$f(z):=\frac{\frac{3}{8}+z^4-z^2+(z^2-\frac{1}{2})e^{2iz} +\frac{1}{8}e^{4iz} }{z^6}.$$
  Then we add in the numerator the polynomial
  $$p(z):=i\left( \frac{1}{2}z -\frac{4}{3}z^3 +\frac{2}{5}z^5\right),$$
  that is we consider the function
  $$F(z):= f(z)+\frac{p(z)}{z^6}.$$
  
  Note that this polynomial is chosen so that $F$ has a removable singularity at $z=0$ (in other words,  $-\frac{p(z)}{z^6}$ is the principal part in the Laurent expansion of $f$ around the origin). Hence $\int_\Gamma F(z)dz=0$, 
  where $\Gamma$ is the boundary of the half-disk $|z|\leq R$, ${\rm Im}\,z \geq 0$, consisting of the half circle 
  $\Gamma_R$  and  the interval $[-R,R]$.
  Hence, by letting $R\to \infty$ and taking real parts,
  $$0={\rm Re}\lim_{R\to\infty} \int_{\Gamma_R} F(z)dz+ I.$$
  
  By Jordan's Lemma, $\limsup_{R\to\infty} \int_{\Gamma_R} |e^{inz}| |dz|<\infty$. Hence,
  $$\lim_{R\to\infty} \int_{\Gamma_R} F(z)dz=0+0+ i\lim_{R\to \infty} \int_{\Gamma_R}\frac{\frac{2}{5}z^5}{z^6} dz=  -\frac{2\pi}{5}.$$
  We conclude that the value of the original integral $J$ is $\pi/5$.

 \newpage

   \begin{figure}[h!]
 
  \scalebox{0.6} 
  {\includegraphics{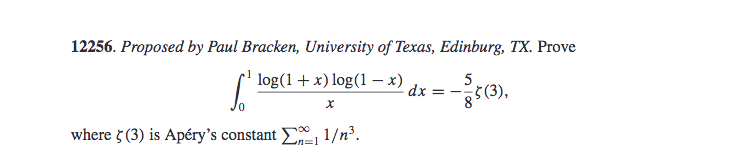}} 
\end{figure}

\centerline{\bf Solution to problem 12256  in Amer. Math. Monthly  128 (2021), 478}\medskip 

\centerline{Raymond Mortini and  Rudolf Rupp}

\medskip

\centerline{- - - - - - - - - - - - - - - - - - - - - - - - - - - - - - - - - - - - - - - - - - - - - - - - - - - - - -}
  
  \medskip

Using that $4ab=(a+b)^2-(a-b)^2$, we obtain

$$4\int_0^1 \frac{\log(1+x)\log(1-x) }{x}dx=\int_0^1 \frac{\log^2(1-x^2)}{x}dx -\int_0^1 \frac{\log^2\frac{1+x}{1-x}}{x}dx
=:I_1-I_2.$$

For $I_1$, we make the substitution $1-x^2=t^2$. Hence, due to $-xdx=t dt$,
$$I_1=\int_0^1 \frac{\log^2 t^2}{1-t^2} t\;dt$$

Using that $\int\sum=\sum\int$ (Lebesgue), and twice integration by parts,
$$I_1= 4\sum_{n=0}^\infty \int_0^1  t^{2n+1}\log^2 t  \;dt=
 8\sum_{n=0}^\infty \frac{1}{(2n+2)^3}=\xi(3).
$$
For the second one, $I_2$,  we make the substitution $t= \frac{1+x}{1-x}$. Then
$x=\frac{t-1}{t+1}$ and $dx= \frac{2}{(t+1)^2}dt$. Hence
$$I_2=2\int_1^\infty  \frac{\log^2 t}{1-t^2}\;dt\buildrel=_{}^{t=1/s}2\int_0^1 \frac{\log^2 s}{1-s^2}\;ds=
2\sum_{n=0}^\infty\int_0^1 s^{2n}\log^2 s\;ds= 4\sum_{n=0}^\infty \frac{1}{(2n+1)^3}= 4 \frac{7}{8}\xi(3).$$

Consequently, $4I= (1-\frac{7}{2})\xi(3)= -\frac{5}{2}\xi(3)$ and so 
$$\int_0^1 \frac{\log(1+x)\log(1-x) }{x}dx= -\frac{5}{8}\xi(3).$$

\newpage

\pagecolor{yellow}

 \begin{figure}[h!]
 
  \scalebox{0.5} 
  {\includegraphics{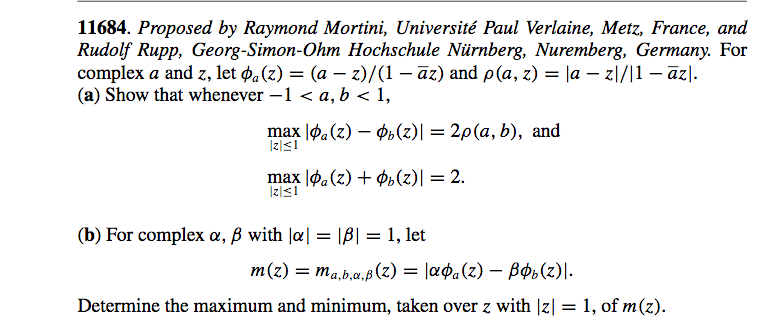} }

\end{figure}

{\sl \underline{original statement}}\\

Given  $a,b,\alpha,\beta \in \mathbb C$ with $|a|<1$,  $|b|<1$ and  $|\alpha|=|\beta|=1$,   let $\varphi_a(z)=(a-z)/(1-\overline a z)$ and $\rho(a,b)=|a-b|/|1-\ov a b|$ the pseudohyperbolic distance
between $a$ and $b$.\medskip

i) Show that whenever $a,b\in \;]-1,1[$, 
$$M^-:=\max_{|z|\leq 1} |\varphi_a(z)-\varphi_b(z)|=2\rho(a,b)  $$
and
$$M^+:=\max_{|z|\leq 1} |\varphi_a(z)+\varphi_b(z)|=2.$$

ii)  Determine
$$M:=\max_{|z|= 1} |\alpha \varphi_a(z)-\beta \varphi_b(z)|$$
and  
$$m:=\min_{|z|= 1} |\alpha \varphi_a(z)-\beta\varphi_b(z)|.$$
\vspace{1cm}

\centerline{\bf Solution to problem 11684  AMM  120 (2013),  76} \medskip 

\centerline{Raymond Mortini, Rudolf Rupp }

i) That $M^+=2$ is easy: just take $z=1$ and evaluate:
$$|\varphi_a(1)+\varphi_b(1)|=|-1-1|=2.$$
 Since $M^+\leq 2$, we are done. 
 \medskip
 
 ii) 
 We  first observe that $\phi_b$ is its own inverse.  
 Let $c= (b-a)/(1- a \ov b)$ and $\lambda= -(1-a\ov b)/(1-\ov a b)$.
   Since $\phi_b$ is a bijection of the unit circle onto itself,   
   $$\max_{|z|=1}|\alpha \varphi_a(z)-\beta\varphi_b(z)|=\max_{|z|=1}|\alpha\ov\beta\varphi_a(\varphi_b(z))-z|=
  \max_{|z|=1}|\alpha\ov \beta\lambda\varphi_c(z) - z|.$$
  The same identities hold when replacing the maximum with the minimum.\medskip

Put $\gamma:=\alpha\ov \beta\lambda$ and let  $-\pi <\arg\gamma\leq \pi$.
For $|z|=1$ we obtain
$$H(z):=|\gamma \phi_c(z)-z|=\left|\gamma \frac{z(c\ov z-1)}{1-\ov c z}-z \right|$$
$$=\left| \gamma \frac{1-c\ov z}{1-\ov c z}+1\right|=\left| \gamma \;\frac{w}{\;\ov w\;}+1\right|,$$ 
where $$w=1-c\ov z=1-c \frac{1}{z}.$$
If $z$ moves on the unit circle, then $w$ moves on the circle  $|w-1|=|c|$.
Let $w=|w|e^{i\theta}$. Then (see figure \ref{argument}) the domain of variation  of 
$\theta$ is the interval $[-\theta_m, \theta_m]$ with $|\theta_m|<\pi/2$ and $\sin\theta_m=|c|=\rho(a,b)$. Now

$$H(z)=|\gamma e^{2i\theta}+1|=2\mbox{$|\cos (\frac{\arg \gamma}{2} + \theta)$}|.
$$
Hence,  
$$M=\max_{|z|=1} H(z)=2\max\{\mbox{$|\cos (\frac{\arg \gamma}{2} + \theta)$}|: 
|\theta|\leq \arcsin(\rho(a,b))\}$$ and 
$$m=\min_{|z|=1} H(z)=2\min\{\mbox{$|\cos (\frac{\arg \gamma}{2} + \theta)$}|: 
|\theta|\leq \arcsin(\rho(a,b))\}.$$
In particular, if $a,b\in \;]-1,1[$ and $\alpha=\beta=1$, then $\gamma=-1$, and so
(using the maximum principle at $*$)
$$M^-\buildrel=_{}^{*}\max_{|z|=1} H(z)= 2\max \{|\sin \theta|:  |\theta|\leq \arcsin(\rho(a,b))\}=2\rho(a,b).$$
If $a,b\in \;]-1,1[$ and $\alpha=1, \beta=-1$, then $\gamma=1$, and so
$$M^+\buildrel=_{}^{*}\max_{|z|=1} H(z)= 2\max \{|\cos \theta|:  |\theta|\leq \arcsin(\rho(a,b))\}=2.$$

We note that $m=0$, that is $H(z_0)=0$ for some $z_0$ with $|z_0|=1$,
 if and only if $\gamma \phi_c$ has  a fixed point on the unit circle (namely $z_0$).
This is equivalent to the condition $|\cos (\frac{\arg \gamma}{2})|\leq |c|$.
Moreover, $M=2$ if and only if $|\sin (\frac{\arg\gamma}{2})|\leq |c|$.
\bigskip

 \begin{figure}[h]!
   \scalebox{.40} {\includegraphics{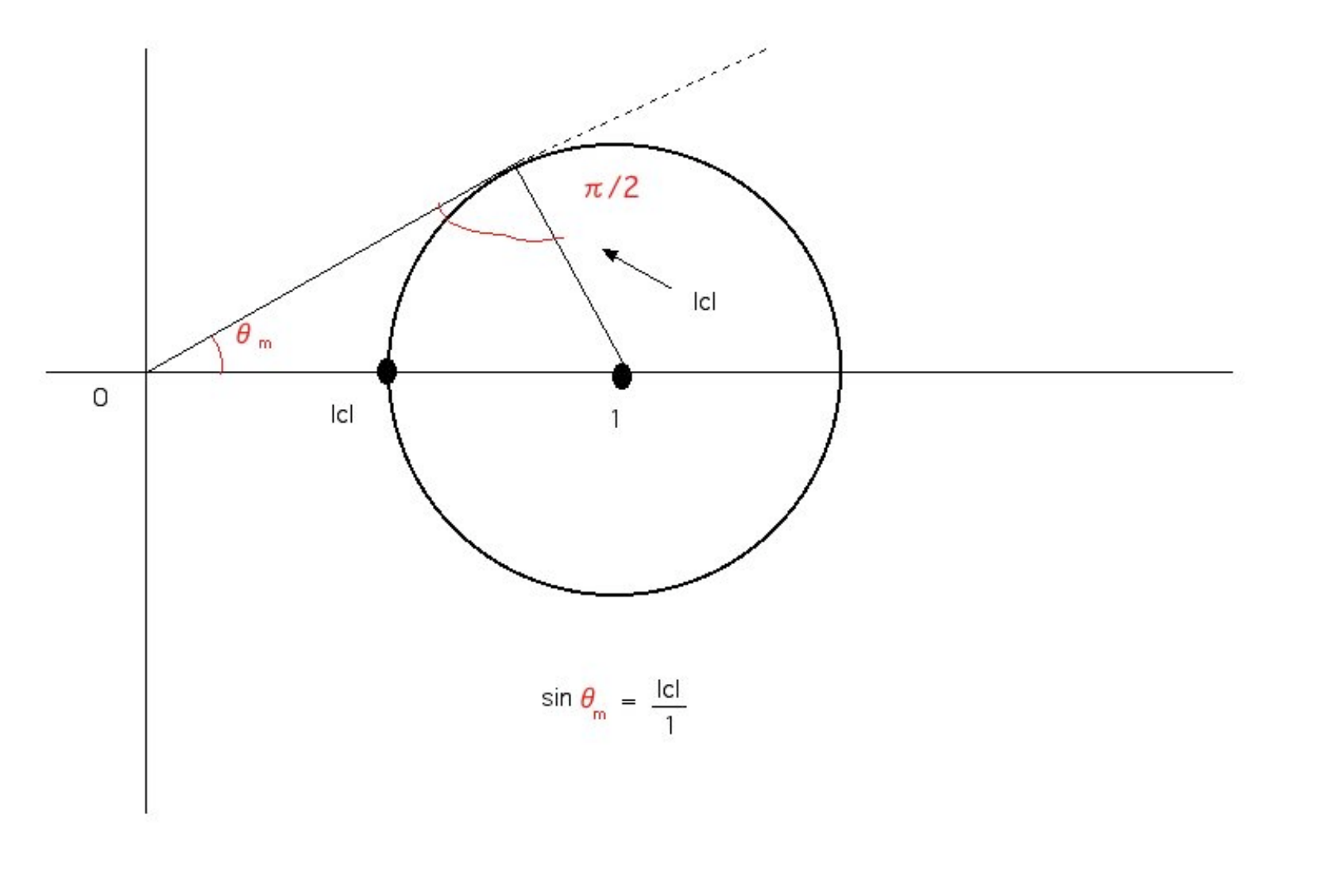}} 
\caption{\label{argument}The domain of variation of $\arg w$}
\end{figure}

\begin{figure}[h!]
 
  \scalebox{0.6} 
  {\includegraphics{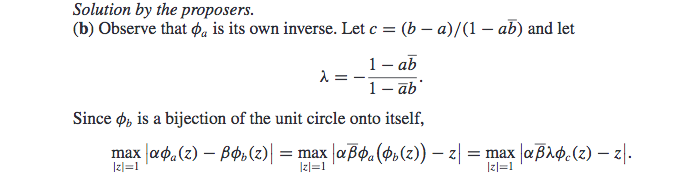}}
 \end{figure}

 \begin{figure}[h!]
 
  \scalebox{0.6} 
  {\includegraphics{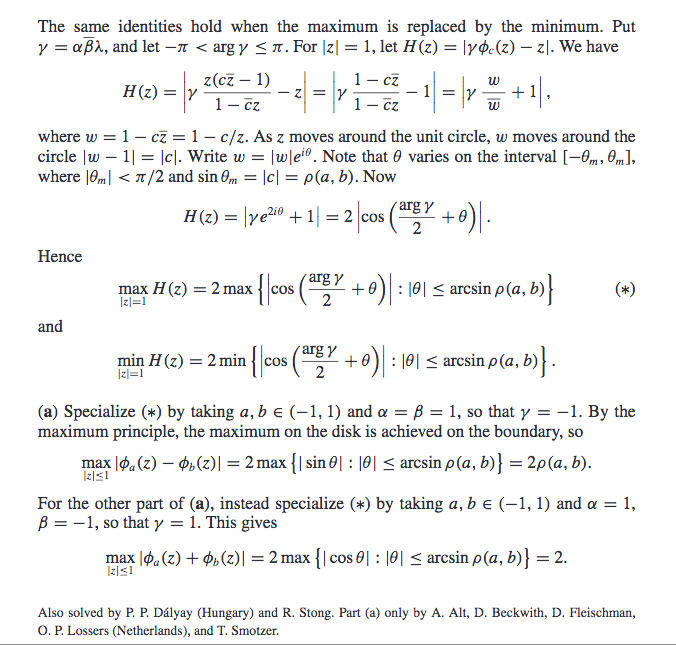} }

\end{figure}

\phantom{$Q$ obviously is continuous. 
If we suppose that $Q$ is constant $\kappa$, then this constant is necessarily 1 (just interchange $s$ with $s'$).
Now $x=(1-t)s+ts'$. Thus $x-s=t(s'-s)$ and $x-s'= (1-t)(s-s')$ and so $Q(s)=t/(1-t)$.
 Hence $1=\kappa=\frac{t}{1-t}$.
So $t=1/2$.  Now $x/||x||$ and $-x/||x||$ belong to $S$  and with $t=(1-||x||)/2$
we have $x= (1-t) \frac{x}{||x||} +t \frac{-x}{||x||}$.  So $t=1/2$ implies that $x=0$}

\vspace{2cm}
 \centerline{\copyright Mathematical Association of America, 2025. }

\newpage

\begin{figure}[h!]
 
  \scalebox{0.5} 
  {\includegraphics{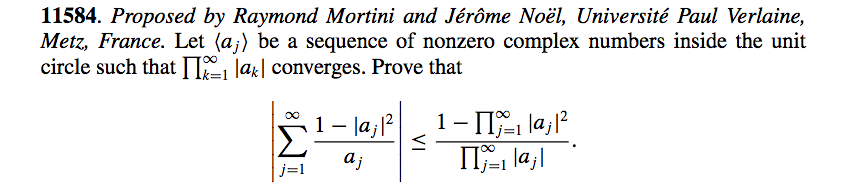}}
 \end{figure}

\centerline{\bf Solution to problem 11584 AMM 118 (2011), 558} \medskip 

\centerline{Raymond Mortini, J\'er\^ome No\"el }\medskip

\centerline{- - - - - - - - - - - - - - - - - - - - - - - - - - - - - - - - - - - - - - - - - - - - - - - - - - - - - -}
  
  \medskip

By the Schwarz-Pick inequality, $\frac{(1-|z|^2)|B'(z)|}{1-|B(z)|^2}\leq 1$
for any holomorphic self-map of the unit disk. Then, if we let $B$ be the Blaschke product
$$B(z)= \prod_{n=1}^\infty \frac{|a_n|}{a_n}\frac{a_n-z}{1-\overline a_n z}$$
associated with the zeros $(a_n)$, we get:

$$\frac{|B'(0)|}{1-|B(0)|^2}\leq 1.$$

But $$ \frac{B'(z)}{B(z)}=-\sum_{n=1}^\infty \frac{1-|a_j|^2}{(1-\overline a_j z) (a_j-z)}.$$
Hence
$$\left|\dis \sum_{j=1}^\infty \frac{1-|a_j|^2}{a_j}\right| = \frac{|B'(0)|}{ |B(0)|}\leq 
\frac{1-|B(0)|^2}{|B(0)|}=
\frac{1-\prod_{j=1}^\infty|a_j|^2}{\prod_{j=1}^\infty |a_j|}.$$

Motivation for posing this as a problem to AMM:  We are interested in a direct elementary proof.

\newpage
\nopagecolor

   \begin{figure}[h!]
 
  \scalebox{0.45} 
  {\includegraphics{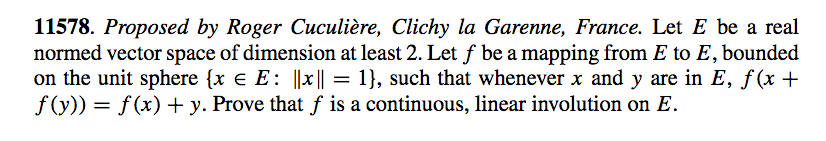}} 
\end{figure}

\centerline{\bf Solution to problem 11578  in Amer. Math. Monthly  118 (2011), 464} \medskip

\centerline{Raymond Mortini}

\medskip

\centerline{- - - - - - - - - - - - - - - - - - - - - - - - - - - - - - - - - - - - - - - - - - - - - - - - - - - - - -}
  
  \medskip

\begin{lemma}\label{center}

Let $0<||x||<1$ and  $s\in S$.  Let  $s'$ be the (second) uniquely determined intersection point of  the half-line starting at  $s$ and passing through $x$ with $S$. Then the 
map  $Q: S\to [0,\infty[, \; s\mapsto  ||x-s|| / ||x-s'||$ is a nonconstant continuous map.
 \end{lemma}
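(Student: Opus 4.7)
The strategy has two independent parts: verify continuity by a standard monotone-implicit-function argument, then rule out constancy by evaluating $Q$ at the distinguished point $s_0=-x/\|x\|$.

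\textbf{Continuity of $Q$.} Parametrize the half-line starting at $s$ and passing through $x$ as
\[
\gamma_s(\tau)=s+\tau(x-s)=(1-\tau)s+\tau x,\qquad \tau\geq 0,
\]
so $\gamma_s(0)=s$ and $\gamma_s(1)=x$. Since $x$ lies in the open unit ball while $s\in S$, the real-valued function $\tau\mapsto\|\gamma_s(\tau)\|$ is strictly less than $1$ at $\tau=1$ and tends to $+\infty$ as $\tau\to\infty$; moreover it is convex in $\tau$ (composition of a norm with an affine map). Hence there is exactly one $\tau_0=\tau_0(s)>1$ with $\|\gamma_s(\tau_0)\|=1$, and this $\tau_0$ is characterized by a strict monotone crossing. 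By continuity of $(s,\tau)\mapsto\|\gamma_s(\tau)\|$ and this strict monotonicity at the crossing, $\tau_0(s)$ depends continuously on $s$ (routine $\varepsilon$--$\delta$ from the monotone implicit function lemma). Now $s'=\gamma_s(\tau_0)$, and a direct computation gives
\[
\|x-s\|=\|x-s\|,\qquad \|x-s'\|=(\tau_0-1)\|x-s\|,
\]
so $Q(s)=1/(\tau_0(s)-1)$, which is therefore continuous on $S$.

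\textbf{Nonconstancy of $Q$.} Suppose, toward a contradiction, that $Q\equiv\kappa$ on $S$. Interchanging the roles of $s$ and $s'$ is legitimate because the half-line from $s'$ through $x$ meets $S$ again precisely at $s$; this gives $Q(s')=\|x-s'\|/\|x-s\|=1/Q(s)=1/\kappa$. Since $Q(s')=\kappa$ as well, $\kappa^2=1$, so $\kappa=1$. Writing $x=(1-t)s+ts'$ with $t\in(0,1)$, the formula $Q(s)=t/(1-t)$ (as in the paper's crossed-out attempt) forces $t=1/2$, i.e.\ $x=(s+s')/2$ for every $s\in S$.

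\textbf{The contradiction.} Take $s_0:=-x/\|x\|\in S$. The corresponding $s_0'$ is the second intersection of the half-line from $s_0$ through $x$ with $S$; since this half-line is the real line through the origin spanned by $x$, we have $s_0'=x/\|x\|$. The relation $x=(s_0+s_0')/2$ would give $x=0$, contradicting $\|x\|>0$. Hence $Q$ is not constant.

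The main (mild) obstacle is the continuity of $\tau_0(s)$: it requires strict monotonicity at the crossing, which in turn uses that $x$ lies strictly inside the unit ball. Everything else reduces to direct algebraic identities on the parametrized line.
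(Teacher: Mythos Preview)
Your proof is correct and follows essentially the same route as the paper: the involution $s\leftrightarrow s'$ forces $\kappa=1$, hence $t=1/2$, and evaluating at the antipodal pair $\pm x/\|x\|$ on the line through the origin yields $x=0$. The only difference is that you spell out the continuity of $s\mapsto\tau_0(s)$ via convexity and a monotone-crossing argument, whereas the paper simply declares $Q$ ``obviously continuous''; your extra care here is welcome but not a different idea.
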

\begin{proof}
$Q$ obviously is continuous. 
If we suppose that $Q$ is constant $\kappa$, then this constant is necessarily 1 (just interchange $s$ with $s'$).
Now $x=(1-t)s+ts'$. Thus $x-s=t(s'-s)$ and $x-s'= (1-t)(s-s')$ and so $Q(s)=t/(1-t)$.
 Hence $1=\kappa=\frac{t}{1-t}$.
So $t=1/2$.  Now $x/||x||$ and $-x/||x||$ belong to $S$  and with $t=(1-||x||)/2$
we have $x= (1-t) \frac{x}{||x||} +t \frac{-x}{||x||}$.  So $t=1/2$ implies that $x=0$.
\end{proof}

\begin{lemma}\label{conn}
The unit sphere $S$ is connected whenever ${\rm dim}\; E\geq 2$.
\end{lemma}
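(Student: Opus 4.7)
The plan is to show the stronger statement that $S$ is path--connected, which implies connectedness. The key geometric idea is the radial projection of an affine segment onto the sphere.

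First, I would handle the generic case. Given $a,b\in S$ with $a\neq -b$, I claim the segment $\gamma(t):=(1-t)a+tb$, $t\in[0,1]$, avoids the origin. Indeed, if $\gamma(t_0)=0$, then $(1-t_0)a=-t_0 b$; taking norms yields $1-t_0=t_0$, hence $t_0=1/2$, and substituting back gives $a=-b$, contradicting the hypothesis. Consequently the map
$$
\Gamma:[0,1]\to S,\qquad \Gamma(t):=\frac{(1-t)a+tb}{\|(1-t)a+tb\|},
$$
is well defined and continuous, with $\Gamma(0)=a$ and $\Gamma(1)=b$. So any two non--antipodal points of $S$ can be joined by a path in $S$.

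Next I would dispose of the antipodal case $b=-a$, where the assumption $\dim E\geq 2$ enters. Since $E$ has dimension at least $2$, there exists $z\in E$ not collinear with $a$; in particular $z\neq 0$ and, setting $c:=z/\|z\|\in S$, neither of the equalities $c=a$ or $c=-a$ holds (as both would force $z$ to be a scalar multiple of $a$). By the first step there is a continuous path in $S$ from $a$ to $c$ and another from $c$ to $-a=b$. Concatenating these paths yields a continuous path in $S$ from $a$ to $b$.

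Thus $S$ is path--connected, hence connected. The only substantive step is the dimension argument used to bypass the antipodal obstruction; the rest is the elementary check that the affine segment between non--antipodal unit vectors stays away from the origin, so that radial normalization is continuous.
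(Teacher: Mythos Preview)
Your proof is correct and follows essentially the same route as the paper: radial projection of the affine segment onto $S$, with the dimension hypothesis invoked only to route around the antipodal obstruction via an intermediate point. The paper phrases the case split as ``linearly independent'' versus ``linearly dependent'' rather than ``non-antipodal'' versus ``antipodal,'' but since two distinct unit vectors are linearly dependent only when antipodal, the two dichotomies coincide.
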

\begin{proof}
Let $x,y\in S$, $x\not=y$. If $x$ is linear independent of $y$, then  the segment
$\{tx+(1-t)y: 0\leq t\leq 1\}$ does not pass through the origin; hence 
$$t\mapsto \frac{tx+(1-t)y}{||tx+(1-t)y||}$$
is a path joining $y$ with $x$ on $S$. 

If $y=\lambda  x$ for some $\lambda \in \mathbb R$, then 
we use the hypothesis that ${\rm dim}\; E\geq 2$
to guarantee the existence of a vector $u$ linear independent of $x$. Thus $v:=u/||u||\in S$.
By the first case, we may join $x$ with $v$ and then $v$ with $y$  by a path in $S$.
\end{proof}

The first  step is to show that $f(0)=0$. 

(1) Let $x=0$, $y=-f(0)$. Then $f(f(-f(0)))=f(0)-f(0)=0$;

(2) Let $x=y=0$. Then $f(f(0))=f(0)$;

(3) Let $x=-f(y)$. Then  $f(0)=f(-f(y))+y$. With $y=0$ this gives
$f(0)=f(-f(0))$.

(4) Applying $f$ yields $f(f(0))=f (f(-f(0)))\buildrel=_{(1)}^{}0$. 
Thus, by (2), $f(0)=0$. 

(5) Let $x=0$. Then $f(f(y))=f(0)+y=y$. Hence $f$ is an involution.

(6) $f$ is additive since 
$$f(x+y)\buildrel=_{(5)}^{}f(x+f(f(y)))=f(x)+ f(y).$$

(7) Next we show that $f$ is $\mathbb Q$-homogeneous by induction. 
Indeed, by (5), $$f((n+1)x)=f(nx + x)= f(\underbrace{nx}_X +f(\underbrace{f(x)}_Y))= f(nx)+f(x).$$
Thus $f(mx)=mf(x)$ for every $m\in \mathbb N$.

Now   $$0=f(0)=f(-x+x)\buildrel=_{(5)}^{}f(-x+f(f(x)))= f(-x)+f(x).$$
Thus $f(-x)=-f(x)$.
Hence, for $p\in\mathbb Z$, we have $f(px)=pf(x)$.

Next, if $n\in \mathbb N$, then 
$$n f(\frac{x}{n})= f(\frac{x}{n})+ (n-1) f(\frac{x}{n})= f(\frac{x}{n})+ f (\frac{n-1}{n}x)$$
$$=f\biggl(\underbrace{\frac{x}{n}}_X + f\Bigl(\underbrace{f\bigl(\frac{n-1}{n}x\bigr)}_Y\Bigr)\biggr)\buildrel=_{(5)}^{}
f\left(\frac{x}{n}+ \frac{n-1}{n}x\right)= f(x)$$
Hence $f\left(\frac{x}{n}\right)= \frac{1}{n} f(x)$. Therefore
$f\left(\frac{p}{n} \right)= \frac{p}{n} f(x)$ for $p\in \mathbb Z$ and $n\in \mathbb  N$.

\medskip
(8) By hypothesis, $||f(s)||\leq C$ for every $s\in S$.
Let $0<||x||<1$. Consider, as in Lemma \ref{center}, 
 the map $H: S\to [0,\infty[, s\mapsto ||x-s||/||x-s'||$. 
$H$ is continuous and non-constant. Since ${\rm dim}\; E\geq 2$,
 $S$ is connected by Lemma \ref{conn}. Hence $H(S)$ is an interval. In particular,
 there is $s\in S$ such that $r:=||x-s||/||x-s'||$ is rational. Thus, with $t= r/(1+r)$,
 $$x= (1-t)s  +ts'$$
 is  a rational convex-combination of two elements in the sphere.
  
Since $f$ is $\mathbb Q$-linear, we conclude that 
$$||f(x)||\leq (1- t) ||f(s)|| +t ||f(s')||\leq(1-t)C+tC=C.$$

Now let $x\in E$ be arbitrary. Choose a null-sequence  $\epsilon_n$ of positive numbers
so that $q_n:=||x||+\epsilon_n$ is rational. Then, $||x/q_n||\leq 1$. Since $f$ is
 $\mathbb Q$-linear, we obtain
$$||f(x)||=q_n ||f(x/q_n)||\leq q_nC.$$
Letting $n$ tend to infinity, we get 
$$||f(x)||\leq C ||x||.$$

Thus $f$ is continuous at the origin. Since $f$ is additive, we deduce that
$f$ is continuous everywhere; just use $f(x_0+x)=f(x_0)+f(x)\to f(x_0)$ if $x\to 0$.
\medskip

(9) It easily follows now that $f$ is homogeneous: if $\alpha\in \mathbb R$,
choose a sequence $(r_n)$ of rational numbers converging to $\alpha$.
Then, due to continuity,
$$f(\alpha x)=\lim_n r_n f(x)=\alpha f(x).$$

To sum up, we have shown that $f$ is a continuous linear involution.
\bigskip

{\bf Remarks}

If $n=1$, then the unit sphere $S$ is just a two point set, and so every function  is
automatically bounded on $S$. There exist, though, non-continuous  linear involutions
in $\mathbb R$. To this end, let $\mathcal B$ be  a Hamel basis of the $\mathbb Q$-vector space 
$\mathbb R$, endowed
with the usual Euclidean norm. We may assume that  $\mathcal B$ is dense in $\mathbb R$. 
Fix two elements $b_0$ and $b_1\in \mathcal B$. Let $f$ be defined
by $f(b_0)=b_1$, $f(b_1)=b_0$ and $f(b)=b$ if $b\in \mathcal B\setminus\{b_0,b_1\}$.
Linearly extend $f$ (in a unique  way). Then, obviously, $f$ is a linear involution.
But $f$ is not continuous at $b_0$. In fact, let $(b_k)_{n\geq 2}\in \mathcal B^{\mathbb N}$ converge to $b_0$. Then $f(b_k)=b_k\to b_0= f(b_1)\not=f(b_0)$.

\newpage

   \begin{figure}[h!]
 
  \scalebox{0.45} 
  {\includegraphics{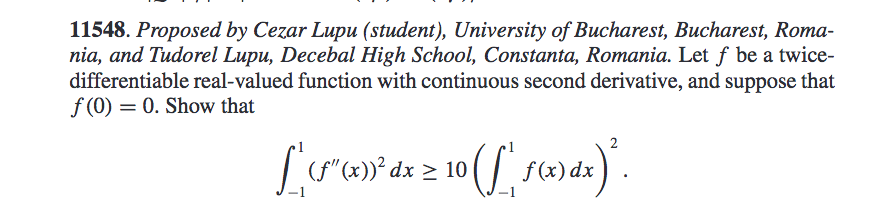}} 
\end{figure}

\centerline{\bf Solution to problem 11548  in Amer. Math. Monthly  118 (2011), 85}  \medskip

\centerline{Raymond Mortini and  J\'er\^ome No\"el}

\medskip

\centerline{- - - - - - - - - - - - - - - - - - - - - - - - - - - - - - - - - - - - - - - - - - - - - - - - - - - - - -}
  
  \medskip

Let $f\in C^2([-1,1])$, $f(0)=0$. Then 
$$\left(\int_{-1}^1 f(x) dx\right)^2\leq \frac{1}{10}\int_{-1}^1 (f''(x))^2 dx.$$
Moreover, the constant $1/10$ is best possible.\bigskip

{\bf Solution}
We consider the auxiliary integral 
$$I=\frac{1}{2} \left[  \int_0^1 (t-1)^2 f''(t) dt +\int_{-1}^0 (1+t)^2 f''(t)\right].$$

We first show that $I= \int_{-1}^1 f(t)dt$.  In fact, twice integration by parts yields:
$$\int_{0}^1 (t-1)^2 f''(t) dt= -f '(0) - 2\int_0^1(t-1) f '(t) dt=-f '(0) +2 \int_0^1 f(t)dt,
$$
as well as
$$\int_{-1}^0 (t+1)^2 f''(t) dt= f '(0) -2\int_{-1}^0 (t+1) f '(t) dt= f '(0) +2\int _{-1}^0 f (t) dt.$$
This proves the first claim. Now we use the Cauchy-Schwarz  inequality to estimate $I$:

$$\left(\int_0^1 (t-1)^2f''(t)dt\right)^2 \leq  \int_0^1 (t-1)^4 dt \int_0^1 (f''(t))^2 dt=\frac{1}{5}
\int _0^1 (f''(t))^2 dt,$$
and similarily for the second integral. Hence, by using that  $(A+B)^2\leq 2(A^2+B^2)$, we obtain

$$ I^2\leq 2 \frac{1}{4} \left(\frac{1}{5}\int_0^1  (f''(t))^2 dt + \frac{1}{5}\int_{-1}^0  (f''(t))^2 dt 
\right) = \frac{1}{10} \int_{-1}^1  (f''(t))^2 dt .
$$

The constant $1/10$ is obtained for the function
$$f(t)= \begin{cases} \frac{1}{12}t^4 + \frac{1}{3}t^3 + \frac{1}{2} t^2&\text{if $ -1\leq t\leq 0$}\\
\frac{1}{12}t^4 - \frac{1}{3}t^3 + \frac{1}{2} t^2&\text{if $ 0\leq t\leq 1$}.
\end{cases}$$
Indeed, this follows from the fact that in the Cauchy-Schwarz inequality we actually
have equality if the functions are colinear: $p ''(t)= (1+t)^2$ if $-1\leq t\leq 0$ and 
$p''(t) = (1-t)^2$ if $0\leq t \leq 1$.
A computation then shows that $$\left(\int_{-1}^1 p(x) dx\right)^2= \frac{1}{10}\int_{-1}^1 (p''(x))^2 dx=\frac{1}{25}.$$

{\bf Remark} If $f\in C^2([-1,1])$ satisfies  $f(1)=f(-1)=f '(1)=f '(-1)=0$,
then the inequality above holds, too. In fact.
$$ \int_{-1}^1 f(x) dx = \int_{-1}^1 1\cdot f(x) dx= xf(x)|^{1}_{-1}-\int_{-1}^1 xf '(x)dx =$$
$$=- \frac{1}{2}x^2 f '(x)|^{1}_{-1} + \frac{1}{2}\int_{-1}^1 x^2f ''(x) dx= \frac{1}{2}\int_{-1}^1 x^2f ''(x) dx
$$
Thus, by Cauchy-Schwarz,
$$\left(  \int_{-1}^1 f(x) dx\right)^2\leq \frac{1}{4}\int_{-1}^1 x^4 dx\int_{-1}^1 (f ''(x))^ 2 dx=
\frac{1}{4}\cdot \frac{2}{5}\int_{-1}^1 (f ''(x))^ 2 dx.$$

 \newpage
 
 \pagecolor{yellow}
 
 \begin{figure}[ht]
\scalebox{0.5}{\includegraphics{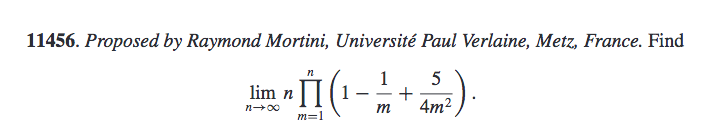}}
\end{figure}

\centerline {\bf Solution to problem 11456 AMM 116 (2009), 747}  \medskip
 
 \centerline{Raymond Mortini}

\medskip

\centerline{- - - - - - - - - - - - - - - - - - - - - - - - - - - - - - - - - - - - - - - - - - - - - - - - - - - - - -}
  
  \medskip

 $$a_m:=1-\frac{1}{m}+\frac{5}{4}\frac{1}{m^2}= \frac{1+\left(\frac{2m-1}{2}\right)^2}{m^2}$$

$$\prod_{m=1}^n  a_m=\frac{\prod_{m=1}^n \left(1+\left(\frac{2m-1}{2}\right)^2\right)}{\prod_{m=1}^n m^2}\;\frac{\prod_{m=1}^{2n}  m^2}{\prod_{m=1}^n (2m-1)^2\,\prod_{m=1}^n (2m)^2}$$

$$=\frac{\prod_{m=1}^n \left(\frac{1}{(2m-1)^2}+\frac{1}{4}\right)\; (2n)!^2}{4^n (n!)^4}=
\frac{\prod_{m=1}^n \left(\frac{4}{(2m-1)^2}+1\right)\; (2n)!^2}{16^n (n!)^4}.$$

Now, by Stirlings formula, 
$$\frac{(2n)!}{4^n n!^2}\sim \frac{(2n)^{2n}e^{-2n}\sqrt{4\pi n}}
{(n^n e^{-n} \sqrt{2\pi n})^2 2^{2n}}=\frac{1}{\sqrt{\pi n}}.$$

Since $\cos(\pi z)=\prod_{n=1}^\infty\left( 1-\frac{4z^2}{(2n-1)^2}\right)$, we have
 $$\lim_n n\prod_{m=1}^n a_m= \frac{\cos(\pi i)}{\pi}=\frac{\cosh \pi}{\pi}.$$

We note that 
$$\sqrt{\prod_{m=1}^n a_m} =\frac{1}{n!} \prod_{m=1}^n \left|i-\frac{2m-1}{2}\right|=
(n+1)\frac{2}{\sqrt 5}\frac{|f^{(n+1)}(0)|}{(n+1)!},$$
where $f(z)=(1-z)^{i+\frac{1}{2}}$, an interesting function in the Wiener algebra
(its Taylor coefficients  behave like $n^{-3/2}$ by the above calculations).

 \newpage
 \nopagecolor
 
 \begin{figure}[ht]
\scalebox{0.95}{\includegraphics{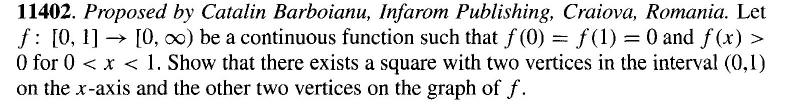}}
\end{figure}

\centerline  {\bf Solution to problem 11402, AMM 115 (10), (2008), p. 949}  \medskip
  
  \centerline{Raymond Mortini}
  
\medskip

\centerline{- - - - - - - - - - - - - - - - - - - - - - - - - - - - - - - - - - - - - - - - - - - - - - - - - - - - - -}
  
  \medskip

  The problem obviously is equivalent to show the existence of two points $0<a<b<1$
  with $f(a)=f(b)=b-a$, or in other words, find $0<a<b<1$ with $b-f(b)=a$ and $f(b)=f(a)$.
  
  To this end, consider the function $h(x):=f(x-f(x))-f(x)$, where we have continuously 
  extended $f$ by the value $0$ for $x<1$ and $x>1$.  Then $h$ is continuous.
  We have to show that $h$ admits a zero $b$  in $]0,1[$ with $f(b)< b$. 
  Then  $a:=b-f(b)\in \;]0,1[$ and $b-a=f(b)=f(a)$.
  
 To do this, we prove  that $h$ takes positive and negative values on $[0,1]$.  Since $h(0)=h(1)=0$,
  the continuity of $h$ implies that $h$ has  a zero $b$ in $]0,1[$. Our construction will guarantee
  that $f(b)<b$.\medskip
  
  Let $\xi_0$ be the largest fixed point  of $f$ (note hat $0\leq \xi_0<1$). For later
  purposes, we note that $f(x)\leq x$ whenever $\xi_0\leq x\leq 1$.
  If  $\xi_0=0$, we let $x_0$
  be the be the smallest point  for which 
  $f(x_0)=M:=\max_{x\in [0,1]} f(x)$. Note that $x_0\in \;]0,1[$.
  Finally,
   let  $x_1\in\; [\xi_0,1[$ be the largest point  with $f(x_1)=M_1:=\max_{x\in [\xi_0,1]} f(x)$.
 Then $0<x_0\leq x_1<1$. 
  Since the function   $x-f(x)$ is $0$ at $\xi_0$ and $1$ at $1$, the intermediate value theorem
  for continuous functions implies that there exists $y_1\in\; ]\xi_0,1[$ such that $y_1-f(y_1)=x_1$.
  Since $f>0$, $y_1>x_1$. Thus $$h(y_1)=f(y_1-f(y_1))-f(y_1)=f(x_1)-f(y_1)=M_1-f(y_1)>0.$$

  On the other hand,  $h(\xi_0)=f(\xi_0-f(\xi_0))-f(\xi_0)=0-f(\xi_0)< 0$ if $\xi_0>0$, and
  if $\xi_0=0$, then, $h(x_0)=f(x_0-f(x_0))-f(x_0)<0$ (since $x_0-f(x_0)$ is left from
  the smallest maximal point $x_0$ of $f$.)\medskip
  
  In both cases, there exists $b$ such that $h(b)=0$.   Since
  $\xi_0<b<y_1$ if $\xi_0>0$ and $0<x_0<b<y_1$ if $\xi_0=0$, we see that $f(b)<b$.

   \newpage

\begin{figure}[ht]
\scalebox{1.00}{\includegraphics{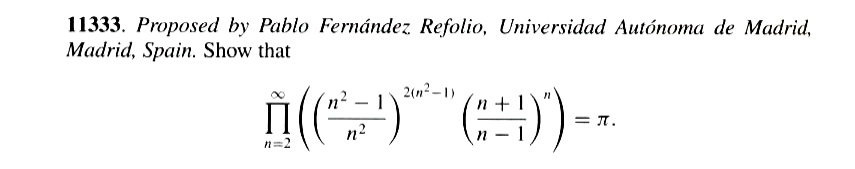}}
\end{figure}

\centerline  {\bf Solution to problem 11333, AMM 114 (10), (2007), p. 926}  \medskip
  
  \centerline{Raymond Mortini}
  
\medskip

\centerline{- - - - - - - - - - - - - - - - - - - - - - - - - - - - - - - - - - - - - - - - - - - - - - - - - - - - - -}
  
  \medskip
  
  Let 
  $$P_N=\prod_{n=2}^N \left( \left( \frac{n^2-1}{n^2}\right)^{2(n^2-1)}
  \left(  \frac{n+1}{n-1}\right)^n\right).$$
  \bigskip
  
  a)  We have the following  equalities:
  $$\prod_{n=2}^N \left( \frac{n^2-1}{n^2}\right)^{n^2-1}=
  \frac{(N+1)^{N^2-1}}{N^{N(N+2)}}\, (N!)^2,$$
  
  b) $$ \prod_{n=2}^N \left(  \frac{n+1}{n-1}\right)^n= \frac{(N+1)^N N^{N+1}}{2 (N!)^2}.$$

  Hence  
  $$\sqrt{P_N}= \frac{(N+1)^{N^2-1}}{N^{N(N+2)}}\, (N!)^2\,
  \frac{ (N+1)^{N/2} \, N^{(N+1)/2}}{\sqrt 2 N!}=$$
  $$ \left( \frac{N+1}{N}\right)^{N^2-1}\, \frac{ N^{N^2-1}}{N^{N(N+2)}}\, N!\;
   \frac{ (N+1)^{N/2} \, N^{(N+1)/2}}{\sqrt 2}=$$
   $$ \left( \frac{N+1}{N}\right)^{N^2-1} N! \;\frac{(N+1)^{N/2}}{\mathbf{ N^{N/2}}\sqrt 2}
   \,\frac{N^{(N+1)/2}\mathbf {N^{N/2}} }{N^{2N+1}}=$$
   $$\left( \frac{N+1}{N}\right)^{N^2-1} N!\; \frac{\left(1+\frac{1}{N}  \right)^{N/2}}{\sqrt 2}\,
   \frac{ \sqrt N}{N^{N+1}}.$$
   
   We are now using Stirling's formula telling us that $n! \sim e^{-n}n^n \sqrt{2\pi n}$.
  Hence
  $$\sqrt{P_N}\sim  \frac{\sqrt{e}}{\sqrt 2} N^N e^{-N} \sqrt{2\pi N} \left( \frac{N+1}{N}\right)^{N^2-1}
   \frac{ \sqrt N}{N^{N+1}}=$$
   
 $$\sqrt e \sqrt \pi e^{-N}  \left( \frac{N+1}{N}\right)^{N^2-1}.$$
 
 But $a_N:=\dis e^{-N} \left( \frac{N+1}{N}\right)^{N^2-1}\to \frac{1}{\sqrt e}$ as $N\to\infty$;
 in fact, by  taking logarithms we obtain
  
 $\log a_n=(N^2-1)\log (1+\frac{1}{N})-N\sim N^2 \log (1+\frac{1}{N})-N=
 N^2( \frac{1}{N} -\frac{1}{2N} \pm \cdots)-N\sim  -\frac{1}{2}.$
 \medskip
 
 Hence $\sqrt{P_N}\to \sqrt \pi$ and so $P_N\to \pi$.

  \newpage
  
  \begin{figure}[ht]
\scalebox{.45}{\includegraphics{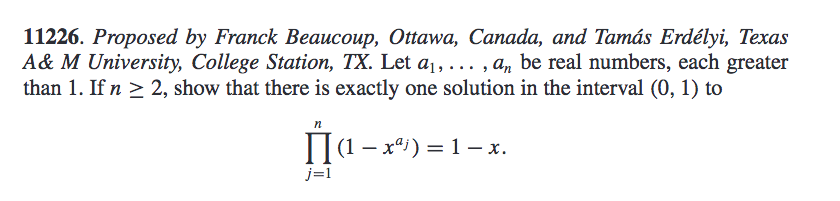}}
\end{figure}

 \centerline {\bf Solution to problem 11226, AMM 113 (5), (2006), p. 460}  \medskip
  
  \centerline{Raymond Mortini}
  
\medskip

\centerline{- - - - - - - - - - - - - - - - - - - - - - - - - - - - - - - - - - - - - - - - - - - - - - - - - - - - - -}
  
  \medskip

Let $h(x)=\prod_{j=1}^n (1-x^{a_j})$. Then 
$h'(x)/h(x)=-\sum_{j=1}^n \frac{a_jx^{a_j-1}}{1-x^{a_j}}$
and hence $$h'(x)=-\sum_{j=1}^n a_j x^{a_j-1}\prod_{k\not=j}(1-x^{a_k}).$$
 Clearly  $h'(0)=h'(1)=0$.
Let $$f(x)=(1-x)^{-1}\prod_{j=1}^n (1-x^{a_j})$$ if $0\leq x<1$.
Note that $f(0)=1$ and $\lim_{x\to 1}f(x)=-h'(1)=0$. Thus, if we show that $f'(0)>0$ and that
the derivative of $f$ has a unique zero in the open interval $]0,1[$,
we are done (that is we can then conclude by the intermediate value theorem
that there is a unique $x_0$ with $0<x_0<1$ so that  $f(x_0)=1$, and hence 
$h(x_0)= 1-x_0$.)

Now, $f'(x)/f(x)=\frac{1}{1-x}+h'(x)/h(x)$. In particular, $f'(0)=1$. Thus we have to look for  $x\in ]0,1[$
so that  $g(x):=\sum_{j=1}^n a_jx^{a_j-1}\frac{1-x}{1-x^{a_j}}=1$. But $g(0)=0$,
and, by de l'Hopital's rule, $\lim_{x\to 1} g(x)=n$.  The intermediate value theorem 
yields the existence of $x$. 
The uniqueness of such an $x$ follows from
the fact that $g$ is strictly increasing. This is due to the fact that the function
$\frac{x^{a-1}-x^a}{1-x^a}$ is strictly increasing on $]0,1[$ whenever $a>1$. 

The latter follows from the fact that 
$$\frac{d}{dx}\frac{x^{a-1}-x^a}{1-x^a}=\frac{x^{a-2}\big((a-1)+x^a-ax\bigr)}{(1-x^a)^2}$$

and that $k(x):=a-1 +x^a -ax\geq 0$ for $0\leq x\leq 1$, because $k(0)=a-1>0$, $k(1)=0$
and $k'(x)=a(x^{a-1}-1)\leq 0$.\vs1cm

  \newpage
  
     \begin{figure}[h!]
 
  \scalebox{0.5} 
  {\includegraphics{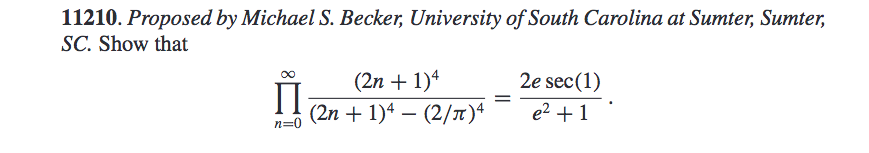}} 
\end{figure}

 \centerline {\bf Solution to problem 11210, AMM 113 (3), (2006), p. 267}  \medskip
  
  \centerline{Raymond Mortini}
  
\medskip

\centerline{- - - - - - - - - - - - - - - - - - - - - - - - - - - - - - - - - - - - - - - - - - - - - - - - - - - - - -}
  
  \medskip

  \vs0,5cm
  We note that
  $$ p_n:=\frac{(2n+1)^4-(2/\pi)^4}{(2n+1)^4}=
  \left( 1-\left[\frac{2}{\pi(2n+1)}\right]^4 \right)=$$
  $$
  \left( 1-\frac{4}{\pi^2(2n+1)^2}\right)\; \left( 1+\frac{4}{\pi^2(2n+1)^2}\right).$$
  \vs0,2cm
  Multiplying in the numerator and denominator (which is 1) with the "missing" factors
  $$\left(1-\frac{4}{\pi^2(2n)^2}\right)\; \left(1+\frac{4}{\pi^2(2n)^2}\right)
  $$
  
  we obtain
  $$P:=\prod_{n=0}^\infty p_n=\prod_{k=1}^\infty\frac{\left(1-\frac{4}{\pi^2k^2}\right)
  \left(1+\frac{4}{\pi^2k^2}\right)}{\left(1-\frac{1}{\pi^2k^2}\right)
  \left(1+\frac{1}{\pi^2k^2}\right)}.$$
  
  Using the standard infinite product representation of the sinus
  $$\frac{\sin z}{z}=\prod_{k=1}^\infty\left(1-\frac{z^2}{\pi^2k^2}\right),$$
  we obtain
  
  $$P=\frac{ \frac{\sin 2}{2}\; \frac{\sin(2i)}{2i}}{\frac{\sin 1}{1}\; \frac{\sin i}{i}}=
  \cos 1\cosh 1=(\cos 1)\, \frac{e^2+1}{2}.$$

\newpage
  
     \begin{figure}[h!]
 
  \scalebox{0.45} 
  {\includegraphics{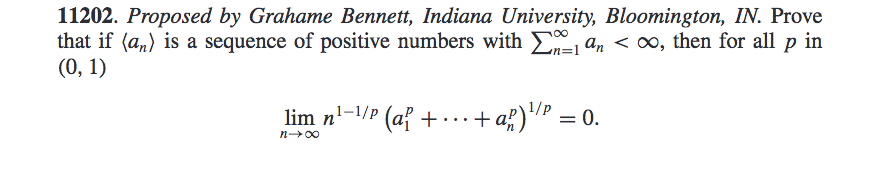}} 
\end{figure}

 \centerline  {\bf Solution to problem 11202, AMM 113 (2), (2006), p. 179}  \medskip
   
   \centerline{Raymond Mortini}
  
\medskip

\centerline{- - - - - - - - - - - - - - - - - - - - - - - - - - - - - - - - - - - - - - - - - - - - - - - - - - - - - -}
  
  \medskip

 \vs0,5cm

The assertion is an immediate consequence of H\"older's inequality:
Wlog let $0\leq a_j\leq 1$ and let $q\in ]0,1[$ be such that $p+q=1$ (note that $p\in\, ]0,1[$.)
$$n^{p-1}\sum_{j=1}^na_j^p=n^{p-1}\left(\sum_{j=1}^N a_j^p+\sum_{j=N+1}^n a_j^p\,\cdot 1\right)\leq$$
$$
\frac{N}{n^{1-p}}+\left(\sum_{j=N+1}^n (a_j^p)^{1/p}\right)^p
\left(\sum_{j=N+1}^n1^{1/q}\right)^q\; \frac{1}{n^{1-p}}\leq$$
$$ \frac{N}{n^{1-p}}+\left(\sum_{j=N+1}^\infty a_j\right)^p \;\frac{n^q}{n^{1-p}}
=  \frac{N}{n^{1-p}} + \left(\sum_{j=N+1}^\infty a_j\right)^p\leq \epsilon$$
\vs0,2cm
if $N$ and $n>N$ is sufficiently big.\vs1cm

\newpage 

\pagecolor{yellow}

     \begin{figure}[h!]
 
  \scalebox{0.5} 
  {\includegraphics{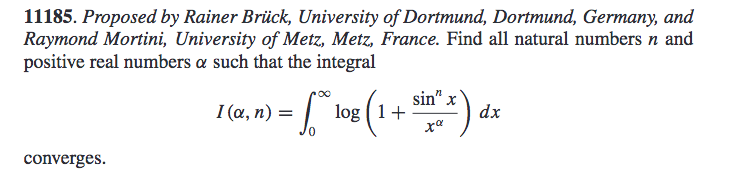}} 
\end{figure}

\centerline{\bf Solution to problem 11185 AMM 112 (2005), 840}  \medskip

\centerline{Rainer Br\"uck, Raymond Mortini }

\medskip

\centerline{- - - - - - - - - - - - - - - - - - - - - - - - - - - - - - - - - - - - - - - - - - - - - - - - - - - - - -}
  
  \medskip

We claim that
\medskip

\shadowbox{$I(\alpha,p)$ converges if and only if $(\alpha,p) \in
\left]1,\infty\right[ \times \mathbb{N}$ or $(\alpha,p) \in
\left]\frac{1}{2},1\right] \times (2\mathbb{N}+1)$.}
\bigskip

First we discuss the behaviour of the integrand at the origin. For
$\alpha>0$ we have
$\left|\log{\left(1+\frac{\sin^p{x}}{x^\alpha}\right)}\right| \leq
\log{(1+x^{-\alpha})}$. Substituting $\frac{1}{x}$ by $t$, we
obtain
\[
  \int_0^1 \log{\left(1+x^{-\alpha}\right)}\,dx =
  \int_1^\infty \frac{\log{(1+t^\alpha)}}{t^2}\,dt\,,
\]
and this integral is convergent. Hence, our integral $I(\alpha,p)$
converges at $0$ for every $\alpha>0$ and $p \in \mathbb{N}$.

Now we discuss the behaviour at infinity. Since $\lim\limits_{t
\to 0}{\frac{\log{(1+t)}}{t}}=1$, we see that at infinity
\[
  A(x) := \log{\left(1+\frac{\sin^p{x}}{x^\alpha}\right)} \sim
  \frac{\sin^p{x}}{x^\alpha} = :B(x)\,.
\]
Hence $\int_1^\infty A(x)\,dx$ converges absolutely if and only if
$\int_1^\infty B(x)\,dx$ does. Note that by Riemann's convergence
test $\int_1^\infty |B(x)|\,dx \leq \int_1^\infty
\frac{dx}{x^\alpha} < \infty$ whenever $\alpha>1$. Hence,
$\int_1^\infty A(x)\,dx$ is absolutely convergent for $\alpha>1$.

Now suppose that $0 < \alpha \leq 1$. On the intervals $J_k :=
\left[\frac{\pi}{6}+2k\pi,\frac{\pi}{2}+2k\pi\right]$, $k \geq 1$,
we have $|\sin{x}| \geq \frac{1}{2}$ and $x \geq 1$. Hence
$\frac{\sin^p{x}}{x^\alpha} \geq \frac{2^{-p}}{x} \geq
\frac{2^{-p}}{2\pi(k+1)}$. Therefore,
\[
  \int_{J_k}|B(x)|\,dx \geq \frac{1}{3}\cdot\frac{2^{-p-1}}{k+1}\,.
\]
Since $\int_1^\infty |B(x)|\,dx \geq \sum_{k=1}^\infty \int_{J_k}
|B(x)|\,dx$, we see that $\int_1^\infty |B(x)|\,dx$ and hence
$\int_1^\infty |A(x)|\,dx$ diverges (absolutely) for $0 < \alpha
\leq 1$. In particular, $\int_1^\infty A(x)\,dx$ diverges whenever
$p$ is even, since in that case $|A(x)|=A(x)$.
\medskip

To continue, we may thus assume that $p=2n+1$ is odd. We use that
for every $\alpha>0$ and $n \in \mathbb{N}$ the integral
$\int_1^\infty \frac{\sin^{2n+1}{x}}{x^\alpha}\,dx$ converges.
Indeed, let $I_m(x):=\int_1^x \frac{\sin^{m}{t}}{t^\alpha}\,dt$
and let $F_m$ be a primitive of $\sin^m{t}$ with $F_m(1)=0$. For
$m$ odd, $F_m$ is periodic, hence bounded. By partial integration
we obtain
\[
  I_{2n+1}(x) = \frac{F_{2n+1}(x)}{x^\alpha} + \alpha
  \int_1^x \frac{F_{2n+1}(t)}{t^{\alpha+1}}\,dt\,,
\]
and we conclude that $I_{2n+1}(x)$ converges as $x\to\infty$.

Now we use the Taylor development
\[
  \log{(1+u)} = \sum_{k=1}^{m-1} \frac{(-1)^{k-1}}{k}\,u^k +
  \frac{(-1)^{m-1}}{m}\,u^m\left(1+\varepsilon(u)\right)\,,
\]
where $\varepsilon$ is a continuous function of $u$ and
$\varepsilon(0)=0$. In particular, $|\varepsilon(u)|<1$ whenever
$|u| \leq \delta$ with $\delta>0$ sufficiently small. Now, we set
$u=u(x)=\frac{\sin^{2n+1}{x}}{x^\alpha}$, where $x>0$ is so large
that $|u| \leq \delta$. Then for sufficiently large real numbers
$M>N$, we have
\begin{align*}
  I &:= \int_{N}^M \log{\left(1+
  \frac{\sin^{2n+1}{x}}{x^\alpha}\right)}\,dx
  = \sum_{k=1}^{m-1} \frac{(-1)^{k-1}}{k} \int_{N}^M
  \left(\frac{\sin^{2n+1}{x}}{x^\alpha}\right)^k\,dx \\
  &\qquad + \frac{(-1)^{m-1}}{m} \int_{N}^M
  \left(\frac{\sin^{2n+1}{x}}{x^\alpha}\right)^m
  \left(1+\varepsilon(u(x))\right)\,dx
  =: \sum_{k=1}^{m-1} I_k + \widetilde{I}_m\,.
\end{align*}
Choosing $m \in \mathbb{N}$ such that $m\alpha>1$ and $(m-1)\alpha
\leq 1$, the boundedness of $\varepsilon(u)$ yields the absolute
convergence of the last integral $\widetilde{I}_m$. If
$\frac{1}{2} < \alpha \leq 1$, then $m=2$ and hence
$I=I_1+\widetilde{I}_2$. But $I_1$ and $\widetilde{I}_2$ converge,
and hence $I$ converges. If $0 < \alpha \leq \frac{1}{2}$, then $m
\geq 3$ and at least a third integral $I_2$ above appears. That
integral is divergent, since the exponent of the $\sin$ is an even
one (note that by the choice of $m$, the exponent of $x$ is still
at most $1$). Since all those divergent integrals $I_{2q}$ come up
with the same sign, we finally get the divergence of
$I_1+I_2+\dotsb+I_{m-1}$, and thus $I$ diverges.

Finally, we note that the example $p=1$ and $\alpha=\frac{1}{2}$
yields examples of functions $f$ and $g$ such that at infinity,
$f\sim g$, but for which $\int_0^\infty f(x)\,dx$ diverges and
$\int_0^\infty g(x)\,dx$ converges, namely $f(x) =
\log{\left(1+\frac{\sin{x}}{\sqrt{x}}\right)}$ and $g(x) =
\frac{\sin{x}}{\sqrt{x}}$.

\newpage

     \begin{figure}[h!]
 
  \scalebox{0.5} 
  {\includegraphics{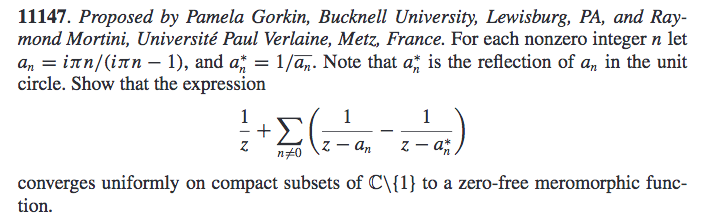}} 
\end{figure}

\centerline{\bf Solution to problem 11147 AMM 112 (2005), 366}  \medskip

\centerline{Pamela Gorkin, Raymond Mortini}
  
\medskip

\centerline{- - - - - - - - - - - - - - - - - - - - - - - - - - - - - - - - - - - - - - - - - - - - - - - - - - - - - -}
  
  \medskip

Let $S(z)=\exp\left(-\frac{1+z}{1-z}\right)$ be the atomic inner function. Put 
$$f=\frac{1/e-S}{1-(1/e)S}.$$
Then $f$ is an inner function (that is it has radial limts of modulus one almost everywhere). Since $f$ does not have radial limit zero, it must be a pure Blaschke product (see Garnett, p.76), that is

$$f(z)=e^{i\theta} z\prod_{n\in\Z\setminus \{0\}} \frac{|a_n|}{a_n}\;\frac{a_n-z}{1-\ov a_n z}.$$

Its zeros are exactly the numbers $a_n$ for $n\in\Z\setminus\{0\}$, including the the origin. Since the derivative of $S$ is $S'(z)=-S(z)\frac{2}{(1-z)^2}$, it follows that the derivative of $f$ does not vanish either. But
$$\frac{S'(z)}{S(z)}=\frac{1}{z} + \sum_{n\in\Z\setminus\{0\}} \left(\frac{1}{z-a_n}-\frac{1}{z-a_n^*}\right).$$

\newpage

     \begin{figure}[h!]
 
  \scalebox{0.5} 
  {\includegraphics{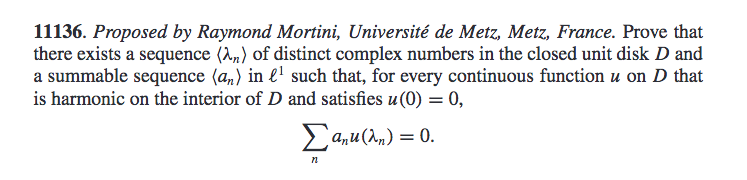}} 
\end{figure}

\centerline{\bf Solution to problem 11136 AMM 112 (2005), 181} \medskip

\centerline{Raymond Mortini}
  
\medskip

\centerline{- - - - - - - - - - - - - - - - - - - - - - - - - - - - - - - - - - - - - - - - - - - - - - - - - - - - - -}
  
  \medskip

Let $D_n=D(\lambda_n,r_n)$ be a sequence of pairwisw disjoint, closed disks contained in the open unit disk
$\D$ such that the area measure of $\D\setminus \Union D_n$ is zero. Noticing that by the mean-value area theorem
for harmonic functions
$$\int\int_{D(\lambda,r)}u(z) dA(z)=\pi r^2 u(\lambda),$$
we obtain the assertion
$$0=u(0)=\int\int_\D u(z)dA(z)=\sum_n \int\int_{D_n} u(z) dA(z)=\pi \sum_n r_n^2u(\lambda).$$

{\it Remark} The problem was motivated by the question, circulating in England, and communicated to me by
Joel F. Feinstein, whether the set of exponentials $\{e^{i\lambda z}: \lambda \in \C\}$  is countably
 linear independent!  The method for the proof above presumably appeared for the first
  time in a paper of J. Wolff [Comptes Rendus Acad. Sci. Paris 173 (1921), 1056-1058].

\newpage
\nopagecolor
  
     \begin{figure}[h!]
 
  \scalebox{0.45} 
  {\includegraphics{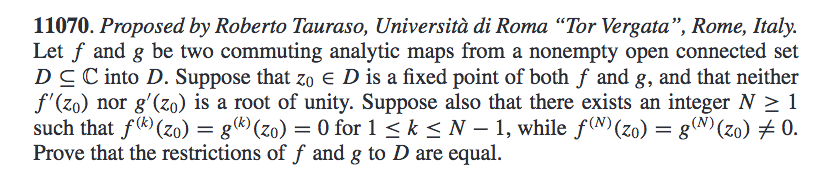}} 
\end{figure}

\centerline {\bf Solution to  problem 11070, AMM 111 (2004), p. 258}  \medskip

\centerline{Raymond Mortini}
  
\medskip

\centerline{- - - - - - - - - - - - - - - - - - - - - - - - - - - - - - - - - - - - - - - - - - - - - - - - - - - - - -}
  
  \medskip

Let $\N=\{1,2,\cdots\}$ and $f,g\in C^n(\Omega)$.
Then the  result follows from the following formula: 
$$(f\circ g)^{(n)}(z)=
\sum_{j=1}^n f^{(j)}(g(z))\biggl(\sum_{{k\in{\N}^j\atop |k|=n}}C_k^n\, g^{(k)}(z)\biggr),\eqno ({\rm Mo}_n)$$

where $k=(k_1,k_2,\dots,k_j)\in\N^j$ is an ordered multi-index with $k_1\leq k_2\leq\cdots\leq k_j$,
$|k|=\sum_{i=1}^jk_i,\sp g^{(k)}=g^{(k_1)}g^{(k_2)}\dots g^{(k_j)}$ and
$\dis C_k^n=\frac{1}{~\prod_i[A_k(i)!]~}{n\choose k}$. Here $A_k(i)$ denotes the cardinal of how often
$i$ appears within the ordered index $k$ and ${n\choose k}=\frac{n!}{ k_1!k_2!\dots k_j!}$.
\vs0,1cm

This formula has many advantages vis-\aa-vis the Faa di Bruno formula 
$$(f\circ g)^{(n)}=\sum{n\choose p}(f^{(p)}\circ g)\prod_{j=1}^n\left(\frac{g^{(j)}}{j!}\right)^{p_j},$$
where $p_j\in \{0,1,2,\cdots\}$, $p=p_1+p_2+\dots +p_n$ and  $p_1+2p_2+\dots +np_n=n$,
since one immediately can write
down all the factors that occur without solving  the above equations for $p_j$.\vs0,2cm

{\sl Case 1}: Let $f(z_0)=g(z_0)=z_0$, $A:=f'(z_0)=g'(z_0)\not=0$, $A^p\not= 1\;\forall p\in\N$
and $f\circ g=g\circ f$.

In order to show that $f\equiv g$ it is enough to prove that
 $f^{(n)}(z_0)=g^{(n)}(z_0)$ for all $n$. The proof is done inductively:

$n=2$:  Since $(f\circ g)''=(f''\circ g)g'^2 +(f'\circ g)g''$  and $f\circ g=g\circ f$ we get:
$f''(z_0)A^2+Ag''(z_0)=g''(z_0)A^2+Af''(z_0)$. Hence $f''(z_0)(A-1)=g''(z_0)(A-1)$.
Since $A\not=1$ we obtain that $f''(z_0)=g''(z_0)$.

$n\to n+1$:  
$$(f\circ g)^{(n+1)}=(f'\circ g) g^{(n+1)}+\sum_{j=2}^n(f^{(j)}\circ g)
\sum_{{k\in \N^j\atop |k|=n+1}} C^{n+1}_k g^{(k)}+(f^{(n+1)}\circ g) (g')^{n+1}$$
Evaluating at $z_0$ and noticing that, by induction hypotheses,  all derivatives appearing
 in the middle term 
coincide at $z_0$ with those  when $f$ is replaced by $g$, we get that 
$$A g^{(n+1)}(z_0)+f^{(n+1)}(z_0)A^{n+1}=A f^{(n+1)}(z_0)+g^{(n+1)}(z_0)A^{n+1}.$$
Hence $f^{(n+1)}(z_0)(A^n-1)=g^{(n+1)}(z_0)(A^n-1)$, from which we conclude that 
$f^{(n+1)}(z_0)= f^{(n+1)}(z_0)$, because $A^n\not=1$. \vs 0,3cm

{\sl Case 2}:    $f(z_0)=g(z_0)=z_0$, $f^{(j)}(z_0)=g^{(j)}(z_0)=0$ for $1\leq  j<n_0$, but
$f^{(n_0)}(z_0)=g^{(n_0)}(z_0)\not=0$ and $f\circ g=g\circ f$.

Suppose that $f^{(j)}(z_0)=g^{(j)}(z_0)$ has been shown to be true for $j<n$, where
 $n=pn_0+q$, with $0\leq q<n_0$ and $p\geq 1$. We show that this holds then for $j=n$.

Let $N= n_0^2+(p-1)n_0+q$ and consider $(f\circ g)^{(N)}(x_0)$.
 All the terms in (Mo)$_N$ with $j<n_0$ disappear, since
$f^{(j)}(g(z_0))=f^{(j)}(z_0)=0$. Moreover, as we are going to show, all other terms,
excepted the term for $j=n_0$ and the index $k=(n_0,\cdots, n_0, pn_0+q)\in \N^{n_0}$,
coincide for $f$ and $g$; hence can be thrown off when regarding the equality $(f\circ g)^{(N)}=(g\circ f)^{(N)}$.
Thus that equality is equivalent to
$$f^{(n_0)}(g(z_0))(g^{(n_0)})^{n_0-1}(z_0)g^{(pn_0+q)}(z_0)=
g^{(n_0)}(f(z_0))(f^{(n_0)})^{n_0-1}(z_0)f^{(pn_0+q)}(z_0)$$

But this implies of course that $f^{(pn_0+q)}(z_0)=g^{(pn_0+q)}(z_0)$, which is what we were after.
\vs0,2cm
That one can restrict to this single index $k=(n_0,\cdots, n_0, pn_0+q)\in \N^{n_0}$
is seen as follows:  Let $k'\in \N^{n_0}$, be an ordered index 
with $|k'|=|k|=(n_0-1)n_0+pn_0+q=N$. Suppose that the last coordinate of $k'$ (which is the maximum)
is strictly bigger than the last coordinate of $k$. Then at least one of the previous coordinates of $k'$
 must be strictly
smaller  than $n_0$. But the associated derivatives of $g$ (resp $f$) vanish at $z_0$.
Thus this term does not appear in the formula for $(f\circ g)^{(N)}(z_0)$. On the other hand,
if the last coordinate  of $k'$ is strictly less than $pn_0+q$ (hence all of the coordinates of $k'$),
then by induction all the associated derivatives of $g$ (in $(f\circ g)^{(N)}$) coincide with
those for $f$ (in $(g\circ f)^{(N)}$) at $z_0$. Thus these terms can be thrown away.

 Now let $k'\in \N^j$
with $n_0<j\leq N$ and $|k'|=N$. Then the maximum of the coordinates  of $k'$ is strictly less than
$pn_0+q$, since otherwise $|k'|\geq (j-1)n_0+pn_0+q\geq n_0^2+pn_0+q> N$, a contradiction.
Thus, as above, also these terms can be thrown away.

\newpage

\pagecolor{yellow}

    \begin{figure}[h!]
 
  \scalebox{0.5} 
  {\includegraphics{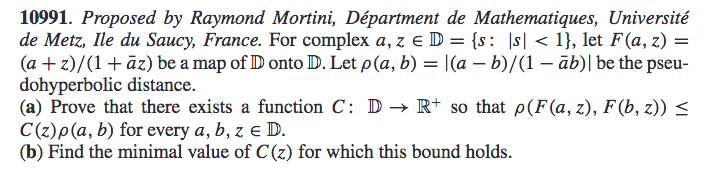}} 
\end{figure}

\centerline {\bf No own Solution to  problem 10991, AMM 110 (2003), p. 155}\vskip 1cm

\newpage

 
\centerline{\colorbox{white}{\parbox{13cm}{\phantom{\big|}{\bf 10890.} ~~{\sl Proposed by Raymond Mortini, Universit\'e de Metz, France.} ~~ Let $d_1$ and $d_2$
  be two metrics on a nonempty set $X$ with the property that every ball in $(X,d_1)$ contains a ball in $(X,d_2)$
   and vice versa.  Must $d_1$ and $d_2$ generate the same topology?\\
    \phantom{\big|}}}
   }\bigskip

\centerline {\bf Solution to  problem 10890, AMM 108 (2001), p. 668}  \medskip

\centerline{Raymond Mortini}
  
\medskip

\centerline{- - - - - - - - - - - - - - - - - - - - - - - - - - - - - - - - - - - - - - - - - - - - - - - - - - - - - -}
  
  \medskip

Let $d$ denote the Euclidean metric on $\R$ and
let $f$ be an injective real-valued function on $\R$. 
It is easy to see that the function $\rho(x,y)=|f(x)-f(y)|$
defines a second metric on  $\R$, i.e. satisfies
the axioms 
\begin{enumerate}

\item [(D1)]  $\rho(x,y)\geq 0, \rho(x,y)=0\Longleftrightarrow x=y$,

\item [(D2)] $\rho(x,y)=\rho(y,x)$

\item [(D3)]   $\rho(x,y)\leq  \rho(x,z)+ \rho(z,y)$ for all $x,y,z\in\R$.
\end{enumerate}

Let $B_d(x_0,\epsilon)$ resp. $B_{\rho}(x_0,\epsilon)$ denote the open  balls
of radius $\epsilon$ and center $x_0$ with respect to the distances $d$ and $\rho$.
\vskip 0,2cm

Let us now additionally assume that $f$ is   increasing,  one-sided
continuous but not continuous,   and has only a finite number of discontinuities.
 This guarantees that $I:=f(\R)$  is a union of non-degenerated
intervals, with pairwise disjoint closures. The inverse function $f^{-1}: I\to \R$
then is continuous on $I$. Fix $x_0$.  Hence for every $\epsilon>0$ there exists $\delta>0$
such that $B_{\rho}(x_0,\delta)\subseteq B_d(x_0,\epsilon)$. 

Let $x_0$ be a point at which $f$ is, say, left-continuous. Then for every
$\epsilon>0$ there exists $\delta>0$ such that for  all $x<x_0$, $d(x,x_0)<\delta$
implies $\rho(x,x_0)=|f(x)-f(x_0)|<\epsilon$. Let $x_1=x_0-{1\over 2}\delta$. Then
$B_d(x_1,\delta/2)\subseteq B_{\rho}(x_0,\epsilon)$.

Thus each ball in the $d$-metric contains a ball in the $\rho$-metric, and vice-versa. 

 It is  clear that the identity map id: $(\R,\rho)\to (\R,d)$, although
being continuous, has no continuous inverse. 
Note  that ${\rm id}: (\R,d)\to (\R,\rho)$ is continuous at $x_0$ if and only if $f$
is continuous at $x_0$. Thus the two topologies are distinct.

\medskip
{\it Remark}  If we additionally assume that $(X,d_j)$ are topological vector spaces, then the answer is yes.
This is due to the fact that these topologies can be generated by translation invariant metrics $d_1'$ and $d_2'$.
In fcat, $\forall \e>0 \;\exists\delta>0: B_{d_1'}(x_0,\delta)\ss B_{d_2'}(0,\e/2)$. In particular, $x_0$ and $-x_0$ are in $B_{d_2'}(0,\e/2)$. Hence
$$B_{d_1'}(0,\delta)=-x_0+B_{d_1'}(x_0,\delta)\ss B_{d_2'}(0,\e/2)+B_{d_2'}(0,\e/2)\ss B_{d_2'}(0,\e).$$
The problem was also solved by Matthias Bueger and Dietmar Voigt (Germany).

\newpage

\newpage
\nopagecolor

     \begin{figure}[h!]
 
  \scalebox{0.5} 
  {\includegraphics{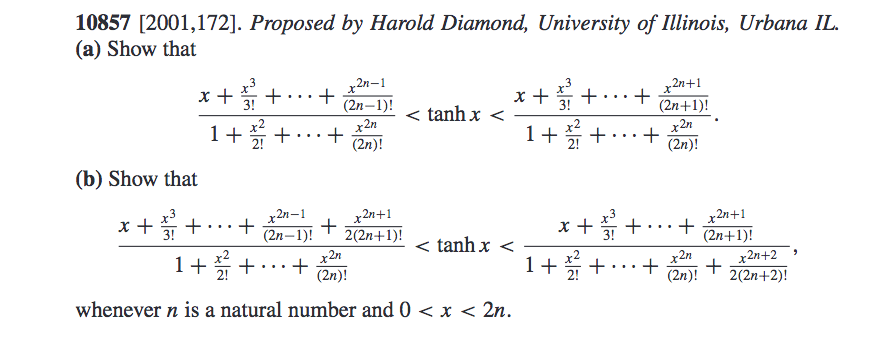}} 
\end{figure}

\centerline {\bf Solution to  problem 10857 (a), AMM 108 (2001), p. 172}  \medskip

\centerline{Raymond Mortini}
  
\medskip

\centerline{- - - - - - - - - - - - - - - - - - - - - - - - - - - - - - - - - - - - - - - - - - - - - - - - - - - - - -}
  
  \medskip

Let $\dis C_{2n}=\sum_{j=0}^{n}{x^{2j}\over (2j)!}$ and $\dis S_{2n+1}=\sum_{j=0}^{n}{x^{2j+1}\over
(2j+1)!}$. We show that, for every $x>0$, the sequence  $({S_{2n+1}\over C_{2n}})$ is strictly
decreasing, whereas $({S_{2n-1}\over C_{2n}})$ is stricly increasing. Since both sequences converge
to $\tanh x$ we get that ${S_{2n-1}\over C_{2n}}< \tanh<{S_{2n+1}\over C_{2n}}$.\vs0,2cm

i) We have the following equivalences:\vs 0,2cm

$\dis({S_{2n+1}\over C_{2n}})\searrow\ssi
{S_{2n+1}\over  S_{2n-1}}<    {C_{2n}\over C_{2n-2}}\ssi {S_{2n-1}+{x^{2n+1}\over (2n+1)!}\over S_{2n-1}}
< {C_{2n-2}+{x^{2n}\over (2n)!}\over C_{2n-2}}\ssi $\vs 0,2cm

$\dis \ssi 1+{{x^{2n+1}\over (2n+1)!}\over S_{2n-1}}<
1+{{x^{2n}\over (2n)!}\over C_{2n-2}}\ssi xC_{2n-2}< (2n+1)S_{2n-1}\ssi$\vs 0,2cm

$$\dis\sum_{j=0}^{n-1}{x^{2j+1}\over (2j)!}<(2n+1)\sum_{j=0}^{n-1}{x^{2j+1}\over (2j+1)!}\eqno (1)$$\vs 0,2cm

But ${1\over (2j)!}< (2n+1){1\over (2j+1)!}\ssi 2j+1< 2n+1$, which is true. Since $x>0$ we get (1).\vs 0,2cm

ii) That $({S_{2n-1}\over C_{2n}})$ is stricly increasing, is shown in exactly the same way.\vs 0,3cm

To sum up, we get

$${C_{2n+2}\over C_{2n}}<{S_{2n+1}\over S_{2n-1}}<{C_{2n}\over C_{2n-2}}.$$

\newpage

.\vspace{-8mm}
     \begin{figure}[h!]
 
  \scalebox{0.5} 
  {\includegraphics{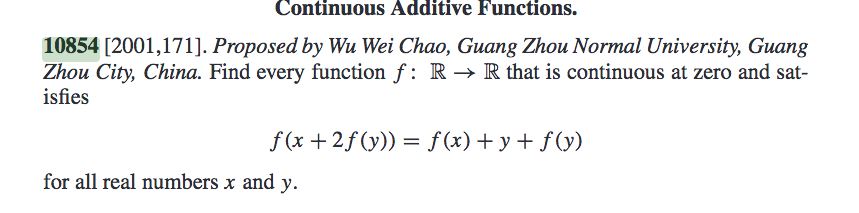}} 
\end{figure}

\centerline {\bf Solution to  problem 10854 AMM 108 (2001), p. 171}  \medskip

\centerline{Raymond Mortini}
  
\medskip

\centerline{- - - - - - - - - - - - - - - - - - - - - - - - - - - - - - - - - - - - - - - - - - - - - - - - - - - - - -}
  
  \medskip

Suppose that $f:{\bf R} \to{\bf R}$ is  a function, continuous  at the origin, and  satisfying 
$$f(x+2f(y))=f(x)+f(y)+y\eqno (1)$$
 for all $x,y\in{\bf R}$.
First, we shall show that $f$ is continuous everywhere. In fact,

$$f\bigl(x+2f(x+2f(y))\bigr)=f(x+2[f(x)+f(y)+y])=f\bigl([x+2y+2f(y)]+2f(x)\bigr)=$$
$$=f\bigl((x+2y)+2f(y)\bigr)+ f(x)+x=f(x+2y)+f(y)+y +f(x)+x.\eqno (2)$$

On the other hand:

$$f\bigl(x+2f(x+2f(y))\bigr)=f(x)+f(x+2f(y))+x+2f(y)=$$
$$=f(x)+[f(x)+f(y)+y]+x+2f(y)=2f(x)+3f(y)+y+x.\eqno(3)$$

By (2) and (3) we get that $f(x+2y)=f(x)+2 f(y)\; \forall (x, y)\in{\bf R}^2$.\vs0,2cm

In particular, by setting $x=y=0$, we see that $f(0)=0$. 

It easily follows that $f$ is continuous at every point $x\in{\bf R}$.  \vs 0,2cm

So, in order to continue,  we may assume that $f$ is a continuous solution of (1).

Let $x=y$. Then 
$$f(y+2f(y))=y+2f(y).\eqno  (4)$$

First we shall determine all continuous solutions of (4).
 Let $g(y)=y+2f(y)$. Since $g$ is continuous,  $g({\bf R})$
is either a singleton or a nondegenerate interval $I$.  If  $g$ is constant, say  $g\equiv c$, 
then $f(y)={c-y\over 2}$
and so $c=f(y+2f(y))=f(c)$, from which we conclude that $c=0$.  Hence $f(y)=-{y\over 2}$. If $g$
is not constant,
take $z\in I$; that is $y+2f(y)=g(y)=z$ for some $y$.
Then $f(z)=z$. Hence $f$ is the identity on $I$.  It follows that $3z=z+2f(z)=f(z+2f(z))=g(z)$. Therefore 
$3z\in I$ and so $I=\big<m,\infty[$ for some $m\in{\bf R}\cup\{-\infty\}$.
 Thus $f(x)=x$ for every $x>  m$. Since $g\geq m$, we have that $f\geq {m-y\over 2}$ 
on $]\;-\infty, m]$.\vs 0,2cm

To prove the converse, 
choose $m\in{\bf R}$. Let $f^*$ be any continuous function on $]-\infty,m]$ 
such that $f^*(y)\geq{m-y\over 2}$
for $y\leq m$ and so that $f^*(m)=m$. Then 
$$\widetilde f(y)=\begin{cases} f^*(y)&\text{if $y\leq m$}\\ y& \text{if $y\geq m$}\end{cases}\eqno (5)$$
is a continuous solution of $(4)$.  \vs 0,3cm

We deduce that any continuous solution of (1) necessarily has the form  (5) or equals $-{1\over 2}y$.
We shall now show that only for $f^*=id$, we really get a solution of (1).\vs0,2cm

So let $f$ be  a continuous solution of (1). Then $f=\widetilde f$ for
 some $f^*$. Fix $x<m$. Take $y> m$ so that $x+2y>m$. Then 

$f(x+2f(y))=f(x+2y)=x+2y$ and $f(x)+y+f(y)=f^*(x)+2y$. Hence (1) implies that $f^*(x)=x$.
\vs 0,2cm

We conclude that $f$ is a continuous solution of (1) if and only if $f(x)=x$ or $f(x)=-{x\over 2}$
on ${\bf R}$.

\newpage

     \begin{figure}[h!]
 
  \scalebox{0.5} 
  {\includegraphics{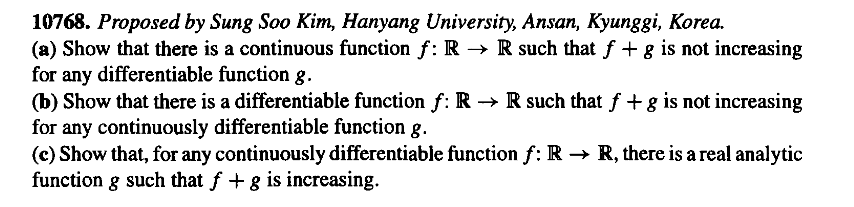}} 
\end{figure}

\centerline {\bf Solution to  problem 10768 AMM  106  (1999), 963}  \medskip

\centerline{Raymond Mortini}
  
\medskip

\centerline{- - - - - - - - - - - - - - - - - - - - - - - - - - - - - - - - - - - - - - - - - - - - - - - - - - - - - -}
  
  \medskip

a)\sp  Let $\dis f(x)=\sqrt{|x|}\sin{1\over x}$ for $x\not=0$ and $f(0)=0$.
Then $f$ is continuous on $\R$. Let $g$ be a differentiable function on $\R$.
Then,  in every neigborhood of $0$, $h:=f+g-g(0)$  takes  negative and positive
values. In fact,  suppose  that $h\geq 0$ on $[0,\e]$. Then ${h(x)\over x}\geq 0$
on $[0,\e]$. But $\liminf_{x\to 0^+}{h(x)\over x}= g'(0)+\liminf_{x\to 0^+}{1\over \sqrt  x}\sin{1\over x}
=-\infty$, a contradiction. Thus $f+g$ is not monotone on any interval centered at $0$. \vs 0,2cm

b) \sp Let $\dis f(x)=x^2\sin{1\over x^2}$ for $x\not=0$ and $f(0)=0$. Then $f$ is differentiable 
on $\R$, $f'(0)=0$, but
$f'(x)=2x\sin{1\over x^2}-{2\over x}\cos{1\over x^2}$ takes arbitrarily large negative and positive
values in any neighborhood $U$ of $0$. Let $g$ be any $C^1(\R)$ function. In particular, $g'$ is
bounded on every compact interval centered at $0$. Hence $f'+g'$ takes arbitrary large negative and
positive values in $U$. Thus $f+g$ is not monotone on any interval  centered at $0$.\vs 0,2cm

c) We show that for every function $f\in C^1(\R)$  there exists an entire function $g$ (that is a function
 holomorphic on the whole plane), real-valued on $\R$,  such that $f+g$ is increasing on $\R$.   In fact, $f'+2|f'|+2\e\geq
2\e>0$ on $\R$. Let $q=2 |f'|+2\e$. Then $q $ is continuous on $\R$. By Carleman's theorem 
(see \cite{carl} and \cite[p.125]{gai}.)
there exists an entire function $Q$ such that $\n q-Q\n_\infty\leq \e$, where ${\n\cdot\n}_\infty$ denotes
 the supremum norm on $\R$.  Let $G(x)={\rm Re}\; Q(x)$. Then  $\n q-G\n\leq\e$. Moreover,  the function
$H(z)={1\over 2}\bigl(Q(z)+\ov{Q(\ov z)}\bigr)$ is analytic in $\C$, and $H$ coincides on $\R$ with  $G$.

Now it is easy to check that  $f'+G\geq \e>0$.  Let $g$ be a primitive of $G$. Then $g$  is the trace of 
an entire function and $f+g$ is (strictly) increasing, since its derivative is  strictly positive. \vs 0,3cm

\newpage

     \begin{figure}[h!]
 
  \scalebox{0.5} 
  {\includegraphics{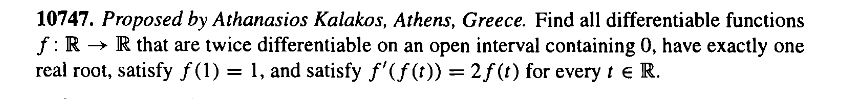}} 
\end{figure}

\centerline {\bf Solution to  problem 10747 AMM 106 (1999), p. 685}  \medskip

\centerline{Raymond Mortini}
  
\medskip

\centerline{- - - - - - - - - - - - - - - - - - - - - - - - - - - - - - - - - - - - - - - - - - - - - - - - - - - - - -}
  
  \medskip

 We claim that all differentiable solutions $f$  of  $f'(f(t))=2f(t),\; t\in\R, f(1)=1$, and having 
only one real root, have the form
$f(t)=t^2$ for $t\geq 0$ and $f(t)=g(t)$  for $t<0$, where $g$ is an arbitrary differentiable function, 
defined on $]-\infty,0]$ satisfying
$g(t)>0$ for $t<0$ and $g(0)=g'(0)=0$. The assumption, that $f$ should be twice differentiable
in a neighboorhood of $0$, is not important. \vs 0,2cm

{\sl Proof} Let $f$ be  a solution of the problem. Put $h=f\circ f-f^2$. Then $h'=(f'\circ f)f'-2f'f=f'(f'\circ f-2f)
\equiv 0$.  Hence $h$ is a constant, say $C$. Because $h(1)=0$, we see that $C=0$ and so $f\circ f=f^2$.
Let $y\in f(\R)$. Then $f(x)=y$ for some $x\in\R$. Therefore $f(y)=f(f(x))=f^2(x)=y^2$. By hypothesis,
$\{0,1\}\ss f(\R)$. By continuity we conclude that $[0,1]\ss f(\R)$. Since the left derivative at $x=1$
is $2$, the differentiability of $f$ now implies that there exists points $x_0$ greater than $1$ for 
which $f(x_0)>f(1)=1$. Since $f_{n+1}=f^{2^n}$, we obtain that $f_{n+1}(x_0)=[f(x_0)]^{2^n}\to\infty$.
Hence $f$ is unbounded. By the intermediate value theorem, we  then get that $[0,\infty]\ss f(\R)$.
Hence $f(x)=x^2$ for $x\geq 0$. \vs 0,2cm 
To determine the behaviour of $f$ for negative values, we use the hypothesis  that $f$ should have
only one zero. Since $f(0)=0$, by continuity, we conclude that either $f(x)<0$ for all $x<0$ or $f(x)>0$ 
for all $x<0$. But $f(x_0)<0$ for some (all) $x_0<0$ implies that  $f(f(x_0))=f^2(x_0)>0$, a contradiction.
Thus $f(x)>0$ for $x>0$. \vs 0,2cm

It is easy to check that every function of the form $f(x)=x^2$ for $x\geq 0$ and $f(x)=g(x)$ for $x<0$,
where $g>0$ is differentiable and satisfies $g(0)=g'(0)=0$, is a solution  of $f\circ f=f^2.$
Hence, by differentiating, $f'(f(x))f'(x)=2f'(x)f(x)$. If $f'(x)\not=0$, then we 
are done. If $f'(x_0)=g'(x_0)=0$ for some $x_0<0$, then  we use the fact that $y:=f(x_0)>0$ and that 
for these positive values $f(y)=y^2$. Hence, $f'(f(x_0))=2f(x_0)$. So we obtain a solution
of our functional equation. 
\newpage
 
      \begin{figure}[h!]
 
  \scalebox{0.5} 
  {\includegraphics{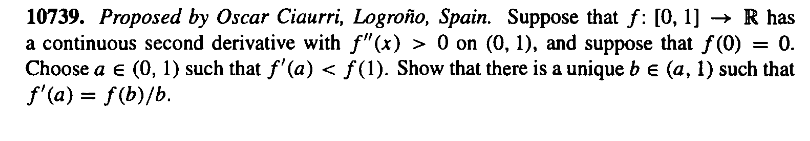}} 
\end{figure}

\centerline {\bf Solution to  problem 10739 AMM 106 (1999), p. 586}  \medskip

\centerline{Raymond Mortini}
  
\medskip

\centerline{- - - - - - - - - - - - - - - - - - - - - - - - - - - - - - - - - - - - - - - - - - - - - - - - - - - - - -}
  
  \medskip

Let $H(x)={f(x)-f(0)\over x-0}= {f(x)\over x}$. Since $f''(x)>0$,
the function  $f$ is strictly convex and both  its derivative and the quotient $H$
are  strictly increasing (see e.g. W. Walter, Analysis 1, Springer-Verlag, p. 303). 
Moreover, $H$ is
 continuous  on $]0,1]$. Note that $H(1)=f(1)$ and that
$H(0):=\lim_{x\to 0}f'(x)$ exists in $[-\infty, f(1)]$.   Hence, by the intermediate
value theorem, there exists for every value $w$ with $H(0)<w<H(1)$ a point
$b\in ]0,1[$
with $H(b)=w$. Now choose $a\in ]0,1[$ such that  
 $w:=f'(a)$ satisfies $H(0)<w<H(1)$ (such a choice obviously is possible).  
Thus there  exists
$b\in  ]0,1[$ so that   ${f(b)\over b}=H(b)=f'(a)$.  Choose $x_a\in ]0,a[$
so that $H(a)=f'(x_a)$. Due to the monotonicity of $f'$ we obtain:
$H(a)=f'(x_a)< f'(a)=H(b)$. Since $H$ is monotone, $b$ is unique and satisfies
 $a<b<1$.
 
 \newpage
 
      \begin{figure}[h!]
 
  \scalebox{0.5} 
  {\includegraphics{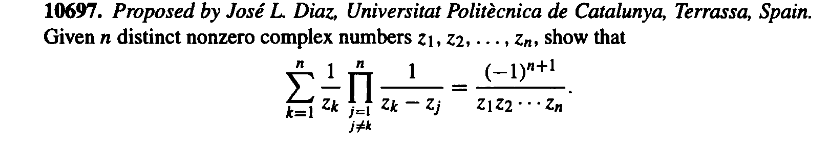}} 
\end{figure}

\centerline {\bf Solution to  problem 10697 AMM 105 (1998), p. 955}  \medskip

\centerline{Raymond Mortini}
  
\medskip

\centerline{- - - - - - - - - - - - - - - - - - - - - - - - - - - - - - - - - - - - - - - - - - - - - - - - - - - - - -}
  
  \medskip

This is nothing but a Lagrange interpolatory argument:\vs 0,3cm

In fact let $w_1,\cdots, w_n\in\C$. Then 

$$p(z)= \sum_{k=1}^nw_k{\prod_{j=1, j\ne k}^n(z-z_j)\over
\prod_{j=1, j\ne k}^n(z_k-z_j)} $$

is the unique polynomial of degree at most $n-1$ satisfying $p(z_k)=w_k,\;k=1,\cdots,n$.
Now choose $w_k=1$ for every $k$.  Since $q(z)\equiv 1$ satisfies the interpolation $q(z_k)=w_k$,
we obtain from uniqueness that $q=p$. Let $z=0$. Then

$$1=q(0)=\sum_{k=1}^n\prod_{{j=1\atop j\ne k}}^n{(-z_j)\over z_k-z_j}=(-1)^{n-1}
\sum_{k=1}^n{\prod_{j=1,j\ne k}^n z_j\over \prod_{j=1,j\ne k}^n(z_k-z_j)}.$$

Dividing by $\prod_{j=1}^nz_j$, yields the assertion

$$\sum_{k=1}^n{1\over z_k}\prod_{{j=1\atop j\ne k}}^n{1\over z_k-z_j}={(-1)^{n-1}\over \prod_{j=1}^nz_j}.$$

 \newpage
 
      \begin{figure}[h!]
 
  \scalebox{0.5} 
  {\includegraphics{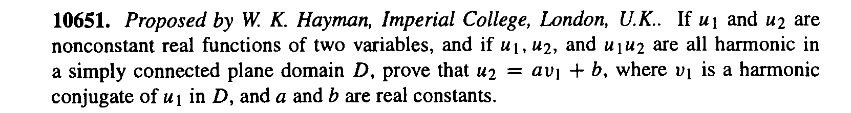}} 
\end{figure}

\centerline {\bf Solution to  problem 10651 AMM 105 (1998), p. 271}  \medskip

\centerline{Raymond Mortini}
  
\medskip

\centerline{- - - - - - - - - - - - - - - - - - - - - - - - - - - - - - - - - - - - - - - - - - - - - - - - - - - - - -}
  
  \medskip

We prove a stronger version than in the formulated problem (see \cite{mo2002}, which was based on  this).  \vs 0,2cm

{\bf Proposition 1} {\sl Let  $u$ and  $v$ be two non constant harmonic functions on a  domain $D\ss\C$.
Suppose that $uv$ is harmonic.  Then $u$ has an harmonic conjugate $\tilde u$
 on $D$ and  there are constants $a,b\in\R$ such that 
$$v=a\tilde u +b.\eqno (1)$$}\vs 0,3cm  
{\bf Remarks}. (1) If $u$ is a constant, then (1) is not true (because $v$ may be chosen to be any 
harmonic function). 

(2) If $v$ is a constant then (1) is true for $a=0$, provided a harmonic conjugate exists. A well known sufficient 
condition for the existence of a harmonic conjugate being that $D$ is simply connected. 

(3) Of course, if $v$ is any harmonic function satisfying (1), then $uv$ is harmonic.\vs 0,3cm

{\bf Solution} Let $\Delta$ be the Laplace operator. Because $\Delta u=\Delta v=0$ we obtain:
$$0=\Delta (uv)=\bigl(u_{xx}v+2u_xv_x+v_{xx}\bigr)+\bigl(u_{yy}v+2u_yv_y+v_{yy}\bigr)=
2\bigl(u_xv_x+u_yv_y\bigr).$$
Let $f=u_x-iu_y$ and $g=v_x-iv_y$.  The harmonicity of $u$ and $v$ imply that $f$ and $g$ satisfy the
Cauchy-Riemann differential equations; hence $f$ and $g$ are holomorphic. It is easy to see that
${\rm Re}\;{f\ov g}= u_xv_x+u_yv_y$. Thus ${\rm Re}\;{f\ov g}\equiv 0$ on $D$. \vs 0,2cm
Let $Z(g)=\{z\in D: g(z)=0\}$ denote the zero set of $g$.  It is a discrete subset of $D$  provided that
 $g\not\equiv 0$. Since $v$ is assumed not to be a constant, we see that $g\not\equiv 0$.
 Then on $D\setminus Z(g)$ we have
$\dis{\rm Re}\;{f\over g}={\rm Re}\;{f\ov g\over |g|^2}$. Thus $\dis{\rm Re}\;{f\over g}\equiv 0$ on $D\setminus 
Z(g)$.  This implies, in view of the analyticity,  that ${f\over g}$ is a pure imaginary constant, say 
${f\over g}\equiv i\lambda$ on $D\setminus Z(g)$. 
Hence $f=i\lambda g$ on $D$. 
 The definitions of $f$ and $g$ now
 yield that $u_x=\lambda v_y$ and $u_y=-\lambda v_x$.  Consequently, by the Cauchy-Riemann equations,  the function
$u+i\lambda v$ is holomorphic on $D$. In particular, $u$ has an harmonic conjugate on $D$. (Note that we do not 
have assumed  that $D$ is simply connected.) Thus, for any other harmonic conjugate $\tilde u$ of $u$,
we have  $\lambda v=\tilde u+c$ for some constant $c\in \R$. Note that $u$ not constant implies that $\lambda\ne 0$.
Thus $v$ has the desired form (1).

 \vs 0,5cm
A natural question now is the following. Let $u$ and $v$ be two harmonic functions on a domain $D\ss\C$. Then
$(u+iv)^2=u^2-v^2+2iuv$. Assume that $u^2-v^2$ is harmonic. What can be said for $v$? We have the 
following result:\\

{\bf Proposition 2} {\sl Assume that $u$, $v$ and $u^2-v^2$ are harmonic in a simply connected 
 domain $D\ss C$. Then there exists $a\in\R$ and $\theta\in [0,2\pi[$ such that 

$$v=\cos\theta\, u-\sin\theta\,\tilde u +a.\eqno (2)$$

Conversely, every function $v$ satisfying (2) for a harmonic function $u$ has the property that
$u^2-v^2$ is harmonic. }

\vs 0,2cm

{\bf Proof} Because $\Delta u=\Delta v=0$ we obtain:

$$0=\Delta (u^2-v^2)=2\Bigl(u_x^2+u_y^2-(v_x^2+v_y^2)\Bigr).$$

Hence $u_x^2+u_y^2=v_x^2+v_y^2$. Again, let $f=u_x-iu_y$ and $g=v_x-iv_y$.  As above, $f$ and $g$ 
are holomorphic on $D$. Moreover $|f|^2= |g|^2$. Thus $g$ is a rotation of $f$, say $g=e^{i\theta}f$. \vs 0,2cm

Let $z_0\in D$. Since $D$ is simply connected, $u$ and $v$  have harmonic conjugates $\tilde u$  and
 $\tilde v$ respectively, satisfying  $\tilde u(z_0)=\tilde v(z_0)=0$.  Let $F=u+i\tilde u$ and $G=v+i\tilde v$.
Then, by  Cauchy-Rieman, $F'=u_x+i\tilde u_x=u_x-iu_y=f$. Similiarly $G'=g$. Thus $G=e^{i\theta}F+c$ for some
constant $c\in\C$. Taking real parts yields

$$v=\cos\theta\, u-\sin\theta\,\tilde u +a$$

for some real constant $a$. The converse is easy to check.

The above results are related to the following more general result:\vs 0,2cm

{\bf Proposition 3.} {\sl  Let $h$ be an entire function and let $u:D\to\R$ and  $v:D\to\R$ be two
 nonconstant harmonic functions  in a simply connected domain $D$. Let $\tilde u$ be a harmonic 
conjugate of $u$ in $D$.
Then $h(u+iv):D\to \C$ is harmonic if and only if $v=\pm\tilde u +a$ for
a constant $a\in\R$.}\vs 0,2cm

{\bf Proof} Since $h$ is holomorphic, we have, by Cauchy-Riemann, $h_y=ih_x$ and $h_x=h'$. Hence
$h_{xx}=h'', h_{xy}=h_{yx}=ih''$ and $h_{yy}=-h''$. As above, let $f=u_x-iu_y$ and $g=v_x-iv_y$.Then 

$$\Delta [h\circ(u+iv)]= h''\circ(u+iv)\cdot[(|f|^2-|g|^2)+2i{\rm Re}\; f\ov g].$$

Obviously  $h''\circ q\not\equiv 0$ for any nonconstant continuous function $q$. 
Hence $h(u+iv)$ is harmonic if and only if  $|f|=|g|$ and ${\rm Re}\; f\ov g=0$. 
By the paragraphs
above we conclude that $f=i\lambda g$ for some $\lambda\in\R$. Hence $|\lambda|=1$. Thus $u_x=\pm v_y$ and $-u_y=
\pm v_x$. So $v$ or $-v$  is a harmonic conjugate of $u$ in $D$.  Therefore $v=\pm\tilde u +a$.
\vs 0,2cm 
To prove the converse, we have simply to note that the composition of a holomorphic
function with a holomorphic or anti-holomorphic function is harmonic.

\newpage

     \begin{figure}[h!]
 
  \scalebox{0.5} 
  {\includegraphics{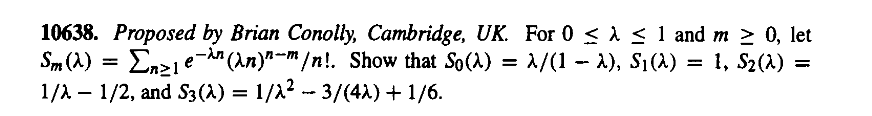}} 
\end{figure}

\centerline {\bf Solution to  problem 10638 AMM 105 (1998), p. 69}  \medskip

\centerline{Raymond Mortini}
  
\medskip

\centerline{- - - - - - - - - - - - - - - - - - - - - - - - - - - - - - - - - - - - - - - - - - - - - - - - - - - - - -}
  
  \medskip

In the following we present a solution  to problem number 10638. We shall not only compute the functions
$S_0,\cdots, S_3$, but we will give an explicit value for all $m\in \N$. To this end we need the following
Lemma.\vs 0,3cm

{\bf Lemma} {\sl  Let $f(z)= ze^z$. Then $f$ is invertible in a neighborhood of the origin in $\C$ and the inverse
function has the Taylor representation

$$f^{-1}(w)= \sum_{n=1}^\infty {n^{n-1}\over n!}(-1)^{n-1}w^n,$$

which converges for $|w|<{1\over e}$.}\vs 0,2cm

{\bf Proof}  By the residue theorem it is easy to see that whenever $f$ is holomorphic and injective in a disque
$D\ss \C$ (or even a simply connected domain),   then 

$$f^{-1}(w)n(\Gamma, f^{-1}(w))={1\over 2\pi i}\intop_{\Gamma}z\;{f'(z)\over f(z)-w}\;dz,$$

where $\Gamma$  is an arbitrary cycle (=finite union of closed, piecewise $C^1$-curves) in $D$. 

Applying this formula for $f(z)=ze^z$ and the disk $|z|<2\delta$, $\delta$ small enough,   we obtain :

$${d^n\over (dw)^n}f^{-1}(w)= {n!\over 2\pi i}\intop_{|z|=\delta} z\;{(z+1)e^z\over (ze^z-w)^{n+1}}\;dz.$$

Thus, for the power series $f^{-1}(w)=\sum_{n=0}^\infty a_nw^n$ we have $a_0=0$ and for $n\geq 1$:

$$a_n={1\over 2\pi i}\intop_{|z|=\delta} {z+1\over z^n} e^{-nz}\;dz= {1\over 2\pi i}\sum_{k=0}^\infty(-1)^k
\intop_{|z|=\delta}{z(nz)^k+(nz)^k\over z^nk!}\;dz= (-1)^{n-1}{n^{n-1}\over n!}.$$

By d'Alembert's rule it is easy to check that the radius of convergence is $1/e$.\hfill $\bigcirc$\vs 0,3cm

{\bf Proposition} {\sl For $0<\lambda<1$ and $m\in \Z$,  let $g_m(\lambda)=\lambda^mS_m(\lambda)$, where

$$S_m(\lambda)=\dis\sum_{n=1}^\infty e^{-\lambda n}(\lambda n)^{n-m}/n!.\eqno (1)$$

Then, for $m \in \{1,2,\cdots\}$, $g_m$ is a polynomial of degree $m$ vanishing at the origin, say $g_m(\lambda)=-\sum_{n=1}^m  b_{n,m}(-\lambda)^n$,
and the coefficients  $b_{n,m}$ are given by the recurrence relation 

$$b_{n,m}={1\over n}(b_{n,m-1}+b_{n-1,m-1}),\sp b_{1,1}=1.\eqno (3)$$

Solving these difference equations yields

$$b_{n,m}= \sum_{j=1}^n {1\over n!}{n\choose j }(-1)^{j-1} \left({1\over j}\right)^{m-n}.\eqno (4)$$}

{\bf Proof}  We note that, by Stirling's formula, the series $g_m(\lambda)$ converges locally uniformly in $0\leq\lambda<1$,
but does not converge whenever $\lambda=1$ and $m=0$.  Note that $g_m(0)=0$. 
Due to local uniform convergence, it is easy to see that, in order to obtain $ g'_m(\lambda)$, one can differentiate the
series for $g_m$ term by term. This yields that for $m\in \Z$

$$g'_m(\lambda)= {1-\lambda\over \lambda}g_{m-1}.\eqno (5)$$

Later we shall show that $g_1(\lambda)=\lambda$. Hence, by induction on (5), it is clear that for $m=1,2,\cdots$ the function 
 $g_m$ is a polynomial
vanishing at the origin, say $g_m(\lambda)=-\sum_{n=1}^m  b_{n,m}(-\lambda)^n$. If we let $x=-\lambda$, then we obtain
$\sum_{n=1}^m nb_{n,m}x^n=(1+x)\sum_{n=1}^m b_{n,m-1}x^n.$ Comparing coefficients, finally yields (3).

This difference equation can be solved by the usual methods. May be Maple or Mathematica gives the
solution. In any case, by the uniqueness of the solution,  it suffices to show that  (4) verifies the
 difference equation. Note also, that for $n>m$, the $b_{n,m}$ in (4) are $0$. This follows from the fact that
the $p$-th difference operator $D^p(a_n)=\sum_{j=0}^p {n\choose j}(-1)^j a_{n-j}$ vanishes identically
whenever $a_n$ is a polynomial (in $n$) of degree strictly less than $p$.\vs 0,2cm

For the readers convenience, here are the coefficients for
$m=1,\cdots, 5$: 

$$\begin{matrix}1&&&&\cr 1 &{1\over 2}&&&&\cr \noalign{\smallskip} 1 &{3\over 4}& {1\over 6}&&\cr\noalign{\smallskip}
1&{7\over 8}& {11\over 36}& {1\over 24}&\cr \noalign{\smallskip}1&{15\over 16}&{85\over 216}& {25\over 288}& {1\over 120}\end{matrix}$$
\vs 0,3cm
{\sl The case m=1}\sp In that case we have 
$$g_1(\lambda)= \lambda\sum_{n=1}^\infty e^{-\lambda n}(\lambda n)^{n-1}/n!=\sum_{n=1}^\infty {n^{n-1}\over n!}(\lambda e^{-\lambda})^n.$$

Let $w=-\lambda e^{-\lambda}$.  Now, for $w\in\C, |w|<{1\over e}$, the function $h(w)=\sum_{n=1}^\infty 
{n^{n-1}\over n!}(-1)^{n-1}w^n$ is, by Lemma 1, nothing but the inverse function of the holomorphic function $f(z)=ze^z$ ,
$|z|<\delta$ for sufficiently small $\delta>0$.  Thus $g_1(\lambda)=\lambda$. \vs 0,3cm

{\sl The case m=0}    \sp By (5) we see that $1=g_1'(\lambda)={1-\lambda\over\lambda}g_0(\lambda)$. Hence, $g_0(\lambda)={\lambda\over 1-\lambda}$. \vs 0,3cm

Using (5) it is also easy to derive, inductively, the values of $g_m$ for negative integers $m$.  For
example we get:\vs 0,2cm

$$g_{-1}(\lambda)={\lambda\over (1-\lambda)^3},\sp\sp g_{-2}(\lambda)={\lambda\over (1-\lambda)^5}(1+2\lambda),\sp\sp g_{-3}(\lambda)=
{\lambda\over (1-\lambda)^7}(1+8\lambda+6\lambda^2).$$

In general, one can convince oneself that for $m\in \Z, m<0$,  $g_m(\lambda)$ has the form
$g_m(\lambda)={\lambda\over (1-\lambda)^{-2m+1}}\;Q_m(\lambda)$, where $Q$ is a polynomial of degree $-m-1$ with value $1$
at the origin and satisfying the differential equations

$$Q_{m-1}(\lambda)=\lambda(1-\lambda)Q'_m(\lambda)+(1-2m\lambda)Q_m(\lambda).$$

Due to lack of time we were not able to solve this explicitely. May be Maple and Mathematica will be 
helpfull.

\newpage
     \begin{figure}[h!]
 
  \scalebox{0.5} 
  {\includegraphics{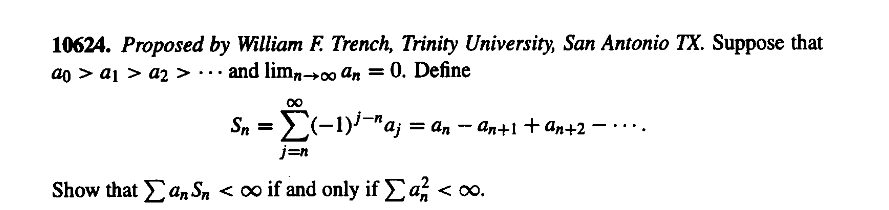}} 
\end{figure}

\centerline {\bf Solution to  problem 10624 AMM 104 (1997), p. 871}  \medskip

\centerline{Raymond Mortini}
  
\medskip

\centerline{- - - - - - - - - - - - - - - - - - - - - - - - - - - - - - - - - - - - - - - - - - - - - - - - - - - - - -}
  
  \medskip

By Leibniz's criteria, we know that $S_n$ actually converges and that $S_n\geq 0$ for every
$n\in \N$. Since $S_n=a_n-S_{n+1}$, we see that $S_n\leq a_n$ and so $\sum a_nS_n\leq \sum a_n^2$.
Thus the convergence of $\sum a_n^2$ implies the convergence of $\sum a_nS_n$.\vs 0,2cm

Now assume that $\sum a_nS_n= \sum(S_n+S_{n+1})S_n\sp (1)$ converges. Since all the terms are positive,
we deduce the convergence of the sums $\sum S_n^2 $ and $\sum S_{n+1}S_n$. A shift of the variable
yields that $\sum S_{n+1}^2$ converges. Hence $\sum S_{n+1}(S_{n+1}+S_n)\sp (2)$ converges. Summing
(1) and (2) yields that $\sum a_n^2 =\sum (S_{n+1}+S_n)^2= \sum(S_n+S_{n+1})S_n+\sum S_{n+1}(S_{n+1}+S_n)$
is convergent.\vs 2cm

 \newpage
 
      \begin{figure}[h!]
 
  \scalebox{0.5} 
  {\includegraphics{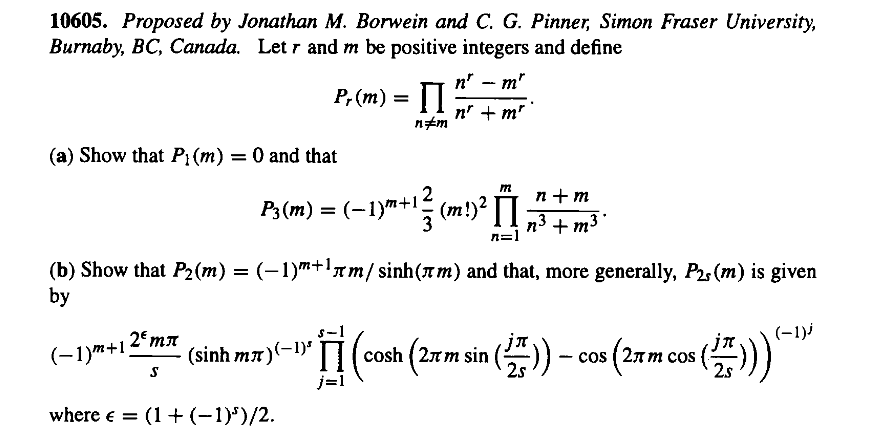}} 
\end{figure}

\centerline {\bf Solution to  problem 10605 (b) AMM 104 (1997), p. 567}  \medskip

\centerline{Raymond Mortini}
  
\medskip

\centerline{- - - - - - - - - - - - - - - - - - - - - - - - - - - - - - - - - - - - - - - - - - - - - - - - - - - - - -}
  
  \medskip

Write $\dis{n^{2s}-m^{2s}\over n^{2s}+m^{2s}}= {1-(m/n)^{2s}\over 1+(m/n)^{2s}} $. Let 
$\dis y=\left({m\over n}\right)^2$.
Then 
$${1-y^s\over 1+y^s}= {\prod_{j=0}^{s-1} \bigg(1-y\exp(-i{2\pi j\over s}) \bigg)\over 
\prod_{j=0}^{s-1} \bigg(1-y\exp(-i{\pi +2\pi j\over s})\bigg)}.$$

Since $\e\in\C$ is an $s$-root of $1$ [resp. (-1)] if and only if $\ov{\e}$ is an $s$-root, we obtain:

$${1-y^s\over 1+y^s} = {{(1-y)(1+y)\prod_{j=1}^{p-1}\left|1-y\exp(-i{2\pi j\over s})\right|^2\over
\prod_{j=1}^{p-1}\left|1-y\exp(-i{\pi (2j+1) \over s})\right|^2}}$$
 if $s=2p$ and

$${1-y^s\over 1+y^s} = {(1-y)\prod_{j=1}^{p}\left|1-y\exp(-i{2\pi j\over s})\right|^2\over (1+y)
\prod_{j=1}^{p-1}\left|1-y\exp(-i{\pi (2j+1) \over s})\right|^2}$$
 if $s=2p+1$.

This can be written by a single formula:

$${1-y^s\over 1+y^s} = (1-y)(1+y)^{(-1)^s}\prod_{k=1}^{s-1}\left|1-y\exp(-i{\pi k\over s})\right|
^{2(-1)^k}.\eqno (1)$$

In particular $$\prod_{k=1}^{s-1}\left|1-exp(-i{\pi k\over s})\right|^{2(-1)^k} = \lim_{y\to 1}
{\dis{1-y^s\over 1+y^s}\over (1-y)(1+y)^{(-1)^s}}={s\over 2\cdot 2^{(-1)^s}}.\eqno (2)$$

It is easy to check that 

$$ P:= \prod_{k=1}^{s-1}(2\pi^2m^2)^{(-1)^k}=\begin{cases}1, &\text{if $s$ is odd}\\
 {1\over 2\pi^2m^2},& \text{if $s$ is even.
}\end{cases}\eqno (3) $$

Now use the infinite product representation of the function $\sin \pi z$. This gives:

$${\sin \pi z\over \pi z} = \prod_{n=1}^\infty \left(1-{z^2\over n^2}\right),$$

and

$$ \prod_{n=1}^\infty \left(1+{z^2\over n^2}\right)={\sin i\pi z\over i\pi z}={\sinh \pi z\over \pi z}.$$

Moreover we have by de l'H\^opital's rule that

$$\prod_{n\not=m}^\infty \left(1-{m^2\over n^2}\right)= \lim_{z\to m} {\sin\pi z\over \pi z}
\bigg/1-\left({z\over m}\right)^2= {(-1)^{m+1}\over 2}.$$

Finally we need that $|\sin z|^2 = {1\over 2}(\cosh 2y -\cos 2x)$ for $z=x+iy$. 

Put all this together to get from (1)

$$2^{(-1)^s}\prod_{n\not=m}{n^{2s}-m^{2s}\over n^{2s}+m^{2s}}= {(-1)^{m+1}\over 2} 
\left({\sinh \pi m\over\pi m}\right)^{(-1)^s}
\cdot \prod_{k=1}^{s-1}\left|\prod_{n\not=m}\left(1-\left({m\over n}exp(-i{\pi k\over 2s})\right)^2\right)\right|^{2(-1)^k}=$$

$$={(-1)^{m+1}\over 2} \left({\sinh \pi m\over\pi m}\right)^{(-1)^s}\prod_{k=1}^{s-1}\left|
{\sin \bigl(\pi m \,\exp(-i{\pi k\over 2s})\bigr)
\over \pi m \left(1-\exp(-i{\pi k\over s})\right)}\right|^{2(-1)^k}=$$

$$= {(-1)^{m+1}\over 2} \left({\sinh \pi m\over\pi m}\right)^{(-1)^s}{\prod_{k=1}^{s-1}
\bigg[{1\over 2}\left(\cosh \bigg(2\pi m\sin{\pi k\over 2s}\bigg)-\cos\bigg(2\pi m
\cos{\pi k\over 2s}\bigg)\right)
\bigg]^{(-1)^k}\over \prod_{k=1}^{s-1}\left|1-\exp(-i{\pi k\over s})\right|^{2(-1)^k}\cdot
\prod_{k=1}^{s-1}(\pi m)^{2(-1)^k}}=$$

$$= {(-1)^{m+1}\over 2} \left({\sinh \pi m\over\pi m}\right)^{(-1)^s}{\prod_{k=1}^{s-1}
\bigg[\cosh\bigg( 2\pi m\sin{\pi k\over 2s}\bigg)-\cos\bigg(2\pi m\cos{\pi k\over 2s}\bigg)
\bigg]^{(-1)^k}\over \prod_{k=1}^{s-1}\left|1-\exp(-i{\pi k\over s})\right|^{2(-1)^k}\cdot
\prod_{k=1}^{s-1}(2\pi^2m^2)^{(-1)^k}}=$$

$$={ (-1)^{m+1}(\sinh \pi m)^{(-1)^s}  \prod_{k=1}^{s-1}
\bigg[\cosh \bigg(2\pi m\sin{\pi k\over 2s}\bigg)-\cos\bigg(2\pi m\cos{\pi k\over 2s}\bigg)\bigg]^{(-1)^k} \over
(\pi m)^{(-1)^s}   \dis{s\over  2^{(-1)^s}}\cdot P }$$

Clearly $$\dis {1\over (\pi m)^{(-1)^s}\,s\cdot P}= 2^{(1+(-1)^s)/2}\left({\pi m\over s}\right).$$

Putting $\e=(1+(-1)^s)/2$, we get the final equality:

$$P_{2s}=\prod_{n\not=m}{n^{2s}-m^{2s}\over n^{2s}+m^{2s}}=$$

$$= (-1)^{m+1}{2^\e m\pi\over  s}   (\sinh \pi m)^{(-1)^s}  \prod_{k=1}^{s-1}
\bigg[\cosh \bigg(2\pi m\sin{\pi k\over 2s}\bigg)-
\cos\bigg(2\pi m\cos{\pi k\over 2s}\bigg)\bigg]^{(-1)^k}. $$

If $s=1$ we interpret the empty product as $1$. This gives

$$P_2(m)=(-1)^{m+1}\pi m/\sinh(\pi m).$$\vs 1cm

\newpage
 
      \begin{figure}[h!]
 
  \scalebox{0.5} 
  {\includegraphics{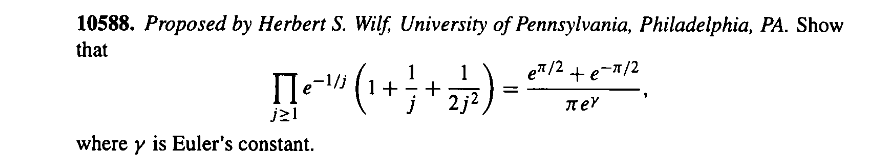}} 
\end{figure}

\centerline {\bf Solution to  problem 10588/10595 AMM 104 (1997), p. 456}  \medskip

\centerline{Raymond Mortini}
  
\medskip

\centerline{- - - - - - - - - - - - - - - - - - - - - - - - - - - - - - - - - - - - - - - - - - - - - - - - - - - - - -}
  
  \medskip

We show that 
$$P= \prod_{n=1}^\infty e^{-{1\over n}} \left( 1+{1\over n}+{1\over 2n^2}\right) = {e^{\pi/2}
+e^{-\pi/2}\over \pi e^\gamma}. $$\vs 2cm

Let $$\Gamma(z)= \left[e^{\gamma z}z \prod_{n=1}^\infty\left(1+{z\over n}\right)e^{z/n}\right]^{-1}$$

be the Gamma function and let $\e = {1\over 2}(1+i)$. Then $\ov{\e} = 1-\e$. Hence, as is well known, 

$$\Gamma(\e)\Gamma(\ov{\e}) = \Gamma(\e)\Gamma(1-\e) = {\pi \over \sin \pi \e}.$$

Therefore $${\sin\pi\e\over \pi} = e^{\gamma\e} \e \prod_{n=1}^\infty\left(1+{\e\over n}\right) 
e^{-\e/n}   \times
e^{\gamma\ov{\e}} \ov{\e} \prod_{n=1}^\infty\left(1+{\ov{\e}\over n}\right) e^{-\ov{\e}/n} =$$

$$=e^{\gamma} {1\over 2}\prod_{n=1}^\infty\left(1+{\e\over n}\right)\left( 1+{\ov{\e}\over n}\right) e^{-1/n} =$$
$$= e^{\gamma} {1\over 2}\prod_{n=1}^\infty\left(1+{1\over n}+{1\over 2n^2}\right)e^{-1/n}.$$

Hence $\dis P= {2\sin\pi\e\over \pi e^{\gamma}} = {2\cosh \pi/2\over \pi e^{\gamma}}$, which is the assertion.\vs 1cm

 \newpage

      \begin{figure}[h!]
 
  \scalebox{0.5} 
  {\includegraphics{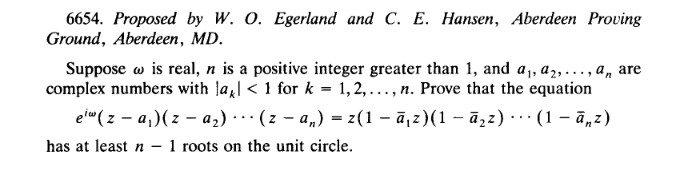}} 
\end{figure}
      \begin{figure}[h!]

\centerline {\bf Solution to  problem 6654 AMM 98 (1991), p. 273}  \medskip

\centerline{Raymond Mortini}
  
\medskip

\centerline{- - - - - - - - - - - - - - - - - - - - - - - - - - - - - - - - - - - - - - - - - - - - - - - - - - - - - -}
  
  \medskip

  \scalebox{0.5} 
  {\includegraphics{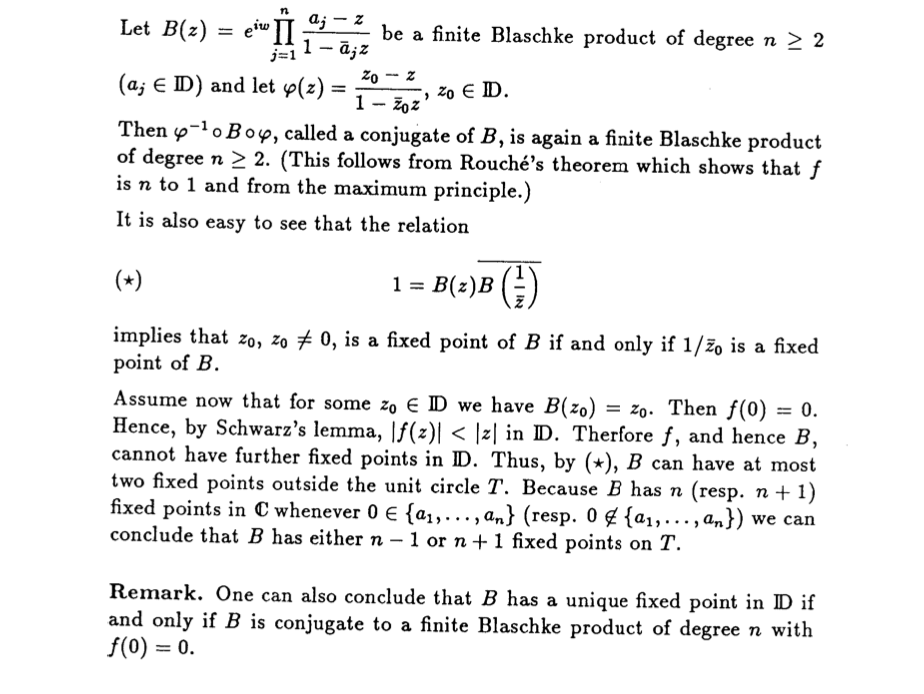}} 
\end{figure}

 \newpage

      \begin{figure}[h!]
 
  \scalebox{0.59} 
  {\includegraphics{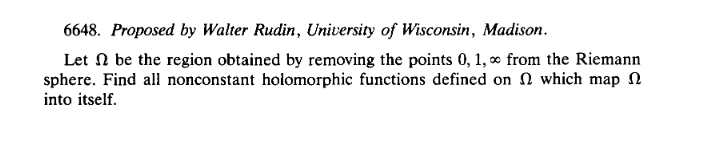}} 
\end{figure}

\centerline {\bf Solution to  problem 6648 AMM 98 (1991), p. 63}  \medskip

\centerline{Raymond Mortini}
  
\medskip

\centerline{- - - - - - - - - - - - - - - - - - - - - - - - - - - - - - - - - - - - - - - - - - - - - - - - - - - - - -}
  
  \medskip
      \begin{figure}[h!]
 
  \scalebox{0.6} 
  {\includegraphics{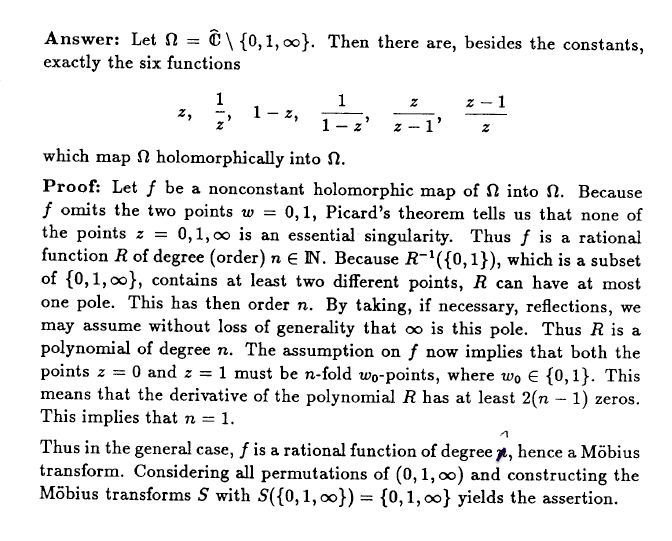}} 
\end{figure}

 \newpage

      \begin{figure}[h!]
 
  \scalebox{0.6} 
  {\includegraphics{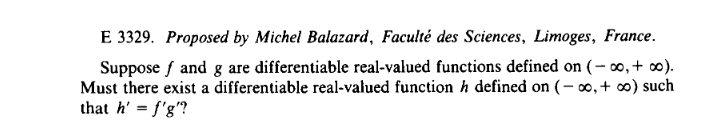}} 
\end{figure}

\centerline {\bf Solution to  problem E3329 AMM 96 (1989), p. 445}  \medskip

\centerline{Raymond Mortini}
  
\medskip

\centerline{- - - - - - - - - - - - - - - - - - - - - - - - - - - - - - - - - - - - - - - - - - - - - - - - - - - - - -}
  
  \medskip

      \begin{figure}[h!]
 
  \scalebox{0.60} 
  {\includegraphics{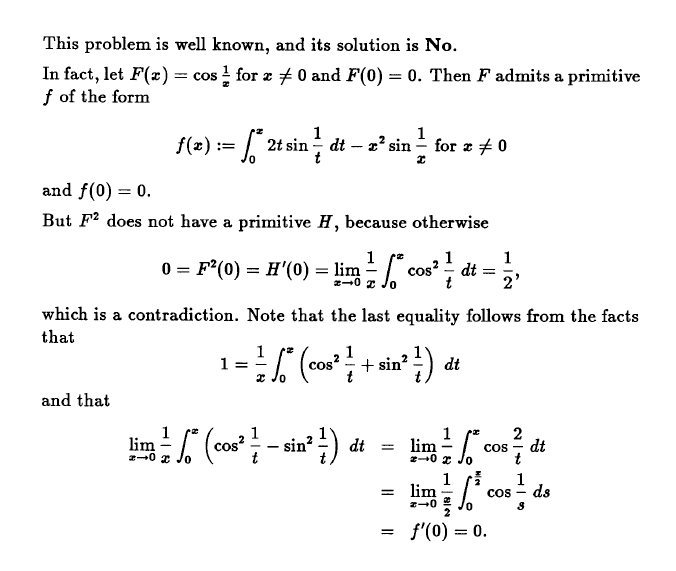}} 
\end{figure}

\newpage

      \begin{figure}[h!]
 
  \scalebox{0.45} 
  {\includegraphics{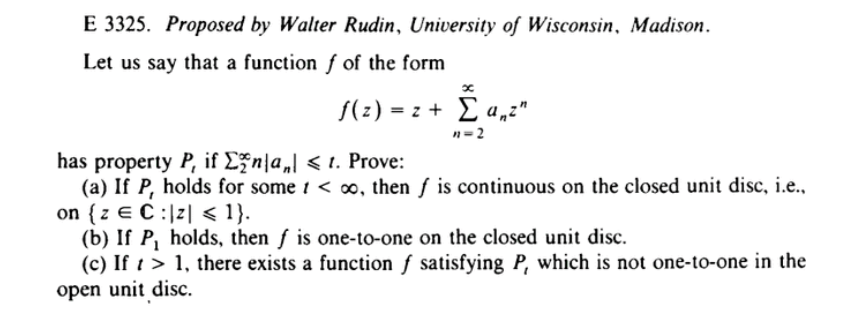}} 
\end{figure}

\centerline {\bf Solution to  problem E3325 AMM 96 (1989), p. 445}  \medskip

\centerline{Raymond Mortini}
  
\medskip

\centerline{- - - - - - - - - - - - - - - - - - - - - - - - - - - - - - - - - - - - - - - - - - - - - - - - - - - - - -}
  
  \medskip

      \begin{figure}[h!]
 
  \scalebox{0.5} 
  {\includegraphics{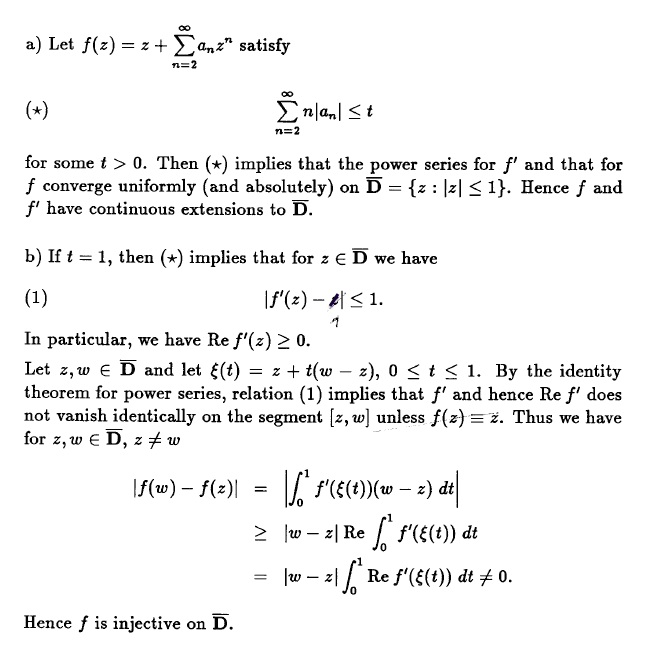}} 
\end{figure}

\vspace{-1cm}

      \begin{figure}[h!]
 \scalebox{0.38} 
 {\includegraphics{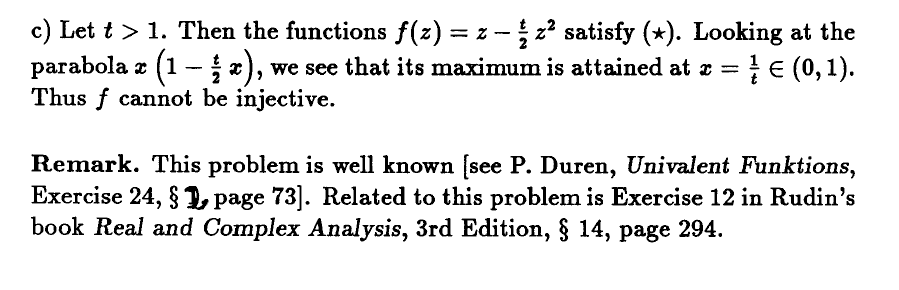}} 
\end{figure}


\newpage

\gr{\huge\section{Mathematics Magazine}}

\bigskip

\centerline{\gr{\copyright Mathematical Association of America, 2025.  }}


\bigskip


\begin{figure}[h!]
 
  \scalebox{0.45} 
  {\includegraphics{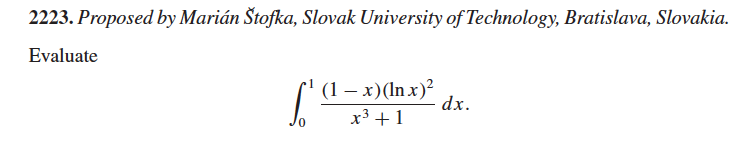}} 
\end{figure}

\centerline{\bf Solution to problem 2223 Math. Mag. 98 (3) 2025, p. 234}  \medskip

     \centerline{Raymond Mortini and Rudolf Rupp}

\medskip

\centerline{- - - - - - - - - - - - - - - - - - - - - - - - - - - - - - - - - - - - - - - - - - - - - - - - - - - - - -}
  
  \medskip
  

We show that
$$\ovalbox{$I:=\dis \int_0^1 \frac{(1-x)(\log x)^2}{x^3+1}\,dx= \frac{13}{9}\zeta(3)\sim 1.7363044156\dots$}.$$
\bigskip

First we note that for $j\in \N$ it can easily be proved via twice partial integration that
$$\int_0^1 x^j (\log x)^2 dx=  \frac{2}{(j+1)^3}.$$

Now, for $0\leq x<1$,  
\begin{eqnarray*}
\frac{(1-x)(\log x)^2}{1+x^3} &=&\sum_{j=0}^\infty (-1)^j (x^3)^j (\log x)^2-\sum_{j=0}^\infty (-1)^j x(x^3)^j (\log x)^2\\
&=& \sum_{j=0}^\infty (-1)^j x^{3j}(\log x)^2  -  \sum_{j=0}^\infty (-1)^j x^{3j+1}(\log x)^2.
\end{eqnarray*}

Note that we have $\int\sum=\sum\int$, since $\left|\sum_{n=1}^N (-1)^n x^{3n}\log^2 x\right|\leq \frac{\log^2x}{1+x^3}$ is an integrable majorant of both sums. Hence
\begin{eqnarray*}
I&=&2\sum_{n=0}^\infty   \frac{(-1)^n}{(3n+1)^3}-2\sum_{n=0}^\infty   \frac{(-1)^n}{(3n+2)^3}=: 2(T_1-T_2).
\end{eqnarray*}
Next we use that  $\dis \eta(3):=\sum_{n=0}^\infty  \frac{(-1)^n}{(n+1)^3}=\frac{3}{4}\zeta(3)$,
where $\eta$ is the Dirichlet $\eta$-function and $\zeta$ the Riemann $\zeta$-function. Since 
$$T_3:= \sum_{n=0}^\infty (-1)^n  \frac{1}{(3n+3)^3}= \frac{1}{27} \eta(3),$$
we deduce  from $T_1-T_2+T_3=\eta(3)$ that
$$I= 2(T_1-T_2)=2\left(\frac{3}{4}-\frac{1}{27}\cdot\frac{3}{4}\right)\zeta(3)=\frac{13}{9}\zeta(3).$$


\newpage


\begin{figure}[h!]
 
  \scalebox{0.45} 
  {\includegraphics{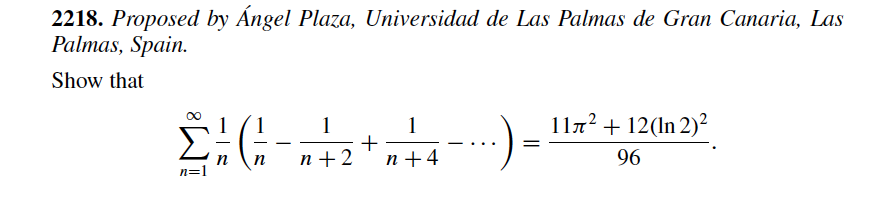}} 
\end{figure}

\centerline{\bf Solution to problem 2218 Math. Mag. 98 (2) 2025, p. 146}  \medskip

     \centerline{Raymond Mortini}

\medskip

\centerline{- - - - - - - - - - - - - - - - - - - - - - - - - - - - - - - - - - - - - - - - - - - - - - - - - - - - - -}
  
  \medskip
  

  {\bf Remark} This exercise appeared also  as CRUX 4965 (b) in \cite{cru}.\\

{\bf Step 1} {\it Reduction to the computation of an integral \footnote{ This stems from  sol. to CRUX 4965 by B. Chakraborty, R. Mortini and R. Rupp. }}.\\

We use that for $a>0$,
$$\frac{1}{a}=\int_0^\infty e^{-ax}dx.$$
Since the moduli of the  partial sums $\sum_{k=0}^N  e^{-nx} (-1)^k e^{-2kx} $ are bounded by the $L^1([0,\infty[)$-function $2e^{-nx}$,
 we have $\sum\int=\int\sum$, and so
\begin{eqnarray*}
S_n:=\frac{1}{n}-\frac{1}{n+2}+\dots &=& \sum_{k=0}^\infty \frac{(-1)^k}{n+2k}= \sum_{k=0}^\infty (-1)^k \int_0^\infty e^{-(n+2k)x} dx\\
&=&\int_0^\infty e^{-nx}\;\sum_{k=0}^\infty (-1)^k e^{-2kx} dx= \int_0^\infty \frac{e^{-nx}}{1+e^{-2x}}dx.
\end{eqnarray*}

Now let

$$B:=\sum_{n=1}^\infty\frac{1}{n}\; \left( \frac{1}{n}-\frac{1}{n+2}+\dots\right).$$

Since
$$0\leq S_n\leq  \int_0^\infty e^{-nx} dx= \frac{1}{n}.$$
 the series  $B$ converges absolutely as the general term is $\Oh(1/n^2)$.
Due to the boundedness of the partial sums of the  series $\sum_{n=1}^\infty \frac{1}{n} e^{-nx}$
 by the $L^1(]0,\infty[)$-function $|\log(1-e^{-x})|$, we have $\sum\int=\int\sum$. Hence

\begin{eqnarray*}
B&=& \sum_{n=1}^\infty\frac{1}{n}\;  \int_0^\infty \frac{e^{-nx}}{1+e^{-2x}}dx=
\int_0^\infty \frac{1}{1+e^{-2x}}\sum_{n=1}^\infty \frac{1}{n} e^{-nx} dx\\
&=&-\int_0^\infty \frac{\log(1-e^{-x})}{1+e^{-2x}}dx\buildrel=_{x\mapsto e^{-x}}^{} -\int_0^1 \frac{\log(1-x)}{x(1+x^2)}\; dx.
\end{eqnarray*}

\newpage

{\bf Step 2} { \it Evaluation of $ \int_0^1 \frac{\log(1-x)}{x(1+x^2)}\;dx$}.

The evaluation of this integral works along the same lines as that  where $\log(1-x)$ is replaced by $\log(1+x)$ (see \cite{11966} or \cite[p. 127-128]{chen}). For the readers convenience, we represent these  steps again, but adapted to the present minus sign. \\

The clue is to use that $\log(1-x)=-\int_0^1 \frac{x}{1-xt}\,dt$. Hence, by using Fubini's theorem for positive functions,

\begin{eqnarray*}
- \int_0^1 \frac{\log(1-x)}{x(1+x^2)}\;dx&=&-\int_0^1 \left( \frac{\log(1-x)}{x} - \frac{x\log(1-x)}{1+x^2}\right)dx\\
&=&-\int_0^1 \frac{\log s}{1-s}ds -\int_0^1\left( \int_0^1\frac{x ^2 }{(1-xt)(1+x^2)} dt\right)dx\\
&\buildrel=_{}^{{\rm Fubini}}&-\sum_{n=0}^\infty s^n \log s\; ds +\int_0^1 \frac{1}{1+t^2}\left[\int_0^1 \left(\frac{tx+1}{1+x^2} -\frac{1}{1-tx}\right)  dx\right] dt\\
&=&\sum_{n=0}^\infty \frac{1}{(n+1)^2}+\int_0^1  \frac{1}{1+t^2}\left(\frac{ t\log 2}{2}+\frac{\pi}{4}+\frac{\log(1-t)}{t}\right)\; dt\\
&=&\frac{\pi^2}{6} +\frac{\log^2 2}{4} +\left(\frac{\pi}{4}\right)^2+ \int_0^1 \frac{\log(1-t)}{t(1+t^2)}\;dt.
\end{eqnarray*}

Hence 
$$-\int_0^1  \frac{\log(1-t)}{t(1+t^2)}\;dt= \frac{\pi^2}{12}+\frac{\log^2}{8}+\frac{\pi^2}{32}
=\frac{11\pi^2+12 \log^2 2}{96}.$$

\newpage

\begin{figure}[h!]
 
  \scalebox{0.55} 
  {\includegraphics{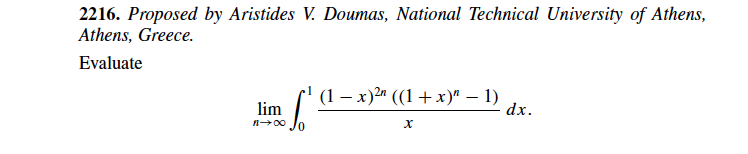}} 
\end{figure}

\centerline{\bf Solution to problem 2216 Math. Mag. 98 (2) 2025, p. 146}  \medskip

     \centerline{Raymond Mortini, Rudolf Rupp and Roberto Tauraso}

\medskip

\centerline{- - - - - - - - - - - - - - - - - - - - - - - - - - - - - - - - - - - - - - - - - - - - - - - - - - - - - -}
  
  \medskip
 
  We prove that
$$\ovalbox{$\dis I_n:=\int_0^1 \frac{(1-x)^{2n}\big((1+x)^n-1\big)}{x}dx\to \log 2$}.$$

To achieve this goal we use that the harmonic number 
$$H_n:=\sum_{j=1}^n \frac{1}{j}$$
can be written as
$$H_n=\int_0^1 \frac{1-x^n}{1-x}dx=\int_0^1\frac{1-(1-t)^n}{t}dt.$$
Now let 
$$J_n:= \int_0^1 \frac{(1-x)^n-(1-x)^{2n}}{x}dx.
$$
Then, using the addendum below,   $J_n=H_{2n}-H_n\to \log 2$.
But $I_n-J_n\to 0$. In fact, 

\begin{eqnarray*}
I_n-J_n&=&\int_0^1 \frac{(1-x)^{2n}(1+x)^n -(1-x)^n}{x}dx\\
&=&\int_0^1\frac{(1-x)^n\big( (1-x^2)^n-1\big)}{x} dx=:\int_0^1 L_n(x)dx.
\end{eqnarray*}
and, since $0\leq x^{2j-1}\leq x^j$, 
\begin{eqnarray*}
\frac{\left|(1-x^2)^n-1\right|}{x}&=&\left|\sum_{j=1}^n \binom{n}{j}(-1)^j x^{2j-1}\right|\leq\co{ \sum_{j=1}^n  \binom{n}{j} x^j\leq (1+x)^n.}
\end{eqnarray*}
Hence, for $0<x\leq 1$ 
$$|L_n(x)|\leq (1-x^2)^n\to 0.$$
Now just apply Lebesgue's dominated convergnce theorem to conclude that $\int_0^1 L_n(x)dx\to 0$.\\

{\bf Addendum}
It is well-known that $\lim_{N\to\infty} \sum_{n=N+1}^{2N} \frac{1}{n}=\log 2$, as it is a direct consequence to 
the fact that the Euler-Mascheroni constant $\gamma$ is given by 
$$\gamma=\lim(H_n-\log n),$$
and so 
$$H_{2N}-H_N=(H_{2N}-\log  (2N)- \gamma)+(\log N +\gamma -H_N) +\log 2\to \log 2.$$

\newpage

\begin{figure}[h!]
 
  \scalebox{0.55} 
  {\includegraphics{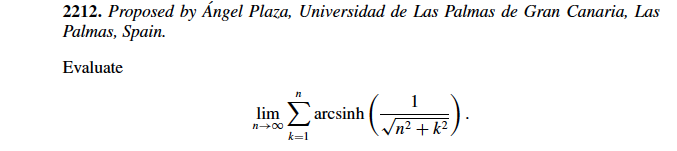}} 
\end{figure}

\centerline{\bf Solution to problem 2212 Math. Mag. 98 (1) 2025, p. 67}  \medskip

     \centerline{Raymond Mortini, Rudolf Rupp}

\medskip

\centerline{- - - - - - - - - - - - - - - - - - - - - - - - - - - - - - - - - - - - - - - - - - - - - - - - - - - - - -}
  
  \medskip
  
  
  Note that for $x\in \R$, we have $A(x):=\arsinh x=\log(x+\sqrt{1+x^2})$ and that $A'(x)=(1+x^2)^{-1/2}$.
Let 
$$S_n:=\sum_{k=1}^n \arsinh \left(\frac{1}{\sqrt{n^2+k^2}}\right).$$
We show that
$$\ovalbox{$\dis \lim_{n\to\infty} S_n=\arsinh 1=\log(1+\sqrt 2)$.}$$
To this end, we first use that with $a_{k,n}:= (n^2+k^2)^{-1/2}$ and the substitution $x=a_{k,n}u$
 
\begin{eqnarray*}
S_n&=& \sum_{k=1}^n \int_0^{a_{k,n}}\frac{dx}{\sqrt{1+x^2}}=  \sum_{k=1}^n \int_0^1 \frac{a_{k,n}}{\sqrt{1+a_{k,n}^2 u^2}\;}du=
\sum_{k=1}^n \int_0^1 \frac{du}{\sqrt{n^2+k^2+u^2}}\\
&=& \int_0^1 \left(\sum_{k=1}^n \frac{1}{n} \;\frac{1}{\sqrt{1+(k/n)^2+(u/n)^2}}\right)\;du=:\int_0^1 R_n(u)du.
\end{eqnarray*}
Next we use the estimates
\begin{eqnarray*}
1+(k/n)^2+(u/n)^2&\leq& 1+(k/n)^2+(1/n)^2\leq 1 +\big((k+1)/n\big)^2\\
1+(k/n)^2+(u/n)^2&\geq& 1 +(k/n)^2.
\end{eqnarray*}
Consequently, for every $u\in[0,1]$,
$$\sum_{k=1}^{n-1} \frac{1}{n}\;\frac{1}{\sqrt{ 1 +\big((k+1)/n\big)^2}}\leq R_n(u)\leq 
\sum_{k=1}^{n} \frac{1}{n}\;\frac{1}{\sqrt{ 1 +(k/n)^2}}.$$
These bounds for $R_n(u)$ are related to the lower, $L_n$, and  upper   Riemann sums,  $T_n$, for the monotone decreasing function
$f(x):=\frac{1}{\sqrt{1+x^2}}$. 
In fact, using the decomposition $\{0, 1/n, 2/n,\dots (n-1)/n, 1\}$ of $[0,1]$
$$\mbox{$\dis L_n:=\sum_{j=1}^{n} \frac{1}{n}\; \frac{1}{\sqrt{1+(j/n)^2}}$ and
$\dis T_n:=\sum_{j=0}^{n-1} \frac{1}{n}\; \frac{1}{\sqrt{1+(j/n)^2}}$},$$
we obtain
$$
\co{L_n\leq R_n(u)+\frac{1}{\sqrt{1+n^2}}}\;\leq \left(T_n-\frac{1}{n}+ \frac{1}{n\sqrt{2}}\right) +
\frac{1}{\sqrt{1+n^2}}\;\co{\leq T_n+\frac{1}{\sqrt{1+n^2}}}.
$$
Intregrating the red parts of the estimates with respect to $u$ yields (note that the $L_n$ and $R_n$ are independent of $u$)

$$L_n- \frac{1}{\sqrt{1+n^2}}\leq \int_0^1 R_n(u)\, du\leq T_n.$$

Since 
$$\lim_{n\to\infty} L_n=\lim_{n\to\infty}T_n= \int_0^1 \frac{1}{\sqrt{1+x^2}}dx= \arsinh 1,$$
we deduce that
$$S_n= \int_0^1 R_n(u)du \to \arsinh 1.$$
\bigskip

{\bf Remark} A function ``arcsinh x" does not exist; this wrong terminology should be avoided (used probably first by the software developers).  Also avoided should be the  ambiguous  notation, $\sinh^{-1} x$. The correct notation is  $\arsinh x$. This has to do with the area, and not the arcs.

\newpage

\begin{figure}[h!]
 
  \scalebox{0.45} 
  {\includegraphics{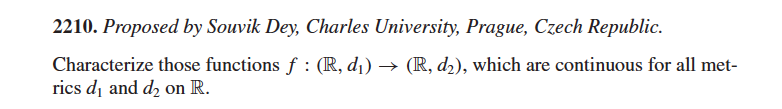}} 
\end{figure}

\centerline{\bf Solution to problem 2210 Math. Mag. 97 (2) 2024, p. 575}  \medskip

     \centerline{Raymond Mortini}

\medskip

\centerline{- - - - - - - - - - - - - - - - - - - - - - - - - - - - - - - - - - - - - - - - - - - - - - - - - - - - - -}
  
  \medskip

This does not seem to be  a serious problem proposal.  Let $X:=(\R,d_1)$, where $d_1$ is the usual Euclidean metric $d_1(x,y)=|x-y|$. 
Now take as $d_2$ the discrete metric defined by
$d_2(x,y)=1$ if $x\not=y$ and $d_2(x,x)=0$ (the triangle inequality is trivially satisfied).  Since continuous functions map connected sets to connected sets, and since the only non-empty connected sets in $Y=(\R, d_2)$ are the singletons,  any continuous map $f:X\to Y$ must be constant (since 
 $f(\R)$ is connected, hence a singleton). Conversely, every constant function $f:M\to \tilde M$, $x\mapsto c$, between any metric spaces $M,\tilde M$, is continuous since 
 $f^{-1}[V]=M$ for every  open set $V$ in $\tilde M$ with $c\in V$, and $f^{-1}[V]=\emp$ for any open set $V$ in
  $\tilde M$ with $c\notin V$.


\newpage


  \begin{figure}[h!]
 
  \scalebox{0.45} 
  {\includegraphics{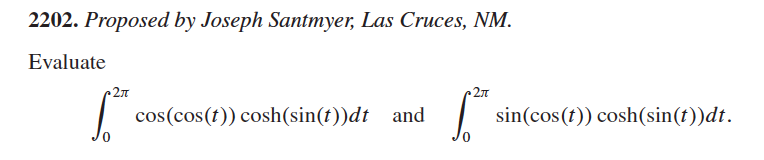}} 
\end{figure}

\centerline{\bf Solution to problem 2202 Math. Mag. 97 (2) 2024, p. 434}  \medskip

     \centerline{Raymond Mortini,  Rudolf Rupp}

\medskip

\centerline{- - - - - - - - - - - - - - - - - - - - - - - - - - - - - - - - - - - - - - - - - - - - - - - - - - - - - -}
  
  \medskip

Using complex analysis, we prove that
$$\ovalbox{$I_1:=\int_0^{2\pi} \cos(\cos t)\;\cosh(\sin t)\;dt=2\pi$}$$
and
$$\ovalbox{$I_2=\int_0^{2\pi} \sin(\cos t)\;\cosh(\sin t)\;dt=0$}.$$

First note that $\cosh z=\cos (iz)$. Hence, by using that  
$$\cos x+\cos y= 2 \cos\left(\frac{x+y}{2}\right)\;\cos\left(\frac{x-y}{2}\right),$$
and
$$\sin x+\sin y= 2 \sin\left(\frac{x+y}{2}\right)\;\cos\left(\frac{x-y}{2}\right),$$

we obtain
\begin{eqnarray*}
 \cos(\cos t)\;\cosh(\sin t)&=& \cos(\cos t)\;\cos(i\sin t)\\
 &=&\cos\left(\frac{e^{it}+e^{-it}}{2}\right)\; \cos\left(\frac{e^{it}-e^{-it}}{2}\right)\\
 &=& \frac{1}{2}\left(\cos(e^{it})+\cos(e^{-it})\right),
\end{eqnarray*}
and
\begin{eqnarray*}
 \sin(\cos t)\;\cosh(\sin t)&=& \sin(\cos t)\;\cos(i\sin t)\\
 &=&\sin\left(\frac{e^{it}+e^{-it}}{2}\right)\; \cos\left(\frac{e^{it}-e^{-it}}{2}\right)\\
 &=& \frac{1}{2}\left(\sin(e^{it})+\sin(e^{-it})\right).
\end{eqnarray*}

Now, by Cauchy's residue theorem in complex analysis,

\begin{eqnarray*}
I_1&=& \frac{1}{2}\;\int_0^{2\pi} \left(\cos(e^{it})+\cos(e^{-it})\right)\;dt\buildrel=_{}^{z=e^{it}} 
 \frac{1}{2i}\;\ointctrclockwise \frac{\cos z+\cos (1/z)}{z}\;dz\\
 &=&\frac{1}{2i} 2\pi i \left( {\rm Res}\left[\frac{\cos z}{z},0\right]+  {\rm Res}\left[\frac{\cos(1/z)}{z},0\right]\right)=\pi ( 1+1)=2\pi,
\end{eqnarray*}

and

\begin{eqnarray*}
I_2&=& \frac{1}{2}\;\int_0^{2\pi} \left(\sin(e^{it})+\sin(e^{-it})\right)\;dt\buildrel=_{}^{z=e^{it}} 
 \frac{1}{2i}\;\ointctrclockwise \frac{\sin z+\sin (1/z)}{z}\;dz\\
 &=&\frac{1}{2i} 2\pi i \left( {\rm Res}\left[\frac{\sin z}{z},0\right]+  {\rm Res}\left[\frac{\sin(1/z)}{z},0\right]\right)=\pi ( 0+0)=0.
\end{eqnarray*}

\newpage

\nopagecolor


  \begin{figure}[h!]
 
  \scalebox{0.5} 
  {\includegraphics{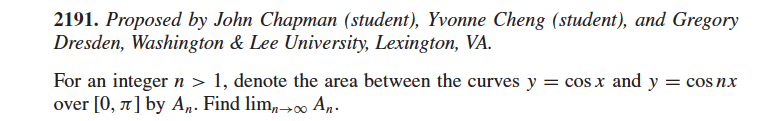}} 
\end{figure}

\centerline{\bf Solution to problem 2191 Math. Mag. 97 (2) 2024, p. 223}  \medskip

     \centerline{Raymond Mortini,  Rudolf Rupp}

\medskip

\centerline{- - - - - - - - - - - - - - - - - - - - - - - - - - - - - - - - - - - - - - - - - - - - - - - - - - - - - -}
  
  \medskip

We prove that this area $A_n$ tends to \ovalbox{$8/\pi\sim 2.5464790\dots$}. 

To this end, we use the Fourier series for 
$f(x):=|\sin x|$
\begin{equation}\label{fouriersin}
|\sin x|=\frac{2}{\pi}-\frac{4}{\pi}\;\sum_{k=1}^\infty \frac{\cos(2kx)}{4k^2 -1},~~ x\in \R,
\end{equation}
which can easily be obtained by noticing that  that $f$ is  even and $\pi$-periodic, and so 
$$f(x)\sim \frac{a_0}{2} + \sum_{n=1}^\infty a_k \cos(2kx)$$
 with
$$a_k=\frac{2}{\pi}\int_0^\pi f(x)\cos(2kx) dx=\frac{2}{\pi} \int_0^\pi \sin x \cos(2kx)dx\
=\frac{1}{\pi} \int_0^\pi \big(\sin(1+2k)x+\sin(1-2k)x\big)dx.$$

Note  that the desired area $A_n$ is given by $\int_0^\pi |\cos x-\cos (nx)|dx$. By the addition theorem
$$ |\cos x-\cos (nx)|= 2 |\sin \left((n-1)(x/2)\right)\; \sin \left((n+1)(x/2)\right)|.$$
Hence
\begin{eqnarray*}
q(x):=2 |\sin(n-1)(x/2)\sin(n+1)(x/2)|&=&2\left(\frac{2}{\pi}-\frac{4}{\pi}\;\sum_{k=1}^\infty \frac{\cos k(n-1)x}{4k^2 -1}\right)\;
\left(\frac{2}{\pi}-\frac{4}{\pi}\;\sum_{k=1}^\infty \frac{\cos k(n+1)x}{4k^2 -1}\right).
\end{eqnarray*}
Since for $m,n\in \N=\{0,1,2,\dots\}$, $m\not=n$,
$$\int_0^\pi \cos(mx)\cos(nx)dx=\frac{1}{2}\int_0^\pi\big(\cos(m-n)x +\cos (m+n)x\big)dx=0,$$ 
we deduce from the fact
that  the Fourier series  (\ref{fouriersin}) is  absolutely and uniformly convergent,  that for $n\geq 2$
\begin{eqnarray*}
A_n=\int_0^\pi q(x) dx&=&\frac{8}{\pi^2}\pi+\frac{32}{\pi^2} \sum_{j,k=1}^\infty \int_0^\pi \frac{\cos j(n-1)x\;\cos k(n+1)x}{(4k^2-1)(4j^2-1)}dx\\
&=& \frac{8}{\pi }+\frac{16\pi }{\pi^2}\sum_{j,k=1\atop j(n-1)=k(n+1)}^\infty \frac{1} {(4k^2-1)(4j^2-1)}.
\end{eqnarray*}

Now $j(n-1)=k(n+1)$ if $j=r (n+1)m$ and $k=r(n-1)m$ for some $m\in \N$ and 
$$r=\begin{cases} 1 &\text{if $n$ is even}\\
\frac{1}{2}&\text{if $n$ is odd}
\end{cases}$$
In fact, if $n$ is even, then the odd numbers $n-1$ and $n+1$ are relatively prime, since otherwise a joint divisor must also divide $2=(n+1)-(n-1)$.  Hence $k=m(n-1)$ and $j=m(n+1)$ for some $m\in \N$. And if $n$ is odd,  then $n-1$ and $n+1$ are even and $2$ then is the gcd of $n-1$ and $n+1$ since $(n+1)-(n-1)=2$. Now let $n\geq3$.
Due to $1\leq 2 (1/4) m^2(n\pm 1)^2\leq 2  r^2m^2(n+1)^2$, we finally conclude  that 
\begin{eqnarray*}
\frac{8}{\pi }\leq A_n&=&  \frac{8}{\pi }+\frac{16\pi }{\pi^2}\sum_{m=1}^\infty \frac{1}{(4r^2m^2(n-1)^2-1)(4r^2m^2(n+1)^2-1)}\\
&\leq & \frac{8}{\pi }+\frac{16\pi }{\pi^2}\sum_{m=1}^\infty \frac{1}{2r^2m^2(n+1)^2}= \frac{8}{\pi }+\frac{1}{(n+1)^2}\frac{16\pi }{\pi^2} \frac{1}{2r^2}
\sum_{m=1}^\infty \frac{1}{m^2}\\
&\to&  \frac{8}{\pi }.
\end{eqnarray*}

{\bf Remark.} 
Using that for $a\notin\Z$, $\dis \sum_{m=1}^\infty \frac{1}{m^2-a^2}=\frac{1-a\pi \cot(a\pi)}{2a^2}$, we obtain
\begin{eqnarray*}
A_n&=&  \frac{8}{\pi }+\frac{16 }{\pi} \frac{1}{16nr^2}\sum_{m=1}^\infty \left(\frac{1}{m^2-\frac{1}{4r^2(n-1)^2}}-
 \frac{1}{m^2-\frac{1}{4r^2(n+1)^2}}\right)\\
 &=& \frac{8}{\pi }+\frac{16 }{\pi} \frac{1}{16nr^2}\left(  \frac{1- \frac{\pi}{2r(n-1)} \cot (\frac{\pi}{2r(n-1)})}{\frac{2}{4r^2(n-1)^2}}
 - \frac{1- \frac{\pi}{2r(n+1)} \cot (\frac{\pi}{2r(n+1)})}{\frac{2}{4r^2(n+1)^2}}\right)\\
 &=& \frac{8}{\pi }+\frac{2}{\pi n r^2}\left(r^2(n-1)^2- \frac{r(n-1)\pi}{2}\cot \frac{\pi}{2r(n-1)}
 -r^2(n+1)^2+ \frac{r(n+1)\pi}{2}\cot \frac{\pi}{2r(n+1)}\right)\\
 &=&-\frac{n-1}{nr}\cot \frac{\pi}{2r(n-1)} +\frac{n+1}{nr}\cot \frac{\pi}{2r(n+1)}.
\end{eqnarray*}

For example, \\

  $\dis A_5=\frac{4}{5}(3\sqrt 3-2)\sim 2.5569219\dots$\\
  
  $\dis A_4= \frac{5}{4}\sqrt{5+2\sqrt 5}-\frac{3\sqrt 3}{4}\sim 2.54806631\dots$\\
  
  $\dis A_3=\frac{8}{3}\sim 2.6666666\dots$\\
  
  $\dis A_2=\frac{3\sqrt 3}{2}\sim 2.59807621\dots$.

 It is now very easy to determine  the limit directly (one may replace $r$ above even by any $x\not=0$):
  Let $a:=\frac{\pi}{2x(n+1)}$ und $b:=\frac{\pi}{2x(n-1)}$. Then
$$xA_n=(\cot a-\cot b) +\frac{1}{n}(\cot a+\cot b),$$
and so

$$xA_n= \frac{\sin(b-a)}{\sin a \sin b} + \frac{1}{n}\;\frac{\sin(a+b)}{\sin a \sin b}.$$

Note that
$$\mbox{$\dis b-a=\frac{\pi}{x}\;\frac{1}{n^2-1}$ and $\dis a+b= \frac{\pi}{x}\; \frac{n}{n^2-1}.$} $$
Since $\lim_{x\to 0}\frac{\sin x}{x}=1$,   $xA_n$ has the same asymptotic as
$$\frac{\frac{\pi}{x}\;\frac{1}{n^2-1}}{\frac{\pi}{2x(n+1)}\; \frac{\pi}{2x(n-1)}}+
 \frac{1}{n}\; \frac{\frac{\pi}{x}\; \frac{n}{n^2-1}}{\frac{\pi}{2x(n+1)}\; \frac{\pi}{2x(n-1)}}=\frac{4x}{\pi}+\frac{4x}{\pi}=\frac{8x}{\pi}.
 $$

\newpage


  \begin{figure}[h!]
 
  \scalebox{0.5} 
  {\includegraphics{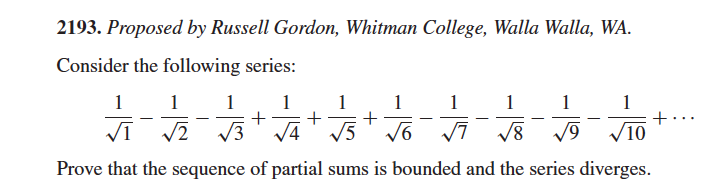}} 
\end{figure}

\centerline{\bf Solution to problem 2193 Math. Mag. 97 (2) 2024, p. 223}  \medskip

     \centerline{Raymond Mortini,  Rudolf Rupp}
            
\medskip

\centerline{- - - - - - - - - - - - - - - - - - - - - - - - - - - - - - - - - - - - - - - - - - - - - - - - - - - - - -}
  
  \medskip

Let  $\N=\{0,1,2,\dots\}$.
We assume that the terms of the series $\sum_{j=1}^\infty a_j$  are regrouped as follows (this regrouping does not change the partial sums):
 
 $$\frac{1}{\sqrt 1}-\left(\frac{1}{\sqrt 2} +\frac{1}{\sqrt 3}\right)+\left(\frac{1}{\sqrt4}+\frac{1}{\sqrt5}+\frac{1}{\sqrt 6}\right)-\left(\frac{1}{\sqrt 7}+\frac{1}{\sqrt8}+\frac{1}{\sqrt9}+\frac{1}{\sqrt{10}}\right) +-\cdots
 =:\sum_{k=1}^\infty (-1)^{k-1}P_k,$$
 where for $k\in\N$ 
 $$P_k:=\sum_{j=A_k}^{A_{k+1}-1} \frac{1}{\sqrt j},$$
and where  the number $A_k=\frac{k(k-1)}{2}+1$ is  the $j$-index of the first summand in the $k$-th  group.
The sum $P_k$ has $k$ summands. We now estimate each $P_k$:
 
\begin{equation}\label{pk-abs}
P_k\leq \sum_{j=A_k}^{A_{k+1}-1} \frac{1}{\sqrt j}\leq \frac{k}{\sqrt{A_k}}=\frac{k\sqrt 2 }{\sqrt{k(k-1)+2}}
 \leq 2\sqrt 2.
\end{equation}
 
 $$P_k\geq \frac{k}{\sqrt{A_{k+1}-1}}=\frac{\co{\sqrt 2}\,k}{\sqrt{k(k+1)}}\geq \frac{\co{\sqrt 2}\,k}{k+1}\geq \frac{\co{\sqrt 2}}{2}.
 $$
 
 By Cauchy's criterion, the series diverges as the blocks $P_k$ do not go to zero. Next we estimate 
  the partial sums
 $$
 \sum_{k=1}^N (-1)^{k-1}P_k.
$$
 
 To this end, we  use the useful inequality
 $$2(\sqrt{n+1}-\sqrt n)\leq\frac{1}{\sqrt n} \leq 2 (\sqrt n-\sqrt{n-1}).$$
 
 This yields (via telescoping property)
 $$ P_k\leq  2 (\sqrt{A_{k+1}-1}-\sqrt{A_k-1})= 2\sqrt 2\frac{\sqrt k}{\sqrt{k+1}+\sqrt{k-1}}=
 \sqrt 2 \left(1+ \frac{1}{8}\;\frac{1}{k^2}\right)+\oh(\frac{1}{k^2})$$
 $$P_k\geq 2( \sqrt{A_{k+1}}-\sqrt{A_k})\geq2\sqrt 2\frac{k}{\sqrt{k(k+1)+2}+\sqrt{k(k-1)+2}}=
 \sqrt 2\left(1-\frac{7}{8}\;\frac{1}{k^2}\right)+\oh(\frac{1}{k^2}).
 $$
 
Numerical computations let us guess that  the $P_k$ are {\it increasing}. 
Anyway,  the partial sums  formed with full blocks write as

 $$L_N:=\sum_{k=1}^N (-1)^{k-1}P_k=\begin{cases}(P_1-P_2)+(P_3-P_4)+\cdots+(P_{N-1}-P_N)&\text{if $N$ even}\\
 (P_1-P_2)+(P_3-P_4)+\cdots+(P_{N-2}-P_{N-1})+P_N &\text{if $N$ odd},
 \end{cases}
 $$
and the general one is given by

\begin{equation}\label{partisum}
S_n:=\sum_{j=1}^ n a_j=\sum_{k=1}^N (-1)^{k-1}P_k+(-1)^{N}\sum_{j=A_{N+1}}^n \frac{1}{\sqrt j},
\end{equation}

where $N$ is the unique number for which $A_{N+1}=\frac{N(N+1)}{2} +1\leq n< \frac{(N+2)(N+1)}{2}=A_{N+2}-1$.

 It remains to   estimate the differences $|P_k-P_{k+1}|$.

 \begin{eqnarray*}
 P_k-P_{k+1}&\leq & \left(\sqrt 2+\frac{\sqrt 2}{8}\;\frac{1}{k^2}+\oh(\frac{1}{k^2})\right)
  - \left(\sqrt 2 -\frac{7\sqrt 2}{8}\;\frac{1}{(k+1)^2}+\oh(\frac{1}{(k+1)^2})\right)\\
  &\leq&\frac{\sqrt 2}{k^2} +\oh(\frac{1}{k^2})
\end{eqnarray*}

\begin{eqnarray*}
P_k-P_{k+1}&\geq&  \left(\sqrt 2-\frac{7\sqrt 2}{8}\;\frac{1}{k^2}+\oh(\frac{1}{k^2})\right)
-\left(\sqrt 2 +\frac{\sqrt 2}{8}\;\frac{1}{(k+1)^2}+\oh(\frac{1}{(k+1)^2})\right)\\
&\geq &-\frac{\sqrt 2}{k^2} +\oh(\frac{1}{k^2}).
\end{eqnarray*}
 
 Hence, for all $k\geq k_0$,
 $$|P_k-P_{k+1}|\leq \frac{\sqrt 2}{k^2}+\oh(\frac{1}{k^2})\leq  \frac{\sqrt 2+1}{k^2}
 $$
  and so, for all $k$, 
 \begin{equation}\label{pkabs}
|P_k-P_{k+1}|\leq C {k^2}.
\end{equation}
 We deduce from (\ref{pk-abs}) and (\ref{partisum}) that for every $n$ and $N$ chosen as above
 
 \begin{eqnarray*}
|S_n|&\leq & |L_N| + \sum_{j=A_{N+1}}^n \frac{1}{\sqrt j}\\
&\leq& \sum_{j=1}^{\lfloor N/2\rfloor} |P_{2j-1}-P_{2j}|+\underbrace{P_N}_{\text{comes from the case $N$ odd}}
+ \sum_{j=A_{N+1}}^n \frac{1}{\sqrt j}\\
&\leq&\sum_{k=1}^\infty |P_k-P_{k+1}|+ P_N +P_{N+1}\\
&\buildrel\leq_{(\ref{pkabs})}^{(\ref{pk-abs})}&C \sum_{k=1}^\infty \frac{1}{k^2} + 4\sqrt 2=:\tilde C.
 \end{eqnarray*}
 
 {\bf Remark}  We also deduce that the associated parenthesized series converges:
 $$\left(\frac{1}{\sqrt 1}-\frac{1}{\sqrt 2} -\frac{1}{\sqrt 3}\right)+\left(\frac{1}{\sqrt4}+\frac{1}{\sqrt5}+\frac{1}{\sqrt 6}-\frac{1}{\sqrt 7}-\frac{1}{\sqrt8}-\frac{1}{\sqrt9}-\frac{1}{\sqrt{10}}\right) $$
 $$+\left(\frac{1}{\sqrt{11}}+\frac{1}{\sqrt{12}}+\frac{1}{\sqrt{13}}+\frac{1}{\sqrt{14}}+\frac{1}{\sqrt{15}}-\frac{1}{\sqrt{16}}-
 \frac{1}{\sqrt{17}}-\frac{1}{\sqrt{18}}-\frac{1}{\sqrt{19}}-\frac{1}{\sqrt{20}}-\frac{1}{\sqrt{21}}\right)+\cdots
 $$

\newpage

 \begin{figure}[h!]
 
  \scalebox{0.55} 
  {\includegraphics{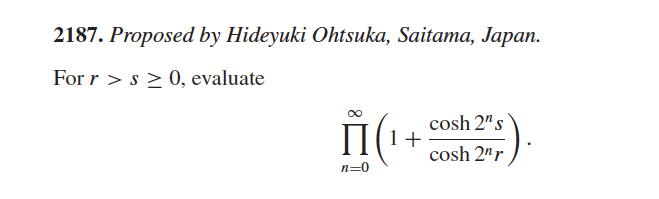}} 
\end{figure}

\centerline{\bf
Solution to problem 2187 Math. Mag. 97 (1) 2024, p. 81}  \medskip

     \centerline{Raymond Mortini,  Rudolf Rupp}
            
\medskip

\centerline{- - - - - - - - - - - - - - - - - - - - - - - - - - - - - - - - - - - - - - - - - - - - - - - - - - - - - -}
  
  \medskip


We claim that for $r>s\geq 0$,

$$\ovalbox{$\dis P:=\prod_{n=0}^\infty \left(1+\frac{\cosh(2^ns)}{\cosh(2^nr)}\right)= \frac{\sinh r}{\cosh r-\cosh s}$}.$$
\\

To this end we first show via induction that

\begin{equation}\label{hil1}
\prod_{n=0}^k \cosh(2^nx)=\frac{\sinh(2^{k+1}x)}{2^{k+1}\sinh x}.
\end{equation}

In fact, let $k=0$. Then $\frac{\sinh 2x}{2 \sinh x}=\cosh x$. If (\ref{hil1}) is correct for some $k$, then

\begin{eqnarray*}
\prod_{n=0}^{k+1} \cosh(2^nx)&=&\left(\prod_{n=0}^k \cosh(2^nx)\right)\cdot \cosh (2^{k+1}x)=
\frac{\sinh(2^{k+1}x)}{2^{k+1}\sinh x} \cdot  \cosh (2^{k+1}x)\\
&=& \frac{\sinh (2^{k+2}x)}{2^{k+2}\sinh x}.
\end{eqnarray*}

A way to come up with such a formula, is to use the well-known funny formula 
$$\dis \prod_{n=0}^{k} (1+w^{2^n})=\frac{1-w^{2^{k+1}}}{1-w}$$
 for $w=e^{-x}$
and by writing $ \cosh x=e^{x}(1+e^{-2x})/2$.\\

Now 
$$1+\frac{\cosh u}{\cosh v}=\frac{\cosh u+\cosh v}{\cosh v}=2\;\frac{\cosh(\frac{u+v}{2})\cosh(\frac{u-v}{2})}{\cosh v}.$$
Hence, with $u=2^ns$ and $v=2^nr$,
{\footnotesize \begin{eqnarray*}
P_k:= \prod_{n=0}^k\left(1+\frac{\cosh 2^ns}{\cosh 2^nr}\right)&=&2^{k+1}\,\prod_{n=0}^k \frac{\cosh(2^{n-1}(r+s))
 \cosh(2^{n-1}(r-s))}{\cosh 2^nr}\\
 &=&2^{k+1} \frac{ \cosh(\frac{r+s}{2})\, \cosh(\frac{r-s}{2})\;\prod_{j=0}^{k-1} \cosh 2^j(r+s) \prod_{j=0}^{k-1} \cosh 2^j(r-s)}{\prod_{n=0}^k \cosh 2^nr}\\
 &=&2^{k+1} \cosh\left(\frac{r+s}{2}\right)\, \cosh\left(\frac{r-s}{2}\right)\; \frac{\sinh 2^k(r+s)}{2^{k}\sinh (r+s)} \; 
  \frac{\sinh 2^k(r-s)}{2^{k}\sinh (r-s)} \; \frac{2^{k+1}\sinh r}{\sinh 2^{k+1}r}\\
  &=&4 \cosh\left(\frac{r+s}{2}\right)\, \cosh\left(\frac{r-s}{2}\right)\;\frac{\sinh r}{\sinh(r+s)\sinh(r-s)}\;
  \frac{\sinh 2^k(r+s)\sinh 2^k(r-s)}{\sinh 2^{k+1}r}
\end{eqnarray*}
}
Next we claim that for $r>s$,
$$\lim_{k\to\infty}\frac{\sinh 2^k(r+s)\sinh 2^k(r-s)}{\sinh 2^{k+1}r}=\frac{1}{2}.$$

In fact, using that
$\sinh x=\frac{e^x}{2}(1-e^{-2x})$
we obain
\begin{eqnarray*}
 \frac{\sinh 2^k(r+s)\sinh 2^k(r-s)}{\sinh 2^{k+1}r}&=&\frac{\frac{1}{4}e^{2^{k}(r+s)} e^{2^{k}(r-s)} (1-e^{-2^{k+1}(r+s)})(1-e^{-2^{k+1}(r-s)})}
 {\frac{e^{2^{k+1}r}}{2} (1-e^{2^{k+2}r})}\\
 &=&\frac{1}{2}\;\frac{(1-e^{-2^{k+1}(r+s)})(1-e^{-2^{k+1}(r-s)})}{ (1-e^{2^{k+2}r})}\to \frac{1}{2}.
\end{eqnarray*}

Now note that $\cosh^2 r-\sinh^2 s= \sinh(r+s)\sinh(r-s)$, and so 
\begin{eqnarray*}
4 \cosh\left(\frac{r+s}{2}\right)\, \cosh\left(\frac{r-s}{2}\right)\;\frac{\sinh r}{\sinh(r+s)\sinh(r-s)}&=& 
2 \frac{\cosh r+\cosh s}{\sinh(r+s)\sinh(r-s)}\sinh r\\
&=&\frac{2\sinh r}{\cosh r-\cosh s}.
\end{eqnarray*}
Thus
$$\lim_{k\to\infty} P_k=\frac{\sinh r}{\cosh r-\cosh s}.$$

\newpage

 \begin{figure}[h!]
 
  \scalebox{0.55} 
  {\includegraphics{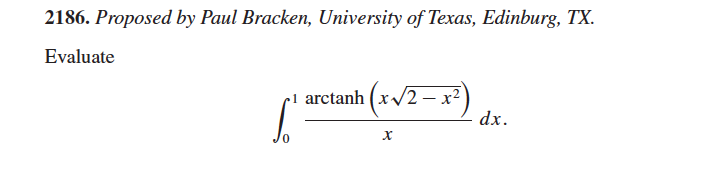}} 
\end{figure}

\centerline{\bf
Solution to problem 2186 Math. Mag. 97 (1) 2024, p. 81}  \medskip

     \centerline{Raymond Mortini,  Rudolf Rupp}

\medskip

\centerline{- - - - - - - - - - - - - - - - - - - - - - - - - - - - - - - - - - - - - - - - - - - - - - - - - - - - - -}
  
  \medskip

A function ``arctanh x" does not exist in the terminology we have learned  (see R. Burckel,  Classical Analysis in the plane, 2021,  p. 135). It is either $\arctan x$ or $\artanh x$. Not knowing whether the letter $c$  or the letter $h$ in your statement is superfluous, we consider both cases. So we will prove the following:\\

\begin{enumerate}
\item[(1)] $\dis \int_0^1 \frac{\artanh(x\sqrt{2-x^2}) }{x}\;dx=\frac{3}{16}\pi^2\sim 1.850550825204\cdots$,
\item[(2)] $\dis \int_0^1 \frac{\arctan(x\sqrt{2-x^2}) }{x}\;dx=\frac{1}{2} C +\frac{\pi}{4}\log(\sqrt 2+1)\sim 1.15021199360\cdots$.
\end{enumerate}
where $C$ is the Catalan constant.
\\

We need the following well-known integral:

\begin{lemma}\label{sinup}
Let $I_n:=\int_0^{\pi/2} (\sin x)^{n} dx$. Then  $I_0=\pi/2$ and $I_1=1$. For $n\in \N^*:=\{1,2,3\cdots\}$ we have  

$$ I_{2n}=\frac{1}{2}\cdot\frac{3}{4}\cdot\frac{5}{6}\cdots\frac{2n-1}{2n}\frac{\pi}{2}=\frac{(2n)!}{4^n (n!)^2}\cdot\frac{\pi}{2}=
\frac{{2n\choose n} }{4^n}\cdot \frac{\pi}{2}.$$

\end{lemma}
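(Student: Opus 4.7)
The plan is to reduce everything to a two-term recurrence via integration by parts, and then iterate. First I would dispose of the base cases directly: $I_0 = \int_0^{\pi/2} dx = \pi/2$ and $I_1 = \int_0^{\pi/2} \sin x\,dx = [-\cos x]_0^{\pi/2} = 1$, both of which match the claimed values.

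Next, for $n\geq 2$ I would write $\sin^n x = \sin^{n-1}x \cdot \sin x$ and integrate by parts with $u = \sin^{n-1}x$, $dv = \sin x\,dx$, so $du = (n-1)\sin^{n-2}x \cos x\,dx$ and $v = -\cos x$. The boundary term $[-\sin^{n-1}x \cos x]_0^{\pi/2}$ vanishes, leaving
$$I_n = (n-1)\int_0^{\pi/2} \sin^{n-2}x \cos^2 x\,dx = (n-1)\int_0^{\pi/2} \sin^{n-2}x\,(1-\sin^2 x)\,dx = (n-1)(I_{n-2} - I_n),$$
which rearranges to the Wallis recurrence $I_n = \dfrac{n-1}{n}\,I_{n-2}$ for every $n\geq 2$.

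Applying this recurrence with even index $2n$ and iterating down to $I_0 = \pi/2$ gives
$$I_{2n} = \frac{2n-1}{2n}\cdot\frac{2n-3}{2n-2}\cdots\frac{3}{4}\cdot\frac{1}{2}\cdot\frac{\pi}{2},$$
which is exactly the first form claimed. To reach the factorial form, I would multiply numerator and denominator of this product by the even product $(2n)(2n-2)\cdots 2 = 2^n\,n!$: the numerator becomes $(2n-1)!(2n-3)!\cdots$ — more precisely the interleaving of odd and even factors produces $(2n)!$, while the denominator becomes $\bigl(2^n n!\bigr)^2 = 4^n(n!)^2$. Hence $I_{2n} = \dfrac{(2n)!}{4^n(n!)^2}\cdot\dfrac{\pi}{2}$, and recognizing $\binom{2n}{n} = \dfrac{(2n)!}{(n!)^2}$ yields the final form $I_{2n} = \dfrac{\binom{2n}{n}}{4^n}\cdot\dfrac{\pi}{2}$.

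No step looks like a real obstacle here: the only point requiring any care is the reindexing that converts the product of odd-over-even ratios into a clean $(2n)!/4^n(n!)^2$ expression, which is a standard combinatorial manipulation. A clean alternative would be to prove the closed form directly by induction on $n$, using the identity $(2n+2)!/(4^{n+1}((n+1)!)^2) = \frac{2n+1}{2n+2}\cdot (2n)!/(4^n(n!)^2)$, which is exactly the recurrence applied to $I_{2n+2}$.
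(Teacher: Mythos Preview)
Your proposal is correct and is essentially the same as the paper's proof: both establish the Wallis recurrence $I_{2n}=\frac{2n-1}{2n}I_{2n-2}$ and iterate down to $I_0=\pi/2$. The only cosmetic difference is that the paper verifies the recurrence by writing $2nI_{2n}-(2n-1)I_{2n-2}$ as a single integral and recognizing its integrand as $-\frac{d}{dx}\bigl[(\sin x)^{2n-1}\cos x\bigr]$, which is just your integration by parts repackaged.
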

\begin{proof}
$I_{2n}= \frac{2n-1}{2n} I_{2n-2}$ for $n\in \N^*$ and $I_0=\frac{\pi}{2}$, because

\begin{eqnarray*}
2n I_{2n}- (2n-1)I_{2n-2}&=&\int_0^{\pi/2}(\sin x)^{2n-2}\left(2n \sin^2 x-(2n-1)\right)dx\\
&=&-\int_0^{\pi/2}(\sin x)^{2n-2}\left( (2n-1) \cos ^2x-\sin^2 x\right)dx\\
&=&-\left[ (\sin x)^{2n-1}\cos x   \right]^{\pi/2}_0=0.
\end{eqnarray*}

\end{proof}

Moreover,  we will use that $\dis\sum_{n=0}^\infty \frac{1}{(2n+1)^2}=\frac{\pi^2}{8}$ as well as $\dis\sum_{n=0}^\infty \frac{(-1)^n}{(2n+1)^2}=C$.
 Finally we need that for $|x|\leq 1$
$$\arcsin x=\sum_{n=0}^\infty   \frac{{2n\choose n}}{4^n}\frac{x^{2n+1}}{2n+1}\quad {\rm and}\quad
{\rm arsinh}\, x=\sum_{n=0}^\infty  (-1)^n \frac{{2n\choose n}}{4^n}\frac{x^{2n+1}}{2n+1}.
$$
Note that in view of Stirling's formula $\frac{{2n\choose n}}{4^n}\sim \frac{1}{\sqrt \pi\; \sqrt n}$, so the series converge absolutely for $x=\pm1$.

(1)  We make the substitution $x=\sqrt 2 \sin t$, $dx=\sqrt 2 \cos t\,dt$. Then, by using  $\int\sum=\sum\int$ (all terms are positive),
\begin{eqnarray*}
\dis \int_0^1 \frac{\artanh(x\sqrt{2-x^2}) }{x}\;dx&=&\int_0^{\pi/4} \frac{\artanh (\sin (2t))}{\sin t}\; \cos t\,dt\\
&=&\int_0^{\pi/4}\sum_{n=0}^\infty \frac{1}{2n+1}\frac{(\sin(2t))^{2n+1}}{\sin t}\cos t\;dt\\
&\buildrel=_{}^{\sin(2t)=2\sin t\cos t}&\int_0^{\pi/4}\sum_{n=0}^\infty \frac{1}{2n+1}(\sin(2t))^{2n}\;\underbrace{2 \cos^2 t}_{=1+\cos(2t)}\;dt\\
&=&\sum_{n=0}^\infty \frac{1}{2n+1}\left(\int_0^{\pi/4} (\sin (2t))^{2n} dt + \int_0^{\pi/4} (\sin(2t))^{2n}\cos(2t)dt\right)\\
&\buildrel=_{}^{2t=u}&\frac{1}{2}\sum_{n=0}^\infty\frac{1}{2n+1}\left(\int_0^{\pi/2} (\sin u)^{2n} du + \int_0^{\pi/2} (\sin u)^{2n}\cos u du\right)\\
&=&\frac{1}{2}\sum_{n=0}^\infty  \frac{1}{2n+1}\left(\frac{{2n\choose n} }{4^n}\cdot \frac{\pi}{2} + \frac{1}{2n+1}\right)\\
&=& \frac{\pi}{4} \arcsin 1 + \frac{\pi^2}{16} =\frac{3}{16}\pi^2.
\end{eqnarray*}

(2)  In this case case, the factor $\frac{1}{2n+1}$ is replaced by $\frac{(-1)^n}{2n+1}$.
Moreover, $\int \sum=\sum\int$, since
$$\left|\sum_{n=0}^N \frac{(-1)^n}{2n+1}(\sin u)^{2n}(1+\cos u)\right|\leq \sum_{n=0}^\infty \frac{1}{2n+1}(\sin u)^{2n}(1+\cos u),$$
which is an integrable majorant by (1).
Hence
$$ \int_0^1 \frac{\arctan(x\sqrt{2-x^2}) }{x}\;dx=\frac{\pi}{4} {\rm arsinh}\; 1 + \frac{1}{2} C.$$
Since ${\rm arsinh}\, x= \log(x+\sqrt{1+x^2})$, we are done.

\newpage

  \begin{figure}[h!]
 
  \scalebox{0.6} 
  {\includegraphics{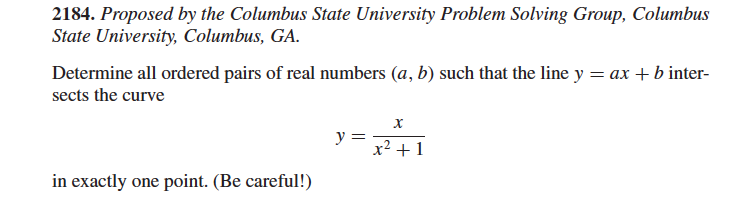}} 
\end{figure}

\centerline{\bf
Solution to problem 2184 Math. Mag. 96 (5) 2023, p. 567}  \medskip

     \centerline{Raymond Mortini, Peter Pflug  and Rudolf Rupp}
     \medskip

\centerline{- - - - - - - - - - - - - - - - - - - - - - - - - - - - - - - - - - - - - - - - - - - - - - - - - - - - - -}
  
  \medskip


We give two proofs (one geometric/intuitive informal one and one analytic one) of the  following result:\\

{\bf Proposition} {\sl
Let $f(x)=\frac{x}{1+x^2}$. Then the set of all those $(a,b)\in \R^2$ for which the line $y=ax+b$ cuts the
 graph $G:=\{(x,f(x)): x\in \R\}$ of $f$
in exactly one point is given by the "exterior"   $E$ \footnote{ This is the unbounded component of the complement of the curve.}   of the closed Jordan curve  (displayed in red below)

\begin{equation}\label{tangentenkurve}
\Gamma(t)= {a(t)\choose b(t)}= {\frac{1-t^2}{(1+t^2)^2}\choose \frac{2t^3}{(1+t^2)^2}},\;{\rm for}\;   t\in \R, \quad {\rm and}\quad
\Gamma(\pm \infty)={0\choose 0},
\end{equation} 

together with $ \{(0,0)\}\union\{(1,0)\}\union\{(0,\pm \frac{1}{2})\}\union \{(-\frac{1}{8}, \pm\frac{3\sqrt 3}{8}) \}$ and 
 deleted by the half lines 
 $$\mbox{$\{0\}\times \,]1/2,\infty[$ and $\{0\}\times ]-\infty,-1/2[$}.$$
 }
 
 \vspace{1cm}
   \begin{figure}[h!]
        {\scalebox{0.40} {\includegraphics{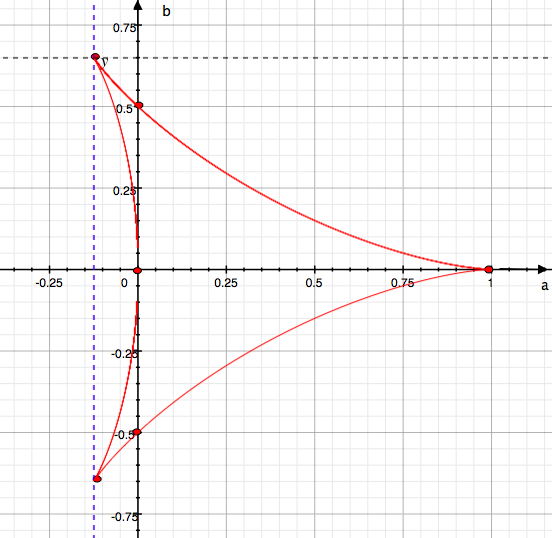}} }
\caption{\label{red} {The red curve}}
\end{figure}

 \begin{figure}[h!]
 
  \scalebox{0.45} 
  {\includegraphics{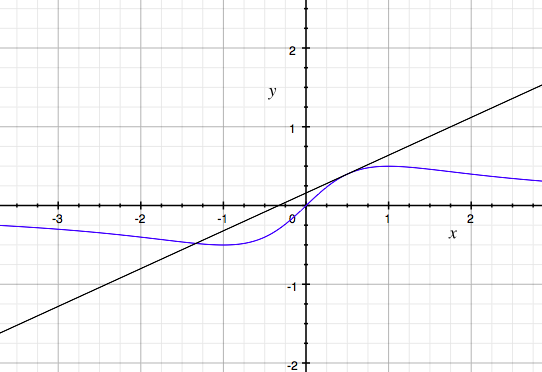}} 
 \caption{\label{bsp} {Graph of $f$ and one tangent}}

\end{figure}

\begin{proof}   We first discuss the geometry of the graph of $f$. 

 $\bullet$ Note that  $\dis f'(x)=\frac{1-x^2}{(1+x^2)^2}$. Hence
the red curve $\Gamma$, excepted the point $(0,0)$, is the set of $(a,b)=(a(x),b(x))$ such 
that $s\mapsto as+b$ is a tangent to the graph of $f$ at the point $(x,f(x))$ (since $a(x) x +b(x)= f(x)$ and $a(x)=f'(x)$).

Next,  $\dis f''(x)=\frac{2x(x^2-3)}{(1+x^2)^3}$.
Since $f''(0)=f''(\pm \sqrt 3)=0$, 
\begin{equation}\label{mwe}
\mbox{$\max f'=f(0)=1$ and $\min f'=f'(\pm\sqrt 3)=-\frac{1}{8}.$}
\end{equation}
Moreover $f(\pm \sqrt 3) =\pm\frac{\sqrt 3}{4}\sim \pm 0.433013\dots$ and $0$ and $\pm\sqrt 3$ are inflection points for $f$ 
and $\max f=f(1)=\frac{1}{2}$, respectively $\min f=f(-1)=-\frac{1}{2}$. If  in (\ref{tangentenkurve}) $t=\pm\sqrt 3$, then 
 $a(t)=-\frac{1}{8}$ and  $b(t)=\pm \frac{3\sqrt 3}{8} \sim \pm 0.64951\dots$.\\

   \begin{figure}[h!]
 
  \scalebox{0.35} 
  {\includegraphics{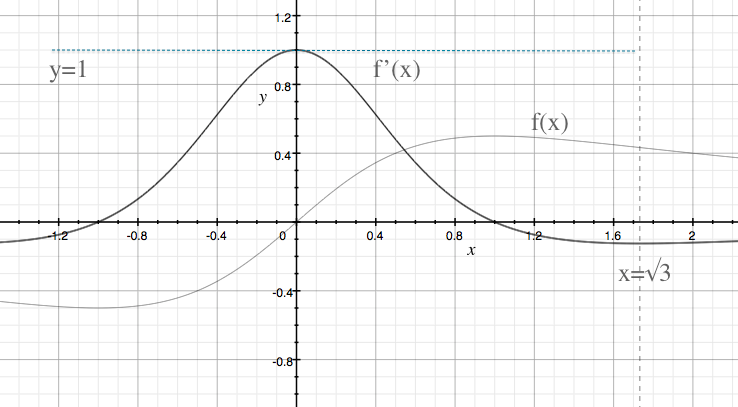}} 
  \caption{\label{graph} {$f$ and $f'$}}

\end{figure}

$\bullet$ Observe that if $s\mapsto as+b$ cuts the graph of $f$ in at least two different points, then $a\in [-1/8,1]$. In fact, by the mean value theorem, if $x_j$ are two intersection points, then
$$a=\frac{(ax_1+b)-(ax_2+b)}{x_1-x_2}=\frac{f(x_1)-f(x_2)}{x_1-x_2}=f'(\eta).$$
Now (\ref{mwe}) yields the assertion.
Consequently, if $a\notin [-1/8,1]$, then the line $s\mapsto as+b$ is either disjoint from the  graph of $f$ or cuts it in  a single point.

$\bullet$  The only lines $s\mapsto as+b$ which do not intersect the graph of $f$ are those that are parallel to the real axis (that is $a=0$) and for which $|b|>1/2$ (obviously clear by having  a glimpse at the figure \ref{graph} of the graph $G$ of $f$).   In fact, 
 any "oblique" line  $L$  (and any vertical line) has points in both domains determined by $G$ and so the connectedness of the line implies that $L\inter G\not=\emp$. \\
 Moreover, if $a=0$, then $b=0\cdot x+b=  \frac{x}{1+x^2}$ is equivalent to $bx^2-x+b=0$. So no solution exists if and only if the discriminant $1-4b^2$ is negative; that is if $|b|>1/2$.

$\bullet$ We will see below  that the only tangents meeting the graph of $f$ at a single point are the lines $y=\pm 1/2$ and $y=x$ and
$y= \frac{-1}{8} x \pm \frac{3\sqrt 3}{8}$ (those tangents associated with the extrema and the inflection points of $f$ ).
 All other tangents have another point of intersection: this is seen "geometrically" by looking at the graph  and by considering the three cases (and of course the associated opposites) : $0\leq x_0\leq 1$, $1< x_0\leq \sqrt 3$ and 
$x_0>\sqrt 3$, and by noticing that on the interior $I_j$ of these   three intervals  the tangents are on one side of the 
graph $\{(x,f(x): x\in I_j\}$, as we have no change of curvature ($f$ is either convex or concave on $I_j$.) 
See figure \ref{jordan1}. 

$\bullet$ The behavior  of the lines of the form $s\mapsto as+b$ with $a\in [-1/8,1]$ and $b>b(x)$ or $b<b(x)$ can be intuitively guessed by looking at the graph of $f$ (for a precise analytic proof, see next section).  
\end{proof}

  \begin{figure}[h!]
 
  \scalebox{0.35} 
  {\includegraphics{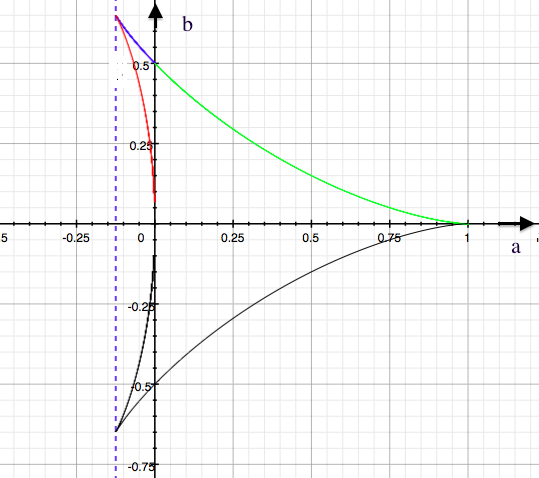}} 
  \caption{\label{jordan1} {The $a(x)$ and $b(x)$'s  for $x\in I_j$}}

\end{figure}

\subsection{An analytic proof}

The intersection condition is equivalent to  solving, for $a\not=0$,  the cubic polynomial equation
$$ax+b=\frac{x}{1+x^2}\iff  ax^3+bx^2+(a-1)x+b=0$$
and for $a=0$ the quadratic equation
$$b=\frac{x}{1+x^2}\iff bx^2-x+b=0.$$

 Put  
$$p(z):=p_{a,b}(z):=az^3+bz^2+(a-1)z+b.$$

Then several cases occur when discussing the equation $p_{a,b}(z)=0$, $a\not=0$:\\

i) one real solution and two complex ones (which are conjugated),

ii) three distinct real solutions,

iii) one double real solution and a second real solution, 

iv) a triple real solution.\\

A way to deal with this, is to use the discriminant.  For \ovalbox{$a\not=0$}, let
$$D:=a^4(z_1-z_2)^2(z_2-z_3)^2(z_1-z_3)^2$$
be the discriminant of this cubic equation. Here $z_1,z_2,z_3$ are the zeros. Then, 
$$D=-4a^4-8a^2b^2-4b^4+12a^3-20ab^2-12a^2+b^2+4a.$$
A lenghthier calculation (a posteriori verified by Maple and wolframalpha) gives
$$D=D(a,b)=-4\left[\left(b^2+a^2+\frac{5}{2}a-\frac{1}{8}\right)^2-8\left(a+\frac{1}{8}\right)^3\right].$$

It is well-known  that the cubic equation has a multiple zero if and only if the discriminant is zero. In other words, 
if and only if  $$\left(b^2+a^2+\frac{5}{2}a-\frac{1}{8}\right)^2=8\left(a+\frac{1}{8}\right)^3.$$
 Also,  $D>0$ if and only if the cubic equation (with real coefficients) has three distinct real zeros, and $D<0$ if and only if there 
is a unique real zero. In our situation here, 
 $D<0$ if and only if 
 $$-4\left[\left(b^2+a^2+\frac{5}{2}a-\frac{1}{8}\right)^2 -8\left(a+\frac{1}{8}\right)^3\right]<0,$$
 equivalently 
 $$\left(b^2+a^2+\frac{5}{2}a-\frac{1}{8}\right)^2>8\left(a+\frac{1}{8}\right)^3.$$

  Now we have  the following result:

\begin{lemma} \label{multi}
Let $(a,b)\in \R^2$. The following assertions  are equivalent:
\begin{enumerate}
\item[(1)]  $D(a,b)=0$ if $a\not=0$ or $(a,b)=(0,\pm 1/2)$ if $a=0$.
\item[(2)]  $p_{a,b}(z)=az^3+bz^2+(a-1)z+b$ has a multiple zero.

\item[(3)] The line $L:s\mapsto as+b$ is tangent to the graph $G$ of $f$ at the point $(x,f(x))$  for some \footnote{
Later we shall see that $x$ is uniquely determined; so a line $L$ can be tangent to  $G$ at at most one point.}
 $x\in \R$,  and
$$\mbox{$\dis a=a(x)= \frac{1-x^2}{(1+x^2)^2}$ and $\dis b=b(x)=\frac{2x^3}{(1+x^2)^2}$}.$$

\end{enumerate}

\end{lemma}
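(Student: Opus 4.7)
The plan is to route all three equivalences through the identity
$$p_{a,b}(x)=(1+x^{2})\bigl(ax+b-f(x)\bigr),$$
which is verified by expansion. Since $1+x^{2}$ never vanishes on $\R$, this says that the real zeros of $p_{a,b}$ are exactly the abscissas of the intersections $L\inter G$, and that the order of vanishing of $p_{a,b}$ at a real point $x_{0}$ agrees with that of $g(x):=ax+b-f(x)$.

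For (2)$\iff$(3): tangency of $L$ to $G$ at $(x_{0},f(x_{0}))$ means $g(x_{0})=g'(x_{0})=0$, i.e.\ that $x_{0}$ is a real zero of $p_{a,b}$ of multiplicity at least two. Conversely, any multiple zero of the cubic $p_{a,b}$ (with real coefficients) must be real, since a non-real multiple zero would come together with its complex conjugate, producing at least four zeros for a degree-three polynomial. The tangent-line conditions $a=f'(x_{0})$ and $b=f(x_{0})-x_{0}f'(x_{0})$ then give, after a one-line simplification, the parametrization
$$a(x_{0})=\frac{1-x_{0}^{2}}{(1+x_{0}^{2})^{2}},\qquad b(x_{0})=\frac{2x_{0}^{3}}{(1+x_{0}^{2})^{2}}.$$
In the degenerate case $a=0$ the polynomial $p_{0,b}(z)=bz^{2}-z+b$ is quadratic (or linear if $b=0$); a double real zero then forces $f'(x_{0})=0$, hence $x_{0}=\pm 1$ and $b=f(\pm 1)=\pm 1/2$, which matches $\Gamma(\pm 1)=(0,\pm 1/2)$.

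For (1)$\iff$(2): when $a\neq 0$, this is the classical fact that a cubic has a multiple zero iff its discriminant vanishes, combined with the explicit formula for $D(a,b)$ recorded before the lemma, obtained by substituting the coefficients of $p_{a,b}$ into the standard cubic discriminant; this is a computation best deferred to a computer-algebra system. When $a=0$ the polynomial $bz^{2}-z+b$ has (ordinary) discriminant $1-4b^{2}$, vanishing precisely at $b=\pm 1/2$, exactly the supplementary values attached to (1).

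The main obstacle is not conceptual but bookkeeping: keeping track of the special role of $a=0$ (where the cubic degenerates) and ensuring that the limiting points $(0,\pm 1/2)$ do lie on the closed Jordan curve $\Gamma$. A calculation-free alternative to the discriminant step would be to prove (1)$\iff$(3) directly: substitute $(a(t),b(t))$ into $D$ and verify the polynomial identity $D(a(t),b(t))\equiv 0$, then use the already-established (2)$\iff$(3) together with the count of connected components of $\{D>0\}$ and $\{D<0\}$ in the $(a,b)$-plane to conclude that $\Gamma$ (with the two exceptional points when $a=0$) exhausts the zero set of $D$.
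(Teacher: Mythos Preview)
Your proof is correct. The key difference from the paper is your use of the factorization $p_{a,b}(x)=(1+x^{2})\bigl(ax+b-f(x)\bigr)$, which makes (2)$\iff$(3) nearly immediate: since $1+x^{2}$ is a unit, multiplicities of real zeros of $p_{a,b}$ match those of $g(x)=ax+b-f(x)$, and you dispose of the possibility of non-real multiple zeros via the conjugate-pair degree count. The paper instead argues (2)$\imp$(3) and (3)$\imp$(1) by direct algebraic manipulation of the system $p(x)=p'(x)=0$, ending in the verification of a polynomial identity; your route sidesteps that computation entirely. Both treatments handle the $a=0$ degeneration identically, through the quadratic discriminant $1-4b^{2}$.

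Your closing ``calculation-free alternative'' is only a sketch: substituting $(a(t),b(t))$ into $D$ shows $\Gamma\subseteq\{D=0\}$, but the reverse inclusion does not follow from a connected-components argument alone without knowing that $\{D=0\}\cap\{a\neq 0\}$ is a one-dimensional set (otherwise a spurious component could sit inside). Since you present this as an aside and your main argument already establishes (1)$\iff$(2) via the discriminant, this does not affect correctness.
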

\begin{proof}
(1) $\iff $ (2): Discussed above for the case $a\not=0$. The case $a=0$ follows since the discriminant of the 
quadratic $bz^2-z+b$ is $1-4b^2$.

(2) $\imp$ (3): Suppose that $x\in \R$ is a multiple zero of $p$. Recall that $p'(z)= 3az^2+2bz+(a-1)$.
Then  $p(x)=p'(x)=0$ imply 
that $ax+b=\frac{x}{1+x^2}$ and 
 $$
 3a x^2+2x\left(\frac{x}{1+x^2}-ax\right)+(a-1)=0.
 $$
Thus
 $$a= \frac{1-x^2}{(1+x^2)^2}.$$
 (In case $a=0$, $x=\pm 1$). 
 Consequently, $s\mapsto as+b$  is a tangent to the graph of $f$ at $x$ (since  $ax+b=f(x)$ and $a=a(x)=f'(x)$). Moreover,
 $$b=\frac{x}{1+x^2}- \frac{1-x^2}{(1+x^2)^2}\,x= \frac{2x^3}{(1+x^2)^2}.$$
 (In case $a=0$, $b=\pm 1/2$). 
 
 (3) $\imp$ (1): Suppose that $s\mapsto as+b$ is a tangent at $(x,f(x))$ and that $a$ and $b$ have the form given in 
  the assumption (3).  If 
$a=a(x)\notin \{0,1\}$, then $x$ is (at least !)  a double zero of $p_{a,b}$, since $p_{a,b}(x)=0$ 
(equivalently $ax+b=f(x)$), and $p_{a,b}'(x)=0$ because
  $$3 \frac{1-x^2}{(1+x^2)^2}x^2 +2\frac{2x^3}{(1+x^2)^2}x +  \frac{1-x^2}{(1+x^2)^2}-1\equiv 0.$$
Moreover, if $a=a(x)=1$, then $x$  is a triple zero of $p_{a,b}$  and $b=b(x)=0$. Hence, as (2) $\imp$ (1), $D(a(x),b(x))=0$.
If $a=a(x)=0$, then $x=\pm 1$ and $b=\pm 1/2$ .  Thus (1) holds. 
\end{proof}

 Conclusion:
 The set $(a,b)\in\R^2$ of points where $p_{a,b}$ has a multiple zero  is in a one to one  correspondance with those lines $s\mapsto as+b$ which are tangent to the graph of $f$. It coincides with 
 $$\{(a,b)\in \R^2\setminus \big(\{0\}\times\R\big): D(a,b)=0\}\union \Big\{(0,-\frac{1}{2}),(0,\frac{1}{2})\Big\},$$
 and is the {\it Jordan arc} parametrized by 
 $$\Gamma(t)={a(t)\choose b(t)}={\frac{1-t^2}{(1+t^2)^2}\choose \frac{2t^3}{(1+t^2)^2}},\quad t\in \R.$$
 
 To see that $\Gamma$ is injective,  suppose that there exists $(a,b)\in \R^2$ such that  $a=a(t)=a(t')$ and $b=b(t)=b(t')$ 
 for $t\not=t'$. By Lemma \ref{multi} (and its proof), the line $s\mapsto as+b$ is tangent to the graph of $f$ at the points
 $(t,f(t))$ and $(t' ,f(t'))$,  and so $t$ and $t'$ are (at least ) double zeros of $p_{a,b}$. This would imply that the degree of $p_{a,b}$ is bigger than  $4$. A contradiction.\\

 The two components determined by the closure $J:=\Gamma(\R)\union \{(0,0)\}$ of this Jordan arc (which is a closed Jordan curve)  coincide with  \footnote{ Take  e.g. two points in the exterior complemented component of $J$, denoted by  $\Omega$.  Join those with an arc inside $\Omega$. Then 
 $\tilde D$ must have the same sign at both points; otherwise this arc would meet the set where  $\tilde D$ is zero. As this set coincides
 with the boundary of $\Omega$, that is the  Jordan curve $J$, we get  a contradiction.}
 $$\mbox{$\tilde D(a,b)^{-1}(\;]0,\infty[\,)$ and $\tilde D(a,b)^{-1}(\,]-\infty,0[\,)$},$$
 respectively, where \footnote{ In order to have continuity of $\tilde D$, we need to add the factor $b^2$.}
 $$\tilde D(a,b)=\begin{cases} D(a,b)&\text{if $a\not=0$}\\
 b^2(1-4b^2)&\text{if $a=0$}.
 \end{cases}
 $$
 
 The following observations now will show that the exterior of this Jordan domain is the set where $\tilde D(a,b)<0$. Always have in mind figure \ref{jordan1}. But attention: this is not yet the final set the problem is asking for.

Consider a tangent at $x$ with  $0<a(x)\leq 1$.
  Since $a(x)=f'(x)$ we deduce that $|x|<1<\sqrt 3$. This implies that
$$b(x)^2+a(x)^2+\frac{5}{2}a(x)-\frac{1}{8}= \frac{1}{8}\,\frac{(3-x^2)^3}{(x^2+1)^3}>0.$$
Hence, if $b>b(x)$, we get
$$ 8\left(a(x)+\frac{1}{8}\right)^3=\left(b(x)^2+a(x)^2+\frac{5}{2}a(x)-\frac{1}{8}\right)^2<
\left(b^2+a(x)^2+\frac{5}{2}a(x)-\frac{1}{8}\right)^2,$$
and so $D(a(x),b)<0$. This implies that there is a unique real zero of $p$ and so the line $s\mapsto a(x)s+b$ cuts the graph of $f$ at a single point.

Next, if $b=0$ and if $a\to -\infty$, then $D(a,b)\to -\infty$. So again $\tilde D<0$ in that part of the exterior of $J$ that is contained in the left-hand plane.

Finally, if $a=0$, the discriminant $1-4b^2$ of $bz^2-z+b$ is negative if and only if $p_{0,b}$ has no real zeros; so no intersection points of $ax+b$ exist whenever $a=0$ and $|b| >1/2$, but two if $0<|b|<1/2$ and one if $b=0$.  Consequently, the exterior of the Jordan curve is the set where $\tilde D<0$. \\

To  achieve the solution to the problem,  a last case has to be investigated: for which $(a,b)$ the polynomial $p_{a,b}$ has  triple zero (as this yields tangents which cut the graph $G$ of $f$ at a single point). 

So let $p_{a,b}(x)=p_{a,b}'(x)=p_{a,b}''(x)=0$.  By Lemma \ref{multi}, $s\mapsto as+b$ is tangent to the graph  $G$ of $f$. Hence 
$a= a(x)= \frac{1-x^2}{(1+x^2)^2}$ and $b=b(x)=\frac{2x}{(1+x^2)^2}$.  

Now $p''_{a,b}(z)=  6a z+2b$. Hence 
$$x= -\frac{b}{3a}=\frac{2x}{(1+x^2)^2} \bigg/  3\frac{1-x^2}{(1+x^2)^2}=- \frac{2}{3}\; \frac{x^3}{1-x^2}.
$$
Consequently, either $x=0$ or $x=\pm \sqrt 3$. This yields the values $(a,b)=(1,0)$ and $(a,b)=(-\frac{1}{8},  \pm\frac{3\sqrt 3}{ 8})$. \\

$\bullet$  We are now able to answer the question, which lines $s\mapsto as+b$ intersect the graph $G$
 of $f(x)=x/(1+x^2)$ in a single point:

i) All points $(a,b)\in \R^2$ for which  $a\not=0$ and $D(a,b)<0$.

ii) The 6 points $(a,b)\in  \{(0,0),\;(1,0),\;(0,\pm \frac{1}{2}),\; (-\frac{1}{8}, \pm\frac{3\sqrt 3}{8})\}$,  
which induce via the map $s\mapsto as+b$ tangents to $G$ whenever $(a,b)\not=(0,0)$.

\newpage

 \begin{figure}[h!]
 
  \scalebox{0.5} 
  {\includegraphics{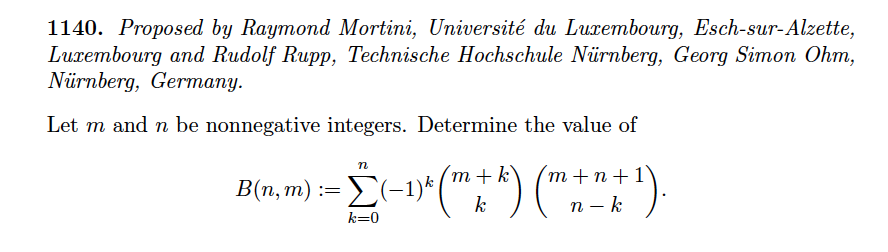}} 
\end{figure}

\centerline{\bf
Quicky  1140 Math. Mag. 97 (2024) by}
 \centerline{Raymond Mortini and Rudolf Rupp}
      \medskip

\centerline{- - - - - - - - - - - - - - - - - - - - - - - - - - - - - - - - - - - - - - - - - - - - - - - - - - - - - -}
  
  \medskip

\pagecolor{yellow}

     {\bf Submitted statement:}\\
     
(a) Let $m,n\in \N=\{0,1,2,\dots\}$. Determine the value of 
$$\fbox{\parbox{8cm}  {$\dis B(n,m):= \sum_{k=0}^n (-1)^k {m+k\choose k}\;{m+n+1\choose n-k}.$}}
$$

(b) Let $z,w\in \C\setminus \{-1,-2,-3\dots\}$. Suppose that  $z-w\notin \{-1-2,-3\dots\}$. Using that  for these parameters   
$ {z\choose w} :=\frac{\Gamma(z+1)}{\Gamma(w+1)\Gamma(z-w+1)}$ is well defined,  show that  for $a,b\in \C$ with 
${\rm Re}\; a>0$, ${\rm Re}\; b>1$, and  $b\notin \Z$,
the series 
$$\fbox{\parbox{8cm}  {$\dis S(a,b):= \sum_{k=0}^\infty (-1)^k {a+k-1\choose k}\;{a+b-1\choose b-k-1}.$}}
$$
converges absolutely and that $S(a,b)=1$.
\bigskip\bigskip

{\bf Solution}  (a) Note that
\begin{eqnarray*}
{m+k\choose k}\;{m+n+1\choose n-k}&=&\frac{(m+k)!}{m!k!}\; \frac{(m+n+1)!}{(m+k+1)! (n-k)!}\\
&=& \frac{(m+n+1)!}{m!} \frac{1}{k! (m+k+1)(n-k)!}= \frac{(m+n+1)!}{m!n!}{n\choose k}\frac{1}{m+k+1} .
\end{eqnarray*}
Hence
$$\sum_{k=0}^n(-1)^k\;{m+k\choose k}\;{m+n+1\choose n-k}=
 \frac{(m+n+1)!}{m!n!}\; \sum_{k=0}^n(-1)^k\; {n\choose k}\frac{1}{m+k+1}.$$
Put
$$f(x):= \sum_{k=0}^n (-1)^k\;{n\choose k}\frac{1}{m+k+1}\; x^{m+k+1}.$$
Then
$$f'(x)= \sum_{k=0}^n  (-1)^k\;{n\choose k} x^{m+k}=x^m (1-x)^n.$$

Consequently, as $\int_0^1 f'(x)dx=f(1)-f(0)$ and $f(0)=0$,

$$B(n,m)= \frac{(m+n+1)!}{m!n!} \int_0^1 x^m(1-x)^n\; dx.$$

The value of the integral is given by Euler's $\beta$ function 

$$\beta(m+1,n+1)=\frac{ \Gamma(m+1)\;\Gamma(n+1)}{\Gamma(m+n+2)}= \frac{m!\,n!}{(m+n+1)!}.$$

Hence
$$B(n,m)=  \frac{(m+n+1)!}{m!n!}\;  \frac{m!\,n!}{(m+n+1)!}=1.$$

\bigskip

(b) First we note that under the conditions on $a$ and $b$, the complex binomial coefficients are well defined.  Since 
$\Gamma(z+1)=z\Gamma(z)$  for $z\in \C\setminus (-\N)$, and since the $\Gamma$-function has no zeros, we have

\begin{eqnarray*}
 {a+k-1\choose k}\;{a+b-1\choose b-k-1}&=& \frac{\co{\Gamma(a+k)}}{\Gamma(a)\,\Gamma(k+1)}\; 
 \frac{\Gamma(a+b)}{\co{\Gamma(a+k+1)}\Gamma(b-k)}\\
 &=& \frac{\Gamma(a+b)}{\Gamma(a)\co{\Gamma(b)}}\; \frac{\co{\Gamma(b)}}{(a+k) \Gamma(k+1)\Gamma(b-k)}\\
 &=& \frac{\Gamma(a+b)}{\Gamma(a)\Gamma(b)}\; {b-1\choose k}\; \frac{1}{a+k}.
\end{eqnarray*}

Hence

$$S(a,b)=\sum_{k=0}^\infty (-1)^k{a+k-1\choose k}\;{a+b-1\choose b-k-1}= \frac{\Gamma(a+b)}{\Gamma(a)\Gamma(b)}\;
\sum_{k=0}^\infty (-1)^k{b-1\choose k}\; \frac{1}{a+k}.$$

It is known that the  binomial series $ \dis \sum_{k=0}^\infty {b-1\choose k}$ converges absolutely for 
${\rm Re}\; b>1$ (see \cite[p. 140]{kno}).
Hence $S(a,b)$ converges. Now consider for  $c\in \C$ and  $0< x\leq 1$ the functions $x^c:=\exp( c \log x)$, and
$$f(x):=\begin{cases} \dis x^a\;\sum_{k=0}^\infty (-1)^k{b-1\choose k}\; \frac{1}{a+k} x^{k} &\text{if $0<x\leq 1$}\\
0&\text{if $x=0$},\end{cases}
$$
 which is continuous  \footnote{ Note that ${\rm Re}\;a>0$ and so $0^a:=0$ is the correct value if one wants continuity:
 $|x^a|\leq \exp({{\rm Re}\, a }\;\log x)\to \exp(-\infty)=0$ as $x\to 0^+$. Also, as usual in the realm of power series, $0^0:=1$.} on $[0,1]$.
 Using  for $0<x<1$ the Newton-Abel formula for the binomial series with complex powers (see \cite[p. 158]{kno}), we obtain
$$f'(x)=\sum_{k=0}^\infty (-1)^k{b-1\choose k}\;x^{a+k-1}= x^{a-1} \sum_{k=0}^\infty {b-1\choose k} (-x)^k=x^{a-1}(1-x)^{b-1}.$$

Consequently, as $\int_0^1 f'(x)dx=f(1)-f(0)$ and $f(0)=0$,

$$S(a,b)= \frac{\Gamma(a+b)}{\Gamma(a)\Gamma(b)}\; \int_0^1 x^{a-1}(1-x)^{b-1}\;dx.$$

This integral is the $\beta$-function. Note that ${\rm Re}\,a>0$ and ${\rm Re}\,b>0$. Hence this integral is well defined and 
$\beta(a,b)=\frac{\Gamma(a)\Gamma(b)}{\Gamma(a+b)}$ (see \cite[p. 67\,ff]{rr}. Consequently, $S(a,b)=1$.

For this proposal, we were motivated by Problem 4862 Crux Math. 49 (7) 2023, 375. We hope that this sum has not been considered earlier.

\newpage

  \begin{figure}[h!]
 
  \scalebox{0.6} 
  {\includegraphics{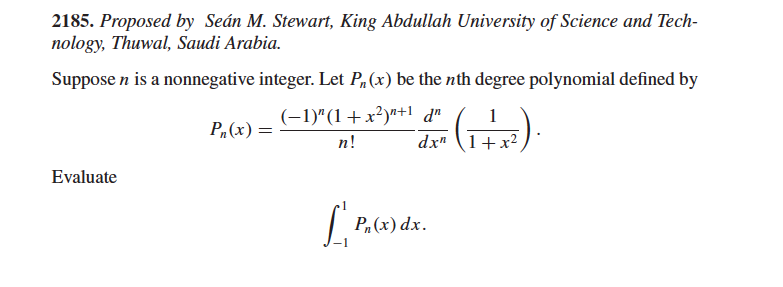}} 
\end{figure}

\centerline{\bf
Solution to problem 2185 Math. Mag. 96 (5) 2023, p. 567}  \medskip

     \centerline{Raymond Mortini  and Rudolf Rupp}

     \medskip

\centerline{- - - - - - - - - - - - - - - - - - - - - - - - - - - - - - - - - - - - - - - - - - - - - - - - - - - - - -}
  
  \medskip

\nopagecolor

We show that the value of the integral $I_n:=\int_{-1}^1P_n(x)dx$ is
$$\ovalbox{$\dis \frac{(-1)^n\, i^n}{n+2}\; \big(1+(-1)^n\big)\; 2^{\frac{n+2}{2}}\cos\left(n\frac{\pi}{4}\right).
$}$$
 Another representation is
$$I_n=\e_n\, \frac{2^\frac{n+4}{2}}{n+2},$$
where
$$\e_n=\begin{cases} 
1&\text{if $n\equiv 0 \mod 8$}\\
0&\text{if $n\equiv 1 \mod 8$}\\
0&\text{if $n\equiv 2 \mod 8$}\\
0&\text{if $n\equiv 3 \mod 8$}\\
-1&\text{if $n\equiv 4 \mod 8$}\\
0&\text{if $n\equiv 5 \mod 8$}\\
0&\text{if $n\equiv 6 \mod 8$}\\
0&\text{if $n\equiv 7 \mod 8$.}
\end{cases}
$$
A very strange result!  In fact,
$$\mbox{$\dis \frac{d^n}{dx^n}\left(\frac{1}{1+ix}\right)= \frac{i^n(-1)^n n!}{(1+ix)^{n+1}}$  \quad and \quad 
 $\dis \frac{d^n}{dx^n}\left(\frac{1}{1-ix}\right)= \frac{i^n n!}{(1-ix)^{n+1}}$}.
 $$
 
 Hence
 \begin{eqnarray*}
\frac{d^n}{dx^n}\left(\frac{1}{1+x^2}\right)&=& \frac{1}{2}\frac{d^n}{dx^n}\left(\frac{1}{1+ix}+\frac{1}{1-ix}\right)\\
&=& \frac{i^n}{2}n! \; \frac{(1+ix)^{n+1}+(-1)^n(1-ix)^{n+1}}{(1+x^2)^{n+1}}.
\end{eqnarray*}
 From this we get that $P_n$ is a polynomial of degree $n$ with $n+1$ as leading coefficient. We are now ready to calculate the integral:

 \begin{eqnarray*}
I_n= \int_{-1}^1 P_n(x)dx&=&\frac{(-1)^n}{2}i^n \int_{-1}^1 \left((1+ix)^{n+1}+(-1)^n(1-ix)^{n+1}\right)\; dx\\
 &=&\frac{(-1)^n}{2}i^n \left( \frac{(1+i)^{n+2}-(1-i)^{n+2}}{i(n+2)} +(-1)^n \frac{(1-i)^{n+2}-(1+i)^{n+2}}{(-i)(n+2)}\right)\\
 &=& \frac{(-1)^n i^{n-1}}{2(n+2)}\;\left(1+(-1)^n\right)  \;\left( (1+i)^{n+2}-(1-i)^{n+2}\right).
\end{eqnarray*}
 
 Since 
\begin{eqnarray*}
(1+i)^{n+2}-(1-i)^{n+2}&=& \sqrt 2^{\;n+1} \left( \left(\frac{1+i}{\sqrt 2}\right)^{n+2} -\left(\frac{1-i}{\sqrt 2}\right)^{n+2}\right)\\
&=&2^{\frac{n+1}{2}} \left( e^{(n+2)i\pi/4}-e^{-(n+2)i\pi/4}\right)\\
&=& 2 i \;2^{\frac{n+1}{2}} \sin\left( (n+2)\frac{\pi}{4}\right),
\end{eqnarray*}
 we conclude that
\begin{eqnarray*}
I_n&=&\frac{(-1)^n\, i^n}{n+2}\; \big(1+(-1)^n\big)\; 2^{\frac{n+2}{2}}\sin\left( (n+2)\frac{\pi}{4}\right)\\
&=&\frac{(-1)^n\, i^n}{n+2}\; \big(1+(-1)^n\big)\; 2^{\frac{n+2}{2}}\cos\left(n\frac{\pi}{4}\right).
 \end{eqnarray*}

\newpage

  \begin{figure}[h!]
 
  \scalebox{0.5} 
  {\includegraphics{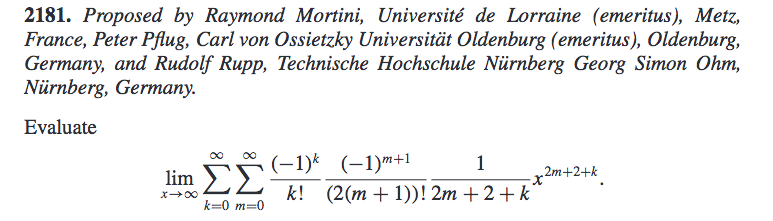}} 
\end{figure}

\centerline{\bf
Solution to problem 2181 Math. Mag. 96 (5) 2023, p. 566}  \medskip

     \centerline{Raymond Mortini, Peter Pflug  and Rudolf Rupp}
     
          \medskip

\centerline{- - - - - - - - - - - - - - - - - - - - - - - - - - - - - - - - - - - - - - - - - - - - - - - - - - - - - -}
  
  \medskip

\pagecolor{yellow}

a) The double series 
 $$S(x):=\sum_{k=0}^\infty \sum_{m=0}^\infty   \frac{1}{k!} \frac{1}{[2(m+1)]!}
 \frac{1}{2m+2+k} x^{2m+2+k}$$
 converges  since for every $j\in \N$ the partial sums can be estimated as follows:
 \begin{eqnarray*}
\sum_{n=0}^j \sum_{m=0}^j \frac{1}{k!} \frac{1}{[2(m+1)]!}\frac{1}{2m+2+k} x^{2m+2+k}&\leq&
 \sum_{n=0}^j  \sum_{m=0}^j  \frac{1}{k!} \frac{1}{[2(m+1)]!} x^{k} x^{2m+2}\\
 &=&
 \left(\sum_{n=0}^j  \frac{1}{k!}\; x^{k}\right)\; \left( \sum_{m=0}^j \frac{1}{[2(m+1)]!}\; x^{2m+2}\right)
\end{eqnarray*} 
Hence the series $P$  converges absolutely (and so does any re-arrangement) locally uniformly to some finite value $P(x)$. 

b) By the same reason the formal derivated series 
$$H(x):= \sum_{k=0}^\infty \sum_{m=0}^\infty   \frac{(-1)^k}{k!} \frac{(-1)^{m+1}}{[2(m+1)]!} x^{2m+1+k}
 $$
 converges  absolutely and locally uniformly for $x\geq 0$. Hence $P'=H$. Thus
 $$H(x)=\left(\sum_{k=0}^\infty  \frac{(-1)^k}{k!}\; x^{k}\right)\; \left( \sum_{m=0}^\infty\frac{(-1)^{m+1}}{[2(m+1)]!}\; x^{2m+1}\right)
 =e^{-x} \frac{\cos x -1}{x}.$$
 
 Consequently  $P$ is a primitive of $e^{-x} \frac{\cos x -1}{x}$ which vanishes at $0$.   Hence 
 $$J:=\lim_{x\to\infty} P(x)=\int_0^\infty e^{-x} \frac{\cos x -1}{x}\;dx.$$
 Next we show that $J=- \frac{1}{2}\log 2$ by
  interpreting this integral as the Laplace transform
$L(q)(s)$ of the function $q(x)=(\cos x-1)/x$ evaluated at $s=1$. By a well-known formula, if  $L(F(t))(s)=f(s)$, then
$$L(q)(s)=L( \frac{F(t)}{t})(s)=\int_s^\infty f(u)du,$$
where 
$$f(s)=\int_0^\infty e^{-st} (\cos t -1)\;dt=\frac{1}{s^3+s}.$$
Hence $L(q)(s)=-\frac{1}{2} \log(1+s^{-2})$ and so
$J=L(q)(1)=- \frac{1}{2}\log 2$.

\bigskip

{\bf Remark}
A formal  (but probably unjustifiable) way to calculate the value of $J$ would be the following:

$$J:=\int_0^\infty e^{-x} \frac{\cos x-1}{x}dx=\int_0^\infty e^{-x} \sum_{n=1}^\infty \frac{(-1)^n}{(2n)!} x^{2n-1}dx
$$
$$\buildrel=_{}^{!}\sum_{n=1}^\infty \frac{(-1)^n}{(2n)!}\int_0^\infty x^{2n-1}e^{-x}dx$$
Since for  $m\in \N$ and $k\in \N\setminus\{0\}$,
$$\int_0^\infty x^me^{-kx}dx=m! / k^{m+1},$$
we {\it would} obtain
$$J= \sum_{n=1}^\infty \frac{(-1)^n}{(2n)!} (2n-1)!=\sum_{n=1}^\infty \frac{(-1)^n}{2n}=- \frac{1}{2}\log 2.
$$

Note also that the softwares Wolframalpha/mathematica give the exact value of the integral, too.

The problem itself come to our mind when solving Problem number  12338 in Amer. Math. Soc..

\newpage

\nopagecolor

   \begin{figure}[h!]
 
  \scalebox{0.6} 
  {\includegraphics{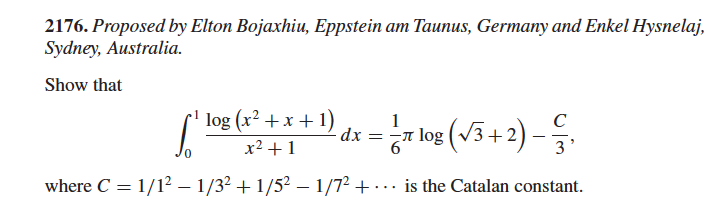}} 
\end{figure}

\centerline{\bf
Solution to problem 2176 Math. Mag. 96 (3) 2023, p. 468}  \medskip

     \centerline{Raymond Mortini and Rudolf Rupp}
     
          \medskip

\centerline{- - - - - - - - - - - - - - - - - - - - - - - - - - - - - - - - - - - - - - - - - - - - - - - - - - - - - -}
  
  \medskip

Let $I:=\dis \int_0^1\frac{\log(1+x+x^2)}{1+x^2}\;dx$.  We make the substitution $x=\tan u$, $dx/du=1+\tan^2 u=\frac{1}{\cos^2 u}$,
$x=0\to u=0$ and $x=1\to u=\pi/4$. Then
\begin{eqnarray*}
I&=& \int_0^{\pi/4} \frac{\log(\tan^2 u +\tan u +1)}{1+\tan^2 u}\;(1+\tan^2 u) du=\int_0^{\pi/4}\log\left(\frac{1+\cos u\sin u}{\cos^2 u}\right) du\\
&=&\int_0^{\pi/4} \log\left(1+\frac{1}{2} \sin(2u)\right) du -2\int_0^{\pi/4}\log(\cos u)du\\
&\buildrel=_{{\rm Lem.\; \ref{cata1}}}^{}&\frac{1}{2}\;\int_0^{\pi/2}  \log\left(1+\frac{1}{2} \sin(v)\right) dv -2\left( \frac{C}{2}-\frac{\pi}{4}\log 2\right)\\
&=&\frac{1}{2}\;\int_0^{\pi/2}\sum_{k=1}^\infty \frac{(-1)^{k+1}}{k}\frac{1}{2^k}(\sin x)^k dx \; +\frac{\pi}{2}\log 2 -C\\
&\buildrel=_{{\rm unif. abs. conv.\atop \int\sum=\sum\int}}^{{\rm Lem. \ref{sinup}}}&
\frac{1}{2}\;\sum_{n=0}^\infty \frac{1}{2n+1}\frac{1}{2^{2n+1}}\frac{4^n}{(2n+1){2n\choose n}} \;- 
\frac{1}{2}\;\sum_{n=1}^\infty \frac{1}{2n}\frac{1}{2^{2n}} \frac{{2n \choose n}}{4^n}\;\frac{\pi}{2} +\frac{\pi}{2}\log 2 -C\\
&=&\frac{1}{4}\;\sum_{n=0}^\infty \frac{1}{(2n+1)^2 {2n\choose n}}- \frac{\pi}{8}\sum_{n=1}^\infty \frac{1}{n}\;{2n\choose n} 16^{-n}
+\frac{\pi}{2}\log 2 -C\\
&\buildrel=_{{\rm Lemm.\;\ref{cata2}}}^{{\rm Lemm.\;\ref{cent}}}&
\frac{1}{4}\left( C-  \frac{1}{8} \pi \log(2+\sqrt 3) \right) \frac{8}{3}   -\frac{\pi}{8}\; 2 \log\left( \frac{1-\sqrt{1-4 (1/16)}}{2 (1/16)}\right)
+\frac{\pi}{2}\log 2 -C\\
&=&-\frac{1}{3} C +\frac{\pi}{6} \log(2+\sqrt 3).
\end{eqnarray*}

\subsection{Appendix}

Here we present for completeness the proofs of all those known results used above to derive the value of the integral.

\begin{lemma}\label{cata1}\cite[formula (8)]{bra}
$$C=2\int_0^{\pi/4}\log(2 \cos x) dx.$$
\end{lemma}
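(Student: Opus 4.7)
\medskip
\textbf{Proof plan.} The plan is to reduce the integral to the defining Dirichlet series for Catalan's constant by means of the classical Fourier expansion
$$\log(2\cos x) \;=\; \sum_{n=1}^{\infty} \frac{(-1)^{n+1}\cos(2nx)}{n}, \qquad |x|<\tfrac{\pi}{2}.$$
This identity can be derived in a standard way either by taking real parts of the principal branch of $-\log(1+e^{2ix})=-\sum_{n\ge 1}\frac{(-1)^{n+1}}{n}e^{2inx}$ (using $1+e^{2ix}=2\cos(x)\,e^{ix}$), or by integrating the geometric sum for $\tan x$. Since we only need the identity on the compact interval $[0,\pi/4]$, on which $\log(2\cos x)$ is continuous, there are no convergence issues at the endpoint $\pi/2$.

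Next I would integrate term by term on $[0,\pi/4]$. To justify the interchange of sum and integral, I would introduce the Abel sum $\log(2\cos x)=\lim_{r\to 1^-}\sum_{n\ge 1}\frac{(-1)^{n+1}r^n\cos(2nx)}{n}$, where for $r<1$ the convergence is absolute and uniform on $[0,\pi/4]$, and then pass to the limit $r\to 1^-$ using dominated convergence (the partial sums of the cosine series being uniformly bounded on $[\delta,\pi/2-\delta]$ and $\log(2\cos x)$ being integrable). The key computation is then the elementary
$$\int_0^{\pi/4}\cos(2nx)\,dx=\frac{\sin(n\pi/2)}{2n}.$$

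The crucial observation is that $\sin(n\pi/2)$ vanishes for even $n$ and equals $(-1)^k$ when $n=2k+1$; moreover $(-1)^{n+1}=1$ for such odd $n$. Hence only the odd-indexed terms survive, and
$$2\int_0^{\pi/4}\log(2\cos x)\,dx \;=\; 2\sum_{k=0}^{\infty}\frac{1}{2k+1}\cdot\frac{(-1)^k}{2(2k+1)}\;=\;\sum_{k=0}^{\infty}\frac{(-1)^k}{(2k+1)^2}\;=\;C.$$

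The main (only) obstacle is the justification of termwise integration, since the Fourier series is only conditionally convergent. The Abel-summation/dominated-convergence argument sketched above handles this cleanly; an alternative would be to invoke Dirichlet's test to obtain uniform convergence on $[0,\pi/4]$ directly (the partial sums $\sum_{n=1}^{N}(-1)^{n+1}\cos(2nx)$ being bounded independently of $N$ uniformly on any compact subset of $(-\pi/2,\pi/2)$, together with $1/n\downarrow 0$).
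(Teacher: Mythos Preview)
Your proof is correct, but it takes a genuinely different route from the paper's. The paper does not use the Fourier expansion of $\log(2\cos x)$ at all. Instead, it starts from the integral representation $C=-\int_0^1\frac{\log x}{1+x^2}\,dx$ (obtained by expanding $(1+x^2)^{-1}$ as a geometric series), substitutes $x=\tan u$ to get $C=\int_0^{\pi/4}\log(\cos u)\,du-\int_0^{\pi/4}\log(\sin u)\,du=:L_c-L_s$, and then combines this with the classical identity $\int_0^{\pi/2}\log(2\sin x)\,dx=0$, which after the change $x\mapsto 2u$ yields $L_c+L_s+\tfrac{\pi}{4}\log 4=0$. Adding the two relations gives $C-\tfrac{\pi}{4}\log 4=2L_c$, i.e.\ $C=2\int_0^{\pi/4}\log(2\cos x)\,dx$.

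Your approach is arguably more direct: it lands on the defining Dirichlet series for $C$ in one step, and the justification of termwise integration via Dirichlet's test on the compact interval $[0,\pi/4]$ (where the partial sums of $\sum(-1)^{n+1}\cos(2nx)$ are uniformly bounded by $1/\cos x\le\sqrt 2$) is clean. The paper's approach, on the other hand, ties the result to two other classical integrals ($-\int_0^1\tfrac{\log x}{1+x^2}dx$ and the Euler log-sine integral), which is useful in the surrounding context where several such identities are being assembled.
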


\begin{proof}
Since on $]0,1]$ the integrable function $|\log x|$ dominates the modulus of the  partial sums  $\sum_{n=0}^N(-1)^n x^{2n} \log x $, we have
 
$$\int_0^1 \frac{\log x}{1+x^2} dx=\sum_{n=0}^\infty \int_0^1 (-1)^n x^{2n} \log x \;dx =\sum_{n=0}^\infty (-1)^n \frac{-1}{(n+1)^2}=-C.$$

Hence, with $x=\tan u$ and $dx = (1+\tan^2 u) du$,
 
\begin{equation}\label{lo1}
C=-\int_0^{\pi/4} \log(\tan u)\, du=\int_0^{\pi/4} \log(\cos u) \,du-\int_0^{\pi/4}\log(\sin u)\, du=:L_c-L_s.
\end{equation}
Now, using the standard result that  $\int_0^{\pi/2}\log\sin x\,dx =-\frac{\pi}{2} \log 2$, we obtain
\begin{equation}\label{lo2}
L_c+L_s+\frac{\pi}{4}\,\log 4=\int_0^{\pi/4} \log(2\sin(2u))\;du=\int_0^{\pi/2} \log(2\sin x)\;dx=0.
\end{equation}
Adding (\ref{lo1}) and (\ref{lo2}), yields
$$C -\frac{\pi}{4}\log 4= 2 L_c.$$
In other words,
$$2\int_0^{\pi/4} \log(2\cos x) dx= C.
$$

\end{proof}

\begin{lemma}\label{sinu}
Let $I_n:=\int_0^{\pi/2} (\sin x)^{n} dx$. Then  $I_0=\pi/2$, $I_1=1$ and for $n\in \N^*$, 
\begin{enumerate}
\item[(1)] $ I_{2n}=\frac{1}{2}\cdot\frac{3}{4}\cdot\frac{5}{6}\cdots\frac{2n-1}{2n}\frac{\pi}{2}=\frac{(2n)!}{4^n (n!)^2}\cdot\frac{\pi}{2}=
\frac{{2n\choose n} }{4^n}\cdot \frac{\pi}{2}$.
\item[(2)] $ I_{2n+1}=\frac{2}{3}\cdot\frac{4}{5}\cdot\frac{6}{7}\cdots\frac{2n}{2n+1}= \frac{4^n (n!)^2}{(2n+1)!}= \frac{4^n}{(2n+1){2n \choose n}}$.
\end{enumerate}
\end{lemma}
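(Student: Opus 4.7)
The plan is to establish the single recurrence $n I_n = (n-1) I_{n-2}$ for $n \geq 2$, and then iterate it separately along even and odd indices starting from the initial values $I_0 = \pi/2$ and $I_1 = 1$; the closed forms in (1) and (2) will then follow by routine manipulation of factorials. Note that $I_0$ and $I_1$ are immediate (they are $\int_0^{\pi/2}dx$ and $[-\cos x]_0^{\pi/2}$).

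First I would repeat verbatim the trick used in the preceding Lemma \ref{sinup}: for $n\geq 2$,
\[
n I_n - (n-1) I_{n-2} = \int_0^{\pi/2} (\sin x)^{n-2}\bigl(n\sin^2 x - (n-1)\bigr)\,dx.
\]
Writing $n \sin^2 x - (n-1) = -\bigl((n-1)\cos^2 x - \sin^2 x\bigr)$ and recognizing the integrand as the derivative of $-(\sin x)^{n-1}\cos x$, this equals $-\bigl[(\sin x)^{n-1}\cos x\bigr]_0^{\pi/2}=0$. Hence $I_n = \tfrac{n-1}{n} I_{n-2}$. This is essentially identical to the computation already displayed in Lemma \ref{sinup}, so I would simply cite it.

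Next I would iterate. For the even case, starting from $I_0 = \pi/2$,
\[
I_{2n} = \frac{2n-1}{2n}\cdot \frac{2n-3}{2n-2}\cdots \frac{3}{4}\cdot \frac{1}{2}\cdot \frac{\pi}{2},
\]
which is the first form in (1). To obtain the binomial form, multiply numerator and denominator by $2\cdot 4\cdots(2n) = 2^n\, n!$ to get $(2n)!/(2^n n!)^2 \cdot \pi/2 = \binom{2n}{n}/4^n \cdot \pi/2$. For the odd case, starting from $I_1 = 1$,
\[
I_{2n+1} = \frac{2n}{2n+1}\cdot\frac{2n-2}{2n-1}\cdots\frac{4}{5}\cdot\frac{2}{3},
\]
yielding (2)'s first form. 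Multiplying numerator and denominator by $2\cdot 4\cdots (2n) = 2^n n!$ gives $(2^n n!)^2 / (2n+1)! = 4^n/\bigl((2n+1)\binom{2n}{n}\bigr)$.

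There is no genuine obstacle here; this is a textbook Wallis-type calculation and the only care required is bookkeeping of the factorials when passing between the product form and the central binomial coefficient form.
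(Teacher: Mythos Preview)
Your proposal is correct and follows essentially the same approach as the paper: both establish the Wallis recurrence by writing $nI_n-(n-1)I_{n-2}$ as the integral of a total derivative and then iterate from $I_0=\pi/2$ and $I_1=1$. The only cosmetic difference is that you write a single recurrence $I_n=\frac{n-1}{n}I_{n-2}$ valid for all $n\ge 2$, whereas the paper treats the even and odd cases in two separate displays.
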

\begin{proof}
(1) $I_{2n}= \frac{2n-1}{2n} I_{2n-2}$ for $n\in \N^*$ and $I_0=\frac{\pi}{2}$, because

\begin{eqnarray*}
2n I_{2n}- (2n-1)I_{2n-2}&=&\int_0^{\pi/2}(\sin x)^{2n-2}\left(2n \sin^2 x-(2n-1)\right)dx\\
&=&-\int_0^{\pi/2}(\sin x)^{2n-2}\left( (2n-1) \cos ^2x-\sin^2 x\right)dx\\
&=&-\left[ (\sin x)^{2n-1}\cos x   \right]^{\pi/2}_0=0.
\end{eqnarray*}

(2) $I_{2n+1}= \frac{2n}{2n+1} I_{2n-1}$ for $n\in \N^*$ and $I_1=1$, because

\begin{eqnarray*}
(2n+1)I_{2n+1}-2nI_{2n-1}&=& \int_0^{\pi/2} (\sin x)^{2n-1}\left((2n+1)\sin ^2 x -2n\right)dx\\
&=&-\int_0^{\pi/2}(\sin x)^{2n-1}\left(2n \cos^2 x-\sin^2x\right)dx\\
&=&-\left[(\sin x)^{2n} \cos x\right]^{\pi/2}_0=0.
\end{eqnarray*}
\end{proof}

\begin{lemma}\label{cent} \cite{leh}\hfill
\begin{enumerate}
\item[(1)] $\dis \sum_{n=0}^\infty {2n \choose n} x^n=\frac{1}{\sqrt{1-4x}}$ for $|x|<1/4$.
\item[(2)] $\dis \sum_{n=1}^\infty\frac{1}{n} {2n \choose  n} x^n=2 \log\left(\frac{1-\sqrt{1-4x} }{2x}\right)$ for $|x|<1/4$.
\end{enumerate}
\end{lemma}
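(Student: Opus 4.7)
My plan is to derive (1) from the generalized binomial theorem and then obtain (2) by term-by-term integration of (1).

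For part (1), I would start from the identity $(1-y)^{-1/2}=\sum_{n=0}^\infty \binom{-1/2}{n}(-y)^n$, valid for $|y|<1$, and specialize to $y=4x$. The only real computation is to verify that
\[
(-1)^n\binom{-1/2}{n}=\frac{(-1)^n}{n!}\prod_{k=0}^{n-1}\left(-\frac{1}{2}-k\right)=\frac{1\cdot 3\cdot 5\cdots(2n-1)}{2^n\,n!}=\frac{(2n)!}{4^n(n!)^2}=\frac{1}{4^n}\binom{2n}{n},
\]
which comes from multiplying the numerator and denominator by $2\cdot 4\cdots 2n=2^n n!$. This immediately gives $\sum_{n\ge 0}\binom{2n}{n}x^n=(1-4x)^{-1/2}$ for $|x|<1/4$.

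For part (2), I would set $g(x):=\sum_{n=1}^\infty \frac{1}{n}\binom{2n}{n}x^n$. Since the series in (1) converges absolutely and locally uniformly on $|x|<1/4$, so does the series obtained from it by multiplying the $n$-th coefficient by $x/n$; hence $g$ is $C^1$ on $]-1/4,1/4[$ and term-by-term differentiation (really: using that $g'$ is the series from (1) minus the $n=0$ term, then divided by the non-issue at $0$ by rearranging) gives
\[
g'(x)=\sum_{n=1}^\infty \binom{2n}{n}x^{n-1}=\frac{1}{x}\left(\frac{1}{\sqrt{1-4x}}-1\right).
\]
Since $g(0)=0$, I would then integrate from $0$ to $x$ (for $0<x<1/4$) and compute the resulting integral by the substitution $u=\sqrt{1-4t}$, so that $t=(1-u^2)/4$, $dt=-u/2\,du$, and
\[
\frac{1}{t}\left(\frac{1}{\sqrt{1-4t}}-1\right)dt=\frac{4}{(1-u)(1+u)}\cdot\frac{1-u}{u}\cdot\left(-\frac{u}{2}\right)du=-\frac{2}{1+u}\,du.
\]
This yields $g(x)=-2\bigl[\log(1+u)\bigr]_{u=1}^{u=\sqrt{1-4x}}=2\log\frac{2}{1+\sqrt{1-4x}}$.

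The final step is the algebraic identity
\[
\frac{2}{1+\sqrt{1-4x}}=\frac{2(1-\sqrt{1-4x})}{(1+\sqrt{1-4x})(1-\sqrt{1-4x})}=\frac{2(1-\sqrt{1-4x})}{4x}=\frac{1-\sqrt{1-4x}}{2x},
\]
which converts the answer to the form stated in the lemma. For $x<0$ the same argument goes through (with $\sqrt{1-4x}>1$, but the substitution and the rationalization remain valid), so the formula extends to all $|x|<1/4$. The only delicate points are the justification of $g'(x)=(f(x)-1)/x$ near $x=0$ (handled by absolute local uniform convergence of the differentiated series) and making sure the substitution is orientation-correct; neither should pose real difficulty.
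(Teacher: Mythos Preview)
Your proof is correct and follows essentially the same approach as the paper: part (1) via the generalized binomial coefficient identity $(-1)^n\binom{-1/2}{n}=4^{-n}\binom{2n}{n}$, and part (2) by differentiating the series, using (1), and integrating via the substitution $u=\sqrt{1-4t}$. Your handling is marginally slicker in that you combine the two terms of $g'$ \emph{before} substituting, obtaining $-\tfrac{2}{1+u}\,du$ directly, whereas the paper integrates $\tfrac{1}{x\sqrt{1-4x}}$ and $\tfrac{1}{x}$ separately and merges the resulting logarithms afterward; but this is only a bookkeeping difference.
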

\begin{proof}
(1) Note that 
\begin{eqnarray*}
{-1/2\choose n}&=& \frac{(-\frac{1}{2})\cdot(- \frac{3}{2})\cdot  \cdots(-\frac{1}{2}-n+1)}{n!}=
(-1)^n \frac{1\cdot3\cdot5\cdots (2n-1)}{2^n n!}\\
&=&(-1)^n \frac{(2n)!}{(2^n n!)^2}= (-1)^n \frac{{2n\choose n}}{4^n}.
\end{eqnarray*}
Hence, by Newton's binomial theorem
$$\sum_{n=0}^\infty {2n\choose n} x^n= \sum_{n=0}^\infty (-1)^n {-1/2\choose n} (4x)^n=(1-4x)^{-1/2}.$$
(2) Let $f(x):=\dis \sum_{n=1}^\infty\frac{1}{n} {2n \choose  n} x^n$. Then, for $0<x<1/4$,  
$$f'(x)=\frac{1}{x}\sum_{n=1}^\infty {2n \choose  n} x^n=\frac{1}{x\sqrt{1-4x}}-\frac{1}{x}.$$
To calculate the primitive, we make the transformation $u:=\sqrt{1-4x}$, or equivalently   $x=\frac{1-u^2}{4}$.
Since
$$
\int  \frac{4}{1-u^2} \frac{1}{u} \left(-\frac{u}{2}\right) du=\log\left(\frac{1-u}{1+u}\right),
$$
we deduce that
\begin{eqnarray*}
f(x)&=&  \log\left( \frac{1-\sqrt{1-4x}}{1+\sqrt{1-4x}}  \right)-\log x= \log\left( \frac{(1-\sqrt{1-4x})^2}{4x}\right)-\log x\\
&=& \log\left( \frac{(1-\sqrt{1-4x})^2}{(2x)^2}\right)=2 \log\left(\frac{1-\sqrt{1-4x} }{2x}\right).
\end{eqnarray*}

The following formula  is due to Ramanujan.

\begin{lemma}\label{cata2}\cite[formula (62)]{bra}
$$C=\frac{1}{8} \pi \log(2+\sqrt 3) +\frac{3}{8} \sum_{n=0}^\infty \frac{1}{(2n+1)^2 {2n\choose n}},$$
equivalently
$$ \sum_{n=0}^\infty \frac{1}{(2n+1)^2 {2n\choose n}}=\frac{8}{3}\, C+\frac{1}{3}\pi\log(2-\sqrt 3).$$
\end{lemma}

\begin{proof}
First we note that 
\begin{equation}\label{ar}
\artanh z=\frac{1}{2}\log\frac{1+z}{1-z}=\sum_{n=0}^\infty \frac{1}{2n+1} z^{2n+1},~~ |z|<1.
\end{equation}
Hence, by Lemma \ref{sinu}
\begin{eqnarray*}
J:=\int_0^{\pi/2}\log  \left(\frac{1+\frac{1}{2}\sin x}{1-\frac{1}{2}\sin x}\right)\;dx&=&\int_0^{\pi/2}2 \sum_{n=0}^\infty \frac{1}{2^{2n+1}} 
\frac{(\sin x)^{2n+1}}{2n+1}\;dx
\buildrel=_{ \int\sum=\sum\int}^{{\rm unif. abs. conv.}} \sum_{n=0}^\infty\frac{1}{2^{2n}} \frac{4^n}{(2n+1)^2 {2n\choose n}}\\
&=&\sum_{n=0}^\infty \frac{1}{(2n+1)^2 {2n\choose n}}.
\end{eqnarray*}
To calculate this integral we combine  calculations done in \cite{ms}  and \cite{ms5}, where it is  also shown that 
$$J=\int_0^1 \frac{\artanh \sqrt{u(1-u)}}{ \sqrt{u(1-u)}}du
$$
(just put $u=\sin^2x$). See \cite{ms4}, too.
Let us introduce the parametric integral
$$I(a):= \int_0^{\pi/2}\log  \left(\frac{1+\sin a\sin x}{1-\sin a\sin x}\right)\;dx.$$
Now $\frac{d}{da}\int=\int\frac{d}{da}$ (as all functions considered here  are continuously differentiable). Hence
\begin{eqnarray*}
I'(a)&=&\int_0^{\pi/2}\left( \frac{\cos a\sin x}{1+\sin a\sin x}+\frac{\cos a\sin x}{1-\sin a\sin x}\right)\;dx
=\int_0^{\pi/2}\frac{2\cos a\sin x}{1-\sin^2 a\sin^2 x}\;dx\\
&=&\frac{2\cos a}{\sin^2 a} \int_0^{\pi/2}  \frac{\sin x}{ \frac{1}{\sin^2 a}+\cos^2 x -1}\;dx=
\frac{2\cos a}{\sin^2 a} \int_0^{\pi/2}  \frac{\sin x}{\cot ^2 a +\cos^2 x}\;dx\\
&=&- \frac{2}{\sin a} \arctan( \cos x \tan a)\Big|_0^{\pi/2}= \frac{2a}{\sin a}.
\end{eqnarray*}
Thus, using partial integration,  and the fact that $\tan(x/2)=\frac{\sin x}{1+\cos x}$,
\begin{eqnarray*}
J&=&I(\pi/6)=I(\pi/6)-I(0)=\int_0^{\pi/6} I'(a) da=\int_0^{\pi/6} \frac{2a}{\sin a} \,da\\
&=&2 \int_0^{\pi/6} x \left( \log\Big(\tan \frac{x}{2}\Big)\right)' \;dx= 2x \log\Big(\tan \frac{x}{2}\Big)\Big|_0^{\pi/6}-
2\int_0^{\pi/6}\log\Big(\tan \frac{x}{2}\Big)\;dx\\
&\buildrel=_{\frac{x}{2}=t}^{}&\frac{\pi}{3}\log(2-\sqrt 3)-4\int_0^{\pi/12}\log(\tan t) dt.
\end{eqnarray*}

Now we follow \cite{ms5} \footnote{ We thank Roberto Tauraso for providing us this link.}.
Recall that on $]0,\pi[$  the Fourier series for $-\log\tan (t/2)$ is 
$$2\sum_{n=0}^\infty \frac{1}{2n+1} \cos(2n+1)t.$$
Since the Fourier series converges in the $L^2$-norm, hence $L^1$-norm on $]0,\pi[$, we have $\sum\int=\int\sum$. Hence
\begin{eqnarray*}
-\int_0^{\pi/12}\log(\tan x)dx &\buildrel=_{x=t/2}^{}& \int_0^{\pi/6} \sum_{n=0}^\infty \frac{1}{2n+1} \cos(2n+1)t \; dt=
 \sum_{n=0}^\infty \frac{1}{2n+1}\int_0^{\pi/6} \cos(2n+1)t \; dt\\
&=& \underbrace{\sum_{n=0}^\infty \frac{1}{(2n+1)^2} \sin\big(\frac{\pi}{6}(2n+1)\big)}_{:=S}\buildrel=_{}^{!}\frac{2}{3}\sum_{n=0}^\infty \frac{1}{(2n+1)^2}(-1)^n =\frac{2}{3}\;C.
\end{eqnarray*}
To show the penultimate identity, we follow \cite{ms3}. To this end we first note that
$$\sin\left(\frac{\pi}{6}(2k+1)\right)=\begin{cases} 
\frac{1}{2}&\text{if $k\equiv 0 \mod 6$}\\
1&\text{if $k\equiv 1 \mod 6$}\\ 
\frac{1}{2}&\text{if $k\equiv 2 \mod 6$}\\
-\frac{1}{2}&\text{if $k\equiv 3 \mod 6$}\\
-1&\text{if $k\equiv 4 \mod 6$}\\
-\frac{1}{2}&\text{if $k\equiv 5 \mod 6$.}
\end{cases}
$$
Hence
\begin{eqnarray*}
S&=& \frac{1}{2}\;\sum_{n=0}^\infty \frac{1}{(12n+1)^2} + \bl{ \sum_{n=0}^\infty \frac{1}{(12n+3)^2} }
+ \frac{1}{2}\; \sum_{n=0}^\infty \frac{1}{(12n+5)^2}\\
&&-\frac{1}{2}\;\ \sum_{n=0}^\infty \frac{1}{(12n+7)^2} \bl{- \sum_{n=0}^\infty \frac{1}{(12n+9)^2}}
-\frac{1}{2}\;\ \sum_{n=0}^\infty \frac{1}{(12n+11)^2}\\
&=&\bl{\frac{1}{9}\;\sum_{n=0}^\infty \left( \frac{1}{(4n+1)^2}-\frac{1}{(4n+3)^2}\right)}+
 \frac{1}{2} \left( \frac{1}{1^2}+\frac{1}{5^2}-\frac{1}{7^2}-\frac{1}{11^2}+\cdots\right)\\
 &=&\bl{\frac{1}{9}\;C} +\frac{1}{2}\; \left( \frac{1}{1^2} \co{-\frac{1}{3^2}}+\frac{1}{5^2}-\frac{1}{7^2} +\co{\frac{1}{9^2}}-\frac{1}{11^2}+\cdots\right)  +\frac{1}{2}\;\left(\co{\frac{1}{3^2} - \frac{1}{9^2} +  \frac{1}{15^2} -\cdots}\right) \\ 
 &=&\bl{\frac{1}{9}\;C} +\frac{1}{2} C +\frac{1}{2}\;\cdot \frac{1}{3^2}\co{\left( \frac{1}{1^2}  -  \frac{1}{3^2}  + \frac{1}{5^2}-\cdots \right)}\\
 &=&\bl{\frac{1}{9}\;C} +\frac{1}{2} C+\frac{1}{18}\;C=\frac{12}{18}\; C=\frac{2}{3}\;C.
\end{eqnarray*}

We conclude that 
$$J=\frac{\pi}{3}\log(2-\sqrt 3)+\frac{8}{3}\;C= \frac{8}{3}\;C-\frac{\pi}{3}\log(2+\sqrt 3).$$
\end{proof}

Here is a second proof to calculate the value of $\int_0^{\pi/12}\log(\tan t) dt.$

\begin{proof}

We follow \cite{ms3}. Consider for $a>0$  the integral
$$Q(a)=-\int_0^{\pi/12} \artanh\left(\frac{2\cos 2x}{a+a^{-1}}\right)\,dx=
-\int_0^{\pi/12} \artanh\left(\frac{2a\cos 2x}{a^2+1}\right)\,dx
.$$
(Note that $a+1/a\geq 2$, so this is well defined). Again $\frac{d}{da}\int=\int\frac{d}{da}$. Using that 
$\dis(\artanh z)' =\frac{1}{1-z^2}$, and that 
$$\frac{d}{da}\left( \frac{1}{a+a^{-1}}\right)= \frac{1-a^2}{(a^2+1)^2},$$
\begin{eqnarray*}
Q'(a)&=&-\int_0^{\pi/12}\frac{\frac{1-a^2}{(1+a^2)^2}\;2 \cos 2x}{1-\frac{4\cos^2 (2x)}{(a+a^{-1})^2}}\;dx=
\int_0^{\pi/12}\frac{(a^2-1)\; 2\cos 2x}{ (a^2+1)^2-4a^2\cos^2 2x}\;dx\\
&=&-\frac{1}{2a} \arctan\left( \frac{2a \sin 2x}{1-a^2}\right)\Big|_0^{\pi/12}=\frac{\arctan\frac{a}{a^2-1}}{2a}.
\end{eqnarray*}
Hence, by using that $Q(0)=0$, and that $\arctan u+\arctan v= \arctan(\frac{u+v}{1-uv})$,
$$\log(\tan x)=-\frac{1}{2} \log\left(\frac{1+\cos 2x}{1-\cos 2x}\right)=-\artanh (\cos 2x).$$
Consequently,  as $C=-\int_0^1 \frac{\arctan x}{x}dx$ (use the power series for $\arctan x$)
\begin{eqnarray*}
\int_0^{\pi/12}\log(\tan x)dx&=&Q(1)=\int_0^1 Q'(a) da=\int_0^1 \frac{\arctan\frac{a}{a^2-1}}{2a} da\\
&=&- \int_0^1\left( \frac{\arctan a}{2a}+ \frac{\arctan a^3}{2a}\right)\; da\\
&=\buildrel=_{a^3\to b}^{}&-\left(\frac{1}{2}+\frac{1}{6}\right)\int_0^1 \frac{\arctan a}{a}\;da=-\frac{2}{3} C.
\end{eqnarray*}
We conclude that 
$$J=\frac{\pi}{3}\log(2-\sqrt 3)+\frac{8}{3}\;C= \frac{8}{3}\;C-\frac{\pi}{3}\log(2+\sqrt 3).$$
\end{proof}

\subsection{Remarks}

(1) The integral $L:=\int_0^\infty \frac{\log(1+x+x^2)}{1+x^2}\;dx$ is mentioned on wikipedia \cite{wiki} (without a source) under the form
$$C=\frac{3}{4}\, L -\frac{\pi}{4}{\rm arcosh}\; 2.$$
We notice that $L=2I+2C$. In fact,
\begin{eqnarray*}
I&\buildrel=_{u=1/x}^{}&\int_1^\infty \frac{\log(u^2+u+1)-\log (u^2)}{1+u^2}\;du\\
&=&\int_1^\infty \frac{\log(u^2+u+1)}{u^2+1} -2C.
\end{eqnarray*}
Hence
$$2I= I+I=\int_0^\infty \frac{\log(u^2+u+1)}{u^2+1} -2C.$$
Using the assertion of the problem dealt with here, 
$$L= 2 \left(\frac{\pi}{6}\log(\sqrt 3+2) -\frac{C}{3}\right)+2C,
$$
and so
$$C=\frac{3}{4} L - \frac{\pi}{4}\log(\sqrt 3+2).
$$
Now just note that ${\rm arcosh}\; 2=\log(\sqrt 3+2)$.
\end{proof}

This  integral $L$  is also calculated in \cite{ms2}.

\newpage

  \begin{figure}[h!]
 
  \scalebox{0.6} 
  {\includegraphics{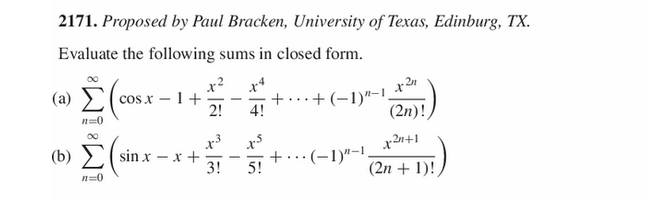}} 
\end{figure}

\centerline{\bf
Solution to problem 2171 Math. Mag. 96 (3) 2023, p. 359}  \medskip

     \centerline{Raymond Mortini and Rudolf Rupp}
     
               \medskip

\centerline{- - - - - - - - - - - - - - - - - - - - - - - - - - - - - - - - - - - - - - - - - - - - - - - - - - - - - -}
  
  \medskip

 For logical reasons, we think that the exponent of $(-1)$ in the two statements above has to be $n+1$, since one starts with $-(-1)^n$, where $n=0$.
Since the double series is absolutely convergent, we may arrange as we wish. \\

(a) Let  
$$C(x):=\sum_{n=0}^\infty \left(\cos x -1+\frac{x^2}{2!}-\frac{x^4}{4!}+\cdots+(-1)^{n+1}\frac{x^{2n}}{(2n)!}\right)$$
and let $T_n$ be the $2n$-th Taylor polynomial for $\cos x$, which is given by
$$T_n(x)=\sum_{j=0}^n (-1)^j \frac{x^{2j}}{(2j)!}.$$
 Then 
$$C(x)=\sum_{n=0}^\infty (\cos x-T_n(x)).$$
Hence

\begin{eqnarray*}
C(x)
&=&\sum_{n=0}^\infty \sum_{k=n+1}^\infty (-1)^{k}\frac{x^{2k}}{(2k)!}
=\sum_{k=1}^\infty\sum_{n=0}^{k-1} (-1)^{k}\frac{x^{2k}}{(2k)!}\\
&=&\sum_{k=1}^\infty k (-1)^{k+1} \frac{x^{2k}}{(2k)!}
= \frac{1}{2}\sum_{k=1}^\infty 2k (-1)^{k} \frac{x^{2k}}{(2k)!}\\
&=&\frac{x}{2}\sum_{k=1}^\infty  (-1)^{} \frac{x^{2k-1}}{(2k-1)!}\\
&=&-\frac{1}{2} x \sin x.
\end{eqnarray*}

For (b) we give two solutions.
\newpage 
Similarily to (a) , let
$$S(x):=\sum_{n=0}^\infty \left(\sin x -x+\frac{x^3}{3!}-\frac{x^5}{5!}+\cdots+(-1)^{n+1}\frac{x^{2n+1}}{(2n+1)!}\right).$$
Then
\begin{eqnarray*}
S(x)
&=&\sum_{n=0}^\infty \sum_{k=n+1}^\infty (-1)^{k}\frac{x^{2k+1}}{(2k+1)!}
=\sum_{k=1}^\infty\sum_{n=0}^{k-1} (-1)^{k}\frac{x^{2k+1}}{(2k+1)!}\\
&=&\sum_{k=1}^\infty k (-1)^{k} \frac{x^{2k+1}}{(2k+1)!}
=\frac{1}{2}\sum_{k=1}^\infty 2k (-1)^{k} \frac{x^{2k+1}}{(2k+1)!}\\
&=&\frac{1}{2}\sum_{k=1}^\infty (2k+1) (-1)^{k} \frac{x^{2k+1}}{(2k+1)!}- \frac{1}{2}\sum_{k=1}^\infty  (-1)^{k} 
\frac{x^{2k+1}}{(2k+1)!}\\
&=& \frac{x}{2} \sum_{k=1}^\infty (-1)^{k} \frac{x^{2k}}{(2k)!}-\frac{1}{2}(\sin x-x)\\
&=& \frac{x}{2}(\cos x-1) -\frac{1}{2}(\sin x-x)\\
&=&\frac{x\cos x-\sin x}{2}
\end{eqnarray*}

The second method is to integrate termwise and then  to interchange the sum with the integral (uniform convergence on compacta). 

\begin{eqnarray*}
S(x)&=&\sum_{n=0}^\infty \int_0^x\left( \cos t -1+\frac{t^2}{2!}-\frac{t^4}{4!}+\cdots+(-1)^{n+1} \frac{t^{2n}}{(2n)!}\right)\;dt\\
&=& \int_0^x  \sum_{n=0}^\infty \left( \cos t -1+\frac{t^2}{2!}-\frac{t^4}{4!}+\cdots+(-1)^{n+1} \frac{t^{2n}}{(2n)!}\right)\;dt\\
&\buildrel=_{(a)}^{}& -\frac{1}{2}\int_0^x (t\sin t)\;dt\\
&=&\frac{x\cos x-\sin x}{2}.
\end{eqnarray*}

\newpage

   \begin{figure}[h!]
 
  \scalebox{0.5} 
  {\includegraphics{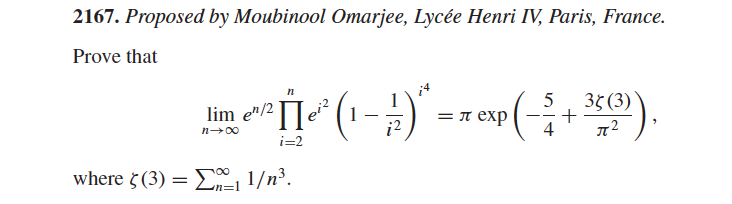}} 
\end{figure}

\nopagecolor

\centerline{\bf
Solution to problem 2167 Math. Mag. 96 (2) 2023, p. 190}  \medskip

     \centerline{Raymond Mortini and Rudolf Rupp}
     
                    \medskip

\centerline{- - - - - - - - - - - - - - - - - - - - - - - - - - - - - - - - - - - - - - - - - - - - - - - - - - - - - -}
  
  \medskip

Let 
$$L:=e^{n/2} \prod_{j=2}^n e^{j^2}\; \prod_{j=2}^n \left(1- \frac{1}{j^2}\right)^{j^4}.
$$
Then, by  using that  for $|x|<1$, $\dis-\log(1-x)=\sum_{k=1}^\infty \frac{1}{k} x^k$,
\begin{eqnarray*}
\log L&=&\frac{n}{2}+\sum_{j=2}^n j^2 +\sum_{j=2}^n j^4 \log\left(1- \frac{1}{j^2}\right)\\
&=&\frac{n}{2}+\sum_{j=2}^n j^2-\sum_{j=2}^n \sum_{k=1}^\infty \frac{j^4}{k}\frac{1}{j^{2k}}\\
&=&\frac{n}{2}+\sum_{j=2}^n j^2-\sum_{j=2}^n j^2 -\frac{1}{2}\sum_{j=2}^n 1-
\sum_{k=3}^\infty  \frac{1}{k}\sum_{j=2}^n\frac{1}{(j^2)^{k-2}}\\
&\buildrel \longrightarrow_{n\to\infty}^{m:=k-2}&\frac{1}{2}- \sum_{m=1}^{\infty} \frac{\zeta(2m)-1}{m+2}.
\end{eqnarray*}
So we need to show that
\begin{equation}\label{zeta1}
\ovalbox{$ \dis\sum_{m=1}^{\infty} \frac{\zeta(2m)-1}{m+2}=\frac{7}{4}-\log\pi-\frac{3}{\pi^2}\zeta(3)\sim 0.239888629\cdots,$}
\end{equation}
from which we conclude that
$$L=\pi\;\exp\left(\frac{3}{\pi^2}\zeta(3)-\frac{5}{4}\right)\sim 1.2970745345\cdots$$

To achieve our goal we use the partial fraction decomposition of 
$$\pi z \cot(\pi z)= 1+ \sum_{n=1}^\infty \frac{2z^2}{z^2-n^2}, ~~ z\in \C\setminus \Z,$$
a formula which implies   (see \cite[p. 182]{fl}) that 
$$ \sum_{n=1}^\infty(\zeta(2n)-1)x^{2n}= \frac{1}{2}\big(1-\pi x\cot(\pi x)\big) -\frac{x^2}{1-x^2} =
 -\sum_{n=2}^\infty\frac{x^2}{x^2-n^2}.$$
 
 Since $z=0$ and $z=\pm 1$ are removable singularities for  $\frac{1}{2}\big(1-\pi z\cot(\pi z)\big) -\frac{z^2}{1-z^2}$,
  the holomorphy in $|z|<2$ implies that we have uniform convergence of  $\sum_{n=1}^\infty(\zeta(2n)-1)x^{2n}$ on $[0,1]$.
Also, for $x\in\; ]0,1[$,
$$\sum_{n=1}^\infty(\zeta(2n)-1)x^{n+1}= \frac{x}{2}-\frac{\pi}{2} x^{3/2} \cot(\pi \sqrt x)-\frac{x^2}{1-x}.$$

A primitive on $]0,1[$ is then given by
$$\sum_{n=1}^\infty\frac{\zeta(2n)-1}{n+2}x^{n+2}= \frac{x^2}{4}-\int \left(\frac{\pi}{2}x^{3/2} \cot(\pi \sqrt x)+\frac{x^2}{1-x}\right)\;dx.
$$
Using  again uniform convergence on $[0,1]$, the substitution $s=\sqrt x$ and integration between 0 and 1 yields
\begin{eqnarray*}
\sum_{n=1}^\infty\frac{\zeta(2n)-1}{n+2}&=&\frac{1}{4}-\int_0^1 \left(\frac{\pi}{2}s^3 \cot(\pi s)+\frac{s^4}{1-s^2}\right)\;2sds\\
&=&\frac{1}{4}-\int_0^1 \left(\pi s^4 \cot(\pi s)+\frac{2s^5}{1-s^2}\right)\;ds
\end{eqnarray*}

Let 

$$\ovalbox{$\dis
I:= \int_0^1\left(\pi\;t^4\cot (\pi t)+\frac{2t^5}{1-t^2}\right)\;dt.
$}
$$
We claim that 
\begin{equation}\label{zeta2}
I= \frac{3}{\pi^2} \zeta(3) -\frac{3}{2}+\log\pi\sim 0.0101113705\cdots
\end{equation}

To determine the value of $I$, we first calculate a primitive of 
$$f(x):=\pi\;x^4\cot (\pi x)$$
on $]0,1[$. 
This is done by  using  partial integration
and  the 1-periodic Fourier series
$$\sum_{k=1}^\infty \frac{\cos(2k\pi x)}{k}= -\log(2\sin(\pi x))=-\log 2 -\log(\sin(\pi x)),$$
where the convergence is considered in  the $L^2$-norm on $]0,1[$, which  also guarantees that $\int\sum=\sum\int$ below. 
\begin{eqnarray*}
\int f(x) dx&=& x^4 \log (\sin(\pi x))- 4 \int x^3\log (\sin(\pi x))\;dx\\
&=&  x^4 \log (\sin(\pi x))+4 \int \sum_{k=1}^\infty  x^3  \frac{\cos(2k\pi x)}{k} \;dx  +4\int x^3 \log 2 \;dx\\
&=&  x^4 \log (\sin(\pi x))+4  \sum_{k=1}^\infty  \int x^3  \frac{\cos(2k\pi x)}{k} \;dx  +4 \int x^3 \log 2 \;dx
\end{eqnarray*}
Before evaluating at the boundary points, we need to add 
$$\frac{2x^5}{1-x^2}= -2x^3-2x -\frac{1}{1+x}+\frac{1}{1-x},$$
 since the integral 
$\int_0^1 f(x) dx $ is divergent (at $1$). Defining  the symbol $[h(x)]_0^1$ below as 
$$[h(x)]_0^1:=\lim_{x\to 1-} h(x) -\lim_{x\to 0} h(x),$$
 we obtain

\begin{eqnarray*}
I=\int_0^1\left(\pi\;x^4\cot (\pi x)+\frac{2x^5}{1-x^2}\right)\;dx&=& \Bigg[ x^4 \log (\sin(\pi x))-\log(1-x)  -\frac{x^4}{2}
-x^2 -\log(1+x)\Bigg]^1_0\\
&&+4  \sum_{k=1}^\infty \int_0^1 x^3  \frac{\cos(2k\pi x)}{k} \;dx+\log 2. 
\end{eqnarray*}

Note that 
$$x^4 \log (\sin(\pi x))-\log(1-x)=x^4\; \log \frac{\sin(\pi x)}{1-x}+(x^4-1)\log(1-x)\to \log\pi \text{\;\;as $x\to 1$}.$$

Also, three times partial integration yields
$$ \int_0^1 x^3  \frac{\cos(2k\pi x)}{k} \;dx=\frac{3}{4k^3\pi^2}.
$$
Hence
\begin{eqnarray*}
I&=&\log\pi -\frac{3}{2}-\log 2 +4 \sum_{k=1}^\infty \frac{3}{4k^3\pi^2}+\log 2 \\
&=& \log\pi -\frac{3}{2}+\frac{3}{\pi^2}\,\zeta(3),
\end{eqnarray*}
yielding (\ref{zeta2}).
We conclude that  (\ref{zeta1}) is satisfied, that is
$$\sum_{n=1}^\infty \frac{\zeta(2n)-1}{n+2}= \frac{1}{4}-I=\frac{7}{4}-\log\pi-\frac{3}{\pi^2}\zeta(3)\sim  0.23988862\cdots$$

 {\bf Remarks} 
 (1) The value for $I$ is also given directly by Maple

   \begin{figure}[h!]
 \scalebox{0.3} 
  {\includegraphics{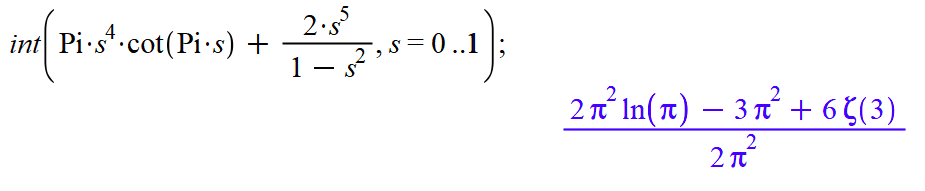}} 
\end{figure}

 (2) Using  Wolframalpha's representation below of  a primitive  of $\pi\;t^4\cot (\pi t)+\frac{2t^5}{1-t^2}$ and evaluating at the boundary points, we also obtain the value of $I$. Just note  that $\zeta(2)= \pi^2/6$ and $\zeta(4)=\pi^4/90$:

\begin{eqnarray*}
I&=& 2i \frac{\zeta(2) }{\pi}+ 3\frac{\zeta(3)}{\pi^2}-3i \frac{\zeta(4)}{\pi^3}-3 \frac{\zeta(5)}{2\pi^4} +\frac{i\pi}{5}-\frac{1}{2}-1 
+\co{\lim_{s\to 1} \big(s^4 \log(1-e^{-2\pi i s})-\log(1-s^2)\big)} \\
&& +3 \frac{\zeta(5)}{2\pi^4} -\bl{\lim_{s\to 0} \big(s^4 \log(1-e^{-2\pi i s})-\log(1-s^2)\big)}\\
&=& 3\frac{\zeta(3)}{\pi^2}  + i\left(\frac{1}{3}-\frac{1}{30} \right)\pi  +\frac{i\pi}{5}-\frac{3}{2}+(\co{ -i \frac{\pi}{2} +\log\pi} ) -(\bl 0)\\
&=&3\frac{\zeta(3)}{\pi^2}-\frac{3}{2}+\log\pi.
\end{eqnarray*}

   \begin{figure}[h!]
 \scalebox{0.5} 
  {\includegraphics{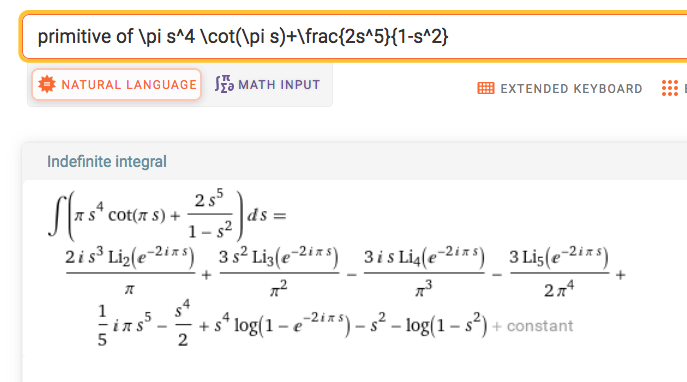}} 
\end{figure}

(3) Generalizations of formula \ref{zeta1} are given in \cite{mr24}.
\newpage

   \begin{figure}[h!]
 
  \scalebox{0.6} 
  {\includegraphics{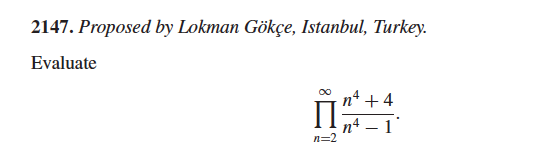}} 

\end{figure}

\centerline{\bf
Solution to problem 2147 Math. Mag. 95 (2) 2022, p. 242}  \medskip

     \centerline{Raymond Mortini and Rudolf Rupp}
          
               \medskip

\centerline{- - - - - - - - - - - - - - - - - - - - - - - - - - - - - - - - - - - - - - - - - - - - - - - - - - - - - -}
  
  \medskip

We show that, in accordance with WolframAlpha,
$$P:=\prod_{n=2}^\infty \frac{n^4+4}{n^4-1}=\frac{2\sinh\pi}{5\pi}.$$

Due to $\sin (iz)=i\sinh z$, we have
$$P(z):=\prod_{n=1}^\infty \left(1-\frac{z^4}{n^4}\right)=\prod_{n=1}^\infty \left(1-\frac{z^2}{n^2}\right)\;\prod_{n=1}^\infty 
\left(1+\frac{z^2}{n^2}\right)=\frac{\sin \pi z\;\sinh \pi z}{\pi^2z^2}$$
and so
$$Q(z):=\prod_{n=2}^\infty \left(1-\frac{z^4}{n^4}\right)= \frac{P(z)}{ 1-z^4}$$

Note that
$$P=\prod_{n=2}^\infty \frac{1+\frac{4}{n^4}}{1-\frac{1}{n^4}}.$$

We put either $z=1$ or $z=1+i$. Note that $(1+i)^4=-4$ and that 
$$\lim_{z\to 1} Q(z)=\lim_{z\to 1} \frac{\sin\pi z}{1-z} \;\lim_{z\to 1}\frac{1}{(1+z)(1+z^2)}\; \frac{\sinh \pi}{\pi^2}=
\frac{1}{4}\; \frac{\sinh\pi}{\pi}.$$

Hence
$$P= \frac{1}{1-(1+i)^4}\frac{\sin (\pi (1+i))\;\sinh ( \pi (1+i))}{\pi^2(1+i)^2}\;\bigg/  \frac{1}{4}\; \frac{\sinh\pi}{\pi}=\frac{2}{5}\frac{\sinh\pi}{\pi}
$$

\newpage

   \begin{figure}[h!]
 
  \scalebox{0.5} 
  {\includegraphics{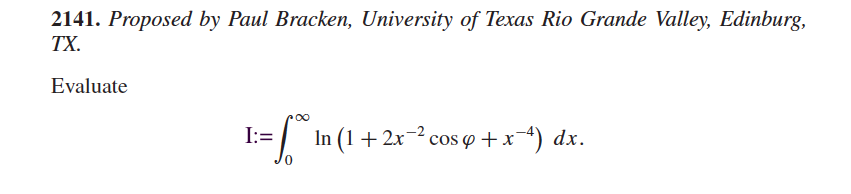}} 

\end{figure}

   \centerline {\bf Solution to problem 2141 Math. Mag. 95 (2) 2022, p. 157}  \medskip
   
     \centerline{Raymond Mortini and Rudolf Rupp}
  
                   \medskip

\centerline{- - - - - - - - - - - - - - - - - - - - - - - - - - - - - - - - - - - - - - - - - - - - - - - - - - - - - -}
  
  \medskip

The value of the integral $I$ is $2\pi \cos(\varphi/2)$.\\

First we note that $(u+e^{i\varphi})(u+e^{-i\varphi})= 1+2u \cos\varphi +u^2$.

{\bf Case 1} $\cos\varphi\not=0$ (or equivantly $\varphi\notin\{\pi+2k\pi:k\in \mathbb Z\}$).

Let  $\log z=\log |z|+i\arg z$ be the main branch of the complex logarithm  (that is $-\pi<\arg z<\pi$). 
Put $H:=\mathbb C\setminus \;]-\infty, 0]$.
Note that for $z\in S:=\mathbb C\setminus\{it: |t|>1\}$ we have 
\begin{equation}\label{arct}
\arctan z=:\frac{1}{2i}\log\frac{1+iz}{1-iz}
\end{equation}
is a primitive of $1/(1+z^2)$.

Now for $x\in \mathbb R$, $1/x^2+e^{\pm i\varphi}\in H$ and so
$f(x):=\log(1/x^2+e^{\pm i\varphi})$ is well defined.  A primitive is given by

$$F(x)=x \log(1/x^2+e^{\pm i\varphi})-\int x \frac{d}{dx} \log(1/x^2+e^{\pm i\varphi})=
x \log(1/x^2+e^{\pm i\varphi})+\int \frac{2}{1+x^2 e^{\pm i\varphi}} dx.
$$
Since $\varphi\not=\pm \pi$,  $z:=x e^{\pm i\varphi/2}\in S$ and so, by using (\ref{arct}) and the fact that
$\frac{1+iz}{1-iz}$ maps the right-half plane onto the upper-half-plane,
$$\int \frac{2}{1+x^2 e^{i\varphi}} dx=2e^{-i\varphi/2}\arctan (e^{i\varphi/2} x)\buildrel\longrightarrow_{x\to\infty}^{} 2e^{-i\varphi/2} \pi/2$$

Now  $\arg z+\arg \ov z=0$ and so  $\log z+\log \ov z=\log |z|^2$, $z\in H$. Hence, with $z=1/x^2+e^{ i\varphi}\in H$,

$$x \log(1/x^2+e^{ i\varphi})+x \log(1/x^2+e^{- i\varphi})=x\log(1+2x^{-2}\cos\varphi+x^{-4})
\buildrel\longrightarrow_{x\to 0}^{x\to\infty} 0
$$

Hence $\lim_{x\to\infty} I(x)=0 +\pi (e^{-i\varphi/2}+e^{i\varphi/2})=2\pi \cos\varphi/2$. 

{\bf Case 2} $\cos\varphi=0$. In other words
$I=\int_0^\infty \log((1-1/x^2)^2) dx$, which is improper at $0$ and $1$.  In this case $I=0$. In fact, for $x>0$ and $x\not=1$,

\begin{eqnarray*}
h_1(x)&:=&x\log \left(\frac{1}{x^2}-1\right)^2+\log \left(\frac{1+x}{1-x}\right)^2\\
&=& (x-1)\log(x-1)^2 +2(x+1)\log(1+x)-4x\log x
\end{eqnarray*}
is a primitive of $\log((1-1/x^2)^2)$. Hence
$$\int_0^1 \log((1-1/x^2)^2) dx=\log 16$$ and 
$$\int_1^\infty \log((1-1/x^2)^2) dx=\log (1/16).$$

\bigskip

\hrule

\bigskip

A related method (for $\cos\varphi\not=0$) is to apply  partial integration directly to $B(x):=\log(1+2x^{-2}\cos\varphi+x^{-4})$ and which gives
$$\int B(x) dx= x B(x)-\int x B'(x) dx$$
with
$$xB'(x)=4  \frac{1+\cos\varphi \;x^2}{x^4+2\cos\varphi\; x^2 +1}=:4 R(x).$$

This rational function writes as
$$R(x)=\frac{A}{x^2+e^{i\varphi}} + \frac{B}{x^2+e^{-i\varphi}} $$
with 
$$A=\frac{e^{i\varphi} \cos\varphi -1}{2i\sin\varphi},~~ B=\ov A.$$

By using (\ref{arct}), we obtain
$$\int_0^\infty \frac{dx}{x^2+b^2}=\lim_{x\to\infty} \frac{1}{b}\arctan(x/b)=\begin{cases}\phantom{+} \frac{\pi}{2b}
&\text{if Re $b>0$}\\ -\frac{\pi}{2b}&\text{if Re $b<0$.}\end{cases}
$$
Since $\displaystyle \lim_{x\to 0\atop x\to \infty} xB(x)=0$, we   deduce  (with $b:=e^{i\varphi/2}$) that $\int_0^\infty B(x)dx=2\pi \cos(\varphi/2)$.

\newpage

   \begin{figure}[h!]
 
  \scalebox{0.6} 
  {\includegraphics{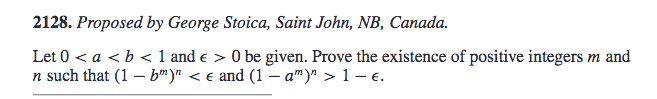}} 

\end{figure}

\centerline {\bf Solution to  problem 2128 Math. Mag. 94 (2021), p. 308}  \medskip

\centerline{Raymond Mortini}

                   \medskip

\centerline{- - - - - - - - - - - - - - - - - - - - - - - - - - - - - - - - - - - - - - - - - - - - - - - - - - - - - -}
  
  \medskip

We first show that
$$\lim_{m\to\infty}(1-b^m)^{\frac{1\phantom{|}}{ma^m}}=0 \text{\;\;and $\dis \lim_{m\to\infty}(1-a^m)^{\frac{1\phantom{|}}{ma^m}}=1$}.$$

In fact, taking logartithms, this is equivalent to show that
$$L:=\lim_{m\to\infty} \frac{\log(1-b^m)}{ma^m}=-\infty \text{\;\; and $R:=\dis\lim_{m\to\infty} \frac{\log(1-a^m)}{ma^m}=0$}.$$

Since $\sum_m ma^m$ converges for $|a|<1$, $ma^m\to 0$. Hence we have an indeterminate form $0/0$ and may use l'Hospital's rule.
Using that for $x>1$, we have $\lim x^m/m=\infty$, we obtain

\begin{eqnarray*}
L&=&\lim_{m\to\infty}\frac{\dis\frac{1}{1-b^m}(-b^m)\log b}{a^m(1+m\log a)}=-\log b\lim_{m\to\infty}\left(\frac{b}{a}\right)^m\frac{1}{1+m\log a}\\
&=&-\frac{\log b}{\log a}\lim_{m\to\infty}\left(\frac{b}{a}\right)^m\frac{1}{m}=-\infty.
\end{eqnarray*}

Moreover

\begin{eqnarray*}
R&=&\lim_{m\to\infty}\frac{\dis\frac{1}{1-a^m}(-a^m)\log a}{a^m(1+m\log a)}=-\log a\lim_{m\to\infty} \frac{1}{m\log a}=0.
\end{eqnarray*}

Next we use that  for  $s_m:=\frac{1}{ma^m}$ and $x_m:=\log(1-b^m)\to 0$
the inequalities 

$$(s_m-1)x_m<\lfloor s_m\rfloor x_m\leq s_mx_m$$
imply that the limits $R$ and $L$  do not change if we replace $s_m$ by $\lfloor s_m\rfloor$.

Consequently, 
$$     \lim_{m\to\infty}(1-b^m)^{\left\lfloor\frac{1}{ma^m}\right\rfloor}=0 \text{\;\;and 
$\dis \lim_{m\to\infty}(1-a^m)^{\left\lfloor\frac{1}{ma^m}\right\rfloor}=1$}$$

Hence, for each $\e\in ]0,1[$ we obtain $m,n\in \N$ such that 
$$(1-b^m)^n<\e \;\; {\rm and}\;\; (1-a^m)^n>1-\e.$$

\newpage

   \begin{figure}[h!]
 
  \scalebox{0.6} 
  {\includegraphics{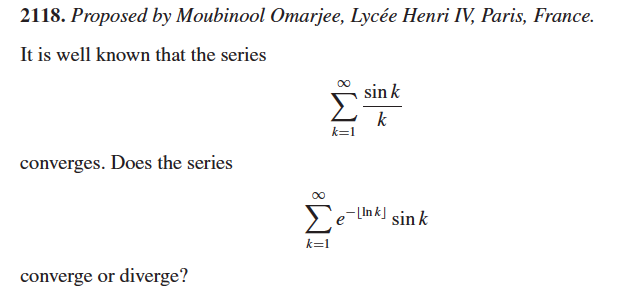}} 

\end{figure}

\centerline {\bf Solution to  problem 2118 Math. Mag.  94 (2021), p. 150}  \medskip

\centerline{Raymond Mortini}

                   \medskip

\centerline{- - - - - - - - - - - - - - - - - - - - - - - - - - - - - - - - - - - - - - - - - - - - - - - - - - - - - -}
  
  \medskip

The series converges.
This is an immediate consequence to Abel's theorem telling us that if $(a_n)$ is a sequence of positive numbers with
 $a_n\searrow 0$, then the trigonometric series
$S(t):=\sum_{n=0}^\infty a_ne^{int}$ converges for all $t\not\in\{2k\pi: k\in \mathbb Z\}$
(see i.e. Appendix 4 in my encyclopedic monograph: R. Mortini, R. Rupp,  Extension Problems  and Stable Ranks, A Space Odyssey, Birkh\"auser 2021, ca 2150 pages):

Just take $a_n=e^{-\lfloor \log n\rfloor}$, $t=1$,  and the imaginary part of $S(t)$. The proof is based on the Abel-Dirichlet rule, telling us that  with $b_n=e^{int}$,  and
$$|b_0+b_1+\cdots+b_m|=|1+e^{it}+\cdots+ e^{imt}|=$$
$$\mbox{$\displaystyle =\left|\frac{1-e^{(m+1)it}}{1-e^{it}}\right| $ if $e^{it}\not=1$}.$$
we obtain for  $t\notin 2\pi\mathbb Z$ that
\begin{equation}\label{sin-esti}
|b_0+b_1+\dots+b_m|\leq \frac{2}{|1-e^{it}|}=:M.
\end{equation}
Hence the series  $\sum_{n=0}^\infty a_nb_n$ is  convergent.

\newpage

  \begin{figure}[h!]
 
  \scalebox{0.6} 
  {\includegraphics{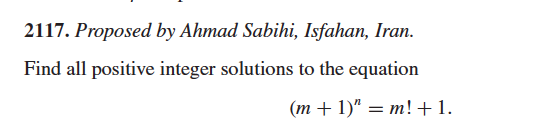}} 
\end{figure}

\centerline{\bf Solution to problem 2117  in Math. Mag. 94 (2021), p. 150}  \medskip

\centerline{Raymond Mortini, Rudolf Rupp and Amol Sasane}
                   \medskip

\centerline{- - - - - - - - - - - - - - - - - - - - - - - - - - - - - - - - - - - - - - - - - - - - - - - - - - - - - -}
  
  \medskip

There are only the three solutions $(n,m)\in\{(1,1), (1,2), (2,4)\}$.

It is easy to check that these are solutions. %

Now suppose that $n\geq m\geq 2$. Then $(n,m)$ cannot be a solution
since
$$\mbox{$(m+1)^n\geq (m+1)^m>m^m>m!$, so $(m+1)^n >m!+1$}.$$
Now, if $2=n<m$, then
$$(m+1)^2=m!+1\iff m+2=(m-1)!$$
which is obviously only satisfied for $m=4$. 

Next let 
 $2<n<m$. Then we see that if $(n,m)$ is a solution to $(m+1)^n=m!+1$,  then $m$ must be even.  
 (Actually, by  Wilson's theorem,  $m+1$ divides $m!+1$ if and only $m+1$ is prime; but we do not need this result).
 In particular, $m\geq 4$.
 Note that the equation $(m+1)^n-1=m!$ under discussion is equivalent to
\begin{equation}\label{summi}
\sum_{k=0}^{n-1} (m+1)^k=(m-1)!.
 \end{equation}
 
 \underline{$1^\circ$} ~ $m=4$. Then, due to  \zit{summi}, $6=3! =1+5+\cdots$ implying that $n=2$. A contradiction to the assumption $2<n<m$..
 
  \underline{$2^\circ$} ~ $m\geq 6$.  Then $2< m/2<m-1$. Hence  the integer $m/2$  divides  $(m-1)!$. Since $m/2>2$,
  additionally the number $2$ divides $(m-1)!$. Thus  $m=2\cdot (m/2)$ divides $(m-1)!$.

Now, \zit{summi} yields  $n\equiv 0$, $\mod m$.  That is, $m$ divides $n$ and so $m\leq n$.  This is again a contradiction to the assumption $2<n<m$.

\newpage

  \begin{figure}[h!]
 
  \scalebox{0.5} 
  {\includegraphics{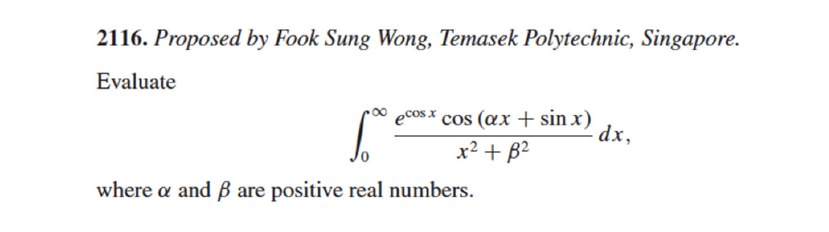}} 
  
\end{figure}

\centerline{\bf Solution to problem 2116  Math. Mag.  94 (2021), 150}  \medskip

\centerline{Raymond Mortini, Rudolf Rupp }
                   \medskip

\centerline{- - - - - - - - - - - - - - - - - - - - - - - - - - - - - - - - - - - - - - - - - - - - - - - - - - - - - -}
  
  \medskip

We show that

  \begin{figure}[h!]
  \scalebox{0.5} 
  {\includegraphics{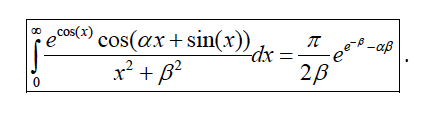}} 
\end{figure}

\vspace{-1cm}

 \begin{figure}[h!]
 
  \scalebox{0.5} 
  {\includegraphics{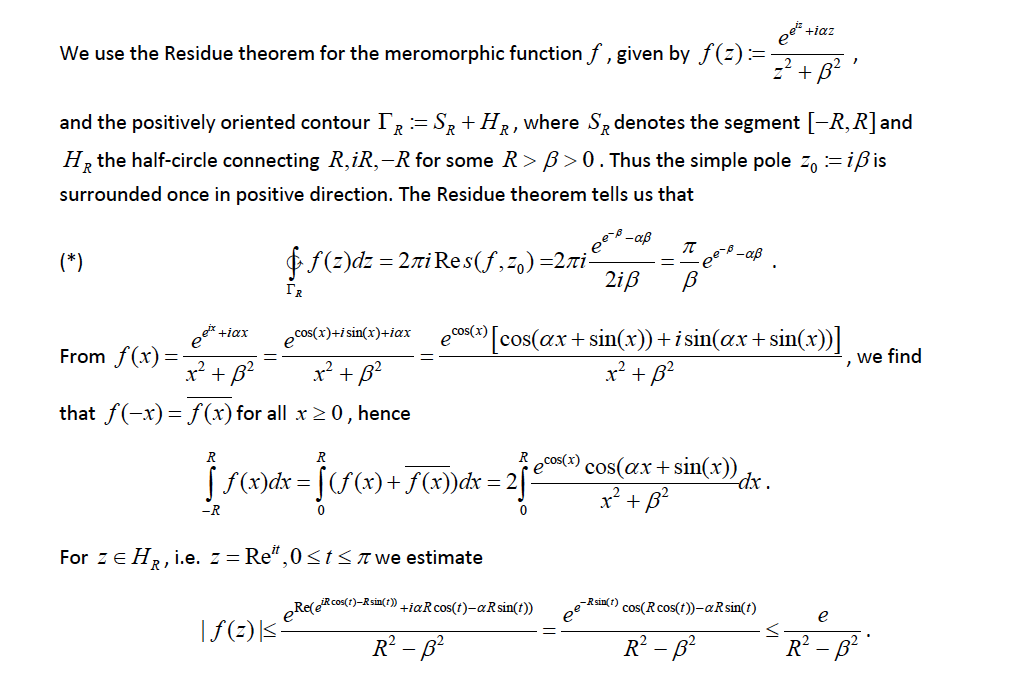}} 

\end{figure}

 
 \newpage
 
 \begin{figure}[h!]
 
  \scalebox{0.48} 
  {\includegraphics{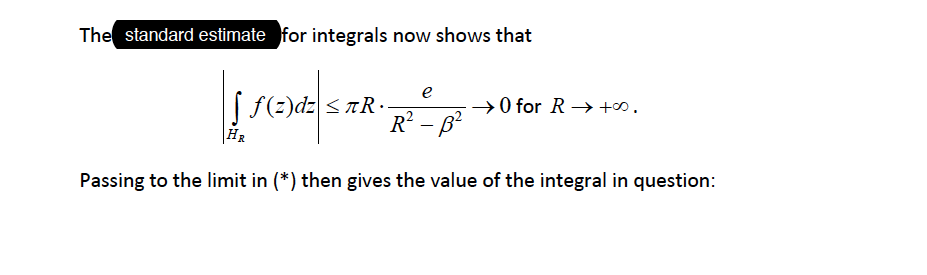}} 
\end{figure}

\newpage

\pagecolor{yellow}

 \begin{figure}[h!]
 
  \scalebox{0.6} 
  {\includegraphics{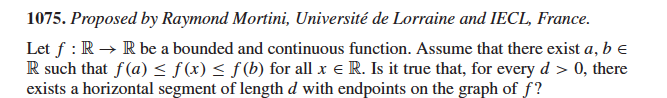}} 

\end{figure}

\centerline{\bf Solution to Quicky 1075  in Math. Mag. 90 (2017), 384}  \medskip

\centerline{Raymond Mortini}

                   \medskip

\centerline{- - - - - - - - - - - - - - - - - - - - - - - - - - - - - - - - - - - - - - - - - - - - - - - - - - - - - -}
  
  \medskip
 
  \begin{figure}[h]
  \scalebox{.40} 
  {\includegraphics{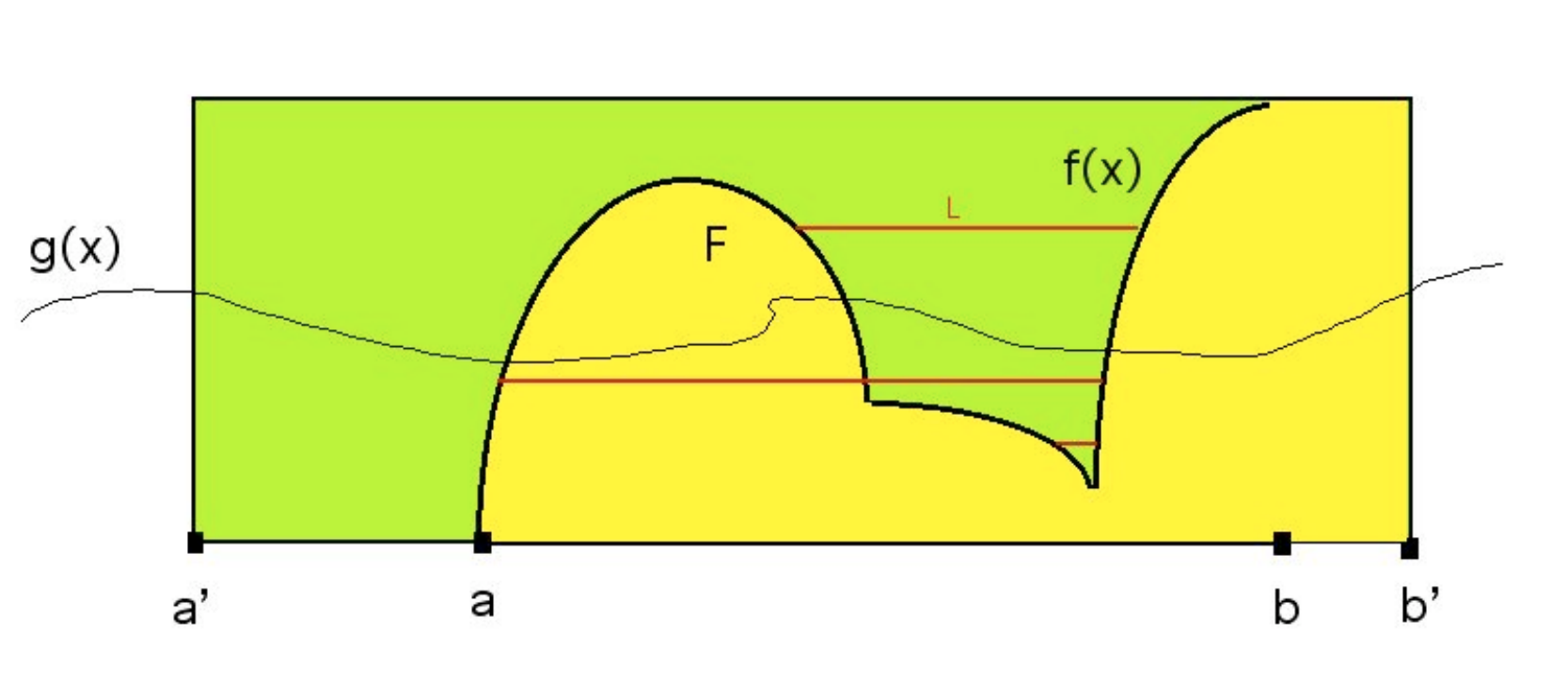}} 
\caption{\label{} {Intersecting curves}}
\end{figure}

  Yes. We have to show that for every $d\in \R$, 
  there is $x_0\in \R$ such that $f(x_0)=f(x_0-d)$.\\
  
  {\it 1) Non-elementary geometric approach.~} 

   Put $g(x):=f(x-d)$ and choose $a,b\in \R$ such that $m=f(a)$,  $M=f(b)$ and  $m<f(x)<M$
  for $x\in \,]a,b[$. We may assume that $a<b$. 
  Of course, $m\leq g(x)\leq M$.  Let $a'<a$ and $b>b'$. Then $x\mapsto (x, g(x)),  a'\leq x\leq b'$ is a  curve
  in the rectangle $R:=[a',b']\times [m,M]$ starting at the left  of the graph $F:=\{(x,f(x)): a\leq x\leq b\}$ of  $f$
  and ending at the right (here we need that the Jordan arc $F$ is a cross-cut of $R$). Thus this curve meets the graph: that is there is $a'\leq x_0\leq b'$ such that
  $(x_0,g(x_0))\in F$. Hence,  there is $a\leq x_1\leq b$ such that $(x_0,g(x_0))=(x_1,f(x_1))$. Consequently, $x_0=x_1$
  and so $f(x_0)=f(x_0-d)$.\\
  
  {\it 2) Analytic approach.~} Let $H:=f-g$. Then $H(a)=m-g(a)\leq 0$ and $H(b)=M-g(b)\geq 0$. If $g(a)=m$ or $g(b)=M$, then we are done. So we may assume that $H(a)<0$ and $H(b)>0$. Hence, by the intermediate value theorem, there is $x_0\in\, ]a,b[$
  such that $H(x_0)=0$. We conclude that $f(x_0)=g(x_0)=f(x_0-d)$.\\
 
 \hrule\vspace{0,2cm}
 
  {\small Let us point out that the assertion does not hold whenever merely $ \inf_\R f$ and $\sup_\R f$ exist: just look 
    at $f(x)=\arctan x$.  Motivation for the problem came from the paper:
 Peter   Horak, Partitioning $\R^n$  into connected components.
Am. Math. Mon. 122, No. 3, 280-283 (2015),
where periodic functions were considered.}

 \newpage
 
 \begin{figure}[h!]
 
 \scalebox{0.5} 
  {\includegraphics{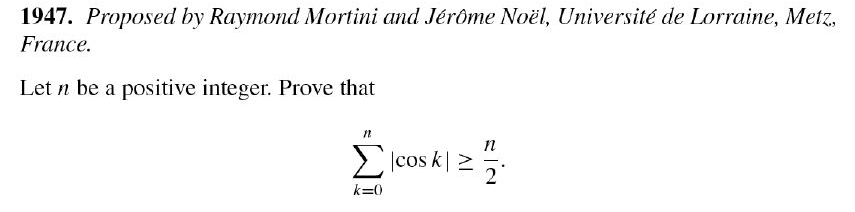}} 

\end{figure}

\centerline {\bf Solution to problem 1947 Math. Mag. 87 (2014), 230}  \medskip
 
\centerline{Raymond Mortini, J\'er\^ome No\"el }
                   \medskip

\centerline{- - - - - - - - - - - - - - - - - - - - - - - - - - - - - - - - - - - - - - - - - - - - - - - - - - - - - -}
  
  \medskip

 $$\sum_{k=0}^n |\cos k| \geq1+\sum_{k=1}^n (\cos k)^2=1+\sum_{k=1}^n \frac{\cos(2 k)+1}{2}= 
1+\frac{n}{2} + \frac{1}{2}{\rm Re}\left(\sum_{k=1}^n e^{2ik}\right).
$$
Now 
$$\sum_{k=1}^n e^{2ik}=e^{2i}\frac{1-e^{2in}}{1-e^{2i}}= e^{2i}\frac{e^{in}}{e^i}
\frac{\sin n}{\sin 1}=e^{i(n+1)}\frac{\sin (n)}{\sin 1}.$$
Hence
$$\sum_{k=0}^n |\cos k| \geq1+\frac{n}{2}+ \frac{\cos (n+1)\sin n}{2\sin 1}\geq
1+\frac{n}{2}-\frac{1}{2\sin 1}
$$
$$=\frac{n}{2}+ \underbrace{\left(1-\frac{1}{2\sin 1}\right)}_{ >0}\geq \frac{n}{2},$$
because $2\sin 1>1$ (note that $\pi/4<1<\pi/3$ implies $1<\sqrt 2 <2\sin 1 < \sqrt 3$).

Let us remark that in the very first step it was important to begin the sum at $k=1$
in order to have the summand 1. Otherwise we would have obtained
 $$\sum_{k=0}^n |\cos k|\geq \frac{n+1}{2} + \frac{\cos n \sin (n+1)}{2\sin 1}
\geq \frac{n+1}{2}- \frac{1}{2\sin 1}$$
$$=\frac{n}{2}+ \frac{1}{2}\left(1- \frac{1}{\sin 1}\right),
$$
an estimate that is less than $n/2$.

 \newpage

\nopagecolor

 \begin{figure}[h!]
 
 \scalebox{0.4} 
  {\includegraphics{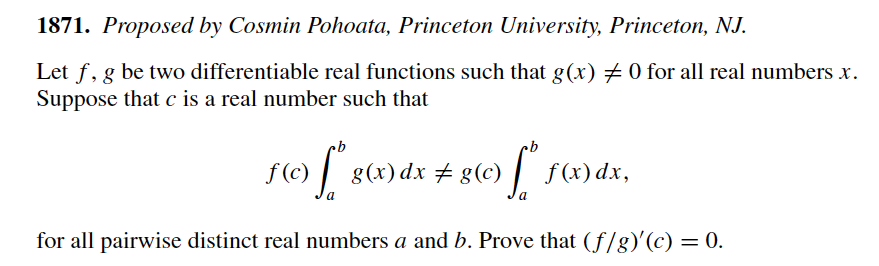}} 

\end{figure}

\centerline {\bf Solution to  problem 1871 Math. Mag. 84 (2011), p. 229}  \medskip

\centerline{Raymond Mortini}

                   \medskip

\centerline{- - - - - - - - - - - - - - - - - - - - - - - - - - - - - - - - - - - - - - - - - - - - - - - - - - - - - -}
  
  \medskip

The solution is a based on the following Lemma:

\begin{lemma}\label{dia}
 Let $F$ be  a continuous, real-valued function on $\R\times \R$. Suppose that
$F$ is not zero outside the diagonal  $D$ and not constant $0$  on $D$. Then either $F\geq 0$ or $F\leq 0$ everywhere.
\end{lemma}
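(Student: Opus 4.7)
My plan is to exploit the topological structure of the complement of the diagonal. The set $\R^2\setminus D$ decomposes into exactly two open connected components, namely the upper half-plane $U:=\{(x,y)\in\R^2: x<y\}$ and the lower half-plane $L:=\{(x,y)\in\R^2: x>y\}$. Since $F$ is continuous and, by hypothesis, nowhere zero on $U\cup L$, the intermediate value property applied to the connected sets $U$ and $L$ forces $F$ to have constant sign on each of them. So exactly one of four cases occurs: $F>0$ on both, $F<0$ on both, $F>0$ on $U$ while $F<0$ on $L$, or $F<0$ on $U$ while $F>0$ on $L$.

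The heart of the argument is to rule out the two ``mixed sign'' cases. Suppose for contradiction that $F>0$ on $U$ and $F<0$ on $L$ (the other mixed case is symmetric). Fix an arbitrary point $(a,a)\in D$. Any sequence of points in $U$ converging to $(a,a)$ yields, by continuity, $F(a,a)\geq 0$, while any sequence in $L$ converging to $(a,a)$ yields $F(a,a)\leq 0$. Hence $F(a,a)=0$ for every $a\in\R$, i.e.\ $F\equiv 0$ on $D$. This contradicts the assumption that $F$ is not identically zero on the diagonal.

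Therefore $F$ has the \emph{same} sign on $U$ and on $L$. By continuity this sign propagates to the boundary: if $F>0$ on $U\cup L$, then $F(a,a)\geq 0$ for every $a$, giving $F\geq 0$ on $\R^2$; and symmetrically $F\leq 0$ on $\R^2$ in the other case. This establishes the claim.

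I do not anticipate any real obstacle: the proof is essentially a two-line application of connectedness plus a one-line continuity squeeze on the diagonal. The only mild subtlety is the precise reading of the hypotheses — one must interpret ``$F$ is not zero outside the diagonal'' as ``$F$ vanishes nowhere on $\R^2\setminus D$'' (so that the IVT yields constant sign on each component) and ``not constant $0$ on $D$'' as ``$F\not\equiv 0$ on $D$'' (which is exactly what is needed to defeat the mixed-sign scenario).
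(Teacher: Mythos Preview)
Your proof is correct and follows essentially the same approach as the paper: decompose $\R^2\setminus D$ into the two open half-planes, use connectedness (equivalently, the intermediate value theorem along paths) to force constant sign on each, and then invoke the hypothesis that $F\not\equiv 0$ on $D$ to exclude the mixed-sign case. Your organization is in fact somewhat cleaner than the paper's case-by-case treatment, but the underlying ideas are identical.
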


\begin{proof}
Let $P^+=\{(x,y)\in \R^2; x<y\}$ and $P^-=\{(x,y)\in \R^2; x>y\}$.\\
Case 1:   if $F(x_0,y_0)<0$ and $F(x_1,y_1)>0$ for some 
points $P_0=(x_0,y_0)$ and $P_1=(x_1,y_1)$ in $P^+$, then $F$ must have a zero on the
segment  $S$ joining $P_0$ and $P_1$ in $P^+$ (since the image of $S$ under $F$ is an interval).

Case 2: if $F(P_0)<0$ and $F(P_1)>0$ for some $P_0\in P^+$ and $P_1\in P^-$, then we may choose
an arc  $A$ (piecewise parallel to the axis) such that $F\not=0$ on $A\inter D$, which is  a singleton.
By the   the intermediate value theorem, there is a zero of $F$ on the arc $A$, but outside $D$.

Case 3: if $F(Q_0)<0$ and $F(Q_1)>0$ for some $Q_0, Q_1\in D$, then there are 
$P_0\in P^+$ and $P_1\in P^-$ such that $F(P_0)<0$ and $F(P_1)>0$. Hence we  are in the second case.

Thus, all cases yield a contradiction to the assumption. Hence, in the image space, $0$
is a global  extremum.
\end{proof}

{\bf Solution to the problem}
Without loss of generality, we may assume that $g>0$.
Let $$H(a,b)=\begin{cases}\dis\frac{\int_a^b f(x)dx}{\int_a^b g(x) dx}& \text {if $a\not=b$}\vspace{10pt}\\

\dis\frac{f(a)}{g(a)}&\text {if $a=b$}.\end{cases}
$$

We claim  that
 $H$ is continuous on $\R\times \R$. In fact, it suffices to prove continuity at the diagonal.
 So let $(a_0,a_0)\in D$. Then, for $(a,b)\in\R^2\setminus D$, there is $\xi\in \;]a,b[$ such
 that $\int_a^b f(x)dx/(b-a)=f(\xi)\to f(a_0)$ if $(a,b)\to (a_0,a_0)$.
 Thus $\lim H(a,b)=H(a_0)$.

  By assumption,
 $H(a,b)\not= f(c)/g(c)$ whenever $(a,b)$ is outside the diagonal in $\R^2$.
 
 Case 1: $H\equiv f(c)/g(c)$ on the diagonal $D$. Then the function $x\mapsto f(x)/g(x)$ has
 derivative $0$ everywhere, and so satisfies the assertion of the problem.
 
 Case 2: $H$ not constant $f(c)/g(c)$ on $D$. Then, by Lemma \ref{dia} applied
 to $F=H-f(c)/g(c)$, we see that $H\geq  f(c)/g(c)$
 on $\R\times \R$ or $H\leq  f(c)/g(c)$ on $\R\times \R$.  In particular, $c$ is an extrema of 
 the function $x\mapsto f(x)/g(x)$ and so the differentiability of $f/g$ implies that
 $(f/g)'(c)=0$.
  
\newpage

\begin{figure}[h!]
 
 \scalebox{0.5} 
  {\includegraphics{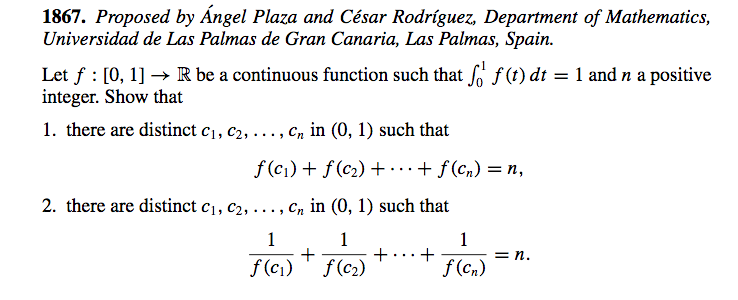}} 

\end{figure}

\centerline {\bf Solution to  problem 1867 Math. Mag. 84 (2011), p. 150}  \medskip

\centerline{Raymond Mortini}

\centerline{- - - - - - - - - - - - - - - - - - - - - - - - - - - - - - - - - - - - - - - - - - - - - - - - - - - - - -}
  
  \medskip

{\small If $f\equiv 1$, then the assertions are  trivially true (just take any $n$ points in $]0,1[$~~).
If $f\not\equiv 1$, then there exist points at which $f$ is strictly less than 1 and
points where $f$ is strictly bigger than one (note that this is the only occasion where we have used
the hypothesis that $\int_0^1f(t)dt=1$).
 Hence, due to  intermediate value theorem,
there is at least one point at which $f$ takes the value 1.  In particular, if $h=f$ or $h=1/f$,
and noticing that the image of $[0,1]$ under $f$ is an interval containing the point $1$ in its interior, 
 there exist $b\in [0,1]$ with $M:=h(b)>1$ and a sequence $(a_i)$ with  $h(a_i)<1$
 and $\lim h(a_i)=1$. By compactness, we may assume that $(a_i)$ is converging to some
 $a\in [0,1]$.  Hence $h(a)=1$ and 
 $$\mbox{$m(\delta):=\min\{ h(x): x\in [a-\delta, a+\delta]\inter [0,1]\}  \to1$ if $\delta\to 0$. }$$
 For later purposes,  we note  that  $m(\delta)<1$.  
   Choose $\delta$ so small that 
$$(n-1)(1-m(\delta))\leq M-1.$$
Then 
$$n-M=(n-1)-(M-1)\leq (n-1)m(\delta).$$
Now choose $n-1$ distinct points $x_1,\dots,x_{n-1}$ in $ [a-\delta, a+\delta]\;\inter \;]0,1[$ such that
$$ m(\delta) <h(x_j)< 1.$$
Then $A:=\sum_{j=1}^{n-1} h(x_j)$ satisfies
$$(n-1)m(\delta)\leq A < n-1.$$
Thus  $n-M\leq A$ and so $1< n-A\leq M$. Again, by the  intermediate value theorem,
there is $x_n\in ]0,1[$ such that $h(x_n)= n-A$. Hence
$$\sum_{j=1}^n h(x_j)=n.$$
Note that $x_n\notin\{x_1,\dots,x_{n-1}\}$.
\bigskip

{\bf Alternate proof concerning the existence of the $c_j$}\medskip

Let $F(x)=\int_0^x f(t)dt$ be the primitive of $f$ vanishing at the origin. 
Let $x_j=j/n, j=0,1,\dots, n$. Then, by the mean-value theorem of  differential calculus, 
there exist $c_j\in\; ]x_{j-1},x_j[\;\ss\; ]0,1[$ such that
$$ 1= F(1)-F(0)=\sum_{j=1}^n (F(x_j)-F(x_{j-1}))=
\sum_{j=1}^n F'(c_j) (x_j-x_{j-1})= \frac{1}{n}\sum_{j=1}^n f(c_j).$$
}

\newpage

\begin{figure}[h!]
 
 \scalebox{0.5} 
  {\includegraphics{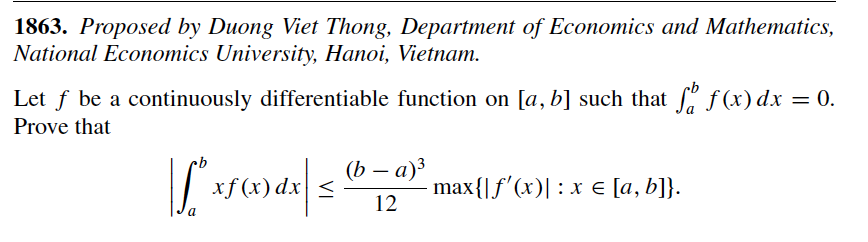}} 

\end{figure}

\centerline {\bf Solution to  problem 1863,   Math. Mag. 84 (2011), 64.}

\centerline{Raymond Mortini}

\centerline{- - - - - - - - - - - - - - - - - - - - - - - - - - - - - - - - - - - - - - - - - - - - - - - - - - - - - -}
  
  \medskip

We use Carath\'eodory's definition of differentiability:
A function $f:I\to \R$ is differentiable at a point $x_0\in I$,  $I\ss\R$ an  interval,
if there exists a function $g=g_{x_0}:I\to\R$ continuous at $x_0$ such that
 $$f(x)=f(x_0)+(x-x_0)g(x);$$
just define $g_{x_0}(x)=\begin{cases}\frac{f(x)-f(x_0)}{x-x_0}& \text{ if $x\not =x_0$}\\
 f '(x_0) & \text{ if $x\not =x_0$}\end{cases}.$
 
 Now if $f\in C^1[a,b]$, then $g_{x_0}$ is continuous and, by Rolle's theorem, 
  $g_{x_0}(x)=f ' (\xi)$ for some $\xi \in \;]a,b[$, 
 $\xi$ depending on $ x_0$ and $x$.
 Hence $$\sup_{a\leq s\leq b}|g_{x_0}(s)| \leq \max_{a\leq t\leq b} |f '(t)|=:M.$$
 
 \medskip
Let $c=(a+b)/2$. Then, using the hypotheses that $\int_a^b f(x)dx =0$ and
the fact that $\int_a^b (x-c)dx =0$ we obtain the following equalities:

$$J:=\int_a^b xf(x)dx= \int_a^b (x-c)f(x)dx=$$ 
$$\int_a^b(x-c)(f(x)-f(c)) dx=
 \int_a^b (x-c)^2 g_c(x)dx.$$

Thus

$$|J|\leq \int_a^b(x-c)^2 Mdx =\frac{1}{3}\left[{(x-c)^3}\right]^b_a M=$$
$$\frac{2}{3}\left( \frac{b-a}{2}\right)^3M
=\frac{1}{12}(b-a)^3 M.$$
 
If $f(x)=x$ and $a=-1, b=1$ then $\int_{-1}^1 f(x)dx=0$  and $\int_{-1}^1 xf(x)dx= 1/3=
(b-a)^3/12$.

\newpage

\begin{figure}[h!]
 
 \scalebox{0.5} 
  {\includegraphics{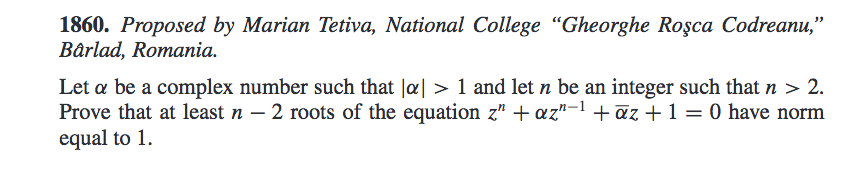}} 

\end{figure}


\centerline {\bf Solution to  problem 1860,   Math. Mag. 83 (2010), 392}   \medskip

\centerline{Raymond Mortini}

\centerline{- - - - - - - - - - - - - - - - - - - - - - - - - - - - - - - - - - - - - - - - - - - - - - - - - - - - - -}
  
  \medskip

We use the Schwarz-Pick Lemma telling us that holomorphic selfmaps of the unit
disk are contractions with respect to the (pseudo)-hyperbolic metric $\rho$
and that $\rho(f(z), f(w))=\rho(z,w)$ for some pair $(z,w)\in \D^2$, $z\not=w$ implies
that $f$ is  a conformal selfmap of $\D$ (hence of the form $e^{i\theta}\frac{b-z}{1-\ov b z}$)
and so a (pseudo)-hyperbolic isometry.

Note that $z^n+\alpha z^{n-1}+\ov a z+1=0$ for some $z\in \D$ if and only if 
$z^{n-1}=- \frac{\ov a z+1}{a+z}$. Now suppose that there are two solutions $z,w$
in $\D$.  Let $f(z)= - \frac{\ov a z+1}{a+z}$. Then
$$\rho(z,w)=\rho(f(z), f(w))=\rho(z^{n-1}, w^{n-1}).$$
But this would imply that $z^{n-1}$ is a bijection of $\D$ onto itself; a contradiction
since $n\geq 3$.  

Thus the equation $z^n+\alpha z^{n-1}+\ov \alpha z+1=0$ has at most one solution in $\D$.
Since $z$ is a solution if and only if $\frac {1}{~~\ov z~~}$ is a solution, we see
that this polynomial of degree $n$ must have at least $n-2$ solutions (multiplicities counting) on the unit circle. 

Next we note that $u\in \T$ s a solution of modulus one of $z^n+\alpha z^{n-1}+\ov \alpha z+1=0$
if and only if $u$ is a fixed point on $\T$ of the selfmap $\varphi(z)=f^{-1}(z^{n-1})$ of $\D$.
Since  the derivative of $\varphi$ does not vanish
at boundary fixed points, we conclude that  there are at least $n-2$ 
{\sl distinct} solutions of unit modulus.

\newpage

\gr{\huge\section{College Math. J.}}
\bigskip

\centerline{\gr{\copyright Mathematical Association of America, 2025.  }}


\bigskip

 \begin{figure}[h!]
  \scalebox{0.45} 
  {\includegraphics{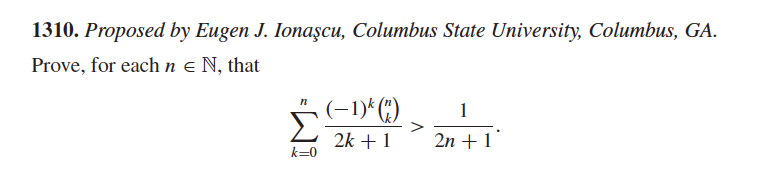}} 
\end{figure}

\centerline{\bf Solution to problem 1310 College Math. J. 56 (2025), 327}

\bigskip
 

 \centerline {Raymond Mortini, Rudolf Rupp} \medskip

\centerline{- - - - - - - - - - - - - - - - - - - - - - - - - - - - - - - - - - - - - - - - - - - - - - - - - - - - - -}
  
  \medskip

\newpage

 \begin{figure}[h!]
  \scalebox{0.45} 
  {\includegraphics{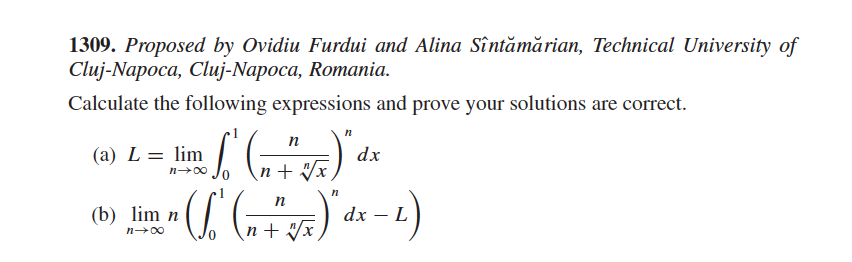}} 
\end{figure}

\centerline{\bf Solution to problem 1309 College Math. J. 56 (2025), 327}

\bigskip
 

 \centerline {Raymond Mortini, Rudolf Rupp} \medskip

\centerline{- - - - - - - - - - - - - - - - - - - - - - - - - - - - - - - - - - - - - - - - - - - - - - - - - - - - - -}
  
  \medskip

\newpage

 \begin{figure}[h!]
  \scalebox{0.45} 
  {\includegraphics{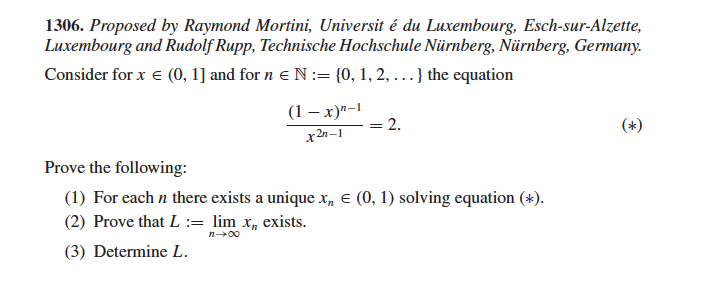}} 
\end{figure}

\centerline{\bf Solution to problem 1306 College Math. J. 56 (2025), 326}

\bigskip
 
\pagecolor{yellow}

 \centerline {Raymond Mortini, Rudolf Rupp} \medskip

\centerline{- - - - - - - - - - - - - - - - - - - - - - - - - - - - - - - - - - - - - - - - - - - - - - - - - - - - - -}
  
  \medskip
  
  \newpage
  
  \nopagecolor

   \begin{figure}[h!]
  \scalebox{0.45} 
  {\includegraphics{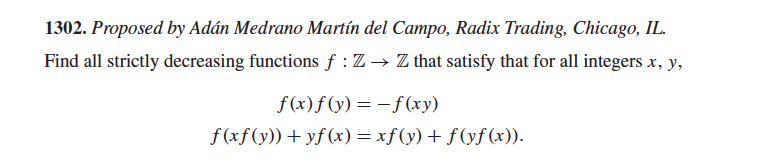}} 
\end{figure}

\centerline{\bf Solution to problem 1302 College Math. J. 56 (2025), 244}

\bigskip

 \centerline {Raymond Mortini, Rudolf Rupp} \medskip

\centerline{- - - - - - - - - - - - - - - - - - - - - - - - - - - - - - - - - - - - - - - - - - - - - - - - - - - - - -}
  
  \medskip

\newpage

\newpage

   \begin{figure}[h!]
  \scalebox{0.55} 
  {\includegraphics{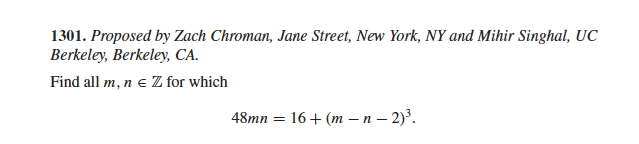}} 
\end{figure}

\centerline{\bf Solution to problem 1301 College Math. J. 56 (2025), 244}

\bigskip

 \centerline {Raymond Mortini, Rudolf Rupp} \medskip

\centerline{- - - - - - - - - - - - - - - - - - - - - - - - - - - - - - - - - - - - - - - - - - - - - - - - - - - - - -}
  
  \medskip

\newpage

   \begin{figure}[h!]
  \scalebox{0.45} 
  {\includegraphics{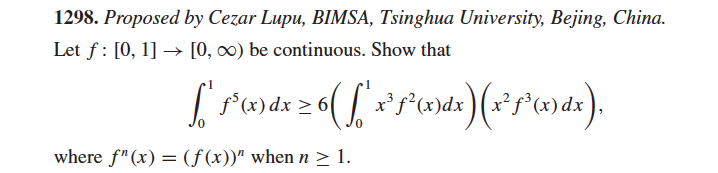}} 
\end{figure}

\centerline{\bf Solution to problem 1298 College Math. J. 56 (2025), 159}

\bigskip
 

 \centerline {Raymond Mortini} \medskip

\centerline{- - - - - - - - - - - - - - - - - - - - - - - - - - - - - - - - - - - - - - - - - - - - - - - - - - - - - -}
  
  \medskip

\newpage

   \begin{figure}[h!]
  \scalebox{0.45} 
  {\includegraphics{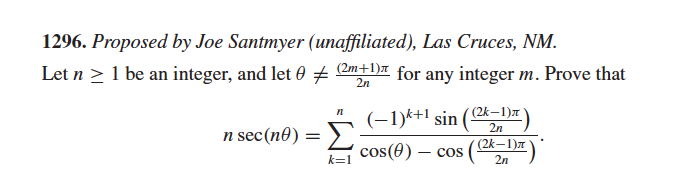}} 
\end{figure}

\centerline{\bf Solution to problem 1296 College Math. J. 56 (2025), 159}

\bigskip
 

 \centerline {Raymond Mortini, Rudolf Rupp } \medskip

\centerline{- - - - - - - - - - - - - - - - - - - - - - - - - - - - - - - - - - - - - - - - - - - - - - - - - - - - - -}
  
  \medskip

  This is straightforward. 
  Let $T_n(x)$ be the Chebychev polynomial (of the fist kind). This has degree $n$ and is defined as $T_n(\cos t)=\cos(nt)$.
  Its zeros are $x_k:=\cos(\frac{(2k-1)\pi}{2n})$, $k=1,\dots, n$. They are all simple. Thus, the partial fraction decomposition of
   $n/T_n(x)$ is given by
   $$\frac{n}{T_n(x)}=\sum_{k=1}^n \frac{c_k}{x-x_k},$$
   where the residues/coefficients $c_k$ are given by
\begin{eqnarray*}
c_k&=&\lim_{x\to c_k} n\frac{x-x_k}{T_n(x)}\buildrel=_{x=\cos t}^{}\lim_{t\to (2k-1)\pi/2n} n\frac{\cos t-x_k}{T_n(\cos t)}=
\lim_{t\to (2k-1)\pi/2n} n\; \frac{\cos t-x_k}{\cos(n t)}\\
&\buildrel=_{}^{{\rm L'Hospital}}&\frac{-n\sin \left(\frac{(2k-1)\pi}{2n}\right)}{-n\sin\left(n \frac{(2k-1)\pi}{2n}\right))}=
\frac{-\sin \left(\frac{(2k-1)\pi}{2n}\right)}{\sin(k\pi-\pi/2)}=-(-1)^{k} \sin \left(\frac{(2k-1)\pi}{2n}\right).
\end{eqnarray*}  
Hence
$$\frac{n}{\cos(nt)}= \sum_{k=1}^n \frac{c_k}{\cos t-x_k}=\sum_{k=1}^n \frac{(-1)^{k+1}\sin \left(\frac{(2k-1)\pi}{2n}\right)}
{\cos t- \cos(\frac{(2k-1)\pi}{2n})}.
$$


\newpage

 \begin{figure}[h!]
  \scalebox{0.45} 
  {\includegraphics{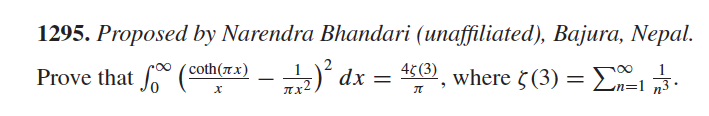}} 
\end{figure}

\centerline{\bf Solution to problem 1295 College Math. J. 56 (2025), 69 }

\bigskip

 \centerline {Raymond Mortini, Rudolf Rupp } \medskip

\centerline{- - - - - - - - - - - - - - - - - - - - - - - - - - - - - - - - - - - - - - - - - - - - - - - - - - - - - -}
  
  \medskip

  {\bf First method.}
  The partial fraction decomposition of $\coth (\pi x)$ is given by
  $$\coth (\pi x)=  \frac{x}{\pi}\; \sum_{k\in \Z} \frac{1}{k^2+x^2}.
    $$
  Hence
  $$\frac{\coth(\pi x)}{x}=\frac{2}{\pi}\sum_{k=1}^\infty \frac{1}{k^2+x^2}+ \frac{1}{\pi x^2}.
  $$
  Consequently, since $\int\sum\sum=\sum\sum\int$ (note that all terms are positive)
  \begin{eqnarray*}
  I:=\int_0^\infty \left( \frac{\coth(\pi x)}{x}-\frac{1}{\pi x^2}\right)^2 \;dx&=& \frac{4}{\pi^2}\int_0^\infty \left(\sum_{k=1}^\infty\frac{1}{k^2+x^2}\right)^2\;dx\\
  &=&\frac{4}{\pi^2}\int_0^\infty \sum_{n=1}^\infty \sum_{k=1}^\infty \frac{1}{k^2+x^2}\;\frac{1}{n^2+x^2}\;dx\\
  &=& \frac{4}{\pi^2}\sum_{n=1}^\infty \sum_{k=1}^\infty\int_0^\infty  \frac{1}{k^2+x^2}\;\frac{1}{n^2+x^2}\;dx.
\end{eqnarray*}
  
  Now, for $n\not =k$,  
  $$ \frac{1}{k^2+x^2}\;\frac{1}{n^2+x^2} =\frac{1}{n^2-k^2}\left(\frac{1}{k^2+x^2} -\frac{1}{n^2+x^2}\right).
  $$
  Using that $\dis\int_0^\infty \frac{1}{k^2+x^2}\;dx= \frac{\pi}{2k}$ and $\dis \int_0^\infty \frac{1}{(n^2+x^2)^2}\;dx=\frac{\pi}{4n^3}$, we obtain
  \begin{eqnarray*}
  I&=&\frac{4}{\pi^2}\;\sum_{n=1}^\infty  \left(\sum_{k=1\atop k\not=n}^\infty  \frac{\pi}{2} \frac{1}{n^2-k^2} 
   \left(\frac{1}{k}-\frac{1}{n}\right) +\frac{\pi}{4n^3}\right)\\
   &=&\frac{2}{\pi}\sum_{n=1}^\infty\frac{1}{n}\; \left(\sum_{k=1\atop k\not=n}^\infty \frac{1}{(n+k) k}+\frac{1}{2n^2}\right)\\
   &=& \frac{2}{\pi}\sum_{n=1}^\infty\sum_{k=1}^\infty \frac{1}{(k+n)kn}\\
   &=&\frac{2}{\pi}\sum_{n=1}^\infty\frac{1}{n^2}\; \sum_{k=1}^\infty\left(\frac{1}{k}-\frac{1}{n+k}\right)\\
   &\buildrel=_{}^{(*)}&\frac{2}{\pi}\sum_{n=1}^\infty\frac{H_n}{n^2},
\end{eqnarray*}
  where $H_n=1+(1/2)+\cdots+(1/n)$ is the $n$-th harmonic  number. It is now well-known that 
  $$\sum_{n=1}^\infty\frac{H_n}{n^2}=2 \zeta(3).$$
  (see for instance \cite[p. 206]{Fu}). Hence $I=\frac{4}{\pi}\zeta(3).$\medskip
  
  {\it Remark}  Note that $(*)$ can be verified as follows: put $a_k:=1/k$, $H_n=\sum_{k=1}^n a_k$  and let $N>n$. We use that
  $H_n-\log n\to \gamma$, where $\gamma$ is the Euler-Mascheroni constant. Then
  \begin{eqnarray*}
  \sum_{k=1}^N (a_k-a_{n+k})&=& H_N-\sum_{k=n+1}^{n+N}a_k =\left(H_N-\sum_{k=1}^{n+N} a_k \right) +H_n\\
  &=& H_N-H_{n+N}+H_n=(H_N-\log N)- (H_{n+N}-\log(n+N))+\log\frac{N}{N+n}+H_n\\
  &\buildrel\longrightarrow_{N\to\infty}^{}& \gamma-\gamma + 0+ H_n=H_n.
 \end{eqnarray*}
 \bigskip
  
  {\bf Second method}  We use the residue theorem.  Let
  $$f(z):=\left(\frac{e^{2\pi z}+1}{(e^{2\pi z}-1)z}-\frac{1}{\pi z^2}\right)^2.
  $$
  Note that $f$ is meromorphic in $\C$ with double poles in $z_k=i k$ for $k=1,2,3,\dots$
  Moreover, as above,  
  $$f(z)= \frac{4}{\pi^2}\ \left(\sum_{k=1}^\infty\frac{1}{k^2+z^2}\right)^2.$$
This shows that $0$ is a removable singularity of $f$. Finally, $f$ is continuous on $\R$ and 
 $$I=\frac{1}{2}\int_{-\infty}^\infty  f(x) dx.
  $$
  Consider the rectangle $R_n:=[-n,n]\times [0, n+\frac{1}{2}]$. Then
 \begin{eqnarray*}
\int_{\partial R_n} f(z) dz&=&\int_{-n}^n f(x) dx +\int_0^{n+\frac{1}{2} }f(n+it) idt- \int_{-n}^n f(t+i(n+\frac{1}{2})) dt- 
  \int_0^{n+\frac{1}{2}} f(-n+it) i dt \\
  &=&I_1(n)+I_2(n)+I_3(n)+I_4(n).
  \end{eqnarray*}
  We show that $\lim_{n\to\infty}I_j(n)=0$ for $j=1,2,3$. 
  In view of the residue theorem, we then have
  $$I=i\pi \sum_{k=1}^\infty {\rm Res}\,(f, z_k).$$
  
  Here are the estimates:
  \begin{eqnarray*}
  |I_2(n)|&\leq &(n+\frac{1}{2}) \left(\frac{e^{2\pi n}+1}{e^{2\pi n}-1} \frac{1}{n} +\frac{1}{n^2}\right)^2\leq
  2n \left(\frac{3}{n}+\frac{1}{n^2}\right)^2\leq \frac{32}{n}\\
|I_4(n)|&\leq &(n+\frac{1}{2}) \left(\frac{1+e^{-2\pi n}}{1-e^{-2\pi n}} \frac{1}{n} +\frac{1}{n^2}\right)^2\leq
2n\left(\frac{4}{n} +\frac{1}{n^2}\right)^2\leq \frac{50}{n}.
\end{eqnarray*}
  Moreover, by using that $|\tanh x|\leq 1$,
  \begin{eqnarray*}
  |I_3(n)|&=&\left|\int_{-n}^n \left(\frac{e^{2\pi (t+i(n+\frac{1}{2})}+1}{(e^{2\pi (t+i(n+\frac{1}{2}))}-1) (t+i(n+\frac{1}{2})}-
  \frac{1}{\pi (t+i(n+\frac{1}{2})^2}\right)^2 dt\right|\\
  &=&\left|\int_{-n}^n \left(\frac{e^{2\pi t}-1}{e^{2\pi t}+1}\;\frac{1}{t+i(n+\frac{1}{2})}-\frac{1}{\pi (t+i(n+\frac{1}{2}))^2}\right)^2 dt \right|\\
  &\leq& 2n \;\left( \frac{1}{n}+\frac{1}{n^2}\right)^2\leq \frac{8}{n}.
\end{eqnarray*}
  Finally, we show that  $\dis {\rm Res}\,(f, z_k)=-\frac{4i}{\pi^2 k^3}$. Fix $k\in \N^*$ and consider the variable $w:=z-ki$. 
  The auxiliary function $g$ is defined as follows:
\begin{eqnarray*}
g(w)=f(w+ki)&=& \frac{e^{2\pi w}+1}{e^{2\pi w}-1}\frac{1}{w+ki}-\frac{1}{\pi(w+ki)^2}\\
&=& \frac{\coth (\pi w)}{w+ki}-\frac{1}{\pi(w+ki)^2}.
\end{eqnarray*}
  Then $g$ is meromorphic in $\C$ and has a simple  pole at the origin with Laurent expansion (near the origin)
  $$g(w)=\frac{b_{-1}}{w} +b_0+ \Oh(w).$$
  Here 
  $$b_{-1}=\lim_{w\to 0} w g(w)=\lim_{w\to 0} \frac{1}{\pi}\cosh (\pi w) \frac{\pi w}{\sinh (\pi w)} \frac{1}{w+ki} +0= \frac{1}{\pi ki}
  =-\frac{i}{\pi k},
  $$
  and
  \begin{eqnarray*}
  b_0&=&\lim_{w\to 0}\left( \frac{\coth(\pi w)}{w+ki}-\frac{1}{\pi(w+ki)^2}-\frac{b_{-1}}{w}\right)\\
  &=&\lim_{w\to 0} \frac{w\cosh(\pi w)-\sinh(\pi w)b_{-1}(w+ki)}{\sinh(\pi w)(w+ki)w}+\frac{1}{\pi k^2}\\
  &=& \lim_{w\to 0}\frac{w (1+ \frac{\pi^2}{2} w^2+\cdots)-(\pi w+\frac{\pi^3}{6}w^3+\cdots )(w+ki)(-\frac{i}{\pi k})}{(\pi w+\frac{\pi^3}{6}w^3+\cdots)(w+ki)w}+\frac{1}{\pi k^2}\\
  &=&\lim_{w\to 0}\frac{1+\Oh(w^2)+\cdots -(\pi ki+\pi w)(-\frac{i}{\pi k})+\Oh(w^2)}{(\pi+\cdots)(w+ki)w}+\frac{1}{\pi k^2}\\
  &=& \frac{\frac{i}{k}}{\pi ki}+\frac{1}{\pi k^2}= \frac{2}{\pi k^2}.
\end{eqnarray*}

Now
\begin{eqnarray*}
(g(w))^2&=&\left(\frac{b_{-1}}{w} +b_0+ \oh(w)\right)^2=\frac{(b_{-1}+b_0w+\Oh(w^2))^2}{w^2}\\
&=&\frac{b_{-1}^2+2b_0b_{-1}w +\Oh(w^2)}{w^2}.
\end{eqnarray*}

Thus 
$${\rm Res}(f, ik)={\rm Res}(g, 0)= 2b_0b_{-1}=2 \left(-\frac{i}{\pi k}\right)\;\frac{2}{\pi k^2}=-\frac{4i}{\pi^2k^3}.$$
We conclude that

\begin{eqnarray*}
&I&=i\pi \sum_{k=1}^\infty {\rm Res}\,(f, z_k)=i\pi \sum_{k=1}^\infty \left(-\frac{4i}{\pi^2k^3}\right) 
=\frac{4}{\pi}\sum_{k=1}^\infty \frac{1}{k^3}=\frac{4 \zeta(3)}{\pi}.
\end{eqnarray*}


\newpage

 \begin{figure}[h!]
  \scalebox{0.45} 
  {\includegraphics{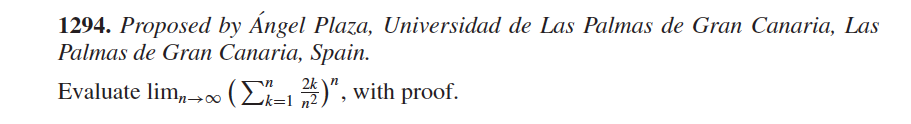}} 
\end{figure}

\centerline{\bf Solution to problem 1294 College Math. J. 56 (2025), 69 }

\bigskip

 \centerline {Raymond Mortini, Rudolf Rupp } \medskip

\centerline{- - - - - - - - - - - - - - - - - - - - - - - - - - - - - - - - - - - - - - - - - - - - - - - - - - - - - -}
  
  \medskip

Let $\dis L_n:= \left(\sum_{k=1}^n \frac{2k}{n^2}\right)^n$. We show that 
$$\ovalbox{$\dis \lim_{n\to\infty} L_n=e$.}$$
\medskip

In fact, using Gauss's classroom formula $\sum_{k=1}^n k =n(n+1)/2$, we immediately obtain  that 
\begin{eqnarray*}
L_n&=&\left(\frac{2}{n^2}\sum_{k=1}^n k\right)^n= \left(\frac{2}{n^2} \frac{n(n+1)}{2}\right)^n=\left(\frac{n+1}{n}\right)^n =
\left(1+\frac{1}{n}\right)^n.
\end{eqnarray*}
Hence $\lim L_n=e$.

\newpage

  \begin{figure}[h!]
  \scalebox{0.45} 
  {\includegraphics{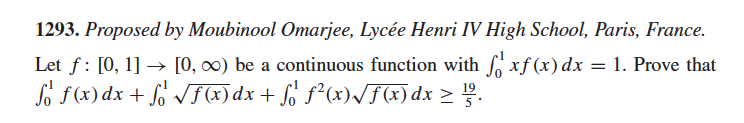}} 
\end{figure}

\centerline{\bf Solution to problem 1293 College Math. J. 56 (2025), 69 }

\bigskip

 \centerline {Raymond Mortini, Peter Pflug } \medskip

\centerline{- - - - - - - - - - - - - - - - - - - - - - - - - - - - - - - - - - - - - - - - - - - - - - - - - - - - - -}
  
  \medskip

We think that the statement was not carefully thought out, as already the third term is bigger than  $19/5$: in fact,
   by H\"older's inequality
  
  $$1=\int_0^1 xf(x) dx \leq \left(\int_0^1 x^{5/3}dx\right)^{3/5}\;\left( \int_0^1 f(x)^{5/2}dx\right)^{2/5}$$
Thus
$$ \int_0^1 f(x)^{5/2}dx\geq \frac{1}{\left((8/3)^{-3/5}\right)^{5/2}}=\left(\frac{8}{3}\right)^{3/2}\sim 4.3546\cdots>19/5= 3.80.
$$

\bigskip

\centerline{- - - - - - - - - - - - - - - - - - - - - - - - - - - - - - - - - - - - - - - - - - - - - - - - - - - - - -}
  \bigskip

Just for curiosity:

$$0\leq \int_0^1 (\sqrt f -x)^2 \sqrt f dx=\int_0^1( f\sqrt f +x^2 \sqrt f -2x  f )dx\leq \int_0^1 ( f\sqrt f +\sqrt f )dx  -2$$
Hence  $$\int f+\int \sqrt f + \int f\sqrt f \geq 3.$$

Since
$$1=\int xf dx \leq \left(\int x^3dx\right)^{1/3}\;\left( \int f^{3/2}dx\right)^{2/3}$$
we also have that 
$$\int f^{3/2} dx \geq \frac{1}{\left(4^{-1/3}\right)^{3/2}}= 2. $$

\newpage

   \begin{figure}[h!]
  \scalebox{0.45} 
  {\includegraphics{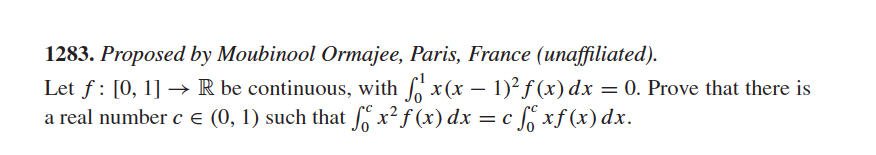}} 
\end{figure}

\centerline{\bf Solution to problem 1283 College Math. J. 55 (2024), 353 }

\bigskip
 

 \centerline {Raymond Mortini, Peter Pflug } \medskip

\centerline{- - - - - - - - - - - - - - - - - - - - - - - - - - - - - - - - - - - - - - - - - - - - - - - - - - - - - -}
  
  \medskip

We shall use the mean value theorem (MVT) for integrals.   Let
$$h(x):= x\int_0^x t f(t)dt-\int_0^x t^2f(t)dt.$$
Then this $c\in ]0,1[$ with $h(c)=0$ exists once we can show that
$I:=\int_0^1 h(x)dx=0$.  This is true, though, in view of the assumption
$$\int_0^1 (1-t^2)t f(t)=0.$$
In fact, using Fubini's theorem, and noticing that $0\leq t\leq x\leq 1$,
\begin{eqnarray*}
\int_0^1 h(x) dx&=& \int_0^1 \int_0^x \left(xt f(t)- t^2f(t)\right)dt dx
=\int_0^1 \int_t^1 \left(xt f(t)- t^2f(t)\right)dxdt\\
&=&\int_0^1\int_t^1 (x-t) tf(t) dx dt=
\int_0^1 \left(\frac{1-t^2}{2} -(1-t)t\right) tf(t)\; dt\\
&=&\frac{1}{2}\;\int_0^1 (1-t)^2  t f(t)\; dt=0.
\end{eqnarray*}

\newpage


\gr{\huge\section{Elemente der Mathematik}}
\bigskip


  \begin{figure}[h!]
  \scalebox{0.55} 
  {\includegraphics{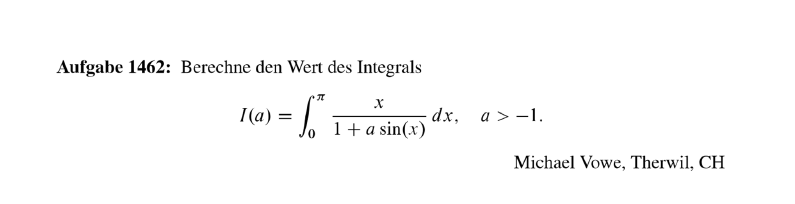}} 
\end{figure}



\centerline{\bf Solution to problem 1462 Elem. Math. 80 (2025), 132}

\bigskip

 \centerline {Raymond Mortini, Rudolf Rupp } \medskip

\centerline{- - - - - - - - - - - - - - - - - - - - - - - - - - - - - - - - - - - - - - - - - - - - - - - - - - - - - -}
  
  \medskip

\newpage

   \begin{figure}[h!]
  \scalebox{0.35} 
  {\includegraphics{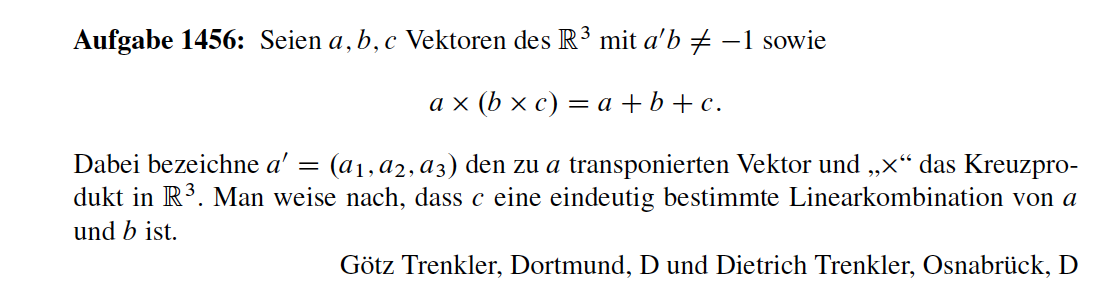}} 
\end{figure}



\centerline{\bf Solution to problem 1456 Elem. Math. 80 (2025), 38}

\bigskip

 \centerline {Raymond Mortini, Rudolf Rupp } \medskip

\centerline{- - - - - - - - - - - - - - - - - - - - - - - - - - - - - - - - - - - - - - - - - - - - - - - - - - - - - -}
  
  \medskip


Benutzt man die Grassman Identit\"at aus dem Physikunterricht, welche aussagt dass
  $$a\times(b\times c)= b (a\cdot c) -c(a\cdot b)$$
  gilt (hier ist das Skalarprodukt mit $\cdot{}$ gekennzeichnet und (zur besseren Memorisierung)
   die  "\"ubliche" Schreibweise   $\lambda\, \vec x$ der Multiplikation eines Vektors mit einem Skalar durch
   $\vec x\, \lambda$ ersetzt)), so erh\"alt man:
   \begin{eqnarray*}
   a+b+c=a\times (b\times c)& \iff & a+b+c=b(a\cdot c)-c(a\cdot b)\\
   &\iff& a+ b(1-a\cdot c) +c\underbrace{(1+a\cdot b)}_{\not=0}=0\\
   &\iff& c=- a\;\frac{1}{1+a\cdot b} - b\;\frac{1-a\cdot c}{1+a\cdot b}.
\end{eqnarray*}

\bigskip
{\bf Bemerkung} Wozu den transponierten Vektor einf\"uhren? Man nehme anstatt Spaltenvektoren  $\underline a$ im $\R^3$ die entsprechenden Zeilenvektoren $a$ und dann ist $ \underline a' \;\underline b=a\cdot b$.

\newpage

   \begin{figure}[h!]
  \scalebox{0.45} 
  {\includegraphics{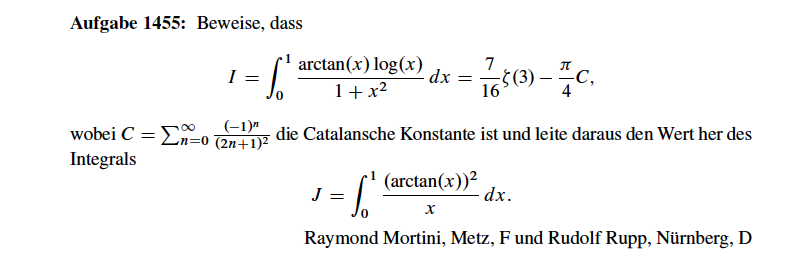}} 
\end{figure}



\centerline{\bf Solution to problem 1455 Elem. Math. 79 (2024), 176}

\bigskip

 \centerline {Raymond Mortini, Rudolf Rupp } \medskip

\centerline{- - - - - - - - - - - - - - - - - - - - - - - - - - - - - - - - - - - - - - - - - - - - - - - - - - - - - -}
  
  \medskip


 We use the substitution $t=\arctan x$, $dt=\frac{dx}{1+x^2}$. Thus
   
   $$I=\int_0^{\pi/4} t\log(\tan t) dt.$$
     
     Now we compute  the Fourier series for $\log\tan t$:
     
   \begin{eqnarray*}
\log\tan t&=&\log \cos t -\log \sin t =\log(\co{2} \cos t)-\log(\co{2}\sin t)\\
&=&\left(\cos(2t)-\frac{\cos(4t)}{2}+\frac{\cos(6t)}{3}-\cdots\right) - \left(-\cos(2t)-\frac{\cos(4t)}{2}-\frac{\cos(6t)}{3}-\cdots\right)\\
&=&-2 \left(\cos(2t)+\frac{\cos(6t)}{3} + \frac{\cos(10t)}{5}+\cdots\right)\\
&=& -2 \sum_{n=1}^\infty\frac{1}{2n-1} \cos((4n-2)t).
\end{eqnarray*}
   
  Hence
  $$t\log \tan t=-2\sum_{n=1}^\infty \frac{t}{2n-1} \cos((4n-2)t).
  $$
  This  converges uniformly on $[0,\pi/4]$ \footnote{ Abel-Dirichlet rule \cite[p. 2002]{moru} for $\sum a_n \cdot b_n(t)$
   with $|\sum_{n=0}^m b_n(t)|\leq M$, 
  $a_n=1/(2n-1)$ and $b_n(t)=t e^{it(4n-2)}$. Here $M=\pi/2$  as $0\leq x/\sin x \leq \pi/2$ for $0\leq x\leq \pi/2$.}.
  Hence $\int\sum=\sum\int$. Since a primitive of $t\cos (at)$ is given by
  $$
  \frac{t\sin(at)}{a}+\frac{\cos(at)}{a^2},
  $$
  we obtain
  $$I=-2\sum_{n=1}^\infty \frac{1}{2n-1}\left(-\frac{1}{4}\,\frac{1}{(2n-1)^2}- \frac{\pi}{8}\frac{(-1)^n}{2n-1}\right).
  $$ 
  
  Hence 
  $$I=\frac{1}{2}\sum_{n=1}^\infty \frac{1}{(2n-1)^3}-\frac{\pi}{4}\sum_{n=1}^\infty\frac{(-1)^{n-1}}{(2n-1)^2}
  =\frac{7}{16}\zeta(3)-\frac{\pi}{4}\; C.$$

   As  a corollary we obtain  the following known result (\cite{ko}) \footnote{ Which we stumbled upon accidentally  when we googled to check whether our result was new or not. It turned out that, coincidentally, the proofs are similar,}:
   
   $$J=\int_0^1 \frac{(\arctan x)^2}{x}dx= \frac{\pi C}{2}-\frac{7}{8}\zeta(3).$$
   
   In fact, using partial integration $\int uv'=uv-\int u'v$ with $u=\log x$ and $v'=\frac{\arctan x}{1+x^2}$ we obtain
   \begin{eqnarray*}
   \int_0^1 \frac{(\arctan x)(\log x)}{1+x^2}dx&=&\frac{1}{2}\, \log x( \arctan x)^2)\Big|_0^1- \frac{1}{2}\, \int_0^1 \frac{(\arctan x)^2}{x} dx\\
   &=&- \frac{1}{2}\, \int_0^1 \frac{(\arctan x)^2}{x} dx.
\end{eqnarray*}

   Or vice versa, using partial integration with  $u=(\arctan x)^2$ and $v'=1/x$,
   
   \begin{eqnarray*}
    \int_0^1 \frac{(\arctan x)^2}{x} dx&=& (\arctan x)^2\log x\Big|_0^1-\int_0^1 2\frac{\arctan x}{1+x^2}\log x dx\\
    &=& -2   \int_0^1 \frac{(\arctan x)(\log x)}{1+x^2}dx.
\end{eqnarray*}

\newpage
\nopagecolor

   \begin{figure}[h!]
  \scalebox{0.45} 
  {\includegraphics{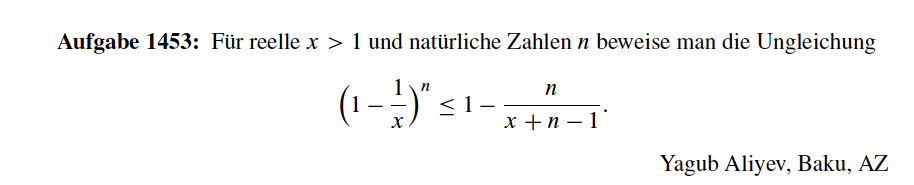}} 
\end{figure}

\centerline{\bf Solution to problem 1453 Elem. Math. 79 (2024), 176}

\bigskip

 \centerline {Raymond Mortini, Rudolf Rupp } \medskip

\centerline{- - - - - - - - - - - - - - - - - - - - - - - - - - - - - - - - - - - - - - - - - - - - - - - - - - - - - -}
  
  \medskip


This is entirely trivial.  First we have that
\begin{eqnarray*}
\left(1-\frac{1}{x}\right)^n\leq 1-\frac{n}{x+n-1}=\frac{x-1}{x+n-1}=x\;\frac{1-\frac{1}{x}}{x+n-1}&\iff&
\ovalbox{$\dis\left(1-\frac{1}{x}\right)^{n-1}\leq \frac{x}{x+n-1}$}.
\end{eqnarray*}

Now the right inequality is shown by induction ($x>1$):\\

$n=0$:  $\left(1-\frac{1}{x}\right)^{-1}=\frac{x}{x-1}$.

$n\to n+1$:

\begin{eqnarray*}
\left(1-\frac{1}{x}\right)^n&\buildrel\leq_{}^{{\rm  ind. hyp}}& \frac{x}{x+n-1}\left(1-\frac{1}{x}\right)= \frac{x}{x+n-1}\frac{x-1}{x}=
\frac{x\co{+n}-1 \co{-n}}{x+n-1}\\
&=& 1-\frac{n}{x+n-1}\leq 1-\frac{n}{x+n}= \frac{x}{x+n}.
\end{eqnarray*}

\newpage

\newpage

   \begin{figure}[h!]
  \scalebox{0.45} 
  {\includegraphics{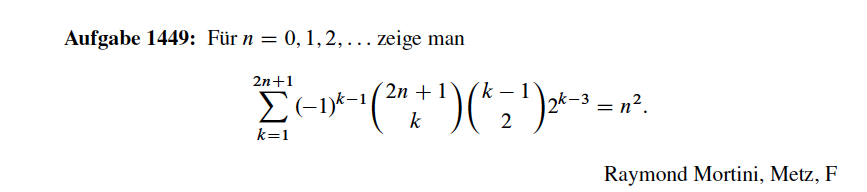}} 
\end{figure}

\nopagecolor

\centerline{\bf Solution to problem 1449 Elem. Math. 79 (2024), 131}

\bigskip
 

 \centerline {Raymond Mortini} \medskip

\centerline{- - - - - - - - - - - - - - - - - - - - - - - - - - - - - - - - - - - - - - - - - - - - - - - - - - - - - -}
  
  \medskip

 Es sei $\dis f(x)=\frac{(x+1)^{2n+1}}{x}$. Dann gilt
 $$f''(x)= \frac{(x+1)^{2n-1}\Big(2n(2n-1)x^2-(4n-2)x+2\Big)}{x^3}$$
 sowie
 $$f''(-2)=2n^2-\frac{1}{4}$$
 Nun berechnen wir die Laurentreihe und deren zweite Ableitung:
 $$f(x)=\sum_{k=0}^{2n+1} {2n+1\choose k} x^{k-1}.$$
\begin{eqnarray*}
f''(x)&=&\sum_{k=0}^{2n+1}{2n+1\choose k}  (k-1)(k-2)  x^{k-3}\\
&=&2\;\sum_{k=0}^{2n+1}{2n+1\choose k}  {k-1 \choose 2}  x^{k-3}
\end{eqnarray*}
 Auswertung an $x=-2$ ergibt:
\begin{eqnarray*}
f''(-2)&=&2\;\sum_{k=0}^{2n+1} (-1)^{k-1}{2n+1\choose k}  {k-1 \choose 2}  2^{k-3}\\
&=& 2 (-1)^1 {2n+1\choose 0}{-1 \choose 2} 2^{-3} + 2A\\
&=&-\frac{1}{4}+2A.
 \end{eqnarray*}
Folglich gilt  $A=n^2$.\\

This was a very special case of problems treated in \cite[p.8]{chmo}.


\newpage

\nopagecolor

   \begin{figure}[h!]
  \scalebox{0.45} 
  {\includegraphics{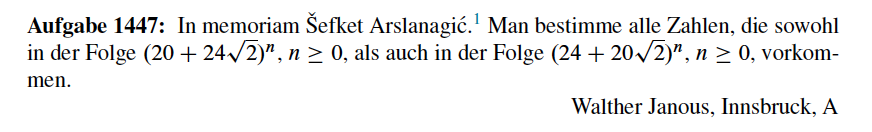}} 
\end{figure}

\centerline{\bf Solution to problem 1447 Elem. Math. 79 (2024), 84}

\bigskip
 

 \centerline {Raymond Mortini, Rudolf Rupp } \medskip

\centerline{- - - - - - - - - - - - - - - - - - - - - - - - - - - - - - - - - - - - - - - - - - - - - - - - - - - - - -}
  
  \medskip

\vspace{1cm}

Zu untersuchen ist f\"ur welche $n,m$  die Gleichung $(20+24 \sqrt 2)^n=(24+20\sqrt 2)^m$ gilt. Wir zeigen, dass
nur die $1$ als gemeinsame Zahl vorkommt (dies f\"ur $n=m=0$).  In der Tat, bezeichnet $N(a+b\sqrt 2):=a^2-2b^2$ 
die sogenannte Norm des Elements $a+b\sqrt 2$ im quadratischen Zahlk\"orper $\mathbb Q[\sqrt 2]$, so erhalten wir wegen deren Multiplikativit\"at $N(xy)=N(x)N(y)$  folgende Bedingung:

$$N(4^n(5+6\sqrt 2)^n)=N( 4^m(6+5\sqrt 2)^m).$$
\"Aquivalent:
$$4^{2n}(25-72)^n=4^{2m}(36-50)^m$$
also
$$16^n (-47)^n=16^m(-14)^m.$$

Da $47$ eine Primzahl ist,  die rechte Seite aber nicht durch diese Zahl teilbar ist falls $n, m\geq 1$, bleibt nur $n=m=0$ \"ubrig.


\newpage

   \begin{figure}[h!]
  \scalebox{0.45} 
  {\includegraphics{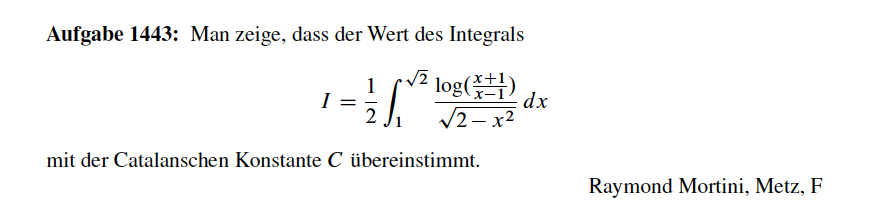}} 
\end{figure}

\nopagecolor

\centerline{\bf Solution to problem 1443 Elem. Math. 79 (2024), 38}

\bigskip
 

 \centerline {Raymond Mortini} \medskip

\centerline{- - - - - - - - - - - - - - - - - - - - - - - - - - - - - - - - - - - - - - - - - - - - - - - - - - - - - -}
  
  \medskip

We use that  (see \cite[p. 453]{ste})
$$\int_0^{\pi/4} \log\left( \frac{\sqrt 2 \cos x+1}{\sqrt 2 \cos x -1}\right) dx =2C.$$

Now put  $v:=\sqrt 2 \cos x$,  $1\leq v\leq \sqrt 2$, 
$$dv= -\sqrt 2 \sin x\; dx= -\sqrt 2 \sqrt{1-\cos^2 x}\;dx=-\sqrt 2( \sqrt{1-v^2/2}\; dx= -\sqrt{2-v^2}\; dx.
$$

Hence

$$2C= \int_1^{\sqrt 2} \frac{\log\left( \frac{v+1}{v-1}\right)}{ \sqrt{2-v^2}}\; dv.
$$


\newpage

   \begin{figure}[h!]
  \scalebox{0.45} 
  {\includegraphics{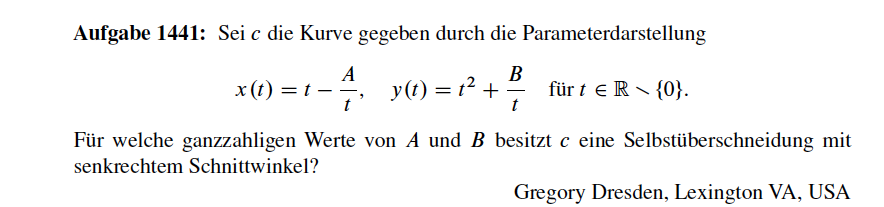}} 
\end{figure}

\nopagecolor

\centerline{\bf Solution to problem 1441 Elem. Math. 78 (2023), 180}

\bigskip
 

 \centerline {Raymond Mortini, Rudolf Rupp } \medskip

\centerline{- - - - - - - - - - - - - - - - - - - - - - - - - - - - - - - - - - - - - - - - - - - - - - - - - - - - - -}
  
  \medskip

Diese Aufgabe ist unseren Ermessens nach nicht korrekt formuliert,  da per Definition eine Kurve das stetige Bild eines {\it Intervals}
in einen topologischen Raum  ist. Deshalb liegen hier zwei Kurven vor und wir werden folgendes zeigen
\footnote{Die Schreibweise mit $-A$ und $B$, anstatt $A$ und $B$, l\"asst uns vermuten, dass bloss nichtnegative Zahlen gemeint waren mit der Bezeichnung "ganzzahlig". Im zweiten Abschnitt  betrachten wir auch den Fall wo $a,b$ beliebig sind.}:

\subsection{Nichtnegative Parameter}

\begin{proposition}
Es seien $a,b\in \R$, $a,b\geq 0$ und $\Gamma^\pm$ die Kurven, welche gegeben sind durch die Parameterdarstellung
$$\Gamma^+: z(t)=(x(t), y(t))=\left(t-\frac{a}{t}, t^2+\frac{b}{t}\right), t>0,$$
beziehungsweise
$$\Gamma^-: z(t)=(x(t), y(t))=\left(t-\frac{a}{t}, t^2+\frac{b}{t}\right), t<0.$$
Dann gilt:
\begin{enumerate}
\item[(1)] $\Gamma^\pm$ besitzen keine Selbst\"uberschneidungen; sind also Jordanb\"ogen.
\item[(2)] F\"ur $a>0$  schneidet $\Gamma^+$ die Kurve $\Gamma^-$ in genau einem Punkt.
\item[(3)] F\"ur $a=0$ liegt der Graph einer Funktion auf $\R\setminus\{0\}$ vor, und folglich sind keine Selbst\"uberschneidungen vorhanden.
\item[(4)] $\Gamma^+$ schneidet $\Gamma^-$   f\"ur $a,b\in \N=\{0,1,2\dots\}$  mit senkrechten Schnittwinkel genau dann wenn 
$(a,b)=(2r^2+1,r(2r^2+1))$ f\"ur ein $r\in \N$.
\end{enumerate}
\end{proposition}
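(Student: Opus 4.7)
\subsection*{Proposal of proof}

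My plan is to treat the four assertions in the stated order, since each step feeds the next. For (1), I would observe that for $t_1,t_2$ of the same sign one has
$$x(t_1)-x(t_2)=(t_1-t_2)\Bigl(1+\frac{a}{t_1 t_2}\Bigr),$$
and since $t_1 t_2>0$ and $a\geq 0$ the second factor is strictly positive, forcing $t_1=t_2$. Hence $\Gamma^{\pm}$ are Jordan arcs. Claim (3) is immediate: for $a=0$, $x(t)=t$ is a bijection on $\R\setminus\{0\}$ so $\Gamma^{+}\cup\Gamma^{-}$ is the graph of $x\mapsto x^{2}+b/x$.

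For (2), I would look for $s>0$ and $t<0$ with $z(s)=z(t)$. The equation $x(s)=x(t)$ combined with the factorization above yields $st=-a$, and substituting this into $y(s)=y(t)$ (after dividing by $s-t\neq 0$) gives $s+t=-b/a$. Thus $\{s,t\}$ must be the roots of $u^{2}+(b/a)u-a=0$. Its discriminant $b^{2}/a^{2}+4a>0$ and its product of roots $-a<0$ guarantee exactly one positive and one negative root, producing the unique intersection point.

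The main work lies in (4). Starting from $z'(t)=(1+a/t^{2},\,2t-b/t^{2})$, I would compute the dot product $z'(s)\cdot z'(t)$ at the intersection point and rewrite it as a symmetric function of $s,t$, then use $P:=st=-a$ and $S:=s+t=-b/a$ together with $s^{2}+t^{2}=S^{2}-2P$ and $s^{3}+t^{3}=S^{3}-3PS$ to obtain
$$z'(s)\cdot z'(t)=4+\frac{b^{2}}{a^{3}}-4a+\frac{7b^{2}}{a^{2}}+\frac{2b^{4}}{a^{5}}.$$
Clearing denominators and treating the perpendicularity equation as a quadratic in $b^{2}$, I expect the discriminant to collapse to $a^{4}(9a-1)^{2}$, leaving the clean condition
$$2b^{2}=a^{2}(a-1).$$
The anticipated obstacle is precisely this algebraic reduction: the computation is long and the factorization of the discriminant is the key simplification that makes the integer classification tractable.

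Finally, to classify the non-negative integer solutions of $2b^{2}=a^{2}(a-1)$, I would set $d=\gcd(a,b)$ and write $a=d\alpha$, $b=d\beta$ with $\gcd(\alpha,\beta)=1$. The equation becomes $2\beta^{2}=\alpha^{2}(d\alpha-1)$, and since $\alpha^{2}\mid 2\beta^{2}$ with $\gcd(\alpha,\beta)=1$ forces $\alpha^{2}\mid 2$, hence $\alpha=1$. This gives $d=2\beta^{2}+1$, and writing $r:=\beta$ we obtain $a=2r^{2}+1$ and $b=r(2r^{2}+1)$ as the complete list of solutions. A direct verification that this pair indeed satisfies $2b^{2}=a^{2}(a-1)$ closes the argument.
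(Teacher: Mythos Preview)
Your proposal is correct and follows the same overall strategy as the paper: parts (1)--(3) are handled identically (the factorization $x(t_1)-x(t_2)=(t_1-t_2)(1+a/(t_1t_2))$ and the resulting relations $st=-a$, $s+t=-b/a$), and both arrive at the Diophantine condition $2b^{2}=a^{2}(a-1)$ for perpendicularity.

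The one notable difference is in how you reach that condition in part (4). The paper, after writing out the orthogonality relation $\langle z'(s),z'(t)\rangle=0$, multiplies through by $st$ and spots the factorization $(t+2s)(s+2t)=-1$, which expands directly to $st+2(s+t)^{2}=-1$ and hence to $2b^{2}=a^{2}(a-1)$ in one line. Your route---express everything via $P=st$ and $S=s+t$, clear denominators, solve the resulting quadratic in $b^{2}$, and observe that the discriminant is $a^{4}(9a-1)^{2}$---is more computational but equally valid; the perfect-square discriminant is indeed the hinge, and you identified it correctly. For the integer classification, your $\gcd$ decomposition ($a=d\alpha$, $b=d\beta$, $\alpha^{2}\mid 2$ forces $\alpha=1$) is arguably cleaner than the paper's argument, which sets $r=b/a\in\Q$ and then shows by a parity case analysis that $r$ must be an integer.
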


{\bf L\"osung}  (1) (2)
Zu betrachten ist das folgende System von Gleichungen f\"ur $s,t\in \R\setminus\{0\}$:
$$\left\{\begin{matrix} t-a/t &=&s-a/s\\\\ t^2+b/t&=&s^2+b/s.
\end{matrix}\right.
\iff \left\{\begin{matrix} t-s&=&a\left(\frac{1}{t}-\frac{1}{s}\right)=a(s-t)\frac{1}{st}\\\\ t^2-s^2&=&b\left(\frac{1}{s}-\frac{1}{t}\right)
=b(t-s)\frac{1}{st}.
\end{matrix}\right.
$$
F\"ur $s\not=t$  und $a\not=0$ ist dies \"aquivalent zu
$$
\left\{\begin{matrix} st&=&-a\\ st(t+s)&=&b
\end{matrix}\right. \iff 
\left\{\begin{matrix} st&=&-a\\ t+s&=&-\frac{b}{a}.
\end{matrix}\right. 
$$
Dies f\"uhrt  auf die L\"osung der quadratischen Gleichung 
$$0= x^2-(s+t)x+st= x^2+\frac{b}{a}x-a.$$

Die L\"osungen hierzu sind 
\begin{equation}\label{werte}
\mbox{$\dis s= \frac{-b+\sqrt{b^2+4a^3}}{2a}$ und $\dis t= \frac{-b-\sqrt{b^2+4a^3}}{2a}$}.
\end{equation}

Es liegen also ein negativer und ein positiver Wert der Kurven-Parameter
 $s$ und $t$ vor. Folglich schneidet $\Gamma^-$ die Kurve $\Gamma^+$ in genau einem Punkt und es liegen keine Selbst\"uberschneidungen vor. \\

(3) ist klar.\\

(4) Es sei $M:=(z(s),z(t))$ dieser eindeutige Schnittpunkt mit $s<0$ und $t>0$.
Diese Kurven $\Gamma^-$ und $\Gamma^+$ schneiden sich nun senkrecht in $M$ genau dann wenn  gilt
\begin{eqnarray*}
0&=&\left<\dot z(s), \dot z(t)\right> = \dot x(s)\; \dot x(t)+ \dot y(s)\; \dot y(t)\\
&\iff&\left(1+\frac{a}{s^2}\right)\; \left(1+\frac{a}{t^2}\right)=- \left( 2s-\frac{b}{s^2}\right)\; \left( 2t-\frac{b}{t^2}\right).
\end{eqnarray*}

Einsetzen von $st=-a$ und $st(s+t)=b$ ergibt
$$-\left(1-\frac{t}{s}\right)\;\left(1-\frac{s}{t}\right)=\left(2s-\frac{t(t+s)}{s}\right)\; \left(2t-\frac{s(t+s)}{t}\right).$$
Multiplikation mit $st$ liefert
\begin{eqnarray*}
-(s-t)(t-s)&=&\big(2s^2-t(t+s)\big)\;\big(2t^2-s(t+s)\big)\\
&=&(s-t)(t+2s)\; (t-s)(s+2t)\\
&=& (s-t)(t-s)(t+2s)(s+2t).
\end{eqnarray*}
Da $s\not=t$, erh\"alt man schliesslich
\begin{equation}\label{ts}
(t+2s)(s+2t)=-1.
\end{equation}
Umformen ergibt

$$
-1=ts+2s^2+2t^2+4st=5st+2\big((s+t)^2-2st\big)=st+2(s+t)^2.
$$
Durch Einsetzen von $st=-a$ und $s+t=-b/a$ ergibt das
\begin{equation}\label{ab}
-1=-a+2\left(\frac{b}{a}\right)^2\iff -a^2=-a^3+2b^2\iff 2b^2+a^2-a^3=0.
\end{equation}

Wir m\"ussen nun alle L\"osungspaare $(a,b)\in \N^2$ dieser diophantischen Gleichung bestimmen.
Ein Umschreiben ergibt
\begin{equation}\label{dio}
2\frac{b^2}{a^2}-a=-1.
\end{equation}
Es sei $r:=\frac{b}{a}$. Gem\"ass der Voraussetzung ist $r\in \Q$. Also hat $a$ die Form  $a=2r^2+1$ und $b=ra=r(1+2r^2)$.
  Dies ist jedoch nur m\"oglich wenn $r$ selbst in $\N$ liegt, was man wie folgt einsehen kann. Ist $r=0$ so ist das evident. Sei 
  also $r\not=0$.  
  Die Voraussetzungen $a\in \N$ und
  $a=2r^2+1$ implizieren  $m:=2r^2\in \N$.  Wir zeigen dass $m$
  von der Form $m=2n^2$ ist f\"ur ein $n\in \N$ und damit ist $r\in \N$.
  Es sei $r=p/q$, mit ${\rm ggt}\;(p,q)=1$. Sodann $ mq^2=2p^2$. Ist $m=2i$ gerade, so erhalten wir  $iq^2=p^2$. Da jeder Primfaktor von $q$ nun $p^2$ teilt, also auch $p$, muss wegen $ {\rm ggt}\;(p,q)=1$ nun $q=1$ sein.  D.h. $m$ hat die gew\"unschte Form.
  Ist $m=2i+1$ ungerade, so muss wegen $2p^2=mq^2$ die Zahl $q$ auch gerade sein. Sagen wir $q=2^j u$, wobei $j\in \N, j\not=0$, 
  und $u$ ungerade. Folglich ist $p^2=(mu^2) 2^{2j-1}$. Weil $2$ kein gemeinsamer Faktor von $p$ und $q$ ist,  erhalten wir den Widerspruch, da die linke Seite von $p^2=(mu^2) 2^{2j-1}$ ungerade ist, die rechte aber gerade.

Damit haben alle L\"osungen von (\ref{dio}) notwendigerweise die Form $(a,b)=(1+2r^2, r (1+2r^2))$, $r\in \N$. 
Umgekehrt, ist auch jedes solche Paar
Lo\"sung der Gleichung (\ref{dio}):
$$ 2 r^2-(1+2r^2)=-1.$$

\vspace{1cm}

Beispiele f\"ur $(a,b)$: $(1,0)$, $(3,3)$, $(9, 18)$, $(19, 57)$, $(33, 132)$.

Damit hat man mit $(a,b)=(1,0)$ den einzigen Schnittpunkt $M=(0,1)$ von $\Gamma^+$ mit 
$\Gamma^-$ im 90 Grad Winkel f\"ur $(s,t)=(1,-1)$ bei
$$\Gamma^+(s)=\left(s-\frac{1}{s}, s^2\right), s>0,\quad \Gamma^-(t)=\left(t-\frac{1}{t},t^2\right), t<0,$$

 \begin{figure}[h!]
  \scalebox{0.45} 
  {\includegraphics{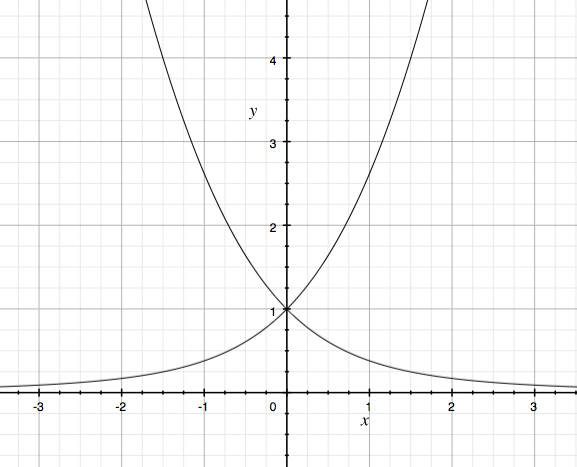}} 
  \caption{\label{$r=0$} $r=0$, $a=1$, $b=0$}
\end{figure}

oder mit $(a,b)=(3,3)$ den Schnittpunkt $M=(-1, 4)$
von $\Gamma^+$ mit 
$\Gamma^-$ im 90 Grad Winkel f\"ur $(s,t)=(\frac{-1+\sqrt {13}}{2},\frac{-1-\sqrt {13}}{2})$ bei
$$\Gamma^+(s)=\left(s-\frac{3}{s}, s^2+\frac{3}{s}\right), s>0,\quad \Gamma^-(t)=\left(t-\frac{3}{t},t^2+\frac{3}{t}\right), t<0,$$

oder mit $(a,b)=(9,18)$ den Schnittpunkt $M=(-2,13)$
von $\Gamma^+$ mit 
$\Gamma^-$ im 90 Grad Winkel f\"ur $(s,t)=(\sqrt{10}-1,-1-\sqrt{10})$ bei
$$\Gamma^+(s)=\left(s-\frac{9}{s}, s^2+\frac{18}{s}\right), s>0,\quad \Gamma^-(t)=\left(t-\frac{9}{t},t^2+\frac{18}{t}\right), t<0,$$

Das \"Uberraschendste f\"ur uns bei dieser proposition: der Schnittpunkt $M$ hat auch ganzzahlige Komponenten:
$$M=(-r, 1+3r^2).$$

\begin{figure}[h!]
  \scalebox{0.35} 
  {\includegraphics{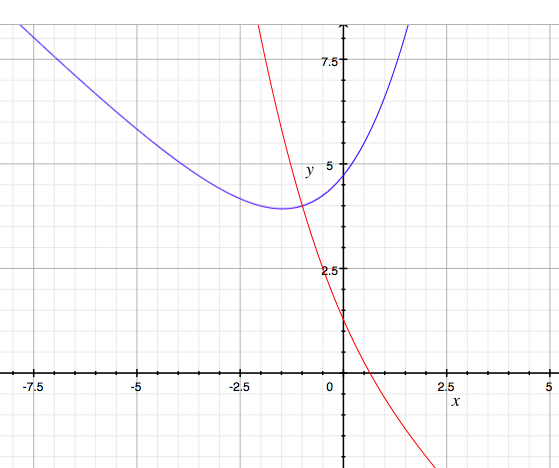}} 
   \caption{\label{$r=1$} $r=1$, $a=3$, $b=3$}
\end{figure}

\newpage
\subsection{Die restlichen F\"alle}

{\it Fall 2.1}: $b^2+4a^3<0$, \"aquivalent $a< -\left(\frac{b^2}{4}\right)^{1/3}$. In diesem Fall hat die quadratische Gleichung (\ref{werte}) keine reellen L\"osungen, und folglich sind 
$\Gamma^+$ und $\Gamma^-$ Jordanb\"ogen die sich nicht schneiden (siehe Grafik \ref{a-neg}).

\begin{figure}[h!]
  \scalebox{0.33} 
  {\includegraphics{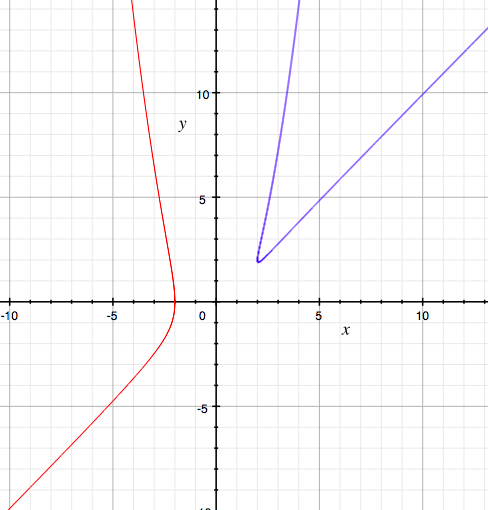}} 
   \caption{\label{a-neg} $a=-1$, $b=1$}
\end{figure}

{\it Fall 2.2}: $-\left(\frac{b^2}{4}\right)^{1/3}<a<0$. In diesem Fall ist $b^2+4a^3>0$ und es liegen zwei verschiedene  L\"osungen $s,t$ der 
quadratische Gleichung (\ref{werte}) vor, welche aber wegen $st=-a>0$ dasselbe Vorzeichen haben. Damit schneiden 
sich die Kurven $\Gamma^+$ und $\Gamma^-$ nicht, aber genau eine von denen hat einen Selbst\"uberschneidungspunkt (siehe Grafik \ref{a-neg2}). N\"amlich $\Gamma^+$ falls $b>0$ und $\Gamma^-$ falls $b<0$.  Anmerken m\"ochten wir noch, dass die Gleichung
(\ref{ab}), $2b^2+a^2-a^3=-1$, keine L\"osung hat falls $a<0$. Folglich ist diese Selbst\"uberschneidung nie senkrecht.

\begin{figure}[h!]
  \scalebox{0.37} 
  {\includegraphics{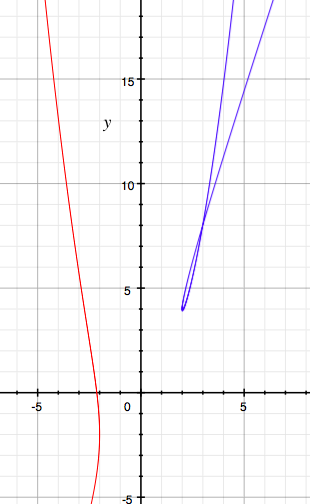}} 
   \caption{\label{a-neg2} $a=-1$, $b=3$}
\end{figure}

\newpage

{\it Fall 2.3}: $b^2+4a^3=0$.

Auch hier schneiden sich die Kurven $\Gamma^+$ und $\Gamma^-$ nicht, und beide sind wieder Jordanb\"ogen.%

Siehe Grafik (\ref{a-neg3}) zum Beispiel
$$\Gamma^+(s)=\left(s+\frac{1}{s}, s^2+\frac{2}{s}\right), s>0,\quad \Gamma^-(t)=\left(t+\frac{1}{t},t^2+\frac{2}{t}\right), t<0.$$

\begin{figure}[h!]
  \scalebox{0.33} 
  {\includegraphics{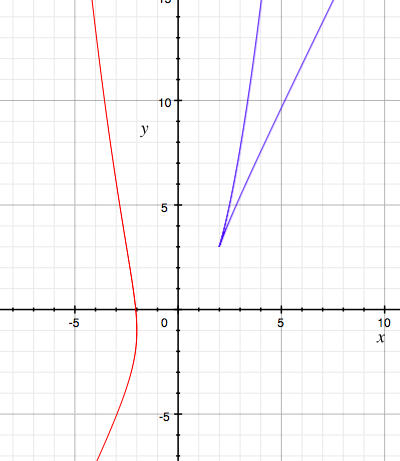}} 
   \caption{\label{a-neg3} $a=-1$, $b=2$}
\end{figure}

{\it Fall 3}: $a>0$, $b\leq 0$. Da ist prinzipiell kein Unterschied zum Fall $a>0,b\geq 0$; es liegt nur eine Spiegelung der Kurven an der
 $y$-Achse vor. Z.B. hat f\"ur $b<0$ die Kurve
 
 $$(x(t), y(t))=\left(t-\frac{a}{t}, t^2+\frac{b}{t}\right), t>0,$$
mit der  Transformation $t\to -t$  auch die Parameterdarstellung

$$ \left(-(t-\frac{a}{t}), t^2-\frac{b}{t}\right), t<0.
$$

Die  Spiegelung an der $y$ Achse ist dann gegeben durch

$$ \left(t-\frac{a}{t}, t^2+\frac{(-b)}{t}\right), t<0$$

\begin{figure}[h!]
  \scalebox{0.33} 
  {\includegraphics{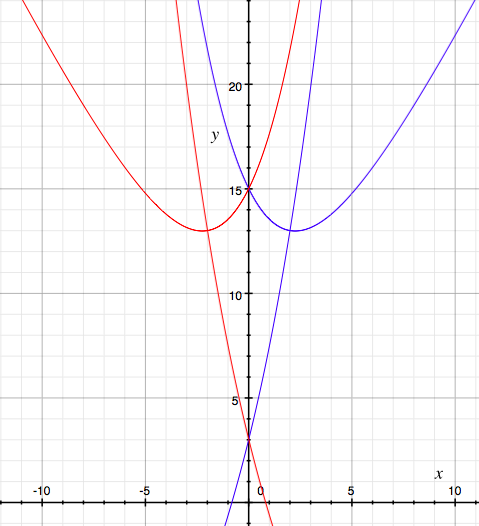}} 
   \caption{\label{a-neg4} $a=9$, $b=18$ blau, $b=-18$ rot}
\end{figure}

{\bf Fazit}: Die Kurve $\Gamma^+=\Gamma^+(a,b)$ schneidet  f\"ur $a,b\in \Z$ die Kurve $\Gamma^-(a,b)$ in einem rechten Winkel genau dann wenn
$$\mbox{$(a,b)=\big((1+2r^2), \pm r(1+2r^2)\big)$ mit $r\in \N=\{0,1,2,3\dots\}$.}$$
Oder in der Formulierung der proposition:
Die unstetige "Kurve" $c=c(a,b)$ besitzt eine "Selbst\"uberschneidung" mit senkrechtem Schmittwinkel genau dann wenn
$$\mbox{$(a,b)=\big((1+2r^2), r(1+2r^2)\big)$ mit $r\in \Z$.}$$

\newpage

   \begin{figure}[h!]
  \scalebox{0.45} 
  {\includegraphics{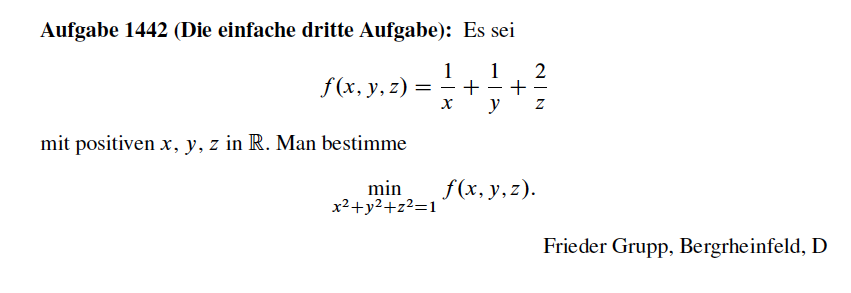}} 
\end{figure}

\centerline{\bf Solution to problem 1442 Elem. Math. 78 (2023), 180}

\bigskip
 

 \centerline {Raymond Mortini, Rudolf Rupp } \medskip

\centerline{- - - - - - - - - - - - - - - - - - - - - - - - - - - - - - - - - - - - - - - - - - - - - - - - - - - - - -}
  
  \medskip

Let $\dis f(x,y,z):=\frac{1}{x}+\frac{1}{y}+\frac{2}{z}$.
Using Lagrange multipliers, we show that

$$\ovalbox{$\dis\inf_{x^2+y^2+z^2=1\atop x>0, y>0, z>0} f(x,y,z)=
\min_{x^2+y^2+z^2=1\atop x,y,z \geq \frac{1}{4\sqrt 3}}f(x,y,z)=
(2+2^{2/3})^{3/2}\sim 6.794693902\cdots$}$$
\bigskip

First we recall the  version of Lagrange's theorem, we will use:

\begin{theorem}
Let $G:=\{\xi=(x,y,z)\in \R^3: x>0,y>0, z>0\}$ be the first octant and $g(x,y,z):=x^2+y^2+z^2-1$. Then $f$ and $g$ belong to $C^1(G)$. 
If $\zeta\in G$ is a local extremum of  $f$ on the set $N:=\{(x,y,z)\in G: g(x,y,z)=0\}$ for which $\frac{\partial}{\partial z}  g(\zeta)\not=0$, then there exists $\lambda\in \R$ such that $(\zeta,\lambda)$ is  a stationary point of Lagrange's function
$$L(x,y,z,\lambda):=f(x,y,z)+\lambda g(x,y,z).$$
\end{theorem}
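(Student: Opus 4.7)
The plan is to invoke the implicit function theorem to reduce the constrained extremum problem in three variables to an unconstrained extremum problem in two variables, and then read off the Lagrange multiplier $\lambda$ from the chain rule. Write $\zeta=(x_0,y_0,z_0)$. Since $g\in C^1(G)$ and $\partial_z g(\zeta)\neq 0$, the implicit function theorem provides an open neighborhood $U\subset \R^2$ of $(x_0,y_0)$, an open interval $V\subset \R$ around $z_0$, and a $C^1$ function $\varphi:U\to V$ such that $\varphi(x_0,y_0)=z_0$ and such that for $(x,y,z)\in U\times V$ the relation $g(x,y,z)=0$ is equivalent to $z=\varphi(x,y)$. Shrinking $U$ if necessary (using that $G$ is open and that $(x,y,\varphi(x,y))$ is continuous in $(x,y)$), we may also assume $U\times V\subset G$, so that the graph of $\varphi$ lies entirely in $N$.

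Next I would define $F:U\to\R$ by $F(x,y):=f(x,y,\varphi(x,y))$. Since $\zeta$ is a local extremum of $f|_N$ and the map $\Phi(x,y):=(x,y,\varphi(x,y))$ is continuous with $\Phi(x_0,y_0)=\zeta$ and $\Phi(U)\subset N$, there is a neighborhood $U'\subset U$ of $(x_0,y_0)$ on which $F$ attains a local extremum at $(x_0,y_0)$. Because $f$ and $\varphi$ are $C^1$, so is $F$; thus the classical necessary condition for an interior extremum in $\R^2$ yields
\[
\partial_x F(x_0,y_0)=0,\qquad \partial_y F(x_0,y_0)=0.
\]

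The key computation is now to expand these two equations via the chain rule, using that the implicit function theorem also gives
\[
\partial_x\varphi(x_0,y_0)=-\frac{\partial_x g(\zeta)}{\partial_z g(\zeta)},\qquad
\partial_y\varphi(x_0,y_0)=-\frac{\partial_y g(\zeta)}{\partial_z g(\zeta)}.
\]
Substituting into $\partial_x F = \partial_x f + (\partial_z f)\,\partial_x\varphi$ and analogously for $y$ yields
\[
\partial_x f(\zeta)-\frac{\partial_z f(\zeta)}{\partial_z g(\zeta)}\,\partial_x g(\zeta)=0,\qquad
\partial_y f(\zeta)-\frac{\partial_z f(\zeta)}{\partial_z g(\zeta)}\,\partial_y g(\zeta)=0.
\]
Setting $\lambda:=-\partial_z f(\zeta)/\partial_z g(\zeta)$, these two identities read $\partial_x L(\zeta,\lambda)=0$ and $\partial_y L(\zeta,\lambda)=0$. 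The equation $\partial_z L(\zeta,\lambda)=\partial_z f(\zeta)+\lambda\partial_z g(\zeta)=0$ holds by the very definition of $\lambda$, while $\partial_\lambda L(\zeta,\lambda)=g(\zeta)=0$ because $\zeta\in N$. Hence $(\zeta,\lambda)$ is a stationary point of $L$.

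The main obstacle is really a conceptual one rather than a computational one: one must be careful that the implicit function theorem is applied in a neighborhood small enough that (i) $\Phi(U')\subset N\cap G$, so the values of $F$ on $U'$ genuinely realize the values of $f$ on $N$ near $\zeta$, and (ii) every point of $N\cap(U\times V)$ is of the form $\Phi(x,y)$, so that being a local extremum of $f|_N$ at $\zeta$ truly forces a local extremum of $F$ at $(x_0,y_0)$. Once this set-theoretic bookkeeping is in place, the remainder reduces to an application of the chain rule.
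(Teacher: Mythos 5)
Your argument is correct and complete: reducing the constrained problem to an unconstrained one via the implicit function theorem (using $\partial_z g(\zeta)\neq 0$ to solve $z=\varphi(x,y)$ locally), applying the chain rule to $F(x,y)=f(x,y,\varphi(x,y))$, and reading off $\lambda=-\partial_z f(\zeta)/\partial_z g(\zeta)$ is the standard proof of the Lagrange multiplier rule, and your care about the set-theoretic identification $N\cap(U\times V)=\Phi(U)$ is exactly the point that makes the reduction legitimate. Note, however, that the paper offers no proof of this statement at all: it is explicitly ``recalled'' as a known version of Lagrange's theorem and then immediately applied to the optimization problem at hand. So there is nothing in the paper to compare against; your proposal simply supplies the standard justification that the paper takes for granted.
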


Next, we prove the existence of such a local extremum. Let $\eta:=(3^{-1/2},3^{-1/2},3^{-1/2})$. Then $\eta\in N$. Moreover,
$f(\eta)=4\sqrt 3$.  Also, if $(x,y,z)\in N$ is such that at least one of its coordinates is strictly bigger than $(4\sqrt 3)^{-1}$, then
 $f(x,y,z)> 4\sqrt 3$. Hence
 $$\inf_N f =\min\left\{f(x,y,z): x^2+y^2+z^2=1,~~ x,y,z \geq \frac{1}{4\sqrt 3}\right\}.$$
 
 Finally we solve Lagrange's equations for $(x,y,z)\in G$ and $\lambda\in \R$:
 $$(1)\hspace{1cm}\frac{\partial}{\partial x} L(x,y,z,\lambda)= -\frac{1}{x^2} +\lambda (2x)\buildrel=_{}^{!}0$$
$$(2)\hspace{1cm}\frac{\partial}{\partial y} L(x,y,z,\lambda)= -\frac{1}{y^2} +\lambda (2y)\buildrel=_{}^{!}0$$
$$(3)\hspace{1cm}\frac{\partial}{\partial z} L(x,y,z,\lambda)= -\frac{2}{z^2} +\lambda (2z)\buildrel=_{}^{!}0$$
$$\hspace{1.3cm}{(4)}\quad\quad\quad\quad\frac{\partial}{\partial \lambda} L(x,y,z,\lambda)= x^2+y^2+z^2-1\buildrel=_{}^{!}0$$

(1) and (2) yield that $x=y$  and (2) and (3) yield that $\frac{2}{z^3}=\frac{1}{y^3}$, equivalently $z=2^{1/3}y$. Due to (4),
$$1=x^2+x^2+2^{2/3}x^2,$$
hence
$$x=y=(2+2^{2/3})^{-1/2},\quad\quad z=2^{1/3}(2+2^{2/3})^{-1/2}.$$

Consequently, the unique stationary point of $L$ on $G\times\R$ is 
$$P=\left(\frac{1}{\sqrt{2+2^{2/3}}},\frac{1}{\sqrt{2+2^{2/3}}}, \frac{2^{1/3}}{\sqrt{2+2^{2/3}}}, \frac{(2+2^{2/3})^{3/2}}{2}\right)
$$
Let $\zeta$ be the point formed with the first three coordinates of $P$, which are of course bigger than $(4\sqrt 3)^{-1}$. Then 
$$f(\zeta)=2\,\sqrt{2+2^{2/3}}+ 2\; \frac{\sqrt{2+2^{2/3}}}{2^{1/3}}= (2+2^{2/3})\;\sqrt{2+2^{2/3}}= (2+2^{2/3})^{3/2}.
$$
Of course, this point $\zeta$ must  now be that unique point on $N$ where $\inf_N$ is taken (note that $\sup f_N=\infty)$.

\newpage

\pagecolor{yellow}
   \begin{figure}[h!]
 
  \scalebox{0.45} 
  {\includegraphics{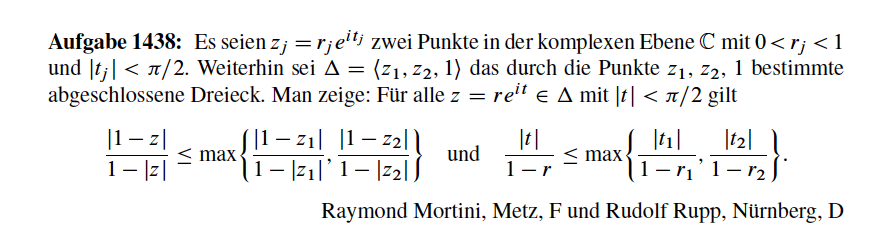}} 
\end{figure}

\centerline{\bf Solution to problem 1438 Elem. Math. 78 (2023), 135}

\bigskip
 

 \centerline {Raymond Mortini, Rudolf Rupp } \medskip

\centerline{- - - - - - - - - - - - - - - - - - - - - - - - - - - - - - - - - - - - - - - - - - - - - - - - - - - - - -}
  
  \medskip

Three solutions for (1) in \cite[Appendix 27]{moru}, one for (2) in  \cite[Appendix 31]{moru}.\\

(1)  Man beachte zun\"achst
$$\Delta= \{t_1z_1+t_2z_2+t_3:0\leq  t_j\leq 1, t_1+t_2+t_3=1\}.$$
Ist nun $z\in \Delta$, so gilt
$$z=t_1z_1+t_2z_2+(1-t_1-t_2)=t_1(z_1-1)+t_2(z_2-1)+1.$$
Daher
$$|1-z|=|t_1(z_1-1)+t_2(z_2-1)|\leq t_1|z_1-1|+t_2|z_2-1|.$$
Andererseits
$$|z|\leq t_1|z_1|+t_2|z_2|+1-t_1-t_2.$$
Somit
$$1-|z|\geq t_1(1-|z_1|)+t_2(1-|z_2|).$$
Folglich gilt
$$\frac{|1-z|}{1-|z|}\leq \frac{t_1|z_1-1|+t_2|z_2-1|}{t_1(1-|z_1|)+t_2(1-|z_2|)}\leq \max\{Z_1,Z_2\}:=\kappa,$$
wobei
$$Z_j=\frac{|1-z_j|}{1-|z_j|}.$$

(2) Ist f\"ur $\kappa>0$, $D_\kappa^*=\{re^{it}\in \bs D: |t|\leq\kappa(1-r)\}$,  so ist der Teil der  Randkurve welche im 1-ten Quadraten liegt, in Polarkoordinaten gegeben durch
$$r(t)=1-\frac{t}{\kappa},  0\leq t\leq \min\{\pi/2,\kappa\}.$$

Diese Kurve ist Graph einer konkaven Funktion (Bilder und Details wie im Buch \cite[Appendix 31]{moru} unter Beachtung dass dort alles auch f\"ur $0<\kappa<1$ gilt). Folglich ist die Menge 
$$\{(x,y):  0\leq x=x(t)\leq 1, 0\leq y\leq y(t)\}$$ konvex,
und somit auch  $S(\kappa):=D_\kappa^*\inter \{z\in \C: {\rm Re} \,z\geq 0\}$.  Unter Beachtung dass 
$D_{\kappa_1}^*\ss  D_{\kappa_2}^*$ f\"ur $0< \kappa_1\leq \kappa_2$, schliessen wir,
dass mit
 $z_1,z_2, 1\in S(\kappa)$ auch $\Delta$ in $S(\kappa)$ liegt falls
 $$\kappa= \max \left\{ \frac{|t_1|}{1-r_1}, \frac{|t_2|}{1-r_2}\right\}.$$

\newpage
\nopagecolor

   \begin{figure}[h!]
 
  \scalebox{0.45} 
  {\includegraphics{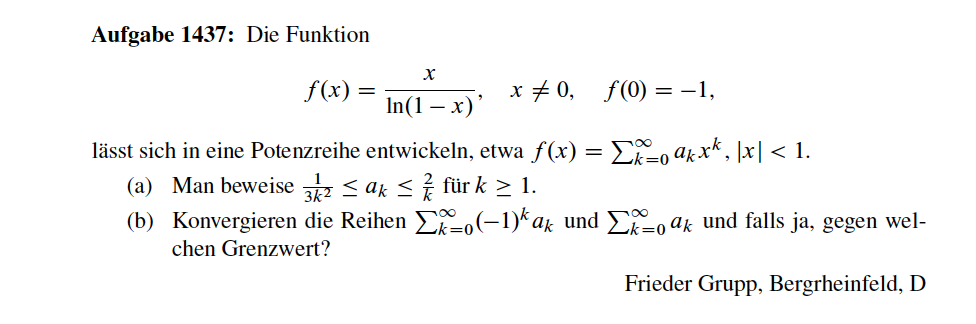}} 

\end{figure}

\centerline{\bf Solution to problem 1437 Elem. Math. 78 (2023), 135}

\bigskip
 

 \centerline {Raymond Mortini, Rudolf Rupp } \medskip

\centerline{- - - - - - - - - - - - - - - - - - - - - - - - - - - - - - - - - - - - - - - - - - - - - - - - - - - - - -}
  
  \medskip

This is well known since "hundreds of years" (see \cite{greg}).\\

(a)
First note that $f(x)=-\int_0^1 (1-x)^s\;ds$. In fact
\begin{eqnarray*}
-\int_0^1 (1-x)^s\;ds&=& -\int_0^1e^{s\log(1-x)}\;ds=-\left[ \frac{e^{s\log(1-x)}}{\log(1-x)}\right]^1_0\\
&=&-\left( \frac{1-x}{\log(1-x)}-\frac{1}{\log(1-x)}\right)=\frac{x}{\log(1-x)}.
\end{eqnarray*}
Now 
$$(1-x)^s=\sum_{k=0}^\infty (-1)^k {s\choose k} x^k.$$
Since $\int\sum=\sum\int$ (due to uniform convergence in $s$ for every fixed $x\in\; ]-1,1[$; note that $ |(-1)^k {s\choose k}|\leq 1$ ),
 we obtain
$$
f(x)=\sum_{k=0}^\infty \left[(-1)^{k+1}\int_0^1  {s\choose k} \;ds \right] x^k.
$$

Hence
\begin{eqnarray*}
a_k&=&(-1)^{k+1}\int_0^1  {s\choose k} \;ds= (-1)^{k+1} \int_0^1 \frac{s(s-1)(s-2)\cdots(s-k+1)}{k!}\;ds\\
&=&\int_0^1 \frac{s(1-s)(2-s)\cdots(k-1-s)}{k!}\;ds\\
&=&\frac{1}{k}\int_0^1 s \left(1- \frac{s}{1}\right)\left(1-\frac{s}{2}\right)\cdots\left(1-\frac{s}{k-1}\right)\;ds.
\end{eqnarray*}
Since every factor is less than $1$, we obtain
$$0\leq a_k\leq \frac{1}{k}.$$
Moreover, as $0\leq s\leq 1$, and for $k\geq 2$,
$$a_k\geq \int_0^1 s(1-s)  \frac{1\cdot 2\cdot 3\cdot 4\dots (k-2)}{k!} \;ds= \left(\frac{1}{2}-\frac{1}{3}\right) \frac{1}{k(k-1)}
= \frac{1}{6k(k-1)}.$$

The right-hand side, though,  is  smaller than $(3k^2)^{-1}$. So we need a more careful estimate. Instead of "breaking"  after the second factor, we brake after the fifth factor.  Noticing that
$$\int_0^1 s(1-s)(2-s)(3-s)(4-s)\;d s=\frac{9}{4},
$$
we obtain for $k\geq 5$,
$$a_k\geq \frac{9}{4}\;   \frac{ 4\dots 5\dots (k-2)}{k!} = \frac{9}{4} \frac{1}{3!}\frac{1}{k(k-1)}=
\frac{3}{8}\;\frac{1}{k^2}\geq \frac{1}{3}\;\frac{1}{k^2}.
$$

The estimate $a_k\geq \frac{1}{3k^2}$ now holds also for $k=1,\dots,4$, due to the following explicit representation of the Taylor sums for $f(x)$:

  \begin{figure}[h!]
 
  \scalebox{0.45} 
  {\includegraphics{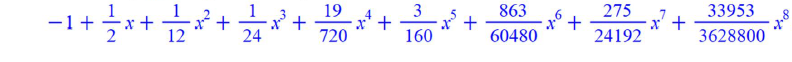}} 

\end{figure}

(b) We first show that $(a_k)$ is decreasing (to $0$):

\begin{eqnarray*}
a_{k+1}&=& \frac{1}{k+1}\int_0^1 s \left(1- \frac{s}{1}\right)\left(1-\frac{s}{2}\right)\cdots\left(1-\frac{s}{k-1}\right)
\underbrace{\left(1-\frac{s}{k}\right)}_{\leq 1}\;ds\\
&\leq& \frac{k}{1+k} a_k\leq a_k.
\end{eqnarray*}
The alternating series test of Leibniz  now yields the convergence of $\sum_{k=0}^\infty (-1)^k a_k$. Finally, by Abel's rule (see
 \cite[p. 1415]{moru}), 
$$\sum_{k=0}^\infty (-1)^k a_k=\lim_{x\to -1^+} f(x)=-\frac{1}{\log 2}.$$

Next we show that $ka_k\to 0 $. In fact, due to $1-x\leq e^{-x}$ for $x\geq 0$, 
\begin{eqnarray*}
ka_k&=& \int_0^1 s \left(1- \frac{s}{1}\right)\left(1-\frac{s}{2}\right)\cdots\left(1-\frac{s}{k-1}\right)\;ds\\
&\leq&\int_0^1 \exp\left(-s \sum_{j=1}^{k-1} \frac{1}{j}\right)\; ds= -\left[\frac{\dis\exp\left(-s \sum_{j=1}^{k-1} \frac{1}{j}\right)}
{\dis\sum_{j=1}^{k-1}\frac{1}{j}}\right]_0^1\\
&\leq& \frac{1}{\phantom{x}\dis \sum_{j=1}^{k-1} \frac{1}{j}\phantom{x}}\leq \frac{1}{\phantom{x}\dis\int_1^k dx/x\phantom{x}}=
\frac{1}{\log k}.
\end{eqnarray*}

Note that $\lim_{x\to 1^-} f(x)=0$.
Hence, by Tauber's Theorem (\cite[p. 52]{la-ga}), $F:=\sum_{k=0}^\infty a_k$ is convergent and $\sum_{k=0}^\infty a_k=0$. 
Very funny!  By the way, $F$ is called Fontana's series (Gregorio Fontana 1735--1803), see \cite[(formula 20)]{bl}, and the $a_k$ are the (moduli) of the Gregory coefficients (James Gregory 1638--1675), see
\cite{bl} and \cite{greg}.

\newpage


  \begin{figure}[h!]
 
  \scalebox{0.45} 
  {\includegraphics{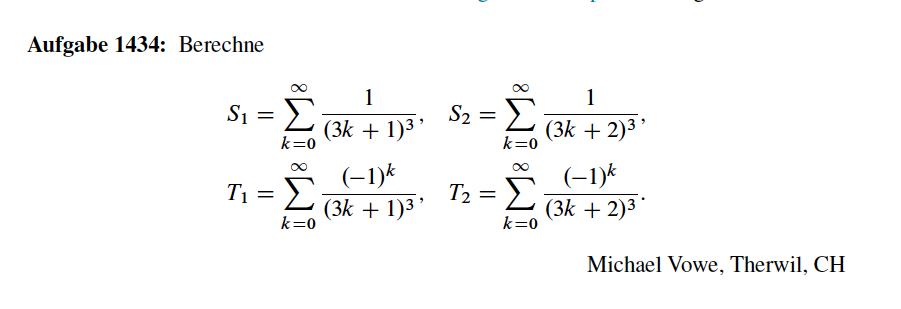}} 

\end{figure}

\centerline{\bf Solution to problem 1434 Elem. Math. 77 (2022), 85}

\bigskip
 

 \centerline {Raymond Mortini, Rudolf Rupp } \medskip

\centerline{- - - - - - - - - - - - - - - - - - - - - - - - - - - - - - - - - - - - - - - - - - - - - - - - - - - - - -}
  
  \medskip

 Let 
$$\mbox{$\dis S_3:=\sum_{n=0}^\infty \frac{1}{(3n+3)^3}$ and $\dis T_3:=\sum_{n=0}^\infty \frac{(-1)^n}{(3n+3)^3}$}.$$
Then 
$$\mbox{$\dis S_3=\frac{1}{27} \zeta(3)$ and $\dis T_3=\frac{1}{27} \eta(3)$}$$
where $\eta(3)$ is the Dirichlet $\eta$-function.  It is well-known that $\eta(3)=(3/4)\zeta(3)$. Thus
$$\mbox{$S_1+S_2+S_3=\zeta(3)$ and $T_1-T_2+T_3=\eta(3)$}$$
imply that
$$\mbox{$S_1+S_2=\frac{26}{27}\zeta(3)$ and $T_1-T_2= \frac{13}{18} \zeta(3)$}.$$

On the other hand,
\begin{eqnarray*}
S_1-T_1&=&\sum_{n=0}^\infty \frac{1-(-1)^n}{(3n+1)^3}\rel=_{n=2k+1}^{}2\; \sum_{k=0}^\infty\frac{2}{(3(2k+1)+1)^3}\\
&=&2\;\sum_{k=0}^\infty \frac{1}{(6k+4)^3}=\frac{1}{4}\;\sum_{k=0}^\infty \frac{1}{(3k+2)^3}\\
&=& \frac{1}{4} S_2.
\end{eqnarray*}
Moreover,
\begin{eqnarray*}
S_2+T_2&=&\sum_{n=0}^\infty \frac{1+(-1)^n}{(3n+2)^3}\rel=_{n=2k}^{}2\;\sum_{k=0}^\infty \frac{1}{(6k+2)^3} \\
&=&=\frac{1}{4}\;\sum_{k=0}^\infty \frac{1}{(3k+1)^3}\\
&=& \frac{1}{4} S_1.
\end{eqnarray*}

This yields the linear system
$$\left(\begin{matrix} 1 &1&0&0\\ 1&-\frac{1}{4}&-1&0\\ -\frac{1}{4}&1&0&1\\ 0&0&1&-1 
\end{matrix}\right)\; \cdot \left(\begin{matrix} S_1\\S_2\\T_1\\T_2  \end{matrix}\right)=
\left(\begin{matrix} \frac{26}{27}\zeta(3)\\0\\0\\\frac{13}{18}\zeta(3)\end{matrix}\right).
$$

The determinant is, unfortunately, zero. The null-space is the one dimensional vector space generated by
$(4,-4,5,5)^\bot$.

We may write  the system as

$$\left(\begin{matrix} 1 &1&0&0\\ 1&-\frac{1}{4}&-1&0\\ -\frac{1}{4}&1&0&1\\ 0&0&1&1 
\end{matrix}\right)\; \cdot \left(\begin{matrix} S_1\\S_2\\T_1\\T_2  \end{matrix}\right)=
\left(\begin{matrix} \frac{26}{27}\zeta(3)\\0\\0\\\frac{13}{18}\zeta(3)+2T_2\end{matrix}\right).
$$
In other words,
$$\left(\begin{matrix} S_1\\S_2\\T_1\\T_2  \end{matrix}\right)=
\left(\begin{matrix} \frac{1}{2}&\frac{2}{5}&\frac{-2}{5}&\frac{2}{5}\\\\
\frac{1}{2}&\frac{-2}{5}&\frac{2}{5}&\frac{-2}{5}\\\\
\frac{3}{8}&\frac{-1}{2}&\frac{-1}{2}&\frac{1}{2}\\\\
\frac{-3}{8}&\frac{1}{2}&\frac{1}{2}&\frac{1}{2}
 \end{matrix}\right)\cdot
\left(\begin{matrix} \frac{26}{27}\zeta(3)\\0\\0\\\frac{13}{18}\zeta(3)+2 T_2\end{matrix}\right).
 $$

That is
\begin{equation}\label{system}
\begin{cases}
S_1=&R_1+\frac{4}{5}T_2\\
S_2=&R_2-\frac{4}{5}T_2\\
T_1=&R_3+T_2\\
T_2=&\phantom{~~~R_3+}T_2
\end{cases}
\end{equation}
where the $R_j$ are rational multiples of $\eta(3)$.  More precisely,
$$R_1=\frac{1}{2}\cdot\frac{26}{27}\zeta(3) +\frac{2}{5}\cdot\frac{13}{18}\zeta(3)=\frac{104}{5\cdot 27}\cdot\zeta(3)
,$$
$$R_2=\frac{1}{2}\cdot\frac{26}{27}\zeta(3) -\frac{2}{5}\cdot\frac{13}{18}\zeta(3)=\frac{26}{5\cdot 27}\cdot\zeta(3)
,$$
$$R_3=\frac{3}{8}\cdot\frac{26}{27} \zeta(3) +\frac{1}{2}\cdot \frac{13}{18}\zeta(3)=\frac{13}{18}\cdot\zeta(3).
$$
Next we use that  for $0<a<2\pi$ (see below)
\begin{equation}\label{altesh}
h(a):=\sum_{n=0}^\infty \frac{\sin(n+1)a}{(n+1)^3}=\frac{a^3 -3\pi a^2+2 \pi^2 a}{12},
\end{equation}
and put $a=2\pi/3$.
Then, since $\sin(2\pi/3)=\sqrt 3/2$, 
$$S_1-S_2=\frac{2}{\sqrt 3}\;h(2\pi/3)= \frac{2}{\sqrt 3}\cdot \frac{2\pi^3}{81}= \frac{4\pi^3}{81 \sqrt 3}.$$
By the formula (\ref{system}) above,
$$S_1-S_2=\frac{4}{5} \cdot\frac{13}{18}\zeta(3) + \frac{8}{5} T_2.$$
Hence
$$T_2= \frac{5}{8}\left(  \frac{4\pi^3}{81 \sqrt 3}-\frac{4}{5}  \cdot\frac{13}{18}\zeta(3)\right)=
\ovalbox{$\dis \frac{5\pi^3}{2\cdot 81\;\sqrt 3 }- \frac{13}{36}\zeta(3)$}\sim 0.11843\cdots.$$
Finally, by  (\ref{system}) again,
$$
\ovalbox{$\dis S_1= \frac{13}{27}\;\zeta(3) + \frac{2\pi^3}{81 \sqrt 3}$}= 3^{-5}( 117\, \zeta(3) +2 \sqrt 3\; \pi^3)\sim 1.02078\cdots,
$$
$$
\ovalbox{$\dis S_2= \frac{13}{27}\;\zeta(3)- \frac{2\pi^3}{81 \sqrt 3}$}= 3^{-5}( 117\, \zeta(3) -2 \sqrt 3\; \pi^3) \sim 0.13675\cdots,
$$
$$
\ovalbox{$\dis T_1=\frac{13}{36}\;\zeta(3)+ \frac{5\pi^3}{2\cdot 81\;\sqrt 3 }$}=
\frac{1}{1944}\left( 702\,\zeta(3) +20\sqrt 3\;\pi^3\right)  \sim 0.98659\cdots.
$$

\vspace{1cm}

{\bf Addendum}

The value in (\ref{altesh}) for $h(a)$ is given as follows (we had developed this in  solving the problem 12388 in AMM).
Note that 
$$h'(a)=\sum_{n=0}^\infty  \frac{\cos(n+1)a}{(n+1)^2}.
$$
Since $\frac{1}{3}\pi^2 + 4 \sum_{n=1}^\infty \frac{\cos nx}{n^2}$ is the  Fourier series of the  function $(x-\pi)^2$, $0\leq x<  2\pi$, 
extended $2\pi$-periodically,
we see that for $0<a<2\pi$,
$$h'(a)= \frac{(a-\pi)^2}{4}-\frac{\pi^2}{12}.$$

As $h(0)=0$, we deduce that for $0<a<2\pi$,
$$h(a)= \frac{(a-\pi)^3}{12}-\frac{\pi^2}{12} \,a+ \frac{\pi^3}{12}=\frac{a^3 -3\pi a^2+2 \pi^2 a}{12}.
$$

 \newpage
 
  \begin{figure}[h!]
 
  \scalebox{0.35} 
  {\includegraphics{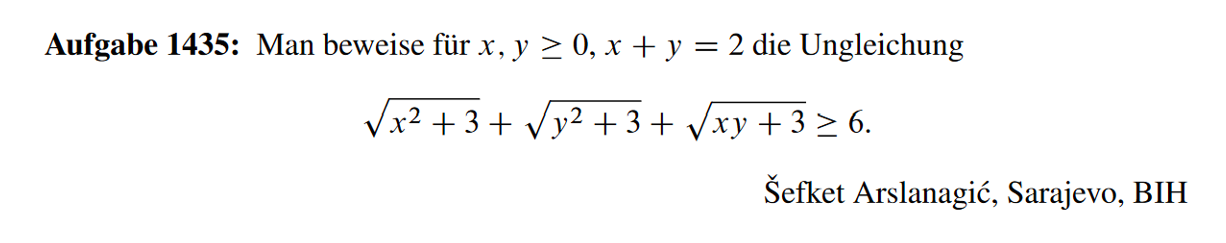}} 

\end{figure}

\centerline{\bf Solution to problem 1435 Elem. Math. 77 (2022), 85}

\bigskip
 

 \centerline {Raymond Mortini, Rudolf Rupp } \medskip

\centerline{- - - - - - - - - - - - - - - - - - - - - - - - - - - - - - - - - - - - - - - - - - - - - - - - - - - - - -}
  
  \medskip

Let 
$$A(x,y):=\sqrt{x^2+3}+\sqrt{y^2+3}+\sqrt{xy+3}.$$

Since by assumption $x+y=2$, we just have to prove that $6$ is the minimal value of the function
$$f(x):=\sqrt{x^2+3}+\sqrt{(2-x)^2+3}+\sqrt{x(2-x)+3}, \quad 0\leq x\leq 2,$$
which is attained at $x=1$.

  \begin{figure}[h!]
 
  \scalebox{0.25} 
  {\includegraphics{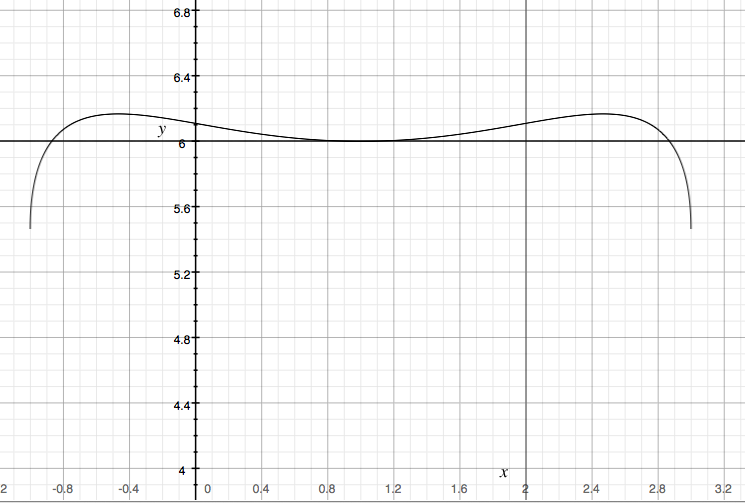}} 
\end{figure}

First we note that $f$ is symmetric with respect to $x=1$; that is 
$$\mbox{$f(1+x)=f(1-x)$ for $0\leq x\leq 1$.}$$

We show that $f$ decreases on $[0,1]$.  Note that $f(0)=2\sqrt 3+ \sqrt 7> 6 =f(1)$. It is sufficient to prove that $f'\leq 0$ on $[0,1]$:
$$f'(x)= \frac{x}{\sqrt{x^2+3}}- \frac{2-x}{\sqrt{(2-x)^2+3}}+ \frac{1}{2}\,\frac{2-2x}{\sqrt{x(2-x)+3}.}
$$
But  $f'\leq 0$ for $0\leq x\leq 1$  if and only if 
\begin{equation}\label{deri}
L(x):= \frac{x}{\sqrt{x^2+3}}+ \frac{1-x}{\sqrt{x(2-x)+3}}\leq \frac{2-x}{\sqrt{(2-x)^2+3}}.
\end{equation}
Next note that, due to $2-x\geq  x$,
$$
\frac{1-x}{\sqrt{x(2-x)+3}}\leq \frac{1-x}{\sqrt{x^2+3}}.
$$
Hence $L(x)\leq \frac{1}{\sqrt{x^2+3}}$. Thus (\ref{deri}) holds for $0\leq x\leq 1$ if 
\begin{equation}
\frac{1}{\sqrt{x^2+3}}\leq  \frac{2-x}{\sqrt{(2-x)^2+3}},
\end{equation}
or equivalently
\begin{equation}\label{deri2}
\frac{\sqrt{(2-x)^2+3}}{\sqrt{x^2+3}}\leq 2-x.
\end{equation}
This holds, though, due to the following equivalences for $0\leq x\leq1$:
$$
(2-x)^2+3 \leq (2-x)^2 (x^2+3)\iff 3\leq (2-x)^2 (x^2+2)=:R(x).
$$
The latter is true, since $\min_{0\leq x\leq 1} R(x)=R(1)=3$ ( note that the derivative of $R$ equals $R'(x)=-4(2-x)(x^2-x+1)$, so $R'\leq 0$ on $[0,1]$.)
\newpage

\pagecolor{yellow}

\begin{figure}[h!]
 
  \scalebox{0.45} 
  {\includegraphics{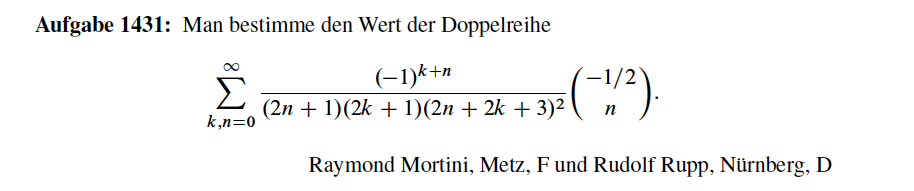}} 

\end{figure}

\centerline{\bf Partial solution to problem 1431,  Elem. Math. 78 (2023), 44}

\bigskip

 \centerline {Raymond Mortini, Rudolf Rupp } \medskip

\centerline{- - - - - - - - - - - - - - - - - - - - - - - - - - - - - - - - - - - - - - - - - - - - - - - - - - - - - -}
  
  \medskip

Let
  
  $$S:=\sum_{k,n=0}^\infty \frac{(-1)^{k+n+1}}{(2n+1)(2k+1)(2n+2k+3)^2}{-1/2\choose n}.$$
  Then 
$S$ is the value of the integral
  
  $$\int_0^1 \arcsin x\arctan x\log x\; dx.$$
  
Just take   the series
  $$\arcsin x =\sum_{n=0}^\infty \frac{(-1)^n}{2n+1}{-1/2\choose n} x^{2n+1}$$
  $$\arctan x= \sum_{n=0}^\infty \frac{(-1)^n}{2n+1}  x^{2n+1}$$ 
  and use that 
  $$\int_0^1 x^m\log x\; dx= -(m+1)^{-2}.$$
  

For a solution see \cite{Stad}.

\newpage

\nopagecolor

  \begin{figure}[h!]
 
  \scalebox{0.25} 
  {\includegraphics{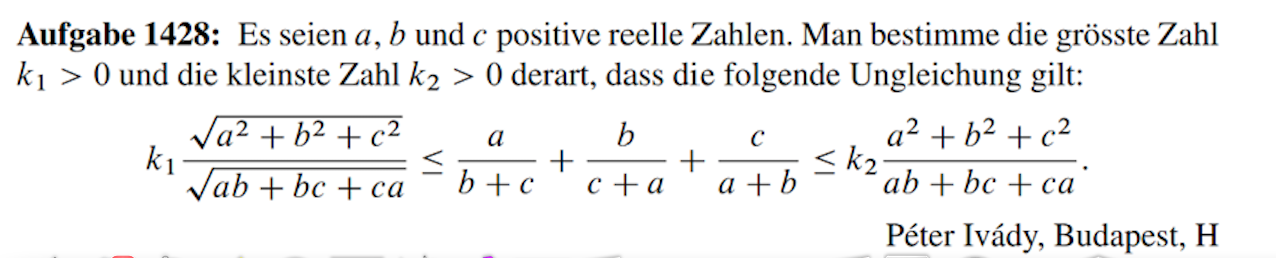}} 

\end{figure}

\centerline{\bf Partial solution to problem 1428 Elem. Math. 77 (2022), 196}

\bigskip

 \centerline {Raymond Mortini, Rudolf Rupp } \medskip

\centerline{- - - - - - - - - - - - - - - - - - - - - - - - - - - - - - - - - - - - - - - - - - - - - - - - - - - - - -}
  
  \medskip

Let 
$$f(a,b,c):= \left( \frac{a}{b+c}+ \frac{b}{c+a}+ \frac{c}{a+b}\right) \;  \frac{ab+bc+ca}{a^2+b^2+c^2}.
$$
Then, by using that $\frac{2xy}{x+y}\leq \sqrt{xy}\leq \frac{x+y}{2}$, 
\begin{eqnarray*}
f(a,b,c)&=&\frac{1}{a^2+b^2+c^2}\left( a\;  \frac{bc}{b+c}+ b\;  \frac{ca}{c+a} + c\;  \frac{ab}{a+b}\right)+1\\
&\leq& \frac{1}{a^2+b^2+c^2} \left(a\; \frac{b+c}{4}+  b\; \frac{c+a}{4}+ c\; \frac{a+b}{4}\right)+1\\
&=& \frac{1}{a^2+b^2+c^2} \frac{2ab+2bc+2ca}{4}+1\\
&=&\frac{1}{a^2+b^2+c^2} \frac{(a+b+c)^2-(a^2+b^2+c^2)}{4}+1\\
&\leq&\frac{1}{a^2+b^2+c^2}\frac{(a^2 +b^2+c^2) (1+1+1)-(a^2+b^2+c^2)}{4}+1\\
&\leq& \frac{3}{2}.
\end{eqnarray*}
If we let $a=b=c$, then $f(a,a,a)= 3/2$ and so \ovalbox{$k_2=3/2$}. 
To determine $k_1$, let
$$ g(a,b,c):=  \left( \frac{a}{b+c}+ \frac{b}{c+a}+ \frac{c}{a+b}\right)^2 \;    \frac{ab+bc+ca}{a^2+b^2+c^2}.
$$
Then, by using two of the estimates above, namley $f\geq 1$, and Cauchy-Schwarz,
$$g(a,b,c)=\frac{a^2+b^2+c^2}{ab+bc+ca}\; f(a,b,c)^2 > \frac{a^2+b^2+c^2}{ab+bc+ca}\geq 1.$$

We guess $k_1=\sqrt 2$. In fact, we may restrict to triples $(x,1,c)$ (homogeniety). Then it remains to prove that 

$$f_c(x):=\left(\frac{1}{x+c}+\frac{x}{c+1}+\frac{c}{1+x}\right)^2 \left(\frac{x+cx+c}{1+x^2+c^2}\right)\geq 2.$$

Now $$\lim_{c\to 0} f_c(x)= x+\frac{1}{x}=f_0(x)\geq 2.$$
Graphical evidence seems to indicate that
$m_c:=\min_{x>0}f_c(x)\geq 2$ and $\lim_{c\to 0}   m_c=2$.

As it is customn with this type of questions, the infimum of the two-variable function $f(x,c):=f_c(x)$ is taken on the boundary of the first quadrant; that is when $c=0$. We have no proof though of this last claim.

\newpage

\pagecolor{yellow}
\begin{figure}[h!]
 
 \scalebox{0.4} 
  {\includegraphics{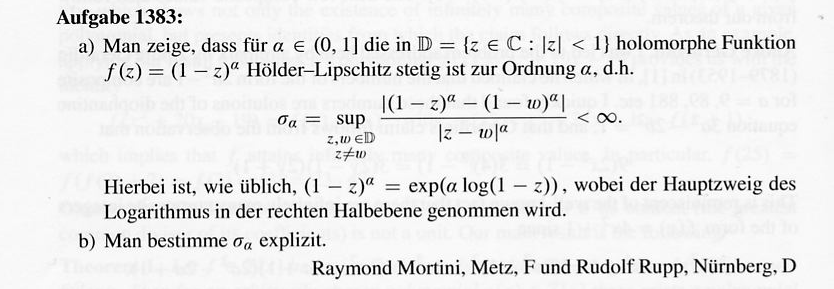}} 

\end{figure}

\centerline{\bf  problem 1383  in Elem. Math 74 (2019), 38, by}\medskip

\centerline{Raymond Mortini and  Rudolf Rupp} \medskip

\centerline{- - - - - - - - - - - - - - - - - - - - - - - - - - - - - - - - - - - - - - - - - - - - - - - - - - - - - -}
  
  \medskip

   \begin{theorem}\label{lippi}
    Let $0<\alpha\leq 1$. Then 
 
\begin{eqnarray*}
\sigma(\alpha)&:=& \dis\sup\left\{\frac{|(1-z)^\alpha-(1-w)^\alpha|}{|z-w|^\alpha}: |z|,|w|\leq 1, z\not=w\right\}\\
&=& \max\{1, 2^{1-\alpha}\sin(\alpha\pi/2)\}\\
&=&\begin{cases}
   1 & \text {if $0<\alpha\leq 1/2$}\\
  2^{1-\alpha}\sin(\alpha\pi/2) & \text{if $1/2\leq \alpha\leq1$.}
   \end{cases}
   \end{eqnarray*}
 
  Moreover, 
 
  $$\dis\max_{0<\alpha\leq 1} \log \sigma(\alpha)= 
   \left(1-\frac{2}{\pi}\arctan\left(\frac{\pi}{2\log 2}\right)\right)\log 2+
  \log \left(    \frac{\pi}{\sqrt{\pi^2+4(\log2)^2}}\right).$$
  
\end{theorem}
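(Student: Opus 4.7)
The plan is to proceed in four phases.

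\textbf{Phase 1 (Setup).} I would substitute $u := 1-z$ and $v := 1-w$. The condition $|z|,|w|\leq 1$ translates to $|1-u|,|1-v|\leq 1$, i.e.\ $u,v$ lie in the closed disk $\overline{D(1,1)}$. This disk is convex, is contained in the right half-plane, touches the imaginary axis only at the origin, and avoids the branch cut $[-\infty,0)$ except at $0$. Hence the principal branch of $u^\alpha$ is continuous on $\overline{D(1,1)}$ and holomorphic in its interior. The quantity to compute becomes
\[
\sigma(\alpha)=\sup\bigl\{|u^\alpha-v^\alpha|/|u-v|^\alpha : u,v\in\overline{D(1,1)},\ u\ne v\bigr\}.
\]

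\textbf{Phase 2 (Lower bound).} I would exhibit two families of test points. First, $(u,v)=(1,0)$ (i.e.\ $z=0$, $w=1$) gives immediately the ratio $1$. Second, for $z=e^{i\theta}$, $w=e^{-i\theta}$ with $\theta\in(0,\pi)$, using $1-e^{\pm i\theta}=2\sin(\theta/2)\,e^{\pm i(\theta-\pi)/2}$ together with $|e^{i\theta}-e^{-i\theta}|=2\sin\theta=4\sin(\theta/2)\cos(\theta/2)$, a direct calculation yields
\[
\frac{|(1-z)^\alpha-(1-w)^\alpha|}{|z-w|^\alpha}
=\frac{2^{1-\alpha}\sin\!\bigl(\alpha(\pi-\theta)/2\bigr)}{\cos^\alpha(\theta/2)}.
\]
Letting $\theta\to 0^+$ gives $2^{1-\alpha}\sin(\alpha\pi/2)$. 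Combining both families gives $\sigma(\alpha)\ge\max\{1,2^{1-\alpha}\sin(\alpha\pi/2)\}$.

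\textbf{Phase 3 (Upper bound --- the main obstacle).} This is where the real work lies. I plan to use the integral representation, valid for $u,v$ off the negative real axis,
\[
u^\alpha-v^\alpha=\frac{\sin(\alpha\pi)}{\pi}\,(u-v)\int_0^\infty\frac{t^\alpha}{(t+u)(t+v)}\,dt,
\]
to bound $|u^\alpha-v^\alpha|\le \frac{\sin(\alpha\pi)}{\pi}|u-v|\int_0^\infty \frac{t^\alpha}{|t+u|\,|t+v|}\,dt$. The integral then has to be estimated uniformly for $u,v\in\overline{D(1,1)}$, splitting the range of $t$ into a piece of size comparable to $|u-v|$ (where one uses $|t+u|\ge |u|$, $|t+v|\ge|v|$ crudely) and its complement (where $|t+u|,|t+v|\asymp t$). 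A compactness argument then reduces the supremum to pairs $(u,v)$ on $\partial D(1,1)$; using the natural parametrization $u=1+e^{is}$, $v=1+e^{it}$ and the symmetry $(u,v)\leftrightarrow(\bar u,\bar v)$, I would reduce to the one-parameter family already treated in Phase~2 plus boundary/corner pairs like $(1,0)$, and verify that no other configuration beats the maximum of those two values. Matching the exact constant $\max\{1,2^{1-\alpha}\sin(\alpha\pi/2)\}$ is the delicate step, since one must simultaneously (i) show that the symmetric family in Phase~2 is extremal among boundary pairs, and (ii) argue that for $\alpha\le1/2$ the small-$\theta$ limit drops below $1$ while the degenerate pair $(1,0)$ still saturates the bound.

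\textbf{Phase 4 (Maximization of $\log\sigma$).} On $(0,1/2]$ we have $\log\sigma(\alpha)=0$, while on $[1/2,1]$ we have
\[
\log\sigma(\alpha)=(1-\alpha)\log 2+\log\sin(\alpha\pi/2).
\]
Differentiating gives $-\log 2+(\pi/2)\cot(\alpha\pi/2)=0$, equivalently $\tan(\alpha\pi/2)=\pi/(2\log 2)$, which yields a unique critical point $\alpha^\star=\tfrac{2}{\pi}\arctan\bigl(\pi/(2\log 2)\bigr)$ in $(1/2,1)$ (and a straightforward sign check on the derivative confirms it is the maximum). From $\tan(\alpha^\star\pi/2)=\pi/(2\log 2)$ one reads off, via the associated right triangle, $\sin(\alpha^\star\pi/2)=\pi/\sqrt{\pi^2+4(\log 2)^2}$. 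Substituting gives
\[
\max_{\alpha}\log\sigma(\alpha)=\Bigl(1-\tfrac{2}{\pi}\arctan\tfrac{\pi}{2\log 2}\Bigr)\log 2+\log\!\Bigl(\tfrac{\pi}{\sqrt{\pi^2+4(\log 2)^2}}\Bigr),
\]
as claimed. The only real difficulty in the whole proof is Phase~3; Phases~1, 2 and 4 are essentially computational.
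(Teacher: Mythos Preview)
The paper itself does not prove this theorem; it only states the result and cites the separate article \cite{mr20}, so there is no in-paper argument to compare with. I therefore assess your plan on its own merits.

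Phases~1, 2 and 4 are correct and essentially complete. Phase~3 is where the real content lies, as you say, and there your outline has two soft spots.

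First, the integral representation is a detour. It does yield \emph{some} H\"older bound after splitting the $t$-range, but there is no visible mechanism by which that splitting will output the exact constant $\max\{1,2^{1-\alpha}\sin(\alpha\pi/2)\}$; you would end up with an upper bound that then has to be improved by a completely different argument. Second, the phrase ``a compactness argument reduces the supremum to pairs on $\partial D(1,1)$'' is the wrong tool: for $\alpha>1/2$ the supremum is \emph{not} attained (your own Phase~2 family approaches it only as $\theta\to0^+$, i.e.\ as both points collapse to $1$). The correct justification for the boundary reduction is the maximum principle: for fixed $w$, the function $z\mapsto \log\bigl(|(1-z)^\alpha-(1-w)^\alpha|\,/\,|z-w|^\alpha\bigr)$ is subharmonic on $\mathbb D$ (it is $\log|g|$ for a holomorphic $g$, plus the harmonic $-\alpha\log|z-w|$, and the apparent singularity at $z=w$ is removable with value $-\infty$ for $\alpha<1$). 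Iterating in $w$ then gives $\sigma(\alpha)=\sup_{|z|=|w|=1}H(z,w)$, which is what you want.

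A cleaner route that avoids the integral representation altogether is to exploit homogeneity: the ratio $|u^\alpha-v^\alpha|/|u-v|^\alpha$ is invariant under $(u,v)\mapsto(\lambda u,\lambda v)$, $\lambda>0$, and since every pair in the open right half-plane can be scaled into $\overline{D(1,1)}$, one has $\sigma(\alpha)=\max\{1,\sup_{w}|1-w^\alpha|/|1-w|^\alpha\}$ with $w=v/u$ ranging over $\mathbb C\setminus(-\infty,0]$. This collapses the problem to a single complex parameter; writing $w=re^{i\theta}$ gives a two-real-variable calculus problem whose maximum (attained in the limit $w\to-1$) is exactly $2^{1-\alpha}\sin(\alpha\pi/2)$.
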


 \begin{figure}[h!]
  \scalebox{.45} 
  {\includegraphics{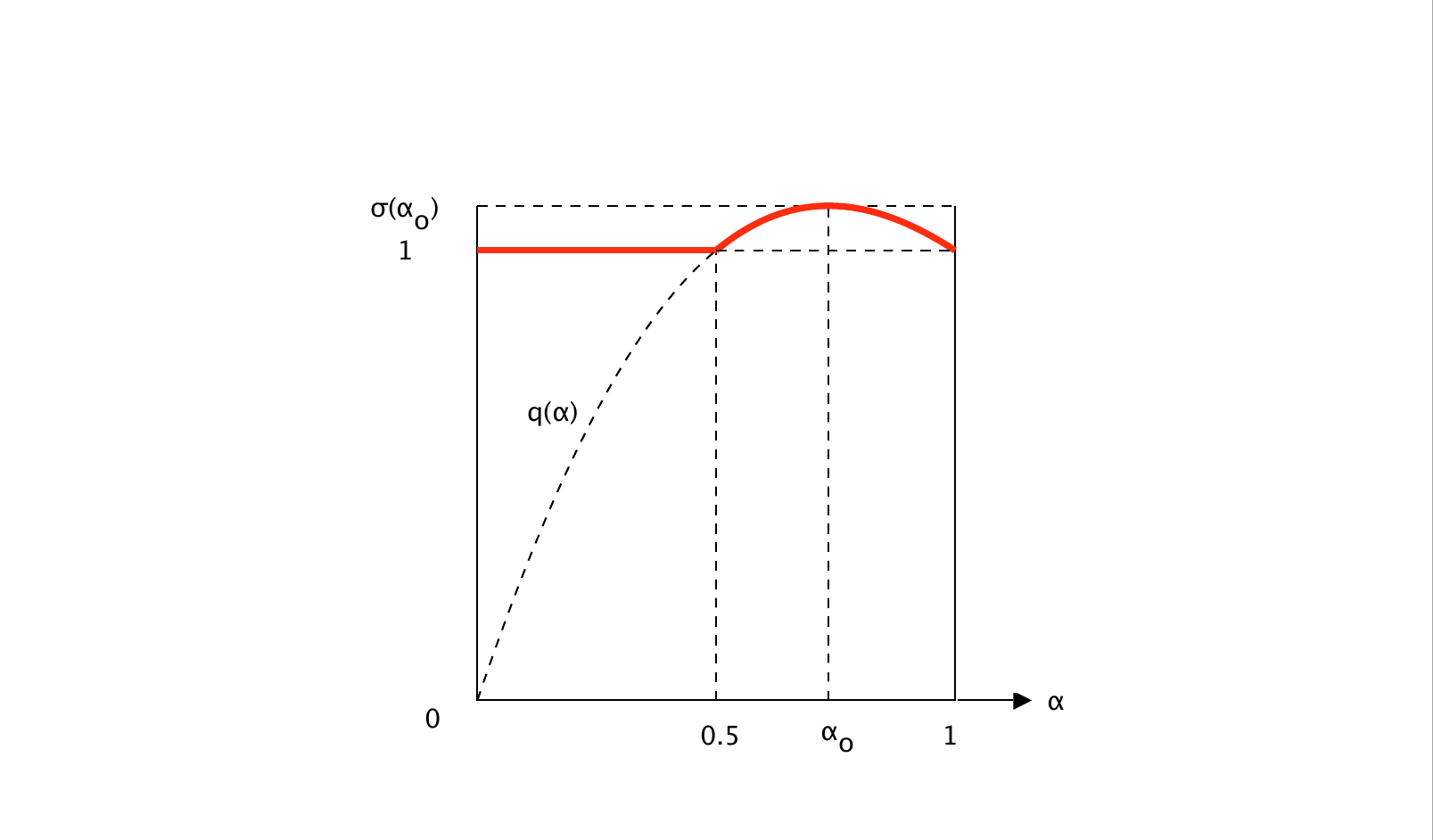}} 
\caption{\label{max-sigma} The H\"older-Lipschitz constant $\sigma(\alpha)$}
\end{figure}

See  \cite{mr20} for  this best H\"older-Lipschitz constant associated with $(1-z)^\alpha$.

\newpage

\begin{figure}[h!]
 
 \scalebox{0.85} 
  {\includegraphics{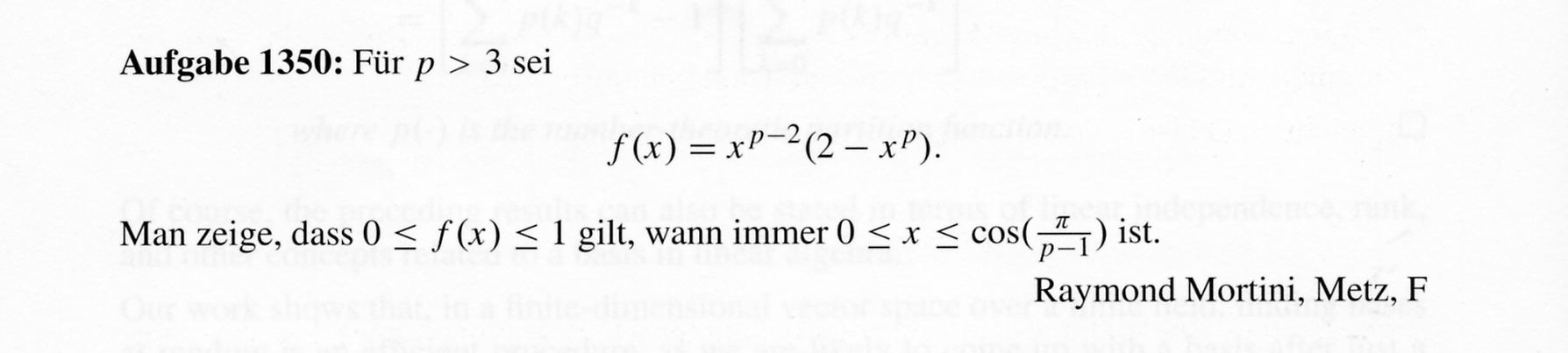}} 

\end{figure}

\centerline{\bf Solution to problem 1350  in Elem. Math 71 (2016), 84}\medskip

\centerline{Raymond Mortini} \medskip

\centerline{- - - - - - - - - - - - - - - - - - - - - - - - - - - - - - - - - - - - - - - - - - - - - - - - - - - - - -}
  
  \medskip

\begin{figure}[h!]
 
 \scalebox{0.5} 
  {\includegraphics{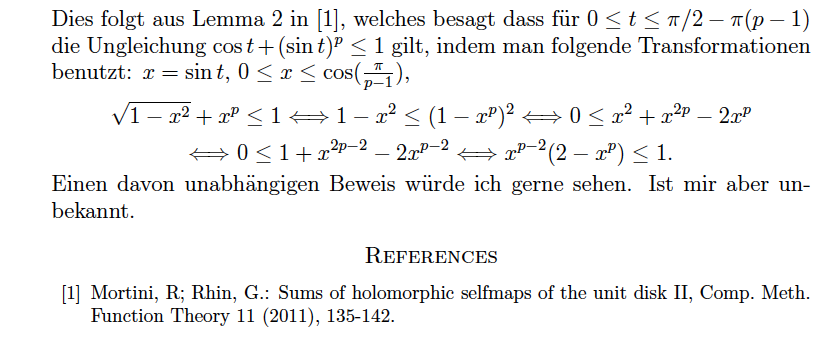}} 

\end{figure}

\newpage
\nopagecolor

\begin{figure}[h!]
 
 \scalebox{0.5} 
  {\includegraphics{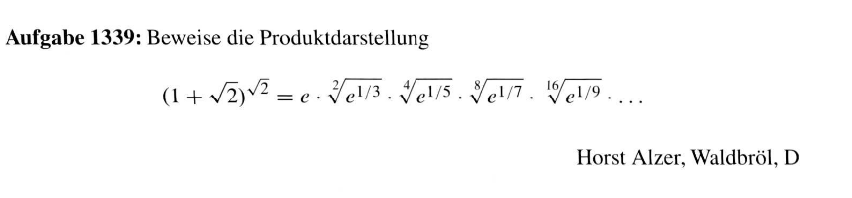}} 

\end{figure}

\centerline {\bf Solution to  problem 1339  Elem. Math.  70 (2015), 82.}\medskip

\centerline{Raymond Mortini}
\medskip

\centerline{- - - - - - - - - - - - - - - - - - - - - - - - - - - - - - - - - - - - - - - - - - - - - - - - - - - - - -}
  
  \medskip

\begin{figure}[h!]
 
 \scalebox{0.5} 
  {\includegraphics{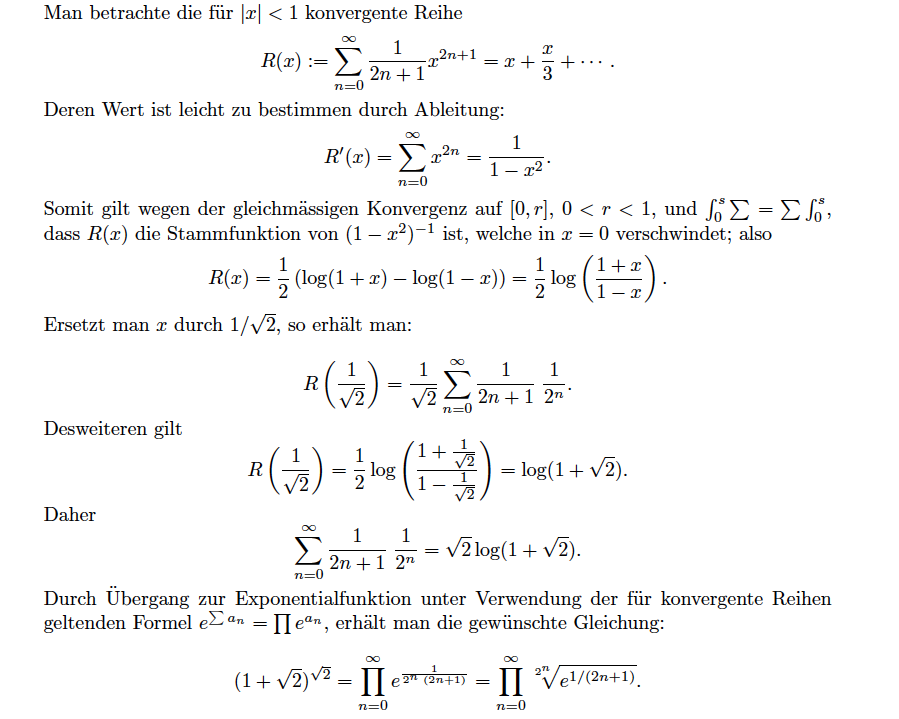}} 

\end{figure}

\newpage

\pagecolor{yellow}
\begin{figure}[h!]
 
 \scalebox{0.4} 
  {\includegraphics{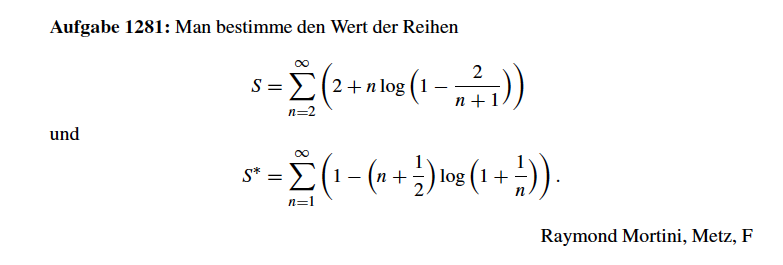}} 

\end{figure}

\centerline{\bf Solution to problem 1281 Elem. Math.  65 (2010), 127, by} \medskip

\centerline{Raymond Mortini, J\'er\^ome No\"el }

\medskip

\centerline{- - - - - - - - - - - - - - - - - - - - - - - - - - - - - - - - - - - - - - - - - - - - - - - - - - - - - -}
  
  \medskip

 a)  Exponentiating, we  have to  calculate the  value of the infinite product
  $$P=\prod_{n=2}^\infty \left( e^2\left( \frac{n-1}{n+1}\right)^n\right).$$
  We claim that $P= \frac{4\pi}{e^3}$; so $S= \log(4\pi) -3$.
  \medskip
  
  Let $P_N=\prod_{n=2}^N \left( e^2\left( \frac{n-1}{n+1}\right)^n\right)$.
  Then by Stirlings formula,  telling us that $n! \sim e^{-n}n^n \sqrt{2\pi n}$, we obtain

  $$P_N= \frac{1}{e^2} e^{2N}\frac{\prod_{n=2}^N (n-1)^n}{\prod_{n=2}^N (n+1)^n}=$$
 $$ \frac{1}{e^2} e^{2N}\frac{\prod_{n=2}^N (n-1)^n}{\mathbf {\prod_{n=2}^N n^{n+1}}}
 \; \frac{\mathbf {\prod_{n=2}^N n^{n+1}}}{\prod_{n=2}^N (n+1)^n}=$$
 $$ \frac{2}{e^2} \frac{e^{2N} \,(N!)^2}{ N^{N+1} (N+1)^N}=
 \frac{2}{e^2} \left( \frac{e^N N!}{N^N}\right)^2 \,\frac{N^N}{(N+1)^N} \; \frac{1}{N}\sim$$
 $$ \frac{2}{e^2}\, \frac{(\sqrt{2\pi N})^2}{N}\, \frac{1}{(1+\frac{1}{N})^N}\to
  \frac{4\pi}{e^3}.
  $$
  \vs 0,2cm
  
  b) To determine $S^*$, we use the same method and calculate the value of
  $$P^*=\prod_{n=1}^\infty e\left(\frac{n}{n+1} \right)^{n+\frac{1}{2}}$$
  We claim that $P^*= \frac{\sqrt {2\pi}}{e}$ and so $S^*= \frac{1}{2}\log (2\pi) - 1$.
  \medskip
  
  In fact 
  
  $$P_N^*=\prod_{n=1}^N e\left(\frac{n}{n+1} \right)^{n+\frac{1}{2}}=
  \frac{e^N N!}{(N+1)^N}\,\frac{1}{\sqrt{N+1}}.$$
  Using Stirling's formula we obtain
  $$
  P_N\sim \frac{N^N}{(N+1)^N }\sqrt{2\pi N}\,\frac{1}{\sqrt{N+1}}=
  \sqrt{2\pi} \frac{1}{(1+\frac{1}{N})^N}\, \frac{\sqrt N}{\sqrt{N+1}}
  \to  \frac{\sqrt{2\pi}}{e}.
  $$

\newpage
.\vspace{-12mm}
\nopagecolor

\begin{figure}[h!]
 
 \scalebox{0.38} 
  {\includegraphics{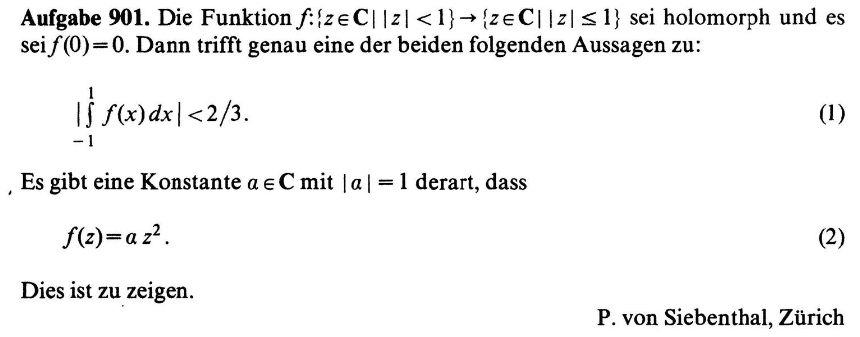}} 

\end{figure}

\centerline {\bf Solution to  problem 901  Elem. Math.  38 (1983), 128.}\medskip

\centerline{Raymond Mortini}

  


\begin{figure}[h!]
 
 \vspace{-3.3mm}
 \scalebox{0.6} 
  {\includegraphics{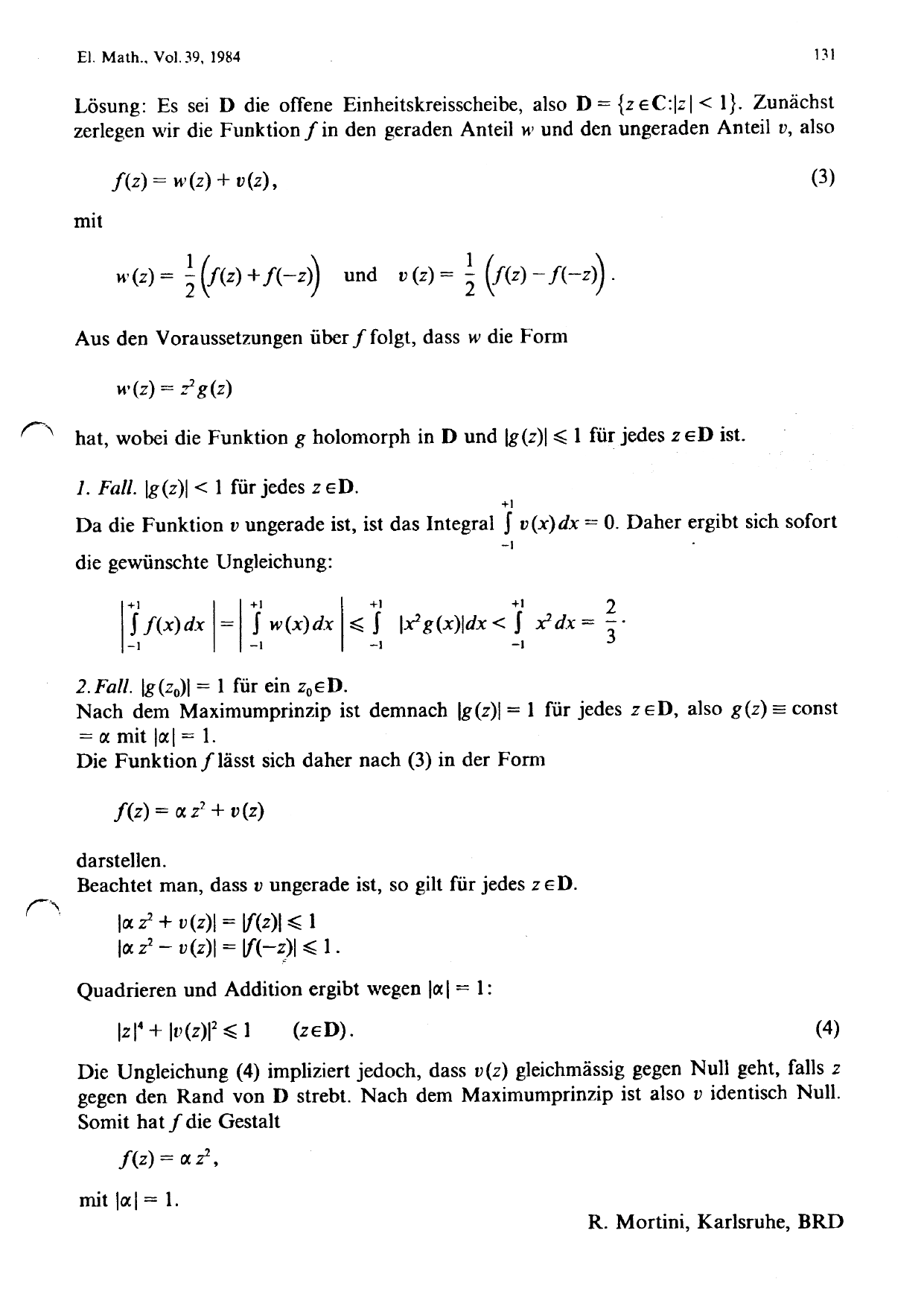}} 

\end{figure}

\newpage

.
\vspace{-2cm}
\gr{
{\huge{\section{Crux Mathematicorum}}}
}

\begin{figure}[h!]
 
  \scalebox{0.45} 
  {\includegraphics{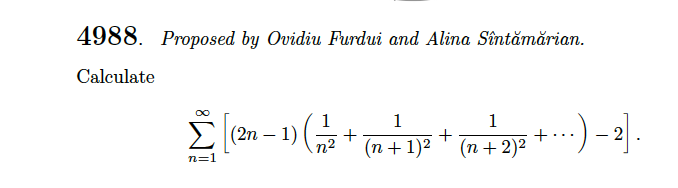}} 
\end{figure}


\centerline{\bf Solution to problem 4988 Crux Math. 50 (9) 2024, 466}  \medskip

 \centerline {Raymond Mortini, Rudolf Rupp}
 
 \medskip

\centerline{- - - - - - - - - - - - - - - - - - - - - - - - - - - - - - - - - - - - - - - - - - - - - - - - - - - - - -}
  
  \medskip

We show that
$$S:=\sum_{n=1}^\infty\left[ (2n-1)\sum_{j=0}^\infty \frac{1}{(j+n)^2} -2\right]=-\frac{1}{2}.$$

The series $\dis \sum_{j=0}^\infty \frac{1}{(j+x)^2}$, denoted by $\psi^{(1)}(x)$ or $\zeta(2,x)$, is sometimes called the trigamma function. It is a special Hurwitz-zeta function. It can be represented as an integral: if $x>0$, then
$$\psi^{(1)}(x)= \int_0^\infty \frac{t e^{(1-x)t}}{e^t-1}\;dt.$$
Now, via induction (thanks to the software wolframalpha; attention: that software says that the series diverges, sic!)
$$\sum_{n=1}^N [(2n-1)\psi^{(1)}(n)-2]= -N+N^2\psi^{(1)}(N+1).$$
Thus it remains to show that
$$I_N:= \left( -N+N^2 \int_0^\infty  \frac{t }{e^t-1}\;e^{ -Nt}dt\right) \to -\frac{1}{2}.$$
This is done using twice partial integration:

\begin{eqnarray*}
I_N&=&-N+N^2\left[-\frac{e^{-Nt}}{N} \frac{t }{e^t-1}\Bigg|_0^\infty+\frac{1}{N}\int_0^\infty e^{-Nt} \;\frac{e^t-1-te^t}{(e^t-1)^2}\;dt\right]\\
&=&-N+N^2\left[ \frac{1}{N}+\frac{1}{N}\int_0^\infty e^{-Nt}  \;\frac{e^t-1-te^t}{(e^t-1)^2}\;dt\right] =N\int_0^\infty e^{-Nt} 
\;\frac{e^t-1-te^t}{(e^t-1)^2}\;dt\\
&=&N\left[ -\frac{e^{-Nt}}{N}\;\frac{e^t-1-te^t}{(e^t-1)^2}\Bigg|_0^\infty +\frac{1}{N}\int_0^\infty e^{-Nt}\;
\underbrace{\frac{-(e^t-1) te^t -(e^t-1-te^t)2 e^t}{(e^t-1)^3}}_{:=b(t)} \;dt\right]\\
&\buildrel=_{{\rm l'Hospital}}^{t\to 0}&-\frac{1}{2} +\int_0^\infty  e^{-Nt} b(t) dt=-\frac{1}{2} +\e_N.
\end{eqnarray*}

Since $b$ is bounded on $]0,\infty[$ and $\int_0^\infty  e^{-Nt}dt =\frac{1}{N}$, we have that $\e_N\to 0$.

\newpage

\begin{figure}[h!]
 
  \scalebox{0.4} 
  {\includegraphics{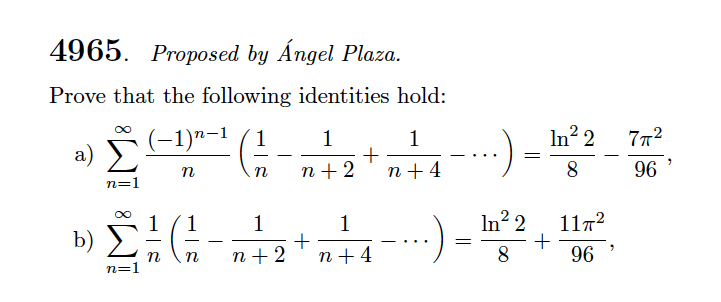}} 
\end{figure}

\nopagecolor

\centerline{\bf Solution to problem 4965 Crux Math. 50 (7) 2024, 368}\bigskip

 \centerline {Bikash Chakraborty, Raymond Mortini, Rudolf Rupp}
  \medskip

\centerline{- - - - - - - - - - - - - - - - - - - - - - - - - - - - - - - - - - - - - - - - - - - - - - - - - - - - - -}
  
  \medskip
 
We observe that the sign in a) is false.\\

{\bf Step 1} {\it Reduction to the computation of an integral}\\

We use that for $a>0$,
$$\frac{1}{a}=\int_0^\infty e^{-ax}dx.$$
Since the moduli of the  partial sums $\sum_{k=0}^N  e^{-nx} (-1)^k e^{-2kx} $ are bounded by the $L^1([0,\infty[)$-function $2e^{-nx}$,
 we have $\sum\int=\int\sum$, and so
\begin{eqnarray*}
S_n:=\frac{1}{n}-\frac{1}{n+2}+\dots &=& \sum_{k=0}^\infty \frac{(-1)^k}{n+2k}= \sum_{k=0}^\infty (-1)^k \int_0^\infty e^{-(n+2k)x} dx\\
&=&\int_0^\infty e^{-nx}\;\sum_{k=0}^\infty (-1)^k e^{-2kx} dx= \int_0^\infty \frac{e^{-nx}}{1+e^{-2x}}dx.
\end{eqnarray*}

Now let
$$A:=\sum_{n=1}^\infty\frac{(-1)^{n-1}}{n}\; \left (\frac{1}{n}-\frac{1}{n+2}+\dots\right)$$
and 
$$B:=\sum_{n=1}^\infty\frac{1}{n}\; \left( \frac{1}{n}-\frac{1}{n+2}+\dots\right).$$

Note that $S_n$ is decreasing, so the series $A$ converges (Leibniz rule). But actually,
$$0\leq S_n\leq  \int_0^\infty e^{-nx} dx= \frac{1}{n}.$$
So the series $A$ and $B$ converge absolutely as the general term is $\Oh(1/n^2)$.

Due to the boundedness of the partial sums of the  series $\sum_{n=1}^\infty \frac{1}{n} e^{-nx}$
 by the $L^1(]0,\infty[)$-function $|\log(1-e^{-x})|$, we have $\sum\int=\int\sum$ in both cases. Hence

\begin{eqnarray*}
B&=& \sum_{n=1}^\infty\frac{1}{n}\;  \int_0^\infty \frac{e^{-nx}}{1+e^{-2x}}dx=
\int_0^\infty \frac{1}{1+e^{-2x}}\sum_{n=1}^\infty \frac{1}{n} e^{-nx} dx\\
&=&-\int_0^\infty \frac{\log(1-e^{-x})}{1+e^{-2x}}dx\buildrel=_{x\mapsto e^{-x}}^{} -\int_0^1 \frac{\log(1-x)}{x(1+x^2)}\; dx.
\end{eqnarray*}

\begin{eqnarray*}
A&=& \sum_{n=1}^\infty\frac{(-1)^{n-1}}{n}\;  \int_0^\infty \frac{e^{-nx}}{1+e^{-2x}}dx=
\int_0^\infty \frac{1}{1+e^{-2x}}\sum_{n=1}^\infty \frac{(-1)^{n-1}}{n} e^{-nx} dx\\
&=&\int_0^\infty \frac{\log(1+e^{-x})}{1+e^{-2x}}dx\buildrel=_{x\mapsto e^{-x}}^{} \int_0^1 \frac{\log(1+x)}{x(1+x^2)}\; dx.
\end{eqnarray*}

{\bf Step 2} {\it Calculating the integrals}\\

To evaluate the integrals, we calculate $I:=A+B$ and $J:=A-B$ giving $A=(I+J)/2$ and $B=(I-J)/2$.

\begin{eqnarray*}
I&=& \bigintss_0^1 \frac{\log\left(\frac{1+x}{1-x}\right)}{x(1+x^2)} dx \buildrel=_{x=\frac{1-t}{1+t}} ^{ \frac{1-x}{1+x}=t} 
\bigintss_0^1\frac{-\log t}{\dis \left(\frac{1-t}{1+t} \right)\Big(1+ \left(\frac{1-t}{1+t}\right)^2\Big)} \frac{2dt}{(1+t)^2}\\
&=&-2\bigintss_0^1 \frac{(1+t)\log t}{(1-t) \big((1+t)^2+(1-t)^2\big)} dt= -2\bigintss_0^1\frac{(1+t)\log t}{(1-t)( 2+2t^2)}dt\\
&=&-\int_0^1 \frac{(1+t)\log t}{(1-t)(1+t^2)}dt=-\left(\int_0^1 \frac{\log t}{1-t}dt +\int_0^1\frac{t\log t}{1+t^2}dt\right)\\
&\buildrel=_{}^{s=t^2}&-\int_0^1 \frac{\log t}{1-t}dt  -\frac{1}{4}\int_0^1 \frac{\log s}{1+s} ds.
\end{eqnarray*}

\begin{eqnarray*}
J&= &\int_0^1 \frac{\log(1-x^2)}{x(1+x^2)}dx\buildrel=_{}^{x^2=s} \frac{1}{2}\int_0^1\frac{\log(1-s)}{s(1+s)}ds\\
&=&\frac{1}{2} \int_0^1 \frac{\log (1-s)}{s} ds -\frac{1}{2} \int_0^1 \frac{\log (1-s)}{1+s}ds\\
&=& \frac{1}{2} \int_0^1 \frac{\log t}{1-t} dt - \frac{1}{2} \int_0^1 \frac{\log t}{2-t}dt.
\end{eqnarray*}

Now
\begin{eqnarray*}
\int_0^1 \frac{\log t}{2-t}dt&\buildrel=_{}^{t=2u}&\int_0^{1/2} \frac{\log(2u)}{2-2u} 2du=\int_0^{1/2} \frac{\log 2+\log u}{1-u} du\\
&=& -(\log 2 )\Big[\log(1-u)\Big]_0^{1/2}+\int_0^{1/2} \frac{\log u}{1-u}\,du\\
&=& \log^2 2 +\int_0^{1/2} \frac{\log u}{1-u}\,du.
\end{eqnarray*}

Hence, by using that  for $0<u\leq 1$, $\dis -\int_0^u \frac{\log (1-x)}{x}dx={\rm Li}_2(u)=\sum_{n=1}^\infty \frac{u^n}{n^2}$ (di-logarithm),
we obtain with $0< v<1$,
\begin{eqnarray*}
\int_0^v \frac{\log x}{1-x} dx&\buildrel=_{x=1-s}^{}&\int_{1-v}^1 \frac{\log(1- s) }{s}ds=\int_0^1  \frac{\log(1- s) }{s}ds- 
\int_0^{1-v}\frac{\log(1- s) }{s}ds\\
&=&-{\rm Li}_2(1) +{\rm Li}_2(1-v).
\end{eqnarray*}
Consequently, by additionally using that 
 $$ \int_0^1 \frac{\log x}{1+x}dx=\sum_{n=0}^\infty \int_0^1 (-1)^n(\log x)x^n dx=\sum_{n=0}^\infty (-1)^{n+1}\frac{1}{(n+1)^2}=
 -\frac{\pi^2}{12},$$

\begin{eqnarray*}
2A= I+J&=&\left[ -\int_0^1 \frac{\log x}{1-x}dx  -\frac{1}{4}\int_0^1 \frac{\log x}{1+x} dx \right]+
\left[ \frac{1}{2} \int_0^1 \frac{\log x}{1-x} dx -\frac{1}{2} \left( \log^2 2 +\int_0^{1/2} \frac{\log x}{1-x}\right)\right]\\
&=&-\frac{1}{2}\log^2 2+\frac{1}{2}\;{\rm Li}_2 (1)- \frac{1}{2}\;\big( -{\rm Li}_2(1) +{\rm Li}_2 (\frac{1}{2})\big)
-\frac{1}{4} \sum_{n=0}^\infty \frac{(-1)^{n+1} }{(n+1)^2}\\
&=& -\frac{1}{2}\log^2 2 +\frac{\pi^2}{6}- \frac{1}{2}\left(\frac{ \pi^2}{12} -\frac{\log^2 2}{2}\right)+\frac{\pi^2}{48}\\
&=& -\frac{1}{4}\log^2 2 +\frac{7}{48}\pi^2,
\end{eqnarray*}
and

\begin{eqnarray*}
2B=I-J&=&\left[ -\int_0^1 \frac{\log x}{1-x}dx  -\frac{1}{4}\int_0^1 \frac{\log x}{1+x} dx \right]-
\left[ \frac{1}{2} \int_0^1 \frac{\log x}{1-x} dx -\frac{1}{2} \left( \log^2 2 +\int_0^{1/2} \frac{\log x}{1-x}\right)\right]\\
&=&\frac{1}{2}\log^2 2+\frac{3}{2}\;{\rm Li}_2 (1)+ \frac{1}{2}\;\big(- {\rm Li}_2(1) +{\rm Li}_2 (\frac{1}{2})\big)
-\frac{1}{4} \sum_{n=0}^\infty \frac{(-1)^{n+1} }{(n+1)^2}\\
&=& \frac{1}{2}\log^2 2 +\frac{\pi^2}{6}+ \frac{1}{2}\left(\frac{ \pi^2}{12} -\frac{\log^2 2}{2}\right)+\frac{\pi^2}{48}\\
&=& \frac{1}{4}\log^2 2 +\frac{11}{48}\pi^2.
\end{eqnarray*}

Consequently
$$A= -\frac{1}{8}\log^2 2 +\frac{7\pi^2}{96} \sim0.659602\dots$$
and

$$B= \frac{1}{8}\log^2 2 +\frac{11\pi^2}{96}\sim 1.19095\dots$$

\newpage

\begin{figure}[h!]
 
  \scalebox{0.5} 
  {\includegraphics{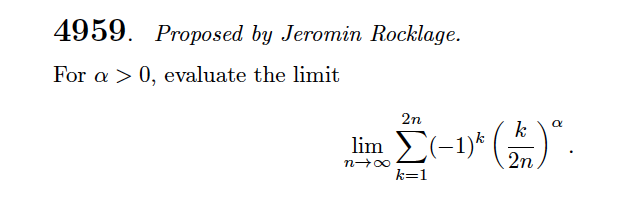}} 
\end{figure}

\centerline{\bf Solution to problem 4959 Crux Math. 50 (6) 2024, 314}\bigskip

 \centerline {Raymond Mortini, Rudolf Rupp} 
 
 \medskip

\centerline{- - - - - - - - - - - - - - - - - - - - - - - - - - - - - - - - - - - - - - - - - - - - - - - - - - - - - -}
  
  \medskip

For $\alpha>0$, let $\dis S_n:=\sum_{k=1}^{2n}(-1)^k\left(\frac{k}{2n}\right)^\alpha$. We show  that 
$\ovalbox{$\lim_{n\to\infty} S_n=\frac{1}{2}$.}$

We will use the higher order Euler-MacLaurin formula applied to the
  function $f(x)=x^{\alpha}$. This formula tells us that for $n\geq m$,
 
\begin{eqnarray*}
\sum_{i=m}^n f(i) -\int_m^n f(x) dx
&=& \frac{f(m)+f(n)}{2}+\frac{1}{6} \frac{f'(n)-f'(m)}{2!}-\int_m^n f''(x) \frac{P_2(x)}{2!}dx
\end{eqnarray*}
 where  $P_2$ is the  periodized Bernoulli function 
 $$P_2(x)=B_2(x-\lfloor x\rfloor),$$
 and 
 $B_2(x)=x^2-x+\frac{1}{6}$,\quad
 the  second  Bernoulli polynomial.\\

Let us first rewrite $S_n$, splitting the sum into odd and even indices:

\begin{eqnarray*}
S_n=\sum_{k=1}^{2n}(-1)^k\left(\frac{k}{2n}\right)^\alpha&=&\sum_{j=1}^n \left(\frac{j}{n}\right)^\alpha-\sum_{j=1}^n \left(\frac{2j-1}{2n}\right)^\alpha\\
&=&\frac{1}{n^\alpha} \sum_{j=1}^n j^\alpha-\frac{1}{(2n)^{\alpha}}\sum_{j=1}^n (2j-1)^\alpha.
\end{eqnarray*}

{\bf Case 1} $\alpha=1$.  Then
\begin{eqnarray*}
S_n&=&\sum_{k=1}^{2n}(-1)^k \frac{k}{2n}=\frac{1}{n}\sum_{j=1} ^n j -\frac{1}{2n}\sum_{j=1}^n(2j-1)\\
&=&\frac{1}{n}\sum_{j=1} ^n j -\frac{1}{n}\sum_{j=1} ^n j + \frac{1}{2n}\cdot n=\frac{1}{2}.
\end{eqnarray*}

{\bf Case 2} $\alpha\not=1$.

By choosing  $m=1$ in the Euler-MacLaurin formula, and estimating
$$\left|\int_1^n f''(x) \frac{P_2(x)}{2!}dx\right|\leq C \int_{1}^n x^{\alpha-2} dx=
\begin{cases}\frac{C}{\alpha-1}\left(n^{\alpha-1}-1\right)&\text{if $\alpha>1$}\\ 
C\log n&\text{if $\alpha=1$}\\
\frac{C}{1-\alpha}\left(1-n^{\alpha-1}\right)&\text{if $0<\alpha<1$},
\end{cases}
$$
we obtain 
$$\sum_{j=1}^n j^\alpha=\left(\frac{n^{\alpha+1}}{\alpha+1}-\frac{1}{\alpha +1}\right)+\frac{n^\alpha+1}{2}+
\frac{1}{12} \left(\alpha n^{\alpha-1}-\alpha\right)+
\co{\begin{cases}\Oh(n^{\alpha-1}) &\text{if $\alpha>1$}\\ \Oh(1)&
\text{if $0<\alpha<1$}.\end{cases}
}
$$
Hence
\begin{eqnarray*}
\frac{1}{n^\alpha} \sum_{j=1}^n j^\alpha&=&\left(\frac{n}{\alpha+1}+\frac{1}{2}+\frac{\alpha}{12} \frac{1}{n} \right)+\Oh(n^{-\alpha})+\co{\begin{cases}\Oh(n^{-1})&\text{if $\alpha>1$}\\ \Oh(n^{-\alpha})&\text{if $0<\alpha<1$}\end{cases}}
\\
&=&\frac{n}{\alpha+1}+\frac{1}{2}+\oh(1).
\end{eqnarray*}

Using, additionally, the formula
\begin{eqnarray*}
\frac{1}{(2n)^\alpha}\sum_{j=1}^n (2j-1)^\alpha&=&\frac{1}{(2n)^\alpha}\left(\sum_{j=1}^{2n} j^\alpha - 
2^\alpha\sum_{j=1}^n j^\alpha\right)\\
&=& \frac{1}{(2n)^\alpha}\sum_{j=1}^{2n} j^\alpha - \frac{1}{n^\alpha} \sum_{j=1}^n j^\alpha
\end{eqnarray*}
we conclude that 

\begin{eqnarray*}
S_n&=&\left(\frac{n}{\alpha+1}+\frac{1}{2} \right) -\left(\frac{2n}{\alpha+1}+\frac{1}{2}-\left(\frac{n}{\alpha+1}+\frac{1}{2}\right)
\right)+\oh(1)\\
&=&	\frac{1}{2}+\oh(1).
\end{eqnarray*}


\newpage

\begin{figure}[h!]
 
  \scalebox{0.5} 
  {\includegraphics{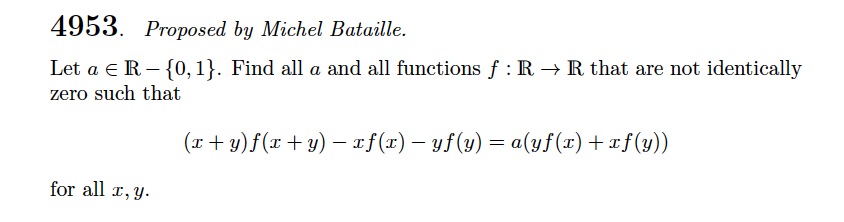}} 
\end{figure}

\centerline{\bf Solution to problem 4953 Crux Math. 50 (6) 2024, 313}\bigskip

 \centerline {Raymond Mortini, Rudolf Rupp}

  \medskip

\centerline{- - - - - - - - - - - - - - - - - - - - - - - - - - - - - - - - - - - - - - - - - - - - - - - - - - - - - -}
  
  \medskip

We show that under the assumption $a\in \R\setminus\{0,1\}$, the number \ovalbox{$a=3$} is the only one for which the functional equation
\begin{equation}\label{eigenartig}
(x+y)f(x+y)-xf(x)-yf(y)=a(yf(x)+xf(y))
\end{equation}
equivalently
\begin{equation}\label{form22}
(x+y)f(x+y)= f(x)(x+ay)+f(y)(y+ax)
\end{equation}
has non-trivial solutions. These are given by \ovalbox{$f(x)= cx^2$} for $c\not=0$.\\

\hrule \bigskip

It is obvious that the set of solutions of (\ref{eigenartig}) is a vector space; may be the trivial one $\{0\}$.  Moreover, if $f$ is a solution then $f(0)=0$: put $y=0$, and  so,  $x f(x)= xf(x) +f(0) a x$. As $a\not=0$, $f(0)=0$.

Note that if $a=3$, then $f(x)=x^2$ is  a solution since
\begin{eqnarray*}
(x+y)^3-x^3-y^3 -3 yx^2-3x y^2=0.
\end{eqnarray*}

Now suppose that $f$ is a solution whenever $a=3$. 
Then  $f$ is even. In fact, putting  $y=-x$,  and using that  $f(0)=0$, yields 
$0=f(x)(-2x)+f(-x)(2x)$ and so $f(-x)=f(x)$.

Next assume that $f(1)=1$ and let us introduce the auxiliary function $h(x):=f(x)-x^2$.  Then $h$ is a solution, too, satisfying $h(0)=h(1)=0$. We show that $h\equiv 0$  (yielding the assertion that $f(x)=x^2$, and so, if $f(1)=c\not=0$, $f(x)=cx^2$, and if $f(1)=0$, $f\equiv 0$).

To see this, take $y=1$ and $x\to-x$. Then
$$(-x+1) h(-x+1)+ xh(-x) = 3 h(-x),$$
hence, by using that $h$ is even,
\begin{equation}\label{equat1}
h(1-x)= \frac{3-x}{1-x}  h(x),\quad x\not=1.
\end{equation}
On the other hand, if $x+y=1$, then by (\ref{form22}),
$$ 0=h(x)(x+ 3(1-x))+h(1-x)(1-x+3x).$$
That is
\begin{equation}
h(1-x)=\frac{2x-3}{2x+1}\; h(x), \quad x\not=-\frac{1}{2}.
\end{equation}
Consequently,
$$ \frac{2x-3}{2x+1} h(x) = \frac{3-x}{1-x}\;  h(x).$$
Since  
$\frac{2x-3}{2x+1} = \frac{3-x}{1-x}\iff -3=3$ is impossible,  $h(x)=0$ for $x\not\in\{1, -1/2\}$. By (\ref{equat1}), $0=h(1/2)=h(-1/2)$.
As $h(1)=0$ we are done.\\

It remains to see that $a$ necessarily is $3$ for the existence of nontrivial solutions.  So let $f$ be  a solution to (\ref{eigenartig}), $f\not\equiv 0$.

$\bullet$  Put $y=x$. Then $2x f(2x)-2xf(x)=2a x f(x)$, implying that 
$$f(2x)=(1+a)f(x),$$
 valid also for $x=0$.

$\bullet$  Put $y=2x$. Then $3x f(3x)-xf(x)-2x f(2x)=a(2x f(x)+x f(2x))$. Hence
$$3 f(3x)=(1+2a) f(x)+ (2+a) f(2x),$$
valid also for $x=0$.

Pulling in $f(2x)$ gives
$$
3f(3x)= (2a+1) f(x)+ (2+a) (1+a) f(x)= (3+5a+a^2)f(x).
$$

$\bullet$ Put $y=3x$.  Then $4x f(4x)-xf(x)-3xf(3x)=a(3x f(x)+xf(3x))$. Hence
$$4f(4x)=(1+3a)f(x)+(3+a)f(3x),$$
valid also for $x=0$.
Since $f(4x)=f(2 (2x))=(1+a)f(2x)=(1+a)^2 f(x)$, we obtain with the formula for $f(3x)$ that
$$
4(1+a)^2f(x)=(1+3a)f(x)+\frac{1}{3}\;(3+a)(3+5a+a^2)f(x).
$$
Consequently, as $f\not\equiv 0$, 
$$
3(3+5a +4a^2) -(3+a)(3+5a+a^2)=0,
$$
or equivalently
$$a(a-1)(a-3)=0.$$
Thus $a=3$.
\\

{\bf Remark} What happens for $a=0$ or $a=1$? If $a=0$, then  the solutions $f$ are those functions for which $xf$ is additive.  If $a=1$ then the solutions are exactly the additive functions: in fact, by (\ref{form22}), $(x+y)f(x+y)= f(x)(x+y)+f(y)(y+x)$. Hence, if $x+y\not=0$,
$f(x+y)=f(x)+f(y)$. Now let $x+y=0$. Since $f(2x)=f(x+x)=f(x)+f(x)=2f(x)$ we conclude that for $x\not=0$,
\begin{eqnarray*}
f(x)+f(-x)&=&f((x-1)+1)+ f((-1-x)+1)=f(x-1) +f(1) +f(-1-x) +f(1)\\
& =&f((x-1)+(-1-x)) +2f(1)= f(-2) + 2f(1)\\
&=&2 (f(-1)+f(1)).
\end{eqnarray*}
In particular, if $x=1$, we deduce that  $f(-1)+f(1)=0$, and so
$$f(x)+f(-x)=0=f(0)=f(x+(-x)).$$ 

{\bf Remark} It is quite astonishing for us that in the these two exceptional cases there are so many solutions, as 
the set of additive functions is in a one to one correspondance with the $\Q$-linear functions on the $\Q$-vector
 space $V$ determined  by the real numbers.


\newpage

\begin{figure}[h!]
 
  \scalebox{0.5} 
  {\includegraphics{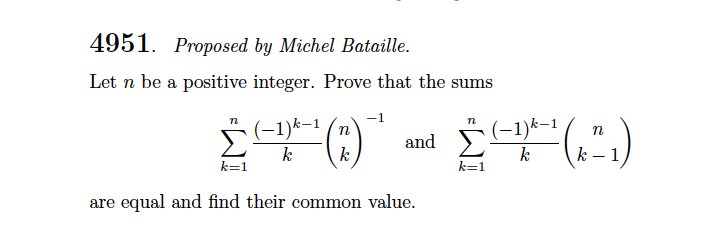}} 
\end{figure}

\centerline{\bf Solution to problem 4951 Crux Math. 50 (6) 2024, 313}\bigskip

 \centerline {Raymond Mortini, Rudolf Rupp}
 
  \medskip

\centerline{- - - - - - - - - - - - - - - - - - - - - - - - - - - - - - - - - - - - - - - - - - - - - - - - - - - - - -}
  
  \medskip

We show that the common value is \quad \ovalbox{$\dis \frac{1+(-1)^{n+1}}{n+1}$.}

Let $$P_1(x):=\sum_{k=1}^n \frac{(-1)^{k-1}}{k}\binom{n}{k-1}x^{k-1}.$$
To be calculated is $P_1(1)$.  To this end, note that
\begin{eqnarray*}
(xP_1(x))'&=&\sum_{k=1}^n(-1)^{k-1}\binom{n}{k-1}x^{k-1}=\sum_{j=0}^{n-1}(-1)^j\binom{n}{j}x^j\\
&=&-(-x)^n+\sum_{j=0}^n \binom{n}{j} (-x)^j=(1-x)^n-(-x)^n.
\end{eqnarray*}
Hence
$$xP_1(x)=\int_0^x \left((1-t)^n-(-t)^n \right) dt,$$
from which we conclude that 
$$P_1(x)= \frac{1}{x(n+1)} \left(-(1-x)^{n+1}+(-1)^{n+1}x^{n+1}+1\right).$$
Consequently $\dis P_1(1)= \frac{1+(-1)^{n+1}}{n+1}$.\\

To calculate $\dis P_2:=\sum_{k=1}^n \frac{(-1)^{k-1}}{k}{\binom{n}{k}}^{-1}$, we use that (see below)
$$\frac{1}{k}{\binom{n}{k}}^{-1}=\beta(k,n-k+1)=\int_0^1 (1-x)^{k-1}x^{n-k}dx=:I.$$
Hence
\begin{eqnarray*}
P_2&=& \int_0^1 \sum_{k=1}^n (x-1)^{k-1}x^{n-k}dx=\int_0^1 \sum_{j=0}^{n-1}(x-1)^j x^{n-j-1}dx\\
&=& -\int_0^1\big((x-1)^n-x^n\big)dx=\frac{1+(-1)^{n+1}}{n+1}.
\end{eqnarray*}

{\bf Addendum} A classical formula tells us that for $\mu,\nu\in \N$ one has
$$\frac{\Gamma(\mu+1)\;\Gamma(\nu+1)}{\Gamma\big((\mu+1)+(\nu+1)\big)}=\beta(\mu+1, \nu+1):=\dis \int_0^1 (1-x)^\mu x^\nu dx= \frac{\mu! \;\nu!}{(\mu+\nu+1)!}.$$
Hence, for $\mu=k-1$ and  $\nu=n-k$,
\begin{eqnarray*}
I=\beta(k, n-k+1)&=& \frac{(k-1)!\; (n-k)!}{n!}=\frac{1}{k}\frac{k!\;(n-k)!}{n!}=\frac{1}{k}{\binom{n}{k}}^{-1}.
\end{eqnarray*}

\newpage

\nopagecolor
\begin{figure}[h!]
 
  \scalebox{0.5} 
  {\includegraphics{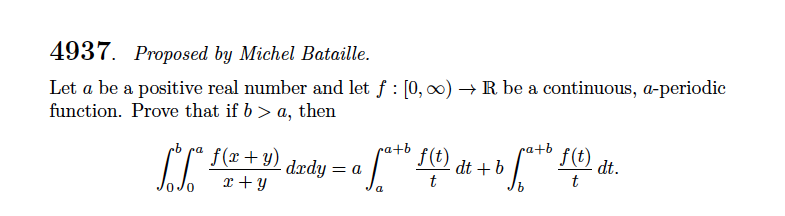}} 
\end{figure}

\centerline{\bf Solution to problem 4937 Crux Math. 50 (4) 2024, 200}\bigskip

 \centerline {Raymond Mortini, Rudolf Rupp}
 
  \medskip

\centerline{- - - - - - - - - - - - - - - - - - - - - - - - - - - - - - - - - - - - - - - - - - - - - - - - - - - - - -}
  
  \medskip


Let $\dis F(y)=\int_a^y \frac{f(t)}{t}dt$ be  a primitive of $f(t)/t$ on $]0,\infty[$. Note that $F(y)$ may be unbounded near $0$
 (for $f\equiv 1$ for instance). But one can control its behaviour near $0$: 
$$R(y):= y( F(a+y)-F(y))\to 0\; \text{as $y\to 0$}.$$

In fact,  as $f$ is continuous, $|f|\leq M$ on $[0,a]$. Hence, for $0<y\leq a$, 
$$y|F(y)|\leq My\int_y^a \frac{1}{t}dt= My\log a-My\log y\to 0,
 $$
 and so $R(y)\to 0$ as $F$ is continuous at $a$.
 
Using the transformation $x+y=t$ with $y\leq t\leq a+y$, we obtain for $\delta>0$ close to $0$,
\begin{eqnarray*}
\int_\delta^b \left(\int_0^a \frac{f(x+y)}{x+y}dx\right)dy&=&\int_\delta^b\left(\int_y^{a+y} \frac{f(t)}{t}dt\right)dy
=\int_\delta^b \left(F(a+y)-F(y)\right) dy\\
&=&\Big[y(F(a+y)-F(y))\Big]^b_\delta- \int_\delta^b y( F'(y+a)-F'(y))dy\\
&=&  b(F(a+b)-F(b)) -\underbrace{\delta( F(a+\delta)-F(\delta))}_{=R(\delta)}-
\int_\delta^b y\left( \frac{f(a+y)}{a+y}-\frac{f(y)}{y}\right)dy\\
&\buildrel\longrightarrow_{\delta\to 0}^{}& b(F(a+b)-F(b)) -\int_0^b \frac{\co{a}+y\co{-a}}{a+y}f(a+y)dy + \int_0^b f(y) dy\\
&=& b(F(a+b)-F(b)) -\int_0^b f(a+y)dy + a\int_0^b\frac{f(a+y)}{a+y}dy+ \int_0^b f(y) dy\\
&\buildrel=_{s=a+y}^{f\; \text{$a$-periodic}}& b(F(a+b)-F(b))+a \int_a^{a+b} \frac{f(s)}{s}ds\\
&=& b\left(\int_a^{a+b} \frac{f(t)}{t} dt - \int_a^b  \frac{f(t)}{t} dt\right)+a \int_a^{a+b} \frac{f(s)}{s}ds\\
&=&b\int_b^{a+b} \frac{f(t)}{t} dt+a \int_a^{a+b} \frac{f(s)}{s}ds.
\end{eqnarray*}


\newpage

\begin{figure}[h!]
 
  \scalebox{0.55} 
  {\includegraphics{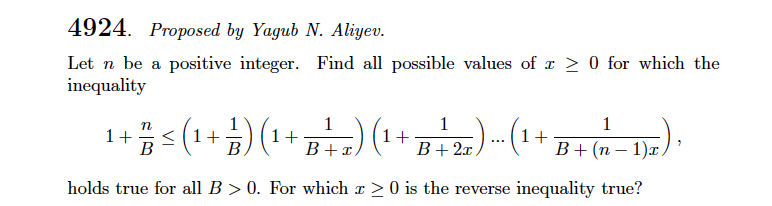}} 
\end{figure}

\centerline{\bf Solution to problem 4924 Crux Math. 50 (3) 2024, 148}\bigskip

 \centerline {Raymond Mortini, Rudolf Rupp}
  \medskip

\centerline{- - - - - - - - - - - - - - - - - - - - - - - - - - - - - - - - - - - - - - - - - - - - - - - - - - - - - -}
  
  \medskip


We claim that for all $B>0$ and $n\geq 2$
$$\ovalbox{$\dis 1+\frac{n}{B}\leq \prod_{j=0}^{n-1}\left(1+ \frac{1}{B+jx}\right)\;\text{if \;$0\leq x\leq 1$}
$}$$
and
$$\ovalbox{$\dis 1+\frac{n}{B}\geq \prod_{j=0}^{n-1}\left(1+ \frac{1}{B+jx}\right)\;\text{if \;$x\geq 1$}.
$}$$
In particular, we have equality for $x=1$. Moreover, equality holds for all $B>0$ and all $x\geq 0$ if $n=1$.\bigskip

Let $L_n$ be the left hand side and $R_n$ the right hand side.

$\bullet$ $n=1$.
 Then
$$L_1-R_1=1+\frac{1}{B}- (1+\frac{1}{B})=0.$$

$\bullet$~ $n\geq 2$. We show the assertion above  via induction on $n$. So let $n=2$.

\begin{eqnarray*}
L_2-R_2&=&  1+\frac{2}{B}-\left(1+\frac{1}{B}\right)\left(1+\frac{1}{B+x}\right)\\
&=&\frac{x-1}{B(B+x)}.
\end{eqnarray*}
Hence the assertion is true for $n=2$.

$\bullet $ $n\to n+1$.  Assume that the assertion is correct for some $n\in \N$. Then, for $x\geq 1$, $L_n\geq R_n$, and so
\begin{eqnarray*}
L_{n+1}-R_{n+1}&=&  1+\frac{n+1}{B}-\prod_{j=0}^{n}\left(1+ \frac{1}{B+jx}\right)\\
&\geq& 1+\frac{n+1}{B}- \left(1+\frac{n}{B}\right) \left(1+\frac{1}{B+nx}\right)\\
&=& \frac{1}{B}\left( 1- \frac{B+n}{B+nx}\right)=\frac{1}{B}\;\frac{n(x-1)}{B+nx}\geq 0.
\end{eqnarray*}
The same estimates replacing $\geq $ by $\leq$ show that  we also get the assertion for $0\leq x\leq 1$.

\newpage

\begin{figure}[h!]
 
  \scalebox{0.5} 
  {\includegraphics{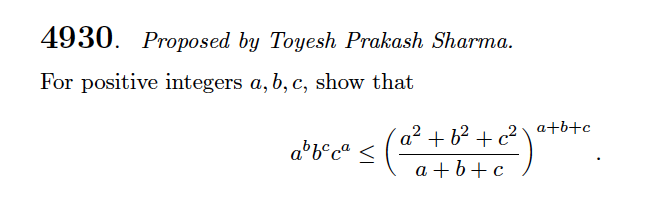}} 
\end{figure}

\centerline{\bf Solution to problem 4930 Crux Math. 50 (3) 2024, 149}\bigskip

 \centerline {Raymond Mortini, Rudolf Rupp}
 
  \medskip

\centerline{- - - - - - - - - - - - - - - - - - - - - - - - - - - - - - - - - - - - - - - - - - - - - - - - - - - - - -}
  
  \medskip


First we note that for $x,y\geq 0$,  $xy\leq \frac{1}{2}(x^2+y^2)$. Hence
$$ab+bc+ca\leq \frac{1}{2}(a^2+b^2)+\frac{1}{2}(b^2+c^2)+\frac{1}{2}(c^2+a^2)=a^2+b^2+c^2.$$

As $\log x$ is concave, we know that $\log (\sum_{j=1}^n \e_j x_j)\geq \sum_{j=1}^n \e_j \log x_j$ whenever
$\sum_{j=1}^n\e_j=1$, $\e_j\geq 0$.
Hence
\begin{eqnarray*}
\frac{b}{a+b+c}\log a+\frac{c}{a+b+c}\log b+\frac{a}{a+b+c}\log c
&\leq & \log\left( \frac{b}{a+b+c} a + \frac{c}{a+b+c}b +\frac{a}{a+b+c}c\right)\\
&\leq& \log\left(\frac{a^2+b^2+c^2}{a+b+c}\right).
\end{eqnarray*}

Hence
$$a^bb^cc^a\leq  \left(\frac{a^2+b^2+c^2}{a+b+c}\right)^{a+b+c}.$$

\newpage
\begin{figure}[h!]
 
  \scalebox{0.55} 
  {\includegraphics{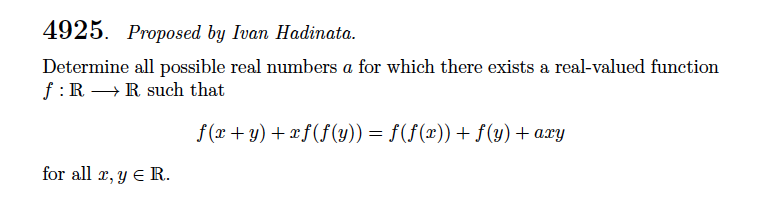}} 
\end{figure}

\centerline{\bf Solution to problem 4925 Crux Math. 50 (3) 2024, 148}\bigskip

 \centerline {Raymond Mortini, Rudolf Rupp}
 
  \medskip

\centerline{- - - - - - - - - - - - - - - - - - - - - - - - - - - - - - - - - - - - - - - - - - - - - - - - - - - - - -}
  
  \medskip
 

{\it We show that the functional equation
\begin{equation}\label{feqa}
f(x+y)+xf(f(y))=f(f(x))+f(y)+axy
\end{equation} 
admits a solution if and only if $a\in \{0,1\}$.
If $a=1$, then  the identity is the only  solution and if $a=0$, then the zero function is the only solution.}\\

Put $x=y=0$. Then $f(0)+0=f(f(0))+f(0)$ implies that $f(f(0))=0$. Now put $y=0$ in (\ref{feqa}). Then
\begin{equation}\label{feqa1}
f(x)=f(f(x))+f(0).
\end{equation}
This yields the new equation
\begin{equation}\label{feqa2}
f(x+y)+x\big(f(y)-f(0)\big)=f(x)-f(0)+f(y)+axy.
\end{equation}
Now put $x=1$. Then
\begin{equation}\label{feqa3}
f(1+y)+f(y)-f(0)=f(1)-f(0)+f(y)+ay,
\end{equation}
from which we conclude that
$f(y+1)=f(1)+ay$, or in other words,
\begin{equation}\label{feqa4}
f(u)=f(1)+a(u-1)=:au+b,
\end{equation}
that is, $f$ is linear-affine.

Since $f(f(0))=0$, we have $a(a\cdot 0+b)+b=0$, and so $ab+b=0$. Thus $b=0$ or $a=-1$.
Due to (\ref{feqa1}), 
$$ax+b=a(ax+b)+b+b.$$
That is
$$ax=a^2x +(ab+b)=a^2x,$$
from which we deduce $a\in \{0,1\}$.  Hence, as $a\not=-1$, $b=0$. Consequently, $f(x)=x$ (if $a=1$) or $f\equiv 0$ if $a=0$.
It is straightforward to check that these are actually solutions.

\newpage
\begin{figure}[h!]
 
  \scalebox{0.5} 
  {\includegraphics{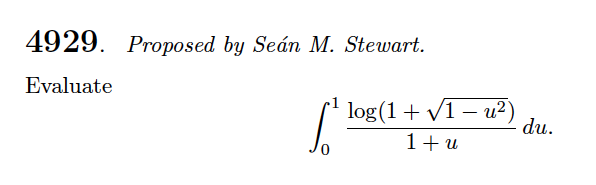}} 
\end{figure}

\centerline{\bf Solution to problem 4929 Crux Math. 50 (3) 2024, 149}\bigskip

 \centerline {Raymond Mortini, Rudolf Rupp}
 
  \medskip

\centerline{- - - - - - - - - - - - - - - - - - - - - - - - - - - - - - - - - - - - - - - - - - - - - - - - - - - - - -}
  
  \medskip


We show that  
$$\ovalbox{$\dis I:= \int_0^1 \frac{\log(1+\sqrt{1-u^2})}{1+u}\; du=\frac{\pi^2}{24}\sim 0.4112335167\cdots$}.$$
In fact, 
\begin{eqnarray*}
I&=& \int_0^1 \log\left(1+\sqrt{1-u^2}\right)\sum_{n=0}^\infty (-1)^n u^n\; du=\sum_{n=0}^\infty (-1)^n\int_0^1 \log\left(1+\sqrt{1-u^2}\right) u^n\; du,
\end{eqnarray*}
where the interchanging $\lim_n \int S_n=\int\lim S_n$ is allowed as the partial sums
$$S_n= \sum_{j=0}^n\log\left(1+\sqrt{1-u^2}\right)(-1)^j u^j$$ 
are bounded by the $L^1[0,1]$-function $\frac{\log\left(1+\sqrt{1-u^2}\right)}{1+u}$.
Now put 
$$I_n:=\int_0^1 \log\left(1+\sqrt{1-u^2}\right) u^n\; du,$$
and use 
partial integration with $f(u)= \log\left(1+\sqrt{1-u^2}\right)$ and $g'(u)=u^n$. Note that $g(u)=\frac{u^{n+1}}{n+1}$ and 
$$f'(u)= \frac{\frac{-u }{\sqrt{1-u^2}}}{1+\sqrt{1-u^2}}=-\frac{u (1-\sqrt{1-u^2}) }{\sqrt{1-u^2}\;u^2}=
-\frac{1}{u\,\sqrt{1-u^2}}+\frac{1}{u}.
$$
Hence
\begin{eqnarray*}
I_n&=& -\frac{1}{n+1}\int_0^1  \left(u^n-\frac{u^n}{\sqrt{1-u^2}}\right)\;du\\
&\buildrel=_{u=\sin t}^{}& -\frac{1}{(n+1)^2}+\frac{1}{n+1}\int_0^{\pi/2} \sin^n t \,dt:=  -\frac{1}{(n+1)^2}+J_n.
\end{eqnarray*}

{\bf Method 1}.

By Lemma \ref{intsinu} below,

\begin{eqnarray*}
I=-\sum_{n=0}^\infty (-1)^n \frac{1}{(n+1)^2}+\sum_{n=0}^\infty (-1)^n J_n&=&-\frac{\pi^2}{12}+\sum_{m=0}^\infty J_{2m} -\sum_{m=0}^\infty J_{2m+1}\\
&=&-\frac{\pi^2}{12} +\sum_{m=0}^\infty \frac{{2m\choose m} }{(2m+1)4^m}\cdot \frac{\pi}{2}-
 \sum_{m=0}^\infty\frac{4^m}{(2m+1)(2m+2){2m \choose m}}\\
 &=& -\frac{\pi^2}{12}+ \frac{\pi^2}{4} - \frac{\pi^2}{8}=\frac{\pi^2}{24}.
\end{eqnarray*}

Here we have used the Taylor series (see e.g. \cite{549028} and \cite{1448822}):

$$\arcsin x=\sum_{n=0}^\infty   \frac{{2n\choose n}}{4^n}\frac{x^{2n+1}}{2n+1}$$
\begin{eqnarray*}
\arcsin^2 x&=&2 \sum_{n=0}^\infty \frac{4^n x^{2n+2}}{(2n+1)(2n+2){2n\choose n}}\\
&=& \sum_{n=0}^\infty \frac{2^{2n+1}(n!)^2}{(2n+2)!} x^{2n+2}= \sum_{n=1}^\infty \frac{2^{2n-1}}{n^2 {2n\choose n}}x^{2n}.
\end{eqnarray*}
evaluated at $x=1$.

\begin{lemma}\label{intsinu}
Let $I_n:=\int_0^{\pi/2} (\sin x)^{n} dx$. Then  $I_0=\pi/2$, $I_1=1$ and for $n\in \N^*$, 
\begin{enumerate}
\item[(1)] $ I_{2n}=\frac{1}{2}\cdot\frac{3}{4}\cdot\frac{5}{6}\cdots\frac{2n-1}{2n}\frac{\pi}{2}=\frac{(2n)!}{4^n (n!)^2}\cdot\frac{\pi}{2}=
\frac{{2n\choose n} }{4^n}\cdot \frac{\pi}{2}$.
\item[(2)] $ I_{2n+1}=\frac{2}{3}\cdot\frac{4}{5}\cdot\frac{6}{7}\cdots\frac{2n}{2n+1}= \frac{4^n (n!)^2}{(2n+1)!}= \frac{4^n}{(2n+1){2n \choose n}}$.
\end{enumerate}
\end{lemma}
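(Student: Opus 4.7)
The plan is to establish the reduction formula $nI_n = (n-1)I_{n-2}$ for $n\ge 2$, and then iterate it from the two base cases $I_0=\pi/2$ and $I_1=1$ to obtain the closed forms in (1) and (2). The base cases are immediate: $I_0=\int_0^{\pi/2} dx = \pi/2$ and $I_1=\int_0^{\pi/2}\sin x\,dx = [-\cos x]_0^{\pi/2}=1$.

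For the recurrence, I would prove the cleaner identity $nI_n-(n-1)I_{n-2}=0$ directly, avoiding any split into the two parities. First I would rewrite
\[
nI_n-(n-1)I_{n-2}=\int_0^{\pi/2}(\sin x)^{n-2}\bigl(n\sin^2 x-(n-1)\bigr)\,dx,
\]
and then, using $n\sin^2 x-(n-1)=-\bigl((n-1)\cos^2 x-\sin^2 x\bigr)$, I would recognize the integrand as the derivative
\[
\frac{d}{dx}\bigl((\sin x)^{n-1}\cos x\bigr)=(n-1)(\sin x)^{n-2}\cos^2 x-(\sin x)^n,
\]
multiplied by $-1$. Hence the integral equals $-\bigl[(\sin x)^{n-1}\cos x\bigr]_0^{\pi/2}=0$, exactly as carried out (for the even case) in the excerpt. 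This settles the recursion $I_n=\tfrac{n-1}{n}I_{n-2}$ uniformly for all $n\ge 2$.

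Finally, I would iterate. For $n\ge 1$, telescoping the recurrence gives
\[
I_{2n}=\frac{2n-1}{2n}\cdot\frac{2n-3}{2n-2}\cdots\frac{1}{2}\,I_0,\qquad
I_{2n+1}=\frac{2n}{2n+1}\cdot\frac{2n-2}{2n-1}\cdots\frac{2}{3}\,I_1.
\]
Substituting $I_0=\pi/2$ and $I_1=1$ yields the first equalities in (1) and (2). The alternative closed forms follow by a one-line manipulation: multiplying numerator and denominator of $\prod_{k=1}^n\tfrac{2k-1}{2k}$ by $\prod_{k=1}^n(2k)=2^n n!$ gives $\tfrac{(2n)!}{4^n(n!)^2}$, and a further rewrite via $\binom{2n}{n}=\tfrac{(2n)!}{(n!)^2}$ produces $\tfrac{\binom{2n}{n}}{4^n}$; similarly, multiplying the product in (2) by $\tfrac{(2n+1)!}{(2n+1)!}$ yields $\tfrac{4^n(n!)^2}{(2n+1)!}$, and pulling out $\binom{2n}{n}$ gives the last form.

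There is really no obstacle here; the only place where one has to be careful is in verifying the recurrence uniformly for all $n\ge 2$ (so as to cover both parities with a single argument) and, for the second form in (2), in checking the arithmetic of the binomial-coefficient rewrite. The proof is therefore short and mechanical once the reduction identity has been written as the integral of a total derivative.
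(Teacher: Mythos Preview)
Your proof is correct and follows essentially the same approach as the paper: both establish the reduction $I_n=\tfrac{n-1}{n}I_{n-2}$ by rewriting $nI_n-(n-1)I_{n-2}$ as $-\int_0^{\pi/2}\tfrac{d}{dx}\bigl((\sin x)^{n-1}\cos x\bigr)\,dx=0$, then iterate from the base cases. The only cosmetic difference is that the paper treats the even and odd cases separately (writing out $2nI_{2n}-(2n-1)I_{2n-2}=0$ and $(2n+1)I_{2n+1}-2nI_{2n-1}=0$ in two parallel computations), whereas you handle both parities with a single uniform argument---a slight economy, but the same idea.
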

\begin{proof}
(1) $I_{2n}= \frac{2n-1}{2n} I_{2n-2}$ for $n\in \N^*$ and $I_0=\frac{\pi}{2}$, because

\begin{eqnarray*}
2n I_{2n}- (2n-1)I_{2n-2}&=&\int_0^{\pi/2}(\sin x)^{2n-2}\left(2n \sin^2 x-(2n-1)\right)dx\\
&=&-\int_0^{\pi/2}(\sin x)^{2n-2}\left( (2n-1) \cos ^2x-\sin^2 x\right)dx\\
&=&-\left[ (\sin x)^{2n-1}\cos x   \right]^{\pi/2}_0=0.
\end{eqnarray*}

(2) $I_{2n+1}= \frac{2n}{2n+1} I_{2n-1}$ for $n\in \N^*$ and $I_1=1$, because

\begin{eqnarray*}
(2n+1)I_{2n+1}-2nI_{2n-1}&=& \int_0^{\pi/2} (\sin x)^{2n-1}\left((2n+1)\sin ^2 x -2n\right)dx\\
&=&-\int_0^{\pi/2}(\sin x)^{2n-1}\left(2n \cos^2 x-\sin^2x\right)dx\\
&=&-\left[(\sin x)^{2n} \cos x\right]^{\pi/2}_0=0.
\end{eqnarray*}
\end{proof}

Next we note that for $0\leq x<1$, $\arcsin x=\arctan\left(\frac{x}{\sqrt{1-x^2}}\right)$. In fact

\begin{lemma}
For $0\leq x<1$ we have
\begin{enumerate}
\item[(1)] $\dis \arcsin x=\arctan\left(\frac{x}{\sqrt{1-x^2}}\right)$.
\item[(2)] $\dis \frac{\arcsin x}{\sqrt{1-x^2}}=\sum_{n=0}^\infty x^{2n+1}\int_0^{\pi/2} (\sin t)^{2n+1}\,dt$.
\item[(3)] $\dis(\arcsin x)^2= \sum_{n=0}^\infty \left(\frac{\int_0^{\pi/2} (\sin t)^{2n+1}dt}{n+1} \right)x^{2n+2}$.
\end{enumerate}
\end{lemma}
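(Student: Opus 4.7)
The plan is to prove the three parts in order, each one feeding into the next. Part (1) is elementary, parts (2) and (3) both rest on a single integral evaluation.

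For \textbf{part (1)}, I would set $y=\arcsin x$ so that $y\in[0,\pi/2)$, $\sin y=x$, and $\cos y=\sqrt{1-x^2}>0$. Then $\tan y = x/\sqrt{1-x^2}$, and since $y\in[0,\pi/2)\subseteq (-\pi/2,\pi/2)$, injectivity of $\arctan$ on that range yields $y=\arctan(x/\sqrt{1-x^2})$.

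For \textbf{part (2)}, the strategy is to recognize the right-hand side as a Laplace-style sum that can be turned into an integral. Starting from
\[
\sum_{n=0}^\infty x^{2n+1}\int_0^{\pi/2}(\sin t)^{2n+1}\,dt \;=\; \int_0^{\pi/2}\sum_{n=0}^\infty (x\sin t)^{2n+1}\,dt \;=\; \int_0^{\pi/2}\frac{x\sin t}{1-x^2\sin^2 t}\,dt,
\]
the interchange being justified for $0\le x<1$ by dominated convergence (the partial sums being bounded by the $L^1$ function $\frac{\sin t}{1-x^2\sin^2 t}$). Now substitute $u=\cos t$, so that $du=-\sin t\,dt$ and $\sin^2 t=1-u^2$, giving
\[
\int_0^1 \frac{x}{(1-x^2)+x^2u^2}\,du \;=\; \frac{1}{\sqrt{1-x^2}}\arctan\!\left(\frac{x}{\sqrt{1-x^2}}\right).
\]
By part (1) this equals $\arcsin x/\sqrt{1-x^2}$, which is the asserted identity.

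For \textbf{part (3)}, I use that $\frac{d}{dx}(\arcsin x)^2 = \frac{2\arcsin x}{\sqrt{1-x^2}}$. Integrating the identity of (2) term by term from $0$ to $x$ (legal by locally uniform convergence of the power series on $[0,1)$) gives
\[
(\arcsin x)^2 \;=\; 2\sum_{n=0}^\infty I_{2n+1}\int_0^x s^{2n+1}\,ds \;=\; \sum_{n=0}^\infty \frac{I_{2n+1}}{n+1}\,x^{2n+2},
\]
where $I_{2n+1}=\int_0^{\pi/2}(\sin t)^{2n+1}\,dt$. The main obstacle is the evaluation of the integral in (2); once the substitution $u=\cos t$ is found and part (1) is invoked, everything else is a routine interchange argument, so Lemma \ref{intsinu} is not even needed here (the closed form of $I_{2n+1}$ plays no role).
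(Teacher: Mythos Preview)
Your proof is correct and essentially the same as the paper's. The only cosmetic difference is in part (2): the paper starts from the left-hand side by writing $\arcsin x/\sqrt{1-x^2}$ as $I(\pi/2)-I(0)$ for $I(t)=\frac{1}{\sqrt{1-x^2}}\arctan\frac{x\sin t}{\sqrt{1-x^2}}$, differentiates to obtain $\int_0^{\pi/2}\frac{x\cos t}{1-x^2\cos^2 t}\,dt$, and then expands as a geometric series in $x\cos t$; you run the same computation in reverse, starting from the series side and recovering the arctan via the substitution $u=\cos t$. Your interchange in (2) is fine since all terms are nonnegative (monotone convergence suffices), and part (3) matches the paper exactly.
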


\begin{proof}
(1) Obvious, as $\tan(\arcsin x)=\sin(\arcsin x)/(\cos\arcsin x)$.

(2) As in \cite{549028}): we shall use that for $0\leq y<1$
$$\frac{y}{1-y^2}= \frac{1}{2}\left(\frac{1}{1-y}-\frac{1}{1+y}\right)=\frac{1}{2}\sum_{n=0}^\infty  (y^n-(-1)^n y^n)
=\sum_{n=0}^\infty y^{2n+1}.$$
Now let 
$$I(t):=\frac{1}{\sqrt{1-x^2}}\;\arctan \frac{x\sin t}{\sqrt{1-x^2}}. 
$$
Then, with $0<x<1$,
\begin{eqnarray*}
 \frac{\arcsin x}{\sqrt{1-x^2}}&=& I\left(\frac{\pi}{2}\right)-I(0)=\int_0^{\frac{\pi}{2}} \frac{\partial I}{\partial t} dt=
 \int_0^{\frac{\pi}{2}} \frac{x\cos t}{1-x^2\cos^2 t}\;dt\\
 &=&  \int_0^{\frac{\pi}{2}}\sum_{n=0}^\infty (x\cos t)^{2n+1}\; dt=\sum_{n=0}^\infty x^{2n+1} \int_0^{\frac{\pi}{2}} (\cos t)^{2n+1}\;dt.
\end{eqnarray*}
Here $\sum\int=\int\sum$, as all the terms are positive.

(3) Just use that  $\dis \frac{d}{dx}(\arcsin x)^2= 2  \frac{\arcsin x}{\sqrt{1-x^2}}$.
\end{proof}

{\bf Method 2} 

Note that

\begin{eqnarray*}
I=-\sum_{n=0}^\infty (-1)^n \frac{1}{(n+1)^2}+\sum_{n=0}^\infty (-1)^n J_n.
\end{eqnarray*}
Consider the auxiliary function
$$h(x)=\sum_{n=0}^\infty \left( \int_0^{\frac{\pi}{2}} \frac{(\sin t)^n}{n+1}\,dt \right)(-1)^nx^{n+1}.$$
Then 
\begin{eqnarray*}
h'(x)&=&\sum_{n=0}^\infty \left( \int_0^{\frac{\pi}{2}} (\sin t)^n\,dt \right) (-1)^n x^n\\
&=&\int_0^{\frac{\pi}{2}} \sum_{n=0}^\infty (- x\sin t )^n\; dt =
\int_0^{\frac{\pi}{2}} \frac{1}{1+x\sin t}\;dt
\buildrel=_{}^{(*)}  \frac{\arccos x}{\sqrt{1-x^2}}
\end{eqnarray*}

Hence, 
$$h(1)=h(1)-h(0)= \int_0^1  \frac{\arccos x}{\sqrt{1-x^2}} dx= \left[ -\frac{1}{2}\arccos^2 x\right]^1_0=\frac{\pi^2}{8}.$$

We conclude that
 $\dis I=-\frac{\pi^2}{12} +\frac{\pi^2}{8}= \frac{\pi^2}{24}$.
 \\
 
 To prove (*), just use the transformation $\tan(t/2)=y$ to get
 
 \begin{eqnarray*}
 \int_0^{\frac{\pi}{2}} \frac{1}{1+x\sin t}\;dt&=&2 \int_0^1 \frac{1}{1+y^2+2xy}dy=
 \frac{2}{1-x^2}\int_0^1 \frac{1}{1+\frac{(y+x)^2}{1-x^2}}dy\\
 &=& 2\left[\frac{\arctan\frac{x+y}{\sqrt{1-x^2}}}{\sqrt{1-x^2}}\right]^1_0=
 2\frac{\arctan\frac{x+1}{\sqrt{1-x^2}}-\arctan\frac{x}{\sqrt{1-x^2}} }{\sqrt{1-x^2}}\\
 &=& \frac{\arccos x}{\sqrt{1-x^2}}.
\end{eqnarray*}

The latter is verified by calculating the derivatives of the numerators and by using  that for $x=0$, $2\arctan 1=\pi/2=\arccos 0$.\\

{\bf Method 3}

\begin{eqnarray*}
 f(1)&=&\int_0^{1}\int_0^{\frac{\pi}{2}} \frac{1}{1+\xi\sin t}\;dt d\xi= \int_0^{\frac{\pi}{2}}\int_0^1  \frac{1}{1+\xi\sin t} d\xi dt\\
 &=&\int_0^{\frac{\pi}{2}} \frac{\log (1+\sin t)}{\sin t}dt\\
 &=&\int_0^1 \frac{\log(1+y)}{y\sqrt{1-y^2}}dy\\
 &=& \frac{\pi^2}{8}.
\end{eqnarray*}

From \cite{549028}

  \begin{figure}[h!]
 
  \scalebox{0.45} 
  {\includegraphics{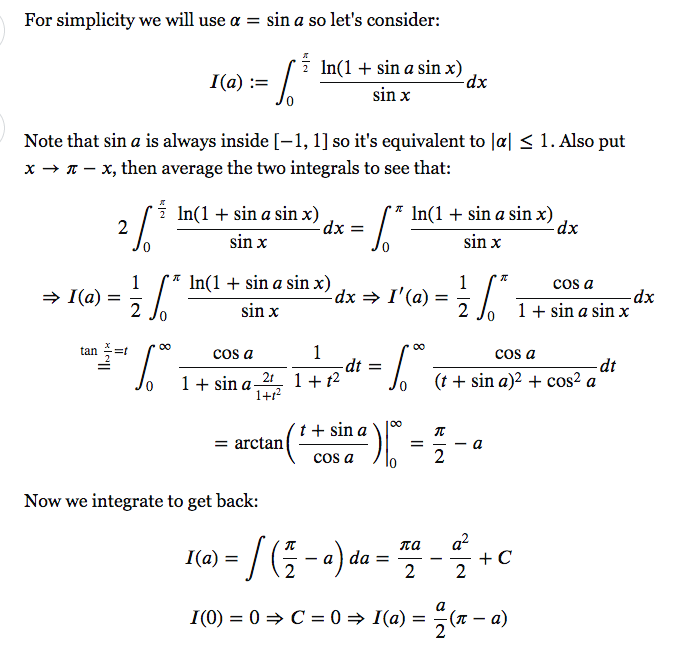}} 
\end{figure}


\newpage
\begin{figure}[h!]
 
  \scalebox{0.55} 
  {\includegraphics{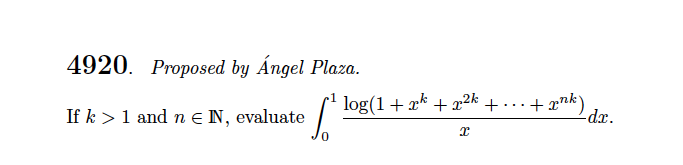}} 
\end{figure}

\centerline{\bf Solution to problem 4920 Crux Math. 50 (2) 2024, 84}\bigskip

 \centerline {Raymond Mortini, Rudolf Rupp}
 
  \medskip

\centerline{- - - - - - - - - - - - - - - - - - - - - - - - - - - - - - - - - - - - - - - - - - - - - - - - - - - - - -}
  
  \medskip


Let 
$$I:= \int_0^1 \frac{\log(1+x^k+x^{2k}+\cdots+x^{nk})}{x}\;dx.
$$

We show that

$$\ovalbox{$\dis I= \frac{\pi^2}{6} \frac{n}{k(n+1)}$}.$$\\

For the proof, we use the power series representation
$$\log(1-x)=-\sum_{j=1}^\infty \frac{x^j}{j},\quad |x|<1.$$
So, if $0<x<1$ we have
\begin{eqnarray*}
f(x):=\frac{\log(1+x^k+x^{2k}+\cdots+x^{nk})}{x}&=&\frac{1}{x}\;\log\left(  \frac{1-x^{k(n+1)}}{1-x^k}        \right)\\
&=&-\sum_{j=1}^\infty \frac{x^{k(n+1)j-1}}{j} + \sum_{j=1}^\infty \frac{x^{kj-1}}{j}.
\end{eqnarray*}
Hence, a primitive is given by
$$
-\sum_{j=1}^\infty \frac{x^{k(n+1)j}}{j^2k(n+1)}+\sum_{j=1}^\infty \frac{x^{kj}}{j^2k}.
$$
We conclude that
\begin{eqnarray*}
I=\int_0^1 f(x) dx&=&\sum_{j=1}^\infty \left(-\frac{1}{k(n+1)} \frac{1}{j^2}+ \frac{1}{k} \frac{1}{j^2}\right)\\
&=& \frac{\pi^2}{6} \left(\frac{1}{k}-\frac{1}{k(n+1)} \right)\\
&=& \frac{\pi^2}{6}  \frac{n}{k(n+1)}.
\end{eqnarray*}

\newpage

\begin{figure}[h!]
 
  \scalebox{0.55} 
  {\includegraphics{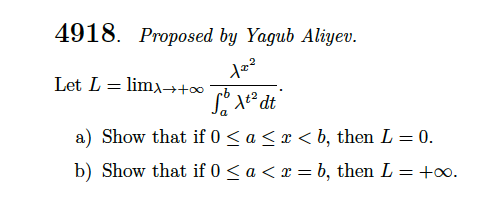}} 
\end{figure}

\centerline{\bf Solution to problem 4918 Crux Math. 50 (2) 2024, 83}\bigskip

 \centerline {Raymond Mortini, Rudolf Rupp}
 
  \medskip

\centerline{- - - - - - - - - - - - - - - - - - - - - - - - - - - - - - - - - - - - - - - - - - - - - - - - - - - - - -}
  
  \medskip

Let $0\leq a <b$. 

a) Suppose that $a\leq x<b$. Then for $a\leq t\leq b$ we have  $1\geq t/b$, and so, for $\lambda>1$, 
\begin{eqnarray*}
0\leq I(\lambda):=\frac{\lambda^{x^2}}{\int_a^b \lambda^{t^2}dt}&\leq & \frac{b\lambda^{x^2}}{\int_a^b t\lambda^{t^2}dt}
=\frac{2b(\log\lambda)\, \lambda^{x^2}}{\lambda^{b^2}-\lambda^{a^2} }=
\frac{2b(\log\lambda)\, \lambda^{x^2-b^2}}{1-\lambda^{a^2-b^2} }.
\end{eqnarray*}
Since 
$$(\log\lambda) e^{(\log\lambda) (x^2-b^2)}= \frac{\log\lambda}{e^{(\log \lambda) (b^2-x^2)}}\to 0 \text{\;as $\lambda\to\infty$},$$
we have that $\lim_{\lambda\to\infty} I(\lambda)=0$.\\

b) Suppose that $x=b$. Then for $a\leq t\leq b$ we have  $1\leq t/a$ and so 
\begin{eqnarray*}I(\lambda)=\frac{\lambda^{b^2}}{\int_a^b \lambda^{t^2}dt}&\geq &
 \frac{a\lambda^{b^2}}{\int_a^b t\lambda^{t^2}dt}
=\frac{2a(\log\lambda)\, \lambda^{b^2}}{\lambda^{b^2}-\lambda^{a^2} }\\
&\geq&\frac{2a(\log\lambda)\, \lambda^{b^2}}{\lambda^{b^2} }=2 a\log\lambda \\
&\to& \infty \text{\;as $\lambda\to\infty$}.
\end{eqnarray*}

\newpage
\begin{figure}[h!]
 
  \scalebox{0.55} 
  {\includegraphics{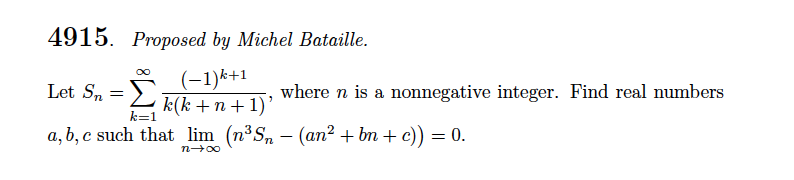}} 
\end{figure}

\centerline{\bf Solution to problem 4915 Crux Math. 50 (2) 2024, 82}\medskip

 \centerline {Raymond Mortini, Rudolf Rupp}
 
  \medskip

\centerline{- - - - - - - - - - - - - - - - - - - - - - - - - - - - - - - - - - - - - - - - - - - - - - - - - - - - - -}
  
  \medskip

We claim that $a=\log 2, \; b=-\frac{1}{2}-\log 2$ and $c=\log 2+\frac{5}{4}$; that is

$$\ovalbox{$\dis\lim_{n\to\infty}\left(n^3S_n-\Big( (\log 2) {{n}^{2}}+\Big( -\frac{1}{2}-\log 2 \Big)n +\log 2+\frac{5}{4}\Big)\right)=0$.}
$$

  \begin{eqnarray*}
 {{S}_{n}}:=\sum\limits_{k=1}^{\infty }{\frac{{{(-1)}^{k+1}}}{k(k+n+1)}}&=&\frac{1}{n+1}\sum\limits_{k=1}^{\infty }{{{(-1)}^{k+1}}\frac{n+1+k-k}{k(k+n+1)}}\\
 &=&\frac{1}{n+1}\sum\limits_{k=1}^{\infty }{{{(-1)}^{k+1}}\left( \frac{1}{k}-\frac{1}{n+k+1} \right)} \\
& =&\frac{1}{n+1}\sum\limits_{k=1}^{\infty }{\left( {{(-1)}^{k+1}}\int\limits_{0}^{1}{{{x}^{k-1}}}dx-{{(-1)}^{k+1}}\int\limits_{0}^{1}{{{x}^{n+k}}}dx \right)}\\
&\buildrel=_{}^{(*)}&\frac{1}{n+1}\int\limits_{0}^{1}\Big({\sum\limits_{k=1}^{\infty }{{{(-1)}^{k+1}}{{x}^{k-1}}-}}\sum\limits_{k=1}^{\infty }{{{(-1)}^{k+1}}{{x}^{n+k}}}\Big)dx \\
& =&\frac{1}{n+1}\int\limits_{0}^{1}{\left( \frac{1}{1+x}-{{x}^{n+1}}\sum\limits_{k=1}^{\infty }{{{(-1)}^{k+1}}{{x}^{k-1}}} \right)}dx=\frac{1}{n+1}\int\limits_{0}^{1}{\frac{1-{{x}^{n+1}}}{1+x}}dx\\
&=&\frac{\log (2)}{n+1}-\frac{1}{n+1}\int\limits_{0}^{1}{\frac{{{x}^{n+1}}}{1+x}}dx \\
& =&\frac{\log (2)}{n+1}-\frac{1}{n+1}\left[ \left. \frac{{{x}^{n+2}}}{n+2}\frac{1}{1+x} \right|_{0}^{1}+\frac{1}{n+2}\int\limits_{0}^{1}{\frac{{{x}^{n+2}}}{{{(1+x)}^{2}}}}dx \right] \\
& =&\frac{\log (2)}{n+1}-\frac{1}{2(n+1)(n+2)}-\frac{1}{(n+2)(n+1)}\int\limits_{0}^{1}{\frac{{{x}^{n+2}}}{{{(1+x)}^{2}}}}dx \\
& =&\frac{\log (2)}{n+1}-\frac{1}{2(n+1)(n+2)}-\frac{1}{(n+2)(n+1)}\left[ \left. \frac{{{x}^{n+3}}}{n+3}\frac{1}{{{(1+x)}^{2}}} \right|_{0}^{1}+\frac{2}{n+3}\int\limits_{0}^{1}{\frac{{{x}^{n+2}}}{{{(1+x)}^{3}}}}dx \right] \\ 
& =&\frac{\log (2)}{n+1}-\frac{2n+7}{4(n+1)(n+2)(n+3)}-\frac{2}{(n+1)(n+2)(n+3)}\int\limits_{0}^{1}{\frac{{{x}^{n+2}}}{{{(1+x)}^{3}}}}dx.
 \end{eqnarray*}\\
 
 Here the interchanging of $\int\sum=\sum\int$ in $(*)$ is possible since the partial sums
 $$\sum_{k=1}^N (-1)^{k+1}( x^{k-1}-x^{n+k})$$
   are bounded.
Hence \footnote{ Here $\Oh$ and $\oh$ denote the Landau symbols: $\Oh (1/n)$ is a function 
$n\mapsto h(n)$ satisfying $\frac{|h(n)|}{ 1/n}\leq C$  and $\oh(1)$ is a function $n\mapsto g(n)$ with $\lim_{n\to\infty} g(n)=0$. In particular, $\Oh(1/n)$ implies $\oh(1)$.}
\begin{eqnarray*}
 {{n}^{3}}{{S}_{n}}&=&\frac{{{n}^{3}}}{n+1}\log (2)-\frac{{{n}^{3}}(2n+7)}{4(n+1)(n+2)(n+3)}-\underbrace{\frac{2{{n}^{3}}}{(n+1)(n+2)(n+3)}}_{\le 2}\underbrace{\int\limits_{0}^{1}{\frac{{{x}^{n+2}}}{{{(1+x)}^{3}}}}dx}_{\le 1/(n+3)} \\
& =&({{n}^{2}}-n+1+\Oh(1/n))\log (2)-\frac{n}{2}+\frac{5}{4}+\Oh(1/n)+\Oh(1/n) \\
& =&{{n}^{2}}\log (2)+n\left( -\frac{1}{2}-\log (2) \right)+\log (2)+\frac{5}{4}+\Oh(1/n) \\
&=& an^2+bn+c+\oh(1),
\end{eqnarray*}
where the asymptotics are obtained by calculating the partial fraction decomposition of the rational functions in $n$.

\newpage

\begin{figure}[h!]
 
  \scalebox{0.55} 
  {\includegraphics{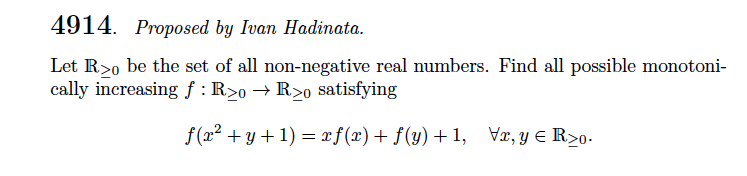}} 
\end{figure}

\centerline{\bf Solution to problem 4914 Crux Math. 50 (2) 2024, 82}\bigskip

 \centerline {Raymond Mortini, Rudolf Rupp}
 
  \medskip

\centerline{- - - - - - - - - - - - - - - - - - - - - - - - - - - - - - - - - - - - - - - - - - - - - - - - - - - - - -}
  
  \medskip

\ovalbox{We show that the {\it identity} is the only monotonically increasing solution on $[0,\infty[$. }\\

First we note that $f(x)=x$ trivially satisfies the functional equation on $[0,\infty[$. Now suppose that $f$ is a solution
and that $f$ is  increasing on $[0,\infty[$.\\

$\bullet $ Put $y=0$, respectively $ x=0$. Then 
$$\mbox{$f(x^2+1)=xf(x)+f(0)+1$ and $f(y+1)=f(y)+1$.}$$
Hence, with $y=x^2$, respectively $y=0$,
$$\mbox{$f(x^2+1)=f(x^2)+1$ and $f(1)=f(0)+1$}$$
 and so 
\begin{equation}\label{4914-1}
f(x^2)+1=f(x^2+1)=x f(x)+f(1).
\end{equation}
This implies that, with $x=1$,
$$f(1)+1 = 1 \cdot f(1)+f(1)=2 f(1).$$
Hence  $f(1)=1$ and so $f(0)=0$. Consequently

\begin{equation}\label{newfeq}
f(x^2) =f(x^2+1)-1=xf(x)+f(1)-1=xf(x).
\end{equation}

 $\bullet$ Next we note that $f$ is right-continuous at $x_1=1$. In fact, since 
$f$ is increasing, $f$ is bounded in a right-neighborhood of $x_0=0$ \footnote{For this,  it is  important that $f$ is defined at $0$.}. Hence, if $x\to 0$, we have that $xf(x)\to 0$.
Consequently, by  the second identity in (\ref{4914-1})
$$ \lim_{x\to 0} f(x^2+1)=f(1)=1.$$

$\bullet$  Via induction we obtain from $f(x^2)=xf(x)$, or  equivalently   $f(x)=\sqrt x\, f(\sqrt x)$ (just  replace $x^2$ by $x$), that  
$$\frac{f(\sqrt[2^n]{x})}{\sqrt[2^n]{x}}=\frac{f(x)}{x}.$$

Now $\sqrt[2^n]{x}\to 1$ for $x>0$ and $\sqrt[2^n]{x}\geq 1$ for $x\geq 1$. Thus, the right-continuity of $f$ at $x_1=1$ yields that 
$$\frac{f(1)}{1}=\frac{f(x)}{x},$$
 from which we conclude that $f(x)=x$ for $x\geq 1$.  

Now let $0\leq x\leq 1$. Then $x+1\geq 1$ and so, due to $f(y+1)=f(y)+1$, 
$$x+1=f(x+1)=f(x)+1.$$
Consequently $f(x)=x$, too. \\


{\bf Remark 1}  Our proof shows that instead of $f$ being increasing, we may have assumed merely that $f$ is  bounded in a right neighborhood of the origin. \\

{\bf Remark 2} A small modification of the proof (see below) shows that any solution to the functional equation 
$$(E) \hspace{2cm}f(x^2+y+1)=xf(x)+f(y)+1, \quad (x,y\geq 0)$$
\\
 actually is 
additive on $[0,\infty[$; that is satisfies $f(u+v)=f(u)+f(v)$ with $f(0)=0$ and $f(1)=1$. Thus we obtain from the well known fact on the Cauchy functional equation (restricted to the non-negative reals) that actually every measurable solution of (E) coincides with the identity.\\

\footnotesize{ In fact, $f(x^2)\buildrel=_{}^{(\ref{newfeq})} xf(x)$ and  $f(y+1)=f(y)+1$ imply that (E) becomes

\begin{equation}\label{zw}
f(x^2+y+1)=f(x^2)+f(y)+1=f(x^2)+ f(y+1), \quad (x,y\geq 0), 
\end{equation}

and so
$$\mbox{$f(u+v)=f(u)+f(v)$ for $u,v\geq 0$},$$

due to the following reason: since for $y\geq 1$, $f(y-1)=f(y)-1$, 

\begin{eqnarray*}
f(u) +f(v)&= &f(u)+f \big((v+1)-1\big)=f(u) + f(v+1) -1\\
&\buildrel=_{(\ref{zw})}^{}&f(u+(v+1))  -1=f(u+v)+1-1\\
&=&f(u+v).
\end{eqnarray*}
}

\newpage

 \begin{figure}[h!]
 
  \scalebox{0.55} 
  {\includegraphics{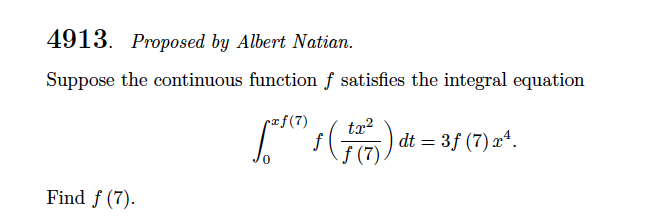}} 
\end{figure}

\centerline{\bf Solution to problem 4913 Crux Math. 50 (2) 2024, 82}\bigskip

 \centerline {Raymond Mortini, Rudolf Rupp}
 
  \medskip

\centerline{- - - - - - - - - - - - - - - - - - - - - - - - - - - - - - - - - - - - - - - - - - - - - - - - - - - - - -}
  
  \medskip

 Let $a\in \R$  and, for $f\in C(\R)$, let
$$I_f(x,a):=\int_0^{xa} f(tx^2 a^{-1}) dt.$$
We show that \ovalbox{$f(7)=42$ whenever $I_f(x,a)=3a x^4$.}\\

{\bf Proof.}
Using for $x\not=0$ the linear substitution $t\to u$ with  $u:=tx^2a^{-1}$ and $dt= a x^{-2} du$, we obtain
\begin{eqnarray*}
I_f(x,a)&=&a x^{-2}\;\int_0^{x^3} f(u) du.
\end{eqnarray*}
Now $I_f(x,a)=3a x^4$ if and only if  
$$\int_0^{x^3} f(u) du= 3 x^6.$$
Differentiating yields 
$$3 x^2\,f(x^3) =18 x^5,$$
equivalently
$$f(x^3)=6x^3.$$
As $x\mapsto x^3$ is a bijection from $\R$ onto $\R$, we obtain $f(u)=6u$.  Conversely, it is straightforward to check that this $f$ satisfies for every $a$ the given integral equation. So $f(7)=42$ independently of $a$.

\newpage

 \begin{figure}[h!]
 
  \scalebox{0.55} 
  {\includegraphics{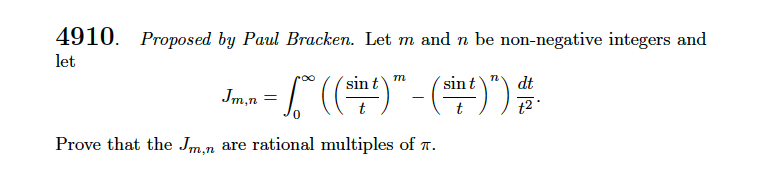}} 
\end{figure}

\centerline{\bf Solution to problem 4910 Crux Math. 50 (1) 2024, 38}\bigskip

 \centerline {Raymond Mortini, Rudolf Rupp}
 
  \medskip

\centerline{- - - - - - - - - - - - - - - - - - - - - - - - - - - - - - - - - - - - - - - - - - - - - - - - - - - - - -}
  
  \medskip


It is sufficient to consider the case $n=0$, otherwise write
$$\int \frac{S^m-S^n}{x^2} dx=\int \frac{S^m-1}{x^2}dx + \int\frac{1-S^n}{x^2}dx,$$
where $\dis S=\frac{\sin x}{x}.$
Since $|S|\leq 1$, we see that $\int_1^\infty \frac{S^m-1}{x^2}dx $ converges. Now use that
$$1-S=\frac{x^2}{3!}-\frac{x^4}{5!}\pm\cdots=x^2\left(\frac{1}{6}+\oh(x)\right)\quad {\rm as}\quad x\to 0,$$
and
$$|S^m-1|=|S-1|\; \Big|\sum_{j=0}^{m-1}S^j\Big|\leq m |S-1|, $$
to conclude that  $\dis \int_0^1 \frac{S^m-1}{x^2}dx $ converges, too. Hence
$$I(m):=\dis \int_0^\infty  \frac{S^m-1}{x^2}dx $$
 converges. Next we write
$$J:=\frac{S^m-1}{x^2}= \frac{(\sin x)^m-x^m}{x^{m+2}}=: \frac{f(x)}{x^{m+2}}.$$
Now we apply Apostol's method (see \cite{apo}). Integration by parts 
$\int uv' =uv-\int u'v$ with $u=f$ and $v'= x^{-m-2}$ yields:

\begin{eqnarray*}
I(m)=\frac{1}{m+1}\int_0^\infty \frac{f'(x)}{x^{m+1}}dx,
\end{eqnarray*}
since  $\dis\lim_{x\to 0} \frac{f(x)}{x^{m+1}}=0$ (and $\dis\lim_{x\to \infty} \frac{f(x)}{x^{m+1}}=0$) because
\begin{eqnarray*}
\Big| \frac{f(x)}{x^{m+1}}\Big|&\leq &m \frac{|S-1|}{x}= mx\left(\frac{1}{6}+\oh(x)\right) \quad {\rm as}\quad x\to 0.
\end{eqnarray*}

Similarily, since $0$ is a zero of order $1$ of the analytic   
\footnote{ Note that $J(z)= z\frac{\phantom{\Big|}\left( \frac{\sin z}{z}\right)^{m}-1}{\phantom {\big|}z^2}= z
\left(-\frac{1}{3!}+ \frac{z^2}{5!}+\cdots\right) R(z)$, where
$\dis\lim_{z\to 0} R(z)=\lim_{z\to 0 }\sum_{j=0}^{m-1}\left( \frac{\sin z}{z}\right)^j=m$.
}
function $\dis J(z):=\frac{(\sin z)^m -z^m}{z^{m+1}}$,
we have that for all $j=0,1,\dots, m$
$$\lim_{x\to 0} \frac{f^{(j)}(x)}{x^{m+1-j}}=0.$$
Hence, by repeating this procedure another $m$-times, we obtain
$$I(m)=\frac{1}{(m+1)!}\int_0^\infty \frac{f^{(m+1)}(x)}{x} dx.$$

Now $f^{(m+1)}(x)=\frac{d^{m+1}}{dx} (\sin x)^m-0$. Next we "linearize" the sinus-power:

\begin{eqnarray*}
(\sin x)^m&=&\left(\frac{e^{ix}-e^{-ix}}{2i}\right)^m=\left(\frac{1}{2i}\right)^m\,\sum_{j=0}^m (-1)^{m-j}{m\choose j}e^{ijx} e^{-i(m-j)x}\\
&=& (-1)^m \frac{1}{(2i)^m} \sum_{j=0}^m (-1)^{-j}{m\choose j} e^{ix(2j-m)}.
\end{eqnarray*}
Since the "constant" term (appearing for $j=m/2$ when $m$ is even) is annihilated by the derivative, we find
\begin{eqnarray*}
\frac{d^{m+1}}{dx} (\sin x)^m&=&(-1)^m  (-i)^{m+1}\frac{1}{(2i)^m} \sum_{0\leq j<\frac{m}{2}}  (m-2j)^{m+1}(-1)^{j}{m\choose j} e^{-ix(m-2j)}\\
&&+(-1)^m i^{m+1} \frac{1}{(2i)^m} \sum_{\frac{m}{2}< j\leq m}(2j-m)^{m+1} (-1)^{j}{m\choose j} e^{ix(2j-m)}.
\end{eqnarray*}
As the left hand side is real, we may take the real part on the right hand side and get (by observing ${\rm Re}\, iz=-{\rm Im}\, z)$
\begin{eqnarray*}
\frac{d^{m+1}}{dx} (\sin x)^m&=&-\frac{1}{2^m} \sum_{0\leq j<\frac{m}{2}}  (m-2j)^{m+1}(-1)^{j}{m\choose j} \sin((m-2j)x)\\
&&+ (-1)^{m+1}\frac{1}{2^m} \sum_{\frac{m}{2}< j\leq m}  (2j-m)^{m+1}(-1)^{j}{m\choose j} \sin((2j-m)x).
\end{eqnarray*}

Finally,  as $$\int_0^\infty \frac{\sin (px)}{x}dx= \int_0^\infty \frac{\sin (x)}{x}dx=\frac{\pi}{2}$$
whenever $p>0$,  we deduce that
\begin{eqnarray*}
I(m)=\frac{1}{(m+1)!} \int_0^\infty \frac{f^{(m+1)}(x)}{x}dx&=
&\frac{\pi}{2} \frac{1}{(m+1)!}\Bigg[-\frac{1}{2^m} \sum_{0\leq j<\frac{m}{2}}  (m-2j)^{m+1}(-1)^{j}{m\choose j} \\
&&+ (-1)^{m+1}\frac{1}{2^m} \sum_{\frac{m}{2}< j\leq m}  (2j-m)^{m+1}(-1)^{j}{m\choose j} \Bigg]\\
\end{eqnarray*}
which surely is a rational multiple of $\pi$.
Making in the second summand the substitution $k=m-j$, then we obtain
$$I(m)=-\frac{\pi}{2^m (m+1)!} \sum_{0\leq j<\frac{m}{2}}  (m-2j)^{m+1}(-1)^{j}{m\choose j}. $$

For instance $I(1)=-\frac{\pi}{4}, I(2)=-\frac{\pi}{3}, I(3)=-\frac{13 \pi}{32}$.

\newpage

 \begin{figure}[h!]
 
  \scalebox{0.55} 
  {\includegraphics{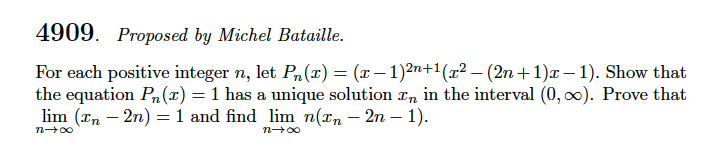}} 
\end{figure}

\centerline{\bf Solution to problem 4909 Crux Math. 50 (1) 2024, 38}\bigskip

 \centerline {Raymond Mortini, Rudolf Rupp}
 
  \medskip

\centerline{- - - - - - - - - - - - - - - - - - - - - - - - - - - - - - - - - - - - - - - - - - - - - - - - - - - - - -}
  
  \medskip


First we note that $P_n$ is continuous on $[0,\infty[$, $P_n(0)=1$ and that $\lim_{n\to\infty} P_n(x)=\infty$. Now
\begin{eqnarray*}
P_n'(x)&=& x(x-1)^{2n} \Big((2n+3) x- (4n^2+6n+4)\Big)\\
&=& x(x-1)^{2n}(2n+3) \Big( x-\big(2n+\frac{4}{2n+3}\big)\Big).
\end{eqnarray*}
Hence $P_n$ is strictly decreasing on $\dis\left[0, 2n+\frac{4}{2n+3}\right]$ and strictly increasing on 
$\dis\left[2n+\frac{4}{2n+3},\infty\right]$. Combining all this, and thanks to the intermediate value theorem, 
we deduce there exists a unique $x_n\in  ]0,\infty[$ with $P_n(x_n)=1$.
Next we discuss the assymptotics of the sequence $(x_n)$. Since  $x_n> 2n+\frac{4}{2n+3}$, we see that $x_n\to\infty$.

$\bullet$~  As
\begin{eqnarray*}
1=P_n(x_n)&=& (x_n-1)^{2n+1}(x_n^2-(2n+1)x_n-1)\\
&=&x_n(x_n-1)^{2n+1} (x_n-2n-1 -x_n^{-1})
\end{eqnarray*}
we get
\begin{equation}\label{rela}
 \frac{1}{x_n(x_n-1)^{2n+1}} +\frac{1}{x_n}=x_n-2n-1.
\end{equation}
But $x_n\to\infty$. Thus $x_n-2n-1\to 0$, from which we conclude that \ovalbox{$x_n-2n\to 1$}. In particular,
$$
\frac{x_n}{n} -2=\frac{x_n-2n}{n}\to 0.
$$

$\bullet$~ By (\ref{rela}),
\begin{eqnarray*}
n\,(x_n-2n-1)&=&\frac{n}{x_n}+ \frac{n}{x_n(x_n-1)^{2n+1}}\\
&=& \frac{n}{x_n}\left( 1+ \frac{1}{(x_n-1)^{2n+1}}\right)\to \frac{1}{2} ( 1+ 0)=\frac{1}{2}.
\end{eqnarray*}

\newpage

 \begin{figure}[h!]
 
  \scalebox{0.55} 
  {\includegraphics{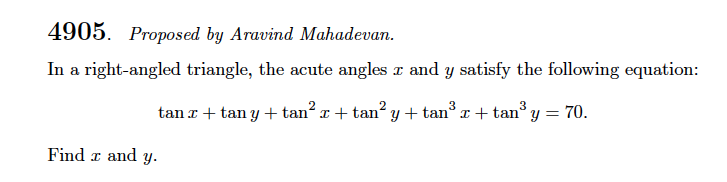}} 
\end{figure}

\centerline{\bf Solution to problem 4905 Crux Math. 50 (1) 2024, 37}\bigskip

 \centerline {Raymond Mortini, Rudolf Rupp}
  \medskip

\centerline{- - - - - - - - - - - - - - - - - - - - - - - - - - - - - - - - - - - - - - - - - - - - - - - - - - - - - -}
  
  \medskip


We show that \ovalbox{$\dis(x,y)=\left(\frac{\pi}{12},\frac{5\pi}{12}\right)$}\;, or in terms of degrees \ovalbox{$15^\circ$ and $75^\circ$}\;.
\bigskip

We may assume that  $0\leq x\leq y\leq \pi/2$ and $x+y=\pi/2$. Now 
$$\tan y=\tan\left(\frac{\pi}{2}-x\right) =\cot(x)=\frac{1}{\tan x}.$$
So we have to solve for $t=\tan x$  the equation
\begin{equation}\label{tanequ}
t+t^2+t^3+\frac{1}{t}+\frac{1}{t^2}+\frac{1}{t^3}=70
\end{equation}
(or equivalently $1+t+t^2-70 t^3 +t^4+t^5+t^6=0$). Such symmetric equations are solved using the substitution $s:=t+1/t$. 
Now $s^2=(t+\frac{1}{t})^2=t^2+\frac{1}{t^2}+2$; hence $t^2+\frac{1}{t^2}=s^2-2$. Moreover, 
$$s^3=\left(t+\frac{1}{t}\right)^3=t^3+3t+\frac{3}{t}+\frac{1}{t^3}$$
and so $t^3+\frac{1}{t^3}= s^3-3s$. This yields the equation
$s+s^2-2+s^3-3s=70$, or equivalently $s^3+s^2-2s-72=0$.
 As $s=4$ is a solution, we obtain the factorization
 $$0=(s-4)(s^2+5s+18)=(s-4) \left( \Big(s+\frac{5}{2}\Big)^2 +\frac{47}{4}\right).$$
 So $s=4$ is the only real solution. The equation $4=t+\frac{1}{t}$ now is equivalent to $t^2-4t+1=0$, which has
 $2\pm \sqrt 3$ as solutions. Now we have to calculate the values $x$ for which $\tan x=2\pm \sqrt 3$. 
 As is well known, $\arctan (2-\sqrt 3)=\pi/12$ and $\arctan(2+\sqrt 3)=5\pi/12$.  This can be verified by using the  formulas
 $$\sin x= \sqrt{\frac{1-\cos 2x}{2}}~~ {\rm and}~~\cos x=\sqrt{\frac{1+\cos 2x}{2}}.$$

\newpage

 \begin{figure}[h!]
 
  \scalebox{0.55} 
  {\includegraphics{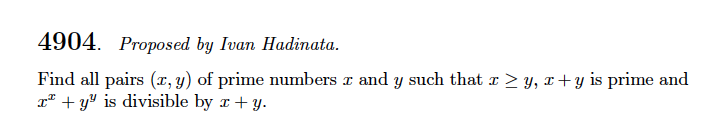}} 
\end{figure}

\centerline{\bf Solution to problem 4904 Crux Math. 50 (1) 2024, 37}\bigskip

 \centerline {Raymond Mortini, Rudolf Rupp} \medskip

\centerline{- - - - - - - - - - - - - - - - - - - - - - - - - - - - - - - - - - - - - - - - - - - - - - - - - - - - - -}
  
  \medskip

 This problem has no solution \footnote{ Under the usual assumption that the number 1 is not considered as a prime number.}. In fact,
 suppose that $x+y$ is prime whenever $x$ and $y $ are prime, $y\leq x$.  Since the sum of two odd prime numbers is even, 
it cannot be prime. Hence $y=2$. Let $n:=x$ \footnote{ The symbol $x$ for a natural number hurts my eyes \Smiley.}. Then we have to discuss the property $4+n^n$ is divisible by $2+n$.
\begin{eqnarray*}
\frac{4+n^n}{2+n}&=&\frac{4 +((n+2)-2)^n}{2+n}=\frac{4+\dis \sum_{j=0}^n (n+2)^j {n\choose j} (-1)^{n-j} 2^{n-j}}{n+2}\\
&=&\frac{4+(-1)^n 2^n + \dis \sum_{j=1}^n (n+2)^j {n\choose j} (-1)^{n-j} 2^{n-j}}{n+2}\\
&=&\frac{4+(-1)^n 2^n}{n+2} + \dis \sum_{j=1}^n (n+2)^{j-1} {n\choose j} (-1)^{n-j} 2^{n-j}\\
&=:&\frac{4+(-1)^n 2^n}{n+2}+m,
\end{eqnarray*}
where $m\in \Z$ (note that the binomial coefficients belong to $\N$).
If $n\geq 3$ is odd, then $2+n$ is odd and therefore $2+n$ cannot divide (in $\Z$) the even number $4+(-1)^n 2^n$. Hence the
 primeness of $n$ implies that $n=2$.  Since  $x+y=2+2=4$ is not prime, the pair $(2,2)$ is not  a solution either.

\newpage

 \begin{figure}[h!]
 
  \scalebox{0.55} 
  {\includegraphics{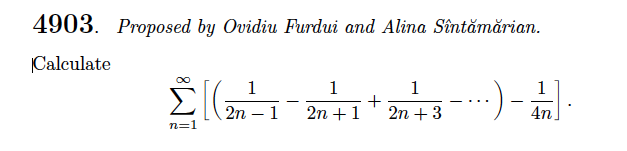}} 
\end{figure}

\centerline{\bf Solution to problem 4903 Crux Math. 50 (1) 2024, 37}\bigskip

 \centerline {Raymond Mortini, Rudolf Rupp} \medskip

\centerline{- - - - - - - - - - - - - - - - - - - - - - - - - - - - - - - - - - - - - - - - - - - - - - - - - - - - - -}
  
  \medskip


Let
$$S:= \sum_{n=1}^\infty \left(-\frac{1}{4n}+\sum_{k=0}^\infty \frac{(-1)^k}{2n+2k-1}\right)$$
which is the concise form of the sum in the problem. We claim that

$$\ovalbox{$\dis S= \frac{\log 2}{2}+\frac{\pi}{8}$}.$$

{\bf Solution}\\

For $n\geq 1$, let
$$F_n(x):=\sum_{k=0}^\infty \frac{(-1)^k}{2n+2k-1}x^{2n+2k-1}$$
be the generating function.
Then $F_n(x)$ converges for $0\leq x\leq 1$ (Leibniz rule for the alternating series at  $x=1$), and by Abel's rule, 
$F_n$ is continuous on $[0,1]$.
Now
$$F_n'(x)=\sum_{k=0}^\infty (-1)^k x^{2n+2k-2}=\frac{x^{2n-2}}{1+x^2}.$$
Since $F_n(0)=0$, we obtain
$$F_n(1)= \sum_{k=0}^\infty \frac{(-1)^k}{2n+2k-1}=\int_0^1 \frac{x^{2n-2}}{1+x^2}\;dx.$$
Note that  $S=\sum_{n=1}^\infty \left(F_n(1)-\frac{1}{4n}\right)$.  
Partial integration  $\int u'v =uv -\int uv' $with
$$\mbox{$\dis u'= x^{2n-2}$ and $v=(1+x^2)^{-1}$}$$
 yields
\begin{eqnarray*}
\int_0^1 \frac{x^{2n-2}}{1+x^2}\;dx&=& \frac{x^{2n-1}}{2n-1}\;\frac{1}{1+x^2}\Big|^1_0+\frac{1}{2n-1}\;\int_0^1 x^{2n-1} \frac{2x}{(1+x^2)^2}\;dx\\
&=&\frac{1}{2(2n-1)}+\frac{2}{2n-1}\int_0^1 \frac{x^{2n}}{(1+x^2)^2}\;dx.
\end{eqnarray*}

Hence, by using that $\sum\int=\int\sum$ as all factors are positive, and the fact that
$$\sum_{n=1}^\infty \left(\frac{1}{2(2n-1)}-\frac{1}{4n}\right)=\lim_{N\to\infty}\sum_{n=1}^N \left(\frac{1}{2(2n-1)}-\frac{1}{4n}\right)
=\frac{1}{2} \lim_{N\to\infty} \sum_{j=1}^{2N}\frac{(-1)^{j-1}}{j}=\frac{\log 2}{2},$$
we obtain
\begin{eqnarray*}
S&=& \sum_{n=1}^\infty \left(\frac{1}{2(2n-1)}-\frac{1}{4n}\right) + 2\;\int_0^1 \left( \sum_{n=1}^\infty \frac{x^{2n-1}}{2n-1}\right)
 \frac{x}{(1+x^2)^2}\;dx\\
 &=& \frac{\log 2}{2}+\int_0^1 \left(\log\left(\frac{1+x}{1-x}\right)\; \frac{x}{(1+x^2)^2}\right)dx\\
 &=:&\frac{\log 2}{2}+\int_0^1I(x) dx.
\end{eqnarray*}

To calculate a primitive of $I(x)$, we use partial integration with $u=\log\left(\frac{1+x}{1-x}\right)$ and $v'= \frac{x}{(1+x^2)^2}$. Hence

\begin{eqnarray*}
\int I(x) dx&=&-\frac{1}{2}\frac{1}{1+x^2}\log\left(\frac{1+x}{1-x}\right) + \int \frac{1}{1-x^4}dx\\
&=&-\frac{1}{2}\frac{1}{1+x^2}\log\left(\frac{1+x}{1-x}\right) + \frac{1}{4}\log\left(\frac{1+x}{1-x}\right)+ \frac{1}{2}\arctan x\\
&=& \frac{1}{4}\;\frac{x^2-1}{x^2+1}\; \log\left(\frac{1+x}{1-x}\right)+ \frac{1}{2}\arctan x\\
&=:& R(x).
\end{eqnarray*}

Hence 
$$\int_0^1 I(x)dx=\lim_{x\to 1}R(x) - R(0)=\frac{\pi}{8}.$$

\newpage

 \begin{figure}[h!]
 
  \scalebox{0.45} 
  {\includegraphics{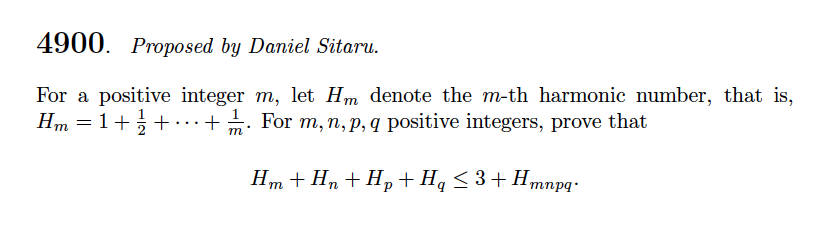}} 
\end{figure}

\centerline{\bf Solution to problem 4900 Crux Math. 49 (10) 2023, 541}\bigskip

 \centerline {Raymond Mortini, Rudolf Rupp} \medskip

\centerline{- - - - - - - - - - - - - - - - - - - - - - - - - - - - - - - - - - - - - - - - - - - - - - - - - - - - - -}
  
  \medskip

 
We first show that

\begin{equation}\label{harmo2}
\mbox{$H_p+H_q\leq 1+H_{pq}$ for $1\leq p\leq q$.}
\end{equation}

In fact, if $1=p=q$, then nothing has to be shown. So let us assume that $q\geq 2$. Then
\begin{eqnarray*}
1+H_{pq}&=&1+H_p+\bl{\left(\frac{1}{p+1}+\cdots+ \frac{1}{2p}\right)} + \left(\frac{1}{2p+1}+\cdots+\frac{1}{3p}\right)
+\cdots+ \bl{\left( \frac{1}{(q-1)p+1}+\cdots+ \frac{1}{qp}\right)}\\
&\geq& 1+H_p+\bl{p \cdot \frac{1}{2p}} +p \cdot \frac{1}{3p}+\cdots+ \bl{p \cdot \frac{1}{qp}}\\
&=&H_p+ H_q.
\end{eqnarray*}

Now let $1\leq m\leq n\leq p\leq q$ (of course this is without loss of generality). Then by (\ref{harmo2}),

\begin{eqnarray*}
(H_m+H_n)+(H_p+H_q)&\leq& (1+ H_{mn})+(1+H_{pq})= 2+ H_{mn} + H_{pq} \\
&\leq& 2 + 1+ H_{(mn)(pq)}=3 + H_{mnpq}.
\end{eqnarray*}

{\bf Remark 1} More generally, one can show that
$$\sum_{j=1}^n H_{n_j} \leq (n-1)+ H_{\prod _{j=1}^n n_j}.
$$

{\bf Remark 2} Solutions to the special case (\ref{harmo2}) above also appeared in Amer. Math. Monthly 56 (2) 1949, 109-110, Problem E819 Euler's constant.\\

{\bf Remark 3} A different proof of  $H_p+H_q\leq 1+H_{pq}$, $p,q \in \N=\{1,2,\dots\}$,   can be given via induction on $q$: for $q=1$, 
$H_p+H_1=H_p+1\leq 1 + H_{p\cdot 1}$. Now for $q\to q+1$, we use that
\begin{eqnarray*}
H_{p(q+1)}&=&H_{pq}+\frac{1}{pq+1}+\cdots +\frac{1}{pq+p}\geq H_{pq}+ \frac{p}{pq+p}= H_{pq}+\frac{1}{1+q}.
\end{eqnarray*}
Hence
$$H_p+H_{q+1}=H_p+H_q+\frac{1}{q+1}\leq 1+H_{pq}+\frac{1}{q+1}\leq 1+H_{p(q+1)}.$$

\newpage

 \begin{figure}[h!]
 
  \scalebox{0.45} 
  {\includegraphics{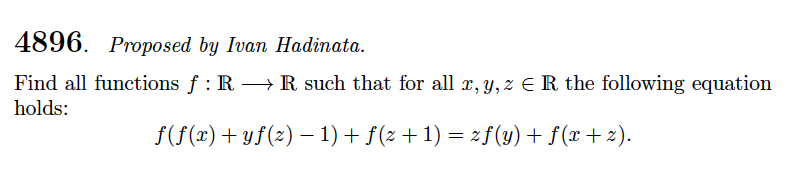}} 
\end{figure}

\centerline{\bf Solution to problem 4896 Crux Math. 49 (10) 2023, 540}\bigskip

 \centerline {Raymond Mortini, Rudolf Rupp} \medskip

\centerline{- - - - - - - - - - - - - - - - - - - - - - - - - - - - - - - - - - - - - - - - - - - - - - - - - - - - - -}
  
  \medskip

We show that all solutions $f:\R\to\R$ of the functional equation 
\begin{equation}\label{fequ3}
f\Big( f(x)+y f(z)-1\Big)+f(z+1)=zf(y)+f(x+z) \quad (x,y,z\in \R)
\end{equation}

are given by 
$$\ovalbox{$f\equiv 0$ or $f(x)=x$ for $x\in \R$.}$$

\bigskip

Obviously $f\equiv 0$  and the identity are solutions. Now let $f$ be  a solution to (\ref{fequ3}) with $f(y_0)\not=0$ for some $y_0\in \R$. 
 We claim that $f$ is surjective.\\
 
 In fact, put $x=1$ and $y=y_0$ in  (\ref{fequ3}). Then, for all $z\in \R$.
 \begin{equation}\label{fequ4}
 f\Big( f(1)+y_0 f(z)-1\Big)+f(z+1)=zf(y_0)+f(1+z) \iff f\Big(f(1)+y_0 f(z)-1\Big)= f(y_0)\, z.
 \end{equation}
 As the function $z\mapsto f(y_0)\,z$ is surjective, the function $z\mapsto f\Big(f(1)+y_0 f(z)-1\Big)$ is surjective, too. Hence
 $x\mapsto f(x)$ is surjective.\\
 
 Next put $y=0$ and $z=0$ in (\ref{fequ3}). Then, for all $x\in \R$
 \begin{equation}\label{fequ5}
 f\big(f(x)-1\big) +f(1)=f(x).
\end{equation}
Now put $u:=f(x)$. Note that if $x$ runs through $\R$, the surjectivity of $f$ implies that $u$ runs through $\R$, too. In particular, $f(u-1)=u-f(1)$ for every $u\in \R$ and so, with $v:=u-1$,  $f(v)=v+1-f(1)$. Now $v=1$ yields that $f(1)=2-f(1)$ and so $f(1)=1$.  Hence $f(v)=v$ for every 
$v\in \R$.


\newpage

 \begin{figure}[h!]
 
  \scalebox{0.45} 
  {\includegraphics{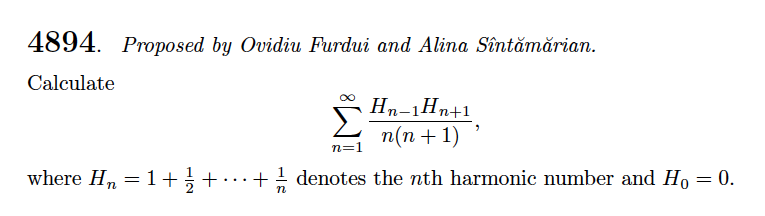}} 
\end{figure}

\centerline{\bf Solution to problem 4894 Crux Math. 49 (10) 2023, 539}\bigskip

 \centerline {Raymond Mortini, Rudolf Rupp} \medskip

\centerline{- - - - - - - - - - - - - - - - - - - - - - - - - - - - - - - - - - - - - - - - - - - - - - - - - - - - - -}
  
  \medskip


We show that
$$\ovalbox{$ \dis\sum_{n=1}^\infty \frac{H_{n-1} H_{n+1}}{n(n+1)}=3$}.$$

For the proof we shall decompose the series into several telescoping series. 

\begin{eqnarray*}
\frac{H_{n-1} H_{n+1}}{n(n+1)}&=&\frac{H_{n-1} H_{n+1}}{n}-\frac{H_{n-1} H_{n+1}}{n+1}=\frac{H_{n-1} H_{n+1}}{n}-
\frac{(H_n-\frac{1}{n}) (H_{n+2}-\frac{1}{n+2})}{n+1}\\
&=&\frac{H_{n-1} H_{n+1}}{n}-\frac{H_nH_{n+2}-\frac{1}{n}H_{n+2} - \frac{1}{n+2}H_n+ \frac{1}{n(n+2)}}{n+1}\\ 
&=&\left(\frac{H_{n-1} H_{n+1}}{n} -\frac{H_nH_{n+2}}{n+1}\right) + \frac{H_{n+2}}{n(n+1)}+ \frac{H_n}{(n+1)(n+2)}-\frac{1}{n(n+1)(n+2)}.
\end{eqnarray*}
Now 
\begin{eqnarray*}
\frac{H_{n+2}}{n(n+1)}&=&\left(\frac{H_{n+1}}{n} +\frac{\frac{1}{n+2}}{n}\right) -\frac{H_{n+2}}{n+1}= 
\left(\frac{H_{n+1}}{n}-\frac{H_{n+2}}{n+1}\right) +\frac{1}{n(n+2)}
\end{eqnarray*}
and
\begin{eqnarray*}
\frac{H_{n}}{(n+1)(n+2)}&=&\left(\frac{H_{n-1}}{n+1} +\frac{\frac{1}{n}}{n+1}\right) -\frac{H_{n}}{n+2}= \left(\frac{H_{n-1}}{n+1}-
\frac{H_{n}}{n+2}\right) +\frac{1}{n(n+1)}.
\end{eqnarray*}
Moreover,
\begin{eqnarray*}
\frac{2}{n(n+2)}&=&\frac{1}{n}-\frac{1}{n+2}=\left(\frac{1}{n}-\frac{1}{n+1}\right)+\left(\frac{1}{n+1}-\frac{1}{n+2}\right)\\
\frac{2}{n(n+1)(n+2)}&=&\frac{1}{n(n+1)}-\frac{1}{(n+1)(n+2)}.
\end{eqnarray*}
Hence 
\begin{eqnarray*}
\frac{H_{n-1} H_{n+1}}{n(n+1)}&=&\left(\frac{H_{n-1} H_{n+1}}{n} -\frac{H_nH_{n+2}}{n+1}\right) +
\bl{\left(\frac{H_{n+1}}{n}-\frac{H_{n+2}}{n+1}\right) }+\left(\frac{H_{n-1}}{n+1}-\frac{H_{n}}{n+2}\right)\\
&&+\bl{\frac{3}{2} \,\left(\frac{1}{n}-\frac{1}{n+1}\right)+\frac{1}{2}\,\left(\frac{1}{n+1}-\frac{1}{n+2}\right)}
 - \frac{1}{2}\left(\frac{1}{n(n+1)}-\frac{1}{(n+1)(n+2)}\right).
\end{eqnarray*}
Consequently

\begin{eqnarray*}
\lim_{N\to\infty}\sum_{n=1}^N \frac{H_{n-1} H_{n+1}}{n(n+1)}&=&\left(H_0H_2- \lim_{N\to\infty}\frac{H_N H_{N+2}}{N+1}\right)
+\bl{\left(H_2-\lim_{N\to\infty}\frac{H_{N+2}}{N+1}\right)}
+ \left(\frac{H_{0}}{2}-\lim_{N\to\infty}\frac{H_{N}}{N+2}\right)\\
&&+\bl{\frac{3}{2}+\frac{1}{4}}- \frac{1}{4}.
\end{eqnarray*}
Since $\gamma=\lim_{n\to\infty}(H_n-\log n)$, we have that  $\lim_{N\to\infty}\frac{H_N H_{N+2}}{N+1}=0$ as well as 
$\lim_{N\to\infty}\frac{H_{N+2}}{N+1}=0$ and $\lim_{N\to\infty}\frac{H_{N}}{N+2}=0$.
Consequently, by noticing that $H_0=0$, 
$$\sum_{n=1}^\infty \frac{H_{n-1} H_{n+1}}{n(n+1)}=H_2+ \frac{3}{2}=3.
$$

We thank Roberto Tauraso for confirming the result via Maple and wolframalpha.com, the latter though using a different representation:

  \begin{figure}[h!]
 
  \scalebox{0.35} 
  {\includegraphics{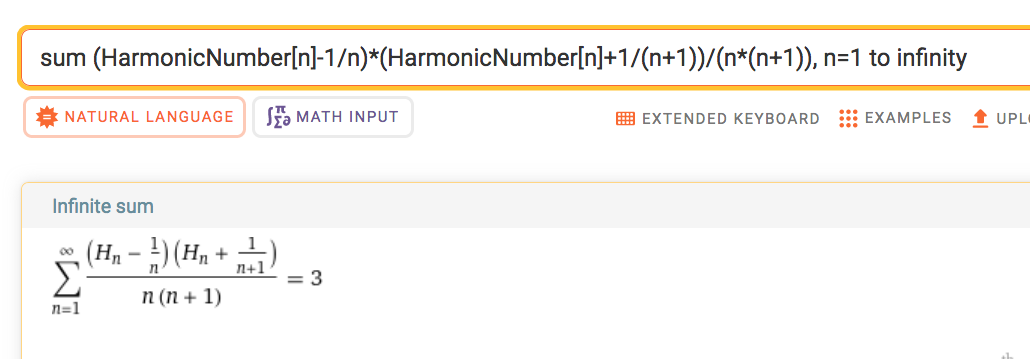}} 
\end{figure}

Calculating the sum beginning with index $n=2$, this software obtains the wrong result (the actual sum is of course 3, 
too as the first summand $\frac{(H_1-1)(H_1+0.5)}{1* 2}=0$):

\begin{figure}[h!]
 
  \scalebox{0.35} 
  {\includegraphics{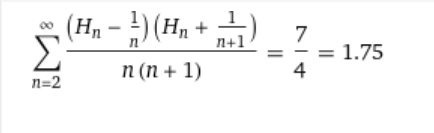}} 
\end{figure}

Very strange, too, is that the software does not give the correct value of the original sum but only very rough approximations:

\begin{figure}[h!]
 
  \scalebox{0.35} 
  {\includegraphics{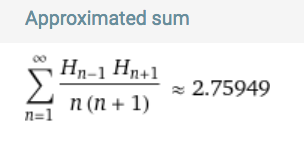}} 
\end{figure}

\begin{figure}[h!]
 
  \scalebox{0.35} 
  {\includegraphics{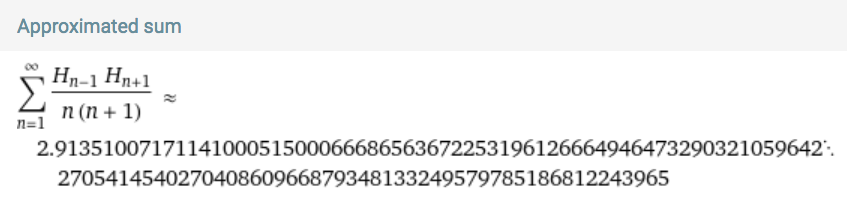}} 
\end{figure}

\newpage

 \begin{figure}[h!]
 
  \scalebox{0.55} 
  {\includegraphics{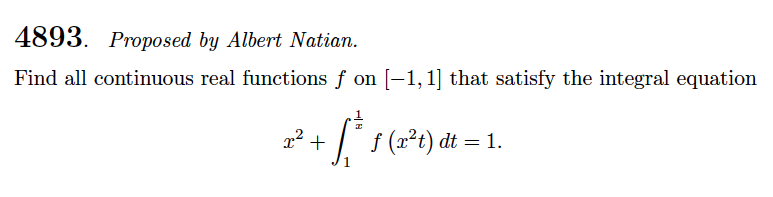}} 
\end{figure}

\centerline{\bf Solution to problem 4893 Crux Math. 49 (10) 2023, 539}\bigskip

 \centerline {Raymond Mortini, Rudolf Rupp} \medskip

\centerline{- - - - - - - - - - - - - - - - - - - - - - - - - - - - - - - - - - - - - - - - - - - - - - - - - - - - - -}
  
  \medskip

\bl{The statement of the problem is a bit ambiguous, as problems arise for $x=0$. 
Note that for $0<x\leq 1$ and $1\leq t\leq 1/x$ one has
$$0\leq x^2 t\leq x^2\frac{1}{x}=x\leq 1,$$
so that the integral  $\int_1^{1/x} f(x^2t)\,dt$  is well defined for $0<x\leq 1$.
Moreover,  for $-1\leq x<0$ and $1/x\leq t\leq 0$, one has
$$-1 \leq x= x^2\frac{1}{x}\leq x^2 t\leq 0, $$
and so the integral $\int_1^{1/x} f(x^2t)\,dt= -\int_0^1 f(x^2t)\,dt -\int_{1/x}^0 f(x^2t)\,dt$   is well defined for $-1\leq x<0$, too. \\
If $x=0$, though,  then the symbol $\int_1^{1/x}$ is not well defined as $1/0^+=\infty$ and $1/0^-=-\infty$. Actually
 no function can be a solution to  $x^2+\int_1^{1/x} f(x^2t) dt=1$  also at this point, as $\int_1^{\pm \infty} f(0) dt$ is divergent if $f(0)\not=0$,
and if $f(0)=0$, then $\int_1^{\pm \infty} 0\,dt=0$ but  $0+0\not=1$.\\
Thus we need to interprete at $x=0$ this functional equation as
$$\lim_{x\to 0} \left(x^2+\int_1^{1/x} f(x^2t) dt\right)=1.$$
}

We show that 
$$\ovalbox{$f(x)=2x$}$$
 is   the only continuous function $f:[-1,1]\to\R$  satisfying  for 
\co{$x\in [-1,1]\setminus \{0\}=:X$}  the integral equation
\begin{equation}\label{int-feq}
x^2+\int_1^{1/x} f(x^2t)\,dt=1,
\end{equation}
and
\begin{equation}\label{int-feq0}
\lim_{x\to 0} \left(x^2+\int_1^{1/x} f(x^2t) dt\right)=1.
\end{equation}
{\bf Proof}. For $x\in X$ and $f\in C[-1,1]$,  let $F(x):=x^2+\int_1^{1/x} f(x^2t) dt$. 
By the change of variable  $u:=x^2t$ we obtain
$$x^2 F(x)-x^4=\int_{x^2} ^x f(u) du.$$

So $F\equiv 1$ on $X$ if and only if $\int_{x^2} ^x f(u) du=x^2-x^4$ on $X$, hence also on $[-1,1]$. Hence, if $f\in C[-1,1]$ is a solution on $X$ to  (\ref{int-feq}) 
then, by taking derivatives,  $f(x)-f(x^2)=2x - 4x^3$ on $[-1,1]$. From this, we guess that $f(x)=2x$.  To this end, let 
$g(x):=f(x)-2x$, $x\in [-1,1]$.  Then $g\in C[-1,1]$ and  $g(x)=2x g(x^2)$ for $x\in [-1,1]$.   Since $g(-x)=-2x g(x^2)$, 
it suffices to determine $g$ for $x\in [0,1]$. 

By induction, for each $x\in [0,1]$,
$$g(x)=2^n x^{2^n-1}g(x^{2^n}).$$
Now, for $0\leq x<1$ we may let $n\to \infty$ and conclude (due to the continuity of $g$ at $0$ and $my^m\to 0$ for $0\leq y<1$) 
that $g(x)=\lim_{n\to\infty}2^nx^{2^n-1} \; g(0)= 0$. As $g$ is continuous at $1$, we deduce that $g\equiv 0$  on $[0,1]$, hence on $[-1,1]$, and so $f(x)=2x$ for $x\in [-1,1]$ whenever $f$ satisfies (\ref{int-feq}) on $X$.  Now it is straightforward to show that $2x$ also satisfies
(\ref{int-feq0}), that is
$$\lim_{x\to0} \left(x^2+\int_1^{1/x} f(x^2t) dt\right)=1.$$

\newpage

 \begin{figure}[h!]
 
  \scalebox{0.55} 
  {\includegraphics{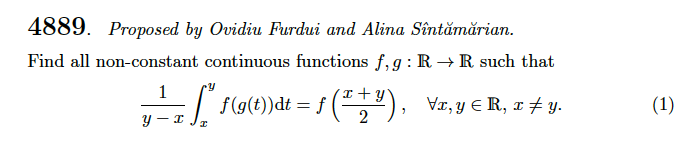}} 
\end{figure}

\centerline{\bf Solution to problem 4889 Crux Math. 49 (9) 2023, 491}\bigskip

 \centerline {Raymond Mortini, Rudolf Rupp} \medskip

\centerline{- - - - - - - - - - - - - - - - - - - - - - - - - - - - - - - - - - - - - - - - - - - - - - - - - - - - - -}
  
  \medskip


Consider the functional equation
 \begin{equation}\label{1n}
\frac{1}{y-x}\int_x^y f(g(t))dt= f\left(\frac{x+y}{2}\right) , x\not=y.
\end{equation}

We show that all nonconstant continuous functions $f:\R\to \R$ and $g:\R\to\R$ satisfying (1)
are given by 
$$\ovalbox{$f(x)=ax+b$ for $a\not=0$, $b\in \R$  and $g(x)=x$.}$$
\medskip

In fact, for fixed $x$, let $y\to x$. Then (1) implies (for instance via l'Hospital's rule) that 

\begin{equation}\label{fg}
f(g(x))=f(x)
\end{equation}
 for every $x\in R$. Hence
\begin{equation}\label{imv}
\frac{1}{y-x}\int_x^y f(t)dt= f\left(\frac{x+y}{2}\right) , x\not=y.
\end{equation}

Next we observe that any continuous $f$ satisfying (\ref{imv}), necessarily is $C^\infty$.
In fact, for all $x$,
$$ \int_{x-1}^{x+1} f(t) dt = 2 f(x).$$
As the function on the left obviously is differentiable by the fundamental theorem of calculus,
 we do have the same for the function on the right.  A calculation gives $f(x+1)-f(x-1)=2f'(x)$. Hence, 
  the continuity of $f$ implies that $f$ is  continuously differentiable.  Inductively, we now conclude that $f$ is $C^\infty$.
  
Now let
$$H(x,y):= \int_{y-x}^{y+x} f(t)dt.$$
Then, by assumption, 
$$H(x,y)= 2x f\left(\frac{y+x+(y-x)}{2}\right)=2x\;f(y).$$
Therefore,
$$H_y=f(y+x)-f(y-x)=2x f'(y),$$
and
$$H_x=f(y+x)+f(y-x)=2 f(y).$$
Addition yields
$$f(y+x)=xf'(y)+ f(y).$$
Now
\begin{eqnarray*}
f'(y+x)&=&\frac{\partial }{\partial y} f(y+x)= xf''(y)+f'(y)\\
f'(y+x)&=&\frac{\partial }{\partial x} f(y+x) =f'(y).
\end{eqnarray*}

Hence, for all $x$, we must have $xf''(y)=0$. As $f''$ is continuous,  $f''\equiv 0$, and so $f(x)=ax+b$ with $a\not=0$ 
(since $f$ is not constant) and $b\in \R$. Moreover, as we know  from (\ref{fg}) that $f(g(x))=f(x)$, the injectivity 
of the linear function $f$ implies that  $g$ is the identity.
\bigskip

We note that an equivalent for (\ref{imv}), the mid-point mean value theorem for derivatives, was dealt with in \cite{ca-lo}.

\newpage

  \begin{figure}[h!]
 
  \scalebox{0.55} 
  {\includegraphics{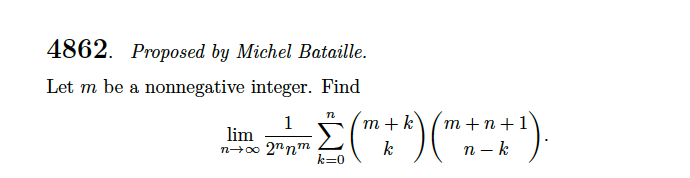}} 
\end{figure}

\centerline{\bf Solution to problem 4862 Crux Math. 49 (7) 2023, 375}\bigskip

 \centerline {Raymond Mortini, Rudolf Rupp} \medskip

\centerline{- - - - - - - - - - - - - - - - - - - - - - - - - - - - - - - - - - - - - - - - - - - - - - - - - - - - - -}
  
  \medskip


Let 
$$L_m(n):=\frac{1}{2^n}\;\frac{1}{n^m}\; \sum_{k=0}^n {m+k\choose k}\;{m+n+1\choose n-k}.
$$
We prove that 
$$ \ovalbox{$\dis\lim_{n\to \infty} L_m(n)= \frac{2}{m!}$}.$$

To this end, note that
\begin{eqnarray*}
{m+k\choose k}\;{m+n+1\choose n-k}&=&\frac{(m+k)!}{m!k!}\; \frac{(m+n+1)!}{(m+k+1)! (n-k)!}\\
&=& \frac{(m+n+1)!}{m!} \frac{1}{k! (m+k+1)(n-k)!}= \frac{(m+n+1)!}{m!n!}{n\choose k}\frac{1}{m+k+1} .
\end{eqnarray*}
Hence
$$\sum_{k=0}^n{m+k\choose k}\;{m+n+1\choose n-k}= \frac{(m+n+1)!}{m!n!}\; \sum_{k=0}^n {n\choose k}\frac{1}{m+k+1}.$$
Put
$$f(x):= \sum_{k=0}^n {n\choose k}\frac{1}{m+k+1}\; x^{m+k+1}.$$
Then
$$f'(x)= \sum_{k=0}^n  {n\choose k} x^{m+k}=x^m (1+x)^n.$$
Consequently, as $\int_0^1 f'(x)dx=f(1)-f(0)$ and $f(0)=0$,
$$ \sum_{k=0}^n{m+k\choose k}\;{m+n+1\choose n-k}=\frac{(m+n+1)!}{m!n!}\;\int_0^1 x^m(1+x)^n\;dx,$$
and so
\begin{equation}\label{lux}
L_m(n)=\frac{1}{n^m}\frac{(m+n+1)!}{m!n!}\;\int_0^1 x^m\left(\frac{1+x}{2}\right)^n\;dx.
\end{equation}

{\it Case 1} If $m=0$, then
$$L_0(n)=\frac{(n+1)!}{n!}\;\int_0^1 \left(\frac{1+x}{2}\right)^n\;dx=
(n+1) \left[\frac{2}{n+1} \left(\frac{1+x}{2}\right)^{n+1}\right]^1_0=2\left(1-\frac{1}{2^{n+1}}\right)\to 2.$$

{\it Case 2} $m\geq 1$.
We claim that
$$ R_n:=(n+1)\;\int_0^1 x^m \left(\frac{1+x}{2}\right)^{n}\;dx\to 2 \text{\quad as $n\to\infty$}.$$
In fact, partial integration yields

\begin{eqnarray*}
R_n&=& \left[ x^m 2 \left(\frac{1+x}{2}\right)^{n+1}\right]^1_0-
\int_0^1 mx^{m-1} 2  \left(\frac{1+x}{2}\right)^{n+1}\;dx\\
&=& 2- m \underbrace{\int_0^1x^{m-1} 2  \left(\frac{1+x}{2}\right)^{n+1}\;dx}_{:=I_n}.
\end{eqnarray*}

Since
$$0\leq  I_n\leq  2\int_0^1 \left(\frac{1+x}{2}\right)^{n+1}\;dx= \frac{1}{n+2}\left[\left( \frac{1+x}{2}\right)^{n+2}\right]^1_0\leq  \frac{1}{n+2},
$$
we conclude that $I_n\to $ and so $R_n\to 2$.

Together with (\ref{lux}), this finally yields  that
\begin{eqnarray*}
L_m(n)&=& \frac{1}{n^m}\frac{(m+n+1)!}{m!n!}\;\int_0^1 x^m\left(\frac{1+x}{2}\right)^n\;dx\\
&=& \frac{1}{m!}\; \frac{(m+n+1) (m+n)\dots(n+2)}{n^m} \; (n+1) \;\int_0^1 x^m \left(\frac{1+x}{2}\right)^{n}\;dx\\
&=&  \frac{1}{m!}\; \frac{m+n+1}{n}\frac{m+n}{n}\dots \frac{n+2}{n} \; R_n\\
&\to& 1^m\cdot  \frac{2}{m!}= \frac{2}{m!}.
\end{eqnarray*}

\newpage

  \begin{figure}[h!]
 
  \scalebox{0.45} 
  {\includegraphics{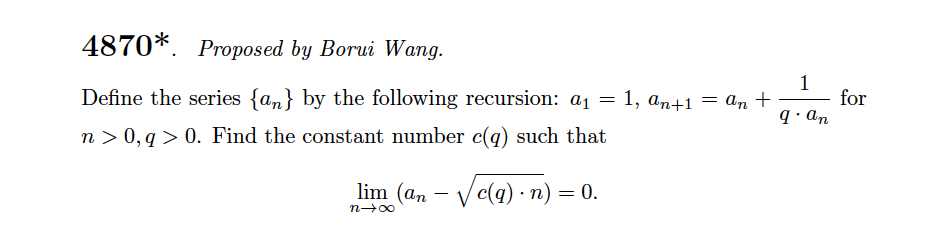}} 
\end{figure}

\centerline{\bf Solution to problem 4870 Crux Math. 49 (7) 2023, 377}\bigskip

 \centerline {Raymond Mortini, Rudolf Rupp} \medskip

\centerline{- - - - - - - - - - - - - - - - - - - - - - - - - - - - - - - - - - - - - - - - - - - - - - - - - - - - - -}
  
  \medskip


We will show the following:

\fbox{\parbox{13cm}{Let $q, c>0$ and define the sequence  (\footnotesize{not series like it is mentioned in the problem statement}) $(a_n)$ by $a_1=c$ and 
$\dis a_{n+1}=a_n+\frac{1}{q\cdot a_n}$ for $n\geq 1$. Then, with $\dis c(q)=2/q$
$$ \lim_{n\to\infty}\big|a_n- \sqrt{c(q) n}\big|=0.$$}}
\bigskip

{\bf Remark} If one starts with $c<0$, then all the $a_n$ are negative too, and one obtains $\big |a_n+ \sqrt{c(q) n}\big|\to 0$. 
(This is done by considering $b_n:=-a_n)$.

\bigskip

{\bf Solution}
Let $\dis h(x):= x+ \frac{1}{qx}$. Then  $h>0$ on $]0,\infty[$ and so $a_{n+1}=h(a_n)$ is well defined. Taking squares
$$a_{n+1}^2=a_n^2+\frac{2}{q} +\frac{1}{q^2 a_n^2},$$
or equivalently
$$a_{n+1}^2-a_n^2=\frac{2}{q} +\frac{1}{q^2 a_n^2},$$
we obtain  the finite  telescoping series:

$$a_{n+1}^2 - a_1^2=\sum_{k=1}^n (a_{k+1}^2 -a_k^2)= \frac{2}{q}\, n + \frac{1}{q^2} \sum_{k=1}^n \frac{1}{a_k^2}.$$

Hence
\begin{equation}\label{folg1}
a_{n+1}^2=c^2 +\frac{2}{q}\, n + \frac{1}{q^2} \sum_{k=1}^n \frac{1}{a_k^2}.
\end{equation}

This allows us to estimate $a_{n+1}$:

$$a_{n+1}^2\geq c^2 +\frac{2}{q}\, n,$$
and so by using this,
\begin{equation}\label{folg2}
a_{n+1}^2\leq   c^2 +\frac{2}{q}\, n + \frac{1}{q^2}\,\sum_{k=1}^n \frac{1}{\dis c^2+\frac{2}{q}(k-1)}.
\end{equation}

Next consider the decreasing function $\dis f(x)= \frac{1}{\dis c^2+ \frac{2}{q} x}$, $x\geq 0$. Then

\begin{eqnarray*}
\sum_{k=1}^n \frac{1}{\dis c^2+\frac{2}{q}(k-1)}&=& \frac{1}{c^2} +\sum_{k=1}^{n-1} f(k)\leq 
\frac{1}{c^2} +\int_0^{n-1} f(x)dx\\
&=&  \frac{1}{c^2}+\left[ \frac{q}{2}\log\left (c^2+ \frac{2}{q}x\right)\right]^{n-1}_0\\
&=& \frac{1}{c^2} + \frac{q}{2}\log\left( 1+ \frac{2}{qc^2}(n-1)\right).
\end{eqnarray*}

Thus we have arrived at the following estimates:

\begin{equation}
c^2 +\frac{2}{q}\, n\leq a_{n+1}^2\leq  c^2 +\frac{2}{q}\, n + \frac{1}{q^2}\;\frac{1}{c^2}+ 
\frac{1}{2q}\log\left( 1+ \frac{2}{qc^2}(n-1)\right).
\end{equation}

Hence
\begin{eqnarray*}
\Delta_n:=\left|a_{n+1}- \sqrt{\frac{2}{q} n}\right|&=&\frac{\Big|a_{n+1}^2- \frac{2}{q}n\Big|}{a_{n+1}+ \sqrt{\frac{2}{q} n}}\\
&\leq&\frac{c^2+\frac{1}{q^2}\;\frac{1}{c^2}+\frac{1}{2q}\log\left( 1+ \frac{2}{qc^2}(n-1)\right)}{\sqrt{c^2 +\frac{2}{q}\, n}+
\sqrt{\frac{2}{q} n} }\\
&\buildrel\longrightarrow_{n\to\infty}^{} &0.
\end{eqnarray*}

Finally
\begin{eqnarray*}
\left|a_{n}- \sqrt{\frac{2}{q} n}\right|&\leq& \left|a_{n}- \sqrt{\frac{2}{q} (n-1)}\right|+ \left|\sqrt{\frac{2}{q} (n-1)}-\sqrt{\frac{2}{q} n}\right| \\
&=&\Delta_{n-1}+\sqrt{\frac{2}{q} }\cdot \frac{1}{\sqrt{n-1}+\sqrt n}\\
&\buildrel\longrightarrow_{n\to\infty}^{} &0.
\end{eqnarray*}

\newpage

   \begin{figure}[h!]
 
  \scalebox{0.55} 
  {\includegraphics{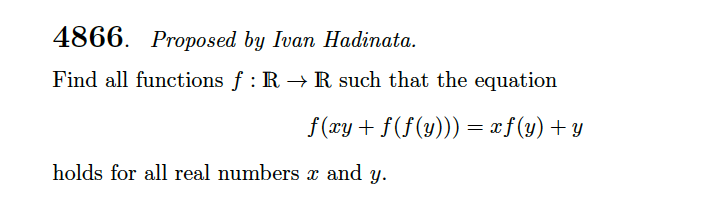}} 
\end{figure}

\centerline{\bf Solution to problem 4866 Crux Math. 49 (7) 2023, 376}\bigskip

 \centerline {Raymond Mortini, Rudolf Rupp} \medskip

\centerline{- - - - - - - - - - - - - - - - - - - - - - - - - - - - - - - - - - - - - - - - - - - - - - - - - - - - - -}
  
  \medskip


This is entirely trivial.
We claim that ${\ovalbox{\rm the identity}}$ is the only solution $f:\R\to \R$ to

\begin{equation}\label{sol1}
f(xy+f(f(y)))=xf(y)+y.
\end{equation}
\bigskip

We first show that  $f(0)=0$.   In fact, if $x=0$, then $f(f(f(y)))=y$. Now take $y=0$ in (\ref{sol1}). Then
$0=f(f(f(0)))=xf(0)$. Hence $f(0)=0$.

Next, we take $y=1$ in (\ref{sol1}). Then

\begin{equation}\label{sol2}
f(x+f(f(1)))=xf(1)+1.
\end{equation}

Put $u:=x+f(f(1))$. This yields

\begin{equation}\label{rud1}
f(u)=(u-f(f(1)))\; f(1) +1=:au+b
\end{equation}

In other words, $f$ necessarily is an affine function. As we already know, $f(0)=0$. Thus $b=0$. Now (\ref{sol1}) yields
$$a(xy+ a^2 y)=x ay+y.$$
Hence $a=1$.

\newpage

\nopagecolor

  \begin{figure}[h!]
 
  \scalebox{0.5} 
  {\includegraphics{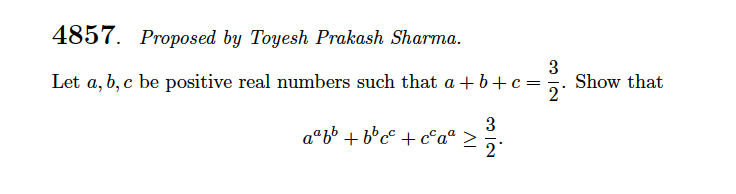}} 
\end{figure}


\centerline {\bf Solution to problem 4857 Crux Math. 49 (5) 2023, 323} \bigskip
 
  \centerline {Raymond Mortini} \medskip

\centerline{- - - - - - - - - - - - - - - - - - - - - - - - - - - - - - - - - - - - - - - - - - - - - - - - - - - - - -}
  
  \medskip

Since the function $\log x$ is concave on $ ]0,\infty[$, we have 
$$\log\left(\frac{A+B+C}{3}\right)\geq \frac{\log A+\log B+\log C}{3}=:R.$$
Here we take
$$A:=a^a b^b, B:=b^bc^c, C=c^ca^a.$$
Now the function $f(x):=2x\log x$ is convex on $]0,\infty[$, since $f''(x)=2/x\geq 0$.
Hence
\begin{eqnarray*}
2a\log a +2b\log b+2c\log c &=&f(a)+f(b)+f(c) \geq 3 f\left(\frac{a+b+c}{3}\right)\\
&=&3 f\left(\frac{1}{2}\right)=-3\log2.
\end{eqnarray*}
Hence $3 R\geq -3\log 2$, equivalently $R\geq \log(1/2)$ from which we deduce that
$A+B+C\geq 3/2$.
In other words
$$a^ab^b+b^bc^c+c^ca^a\geq \frac{3}{2}.$$

\newpage
  \begin{figure}[h!]
 
  \scalebox{0.5} 
  {\includegraphics{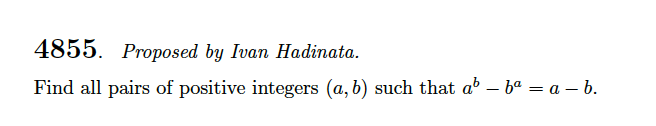}} 
\end{figure}


\centerline {\bf Solution to problem 4855 Crux Math. 49 (5) 2023, 323} \bigskip
 
  \centerline {Raymond Mortini, Rudolf Rupp } \medskip

\centerline{- - - - - - - - - - - - - - - - - - - - - - - - - - - - - - - - - - - - - - - - - - - - - - - - - - - - - -}
  
  \medskip

We claim that all solutions $(a,b)\in \N\times \N$ are given by
$$\ovalbox{$(1, v), (u,1), (t,t),  (2,3), (3,2)$}$$

where $u,v,t\in \N:=\{1,2,\cdots\}$ can be  arbitrarily chosen.\\

It is easily seen that these are solutions. Now let $(a,b)$ be a solution.  Then $(b,a)$ is a solution, too. If $b=a$, or if $b=1$, then nothing remains to be shown. So we may assume that
$a> b>1$.  Let $\log x$ be the natural logarithm. Now the function $f:x \mapsto x/\log x$ is strictly increasing for $x\geq e$ and strictly decreasing for $1<x\leq e$ with $ \min_{x>0} f(x)= e$. So if $a>b\geq 3> e$, 
$$\frac{a}{\log a}>\frac{b}{\log b}$$
or equivalently,

$$b^a> a^b.$$

Hence $a^b-b^a<0$, but $a-b>0$. So this case, where $a>b\geq 3$,  does not occur.  So it remains to consider the case  
$a>b=2$.  If $a=3$, then we actually have the solution $(3,2)$.  If $a\geq 4$, then 
$$\frac{a}{\log a}\geq \frac{4}{\log 4}= \frac{2}{\log 2},$$
and so
$$a^2-2^a\leq 0<a-2.$$
Thus this case $a\geq 4 >2=b$ does not occur, either. As all cases have been considered, we obtain the assertion.

\newpage
  \begin{figure}[h!]
 
  \scalebox{0.5} 
  {\includegraphics{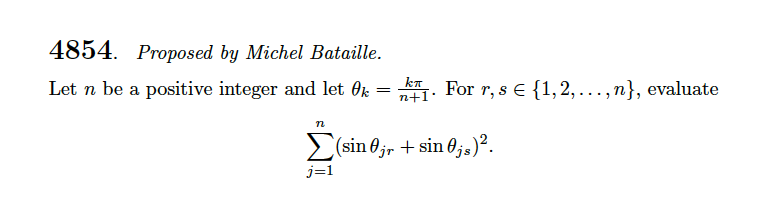}} 
\end{figure}


\centerline {\bf Solution to problem 4854 Crux Math. 49 (5) 2023, 323} \bigskip
 
  \centerline {Raymond Mortini, Rudolf Rupp } \medskip

\centerline{- - - - - - - - - - - - - - - - - - - - - - - - - - - - - - - - - - - - - - - - - - - - - - - - - - - - - -}
  
  \medskip

We prove that for $1\leq r,s\leq n$,

$$\ovalbox{$S:=\dis\sum_{j=1}^n \left(\sin\left( j \frac{r\pi}{n+1}\right)+ \sin\left( j \frac{s\pi}{n+1}\right)\right)^2=\begin{cases}
n+1&\text{if $r\not=s$}\\ 2(n+1)&\text{if $r=s$}\end{cases}
$}
$$
We first show that

\begin{equation}\label{cos1}
\sum_{j=1}^n \cos \left(j \dis \frac{2 \rho\pi }{n+1}\right)=\begin{cases} -1&\text{if $\rho\in \Z\setminus (n+1)\Z$}\\
n&\text{if $\rho\in (n+1)\Z$}
\end{cases}
\end{equation}

and that for odd $\rho\in \Z$
\begin{equation}\label{cos11}
\sum_{j=1}^n \cos \left(j \dis \frac{ \rho\pi }{n+1}\right)=0.
\end{equation}

To see this, we will use that  $\cos x={\rm Re} (e^{ix})$, and that

\begin{equation}\label{hauptung}
\sum_{j=1}^n e^{i j t}=e^{it} \sum_{j=0}^{n-1}e^{i jt}= e^{it} \frac{1-e^{int}}{1-e^{it}}=\frac{e^{it}-e^{i(n+1)t}}{1-e^{it}}.
\end{equation}

Now put  $t=2\rho\pi/(n+1)$ whenever  $\rho\in \Z\setminus (n+1)\Z$.  The latter guarantees that the denominator does not vanish. Hence
$$
\sum_{j=1}^n e^{i j \frac{2\rho\pi}{n+1}}= \frac{e^{i \frac{2\rho\pi}{n+1}}-1}{1-e^{i \frac{2\rho\pi}{n+1}}}=-1.
$$
Now if $\rho\in (n+1)\Z$, then, 
$$\sum_{j=1}^n e^{i j \frac{2\rho\pi}{n+1}}=n.$$
Thus (\ref{cos1}) holds.  If  $\rho$ is odd,  then, by putting $t=\rho\pi/(n+1)$ in (\ref{hauptung}), we obtain

\begin{eqnarray*}
\sum_{j=1}^n e^{i j \frac{\rho\pi}{n+1}}&=&\frac{e^{i \frac{\rho \pi}{n+1}}+1}{1-e^{i \frac{\rho\pi}{n+1}}}=
i\; \cot\left( \frac{1}{2}\;\frac{\rho \pi}{n+1}\right).
\end{eqnarray*}

This is a purely imaginary number, so its real part is 0. This yields (\ref{cos11}).

From  (\ref{cos1}) we easily deduce that for $r\in \{1,2,\dots,n\}$

\begin{equation}\label{cos2}
\sum_{j=1}^n \sin^2 \left(j \frac{r\pi}{n+1}\right)=\frac{n+1}{2}.
\end{equation}

In fact, using  that $\sin^2 x=\frac{1-\cos 2x}{2}$, we obtain from (\ref{cos1})
\begin{eqnarray*}
\sum_{j=1}^n \sin^2 \left(j \frac{r\pi}{n+1}\right)&=& \sum_{j=1}^n \frac{1-\cos \left(j \dis \frac{2 r\pi }{n+1}\right)}{2}\\
&=&\frac{n}{2} -\frac{1}{2}\;\sum_{j=1}^n \cos \left(j \dis \frac{2 r\pi }{n+1}\right)\\
&=&\frac{n+1}{2}.
\end{eqnarray*}

We are now ready to calculate the value of $S$.\\

$\bullet$ {\it Case 1}  \ovalbox{$r=s$.}  Then 
\begin{eqnarray*}
S=\sum_{j=1}^n \left(\sin\left( j \frac{r\pi}{n+1}\right)+ \sin\left( j \frac{r\pi}{n+1}\right)\right)^2&=&
4 \sum_{j=1}^n \sin^2\left( j \frac{r\pi}{n+1}\right)\\
&\buildrel=_{(\ref{cos2})}^{}& 4\frac{n+1}{2}=2 (n+1).
\end{eqnarray*}

$\bullet$ {\it Case 2} \ovalbox{$r\not=s$.}
Since  $r,s\in \{1,2,\dots,n\}$,  $r$ and $s$ do not belong to $(n+1)\Z$. 
Note that  due to $\sin x\sin y=\frac{1}{2}(\cos(x-y)-\cos(x+y))$,
\begin{equation}\label{cos3}
(\sin x+\sin y)^2=\sin^2 x+\sin^2 y + \cos(x-y)-\cos(x+y).
\end{equation}

Hence

\begin{eqnarray*}
S&=& \sum_{j=1}^n \sin^2\left( j  \frac{r\pi}{n+1}\right)+ \sum_{j=1}^n \sin^2\left( j  \frac{s\pi}{n+1}\right)+
\sum_{j=1}^n \cos\left(j \pi\frac{r-s}{n+1}\right)-\sum_{j=1}^n \cos\left(j \pi\frac{r+s}{n+1}\right)\\
&\buildrel=_{(\ref{cos2})}^{}& n+1+\sum_{j=1}^n \cos\left(j \pi \frac{r-s}{n+1}\right)-\sum_{j=1}^n \cos\left(j\pi \frac{r+s}{n+1}\right)\\
&=& n+1+ S_1-S_2.
\end{eqnarray*}

Several cases have to be analyzed now:\\

a) $r-s$ is even, say $r-s=2\rho$, where $\rho\in \Z$.  Then $r+s$ is even, too. Since $0<|r-s|\leq n-1$ and $0<r+s\leq 2n<2(n+1)$,
we again have two subcases: \\

a1) $r+s\not\in \Z(n+1)$ (equivalently $r+s\not=n+1)$: Then by (\ref{cos1}),
$$S=n+1+(-1)-(-1)=n+1.$$

a2) $r+s=n+1\in \Z(n+1)$:  Then $n$ is odd,  say $n=2m+1$ for some $m\in\{0,1,2,\dots\}$, and so  
$$S_2=\sum_{j=1}^{2m+1} \cos (j\pi)=\underline{(-1)+(+1)} +\cdots +\underline{(-1)+(+1)} +(-1)=-1.$$
Hence 
$$S=n+1 +(-1) -(-1)=n+1.$$

b) $r-s$ is odd. Then $r+s$ is odd, too. Again we have two subcases: \\

b1)  $r+s\not=n+1$: Then by (\ref{cos11}),
$$S=n+1 +0-0=n+1.
$$

b2) $r+s=n+1$. Then $n$  is even, say $n=2m$ with $m\in\{1,2,\dots\}$, and so
$$S_2=\sum_{j=1}^{2m} \cos (j\pi)=\underline{(-1)+(+1)} +\cdots +\underline{(-1)+(+1)} =0.$$
Hence
$$S=n+1+0-0=n+1.$$

\newpage

.\vspace{-5mm}
  \begin{figure}[h!]
 
  \scalebox{0.45} 
  {\includegraphics{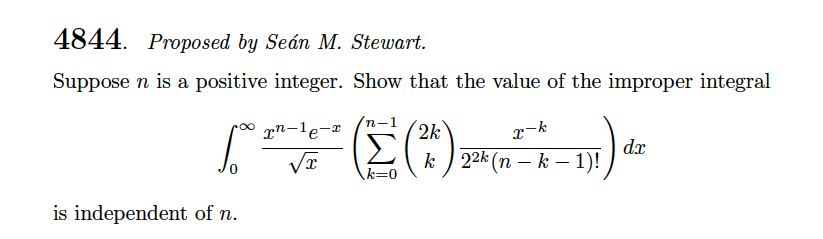}} 
\end{figure}

 \nopagecolor

\centerline {\bf Solution to problem 4844 Crux Math. 49 (5) 2023, 273} \bigskip
 
  \centerline {Raymond Mortini, Rudolf Rupp } \medskip

\centerline{- - - - - - - - - - - - - - - - - - - - - - - - - - - - - - - - - - - - - - - - - - - - - - - - - - - - - -}
  
  \medskip

For $n\geq 1$, let
$$I_n:=\int_0^\infty \frac{x^{n-1}e^{-x}}{\sqrt x}\left( \sum_{k=0}^{n-1} {2k\choose k} \frac{x^{-k}}{2^{2k}(n-k-1)!}\right)\;dx.$$
We show that
$$\ovalbox{$\dis I_n=\sqrt \pi$}.$$
\vspace{0.5cm}

We use the following well-known formulas, where $\Gamma$ is the Gamma function:
\begin{equation}
\int_0^\infty x^{s-1} e^{-x}\;dx=\Gamma (s),\quad \Gamma(s+1)=s\;\Gamma(s),  s>0
\end{equation}
\begin{equation}
\Gamma\left(m+\frac{1}{2}\right)=\Gamma \left(\frac{1}{2}\right) \frac{1}{2}\;\frac{3}{2}\;\frac{5}{2}\cdots\frac{2m-1}{2}=
\sqrt \pi \; \frac{\prod_{k=1}^m (2k-1)}{2^{m}}
=\frac{(2m)!}{m! 4^m}\sqrt \pi
\end{equation}
So, with $m=n-k-1$,
\begin{eqnarray*}
I_n&=&\sum_{k=0}^{n-1}  {2k\choose k} \frac{1}{2^{2k}(n-k-1)!}\;\int_0^\infty x^{(n-k-1/2)-1} e^{-x}\; dx\\
&=&\sum_{k=0}^{n-1}  {2k\choose k} \frac{1}{2^{2k}(n-k-1)!}\; \Gamma(n-k-1/2)\\
&=&\sum_{k=0}^{n-1}  {2k\choose k} \frac{1}{2^{2k}(n-k-1)!}\; \frac{(2(n-k-1))!}{(n-k-1)! 4^{n-k-1}}\;\sqrt \pi\\
&=&\frac{1}{4^{n-1}}\sum_{k=0}^{n-1} \frac{(2k)!}{(k!)^2}  \frac{(2(n-k-1))!}{((n-k-1)!)^2} \sqrt \pi=
\frac{1}{4^{n-1}}\sum_{k=0}^{n-1} {2k\choose k} {2(n-k-1)\choose n-k-1}\sqrt \pi.
\end{eqnarray*}

This is related to  the coefficient in the Cauchy product of 
$$\sum_{n=0}^\infty {2n \choose n} x^n= \sum_{n=0}^\infty {-1/2\choose n} (-1)^n 4^nx^n=\frac{1}{\sqrt{1-4x }},$$
with itself  and which converges for $|x|<1/4$, or if we take $x=y/4$,
$$\sum_{n=0}^\infty \frac{1}{4^n} {2n \choose n} y^n=\frac{1}{\sqrt{1-y}}.$$
In fact, for $|y|<1$,
$$\sum_{m=0}^\infty y^m =\frac{1}{1-y}=\frac{1}{\sqrt{1-y}} \frac{1}{\sqrt{1-y}}=
\sum_{m=0}^\infty \left(\sum_{k=0}^m\frac{{2k\choose k}}{4^k} \frac{ {2(m-k) \choose m-k }}{4^{m-k}}\right) y^m
$$
The coefficients being unique, we deduce that for every $m=0,1,\cdots$ 
$$\sum_{k=0}^m\frac{{2k\choose k}}{4^k} \frac{ {2(m-k) \choose m-k }}{4^{m-k}}=1.$$
Hence, with $m=n-1$, we conclude that $I_n=\sqrt \pi$.

\newpage

 \begin{figure}[h!]
 
  \scalebox{0.5} 
  {\includegraphics{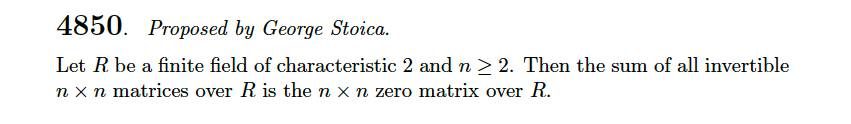}} 

\end{figure}
 
 \nopagecolor

\centerline {\bf Solution to problem 4850  Crux Math. 49 (5) 2023, 274, first version \footnote{ This was  tacitly replaced by another problem later on.}} \bigskip
 
  \centerline {Raymond Mortini, Rudolf Rupp } \medskip

\centerline{- - - - - - - - - - - - - - - - - - - - - - - - - - - - - - - - - - - - - - - - - - - - - - - - - - - - - -}
  
  \medskip

Let $n\geq 2$. We show that for {\it any} finite field the sum $S$ of all invertible $n\times n$ matrices is the $n\times n$ zero matrix $O_n$.\\

For $n\geq 1$, let $\mathcal M_n$ be the set of all $n\times n$ matrices and let $\mathcal U_n$ be the set of all  invertible $n\times n$ matrices. 
Since the field has only a finite number of elements, $\mathcal  U_n$ has  only a finite number of elements. So 
$S:=\sum_{U\in \mathcal U_n} U$ is  a well defined element in $\mathcal M_n$.
We will show that for every $\tilde U\in \mathcal U_n$,
$$S\cdot \tilde U=S.$$

Fix an invertible matrix $\tilde U\in \mathcal  U_n$ and consider the map
$$\iota:  \begin{cases}\mathcal  M_n&\to  \mathcal  M_n\\ X&\mapsto X\cdot \tilde U\end{cases}.$$
Then $\iota$ is a bijection of $\mathcal M_n$ onto itself. The inverse is given by $\iota^{-1}(Y)=Y \cdot \tilde U^{-1}$, since 
$$\iota\circ \iota^{-1} (Y)= \iota(Y \cdot \tilde U^{-1})= (Y \cdot \tilde U^{-1}) \cdot \tilde U=Y$$
and
$$\iota^{-1}\circ \iota (X)=\iota^{-1}( X\cdot \tilde U)= (X\cdot \tilde U)\cdot \tilde U^{-1}=X.$$
Moreover, and this is the main point here,  $\iota$ maps $\mathcal U_n$ bijectively onto itself.
Thus  (and here we have not yet used that $n\not =1$)

\begin{equation}\label{S=SU}
S=\sum_{U\in \mathcal U_n} \iota(U)= \iota (\sum_{U\in\mathcal U_n} U)=\iota(S)=S \cdot \tilde U.
\end{equation}

Now we use that $n\geq 2$. Take for $\tilde U$ and $1\leq i<j\leq n$ the elementary matrices 
$$E_{ij}=(\vec e_1,\dots,\underbrace{\vec e_j}_{\text{$i$-th col}},\dots, \underbrace{\vec e_i}_{\text{$j$-th col}},\dots, \vec e_n),$$
which interchange for $X\cdot  E_{ij}$ the $i$-th and $j$-th column of $X$. Thus $S \cdot E_{ij}=S$ implies that all the columns of $S$ are the same.
Say $S=(\vec s,\dots, \vec s)$. Next we consider the matrix 
$$E=\left(\begin{matrix} 1&1& &0\\ 0&1&&0\\ &&\ddots&\vdots\\ 0&&\cdots&1
\end{matrix}\right).
$$
Note that the action $X\cdot E$ of $E$ on a matrix $X$ is to replace the second column of $X$ by the sum of the first and second column. Since $E$ is invertible, we obtain from  (\ref{S=SU}) that $S\cdot E=S$   and so
$$\vec s+\vec s=\vec s.$$
Hence $\vec s= \vec 0$. Consequently $S=O_n$.
\vspace{0.3cm}

{\bf Remark} We may also consider the case  $n=1$. Note that the smallest field is given by $\mathbb F_2:=\{0,1\}$, with $1\not=0$, where $0$ is the neutral element for addition and $1$ the one for multiplication. This necessarily has characteristic 2. 
Here $S=1$.  If the finite field is not field-isomorphic to $\mathbb F_2$, it has more than two elements, and so   there is an (invertible) element $u$  different from $1$.  Now by (\ref{S=SU}), $S=Su$, hence $S(1-u)=0$.
Since $1-u\not=0$, hence invertible,we conclude that    $S=0$.

\newpage

 \begin{figure}[h!]
 
  \scalebox{0.6} 
  {\includegraphics{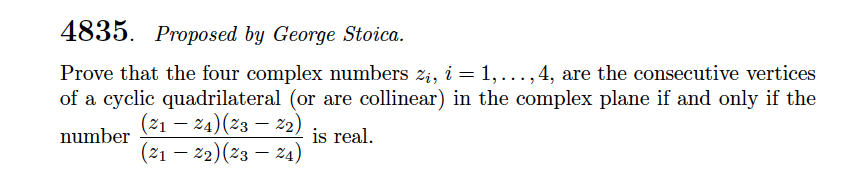}} 

\end{figure}
 
 
 \nopagecolor
 
\centerline {\bf Solution to problem 4835 Crux Math. 49 (4) 2023, 213} \bigskip
 
  \centerline {Raymond Mortini, Rudolf Rupp } \medskip

\centerline{- - - - - - - - - - - - - - - - - - - - - - - - - - - - - - - - - - - - - - - - - - - - - - - - - - - - - -}
  
  \medskip

This is a standard result/exercise in old monographs on function theory/complex analysis and is for instance in \cite[p. 70]{kn}

Using a not so sophisticated wording, we will   show that  four distinct points  $z_j$ ($j=1,\dots,4$) in the plane belong to a circle or a line if and only if their cross-ratio (bi-rapport, Doppelverh\"altnis)
$$DV(z_1,z_2,z_3,z_4):= \frac{z_1-z_2}{z_1-z_4}\Big/ \frac{z_3-z_2}{z_3-z_4}$$
is a real number. \\

In particular, being real, will be independent of the "order" of the points on the circle, respectively line. 

Our  proof will be done in the extended complex plane, $\widehat{\C}:=\C\union\{\infty\}$ (also called the one-point compactification of $\C$). Let us recall some terminology here. If $L$ is a line in $\C$, then  $L\union\{\infty\}$ is called an extended line.
As usual we call the elements of the  set of circles and  extended lines in $\widehat{\C}$  "generalized circles".

We also use an  extension of  the definition of the cross-ratio to points in $\widehat{\C}$. This is done by taking limits.  For instance
\begin{equation}\label{dvinf}
D(z_1,z_2,z_3,\infty)= \frac{z_1-z_2}{z_3-z_2}.
\end{equation}
 Finally, let us recall the following results:

i) There is a unique  linear-fractional map (or in modern terminology, a M\"obius transform)  $T(z):=(az+b)/(cz+d)$, $ad-bc\not=0$, 
 viewed as map  from $\widehat{\C}$ to $\widehat{\C}$  
mapping three distinct points $z_2,z_3,z_4$ in $\widehat{\C}$ to $0,1,\infty$, namely
$T(z)=DV(z,z_2,z_3,z_4)$.
 
 ii) The cross ratio  is invariant under linear-fractional maps: 
 $$DV(T(z_1), T(z_2), T(z_3), T(z_4))=DV(z_1,z_2,z_3,z_4).$$

 Note that the latter is an immediate consequence of i). \\
 
 iii) The class of generalized circles is invariant under M\"obius transforms.\\
 
 
 Now we are ready to confirm the statement above:\\
 
 Given  four distinct points $z_1,z_2,z_3,z_4\in \C$, consider the map $S(z):=DV(z,z_2,z_3,z_4)$. Suppose that these $z_j$ belong to a generalized circle $E$.  Now $S$ maps $E$ to the extended real line $\R\union \{\infty\}$, since $z_2\to 0$, $z_3\to 1$ and $z_4\to\infty$. In particular,  
 $w_j:=S(z_j)\in \R\union \{\infty\}$ for $j=1,\dots,4$.
Since $DV(w_1,w_2,w_3,w_4)$ is real, the invariance result shows that $DV(z_1,z_2,z_3,z_4)$ is real.

Conversely, suppose that  $DV(z_1,z_2,z_3,z_4)$ is real.  Note that $S(z_j)\in \{0,1,\infty\}\ss \R\union \{\infty\}$ for $j=2,3,4$. 
Now the  image of the extended real line by the inverse M\"obius transform $S^{-1}$ is a generalized  circle, $E$. Of course $E$ contains the points $z_2,z_3$ and $z_4$.     But, by (\ref{dvinf}), and the assumption,  we have
$$S(z_1)=DV(S(z_1), S(z_2),S(z_3),S(z_4))=DV(z_1, z_2,z_3,z_4)\in \R.$$
Hence $z_1=S^{-1}(S(z_1))\in E$. In other words, all the $z_j$ belong either to a circle or a line.
\vspace{0.5cm}

\hrule
\vspace{0.5cm}

This can be shortened, without the explicit use of the cross ratio. Actually, just iii) is relevant here:
Consider the M\"obius transform
$$M(z):=\frac{z-z_4}{z-z_2}\; \frac{z_3-z_2}{z_3-z_4}.$$
Then $z_4,z_3,z_2$ are mapped to $0,1,\infty$, and so the (unique) generalized circle $E$ determined by $z_4,z_3,z_2$ is mapped to the extended real line. Thus the point $z_1$ belongs to $E$ if and only if $M(z_1)\in \R$.  In other words, all the $z_j$ belong either to a circle or a line if and only if $\frac{z_1-z_4}{z_1-z_2}\; \frac{z_3-z_2}{z_3-z_4}\in \R$.

\newpage

  \begin{figure}[h!]
 
  \scalebox{0.6} 
  {\includegraphics{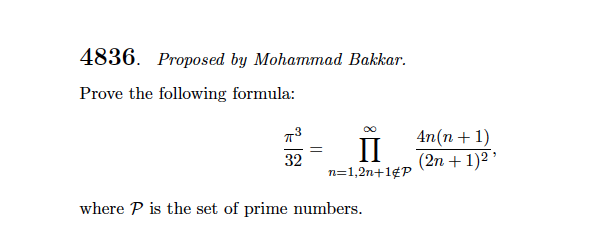}} 

\end{figure}
 
 
 \nopagecolor
 
\centerline {\bf Solution to problem 4836 Crux Math. 49 (4) 2023, 214} \bigskip
 
  \centerline {Raymond Mortini, Rudolf Rupp } \medskip

\centerline{- - - - - - - - - - - - - - - - - - - - - - - - - - - - - - - - - - - - - - - - - - - - - - - - - - - - - -}
  
  \medskip

We first calculate the missing part

$$P:=\prod_{n=1\atop 2n+1 \in \mathcal P}^\infty \frac{4n(n+1)}{(2n+1)^2}.$$
Put $p:=2n+1$. Then $n=(p-1)/2$ and so, in view of the Euler formula 
$$\frac{\pi^2}{6}=\sum_{n=1}^\infty \frac{1}{n^2}=\prod_{p\in\mathcal  P} \frac{1}{1-p^{-2}},$$
we have
$$P=\prod_{p\in \mathcal P\atop p\not=2}  \frac{p^2-1}{p^2}=\frac{4}{3}\; \frac{6}{\pi^2}=\frac{8}{\pi^2}.
$$
To calculate
$$R:=\prod_{n=1}^\infty \frac{4n(n+1)}{(2n+1)^2},$$
we use partial products and Stirling's formula
$\dis \lim_{n\to\infty}   \frac{n^n e^{-n} \sqrt{2\pi n}}{n!}=1.$

\begin{eqnarray*}
P_N:= \prod_{n=1}^N \frac{4n(n+1)}{(2n+1)^2}&=& \frac{4^N\; N! (N+1)!}{\left(\frac{(2N+1)!}{ \prod_{n=1}^N (2n) }\right)^2}
=\frac{4^N N! (N+1)!}{(2N+1)!^2}\; \frac{\left(2^N N!\right)^2}{1}\\
&=& \frac{4^{2N} N!^4 (N+1)}{(2N+1)!^2}\\
&\sim&\frac{4^{2N} \;N^{4N}e^{-4N}4\pi^2 N^2 (N+1)}{(2N+1)^{4N+2} e^{-4N-2}\;2\pi (2N+1)}\\
&=&\pi e^{2}\; \frac{(2N)^{4N}N^2(N+1)}{(2N+1)^{4N} (2N+1)^3}\\
&=& 2\pi e^{2}\; \frac{1}{\left[\left(1+\frac{1}{2N}\right)^{2N}\right]^2}\frac{N^2(N+1)}{(2N+1)^3}\\
&\to& 2\pi e^{2} \; \frac{1}{e^2}\;\frac{1}{8}=\frac{\pi}{4}.
\end{eqnarray*}

Hence $\dis \prod_{n=1\atop 2n+1\notin\mathcal P}^\infty \frac{4n(n+1)}{(2n+1)^2}= \frac{\pi/4}{8/\pi^2}=\frac{\pi^3}{32}$.
\vspace{0.5cm}

\hrule

\vspace{0.5cm}

A second way to derive the value of $P$ is as follows:

For $z\in \mathbb C$ we have
\begin{eqnarray*}
\sin(\pi z)&=&\pi z \prod_{n=1}^\infty \left(1-\frac{z^2}{n^2}\right)=\pi z\prod_{n=1}^\infty  \left(1-\frac{z}{n}\right) e^{\frac{z}{n}} \;\prod_{n=1}^\infty  \left(1+\frac{z}{n}\right) e^{-\frac{z}{n}} \\
&=& \pi z(1-z) e^z \prod_{n=1}^\infty  \left(1-\frac{z}{n+1}\right) e^{\frac{z}{n+1}}\;\prod_{n=1}^\infty  \left(1+\frac{z}{n}\right) e^{-\frac{z}{n}}\\
&=&\pi z(1-z)e^z\prod_{n=1}^\infty  \left(1-\frac{z}{n+1}\right) \left(1+\frac{z}{n}\right) e^{ \frac{z}{n+1}-\frac{z}{n}}\\
&=&\pi z(1-z) e^z\; e^{\sum_{n=1}^\infty \left(\frac{z}{n+1}-\frac{z}{n}\right)} \prod_{n=1}^\infty   \left(1-\frac{z}{n+1}\right)
 \left(1+\frac{z}{n}\right) \\
 &=& \pi z(1-z)  \prod_{n=1}^\infty   \left(1-\frac{z}{n+1}\right)
 \left(1+\frac{z}{n}\right) .
\end{eqnarray*}

Hence
\begin{eqnarray*}
P&=&\prod_{n=1}^\infty \frac{4n(n+1)}{(2n+1)^2}= \prod_{n=1}^\infty\frac{2n}{2n+1}   \frac{2n+2}{2n+1}\\
&=&\prod_{n=1}^\infty  \frac{1}{1+\frac{1}{2n}}\; \frac{1}{1-\frac{1}{2(n+1)}}
=\frac{1}{\dis\prod_{n=1}^\infty \left(1+\frac{1}{2n}\right)\left(1-\frac{1}{2(n+1)}\right)}
\\
&=&\frac{\pi z(1-z)}{\sin(\pi z)}\Big|_{z=1/2}\\
& =&\frac{\pi}{4}.
\end{eqnarray*}

\newpage
   \begin{figure}[h!]
 
  \scalebox{0.5} 
  {\includegraphics{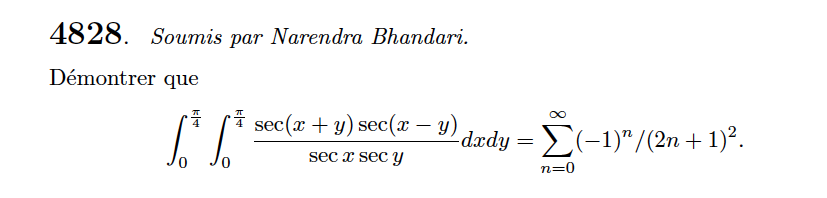}} 

\end{figure}
 

\centerline {\bf Solution to problem 4828 Crux Math. 49 (3) 2023, 157} \bigskip
 
  \centerline {Raymond Mortini, Rudolf Rupp } \medskip

\centerline{- - - - - - - - - - - - - - - - - - - - - - - - - - - - - - - - - - - - - - - - - - - - - - - - - - - - - -}
  
  \medskip

Let 
$$I:=\int_0^{\pi/4}\underbrace{\int_0^{\pi/4} \frac{\cos x\; \cos y}{ \cos(x+y)\;\cos(x-y)}\;dy}_{:=I(x)}dx.$$
Now fix the variable $x$. Since 
$$\cos(x+y)\;\cos(x-y)=\cos^2y-\sin^2 x,$$
we obtain
\begin{eqnarray*}
I(x)&=&\cos x \int_0^{\pi/4}\frac{\cos y}{(1-\sin^2 x)-\sin^2 y}\;dy\\
\tiny{\text{$u:=\sin y$}}&=& \cos x \int_0^{\sqrt 2/2} \frac{du}{\cos^2 x-u^2}\\
&=& \frac{1}{2} \left(\log(\cos x+u)-\log(\cos x -u)\right)\Big|_{u=0}^{\sqrt 2/2}\\
&=&\frac{1}{2} \log\left(\frac{\cos x+1/\sqrt 2}{\cos x-1/\sqrt 2}
\right).
\end{eqnarray*}
Hence (using Fubini),
\begin{equation}\label{wert}
I=\frac{1}{2}\int_0^{\pi/4} \log\left(\frac{\sqrt 2\cos x+1}{\sqrt 2 \cos x-1}     \right)\;dx.
\end{equation}

The value of this integral is known to be  the Catalan number $C$ (see  formula (18) in  \cite{sms}). 
An independent proof is below: using that $\cos a+\cos b= 2 \cos(\frac{a+b}{2}) \cos(\frac{a-b}{2})$ and
$\cos(a-b)=-2 \sin\frac{a+b}{2} \sin \frac{a-b}{2}$,  we obtain

\begin{eqnarray*}
\log\left(\frac{\sqrt 2\cos x+1}{\sqrt 2 \cos x-1}\right) &=&\log\left(\frac{\cos x+\cos\pi/4}{\cos x-\cos\pi/4}\right) \\
 &=&-\log \tan\left( \frac{x+\pi/4}{2}\right)-\log \tan\left( \frac{-x+\pi/4}{2}\right).
\end{eqnarray*}

A change of the variable $x+\pi/4=2y$, respectively $-x+\pi/4=2y$, and a standard integral representation of $C$ yields
$$I=-\frac{1}{2}\int_{\pi/8}^{\pi/4} \log \tan y\; (2dy)-\frac{1}{2}\int_0^{\pi/8} \log\tan y\;(2dy)=-\int_0^{\pi/4} \log\tan y \;dy=C.$$

A proof of this standard representation  can be given for instance by using power series or Fourier series:

$$h(z):=\frac{1}{2} \log\left(\frac{1+z}{1-z}\right)=\sum_{n=0}^\infty \frac{1}{2n+1} z^{2n+1}.$$
Its Taylor coefficients belong to $\ell^2$ and so the associated Fourier series 
$$h^*(e^{it}):=\sum_{n=0}^\infty  \frac{1}{2n+1} e^{i(2n+1)t}$$ 
converges in the $L^2(]0,\pi[)$-norm to 
$$h(e^{it})=\frac{1}{2}\log(i \cot(t/2))=i\,\frac{\pi}{4} -\frac{1}{2}\log \tan(t/2)$$
(Actually the series $h^*(e^{it})$ converges pointwise for $z=e^{it}$ with $0<t<\pi$  by the Abel-Dirichlet rule, but we do not need this.)

Taking real parts, and using that  $\int\sum=\sum\int$ (note that Fourier series converge in the $L^2$-norm, hence in the $L^1$ norm), we may conclude that
$$-\int_0^{\pi/4} \log\tan y \;dy=\int_0^{\pi/2} \sum_{n=0}^\infty \frac{1}{2n+1} \cos(2n+1)t\;dt=\sum_{n=0}^\infty (-1)^n \frac{1}{(2n+1)^2}.$$

 \newpage
   \begin{figure}[h!]
 
  \scalebox{0.5} 
  {\includegraphics{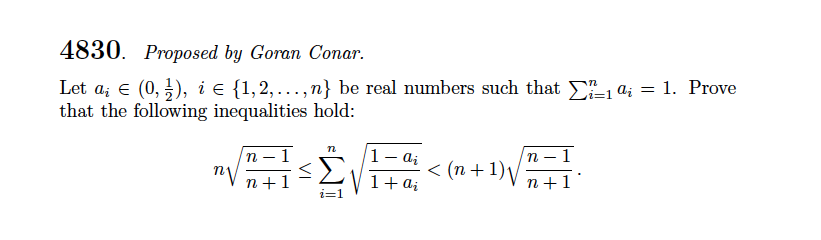}} 

\end{figure}
 
 
 \nopagecolor
 
\centerline {\bf Solution to problem 4830 Crux Math. 49 (3) 2023, 158} \bigskip
 
  \centerline {Raymond Mortini, Rudolf Rupp } \medskip

\centerline{- - - - - - - - - - - - - - - - - - - - - - - - - - - - - - - - - - - - - - - - - - - - - - - - - - - - - -}
  
  \medskip

First we claim that on  $[0,1/2]$ the function $\dis f(x)=\sqrt{\frac{1-x}{1+x}}$ is convex. In fact,
$$f'(x)=-\frac{1}{\sqrt{\frac{1-x}{1+x}}{(1+x)}^2}
$$
and 
$$f''(x)= \frac{1-2x}{(1-x)(x+1)^3\sqrt{\frac{1-x}{1+x}}}\geq 0.
$$
 Since the graph of  a convex function lies below the secant determined by $(a, f(a)), (b,f(b))$, we obtain that 
 $f(x)\leq 1-2(1-3^{-1/2})x$,  where $a=0$ and $b=1/2$.  Since $1-3^{-1/2}\geq 1/3$, we deduce that for $0\leq x\leq 1/2$
 $$f(x)\leq 1-(2/3)x$$,
  and so
 $$\sum_{i=1}^n f(a_i)\leq n-(2/3)\sum_{i=1}^n a_i= n-2/3.$$
 But  for $n\geq 2$, we have
 $$n-2/3<(n+1) \sqrt{\frac{n-1}{n+1}}= \sqrt{n^2-1},$$
 since
 $$n^2-1 -(n-2/3)^2= 4/3 n -13/9\geq 8/3 -13/9= 11/9>0. $$
 This upper bound in the problem appears to be artificial.  We did not see a way to derive this in a natural way.
To prove the reverse inequality, we use Jensen's inequality and obtain
 $$\frac{1}{n}\sum_{i=1}^n f(a_i)\geq f\left(\frac{\sum_{i=1}^n a_i }{n}\right)= f(1/n).$$
 Hence
$$ \sum_{i=1}^n \sqrt{\frac{1-a_i}{1+a_i}}\geq n\; \sqrt{\frac{1-\frac{1}{n}}{1+\frac{1}{n}}}= n\; \sqrt{ \frac{n-1}{n+1}} .$$

 \newpage
 

\newpage
 
   \begin{figure}[h!]
 
  \scalebox{0.5} 
  {\includegraphics{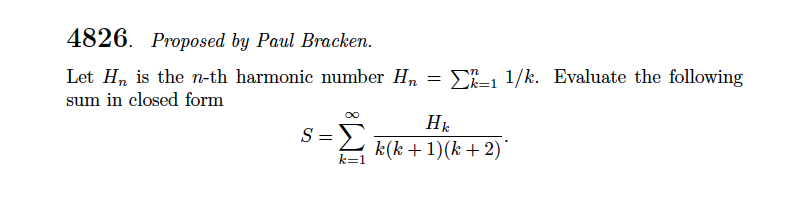}} 

\end{figure}
 

\centerline {\bf Solution to problem 4826 Crux Math. 49 (3) 2023, 157}
 
 \bigskip

 \centerline {Raymond Mortini, Rudolf Rupp } \medskip

\centerline{- - - - - - - - - - - - - - - - - - - - - - - - - - - - - - - - - - - - - - - - - - - - - - - - - - - - - -}
  
  \medskip

 We claim that 
$$\ovalbox{$\dis S=\frac{\pi^2}{12}-\frac{1}{2}$}.$$  
Just write
\begin{eqnarray*}
\frac{H_k}{k(k+1)(k+2)}&=& \frac{1}{2} \left(H_k\Big(\frac{1}{k(k+1)}-\frac{1}{(k+1)(k+2)}\Big)\right)\\
&=&\frac{1}{2} \left( \frac{H_k}{k(k+1)}- \frac{H_{k+1}}{(k+1)(k+2)} + \frac{1}{(k+1)^2 (k+2)}\right).
\end{eqnarray*}
Now 
$$ \frac{1}{(k+1)^2 (k+2)}= \frac{1}{k+2}-\frac{k\co{+1-1}}{(k+1)^2} =\left(\frac{1}{k+2}-\frac{1}{k+1}\right) +\frac{1}{(k+1)^2}.
$$
Since the Cesaro means of the sequences $(1/k)$  converge to 0, that is $H_k/k\to 0$, we conclude that
\begin{eqnarray*}
S&=&\frac{1}{2}\frac{H_1}{2} -\frac{1}{2} \frac{1}{1+1}+\frac{1}{2}\sum_{k=1}^\infty \frac{1}{(k+1)^2}=\frac{\pi^2}{12}-\frac{1}{2}.
\end{eqnarray*}

\newpage
 
   \begin{figure}[h!]
 
  \scalebox{0.5} 
  {\includegraphics{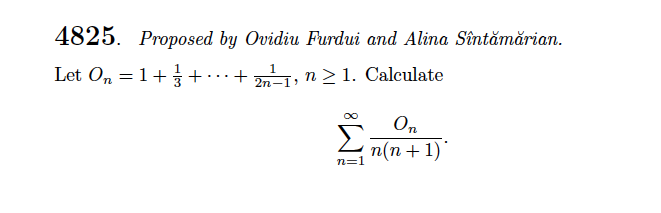}} 

\end{figure}
 

\centerline {\bf Solution to problem 4825 Crux Math. 49 (3) 2023, 157}

 \bigskip

 \centerline {Raymond Mortini, Rudolf Rupp } \medskip

\centerline{- - - - - - - - - - - - - - - - - - - - - - - - - - - - - - - - - - - - - - - - - - - - - - - - - - - - - -}
  
  \medskip

We prove that 
$$\ovalbox{$\dis I:=\sum_{n=1}^\infty \frac{O_n}{n(n+1)}=\log 4$.}$$
First we note that
$$ \frac{O_n}{n(n+1)}=O_n \left(\frac{1}{n}-\frac{1}{n+1}\right)= \frac{O_n}{n}-\frac{O_{n+1}}{n+1} +\frac{1}{(2n+1)(n+1)}.$$
Since the Cesaro means of the null sequence $(1/(2n+1))$ converge to $0$, we obtain
$$I= \frac{O_1}{1} +\sum_{n=1}^\infty \frac{1}{(2n+1)(n+1)}=1 + 2\log 2-1= \log 4.$$
The value of the series 
$\dis S:=\sum_{n=1}^\infty \frac{1}{(2n+1)(n+1)}$ can be determined as follows:

\begin{eqnarray*}
\sum_{n=1}^N \frac{1}{(2n+1)(n+1)}&=&\sum_{n=1}^N \Big(\frac{2}{2n+1}-\frac{1}{n+1}\Big)\\
\text{\footnotesize  splitting into even an odd} &=& \sum_{n=1}^N \Big(\frac{1}{2n+1}-\frac{1}{2n+1}\Big) + \sum_{n=1}^{N}\Big(\frac{1}{2n+1}-\frac{1}{2n}\Big) +\sum_{n=N+1}^{2N+1} \frac{1}{n}\\
&=&-1+ \sum_{n=1}^{2N+1} (-1)^{n+1} \frac{1}{n}   +\sum_{n=N+1}^{2N+1} \frac{1}{n}\\
&\rel\longrightarrow_{N\to\infty}^{}&-1+\log 2+\log 2.
\end{eqnarray*}

Note that the well-known assertion $\lim_{N\to\infty} \sum_{n=N+1}^{2N} \frac{1}{n}=\log 2$ is a direct consequence of the fact that the Euler-Mascheroni constant $\gamma$ is given by 
$$\gamma=\lim(H_n-\log n),$$
where $H_n:=\sum_{i=1}^n \frac{1}{i}$, since 
$$H_{2N}-H_N=(H_{2N}-\log  (2N)- \gamma)+(\log N +\gamma -H_N) +\log 2\to \log 2.$$

\newpage
 
   \begin{figure}[h!]
 
  \scalebox{0.5} 
  {\includegraphics{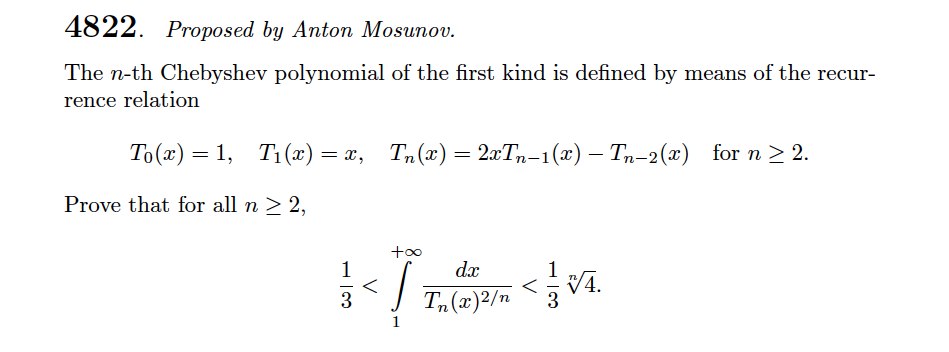}} 

\end{figure}
 

\centerline {\bf Solution to problem 4822 Crux Math. 49 (3) 2023, 156}

 \bigskip

 \centerline {Raymond Mortini, Rudolf Rupp } \medskip

\centerline{- - - - - - - - - - - - - - - - - - - - - - - - - - - - - - - - - - - - - - - - - - - - - - - - - - - - - -}
  
  \medskip

Substituting  $x=\cosh t $ we obtain $T_n(\cosh t)=\cosh(nt)$. In particular, $T_n$ has no zeros on $ [1,\infty[$. Hence

\begin{eqnarray*}
I:=\int_1^\infty \frac{dx}{T_n(x)^{2/n}}&=&\int_0^\infty \frac{\sinh t}{\big(\cosh (nt)\big)^{2/n}}\; dt= \int_0^\infty 
\frac{e^t-e^{-t}}{2 \dis \left(\frac{e^{nt}+e^{-nt}}{2}\right)^{2/n}}\;dt\\
&=&2^{-1+2/n}\int_0^\infty \frac{1-e^{-2t}}{e^t\big(1+e^{-2nt}\big)^{2/n}}\;dt.
\end{eqnarray*}

Hence
\begin{eqnarray*}
I&<&2^{-1+2/n}\int_0^\infty \frac{1-e^{-2t}}{e^t}\;dt =2^{-1+2/n}\left[ -e^{-t}+\frac{1}{3}e^{-3t}\right]^\infty_0\\
&=& 2^{-1+2/n} \frac{2}{3}= \frac{1}{3} \sqrt[n]{4}.
\end{eqnarray*}
Moreover
\begin{eqnarray*}
I&>& 2^{-1+2/n} \int_0^\infty \frac{1-e^{-2t}}{e^t(1+1)^{2/n}}\;dt= 2^{-1} \int_0^\infty \frac{1-e^{-2t}}{e^t}\;dt= 2^{-1} \frac{2}{3}=\frac{1}{3}.
\end{eqnarray*}

\newpage
 
   \begin{figure}[h!]
 
  \scalebox{0.5} 
  {\includegraphics{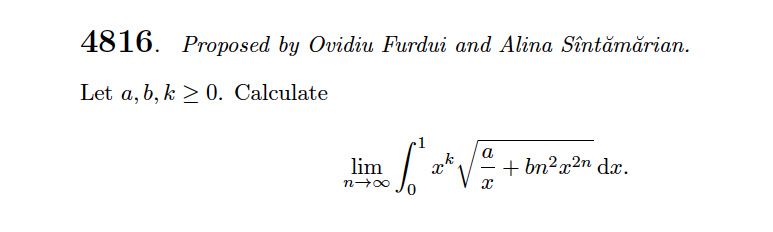}} 

\end{figure}
 
 \nopagecolor

\centerline {\bf Solution to problem 4816 Crux Math. 49 (2) 2023, 101}

 \bigskip

 \centerline {Raymond Mortini, Rudolf Rupp } \medskip

\centerline{- - - - - - - - - - - - - - - - - - - - - - - - - - - - - - - - - - - - - - - - - - - - - - - - - - - - - -}
  
  \medskip

 We show that for $a,b, k\geq 0$ ($k$ not necessary an integer)
 
$$\ovalbox{$\dis I_n:=\int_0^1 x^k\sqrt{\frac{a}{x}+ bn^2 x^{2n}}\;dx\;\rel\longrightarrow_{}^{n\to\infty}\; \sqrt b+ \frac{\sqrt a}{k+1/2}.$}$$
\bigskip

Write 
$$f_n(x)= x^{k-1/2} \sqrt{a+bn^2x^{2n+1}}.$$
If $a=0$, then
$$I_n=\int_0^1  \sqrt bn x^{n+k}dx= \frac{ n\;\sqrt b}{n+k+1}\to \sqrt b.
$$
For $a>0$, let 
 $$d_n(x):=x^{k-1/2} \Big(\sqrt{a+ bn^2 x^{2n+1}}-\sqrt{bn^2x^{2n+1}}\Big).$$
  Then
  $$0\leq d_n(x)= x^{k-1/2}\;\frac{a}{\sqrt{a+ bn^2 x^{2n+1}}+\sqrt b n x^{n+1/2}}\leq \frac{a}{\sqrt a}x^{k-1/2}.
  $$
  Hence $d_n$ is dominated by an $L^1[0,1]$ function and so, by using that $nx^n \to 0$ for $0<x<1$,
  $$\lim_n \int_0^1 d_n(x) dx= \int_0^1 \lim_n d_n(x)dx=\int_0^1\sqrt a x^{k-1/2}= \frac{\sqrt a}{k+1/2}.
  $$
  Consequently, 
\begin{eqnarray*}
\int_0^1 f_n(x)dx&= &\int_0^1 d_n(x)dx + \sqrt b \;\int_0^1 n x^{k-1/2}x^{n+1/2}dx\\
&=& \int_0^1 d_n(x)dx+ \sqrt b\,\frac{n}{k+n+1}\\
&\rel\longrightarrow_{n\to\infty}^{}& \frac{\sqrt a}{k+1/2}+\sqrt b.
\end{eqnarray*}

 \newpage
 
   \begin{figure}[h!]
 
  \scalebox{0.5} 
  {\includegraphics{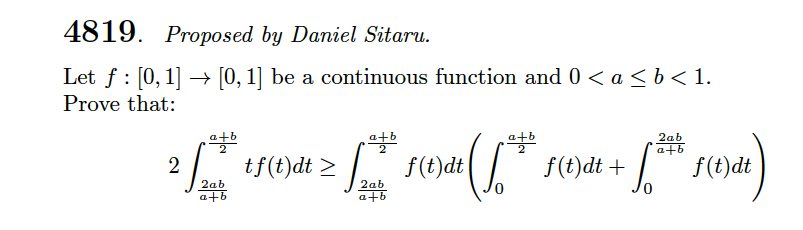}} 

\end{figure}
 

\centerline {\bf Solution to problem 4819 Crux Math. 49 (2) 2023, 102}
  
 \bigskip

 \centerline {Raymond Mortini, Rudolf Rupp } \medskip

\centerline{- - - - - - - - - - - - - - - - - - - - - - - - - - - - - - - - - - - - - - - - - - - - - - - - - - - - - -}
  
  \medskip

 Note that the harmonic mean $x_0:=2ab/(a+b)$ is less than or equal to the arithmetic mean $y_0:=(a+b)/2$. We show that the inequality holds for arbitrary $x_0, y_0$ with $0<x_0<y_0<1$.   So let $F$ be  that primitive of $f$ on $[0,1]$ with $F(0)=0$. We shall prove that
$$\ovalbox{$\displaystyle2 \int_{x_0}^{y_0} t f(t)\, dt\geq (F(y_0)-F(x_0))( F(y_0)+F(x_0))$},
$$
from which the desired inequality immediately follows. 
By partial integration,
\begin{equation}
2\,\int_{x_0}^{y_0}t\, f(t) \,dt= 2\,\int_{x_0}^{y_0} t F'(t)\,dt= 2 (y_0F(y_0)-x_0F(x_0))-2\int_{x_0}^{y_0} F(t)dt. 
\end{equation}

For $0\leq x,y\leq 1$, put
$$H(x,y):= 2 yF(y)-2xF(x)-2 \int_x^y F(t)dt -(F(y)^2-F(x)^2).$$
We have to show that $H(x_0,y_0)\geq 0$.
Since $0\leq f\leq 1$,  $F(x)\leq \int_0^x 1 \;dt = x$. Hence
\begin{eqnarray*}
\frac{\partial H}{\partial x}(x,y)=-2\big(F(x)+x f(x)\big) +2 F(x)+2 F(x)f(x)= 2 \big(F(x)-x\big) f(x)\leq 0.
\end{eqnarray*}
Consequently, by using that $H(y,y)=0$, we obtain $\xi\in ]x_0,y_0[$ with 
$$H(x_0,y_0)=H(x_0,y_0)-H(y_0,y_0)= \underbrace{\frac{\partial H}{\partial x} (\xi,y_0)}_{\leq 0}\underbrace{(x_0-y_0)}_{\leq 0} \geq 0.
$$

\newpage

   \begin{figure}[h!]
 
  \scalebox{0.5} 
  {\includegraphics{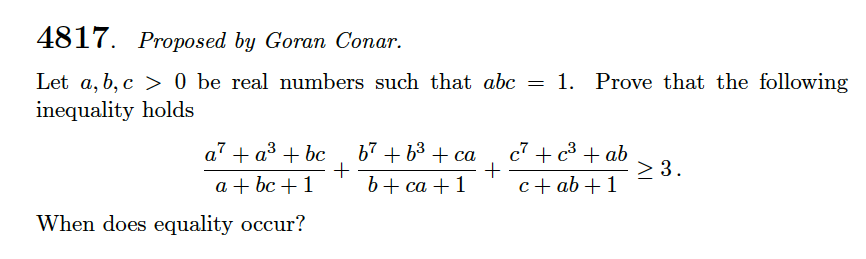}} 

\end{figure}

\centerline{\bf Solution to problem 4817 Crux Math. 49 (2) 2023, 102}

  \bigskip

 \centerline {Raymond Mortini, Rudolf Rupp } \medskip

\centerline{- - - - - - - - - - - - - - - - - - - - - - - - - - - - - - - - - - - - - - - - - - - - - - - - - - - - - -}
  
  \medskip

Let $E:=]0,\infty[ \,\times\, ]0,\infty[\,\times \,]0,\infty[$ and 
let $H: E\to \;]0,\infty[$ be given by

$$
H(a,b,c)=\frac{a^7+a^3+bc}{a+bc+1} +\frac{b^7+b^3+ca}{b+ca+1} +\frac{c^7+c^3+ab}{c+ab+1}.
$$
\medskip

Put $L:=\{(a,b,c)\in E: abc=1\}$.
To be shown is that $\inf_L H =3$ and that this lower bound is obtained exactly at $(1,1,1)$. To this end, consider for $x>0$ the function

$$
f(x):= \frac{x^7+x^3+x^{-1}}{x+x^{-1}+1}= \frac{x^8+x^4+1}{x^2+x+1}= x^6-x^5+x^3-x+1.
$$

Then $f$ is convex on $[0,\infty[$. In fact, 
$$\mbox{$f'(x)=6x^5-5x^4+3x^2-1$ and $f''(x)=30x^4-20x^3+6x=2x(15x^3-10x^2+3)$.}$$
Now $f''(x)= 2x\big(5x^2(3x-2)+3\big)$. Then, clearly, $f''(x)\geq 0$ if $x\geq 2/3$. Since  
$$\max_{[0,2/3]} x^2(2-3x)=32/3^5\leq 3/5,$$
 we deduce that $f''(x)\geq 0$ on $[0,2/3]$, too. 
Due to Jensen's inequality, for $(a,b,c)\in L$
\begin{eqnarray*}
H(a,b,c)&=& f(a) +f(b)+f(c) = 3 \;\frac{f(a)+f(b)+f(c)}{3}\geq 3\;f\Big(\frac{a+b+c}{3}\Big)
\end{eqnarray*} 

Since $f$ is convex for $x\geq 0$, $f(x)\geq f(1)+f'(1)(x-1)=1+3(x-1)=-2+3x$. Why we take evaluation at $1$? Because it works!  It is an a posteriori choice, since the minimal value is taken at $(a,b,c)=(1,1,1)$. 
Thus we obtain the estimate
$$H(a,b,c)\geq 3 \left( -2+ (a+b+c)\right)=-6+3\left(a+b+\frac{1}{ab}\right).
$$

 {We can even avoid Jensen's inequality:
$$H(a,b,c)=f(a)+f(b)+f(c)\geq (-2+3a)+(-2+3b)+(-2+3c)=-6 + 3(a+b+c).$$

Since $a+b+\frac{1}{ab}\geq 3$ (see below) we deduce that for $abc=1$ we have $H(a,b,c)\geq -6+9=3$. As $H(1,1,1)=3$, we are done.

The inequality $\dis g(a,b):=a+b+\frac{1}{ab} \geq 3$ is well known. It can for instance be shown by using differential calculus:
$$\mbox{$\dis g_a(a,b)=1-\frac{1}{ab^2}=0\iff ab^2=1$ and $\dis g_b(a,b)=1-\frac{1}{ba^2}=0\iff ba^2=1$}.$$
In other words, $ab(a-b)=0$. Hence $a=b=1$ is the only stationary point. Thus  $g(1,1)=3$ is the minimum, since the limit of $g$ at the boundary $ab=0$ is $\infty$.

\newpage
   \begin{figure}[h!]
 
  \scalebox{0.5} 
  {\includegraphics{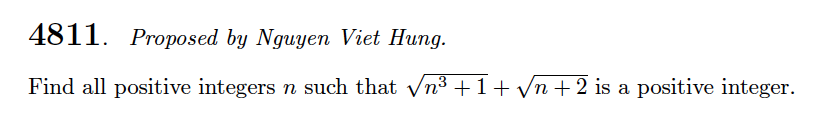}} 

\end{figure}

\centerline {\bf Solution to problem 4811 Crux Math. 49 (2) 2023, 101}
  
   \bigskip

 \centerline {Raymond Mortini, Rudolf Rupp } \medskip

\centerline{- - - - - - - - - - - - - - - - - - - - - - - - - - - - - - - - - - - - - - - - - - - - - - - - - - - - - -}
  
  \medskip

We show that $n=2$ is the only solution. In fact $\sqrt{2^3+1}+\sqrt{2+2}=3+2=5$. Now, for $x,y\geq 0$, one has
$\sqrt x+\sqrt y\in \N$ if and only if $x$ and $y$ are perfect squares.  To see this, just note that
$$\sqrt x+\sqrt y= \frac{x-y}{\sqrt x-\sqrt y}$$
implies that $\sqrt x+\sqrt y\in \Q$ if and only $\sqrt x-\sqrt y\in \Q$ and so, by adding (respectively substracting), $\sqrt x$ and
 $\sqrt y$ are rational.  Thus  $\sqrt x=p/q$ for some $p,q\in \N$ with no common divisor. Hence $x^2=p^2/q^2\in \N$, and so  $q=1$.
 
 Due to a classical result by L.  Euler, the Diophantine equation $n^3+1=m^2$ has in $\N=\{0,1,2,\dots\}$  only the solutions $(m,n)=(1,0)$ and $(m,n)=(3,2)$ (see for instance \cite{mathover}, a reference provided to the first author by Amol Sasane). Thus $n=2$ is the only positive integer also satisfying $\sqrt{n+2}\in \N$.

  \newpage

   \begin{figure}[h!]
 
  \scalebox{0.5} 
  {\includegraphics{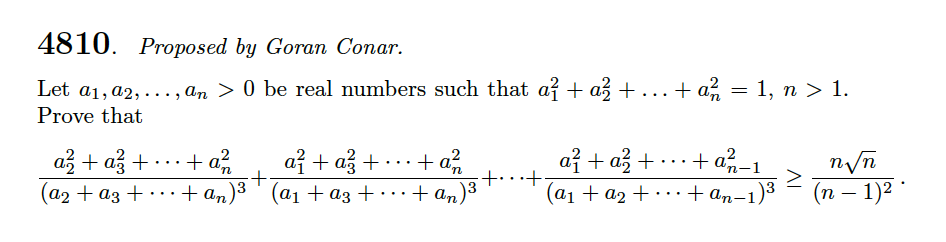}} 

\end{figure}

\centerline{\bf Solution to problem 4810 Crux Math. 49 (1) 2023, 45}
  \bigskip

 \centerline {Raymond Mortini, Rudolf Rupp } \medskip

\centerline{- - - - - - - - - - - - - - - - - - - - - - - - - - - - - - - - - - - - - - - - - - - - - - - - - - - - - -}
  
  \medskip

We first show that whenever $\sum_{j=1}^n a_j^2=1$, then
\begin{equation}\label{zwis}
\frac{\dis\sum^n_{j=1 \atop j\not=i}a_j^2}{\dis\Big(\sum^n_{j=1 \atop j\not=i} a_j\Big)^3} \geq \frac{1}{\sqrt{1-a_i^2}}\; \frac{\sqrt{n-1}}{(n-1)^2}.
\end{equation}

In fact, using Cauchy-Schwarz, we immediately obtain

\begin{eqnarray*}
\frac{\dis\sum^n_{j=1 \atop j\not=i}a_j^2}{\dis\Big(\sum^n_{j=1 \atop j\not=i} a_j\Big)^3}\geq 
\frac{\dis\sum^n_{j=1 \atop j\not=i}a_j^2}{\dis\Big(\big(\sum^n_{j=1 \atop j\not=i}a_j^2\big) (n-1)\Big)^{3/2}}
= \frac{1}{\sqrt{1-a_i^2}}\; \frac{\sqrt{n-1}}{(n-1)^2}.
\end{eqnarray*}

Next we prove that whenever $\sum_{j=1}^n a_j^2=1$, then

\begin{equation}\label{zwis2}
\sum_{i=1}^n \frac{1}{\sqrt{1-a_i^2}} \geq n\;\sqrt{\frac{n}{n-1}}.
\end{equation}

In fact, consider the convex function $f(x)= \frac{1}{\sqrt{1-x}}$.
By Jensen's inequality (or one of the possible defintions of convexity), if $\sum_{j=1}^n t_j=1$  where $(0\leq t_j\leq 1$), then
$$f\Big(\sum_{j=1}^n t_jx_j\Big)\leq \sum_{j=1}^n t_jf(x_j).$$
Here  we choose   $x_i=a_i^2$, and $t_j=1/n$. Note that  $\frac{\sum_{i=1}^n a_i^2}{n}=1/n$.
Hence
$$\sum_{i=1}^n \frac{1}{\sqrt{1-a_i^2}}=n\sum_{i=1}^n \frac{1}{n}\,f(x_i)\geq n f\big(\frac{1}{n}\sum_{i=1}^n x_j\big)=\frac{n}{\sqrt{1-\frac{1}{n}}}= n\; \sqrt{\frac{n}{n-1}}.$$

Now putting (\ref{zwis}) and (\ref{zwis2}) together yields

$$
\sum_{i=1}^n\frac{\dis\sum^n_{j=1 \atop j\not=i}a_j^2}{\dis\Big(\sum^n_{j=1 \atop j\not=i} a_j\Big)^3} \geq 
\frac{\sqrt{n-1}}{(n-1)^2}\; n\; \sqrt{\frac{n}{n-1}}=\frac{n \sqrt n}{(n-1)^2}.
$$

\newpage
   
   \begin{figure}[h!]
 
  \scalebox{0.5} 
  {\includegraphics{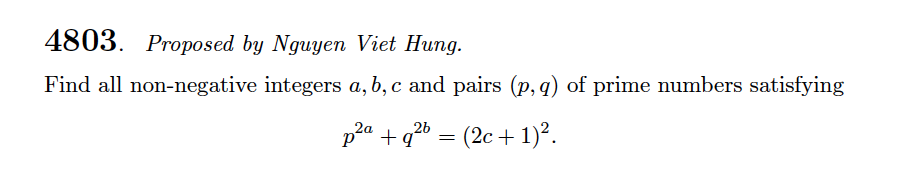}} 

\end{figure}


\centerline{\bf Solution to problem 4803 Crux Math. 49 (1) 2023, 44}
  \bigskip

 \centerline {Raymond Mortini, Rudolf Rupp } \medskip

\centerline{- - - - - - - - - - - - - - - - - - - - - - - - - - - - - - - - - - - - - - - - - - - - - - - - - - - - - -}
  
  \medskip

It turns out that  the triple $(3,4,5)$ satisfying $3^2+4^2=5^2$ is relevant here. Only one solution to the problem with $p\leq q $  exists:
$p=2, q=3$ and $a=2, b= 1, c= 2$. To sum up:

$$\ovalbox{$ 2^{2\cdot 2} + 3^{2\cdot 1} =(2 \cdot 2+1)^2$}$$

To see this, we use of course the well known parametrizations of the solutions to $A^2+B^2=C^2$, which are given by
$$(*)~~~~\mbox{$A=2mn, B=m^2-n^2$ and $C=m^2+n^2, ~~m,n\in \N$}.$$

The conditions to be dealt with are  

$$\mbox{$i)~~~ 2mn=p^a$, $ii)~~~ m^2-n^2=q^b$ and $iii)~~~ m^2+n^2=2c+1$.}$$

$\bullet$ First we note that  $(a,b)=(0,0)$ is not admissible as  $1+1=2$ is even. 
Now if $b=0$ and  $a\not=0$,  then by i) $p$ necessarily must be an even prime, 
that is $p=2$.  Hence 
$$2^{2a}+1=(2c+1)^2.$$
  By (*),  $1=m^2-n^2$ and $2^{2a}=2mn$. Consequently $m$ and $n$ are powers of $2$. Hence $m^2-n^2$ is an even number; and not $1$. Thus $ab\not=0$.

$\bullet$  So let $ab>0$. 
Since $p$ is prime,  $m$ and $n$ can only be powers of $2$ by (i). Due to  iii), telling us that $m^2+n^2$ is an odd number, 
not both $m$ and $n$ can be proper powers of $2$.  Since $m\geq n$ (by ii)), we necessarily have $n=1$ and $m=2^x$ with $x\not=0$.
By ii),  
$$q^b=m^2-1= (2^x)^2-1=(2^x-1)(2^x+1).$$
This implies that $q\not=2$ (as the right hand side is odd). Since the difference of the factors is 2, $q\geq 3$ cannot divide both factors. Thus  we can only have that the factor $2^x-1$  equals 1.

 Hence $x=1$ and $q^b=3$, yielding $b=1$ and $q=3$.  Finally by i), $p^a=2 mn=2 \cdot 2^1\cdot 1=2^2$. So $p=2$ and $a=2$. Finally, $c=2$ as $3^2+4^2=5^2$.

\newpage

   \begin{figure}[h!]
 
  \scalebox{0.5} 
  {\includegraphics{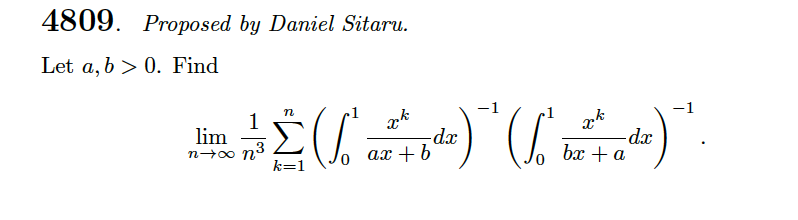}} 

\end{figure}

\centerline{\bf Solution to problem 4809 Crux Math. 49 (1) 2023, 45}\medskip

  \centerline{Raymond Mortini, Rudolf Rupp } \medskip

\centerline{- - - - - - - - - - - - - - - - - - - - - - - - - - - - - - - - - - - - - - - - - - - - - - - - - - - - - -}
  
  \medskip

  We show, more generally, that whenever $f,g:[0,1]\to [0,\infty[$ are continuous and $f(1)g(1)\not=0$, then
$$\ovalbox{$\dis{ \lim_{n\to\infty} \frac{1}{n^3}  \sum_{n=1}^n \left(\int_0^1 x^k f(x)\,dx\right) ^{-1}\;  \left(\int_0^1 x^k g(x)\,dx\right) ^{-1}=
\frac{1}{3}\;\frac{1}{f(1)}\frac{1}{g(1)}}$.
}$$
Hence the limit in the problem is $(a+b)^2/3$.\\

{\bf Proof}
Let $M:=\max\{|f(x)|: 0\leq x\leq 1\}$. Given $\e$ with $0<\e<\frac{1}{2}\min\{f(1),g(1)\}$, choose $\delta>0$ so that $|f(x)-f(1)|\leq \e$ for $\delta\leq x\leq 1$. Moreover, let $n_0$ be so large that $\delta^{k+1}\leq \e/(2M)$ for $k\geq n_0$.  Then

\begin{eqnarray*}
\left|\int_0^1 x^k f(x)\,dx-\frac{1}{k+1} f(1)\right|&=& \left|\int_0^1 x^k (f(x)-f(1))\,dx\right|\\
& \leq& 2M\;\int_0^\delta x^k +\int_\delta^1 x^k |f(x)-f(1)|\,dx  \\
&\leq & 2M \frac{\delta^{k+1}}{k+1} +\e \int_0^1 x^kdx \\
&\leq& \frac{\e}{k+1}+  \frac{\e}{k+1}.
\end{eqnarray*}

Therefore
$$ \left( \frac{1}{k+1} f(1)+\frac{2\e}{k+1}   \right)^{-1}\leq  \left(\int_0^1 x^k f(x)\,dx\right) ^{-1}\leq  \left( \frac{1}{k+1} f(1)-\frac{2\e}{k+1}   \right)^{-1}.$$

We conclude that
\small\begin{eqnarray*}
 \sum_{k=n_0}^n \frac{k+1}{f(1)+2\e}\;\frac{k+1}{g(1)+2\e}
\leq \sum_{k=n_0}^n   \left(\int_0^1 x^k f(x)\,dx\right) ^{-1}\;  \left(\int_0^1 x^k g(x)\,dx\right) ^{-1} &\leq&
 \sum_{k=n_0}^n \frac{k+1}{f(1)-2\e}\;\frac{k+1}{g(1)-2\e}.
\end{eqnarray*} 

Hence, by using that $\sum_{j=1}^n j^2= \frac{n(n+1)(2n+1)}{6}$, we deduce that
$$ \limsup_{n\to\infty} \frac{1}{n^3} \sum_{n=1}^n \left(\int_0^1 x^k f(x)\,dx\right) ^{-1}\;  \left(\int_0^1 x^k g(x)\,dx\right) ^{-1}\leq \frac{1}{3}\;
\frac{1}{g(1)-2\e}\;\frac{1}{f(1)-2\e}
$$
and
$$ \liminf_{n\to\infty} \frac{1}{n^3} \sum_{n=1}^n \left(\int_0^1 x^k f(x)\,dx\right) ^{-1}\;  \left(\int_0^1 x^k g(x)\,dx\right) ^{-1}\geq \frac{1}{3}\;
\frac{1}{f(1)+2\e}\frac{1}{g(1)+2\e}.
$$
from which we conclude that
$$ \lim_{n\to\infty} \frac{1}{n^3}  \sum_{n=1}^n \left(\int_0^1 x^k f(x)\,dx\right) ^{-1}\;  \left(\int_0^1 x^k g(x)\,dx\right) ^{-1}=
\frac{1}{3}\;\frac{1}{f(1)}\frac{1}{g(1)}.
$$
{\bf Remark} The lower estimate show that the limit is infinite if $f(1)g(1)=0$.

\newpage

   \begin{figure}[h!]
 
  \scalebox{0.5} 
  {\includegraphics{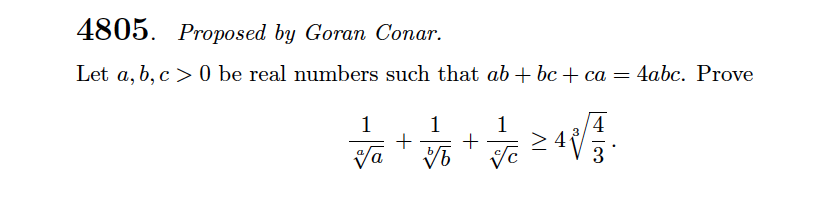}} 

\end{figure}

\centerline{\bf Solution to problem 4805 Crux Math. 49 (1) 2023, 44}\medskip

  \centerline{Raymond Mortini, Rudolf Rupp } \medskip

\centerline{- - - - - - - - - - - - - - - - - - - - - - - - - - - - - - - - - - - - - - - - - - - - - - - - - - - - - -}
  
  \medskip

First we note that $ab+bc+ca=4abc$ is equivalent to 
$$(*)\hspace{3cm}\dis  \ell(a,b,c):=\frac{1}{a}+ \frac{1}{b}+ \frac{1}{c}=4.$$
If $a=b=c$, then this condition is satisfied if $a=3/4$. 
Let 
$$g(a,b,c)= a^{-1/a}+b^{-1/b}+c^{-1/c}.$$
It suffices to show that the minimum $M$ of $g$ under  condition (*) is obtained for $a=b=c$.
Note that $M:=g(3/4,3/4,3/4)= 3 (3/4)^{-4/3}=4 (4/3)^{1/3}\sim 4.402$.

The gradient of the Lagrange function 
$$H(a,b,c,\lambda)= g(a,b,c)+\lambda(\ell(a,b,c)-4)$$
 is zero if 
$$\lambda= a^{-1/a} \;(1-\log a)=b^{-1/b} \;(1-\log b)=c^{-1/c} \;(1-\log c).$$

Since the function $x\mapsto x^{-1/x} \;(1-\log x)$ is strictly decreasing on $]0,\infty[$, the only solution is where $a=b=c$. 
The existence of the minimum is shown as follows (note that 
$$E:=\{(a,b,c): a,b,c>0, \ell(a,b,c)=4\}$$ is not compact.
Condition (*) implies that $a,b,c\geq 1/4$. Let $L:=\inf_E g$. Then 
$$L\geq 3 \min_{[1/4, \infty[} x^{-1/x}=3  e^{-1/e}\geq 3 \times  0.692=2.076.$$
If this infimum is not taken on $E$, then there is $a_n\to \infty$ (or $b_n\to \infty,$ or $c_n\to\infty$)  such that $(a_n,b_n,c_n)\in E$ and
$g(a_n,b_n,c_n)\to L$.  In particular $a_n^{-1/a_n}\to 1$. We may assume that $b_n\to b_0$ and $c_n\to c_0$ (since otherwise $b_n\to\infty$ and so $c_n\to 1/4$, as well as $L=1+1+ 4^4>M$, a contradiction).
 Hence
$L=\inf_{E'} (1+ b^{-1/b}+c^{-1/c})$, where 
$$E'=\{(b,c):  b, c>0, 1/b+1/c=3\}.$$
 In particular, $b\geq 1/3$.
Thus (by using Lagrange again, yielding $x=2/3$)
$$L=\inf_{]1/3,\infty[} 1+ x^{-1/x} + \left(\frac{3x-1}{x}\right)^{\frac{3x-1}{x}}\rel=_{}^{x=2/3} 1+2(3/2)^{3/2}\sim 4.674 >M.$$
A contradiction.
Consequently $(a_n,b_n,c_n)\to (\alpha,\beta,\gamma)\in E$ and so the infimum is a minimum.  Hence 
$$g(a,b,c)\geq g(\alpha,\beta,\gamma)=L=M.$$
\bigskip

Here is a second proof, based on the article  \cite{ahm} (which unfortunately contains many typos (poor proofreading? Poor referee job?). The function $f(x):=x^x$ is convex. 
Let $T_u(x):=f'(u)(x-u)+f(u)$ be the tangent to the graph of $f$ at the point $(u, f(u))$. Then $f(x)\geq T_u(x)$. 
Next, let  $x_1=1/a$, $x_2=1/b$ and  $x_3=1/c$. Then with $u:=S=(x_1+x_2+x_3)/3$, 
$$ \sum_{j=1}^3 f(x_j)\geq \sum_{j=1}^3 T_S(x_j)=\sum_{j=1}^3 \left( f'(S)(x_j-S)+f(S)\right)= f'(S) \sum_{j=1}^3(x_j-S) + 3 f(S)
$$
$$=f'(S)\sum_{j=1}^3 x_j - 3 S f'(S)+3 f(S)=3 f(S).$$
Since $S=(1/a+1/b+1/c)/3=4/3$,  we obtain with $1/a+1/b+1/c=4$ that
$$
(1/a)^{1/a}+(1/b)^{1/b} +(1/c)^{1/c}= \sum_{j=1}^3 f(x_j)\geq 3 f(4/3)= 3 (4/3)^{4/3}=4 (4/3)^{1/3}.
$$


\newpage

   \begin{figure}[h!]
 
  \scalebox{0.5} 
  {\includegraphics{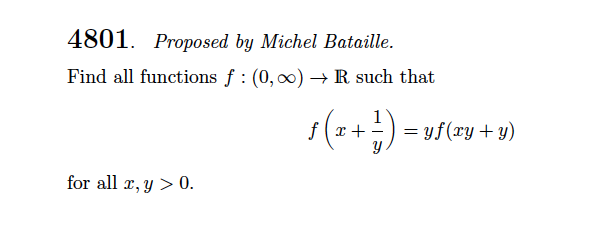}} 

\end{figure}

\centerline{\bf Solution to problem 4801 Crux Math. 49 (1) 2023, 44}\medskip

 \centerline{Raymond Mortini, Rudolf Rupp } \medskip

\centerline{- - - - - - - - - - - - - - - - - - - - - - - - - - - - - - - - - - - - - - - - - - - - - - - - - - - - - -}
  
  \medskip


\ovalbox{We show that all solutions are given by $\dis f(x)= \frac{C}{1+x}$ for $C\in \R$.}\\

$\bullet$ It is straightforward to check that these are solutions:
\begin{eqnarray*}
f\left(x+\frac{1}{y}\right)= \frac{c}{1+ x+\frac{1}{y}}=\frac{cy}{y+yx+1}= y f(xy+y).
\end{eqnarray*}

$\bullet$ Suppose that $f:\,]0,\infty[\to \R$ is a solution. Let $y= \frac{1}{1+x}$. Then
\begin{equation}\label{main4801}
f(2x+1)=f(x+\frac{1}{y})=  y f(y(1+x))=\frac{1}{1+x} f(1).
\end{equation}
Next, let $y=\frac{1}{x}$. Hence, by using  (\ref{main4801}), 
$$
f(2x)=  f(x+\frac{1}{y})=  y f(y(1+x))= \frac{1}{x} f(1+\frac{1}{x})=\frac{1}{x} f(1+2\frac{1}{2x})\rel=_{}^{(\ref{main4801})} \frac{1}{x}\frac{f(1)}{1+ \frac{1}{2x}}=\frac{2 f(1)}{2x+1}.
$$
Now let $X:=2x$ and $C:=2f(1)$. Then $\dis f(X)= \frac{2 f(1)}{ X+1}= \frac{C}{1+X}$.

\newpage

\begin{figure}[h!]
  
   \scalebox{0.5} 
  {\includegraphics{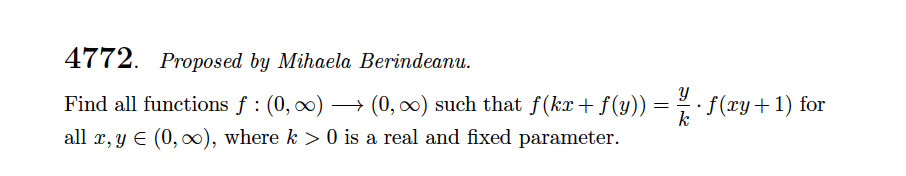}} 
\end{figure}

\centerline{\bf {Solution to problem 4772 Crux Math. 48 (8) 2022, 483}}\medskip

   \centerline{Raymond Mortini, Rudolf Rupp}  \medskip

\centerline{- - - - - - - - - - - - - - - - - - - - - - - - - - - - - - - - - - - - - - - - - - - - - - - - - - - - - -}
  
  \medskip

   For $k=1$, this problem  was given for instance in the Middle European Mathematical Olympiad (MEMO) in 2012 in Switzerland
(see \cite{memo} and \cite{mhr}) and we follow those published solutions.  \\

We claim that for $a>0$, all solutions $f:\R^+\to\R^+$ of \footnote{ We prefer to use the letter $a$ instead of $k$, as for us $k$ always belongs to $\mathbb N$.}
\begin{equation}\label{fgle}
f(ax+f(y))=\frac{y}{a}\;f(xy+1)
\end{equation}
are given by $f(x)=a/x$.  First, it is straightforward to see that this is a solution.
Now we proceed as in  \cite{memo, mhr}. 
Let $f$ be  a solution. 

{\bf Step 1}
Consider  for $y>0$, $y\not=a$, the auxiliary function 
$$
g(y):= \frac{a-yf(y)}{a-y}
$$
(this function is formally  obtained by 
 solving in $\R\times \R^+$ the equation  $ax+f(y)=xy+1$, which gives 
$x=x_y=\frac{1-f(y)}{a-y}$ for $y\not=a$, and so $ax+f(y)= \frac{a-yf(y)}{a-y}=g(y)$. It will turn out that $x=-1/y$ and $g\equiv 0$).\\

Now for every $y>0$ with  $y\not=a$ and $x_y>0$, we have that $g(y)\leq 0$, since otherwise $f$ is well-defined at $g(y)>0$ and so 
$f(g(y))=\frac{y}{a} f(g(y))$, yielding that   $y=a$, a contradiction. \\

{\bf Step 2} 

{\it Case 1} If there would exist $y_0>1$ such that $f(y_0)< a/y_0$,  then with $x_0:=1-\frac{1}{y_0}>0$ we have $x_0y_0+1=y_0$, 
$$u_0:=ax_0+f(y_0)=a-\frac{a}{y_0} +f(y_0)<a,
$$
and
$$f(u_0)=f(ax_0+f(y_0))=\frac{y_0}{a} \;f(y_0)<1.$$
Then $x_{u_0}:=\frac{1-f(u_0)}{a-u_0}>0$ and  so
$$g(u_0)=a x_{u_0}+f(u_0)=x_{u_0}u_0+1>0.$$
But by  Step 1, $g(u_0)\leq 0$, a contradiction.

{\it Case 2} 
  If there would exist $y_1>1$ such that $f(y_1)> a/y_1$, then  by the same reasoning as above, 
  with $x_1:=1-\frac{1}{y_1}$ and 
$$u_1:=a x_1+f(y_1)>a,$$
we have $f(u_1)>1$ and so $g(u_1)>0$, again.  A contradiction.\\

We conclude that $f(y)=a/y$ for every $y>1$. To deal with the remaining case,  take $x=1/a$ and $0<y\leq 1$. Then by (\ref{fgle}),
 
\begin{equation}\label{inter}
f(1+f(y))= \frac{y}{a} f\left(\frac{y}{a}+1\right).
\end{equation}

As both $1+f(y)$ and $\frac{y}{a}+1$ are bigger than $1$, we deduce from (\ref{inter}) that
$$
\frac{a}{1+f(y)}=\frac{y}{a}\frac{a}{\frac{y}{a}+1}=\frac{ay}{y+a}.
$$
Hence $f(y)= a/y$.

\newpage

 \begin{figure}[h!]
  
   \scalebox{0.5} 
  {\includegraphics{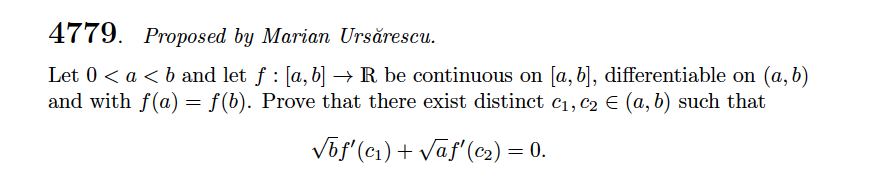}} 
\end{figure}


\centerline{\bf {Solution to problem 4779 Crux Math. 48 (8) 2022, 483}}\medskip

   \centerline{Raymond Mortini} \medskip

\centerline{- - - - - - - - - - - - - - - - - - - - - - - - - - - - - - - - - - - - - - - - - - - - - - - - - - - - - -}
  
  \medskip

 If $f$ is constant, then $f'\equiv 0$ and we may choose any numbers $a<c_1<c_2<b$ to satisfy

\begin{equation}
\sqrt b f'(c_1)_+\sqrt a f'(c_2)=0.
\end{equation}
Otherwise,  $f$ takes its distinct extremal values on $[a,b]$.  We may assume that  $M:=\max_{[a,b]} f >f(a)$
(if not, $M=f(a)$ and so $\min_{[a,b]} f< f(a)$ and we consider $-f$).  Say
$M=f(x_0)$ for some $x_0\in\,]a,b[$. Then $f'(x_0)=0$, and due to continuity of $f'$, there are $a<x_1<x_2\leq x_0$ with $f'(x)>0$ for $x\in\; ]x_1,x_2[$, but $f'(x_2)=0$; we may choose 
$$\mbox{$x_2= \inf \{t \leq x_0: f'\equiv 0$ on $[t,x_0]\}$}.$$
By a similar argument, there are $x_0\leq y_2< y_1$ such that $f'(y_2)=0$, but $f'(x)<0$ for $x\in \;]y_2, y_1[$. By the intermediate value theorem for continuous functions, here for  $f'$, there exists a small $\e>0$  such that $f'$ takes every value from  $[0,\e]$ on $]x_1,x_2]$ and every value from  $[-\e,0]$  on $[y_2,y_1[$.
Now choose $c_1\in\; ]x_1,x_2[$ so that $\frac{\sqrt b}{\sqrt a} f'(c_1) \in\; ]0,\e[$ (this is  possible since $\lim_{x\nearrow x_2} f'(x)=0$).
Hence there exists $c_2\in\; ]y_2,y_1[$ with 
$$f'(c_2)=- \frac{\sqrt b}{\sqrt a} f'(c_1).$$
Thus $\sqrt b f'(c_1)_+\sqrt a f'(c_2)=0$ and $c_1<c_2$. \\

{\bf Remark}  I do not see the role played by the special coefficients $\sqrt a$ and $\sqrt b$. The whole works for any 
$0<s_1<s_2<\infty.$

\newpage

 \begin{figure}[h!]
  
   \scalebox{0.5} 
  {\includegraphics{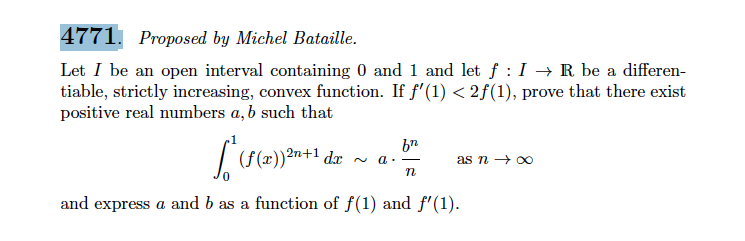}} 
\end{figure}

\centerline{\bf {Solution to problem 4771 Crux Math. 48 (8) 2022, 483}}\medskip

   \centerline{Raymond Mortini, Rudolf Rupp } \medskip

\centerline{- - - - - - - - - - - - - - - - - - - - - - - - - - - - - - - - - - - - - - - - - - - - - - - - - - - - - -}
  
  \medskip

   The problem is a bit ambiguous, due to an undefined $\sim$ symbol.  Let
$$\mbox{$\dis L_n:=\int_0^1 f(x)^{2n+1}\,dx\quad$ and $\quad\dis R_n=a\cdot \frac{b^n}{n}$}$$

Is it $L_n-R_n\to 0$? Or $L_n/R_n\to 1$? Or $c L_n\leq R_n\leq CL_n$ for almost every $n$ and some positive constants
 $c,C$? Note that, a priori, it is not even clear that  $L_n>0$.

We are going to show the following:

\begin{exemple}
$$\ovalbox{$\dis \lim_{n\to\infty} \frac{n+1}{f(1)^{2n+1}}\;\int_0^1 f(x)^{2n+1}\,dx= \frac{f(1)}{2 f'(1)}.$}$$
\end{exemple}

Hence, with $\dis a:= \frac{f(1)^2}{2 f'(1)}$ and $b=f(1)^2$ we get that $L_n/R_n\to 1$. 

\begin{proof}

 Since $f$ is assumed to be  increasing, we see that $f'(x)\geq 0$ for $0\leq x\leq 1$. To exclude that for some points
  $x_0\in\; ]0,1]$, $f'(x_0)=0$, we need the convexity \footnote{ Note that $f$ merely being strictly increasing, does not exclude the existence of zeros of $f'$: $f(x)=(x-1/2)^3$.}  of $f$: in fact, let $T$ be the tangent to the graph of $f$ at $(x_0,f(x_0))$; then $T(x)=f(x_0)+f'(x_0)(x-x_0)$. The convexity of $f$ implies that the graph of $f$ lies above $T$. In particular, if $f'(x_0)=0$,
  then, due to $f$ being strictly increasing,  $f(x_0-\e)< f(x_0)<f(x_0+\e)$ would contradict this fact.  We conclude that $f'(x)>0$ for every $x\in\;]0,1]$.

To calculate our limit, we let $0<s<1$ and write the integral $\dis \frac{n+1}{f(1)^{2n+1}} L_n$ as $I_n(s)+J_n(s)$, where 
$$\mbox{$\dis I_n(s)= \frac{n+1}{f(1)^{2n+1}}\int_0^s f(x)^{2n+1}\;dx\quad$ and 
$\dis\quad J_n(s)= \frac{n+1}{f(1)^{2n+1}}\int_s^1 f(x)^{2n+1}\;dx$}.$$

{\bf Claim 1}  There is a function $h(s)$ with $0<h(s)<1$, such that
\begin{equation}\label{eq1}
\frac{f(1)}{2 f'(1)}\Big(1-h^{2n+2}(s)\Big)\leq J_n(s)\leq \frac{f(1)}{2 f'(s)}.
\end{equation}

To see this, note that $f$ convex and $C^1$ imply that $f'$ is increasing (by the way, a fact equivalent to $f$  being convex). 
By the mean-value theorem,  and  for $s<x\leq 1$, there is $c_x\in\,]s,1[$ with $ f'(c_x)=\frac{f(1)-f(x)}{1-x}$. Hence
$$f'(s) \leq f'(c_x)\leq f'(1)$$
and so
$$f'(s)\leq \frac{f(1)-f(x)}{1-x}\leq f'(1).$$
In other words
\begin{equation}\label{equ2}
f(1)-f'(1) + f'(1)x \leq f(x)\leq f(1)-f'(s)+ f'(s) x.
\end{equation}
Now for $f(x)=Ax+B$ with $A\not=0$  we have
$$\int_s^1 (Ax+B)^{2n+1}\;dx=\frac{(A+B)^{2n+2}-(As+B)^{2n+2}}{A(2n+2)}.$$
Applying this to (\ref{equ2}) yields

\begin{eqnarray*}
\frac{n+1}{f(1)^{2n+1}} \int_s^1 f(x)^{2n+1}\;dx&\leq& \frac{n+1}{f(1)^{2n+1}} \frac{(f'(s)+f(1)-f'(s))^{2n+2}-
(f'(s)s+f(1)-f'(s))^{2n+2}}{f'(s)(2n+2)}\\
&=&\frac{1}{2 f'(s)}\; \frac{f(1)^{2n+2}-(f'(s)s +f(1)-f'(s))^{2n+2}}{f(1)^{2n+1}}\\
&=& \frac{f(1)}{2f'(s)}\left( 1-\left( 1-\frac{f'(s)}{f(1)}(1-s)\right)^{2n+2}\right)\\
&\leq&  \frac{f(1)}{2f'(s)}
\end{eqnarray*}
because $0\leq  1-\frac{f'(s)}{f(1)}(1-s)<1$ for $s\in [s_1,1]$. Similarily,

\begin{eqnarray*}
\frac{n+1}{f(1)^{2n+1}} \int_s^1 f(x)^{2n+1}\;dx&\geq& \frac{n+1}{f(1)^{2n+1}} \frac{(f'(1)+f(1)-f'(1))^{2n+2}-
(f'(1)s+f(1)-f'(1))^{2n+2}}{f'(1)(2n+2)}\\
&=&\frac{1}{2 f'(1)}\; \frac{f(1)^{2n+2}-(f'(1)s +f(1)-f'(1))^{2n+2}}{f(1)^{2n+1}}\\
&=& \frac{f(1)}{2f'(1)}\left( 1-\left( 1-\frac{f'(1)}{f(1)}(1-s)\right)^{2n+2}\right)\\
&=:&  \frac{f(1)}{2f'(1)}\left(1-h(s)^{2n+2}\right),
\end{eqnarray*}
with $h(s):=1-\frac{f'(1)}{f(1)}(1-s)$. Note that $0<h(s)<1$ for $s\in [s_2,1]$.

This finishes the proof of Claim 1.\\

{\bf Claim 2} $\lim_{n\to\infty} I_n(s)=0$ for every $0<s<1$.

To this end, we need to show that $\max_{[0,1]} |f| =f(1)$ and that the maximum is {\it only} obtained at $1$ (note that  $f$ may take negative values). In fact, since $f$ is  increasing, $f(0)\leq f(x)\leq f(1)$ for every $x\in [0,1]$. If $f(0)\geq 0$, nothing has to be proven. So let $f(0)<0$. 
Then, by the mean value theorem on $[0,1]$ there is $0<c_x<1$ such that
$$f(x)=f(0)+f'(c_x) x\leq f(0)+f'(1) x\leq f(0)+f'(1)$$
(note that $f'$ is increasing). Using that $0\leq f'(1)< 2 f(1)$ \footnote{ It is only here that we use this assumption.}, we obtain $f(1)< f(0)+2 f(1)$. Hence $f(0)> -f(1)$. As $f$ is strictly increasing, we also have $f(0)<f(1)$, and so $|f(0)|<f(1)$. Moreover,
 $|f(x)|\not= f(1)$ for any $x\in \;[0,1[$.
 
 We conclude that 
$$|I_n(s)|=\left|\frac{n+1}{f(1)^{2n+1}}\; \int_0^s f(x)^{2n+1}\;\right| \leq (n+1)  s\; \left( \frac{\max_{[0,s]}|f(x)|}{f(1)}\right)^{2n+1}=:
 (n+1)M^{2n+1}, $$
 where $0<M=M(s)<1$. As $\sum_{n=1}^\infty (n+1)M^{2n+1}$ converges, $I_n(s)\to 0$ as $n\to\infty$.\\

We are now ready to determine the limit of  $\frac{n+1}{f(1)^{2n+1}}\;\int_0^1 f(x)^{2n+1}\,dx$. To this end, fix $\e>0$ and choose $s_3=s_3(\e)\in ]0,1[$ so that for all $s\in\; [s_3,1]$
$$\left|\frac{f(1)}{2 f'(s)}-\frac{f(1)}{2f'(1)}\right|<\e.$$

Now for $s_0:=\max \{s_1,s_2,s_3\}$, depending on $\e$,  we obtain from Claim 1 that
$$\frac{f(1)}{2 f'(1)}\Big(1-h^{2n+2}(s_0)\Big)\leq J_n(s_0)\leq \frac{f(1)}{2 f'(s_0)}\leq \frac{f(1)}{2 f'(1)}+\e .$$
Since $0<h(s_0)<1$, there is $n_0=n_0(\e,s_0)$ such that 
$$\mbox{$\dis 0<h(s_0)^{2n+2}<\e$ for all $n\geq n_0$}.$$
Thus, for $n\geq n_0$

$$
\frac{f(1)}{2 f'(1)} (1-\e)\leq  J_n(s_0)\leq  \frac{f(1)}{2 f'(1)}+\e.
$$

By Claim 2, there is $n_1\geq n_0$ (depending on $\e$) such that $|I_n(s_0)|<\e$ for $n\geq n_1$. We conclude that for these $n\geq n_1$
$$
\frac{n+1}{f(1)^{2n+1}} L_n =I_n(s_0)+J_n(s_0)\begin{cases} 
\leq& \e+  \frac{f(1)}{2 f'(1)}+\e\\
\geq& -\e +\frac{f(1)}{2 f'(1)} (1-\e).
\end{cases}
$$
Hence
$$\left| \frac{n+1}{f(1)^{2n+1}} L_n-\frac{f(1)}{2 f'(1)}\right|\leq \max\left\{ 2\e, \e\left(1+ \frac{f(1)}{2f'(1)}\right)\right\}.
$$

\end{proof}

{\bf Remark}  The function $f(x)=x-1/2$ shows that the assertion may fail if $f'(1)=2 f(1)$, since in this case
$L_n=0$. On the other hand, it may hold, too if $f'(1)=2f(1)$. In fact,  if $f(x)=e^{2x}$, then $f'(1)=2 f(1)$ and 
$$\mbox{$\dis L_n= \frac{e^{4n+2}-1}{4n+2}\quad $ and $\dis \quad R_n=\frac{e^4}{4e^2}\cdot \frac{e^{4n}}{n}=
\frac{e^{4n+2}}{4n}$,}
$$
nevertheless $ L_n/R_n\to 1$. What is the reason for this? Well, an analysis of the proof shows that the condition
$f'(1)<2f(1)$ can  be replaced by the assumption that
 the maximum of $|f|$ is {\it only} obtained at $1$. This makes the class of functions with the wished assymptotic behavior of the integrals $\int_0^1 f(x)^{2n+1}\,dx$ much larger.

\newpage

 \begin{figure}[h!]
  
   \scalebox{0.5} 
  {\includegraphics{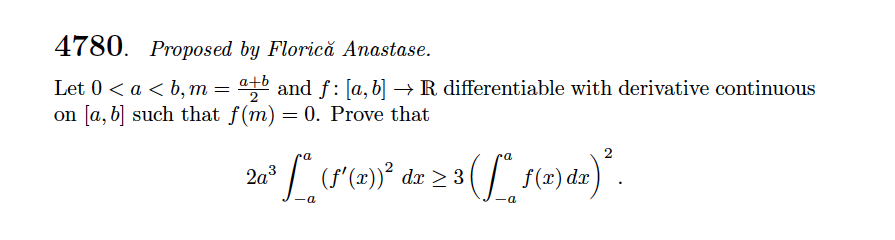}} 
\end{figure}

\centerline{\bf {Solution to problem 4780 Crux Math. 48 (8) 2022, 484}}\medskip

   \centerline{Raymond Mortini, Rudolf Rupp } \medskip

\centerline{- - - - - - - - - - - - - - - - - - - - - - - - - - - - - - - - - - - - - - - - - - - - - - - - - - - - - -}
  
  \medskip

The assertion is not compatible with the hypotheses. So we prove the following two results:

\begin{exemple}
Let $a>0$ and $f\in C^1[-a,a]$. If $f(0)=0$, then 
$$\int_{-a}^a (f'(x))^2\;dx \geq \frac{3}{2a^3}\left(\int_{-a}^af(x)\,dx\right)^2.$$
\end{exemple}

\begin{exemple}
Let $0<a,b<\infty$ and $f\in C^1[a,b]$. If $f((a+b)/2)=0$, then, with $C=\frac{12}{(b-a)^3}$,
$$\int_{a}^b (f'(x))^2\;dx \geq C\left(\int_{a}^bf(x)\,dx\right)^2.$$
\end{exemple}

\begin{proof}[Proof of Example 1]
Let $p$ be  a polynomial. Then, using The Cauchy-Schwarz inequality
$$I:=\left(\int_0^a (f' p)(x)\,dx\right)^2\leq \left(\int_0^a (f'(x))^2\, dx \right)\;\left( \int_0^a p(x)^2\,dx\right)$$
Using partial integration,
$$I=\left((f(x)p(x)\Big|_0^a -\int_0^a f(x)p'(x)\,dx\right)^2$$
Now choose $p(x)=x-a$. Then $ \int_0^a p(x)^2\,dx=\frac{1}{3}(x-a)^3\Big|_0^a=\frac{1}{3}a^3$. Hence, by noticing that 
$p(a)= f(0)=0$, 
$$I=\left(\int_0^a f(x)\,dx\right)^2\leq  \left(\int_0^a (f'(x))^2\, dx \right) \frac{1}{3}a^3
$$
If we choose $p(x)=x+a$, then $p(-a)=0$, and we similarily obtain the appropriate estimation for   $\int_{-a}^0 f(x)dx$.
Hence, using that $(x+y)^2\leq 2(x^2+y^2)$, 
$$\left(\int_{-a}^af(x)\,dx\right)^2 \leq \frac{2}{3} a^3 \int_{-a}^a (f'(x))^2\;dx$$
\end{proof}

\begin{proof}[Proof of Example 2]
Just use the affine transformation $\phi$ given by  $\phi(x)= x+\frac{a+b}{2}$. 
Then $\phi(-\frac{b-a}{2})=a$ and $\phi(\frac{b-a}{2})=b$, as well as $\phi(0)=\frac{a+b}{2}$. Let $c:= (b-a)/2$. Hence, with
$F(t):= f(\phi(t))$ for $-c\leq t\leq c$ we obtain
$$\int_a^b (f'(x))^2\,dx=\int_{-c}^c (F'(t))^2 \,dt\geq  \frac{3}{2 c^3} \left(\int_{-c}^c F(t)\,dt\right)^2= 
\frac{12}{(b-a)^3}\left(\int_a^b f(x)\, dx\right)^2.$$
\end{proof}

Of course Example 1 is  a special case of Example 2.  Is $C$ best possible? Let
$$q(x)=\begin{cases} \displaystyle \frac{(x-a)^2}{2}-\frac{(b-a)^2}{8} & \text{if $a\leq x\leq (a+b)/2$}\\
\displaystyle \frac{(x-b)^2}{2} -\frac{(a-b)^2}{8}&\text{if $(a+b)/2\leq x\leq b$}.
\end{cases}$$
Then $q$ is continuous on $[a,b]$,  $q((a+b)/2)=0$ and 
$$\int_a^b (q'(x))^2\,dx=\frac{12}{(b-a)^3}\left(\int_a^b q(x)\, dx\right)^2.$$
Unfortunately,  $q$ is not $C^1$. How to modify?

\newpage

  \begin{figure}[h!]
  
   \scalebox{0.5} 
  {\includegraphics{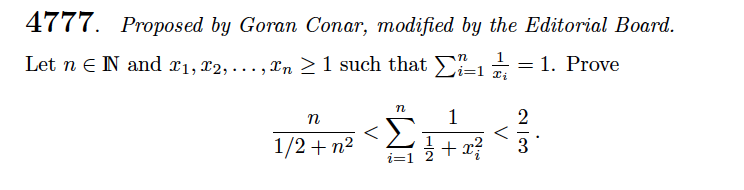}} 
\end{figure}


\centerline{\bf Solution to problem 4777 Crux Math. 48 (8) 2022, 484}\medskip

   \centerline{Raymond Mortini, Rudolf Rupp } \medskip

\centerline{- - - - - - - - - - - - - - - - - - - - - - - - - - - - - - - - - - - - - - - - - - - - - - - - - - - - - -}
  
  \medskip

The assertion is not correct. In fact, let $\mathbf x:=(x_1,\dots,x_n)$, $\mathbb R^+=\{x\in \mathbb R: x>0\}$,
$$S:=\Big\{(x_1,\dots,x_n)\in (\mathbb R^+)^n: \sum_{i=1}^n \frac{1}{x_i}=1\Big\},$$ 
and 
$$f(\mathbf x):= \sum_{i=1}^n \frac{1}{\frac{1}{2}+x_i^2}.$$
We prove that for $n\geq 2$,\\

\centerline{
\ovalbox{$\displaystyle
\frac{n}{1/2+n^2}=\min_{\mathbf x\in S} f(\mathbf x )<\sup_{\mathbf x\in S} f(\mathbf x )   =\frac{2}{3},$
}}
\medskip

and that for $n=1$, $S=\{1\}$ and  so\\

\centerline{
\ovalbox{$\displaystyle f(x)=\frac{1}{\frac{1}{2}+x^2}= f(1)=\frac{2}{3}.$}
}

\medskip

{\sl Proof} Wlog $n\geq 2$.
First we note that $\sum_{i=1}^n 1/x_i=1$ for $x_i\in \mathbb R^+$  implies that $x_i\geq 1$ for every $i$. Now
$\displaystyle \max_{1\leq x<\infty}\frac{x}{1+2x^2}=\frac{1}{3}$, since the function is decreasing on $[1,\infty[$. Hence, 
for $\mathbf x\in S$,
$$\sum_{i=1}^n \frac{1}{\frac{1}{2}+x_i^2}= \sum_{i=1}^n \frac{x_i}{1+2x_i^2}\frac{2}{x_i}< \frac{2}{3}
\sum_{i=1}^n \frac{1}{x_i}=\frac{2}{3},$$
since  for $n\geq 2$,  no $x_i$ can be $1$.  If for $k>n$
$$\mathbf x_k=\Big(x_1^{(k)},\dots,x_n^{(k)}\Big):= \Big(\frac{1}{1-(n-1)/k}, k,\dots,k\Big),$$
 then $\sum_{i=1}^n (1/x_i^{(k)})=1$, $\mathbf x_k\to (1, \infty,\dots,\infty)$ and $f(\mathbf x_k)\to 2/3$. Hence $\sup_S f=2/3$.

To prove the assertion on the minimum, we use Lagrange. It is preferable to work with the new variable $y_j:=1/x_j$ (to get a compact definition set, guarantying the existence of the global extrema). 
So let 
$$S'=\Big\{(y_1,\dots,y_n)\in \mathbb R^n,\, y_j\geq 0: \sum_{j=1}^n y_j=1\Big\}$$
and $$g(y_1,\dots,y_n):= \sum_{i=1}^n \frac{y_i^2}{1+\frac{1}{2}y_i^2}.$$
Then  $S'$ is compact and $\inf f_S=\inf g_{S'}=\min g_{S'}=:m$. 
Say $g(\mathbf  x')=m$ for some $\mathbf x'\in S'$. In order to apply Lagrange, we need to show that  $\mathbf x'$ is an interior point of $S'$ (in symbols,  $\mathbf x'\in (S')^\circ$).  Let $\mathbf y':=(1/n,\dots,1/n)$.  Then $\mathbf y'\in (S')^\circ$.
Now on $\partial S'$ at least one of the coordinates of these points  $\mathbf y:=(y_1,\dots, y_n)\in \partial S'$ is $0$. Say, $y_n=0$. But then
$\sum_{i=1}^{n-1} y_i=1$ and  (via induction on $n$, starting with the trivial case of one-tuples)
$$g(\mathbf y) \geq \frac{n-1}{1/2+(n-1)^2}>  \frac{n}{1/2+ n^2}=g(\mathbf y').$$
Hence the absolute minimum of $g$  on $S'$ does not belong to the boundary.

By Lagrange's theorem, there exists $\lambda \in \mathbb R$ and $(y_1,\dots,y_n)\in S'$ such that
$$\nabla\Big( g(y_1,\dots,y_n)+\lambda (1-\sum_{i=1}^n y_i)\Big)=\mathbf 0.$$
That is, for every $i\in\{1,\dots,n\}$,
\begin{equation}\label{main4777}
\lambda=\frac{2y_i}{(1+\frac{1}{2}y_i^2)^2}.
\end{equation}

Unfortunately,  the function $y\mapsto q(y):=\frac{2y}{(1+\frac{1}{2}y^2)^2}$ is not  injective on $[0,1]$ (note that the derivative vanishes at $y=\pm \sqrt{2/3}$).  So we must discuss several cases (see figure \ref{no-inj}): \\

  \begin{figure}[h!]
 
  \scalebox{0.35} 
  {\includegraphics{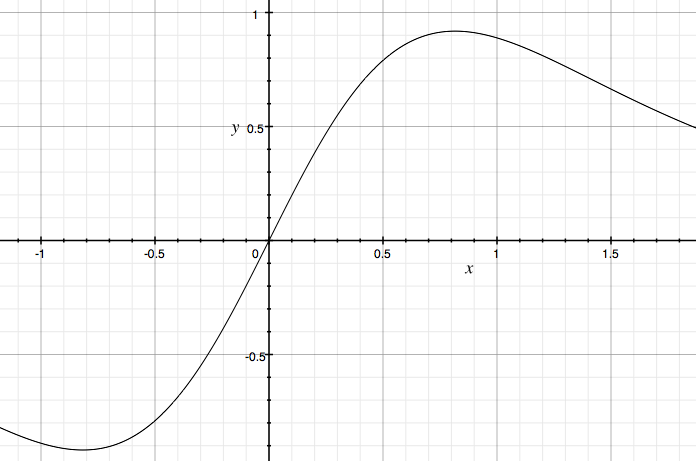}} 
\caption{\label{no-inj} Non injectivity of $q$ on $[0,1]$}

\end{figure}

(i) If $8/9=q(1) \leq \lambda < \max_{[0,1]} q$, then the equation $q(y)=\lambda$ has two solutions $0<y_1,y_2\leq 1$. 

(ii) If $\lambda= \max_{[0,1]} q$  or if $0\leq \lambda <q(1)=8/9$,
then the equation $q(y)=\lambda$ has exactly one  solution $0\leq y_0\leq 1$.

(iii) In all other cases, there is no solution with $y\geq 0$.\\

We first show that  the case (i)  does not yield minimal solutions.
 In fact, for fixed $\lambda\in [q(1), \max_{[0,1]} q[$, 
equation  (\ref{main4777})
has  $2^n$ solutions  of the form
 $P:=(\underbrace{a,\dots, a}_{k\text{-times}}, \underbrace{b,\dots,b}_{(n-k) \text{-times}})$ and their permutations, where 
$k=0,\dots, n$ and $0\leq a\leq b\leq 1$.  
Note that
\begin{equation}\label{abs}
q(1/n)= \frac{8n^3}{(1+2n^2)^2}\leq q(1/2)<  q(1) =\frac{8}{9}<q(\sqrt{2/3}).
\end{equation}
Hence $1/n\leq 1/2<\min\{a,b\}$ (see figure \ref{no-inj}).

Let $A:=(1/n,\dots, 1/n)$. Then $A\in S'$. Since the function $y\mapsto y^2/(1+\frac{1}{2} y^2)$ is 
increasing on $[0,\infty[$, we deduce that
$$g(P)=k \frac{a^2}{1+\frac{1}{2}a^2}+ (n-k) \frac{b^2}{1+\frac{1}{2}b^2} > g(A),$$
so $P$ does not yield a minimum.  Thus only the second case occurs.
That is, we need to consider only a solution of (\ref{main4777}) of the form $(y_1,\dots, y_n)=(a,\dots,a)$ with $0<a\leq 1$.
  Using the constraint condition 
$\sum_{i=1}^n y_i=1$, we obtain that $a=1/n$, hence $(y_1,\dots, y_n)=(1/n,\dots, 1/n)$. 
 Consequently, $\mathbf x'=(1/n,\dots, 1/n)$ is the unique point
where $g$ takes its absolute minimum on $S'$. We conclude that 
$$\min g_{S'}=\frac{n}{\frac{1}{2}+n^2}.$$

For completeness, we observe that $M:=\max_{S'} g$ necessarily is obtained on the boundary of $S'$ (for instance, $M=g(1,0,\dots, 0)=2/3$),
 as Lagrange only yields a single stationary point of the Lagrange function in $(S')^\circ$. \\

A second way to see that case (i) does not occur goes as follows:

We first show that  the case (i)  does not yield minimal solutions.
 In fact, for fixed $\lambda\in [q(1), \max_{[0,1]} q[$, 
equation  (\ref{main4777})
has  $2^n$ solutions  of the form
 $P:=(\underbrace{a,\dots, a}_{k\text{-times}}, \underbrace{b,\dots,b}_{(n-k) \text{-times}})$ and their permutations, where 
$k=0,\dots, n$ and $0\leq a\leq b\leq 1$.  
Note that $q(1/2)= (8/9)^2$ and that $n\geq 2$. Thus
\begin{equation}\label{absi}
q(1/n) \leq q(1/2)<  q(1)\leq \lambda <q(\sqrt{2/3}).
\end{equation}
Hence $1/n\leq 1/2<\min\{a,b\}=a$ (see figure \ref{no-inj}). Since for such a point $P=(y_1,\dots,y_n)$ we have
$$\sum_{i=1}^n y_i= ka +(n-k)b> k \frac{1}{2}+ (n-k)\frac{1}{2}=\frac{n}{2}\geq 1,$$
$P$ does not belong to $S'$; that is such a solution of the system (\ref{main4777}) of equations does not satisfy the constraint
 $P\in S'$.

\newpage

  \begin{figure}[h!]
  
   \scalebox{0.5} 
  {\includegraphics{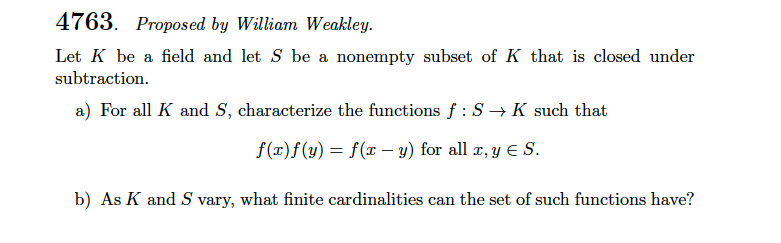}} 
\end{figure}


\centerline {\bf {Partial Solution to problem 4763 Crux Math. 48 (7) 2022, 421}}\medskip
  
 \centerline{Raymond Mortini, Rudolf Rupp } \medskip

\centerline{- - - - - - - - - - - - - - - - - - - - - - - - - - - - - - - - - - - - - - - - - - - - - - - - - - - - - -}
  
  \medskip

Here we give our thoughts on this not  very precisely formulated problem.
  
 First we note that $S\ss K$ necessarily is an additive subgroup of the field $K$. Note that $\{0,1\}\ss K$. In particular $0=x-x\in S$ and with $x\in S$ we have $-x=0-x\in S$.
 
 If 
 \begin{equation*}(FE)\hspace{3cm}\label{fgll}
 \text{$f(x-y)=f(x)f(y)$  for all $ x,y\in S$}
\end{equation*}
then we get the following:

(1) $y=x\imp f(0)=f(x)^2$

(2) $y=0\imp f(x)=f(x)f(0)\imp f(x)(1-f(0))=0$

{\it Case 1} There exists $x_0\in S$ with $f(x_0)=0$. Then, by (1), $f(0)=0$ and so $f(x)=0$ for all $x\in S$.

{\it Case 2} $f$ has no zeros. Then (2) implies that $f(0)=1$.

We claim that $f(2 x)=1$ for every $x\in S$  (note that $\Z S\ss S)$.

In fact, $f(x)=f(2x-x)=f(2x)f(x)$, hence $f(2x)=1$.

We conclude that  for $S=\R$ e.g., the constant function $f(y)=1$ is the only solution, as every $y\in \R$ writes as $y=2x$ for some $x$.

Next we show that $f$ is even and that $f(x)\in \{-1,1\}$. In fact, by (FE), for $x=0$, 
$$\text{$f(-y)=f(0)f(y)=f(y)$ for every $y\in S$}.$$
Hence $1=f(2u) =f(u-(-u))=f(u)f(-u)=f(u)^2$ for any $u\in S$.

If $S=\Z$, then we have  three solutions: $f\equiv 0$, $f\equiv 1$ but also 
$$f(n)=\begin{cases} 1 &\text{if $n$ even}\\  -1&\text{if $n$ odd}.
\end{cases} $$

In fact by the claim above, $f(2m)=1$ for every $m\in \Z$.  Now let $\sigma:=f(1)$. We already know that $\sigma=\pm 1$. 
Now for every $m\in \Z$, 
$$\sigma=f(1)=f((2m+1)-2m)=f(2m+1) f(2m)=f(2m+1).$$

Let $P:=P_f:=\{x\in S: f(x)=1\}$ and $R:=\{x\in S: f(x)=-1\}$. Then $P$ is a subgroup of $S$ since $x,y\in P$ implies that
$x-y\in P$, because $f(x-y)=f(x)f(y)=1\cdot 1=1$. 

As shown above,   $2S\ss P\ss S$ and $2S$ is a subgroup of $S$.  Here $S=2S$  if and only if all the translation operators 
$\tau_x: S\to S, y\mapsto x-y$ have a fixed point.

Also note that $R$ has the following property:
 
 \begin{equation*}(PR)\hspace{2cm}\label{PR}
 \text{$(R-R)\ss P$ and $(R-P) \union (P-R)  \ss R$}.
\end{equation*}
 
 Conversely, if $P$ is a proper subgroup of $S$ and $R:=S\setminus P$ such that (PR) holds, then the function $g$ given by
 $$g(x)=\begin{cases} 1 &\text{if $x\in P$}\\ -1&\text{if $x\in R$}\end{cases},
 $$
 satisfies the functional equation (FE) $g(x-y)=g(x)g(y)$ for $x,y\in S$.
 
 Note that $P$ may be  strictly bigger than $2S$:  in fact, let  $K=\C$, $S:=\Z+i\Z$,  $P=2\Z+i\Z$ and $R= S\setminus P$. Then $S,P,R$ satisfy (PR), but $P:=2S$  does not satisfy (PR).

 If $S=K$ is a field of characteristic 2, then $P_f=R=S$ (note that $1=-1$),  and so only the constant functions 1 and 0 satisfy (FE).

\newpage

  \nopagecolor
  \begin{figure}[h!]
  
   \scalebox{0.5} 
  {\includegraphics{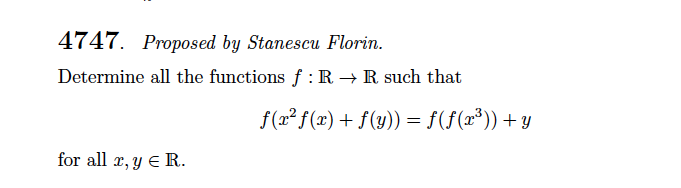}} 
\end{figure}

\centerline  {\bf Solution to problem 4747 Crux Math. 48 (2022), 282 by} \medskip
  
\centerline{Raymond Mortini, Rudolf Rupp } \medskip

\centerline{- - - - - - - - - - - - - - - - - - - - - - - - - - - - - - - - - - - - - - - - - - - - - - - - - - - - - -}
  
  \medskip

  We claim that all solutions of the functional equation
$$f(x^2f(x)+f(y))=f(f(x^3)))+y, ~~x,y\in \mathbb R$$
are given by
$f(x)=x$ and $f(x)=-x$.   

\bigskip

{\bf Claim1} $f$ is injective:

Put $x=0$.  Then
\begin{equation}\label{1}
f(f(y))=f(f(0))+y.
\end{equation}
Now if $f(y_1)=f(y_2)$, then by (\ref{1})
$$\mbox{$f(f(y_1))=f(f(0))+y_1$ and $f(f(y_2))=f(f(0))+y_2$}$$
Hence $y_1=y_2$.\\

{\bf Claim 2} $f$ is surjective:

Let $w\in \mathbb R$. Then, by (\ref{1}),
$$w=f(f(0))+(w-f(f(0)))=f(f(w-f(f(0))).$$\\

{\bf Claim 3} $f(0)=0$:

Take $y=0$: then $f(x^2f(x)+f(0))=f(f(x^3))$. Since  $f$ is bijective, we conclude that  $x^2f(x)+f(0)=f(x^3)$.
Now put $x=1$: then $1^2\; f(1) +f(0)=f(1)$. Hence $f(0)=0$.\\

{\bf Claim 4} $f\circ f=id$ (that is, $f$  is an involution).

This follows from  (\ref{1}).

Hence our equation becomes

\begin{equation}\label{2}
f(x^2f(x)+f(y))=x^3+y  \;\;  (x,y\in \mathbb R).
\end{equation}

In particular, for $y=0$, 
\begin{equation}\label{3}
\mbox{$f(x^2f(x))=x^3$ or equivalenty  $x^2f(x)=f(x^3)$}.
\end{equation}

{\bf Claim 5} $f$ is additive:

In fact, the surjectivity of $f$ and $x\mapsto x^3$ now imply that $x\mapsto x^2f(x)$ is surjective, too.
Hence
$$f(\underbrace{x^2f(x)}_{=a}+\underbrace{f(y)}_{=b})=x^3+y=f(x^2f(x))+y=f(a)+f(b)$$ yields the additivity of $f$.

{\bf Claim 6}  $f(-x)=-f(x)$.

Just use that with  $f(0)=0$ and $f$ additive, 
$$0=f(0)=f(x+(-x))=f(x)+f(-x).$$

{\bf Claim 7} Let $f(a)=1$. Then $a=\pm 1$.

 Recall that  by (5) and (6), $f(a+b)=f(a)+f(b)$  for $a,b\in\mathbb R$ and $f(mx)=mf(x)$ for every 
$m\in \mathbb Z$.  Hence, by (\ref{3}), for $x=a+b$,
$$ (a+b)^2 f(a+b)=f((a+b)^3).$$
Expansion yields:

\begin{eqnarray*}\label{haupt}
(a^2+b^2 +2ab)(f(a) +f(b))&= &f(a^3)+3 f(a^2b)+3 f(ab^2)+f(b^3)\iff\nonumber\\
\underline{a^2f(a)}+b^2 f(a)+2ab f(a)+ a^2f(b) +\underline{b^2 f(b)}+2ab f(b)&=&  \underline{f(a^3)}+3 f(a^2b)+3 f(ab^2)+\underline{f(b^3)}\iff\nonumber\\
b^2f(a)+2ab f(a)+a^2f(b)+2ab f(b)&=&3f(a^2b)+3f(ab^2).
\end{eqnarray*}

$\bullet$ Let  $b=1$ and note that $a=f(1)$. Then
$$ 1+2a +a^3+2a^2= 3 f(a^2) +3 =3a^3+3\iff
$$
$$2 a^3-2a^2 -2a+2=0\iff a^2(a-1)-(a-1)=0\iff (a-1)(a^2-1)=0\iff a\in \{-1,1\}.$$\\

{\bf Claim 8} If the additive function $f$  satisfies $x^2f(x))=f(x^3)$, then  $f(x)= f(1)\, x$. To see this, we consider four
 cases:

$\bullet$\; Let $a=2$, $f(1)=\pm 1$ and $b=x$. Then

\begin{equation}\label{a}
\ovalbox{$\pm x^2 \pm 4x-4f(x)+2xf(x)-3f(x^2)=0$}.
\end{equation}

$\bullet$\; Let $a=1$, $f(1)=\pm 1$ and $b=x$. Then,

\begin{equation}\label{b}
\ovalbox{$\pm x^2\pm  2x-2f(x)+2 xf(x)- 3f(x^2)=0$}.
\end{equation}

Calculating  (\ref{a})-(\ref{b}), yields $\pm 2x -2 f(x)=0$. Hence $f(x)=\pm  x=f(1) x$.\\

\hrule

\medskip

One can also prove Claim 8 without using Claim 7, and then deducing Claim 7 from Claim 8 if additionally we assume  that
$f$ is an involution. \\

In 
\begin{equation}
b^2f(a)+2ab f(a)+a^2f(b)+2ab f(b)=3f(a^2b)+3f(ab^2).
\end{equation}
choose $a=1$, resp. $a=2$ and $b=x$. Then $f(2)=f(2\cdot 1)=2 f(1)$ and so
\begin{equation}\label{n1}
 x^2 f(1)+2 f(1) x+ f(x)+2xf(x)- 3 f(x)-3 f(x^2)=0
 \end{equation}
\begin{equation}\label{n2}
2 x^2  f(1)+ 8 f(1) x+ 4f(x)+4xf(x)-12 f(x)-6f(x^2)=0
\end{equation}
Hence, by calculating (\ref{n1})-$\frac{1}{2}$(\ref{n2}), we obtain
\begin{equation}
-2 f(1)x - f(1) x +3 f(x)=0.
\end{equation}
Hence $f(x)=f(1) x$.  Using (\ref{3}), that is $f(x^2 f(x))=x^3$, we have
$$f(1) x^2 f(1)x =x^3.$$
Hence $ f(1)^2=1$ and so $f(1)=\pm 1$.

  \newpage

 \begin{figure}
  \scalebox{0.5} 
  {\includegraphics{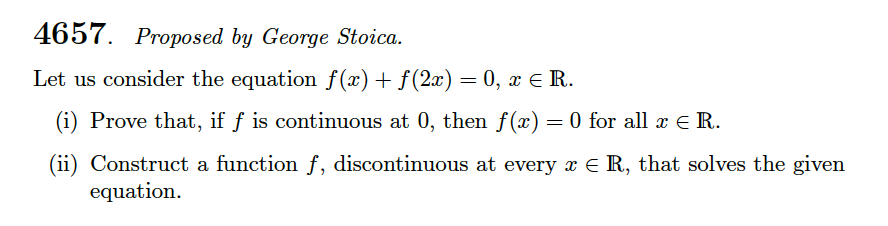}} 
\end{figure}

\centerline{\bf Solution to problem 4657 Crux Math. 47 (2021), 301 by} \medskip

\centerline{Raymond Mortini, Rudolf Rupp }\medskip

\centerline{- - - - - - - - - - - - - - - - - - - - - - - - - - - - - - - - - - - - - - - - - - - - - - - - - - - - - -}
  
  \medskip

}

a) Suppose that the  function $f$ satisfies  $f(x)+f(2x)\equiv 0$ on $\mathbb R$. Then the continuity of $f$ at $x=0$ implies that
 $f\equiv 0$. In fact, fix $x\in \mathbb R\setminus \{0\}$. By induction, $f(x/2^n)=(-1)^n f(x)$. By taking  limits,  the continuity at $0$ implies that for $n$ even we get $f(0)=f(x)$ and for $n$ odd, we get $f(0)=-f(x)$. Hence $2f(x)=f(0)-f(0)=0$, and so $f\equiv 0$.
 
 b) Define the function $f:\mathbb R^+\to \mathbb R$ by $f(0)=0$, $f(x)=1$ if $1\leq x<2$ and $x$ rational, $f(x)=-1$  if $1<x<2$ and $x$ irrational.
  If $n\in \mathbb N$ and $2^n\leq x<2^{n+1}$, put $f(x)=(-1)^nf(x/2^n)$. If $\frac{1}{2^{n+1}}\leq x< \frac{1}{2^n}$, put $f(x)=(-1)^nf(2^nx)$.
 If $x<0$, then let $f(x)=f(-x)$. Then $f$ is discontinuous everywhere and, by construction, $f(x)+f(2x)=0$.
\bigskip

c) \underline{All solutions to $f(x)+f(2x)\equiv 0$ on $\mathbb R$:}

Let $g:[-2,-1[\cup [1,2[\to \mathbb R$ be an arbitrary function.  Put  

$$f(x)=\begin{cases}  0 &\text{ if $x=0$}\\ 
(-1)^n g(x/2^n) &\text{if  $2^n \leq |x|< 2^{n+1}$}\\
 (-1)^ng(2^nx) &\text{if $\frac{1}{2^{n+1}}\leq |x|< \frac{1}{2^n}$}.
\end{cases}
$$

This functional equation and  its companion $f(x)=f(2x)$ appear multiple times, see 
\cite{1,2,3,4,5,6,7,8,9,10}

\newpage
  \begin{figure}[h!]
 
  \scalebox{0.5} 
  {\includegraphics{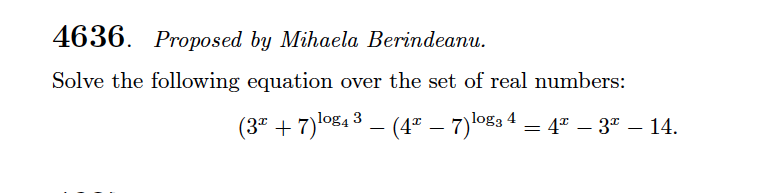} }
\end{figure}

\centerline{\bf Solution to problem 4636   Crux Math. 47 (2021), 200 by} \medskip

\centerline{Raymond Mortini, Rudolf Rupp } \medskip

\centerline{- - - - - - - - - - - - - - - - - - - - - - - - - - - - - - - - - - - - - - - - - - - - - - - - - - - - - -}
  
  \medskip

The equation 
$$(3^x+7)^{\log_4 3}-(4^x-7)^{\log_3 4}=4^x-3^x-14$$

has  on $\mathbb R$ the unique solution $x=2$. In fact, first note that $a:=\log_4 3=\frac{\log 3}{\log 4}>0$ 
and $\log_3 4=1/a$. Then with
$A:=3^x+7$ and $B:=4^x-7$ we have to solve $A^a-B^{1/a}=B-A$ or equivalently,
$$A^a+A=  (B^{1/a})^a +B^{1/a}.$$
Since the function $x\mapsto x^a+x$ is strictly increasing, we deduce that $A=B^{1/a}$. In other words,
$3^x+7= (4^x-7)^{1/a}$, or equivalently
\begin{equation}\label{eqoo}
\log 4 \log(4^x-7)=\log 3 \log (3^x+7).
\end{equation}

The curve $y(x)=\log 4 \log(4^x-7)-\log 3 \log(3^x+7)$ is defined for $x> \log 7/\log 4:=x_0$ with  $\lim_{x\to x_0} y(x)=\infty$ and its derivative 
$$y'(x)=\log^2 4 \frac{1}{1-7x^{-4}}-\log^2 3 \frac{1}{1+7x^{-3}}$$
is strictly  decreasing
with $\lim_{y\to x_0} y'(x)=\infty$ and $\lim_{x\to\infty} y'(x)=(\log^2 4-\log^2 3)$. Note that   the asymptote at infinite is the line $y=(\log^2 4-\log^2 3) x$. In particular, $y'>0$ and so the curve  is strictly increasing and its unique zero is $x_1=2$ (observe that
$\log(4) \log(9)=4\log(2)\log(3)=\log(3)\log(16)$, so (\ref{eqoo}) holds).

\newpage
  \begin{figure}[h!]
 
  \scalebox{0.5} 
  {\includegraphics{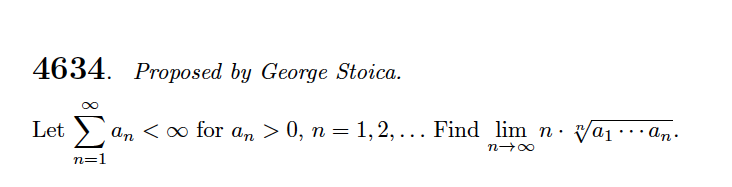}} 
\end{figure}

\centerline{\bf Solution to problem 4634   Crux Math. 47 (2021), 200 by} \medskip

\centerline{Raymond Mortini, Rudolf Rupp } \medskip

\centerline{- - - - - - - - - - - - - - - - - - - - - - - - - - - - - - - - - - - - - - - - - - - - - - - - - - - - - -}
  
  \medskip

For $a_n>0$, 
let $G_n:=n(a_1\cdots a_n)^{1/n}$. Then $\lim_{n\to\infty} G_n=0$ whenever $\sum_{n=1}^\infty a_n$ is convergent. In fact,
given $\e>0$, choose $N$ so big that $\sum_{n=N}^\infty a_n<\e$.
Due to the arithmetic-geometric inequality, for $n> N$, 
$$G_n=(a_1\cdots a_N)^{1/n} \; \frac{n}{n-N} \bigg( (n-N)(a_{N+1}\cdots a_n)^{1/(n-N}\bigg)^{\frac{n-N}{n}}
(n-N)^{1-\frac{n-N}{n}}
$$
$$\leq \sigma_n \bigg(\sum_{j=N+1}^n a_j\bigg)^{\frac{n-N}{n}},$$
where 
$$\sigma_n:=(a_1\cdots a_N)^{1/n} \; \frac{n}{n-N}(n-N)^{N/n}.$$
Since $\lim_n\sigma_n=1$, we have $\limsup_n G_n\leq \limsup_n\e^{\frac{n-N}{n}}=\e$, from which we deduce that $G_n\to 0$.

\newpage

  \begin{figure}[h!]
 
  \scalebox{0.5} 
  {\includegraphics{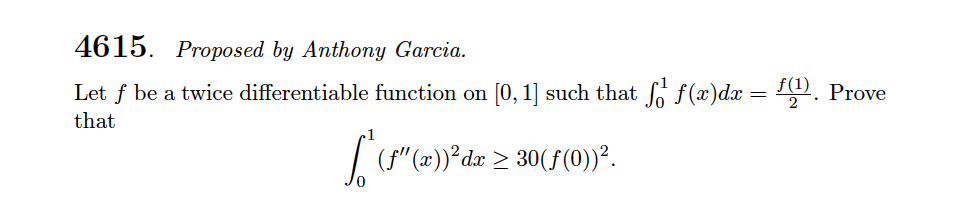}} 
\end{figure}

\centerline{\bf Solution to problem 4615  Crux Math.  47 (2021), 301 by} \medskip

\centerline{Raymond Mortini, Rudolf Rupp } \medskip

\centerline{- - - - - - - - - - - - - - - - - - - - - - - - - - - - - - - - - - - - - - - - - - - - - - - - - - - - - -}
  
  \medskip

If $p$ is a polynomial, we have (due to Cauchy-Schwarz)
$$\left|\int_0^1 f''p dx\right|^2\leq \left(\int_0^1 (f'')^2dx\right) \left(\int_0^1 p^2dx\right).$$
Now, by using twice integration by parts, 
$$\int f''p dx= (f'+c)p- \Big( (f+cx+c')p'- \int (f+cx+c')p'' dx\Big)$$
Now let $p(x)=x(x-1)$. Evaluation at the end-points and using the hypothesis that $\int_0^1 fdx=f(1)/2$, yields
$$\int_0^1 f''p dx= -f(0).$$
Since $\int_0^1p^2dx= \int_0^1 (x^4+x^2-2x^3)dx=1/30$, 
we deduce that
$$\int_0^1 (f'')^2dx\geq 30 f(0)^2.$$
Equality is given if $f''=p$ and $f(1)=2\int_0^1 fdx$; for instance if
$$f(x)=\frac{1}{12}x^4 -\frac{1}{6}x^3-\frac{1}{30}.$$
Here $f(1)=-7/60$.

\medskip  Generalizations appear in \cite{mr21}.
\newpage

{\huge \section{\gr{EMS Newsletter}}}

\vspace{1cm}


\begin{figure}[h!]

\scalebox{0.5} 
  {\includegraphics{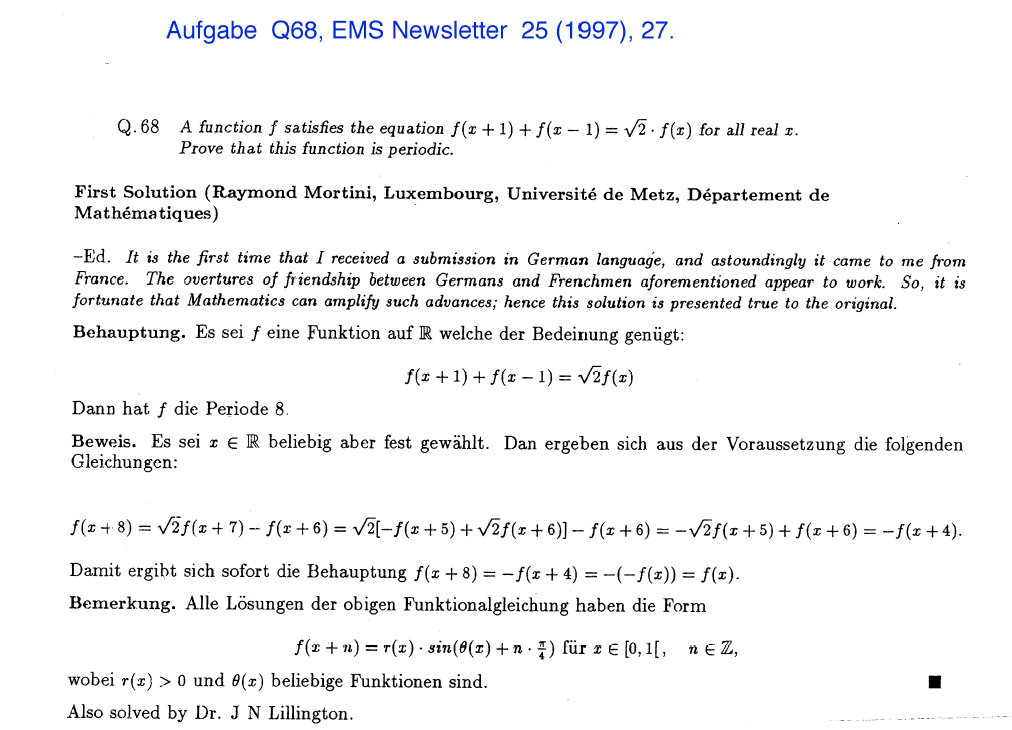}} 

\end{figure}

\newpage

{\huge\gr{\section{Math. Gazette}}}

\bigskip


\begin{figure}[h!]

\scalebox{0.45} 
  {\includegraphics{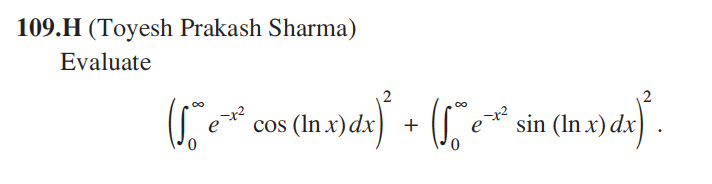}} 

\end{figure}


\centerline{\bf Solution to problem 109.H, Math. Gazette 109, Issue 575 (2024), p. 354}\medskip

\centerline{by Raymond Mortini and Rudolf Rupp}

\medskip

\centerline{- - - - - - - - - - - - - - - - - - - - - - - - - - - - - - - - - - - - - - - - - - - - - - - - - - - - - -}
  
  \medskip

\newpage

\begin{figure}[h!]

\scalebox{0.45} 
  {\includegraphics{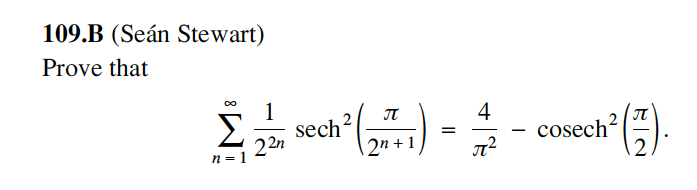}} 

\end{figure}


\centerline{\bf Solution to problem 109.B, Math. Gazette 109, Issue 574 (2024), p. 169}\medskip

\centerline{by Raymond Mortini and Rudolf Rupp}

\medskip

\centerline{- - - - - - - - - - - - - - - - - - - - - - - - - - - - - - - - - - - - - - - - - - - - - - - - - - - - - -}
  
  \medskip

\newpage

\begin{figure}[h!]

\scalebox{0.35} 
  {\includegraphics{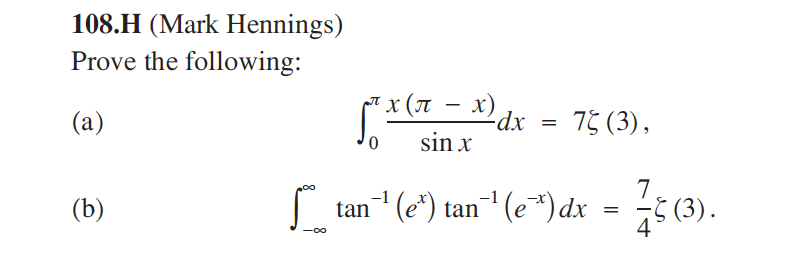}} 

\end{figure}


\centerline{\bf Solution to problem 108.H, Math. Gazette 108, Issue 572 (2024), p. 364}\medskip

\centerline{by Raymond Mortini and Rudolf Rupp}

\medskip

\centerline{- - - - - - - - - - - - - - - - - - - - - - - - - - - - - - - - - - - - - - - - - - - - - - - - - - - - - -}
  
  \medskip

(a)  We first consider the Fourier series of the  function
$$g(x)=\begin{cases}
x(\pi-x)&\text{if $0\leq x\leq \pi$}\\
x(\pi+x)&\text{if $-\pi\leq x\leq 0$}
\end{cases}
$$
(the odd extension of $x(\pi-x)$), and extend it $2\pi$-periodically.
Then 
$$g(x)=\frac{8}{\pi}\sum_{n=1\atop n\;{\rm odd} }^\infty \frac{\sin(nx)}{n^3}.$$
Hence
\begin{eqnarray*}
I_1:=\int_0^\pi \frac{x(\pi-x)}{\sin x}dx&=&\frac{8}{\pi} \sum_{n=1\atop n\;{\rm odd} }^\infty \frac{1}{n^3}\;\int_0^{\pi} \frac{\sin (nx)}{\sin x}dx\\
&=&\frac{8}{\pi} \sum_{n=1\atop n\;{\rm odd} }^\infty \frac{1}{n^3} \pi =8 \frac{7}{8}\xi(3) =7\zeta(3).
\end{eqnarray*}
Note that  the integrand coincides with $U_{n-1}(\cos x)$, where $U_n$ is the Chebyshev polynomial of the second kind. 
To see that for odd $n$, 
$$J_n:=\int_0^{\pi} \frac{\sin (nx)}{\sin x}dx=\pi$$
 (a well known result),  it suffices to show that $J_{n}=J_{n+2}$. This holds, though,  since
\begin{eqnarray*}
J_{n+2} =\int_0^{\pi} \frac{\sin (nx) \cos 2x+\cos (nx) \sin 2x}{\sin x}dx&=&\int_0^{\pi} \left(\frac{\sin (nx)}{\sin x} (1-2\sin^2 x) +2\cos nx  \cos x \right)dx\\
&=& J_n + 2\int_0^\pi  \left(-\sin(nx) \sin x + \cos( nx ) \cos x \right)dx\\
&=&J_n +2\int_0^\pi \cos(n+1)x\,dx\\&=&J_n.
\end{eqnarray*}

\newpage

(b) We shall work with suitable variable substitutions to regain the integral in (a). We use $\arctan x$ instead of  the ambiguous notation $\tan^{-1}x$. Note that $\arctan(1/y)+\arctan y=\pi/2$ for $y>0$.

\begin{eqnarray*}
I_2:= \int_{-\infty}^\infty \arctan(e^x)\arctan(e^{-x}) dx&\buildrel=_{x=\log y}^{e^x=y}&\int_0^\infty \frac{\arctan y \;\arctan \frac{1}{y}}{y} dy\\
&\buildrel=_{\arctan y =s}^{y=\tan s}&\int_0^{\pi/2} \frac{s (\frac{\pi}{2}-s)}{\tan s}\; \frac{1}{\cos^2 s}ds\\
&=&2\int_0^{\pi/2} \frac{s(\frac{\pi}{2}-s)}{\sin (2s)} ds\\
&\buildrel=_{}^{2s=t}&\int_0^\pi \frac{\frac{t}{2}\left(\frac{\pi}{2}-\frac{t}{2}\right)}{\sin t} dt\\
&=&\frac{1}{4}  I_1.
\end{eqnarray*}

\newpage

\begin{figure}[h!]

\scalebox{0.35} 
  {\includegraphics{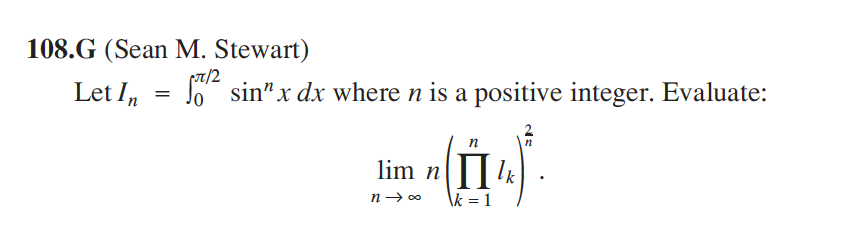}} 

\end{figure}

\centerline{\bf Solution to problem 108.G, Math. Gazette 108, Issue 572 (2024), p. 364}\medskip

\centerline{by Raymond Mortini and Rudolf Rupp}

\medskip

\centerline{- - - - - - - - - - - - - - - - - - - - - - - - - - - - - - - - - - - - - - - - - - - - - - - - - - - - - -}
  
  \medskip

We suppose that the $l_k$ in the integrand should be the $I_k$, that is
$$I_k:=\int_0^{\pi/2} (\sin x)^k dx.$$ 

So let
$$S_n:=n \left(\prod_{k=1}^n I_k\right)^{\hspace{-3pt}\frac{2}{n}}.$$
We claim that 
$$\ovalbox{$\dis  \lim_n S_n=\frac{\pi}{2}e$}.
$$
We shall use that 
$$I_k= \frac{\sqrt \pi}{2}\;\frac{\Gamma(\frac{k+1}{2})}{\Gamma(\frac{k+2}{2})}.$$

Now $S_n$ is a telescopic product; hence
$$S_n=n \left(\frac{\sqrt\pi}{2}\right)^2\;\frac{\Gamma(1)}{\left(\Gamma(\frac{n}{2}+1)\right)^{2/n}}.$$

Using (the non-discrete) Stirling formula, which tells us that
$$\lim_{x\to\infty}\frac{\Gamma(x) }{ \sqrt{\frac{2\pi}{x}} \left(\frac{x}{e}\right)^x}=1,$$
we obtain with $\Gamma\left(\frac{n}{2}+1\right)=\frac{n}{2}\,\Gamma\left(\frac{n}{2}\right)$ that
\begin{eqnarray*}
S_n&\sim&\frac{\pi}{4} \frac{n}{\left(\frac{n}{2}\; \sqrt{\frac{2\pi}{n/2}} \left(\frac{n/2}{e}\right)^{n/2}           \right)^{2/n}}
= \frac{\pi}{4}  \frac{n}{\left(\frac{n}{2}\right)^{2/n}\left(\frac{4\pi}{n}\right)^{1/n}\frac{n}{2e}}\to\frac{\pi}{2} e.
\end{eqnarray*}

\newpage

  \begin{figure}[h!]
 
  \scalebox{0.45} 
  {\includegraphics{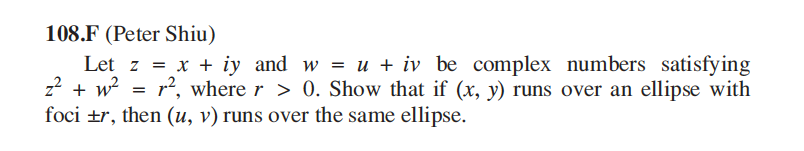} }
\end{figure}

\centerline{\bf Solution to problem 108.F, Math. Gazette 108, Issue 572 (2024), p. 364}\medskip

\centerline{by Raymond Mortini and Rudolf Rupp}

\medskip

\centerline{- - - - - - - - - - - - - - - - - - - - - - - - - - - - - - - - - - - - - - - - - - - - - - - - - - - - - -}
  
  \medskip

This is entirely trivial: Note that  an ellipse with focii $\pm r$, $r>0$,  in the $z$-plane is given by
$|z-r|+|z+r|=2c$ for some positive constant $c$. Hence, as $ |z^2-r^2|= |w^2|$, respectively   $|w^2-r^2|= |z^2|$, and  through squaring,

\begin{eqnarray*}
|z-r|^2+|z+r|^2 +2 |z-r|\;|z+r|&=& 2|z|^2 +2r^2+2 |z^2-r^2|\\
&=&2|z|^2 +2r^2+2|w|^2\\
\end{eqnarray*}
As the  latter is symmetric in $z$ and $w$, we obtain
$$(|z-r|+|z+r|)^2=(|w-r|+|w+r|)^2.$$
As the terms are positive, it follows that $|z-r|+|z+r|=|w-r|+|w+r|$.

\newpage

  \begin{figure}[h!]
 
  \scalebox{0.45} 
  {\includegraphics{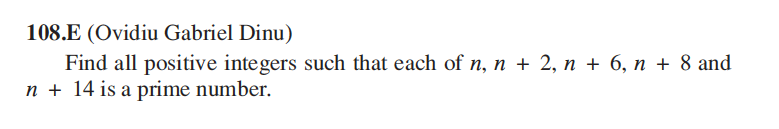} }
\end{figure}

\centerline{\bf Solution to problem 108.E, Math. Gazette 108, Issue 572 (2024), p. 364}\medskip

\centerline{by Raymond Mortini and Rudolf Rupp}

\medskip

\centerline{- - - - - - - - - - - - - - - - - - - - - - - - - - - - - - - - - - - - - - - - - - - - - - - - - - - - - -}
  
  \medskip

We claim that only $n=5$ yields the desired prime quintuplets.\\

Suppose that $n,n+2,n+6, n+8,n+14$ are prime numbers. Say $n=a_0+\sum_{k=1}^m a_k 10^k$, with $0\leq a_k\leq 9$.
Obviously $a_0\in \{1,3,7,9\}$ or $a_0=5$ and all $a_k=0$ for $k\geq 1$.

\underline{{\it Case 1}} ~ $a_0=1$. Then $n+14$ is not prime since $n+14=5+ (a_1+1)10+\sum_{k=1}^m a_k10^k$ is divisible by $5$
\footnote{Note that $a_1+1$ may be equal to $10$; that does not alter the divisibility property, though.
}.

\underline{{\it Case 2}} ~ $a_0=3$. Then $n+2$ is not prime since $n+2=5+\sum_{k=1}^m a_k10^k$ is divisible by $5$.

\underline{{\it Case 3}} ~ $a_0=7$. Then $n+8$ is not prime since $n+8=5+(a_1+1)10+\sum_{k=1}^m a_k10^k$ is divisible by $5$.

\underline{{\it Case 4}} ~ $a_0=9$. Then $n+6$ is not prime since $n+6=5+(a_1+1)10+\sum_{k=1}^m a_k10^k$ is divisible by $5$.

So it remains $a_0=5$: $(5,7,11,13,19)$, the only prime quintuple of this form.

\newpage

\nopagecolor

   \begin{figure}[h!]
 
  \scalebox{0.30} 
  {\includegraphics{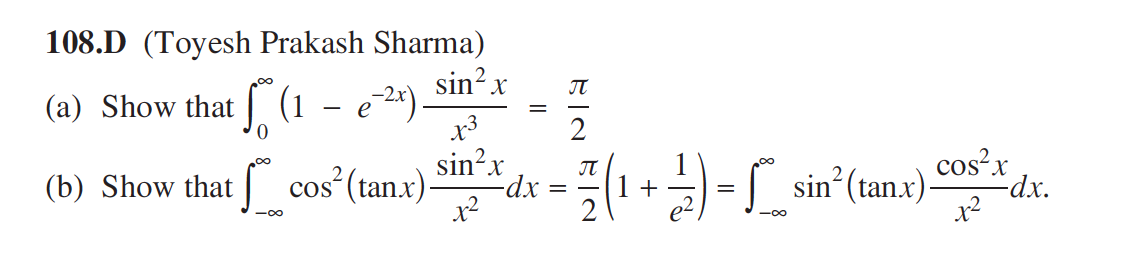} }
\end{figure}

\centerline{\bf Solution to problem 108.D, Math. Gazette 108, Issue 571 (2024), p. 167}\medskip

\centerline{Raymond Mortini, Rudolf Rupp}
\medskip

\centerline{- - - - - - - - - - - - - - - - - - - - - - - - - - - - - - - - - - - - - - - - - - - - - - - - - - - - - -}
  
  \medskip

We give for (a) two proofs. \\

{\bf (a1)} This is done via the Laplace transform $F(s):=\mathcal L[f](s):=\int_0^\infty f(t)e^{-st}dt$ for certain admissible functions $f$.
First note that $\dis\frac{1-e^{-2x}}{x}=\int_{s=0}^2  e^{-sx} ds.$ Then, due to the convergence of the integrals, and Fubini's theorem,
\begin{eqnarray*}
I:=\int_0^\infty(1-e^{-2x})\frac{\sin^2 x}{x^3}dx&=&\int_0^\infty \frac{1-e^{-2x}}{x}\; \frac{\sin^2 x}{x^2}dx=
\int_{x=0}^\infty \int_{s=0}^2 e^{-sx} \frac{\sin^2 x}{x^2}ds dx\\
&=&\int_{s=0}^2\left(\int_{x=0}^\infty e^{-sx}\;\frac{\sin^2 x}{x^2}\;dx\right)ds.
\end{eqnarray*}
Starting with $\mathcal L[\sin^2 t](s)=\mathcal L[\frac{1-\cos (2t)}{2}](s)=  \frac{1}{2s}- \frac{1}{2} \frac{s}{s^2+4}$
and using  twice the formula
$$\mathcal L [f(t)/t](s)= \int_s^\infty F(\sigma) d\sigma,$$
we see that
the Laplace transform of the function $S(x):=\frac{\sin^2 x}{x^2}$  is given by
$$\frac{s\log s}{2}-\frac{s\log(s^2+4)}{4}+\underbrace{\arctan (\frac{2}{s})}_{=\frac{\pi}{2}-\arctan \frac{s}{2}}.$$
Hence 
$$I= \left[ \frac{s\pi}{2} + \frac{s^2\log s}{4}- \frac{s^2+4}{8}\log(s^2+4)+ \frac{1}{2} -s\arctan\frac{s}{2} +
\log\left(\frac{s^2}{4}+1\right)\right]^2_0= \frac{\pi}{2}.$$
\medskip

{\bf(a2)}   Note that 
$$I=\frac{1}{2}\int_0^\infty (1-e^{-2x}) \frac{1-\cos(2x)}{x^3} dx.$$
So we need to apply the residue theorem to the function 
$$f(z):=\frac{(1-e^{-2z})(1-e^{2iz})}{z^3}
$$
and the contour

 \begin{figure}[h!]
 \hspace{-4cm}
  \scalebox{0.30} 
  {\includegraphics{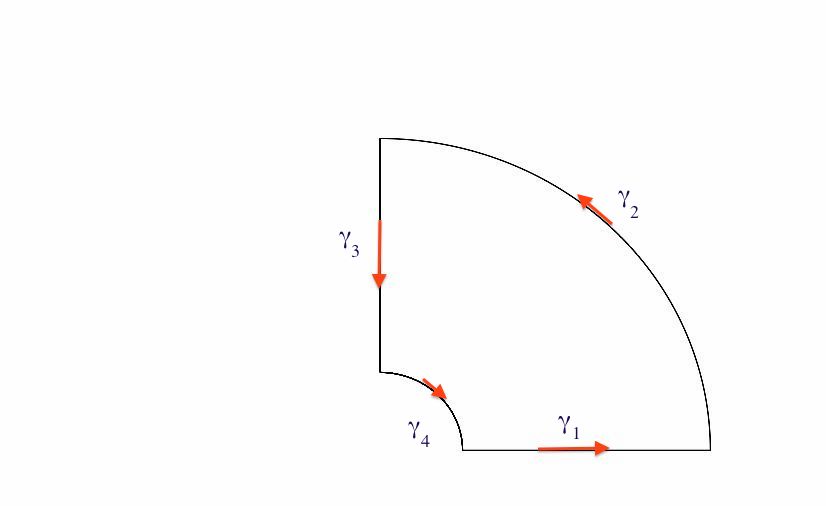} }
\end{figure}

$$\Gamma(r,R)=\gamma_1(r,R)\oplus \gamma_2(R)\oplus \gamma_3(r,R)\oplus \gamma_4(r,R),$$
where $\gamma_1(n)=[r, R]$, $\gamma_2(R)=Re^{it}, 0\leq t\leq \pi/2$, $\gamma^-_3(r)=it$, $r\leq t\leq R$, $\gamma^-_4(r,R)=re^{it}$,
$0\leq t\leq \pi/2$.

Observe that $\int_{\Gamma(r,R)} f(z)dz =0$.
Since
\begin{eqnarray*}
\int_{\gamma^-_3(r,R)} f(z) dz&=&\int_r^R f(it) i dt=\int_r^R\frac{(1-e^{-2it})(1-e^{-2t})}{(-i)t^3} idt\\
&=&-\overline{\int_{\gamma_1(r,R)} f(z)dz},
\end{eqnarray*}
we have
$$\int_{\gamma_1(r,R)} f(z)dz +\int_{\gamma_3(r,R)} f(z)dz= 2\;  \int_{\gamma_1(r,R)} \hspace{-5mm}{\rm Re} \;f(z)dz=
4\int_r^R (1-e^{-2x})\frac{\sin^2 x}{x^3}dx.
$$

Now $I(R):=\dis\int_{\gamma_2(R)} f(z)dz\to 0$ as $R\to\infty$. In fact,

\begin{eqnarray*}
|I(R)|&=&\left| \int_0^{\pi/2} f(Re^{it}) iR e^{it}dt\right|\leq \int_0^{\pi/2} \frac{(1+e^{-2R\cos t})(1+e^{-2R\sin t})}{R^2}dt\leq \frac{4 \;\dis\frac{\pi}{2}}{R^2}.
\end{eqnarray*}

Moreover, $J(r):=\dis\int_{\gamma_4(r)} f(z)dz\to  -2\pi$ as $r\to 0$.  In fact, since
the Taylor expansion of $zf(z)=\frac{(1-e^{-2z})(1-e^{2iz})}{z^2}$ at the origin equals
$$ -4i +(4+4i)z-4z^2+\Oh(z^3),$$
we have
\begin{eqnarray*}
J(r)&=& -\int_0^{\pi/2} \frac{(1-e^{-2 re^{it}})(1-e^{2ire^{it}})}{r^2e^{3it}} i e^{it} dt \;\buildrel\longrightarrow_{}^{r\to 0}\; -(-4i) i  \; \int_0^{\pi/2} 1\,dt =-2\pi
\end{eqnarray*}
(note that $\lim_{r\to 0} \int=\int\lim_{r\to 0}$ since the integrand is uniformly continuous for $(r,t)\in \;]0,1]\times [0,2\pi]$.)
Now
\begin{eqnarray*}
0&=&\lim_{R\to\infty\atop r\to 0} \int_{\Gamma(r,R)} f(z) dz\\
&=& \lim_{R\to\infty\atop r\to 0} \left(\int_{\gamma_1(r,R)} f(z)dz+ \int_{\gamma_3(r,R)} f(z)dz\right) +  \lim_{R\to\infty} \int_{\gamma_2(R)}  f(z)dz+  \lim_{ r\to 0} \int_{\gamma_4(r)} f(z)dz\\
&=& 4 I + 0- 2\pi.
\end{eqnarray*}
Consequently, $I=\pi/2 $.\\

%

(b) We first observe that  due to the majorant $1/x^2$ for $|x|\geq 1$,  the integral  exists as Lebesgue integal as well as improper Riemann integral (note the discontinuity points at 
$-\pi/2+k\pi, k\in \Z$).

Since $f(x):=\cos^2(\tan x)$ is $\pi$-periodic and even,  we may use the Lobashevski integral formula 
$$\int_0^\infty f(x)\frac{\sin^2 x}{x^2}dx=\int_0^{\pi/2} f(x) dx$$
(see e.g. \cite{jo} \footnote{ The proof there is also valid for the case of all even  Riemann-integrable $\pi$-periodic functions.}) to conclude that
\begin{eqnarray*}
\int_{{-\infty}}^\infty \cos^2 (\tan x)\frac{\sin^2 x}{x^2}dx&=&2\int_0^{\pi/2} \cos^2 (\tan x)dx\\
&\buildrel=_{dx=\frac{ds}{1+s^2}}^{s:=\tan x}&2\int_{s=0}^\infty \frac{\cos^2 s}{1+s^2}ds= \int_{s=0}^\infty \frac{1+\cos(2s)}{1+s^2}ds\\
&=& \arctan s \Big|^\infty_0 + \frac{1}{2}\int_{{-\infty}}^\infty \frac{\cos(2s)}{1+s^2}ds\\
&=&\frac{\pi}{2}+\pi i {\rm Res}\;\left(\frac{e^{2iz}}{1+z^2}; z=i\right)\\
&=& \frac{\pi}{2}+\pi i \frac{e^{-2}}{2i}=\frac{\pi}{2}\left(1+ \frac{1}{e^2}\right).
\end{eqnarray*}

Here we have applied the residue theorem to evaluate the classical integral $\int_{{-\infty}}^\infty \frac{\cos(2s)}{1+s^2}ds$,
which used to be an exercise in many complex analysis courses (see e.g. \cite[p. 210]{ch}).\\

Now we have the following identities:

$$
\int_{-\infty}^\infty \sin^2 (\tan x)\frac{\cos^2 x}{x^2}dx -\int_{{-\infty}}^\infty \cos^2 (\tan x)\frac{\sin^2 x}{x^2}dx$$
$$=
\int_{-\infty}^\infty\left( \sin^2 (\tan x)\frac{\cos^2 x}{x^2}+\sin^2 (\tan x)\frac{\sin^2 x}{x^2} \right)dx-
\int_{-\infty}^\infty\left(\sin^2 (\tan x)\frac{\sin^2 x}{x^2} +\cos^2 (\tan x)\frac{\sin^2 x}{x^2}\right)dx
$$
$$=\int_{-\infty}^\infty\frac{\sin^2 (\tan x)}{x^2} dx - \int_{-\infty}^\infty\frac{\sin^2 x}{x^2} dx.
$$
The latter difference, though, vanishes. In fact, since $\sum\int=\int\sum$ (note that  all terms are positive),we obtain in view of the classical formula (see e.g. \cite{jo}),
$$\frac{1}{\sin^2 x}=\sum_{k=-\infty}^\infty \frac{1}{(k\pi +x)^2}$$
that
\begin{eqnarray*}
\int_{{-\infty}}^\infty\frac{\sin^2 (\tan x)}{x^2} dx&=&\sum_{k=-\infty}^\infty \int_{-\frac{\pi}{2}+k\pi}^{\frac{\pi}{2} +k\pi} \frac{\sin^2 (\tan x)}{x^2} dx\\
&\buildrel=_{}^{x=-\frac{\pi}{2}+k\pi +u}&\sum_{k=-\infty}^\infty \int_0^\pi \frac{\sin^2(\tan(u-\frac{\pi}{2}))}{(-\frac{\pi}{2}+k\pi +u)^2} du\\
&=&\int_0^\pi \sin^2(-\cot u)\sum_{k=-\infty}^\infty \frac{1}{(k\pi +(u-\frac{\pi}{2}))^2}\;du\\
&=&\int_0^\pi \sin^2(-\cot u) \frac{1}{\sin^2(u-\frac{\pi}{2})}du=\int_0^\pi \frac{\sin^2(-\cot u) }{\cos^2 u} \;du\\
&\buildrel=_{du=\sin^2 u ds}^{-\cot u=s}&\int_{-\infty}^\infty \frac{\sin^2 s}{\cos^2 u}\; \sin^2 u \;ds=\int_{-\infty}^\infty \frac{\sin^2 s}{s^2}\;ds.
\end{eqnarray*}
Amazing! Hence the second formula 

$$\int_{{-\infty}}^\infty\frac{\sin^2 (\tan x) \;\cos^2 x}{x^2}dx=\frac{\pi}{2}\left(1+ \frac{1}{e^2}\right)$$
holds, too.

\newpage

\begin{figure}[h!]

\scalebox{0.4} 
  {\includegraphics{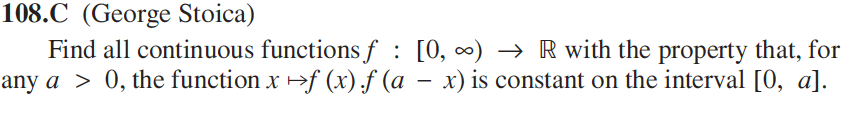}} 

\end{figure}

\centerline{\bf Solution to problem 108.C, Math. Gazette 108, Issue 571 (2024), p. 167}\medskip

\centerline{by Raymond Mortini}

\medskip

\centerline{- - - - - - - - - - - - - - - - - - - - - - - - - - - - - - - - - - - - - - - - - - - - - - - - - - - - - -}
  
  \medskip


We claim that all solutions $f$ continuous on $[0,\infty[$ are given by 
$$\ovalbox{$\lambda\, e^{\beta x}$, where $\lambda,\beta\in \R$ },$$
or equivalently
$$\ovalbox{$f\equiv 0$ or $\pm  e^{\alpha+\beta x}$, where $\alpha,\beta\in \R$. }$$

\bigskip
To verify this, we first note that, trivially, all these functions are solutions, since for $0\leq a\leq x$
$$\lambda e^{\beta x}\cdot \lambda e^{\beta (a-x)}= \lambda^2 e^{\beta a},$$

Conversely, let $f$ be  a solution to the problem,  that is,  $f(x)\cdot f(a-x)=:c(a)$ is independent of  $x$ whenever $0\leq x\leq a$, and this  for any $a>0$.\\

$\bullet$ Let $x=0$. Then $f(0) f(a)=c(a)$. Now let $x=a/2$. Then $f(a/2) f(a/2)=c(a)$, too. Hence, for every $a>0$, 
 
\begin{equation}\label{form1}
f(a/2)^2= f(0) f(a).
\end{equation}
Thus $f$ has everywhere the sign of $f(0)$ or $f\equiv 0$ if $f(0)=0$.
Since $f$ is a solution if and only $\lambda f$ is a solution, we may assume wlog that $f(0)=1$. Hence, by (\ref{form1}), $f>0$ 
on $[0,\infty[$.\\

$\bullet$ Let $F(x):=\log f(x)$. Then, for any $a>0$,  $F$ satisfies on $[0,a]$ the functional equation

\begin{equation}\label{form2}
F(x)+F(a-x)=C(a)
\end{equation}
for some (continuous) function $C(a)$.

Now, by $(\ref{form1})$, $C(a)=2F(a/2)=F(0)+ F(a)=F(a)$. In particular, for $b=a/2$, 

\begin{equation}\label{form3}
2F(b)=F(2b)
\end{equation}

 Moreover, if we take $x=a/3$, 
$$F(a/3)+F((2/3) a)=C(a)=F(a).$$
Thus, for $b=a/3$,

$$0=F(b)+ F(2b)-F(3b)=F(b)+2F(b)-F(3b),$$

and so
\begin{equation}\label{form4}
3 F(b)=F(3b).
\end{equation}
We conclude that  for every $x>0$, and $n,k\in \N:=\{0,1,2,\dots\}$,
$$F\left(\frac{2^n}{3^k} x\right)=\frac{2^n}{3^k} F(x),$$
since, by induction, $F(2^nx)=2^nF(x)$ and $F(x/3^k)= \frac{1}{3^k} F(x)$. Now replace $x$ by $2^nx$.
\\

Since the set  $\{\frac{2^n}{3^k}:n,k\in \N\}$ is dense in $[0,\infty[$ (see e.g. \cite[p. 1879]{moru}), 
continuity of $F$ in $[0,\infty[$  yields that $F(\mu x)=\mu F(x)$ for every  $\mu>0$ and $x>0$. Therefore, for $x=1$, $F(\mu)= \mu F(1)$. Consequently,
with $\beta:= F(1)$,
$$f(\mu)= e^{\beta \mu}.$$

\newpage

The following list is ordered according to the "chronological" appearance in this text.

\end{document}